\newtheorem{thm}{Theorem}[section]
\newtheorem{corollary}[thm]{Corollary}
\newtheorem{lemma}[thm]{Lemma}
\newtheorem{proposition}[thm]{Proposition}
\newtheorem{prop}[thm]{Proposition}
\newtheorem{conjecture}[thm]{Conjecture}
\newtheorem{thm-dfn}[thm]{Theorem-Definition}
\theoremstyle{definition}
\newtheorem{definition}[thm]{Definition}
\newtheorem{remark}[thm]{Remark}
\newtheorem{example}[thm]{Example}
\numberwithin{equation}{section}
\newcommand{\fg}{{\mathfrak g}}
\newcommand{\fp}{{\mathfrak p}}
\newcommand{\bC}{{\mathbb C}}
\newcommand{\bP}{{\mathbb P}}
\newcommand{\bG}{{\mathbb G}}
\newcommand{\bZ}{{\mathbb Z}}
\newcommand{\mE}{\mathcal{E}}
\newcommand{\mF}{\mathcal{F}}
\newcommand{\mM}{\mathcal{M}}
\newcommand{\mO}{\mathcal{O}}
\newcommand{\mL}{\mathcal{L}}
\newcommand{\mG}{\mathcal{G}}
\newcommand{\sY}{\mathscr{Y}}
\newcommand{\on}{\operatorname}
\newcommand{\lra}{\longrightarrow}
\newcommand{\ra}{\rightarrow}
\newcommand{\la}{\leftarrow}
\newcommand{\bs}{\backslash}
\newcommand{\is}{\simeq}
\newcommand{\Loc}{\on{LocSys}}
\newcommand{\Bun}{\on{Bun}}
\newcommand{\quash}[1]{}  
\newcommand{\nc}{\newcommand}
\newcommand{\frakg}{{\mathfrak g}}
\newcommand{\bbA}{{\mathbb A}}
\newcommand{\bbC}{{\mathbb C}}
\newcommand{\bbG}{{\mathbb G}}
\newcommand{\bbH}{{\mathbb H}}
\newcommand{\bbP}{{\mathbb P}}
\newcommand{\bbR}{{\mathbb R}}
\newcommand{\bbS}{{\mathbb S}}
\newcommand{\calB}{{\mathcal B}}
\newcommand{\calE}{{\mathcal E}}
\newcommand{\calF}{{\mathcal F}}
\newcommand{\calG}{{\mathcal G}}
\newcommand{\calH}{{\mathcal H}}
\newcommand{\calK}{{\mathcal K}}
\newcommand{\calL}{{\mathcal L}}
\newcommand{\calM}{{\mathcal M}}
\newcommand{\calO}{{\mathcal O}}
\newcommand{\calQ}{{\mathcal Q}}
\newcommand{\calS}{{\mathcal S}}
\newcommand{\calT}{{\mathcal T}}
\newcommand{\calU}{{\mathcal U}}
\newcommand{\calV}{{\mathcal V}}
\newcommand{\calX}{{\mathcal X}}
\newcommand{\calY}{{\mathcal Y}}
\newcommand{\calZ}{{\mathcal Z}}
\nc{\al}{{\alpha}} \nc{\be}{{\beta}} \nc{\ga}{{\gamma}}
\nc{\ve}{{\varepsilon}} \nc{\Ga}{{\Gamma}} 
\nc{\La}{{\Lambda}}
\nc{\ad }{{\on{ad }}}
\nc{\aff}{{\on{aff}}} \nc{\Aff}{{\mathbf{Aff}}}
\newcommand{\Aut}{{\on{Aut}}}
\nc{\der}{{\on{der}}}
\nc{\diag}{{\on{diag}}}
\nc{\Fl}{{\calF\ell}}
\newcommand{\Gal}{{\on{Gal}}}
\newcommand{\Gr}{{\on{Gr}}}
\nc{\Hg}{{\on{Higgs}}}
\newcommand{\Hom}{{\on{Hom}}}
\newcommand{\id}{{\on{id}}}
\nc{\Id}{{\on{Id}}}
\nc{\Ind}{{\on{Ind}}}
\nc{\Op}{{\on{Op}}}
\newcommand{\pr}{{\on{pr}}}
\nc{\res}{{\on{res}}}
\newcommand{\Spec}{{\on{Spec}}}
\nc{\tr}{{\on{tr}}}
\newcommand{\GL}{{\on{GL}}}
\nc{\GSp}{{\on{GSp}}} \nc{\GU}{{\on{GU}}} \nc{\SL}{{\on{SL}}}
\nc{\SU}{{\on{SU}}} \nc{\SO}{{\on{SO}}}
\nc{\nh}{{\Loc_{J^p}(\tau')}}
\nc{\bnh}{{\Loc_{\breve J^p}(\tau')}}
\nc{\bU}{{\overline{U}}} \nc{\IC}{{\on{IC}}}
\newcommand{\bA}{\mathbb A}
\newcommand{\beqn}{\begin{equation*}}
\newcommand{\eeqn}{\end{equation*}}
\newcommand{\beq}{\begin{equation}}
\newcommand{\eeq}{\end{equation}}
\newcommand{\sw}{\on{sw}}
\nc{\QM}{QM}
\nc{\eval}{\textup{ev}}
\begin{document}
\title{Real groups, symmetric varieties and Langlands duality}

       \author{Tsao-Hsien Chen} 
        
       \address{ School of Mathematics, University of Minnesota, Twin cities, Minneapolis, MN 55455}
       \email{chenth@umn.edu}
       \author{David Nadler} 
        
        \address{Department of Mathematics, UC Berkeley, Evans Hall,
Berkeley, CA 94720}
        \email{nadler@math.berkeley.edu}
       
\maketitle\begin{abstract}
Let $G_\bbR$ be a connected real reductive group and let $X$ be the corresponding complex symmetric variety under the Cartan bijection.
We construct a canonical equivalence between the  
relative Satake category of $G(\calO)$-equivariant $\bC$-constructible complexes on the loop space 
$X(\calK)$
and the  
real Satake category
of $G_\bbR(\calO_\bbR)$-equivariant $\bC$-constructible complexes on 
the real affine Grassmannian $\Gr_\bbR=G_\bbR(\calK_\bbR)/G_\bbR(\calO_\bbR)$.
We show that the equivalence is $t$-exact with respect to the 
natural perverse $t$-structures and is
compatible with the 
fusion products and Hecke actions.
 We further show that the relative Satake category
 is equivalent to 
the category of $\bC$-constructible complexes on the
 moduli stack $\Bun_{G_\bbR}(\bP^1(\bbR))$ of 
 $G_\bbR$-bundles 
 on the real projective line $\mathbb P^1(\bbR)$ and hence provides a connection between the  
 relative Langlands program and the geometric Langlands program for real groups.

We provide numerous applications  of the main theorems to real and relative 
Langlands duality including
the formality and commutativity conjectures for the real and relative  Satake categories
and an
identification of the  dual  groups for 
$G_\bbR$ and $X$.

\end{abstract}
\setcounter{tocdepth}{2} \tableofcontents

 \section{Introduction} 
 
 Let $G_\bbR$ be a  real form of a connected complex
reductive group $G$. Let $X=K\backslash G$ be the associated symmetric variety under Cartan's
bijection, where $K$ is the complexification of a maximal compact subgroup $K_c\subset G_\bbR$.
A fundamental feature of the representation theory of the real group $G_\bbR$ is that many
results of an analytic nature have equivalent purely algebraic geometry formulations in terms of the
corresponding symmetric variety $X$. We will call this broad phenomenon the real-symmetric
correspondence.

In this paper we
study the real-symmetric correspondence in the framework of Langlands duality.
We show that there is an equivalence between the 
relative Satake category of $X$ and the real Satake category of 
$G_\bbR$ with remarkable properties (Theorem \ref{main thm 1}).
We further show that  the relative Satake category 
is equivalent to the dg derived category of sheaves on the moduli stack
of 
$G_\bbR$-bundles on the real projective line $\bP^1(\bbR)$,
hence provides a connection between real and relative Langlands programs (Theorem \ref{main thm 2}).
The proof relies on three geometric results:
 (1) a multi-point version of 
Quillen's 
homeomorphism
between the loop spaces for compact symmetric varieties and  real affine Grassmannians (Section \ref{QMaps})
(2) Morse-theoretic construction of the Matsuki correspondence for the affine Grassmannian (Section \ref{Morse flow})
(3) uniformization of moduli stack of  quasi-maps and real bundles.
(Section \ref{uniform}).

We provide numerous applications of the main results to real and relative Langlands duality including 
semi-simplicity and 
t-exactness criteria of the Hecke actions,
the formality and commutativity conjectures for the real and relative Satake categories, 
and an identification of the (Tannakian) dual groups for $G_\bbR$ and $X$.
The last application provides an explicit description of the 
 dual group of $X$
answering a basic open question in relative Langlands duality.

We now describe the paper in more details.

\subsection{Main results}
Let $\Gr=G(\calK)/G(\calO)$ be the affine Grassmannian for $G$
and let $D(G(\calO)\backslash\Gr)$ be the Satake category of 
$G(\calO)$-equivariant $\bC$-constructible complexes on $\Gr$.
One of the foundational result in Langlands duality is the geometric  
Satake equivalence \cite{BD,G,Lu,MV}
\[\on{Perv}(G(\calO)\backslash\Gr)\is\on{Rep}(G^\vee)\]
providing a description of the category of representations of the  Langlands dual group
$G^\vee$ in terms of 
the abelian Satake category $\on{Perv}(G(\calO)\backslash\Gr)\subset
D(G(\calO)\backslash\Gr)$
of $G(\calO)$-equivariant perverse sheaves on $\Gr$.
 
Let 
$\Gr_\bbR=G_\bbR(\calK_\bbR)/G_\bbR(\calO_\bbR)$ be the real affine Grassmanian 
of $G_\bbR$ 
and let $X(\calK)$ be the loop space of $X$.
We are interested in the  real Satake category $D(G_\bbR(\calO_\bbR)\backslash\Gr_\bbR)$ 
of $G_\bbR(\calO_\bbR)$-equivariant $\bC$-constructible complexes on 
$\Gr_\bbR$ and the relative Satake category $D(X(\calK)/G(\calO))$
of $G(\calO)$-equivariant $\bC$-constructible complexes on $X(\calK)$.
Those categories are one of the main players in the 
geometric Langlands for real groups \cite{BZN} and the relative Langlands program \cite{BZSV}.

In this paper we will assume $G_\bbR$ is connected (equivalently, $K$ is connected).
Our first main result is a remarkable equivalence between the 
real and relative Satake category, called the real-symmetric equivalence:

\begin{thm}\label{main thm 1}
There is a natural equivalence 
\[D(X(\calK)/G(\calO))\is D(G_\bbR(\calO_\bbR)\backslash\Gr_\bbR)\]
which is $t$-exact with respect to the perverse $t$-structures and is compatible with the fusion products 
and Hecke actions of $\on{Rep}(G^\vee)$.
\end{thm}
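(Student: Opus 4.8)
The plan is to build the equivalence out of the three geometric inputs listed in the introduction and then check compatibilities one structure at a time. First I would set up, for each finite set of points on a curve, the relevant multi-point loop spaces: the Beilinson--Drinfeld-type space of loops into $X$ bundled over configurations, and the corresponding factorization space for $\Gr_\bbR$. The key geometric fact, from the promised multi-point version of Quillen's theorem (Section \ref{QMaps}), is a homeomorphism (real-analytic, compatible with the stratifications and with the group actions) between the compact-form real loop space $X(\calK)$ modulo $G(\calO)$ and the real affine Grassmannian $\Gr_\bbR$ modulo $G_\bbR(\calO_\bbR)$. Given this, pullback of $\bC$-constructible complexes along the homeomorphism is an equivalence of the underlying constructible derived categories; so the content of Theorem \ref{main thm 1} is not the bare equivalence of categories but its compatibility with \emph{(i)} the perverse $t$-structures, \emph{(ii)} the fusion product, and \emph{(iii)} the $\on{Rep}(G^\vee)$-Hecke action.

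For \textbf{$t$-exactness} I would argue that the Quillen homeomorphism matches the two stratifications (by $G(\calO)$-orbits on $X(\calK)$ versus $G_\bbR(\calO_\bbR)$-orbits on $\Gr_\bbR$) up to a shift recording the real/complex dimension discrepancy, and that this shift is exactly the one built into the two perversity functions; hence intersection cohomology sheaves go to intersection cohomology sheaves and the hearts are identified. Concretely, one reduces to a single orbit closure and compares the local intersection cohomology computations, which on the real side are governed by the geometry of $\Gr_\bbR$ worked out via the Matsuki flow of Section \ref{Morse flow}. The Matsuki correspondence is what controls fixed points of the relevant torus/Morse function and thereby pins down the stratification poset and the dimensions, so this is where input (2) enters.

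For the \textbf{fusion product} I would use the factorization structure: over the locus where points collide, the multi-point Quillen homeomorphisms are compatible (this is the ``multi-point'' in input (1)), so convolution-by-fusion on the symmetric side is intertwined with the analogous fusion on the real side. Commutativity/associativity constraints then transport automatically. For the \textbf{Hecke action}, I would exhibit the action of $D(G(\calO)\backslash\Gr)$ on $D(X(\calK)/G(\calO))$ by convolution at a moving point, note its factorization compatibility, and check that under the Quillen homeomorphism it becomes the known Hecke action on $D(G_\bbR(\calO_\bbR)\backslash\Gr_\bbR)$; feeding in the geometric Satake equivalence $\on{Perv}(G(\calO)\backslash\Gr)\is\on{Rep}(G^\vee)$ then gives the stated $\on{Rep}(G^\vee)$-compatibility on both sides at once.

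\textbf{The hard part} will be the $t$-exactness statement: proving that the constructible equivalence coming from a merely topological (real-analytic) homeomorphism is exact for the two \emph{perverse} $t$-structures, since perversity is not a purely topological notion and the dimension bookkeeping on $\Gr_\bbR$ is delicate (orbits can have the ``wrong'' parity, and semisimplicity of pushforwards is not automatic on the real side). I expect to handle this by combining the precise orbit-dimension formulas from the Matsuki-flow analysis with a hyperbolic-localization / contraction argument along the Morse flow to reduce IC-sheaf computations to the attracting strata, where the comparison becomes a finite-dimensional statement about symmetric spaces; the factorization structure then upgrades the single-point identification to the full equivalence.
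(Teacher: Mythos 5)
Your high-level geometric picture is right — the multi-point Quillen homeomorphism from Section \ref{QMaps} and factorization are indeed the engine — but the proposal glosses over a step that in the paper carries real content and cannot be waved away: there is \emph{no} direct homeomorphism between $X(\calK)/G(\calO)$ and $G_\bbR(\calO_\bbR)\backslash\Gr_\bbR$. The formal loop space $X(\calK)$ is an infinite-dimensional complex ind-scheme, while $\Gr_\bbR$ is a real-analytic ind-space, and nothing in Section \ref{QMaps} identifies them. What Theorem \ref{Quillen} actually gives is a $K_c$-equivariant stratified homeomorphism $\Omega K_c\backslash\Gr \is \Omega X_c \is \Gr_\bbR$ between the \emph{compact-model} loop space and the real Grassmannian. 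To reach $X(\calK)/G(\calO)$ from there the paper needs two non-formal bridges: (i) the Drinfeld--Bouthier theorem (Proposition \ref{ind-placidness}) that $G(\calK)\to X(\calK)$ is a $K(\calK)$-torsor in the $h$-topology, which gives the isomorphism of stacks $X(\calK)/G(\calO)\is K(\calK)\backslash\Gr$; and (ii) the pro-unipotence argument (Lemma \ref{quasi-maps model equ}) showing $q^!\colon D(K(\calK)\backslash\Gr)\to D_{\calS_K}(LK_c\backslash\Gr)$ is an equivalence, which lets one descend from the formal loop group to the polynomial based loop group at the level of sheaves. Your proposal has no mechanism to cross this divide, and absent some replacement for (i) and (ii) the argument doesn't start.

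Two smaller points. First, the Morse-theoretic Matsuki flow you invoke for $t$-exactness is really the input for Theorem \ref{main thm 2} (the affine Matsuki equivalence and Radon transform), not Theorem \ref{main thm 1}; in the paper, $t$-exactness of the real-symmetric equivalence is proved by directly comparing, stratum by stratum, the BKV $!$-adapted $t$-structure on the ind-placid stack $K(\calK)\backslash\Gr$ (which is not a classical middle-perversity $t$-structure — it is built from placid presentations) against the classical perverse $t$-structure on $\Gr_\bbR$ via the pullback $q^!$. Your instinct that "perversity is not purely topological" is sound, but the actual subtlety is the infinite-dimensionality of the symmetric side, not the parity of real orbits (which is handled by connectedness of $G_\bbR$ ensuring $\dim_\bbR S_\bbR^\lambda$ is even). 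Second, once one has the chain $D(X(\calK)/G(\calO))\is D(K(\calK)\backslash\Gr)\is D_{\calS_K}(LK_c\backslash\Gr)\is D(G_\bbR(\calO_\bbR)\backslash\Gr_\bbR)$, the fusion and Hecke compatibilities do go through essentially as you sketch, using the multi-point Quillen homeomorphism and factorization plus a check that $q^!$ and the pro-smooth maps commute with nearby cycles; this part of your plan matches the paper.
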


Theorem \ref{main thm 1} is the combination of 
Theorem \ref{real-symmetric}, Theorem \ref{conv comp}, and Theorem \ref{fusion compatibility} 
 in the text; we refer  to Section \ref{real sym}, Section \ref{s:hecke} and Section \ref{fusion} for a
more detailed explanation of the statement, including the definition of $t$-structures, fusion products, and 
Hecke actions. The main ingredient in the proof is a multi-point version of 
Quillen's 
homeomorphism 
between the loop spaces for compact symmetric varieties and the real affine Grassmannians,
see Theorem \ref{Quillen}.

To state our second main result, 
let 
$\Bun_{G_\bbR}(\bbP^1(\bbR))$ be the real analytic stack of $G_\bbR$-bundles on the 
real projective line $\bP^1(\bbR)$.
Let $LG_\bbR\ \text{and}\  G_\bbR(\bbR[t^{-1}])\subset G_\bbR(\calK_\bbR)$ be the 
polynomial loop group and polynomial arc group  of $G_\bbR$ respectively.
Denote by 
$D(\Bun_{G_\bbR}(\bbP^1(\bbR)))$ the  dg category of 
$\bC$-constructible complexes 
on the real analytic stack $\Bun_{G_\bbR}(\bbP^1(\bbR))$,
and  by
 $D(LG_\bbR\backslash\Gr)$ (resp. $D(G_\bbR(\bbR[t^{-1}])\backslash\Gr_\bbR)$)
 the dg categories 
 of $LG_\bbR$-equivariant  (resp. $G_\bbR(\bbR[t^{-1}])$-equivariant)
$\bC$-constructible complexes on $\Gr$ respectively.

\begin{thm}\label{main thm 2}
There are natural commutative diagram of equivalences  
\[\xymatrix{D(X(\calK)/G(\calO))\ar[rr]^\simeq\ar[d]^\simeq_{\Upsilon}&&D(G_\bbR(\calO_\bbR)\backslash\Gr_\bbR)\ar[d]^\simeq_{\Upsilon_\bbR}\\
D(LG_\bbR\backslash\Gr)\ar[dr]_\simeq\ar[rr]^\simeq&&D(G_\bbR(\bbR[t^{-1}])\backslash\Gr_\bbR)\ar[ld]^\simeq\\
&D(\Bun_{G_\bbR}(\bbP^1(\bbR)))&}\]
where $\Upsilon$ is the so called affine Matsuki equivalence,
$\Upsilon_\bbR$ is the Radon transform, the horizontal equivalences 
are nearby cycles functors along quasi-maps family,
and the vertical equivalences
in the lower triangle are induced from the 
complex and real uniformizations  of $G_\bbR$-bundles
\[\xymatrix{LG_\bbR\backslash\Gr\ar[r]^{\simeq\ \ }&\Bun_{G_\bbR}(\bbP^1(\bbR))&G_\bbR(\bbR[t^{-1}])\backslash\Gr_\bbR\ar[l]_{\ \ \simeq}}.\]
Moreover, the equivalences above are compatible with the natural 
Hecke actions of $\on{Rep}(G^\vee)$.
\end{thm}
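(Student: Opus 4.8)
The plan is to build the diagram of Theorem \ref{main thm 2} edge by edge, then check commutativity of the two constituent triangles, and finally promote everything to the Hecke-equivariant setting.

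First I would establish the horizontal equivalences. The top equivalence is precisely Theorem \ref{main thm 1}. For the bottom horizontal equivalence I would run the same nearby-cycles-along-quasi-maps argument that underlies Theorem \ref{main thm 1}, but now with the larger symmetry groups: on the one side $LG_\bbR$ acting on $\Gr$ via the Quillen-type identification of $\Gr$ (as the relevant real/compact loop space) and on the other side $G_\bbR(\bbR[t^{-1}])$ acting on $\Gr_\bbR$. Concretely, one takes the degeneration of quasi-maps from $\bP^1$ into the compactified setting; the nearby cycles functor furnishes an equivalence of the two equivariant constructible categories because, by the multi-point Quillen homeomorphism (Theorem \ref{Quillen}), the total spaces of the two quasi-maps families are homeomorphic over the base in a way intertwining the two group actions. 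So the bottom horizontal equivalence is not a new input but an instance of the machine already built in Sections \ref{QMaps}--\ref{fusion}, applied to orbit-type strata indexed by $LG_\bbR$- rather than $G(\calO)$-orbits. The left vertical arrow $\Upsilon$ is the affine Matsuki equivalence, which is exactly the content of the Morse-theoretic Matsuki correspondence of Section \ref{Morse flow} (the Morse flow on $\Gr$ identifies $G(\calO)$-constructible sheaves on $X(\calK)$ with $LG_\bbR$-constructible sheaves on $\Gr$, cf.\ Theorem \ref{Quillen} and the finite-dimensional Matsuki theory it globalizes). The right vertical arrow $\Upsilon_\bbR$ is the Radon transform between $G_\bbR(\calO_\bbR)$-equivariant and $G_\bbR(\bbR[t^{-1}])$-equivariant sheaves on $\Gr_\bbR$; its equivalence is a standard feature of the Radon/Fourier--Sato formalism once one knows the relevant orbit stratifications are affine-space fibrations, which is again supplied by the loop-group analogue of the Iwasawa decomposition.

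Next come the two equivalences of the lower triangle, which are the uniformization statements of Section \ref{uniform}. The left one, $LG_\bbR\backslash\Gr\simeq \Bun_{G_\bbR}(\bP^1(\bbR))$, is the complex (holomorphic) uniformization: a $G_\bbR$-bundle on $\bP^1(\bbR)$ is the same as a $G$-bundle on $\bP^1$ with a real structure, which by Beauville--Laszlo type gluing at $0$ and $\infty$ is a point of the double quotient $G_\bbR(\bbR[t^{-1}])\backslash G(\calK)/G(\calO)$; matching this with $LG_\bbR\backslash\Gr$ uses the identification of $\Gr$ with the appropriate compact loop space. The right one, $G_\bbR(\bbR[t^{-1}])\backslash\Gr_\bbR\simeq \Bun_{G_\bbR}(\bP^1(\bbR))$, is the honest real uniformization: $\bP^1(\bbR)$ is a circle, its "affine chart" is $\bbA^1(\bbR)$ with functions $\bbR[t]$ and Laurent tail $\calK_\bbR$, so a $G_\bbR$-bundle is glued from a trivialization on $\bbR[t]$ and one on $\bbR[t^{-1}]$, giving the double quotient directly. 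At the level of constructible sheaves both uniformization maps are stratified-homeomorphisms (or at worst stratified fibrations with affine-space fibers coming from the automorphism groups), hence induce equivalences of constructible derived categories; I would package this using the descent/equivariance comparison for the actions of $G(\calO)$ resp.\ $G_\bbR(\calO_\bbR)$, noting these act trivially enough on the relevant strata.

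With the four outer edges in hand, commutativity of the two triangles is the crux. For the \emph{lower} triangle I must check that the bottom horizontal equivalence, followed by the right uniformization, agrees with the left uniformization; this is a compatibility between the quasi-maps degeneration and the Beauville--Laszlo gluing, and I expect it to follow from the fact that both sides are computed from the same family of $G_\bbR$-bundles on a degenerating $\bP^1(\bbR)$, so that nearby cycles on sheaves matches the limit on the moduli side --- essentially a base-change statement for the uniformization morphism over the quasi-maps base. For the \emph{upper} square I must show $\Upsilon_\bbR\circ(\text{top})\simeq(\text{bottom})\circ\Upsilon$, i.e.\ that the affine Matsuki equivalence and the Radon transform are exchanged by the two nearby-cycles functors. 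This is the main obstacle: it requires comparing a Morse-flow construction (Matsuki) on the symmetric-variety side with a Radon transform on the real side, across the Quillen homeomorphism, and the cleanest route is probably to realize \emph{both} $\Upsilon$ and $\Upsilon_\bbR$ themselves as nearby-cycles/specialization functors for suitable one-parameter families (the Matsuki equivalence as a "hyperbolic localization" limit, the Radon transform as a Fourier--Sato specialization) and then invoke the compatibility of nearby cycles with the quasi-maps degeneration --- i.e.\ reduce the whole square to commutativity of iterated nearby-cycles functors for a two-parameter degeneration, which holds by general $\psi$-functor formalism. I would carry this out by exhibiting a single space fibered over a square of parameters whose four edge-specializations recover the four functors, so that the square commutes by construction.

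Finally, for Hecke-equivariance: the Hecke action of $\on{Rep}(G^\vee)$ is defined on each corner by convolution at an auxiliary point of $\bP^1$ (away from $0,\infty$), using the geometric Satake category $\on{Perv}(G(\calO)\backslash\Gr)$. Each of the six functors in the diagram is constructed from a geometric operation (nearby cycles along quasi-maps, Morse flow, Radon transform, uniformization) that takes place at or near $0,\infty$ and hence commutes with convolution at the auxiliary point; concretely this is a factorization/base-change statement of the kind already used to prove the Hecke-compatibility in Theorem \ref{main thm 1} (cited as Theorem \ref{fusion compatibility} / \ref{conv comp}). So Hecke-compatibility of the whole diagram reduces to Hecke-compatibility of each edge, which in turn follows from the factorizable structure of the quasi-maps families and the fact that Matsuki flow, Radon transform, and uniformization are all "local at $0,\infty$". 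I expect no new difficulty here beyond bookkeeping, provided the factorization spaces underlying all constructions are set up compatibly, which I would arrange from the outset.
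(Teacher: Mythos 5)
The edges of the diagram are assembled correctly, and your identification of the top equivalence with Theorem \ref{main thm 1}, of $\Upsilon$ with the Morse-theoretic affine Matsuki correspondence, of $\Upsilon_\bbR$ with the Radon transform, and of the two diagonal arrows with the complex and real uniformizations all match the paper. The observation that the lower-left arrow is complex uniformization (i.e., $LG_\bbR\backslash\Gr\simeq\Bun_{G_\bbR}(\bbP^1(\bbR))$ via a Beauville--Laszlo argument at a complex point) is also on target.

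However, there is a genuine gap in the step you yourself flag as "the crux": commutativity of the upper square. You propose to realize both $\Upsilon$ and $\Upsilon_\bbR$ as specialization/nearby-cycles functors and then fold the whole square into a single two-parameter degeneration so that commutativity holds "by construction". This is not carried out — the two-parameter family is never exhibited — and it is not what the paper does. In the paper, $\Upsilon$ and $\Upsilon_\bbR$ are defined as pushforward functors ($\Upsilon=u_!q^!$ along $LK_c\backslash\Gr\to LG_\bbR\backslash\Gr$, and $\Upsilon_\bbR=p_!$ along $G_\bbR\backslash\Gr_\bbR\to G_\bbR(\bbR[t^{-1}])\backslash\Gr_\bbR$), not as limit/specialization functors, and the commutativity of the square is established by a more concrete route: one writes $\Psi(\mF)\simeq i^*j_*(\cdots)\simeq i^!j_!(\cdots)[1]$, produces the canonical base-change morphism $(f_0)_!i^!\to(\bar i)^!f_!$, obtains from it a natural transformation $\Upsilon_\bbR\circ\Psi\to\Psi_\bbR\circ\Upsilon$, and then verifies it is an isomorphism on the generating standard sheaves $\calS_*^+(\lambda,\tau)$ by comparing the explicit outputs of $\Upsilon$, $\Psi$, $\Upsilon_\bbR$, and $\Psi_\bbR$ on standards (Lemma \ref{surjective}, Propositions \ref{Psi_K}, \ref{real RT}, \ref{Psi_R}). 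This generators-and-base-change argument is both necessary and much more elementary than the two-parameter family you suggest; without it (or without actually building the two-parameter family and proving the four edge specializations are the four functors), the square remains unproved. Similarly, your argument that $\Upsilon_\bbR$ is an equivalence "once the stratifications are affine-space fibrations" elides the work — the paper runs the full Matsuki-type Morse argument a second time with the $\bbR_{>0}$-flow $\psi_z^1$ on $\Gr_\bbR$. For the lower triangle you also miss a simplification: the bottom family $LG_\bbR^{(\sigma_2)}\backslash\Gr^{(\sigma_2)}_\bbR|_{i\bbR_{\geq0}}$ is literally \emph{constant} (isomorphic to $\Bun_{G_\bbR}(\bbP^1(\bbR))\times i\bbR_{\geq0}$ by Proposition \ref{open in family}), so $\Psi_\bbR$ is a restriction and the lower triangle commutes immediately; no compatibility with Beauville--Laszlo gluing is needed.
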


Theorem \ref{main thm 2} is restated in 
Theorem \ref{diagram}.
We refer  to Section \ref{Affine Matsuki} and Section \ref{nearby cycles and Radon TF} for a
more detailed explanation of the statement.
The main step in the proof of Theorem \ref{main thm 2} is the 
affine Matsuki equivalence $\Upsilon$ whose proof relies on a 
Morse-theoretic construction (Theorem \ref{flow}) of the Matsuki correspondence for the affine Grassmannian in \cite{N1}: an isomorphism 
between $G(\calO)$-orbits poset on $X(\calK)$ (or rather $K(\calK)$-orbits poset on $\Gr$)
and $LG_\bbR$-orbits poset on $\Gr$.

 Theorem \ref{main thm 1} and 
 Theorem \ref{main thm 2}
  provide a connection between relative Langlands program
and geometric Langlands program for real groups
and  we expect applications of such a connection to both subjects.
For example, Theorem \ref{main thm 2} has been used in \cite{CMNO}
to establish versions of the
 relative Langland duality conjecture and 
 geometric Langlands for 
 $\bP^1(\bbR)$ in the case $(G_\bbR,X)=(\on{GL}_n(\bbH),\on{Sp}_{2n}\backslash\GL_{2n})$ where $\on{GL}_n(\bbH)$
is the real quaternionic linear group (see 
 next section for more applications).

\begin{remark}
In the case when $G_\bbR\is H$ is a connected complex reductive group viewed as a real group, Theorem \eqref{main thm 2} recovers the results of V. Lafforgue 
\cite[Proposition 2.1]{La} saying that 
there are equivalences
\beq\label{Lafforgue}
\xymatrix{D(H(\calO)\backslash\Gr_H)\ar[r]^\simeq& D(H(\bC[t^{-1}])\backslash\Gr_H)\ar[r]^\simeq&D(\Bun_H(\mathbb P^1))}
\eeq
where the first equivalence is given by the Radon transform and the second equivalence 
comes from the uniformization isomorphism
$H(\bC[t^{-1}])\backslash\Gr_H\is \Bun_H(\mathbb P^1)$.
\end{remark}

\begin{remark}
The proof of Theorem \ref{main thm 2} incorporated technical material in 
\cite{CN2} on Matsuki equivalence for affine grassmannians
but with many new results, including a proof of 
a conjecture on identification of the spherical and real dual groups in \cite[Section 1.4.2]{CN2}
(see Section \ref{Identification of dual groups, intro} for more details).
On the other hand, an updated version of \cite{CN2}  will 
include  various generalizations of the main results in \emph{loc. cit.}, including the case for affine flag varieties.

 \end{remark}

\subsection{Applications}
Our main results
allow one to use powerful algebraic geometry  tools on the symmetric side (e.g., Deligne's theory of weights) to study questions
on the real side, and conversely, to use the concrete geometry on the real side (e.g., the real affine Grassmannian or moduli of real bundles) to study questions on
the symmetric side. 
Here are a few notable examples.

\subsubsection{Semi-simplicity of Hecke actions}
\begin{corollary}[Theorem \ref{semisimplicity}]
The Hecke action of $\on{Rep}(G^\vee)$ on the real Satake category $D(G_\bbR(\calO_\bbR)\backslash\Gr_\bbR)$ (resp. the relative Satake category $D(X(\calK)/G(\calO))$) 
preserves semi-simplicity, that is, it maps semi-simple objects to semi-simple objects.
\end{corollary}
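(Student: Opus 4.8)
The plan is to transport the question to the symmetric side, where Deligne's theory of weights is available, and run the standard ``proper pushforward of a pure complex is pure, and a pure complex is semisimple'' argument. First I would invoke Theorem~\ref{main thm 1}: the real--symmetric equivalence is $t$-exact and intertwines the two Hecke actions of $\on{Rep}(G^\vee)$. A $t$-exact equivalence of our dg categories restricts to an equivalence of perverse hearts and commutes with shifts, hence carries semisimple objects (finite direct sums of shifted simple perverse sheaves) to semisimple objects; so it suffices to prove the claim for the relative Satake category $D(X(\calK)/G(\calO))$, the real Satake case following by transporting back. Since $\on{Rep}(G^\vee)$ is semisimple and the Hecke action $H$ is additive and monoidal, I would further reduce to showing that for each dominant coweight $\lambda$ the convolution $\calF\mapsto\calF\star\IC_\lambda$ with the Satake sheaf $\IC_\lambda$ --- the intersection complex with constant coefficients of a single $G(\calO)$-orbit closure $\overline{\Gr^{\le\lambda}}\subset\Gr$ --- sends a simple object of $D(X(\calK)/G(\calO))$ to a semisimple object.

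Next I would recall the geometry of this convolution. On finite type approximations it is computed by a diagram $X(\calK)\xleftarrow{\,q\,}\widetilde Z_\lambda\xrightarrow{\,m\,}X(\calK)$, where $\widetilde Z_\lambda$ is the twisted product built from $\overline{\Gr^{\le\lambda}}$ and the convolution morphism $m$ is \emph{proper} (projective on each finite piece), with $\calF\star\IC_\lambda\simeq m_{!}\big(\calF\,\widetilde\boxtimes\,\IC_\lambda\big)$, where $\calF\,\widetilde\boxtimes\,\IC_\lambda$ is the descent of the external product $\calF\boxtimes\IC_\lambda$ to $\widetilde Z_\lambda$, exactly as in the group case. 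I would then pass to a setting with weights: the symmetric variety $X$, the group $G$, the relevant orbits on $X(\calK)$ and on $\Gr$, and all the sheaves in sight are defined over a finitely generated subring of $\bbC$, so one may either spread out and replace $\bbC$-constructible complexes by mixed $\ell$-adic complexes over $\overline{\bbF}_q$ (descending the resulting semisimplicity back to $\bbC$ by the standard comparison) or work throughout with Saito's mixed Hodge modules over $\bbC$.

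Now comes the weight computation. A simple object of the relative Satake heart is, up to twist, an intersection complex $\IC(\overline\calO,\calL)$ attached to a $G(\calO)$-orbit $\calO\subset X(\calK)$ and an irreducible local system $\calL$ on it; the local systems that occur have \emph{finite monodromy} --- a structural feature of these categories, which under Theorem~\ref{main thm 1} is the statement that the relevant $G_\bbR(\calO_\bbR)$-equivariant local systems on the strata of $\Gr_\bbR$ are of finite order --- so $\calL$ is pointwise pure and $\IC(\overline\calO,\calL)$ is pure. Likewise $\IC_\lambda$ is pure. Hence $\calF\,\widetilde\boxtimes\,\IC_\lambda$ is pure, and since $m$ is proper, Deligne's theorem gives that $\calF\star\IC_\lambda=m_{!}\big(\calF\,\widetilde\boxtimes\,\IC_\lambda\big)$ is pure; by the decomposition theorem of Beilinson--Bernstein--Deligne--Gabber (respectively, Saito's decomposition theorem for pure Hodge modules) a pure complex is a direct sum of shifts of intersection complexes of irreducible local systems, i.e.\ a semisimple object. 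Running the reductions of the first paragraph in reverse then settles both categories.

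The step I expect to be the main obstacle is precisely this purity input on the symmetric side: one must know that the local systems attached to $G(\calO)$-orbits on $X(\calK)$ that enter simple objects of the relative Satake category have finite monodromy, so that these objects underlie pure mixed sheaves (or pure Hodge modules). Granting this structural fact --- which, if one prefers, can be imported from the real Satake category via Theorem~\ref{main thm 1}, where the orbit stratification of $\Gr_\bbR$ and its equivariant fundamental groups are explicit --- the remainder is the routine ``proper pushforward preserves purity, and pure implies semisimple'' mechanism, together with the formal reductions coming from the semisimplicity of $\on{Rep}(G^\vee)$ and the additivity and monoidality of the Hecke action.
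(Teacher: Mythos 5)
Your proposal is correct in outline and reaches the theorem, but it takes a somewhat different path from the paper and glosses over the step the paper actually labors on. The paper's proof of Theorem~\ref{semisimplicity} also transports to the symmetric side via Theorem~\ref{real-symmetric} and Theorem~\ref{conv comp}, and then invokes the decomposition theorem for a proper morphism---so the high-level strategy agrees. But where you spend your effort on a weight/purity argument (finite monodromy $\Rightarrow$ pure $\Rightarrow$ proper pushforward of pure is pure $\Rightarrow$ BBD/Saito $\Rightarrow$ semisimple), the paper applies the decomposition theorem in the form ``proper pushforward of a semi-simple complex is semi-simple'' directly, with no purity detour; conversely, where you wave at ``finite type approximations,'' the paper devotes essentially its whole proof to making that reduction precise: it starts from a placid presentation $X(\calK)\cong\on{colim}_i\on{lim}_j Y^i_j$, shows that the action map $a: Y^i\times^{G(\calO)}\overline{G(\calO)t^\lambda G(\calO)}\to Y^{i'}$ lands in a fixed closed piece, and then uses \cite[Theorem~8.10.5]{EGA IV} to descend $a$ to a \emph{proper} morphism $\bar a: Z_j\to Y^{i'}_j$ between honest finite-type (placid quotient) stacks fitting into a Cartesian square with pro-smooth vertical maps, so that $\mF\star\mM\simeq p^!\bar a_*(\mF')$. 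Your route buys a concrete explanation of why the decomposition theorem is available (the equivariant local systems on the $G_\bbR(\calO_\bbR)$-orbits---equivalently $G(\calO)$-orbits on $X(\calK)$---have finite monodromy, since the component groups of the stabilizers, viz.\ $\pi_0$ of real parabolics, are finite), at the cost of needing to formalize the mixed/Hodge-theoretic setup; the paper's route avoids that, but the tradeoff is the EGA IV descent to finite type, which is the genuine technical content here and which your proposal treats as routine. Neither is wrong, but if you want your argument to be self-contained you should fill in (i) the finite-type reduction (your ``finite type approximations'' is exactly where the placid machinery must be invoked) and (ii) the finite monodromy claim, which you correctly flag as the crux but only sketch.
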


In the case of relative Satake category the corollary above is a direct consequence of the 
decomposition theorem in complex algebraic geometry. However, it is not 
obvious in the case of real Satake category since 
decomposition theorem might not hold in the real analytic setting.

\subsubsection{$t$-exactness criterion of Hecke actions}
\begin{corollary}[Theorem \ref{t-exact}]
The Hecke action of $\on{Rep}(G^\vee)$ on the relative Satake category $D(X(\calK)/G(\calO))$
 (resp. the real Satake category $D(G_\bbR(\calO_\bbR)\backslash\Gr_\bbR)$) 
is $t$-exact with respect to the perverse $t$-structure if and only if 
$X$ is quasi-split (resp. $G_\bbR$ is quasi-split). 
\end{corollary}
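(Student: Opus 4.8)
The plan is to reduce the statement, via the real-symmetric equivalence of Theorem~\ref{main thm 1}, to a single combinatorial/geometric condition on orbit dimensions, and then to recognize that condition as the quasi-split hypothesis. Since Theorem~\ref{main thm 1} provides a $t$-exact equivalence $D(X(\calK)/G(\calO))\is D(G_\bbR(\calO_\bbR)\backslash\Gr_\bbR)$ intertwining the Hecke actions of $\on{Rep}(G^\vee)$, the two halves of the corollary are equivalent once one knows that $X$ is quasi-split if and only if $G_\bbR$ is quasi-split; this last equivalence is standard (both are equivalent to the complexified maximal split torus being a maximal torus, i.e.\ to the existence of a Borel defined over $\bbR$ / a closed $K$-orbit that is a single point under the $B$-action in the appropriate sense). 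So it suffices to work on, say, the relative side.

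First I would isolate the mechanism by which $t$-exactness can fail. The Hecke action is given by convolution with the spherical objects $\mathcal{A}_V$, $V\in\on{Rep}(G^\vee)$, and on the relative Satake category this convolution is a pushforward along the convolution diagram $\Gr\,\widetilde{\times}\,X(\calK)\to X(\calK)$ restricted over $\bGr^\mu$. By the geometric Satake/parity arguments, $t$-exactness of $V\ast(-)$ for all $V$ is equivalent to $t$-exactness of convolution with the minimal-orbit IC sheaves $\IC_\mu$ for $\mu$ minuscule (or more safely, for all dominant $\mu$), and by the standard support/cosupport estimate this amounts to the statement that the convolution morphism $\bGr^\mu\,\widetilde{\times}\,\overline{X(\calK)_\lambda}\to \overline{X(\calK)_{?}}$ is semismall, equivalently that for every pair of $G(\calO)$-orbits $X(\calK)_\lambda\subset X(\calK)_{\lambda'}$ one has the inequality
\[
\dim X(\calK)_{\lambda'}-\dim X(\calK)_{\lambda}\ \geq\ \langle \mu,\text{(coweight difference)}\rangle
\]
with equality governing the failure. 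Concretely, the clean way to package this: $t$-exactness holds iff the function $\lambda\mapsto \dim X(\calK)_\lambda$ (the "dimension/ codimension function" on the orbit poset) is \emph{linear} along root directions in the expected sense — i.e.\ the orbit closures behave like those in the group case $X=G$ — and this linearity is exactly what the quasi-split condition buys, since in the non-quasi-split case there are ``jumps'' (orbits of the wrong parity / dimension defect measured by the defect of the restricted root system) that force convolution to leave the heart.

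The second step is to do the combinatorics of the orbit poset. Here I would use the parametrization of $G(\calO)$-orbits on $X(\calK)$ (equivalently $K(\calK)$-orbits on $\Gr$) by a little Weyl group quotient of a lattice, together with the dimension formula for these orbits in terms of the restricted roots and the defect. The key computation: if $X$ is quasi-split, the relevant spherical functions on the orbit poset have the same shape as the affine Grassmannian case, so convolution with $\IC_\mu$ is semismall and hence $t$-exact (one may even invoke, on the real side, that $\Gr_\bbR$ then has the "right" stratification for a Mirković–Vilonen-type parity argument); conversely, if $X$ is not quasi-split, one exhibits a single pair $\mu$, $\lambda$ for which the fiber dimension of the convolution map exceeds the semismallness bound, producing a cohomology sheaf of $\IC_\mu\ast\IC_\lambda$ in the wrong perverse degree — the minimal rank-one non-quasi-split symmetric pair (e.g.\ $(\mathrm{SU}(2,1),\ldots)$ or $(\SL_2(\bbR)\times\SL_2(\bbR),\SL_2(\bbC))$-type examples, i.e.\ complex or quaternionic type restricted roots) already shows this, and the general case reduces to a rank-one restricted-root subsystem. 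On the real side, the same failure is visible directly from the non-semismallness of real Hecke correspondences over $\Gr_\bbR$ when $G_\bbR$ is not quasi-split.

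The main obstacle I anticipate is the dimension bookkeeping in the ``only if'' direction: one must show that the defect of the restricted root system (the gap between $X$ being quasi-split and not) genuinely produces a \emph{perverse}-degree violation rather than being absorbed — this requires a precise comparison between the relative orbit dimensions and the $\langle\rho,\mu\rangle$-type weights appearing in the support estimate, and then checking that IC extension does not miraculously cancel the excess. I would handle this by the rank-one reduction: restrict attention to an $\SL_2$ (or $\mathrm{SU}(2,1)$, $\mathrm{Sp}(1,1)$) associated to a single restricted root, where the orbit poset and the convolution map can be written down explicitly, verify the strict inequality there, and then argue that $t$-exactness is detected by such rank-one sub-situations (since semismallness of the global convolution map is equivalent to semismallness fiberwise along each minimal degeneration, each of which lives in a rank-one sub-symmetric-variety). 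The quasi-split case is the easier direction and should follow formally once the semismallness of the relevant Bott–Samelson/convolution resolutions is in hand, exactly as in the classical geometric Satake proof of $t$-exactness.
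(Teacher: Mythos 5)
Your proposal and the paper's proof both start by observing that, in view of the real--symmetric equivalence of Theorem~\ref{main thm 1} together with its Hecke compatibility (Theorem~\ref{conv comp}), it suffices to treat one of the two sides. After that the routes diverge substantially. You propose to work on the \emph{relative} side and prove the criterion by a direct semi-smallness analysis of the convolution maps over $X(\calK)$, together with a dimension bookkeeping on $G(\calO)$-orbits in terms of the restricted root system and a rank-one reduction. The paper instead works on the \emph{real} side and reduces everything to two prior results: the theorem of~\cite{N2} (Theorem~1.2.3 there) that the nearby cycles functor $\psi:D(G(\calO)\backslash\Gr)\to D(G_\bbR(\calO_\bbR)\backslash\Gr_\bbR)$ is $t$-exact if and only if $G_\bbR$ is quasi-split, together with Lemma~\ref{real convolution} saying the real convolution product is \emph{always} $t$-exact (because the real convolution morphism is stratified semi-small). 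Since the Hecke action on the real Satake category is $\mF\mapsto\mF\star\psi(\IC_V)$, the unconditional $t$-exactness of $\star$ pins the failure entirely on $\psi$, and applying the action to the unit $\delta_\bbR$ detects it; this gives both directions at once. The paper then transports to the relative side through the $t$-exact, Hecke-compatible equivalence.

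The choice of which side to attack matters here, and the paper says so explicitly in the remark following the corollary: the relative-side criterion is ``not obvious due to the complicated spherical geometry of the loop space $X(\calK)$,'' whereas the real-side criterion is already available. Your plan therefore takes on exactly the hard direction. Apart from that strategic point, there are concrete gaps in your sketch. (1) The assertion that $t$-exactness of the Hecke action is governed by a ``linearity'' of the function $\lambda\mapsto\dim X(\calK)^\lambda$ is not the right formulation; the actual obstruction in the real case, as analyzed in~\cite{N2}, is a defect/parity phenomenon comparing $\langle\lambda,\rho\rangle$ to $\langle\lambda,\rho_{\text{restricted}}\rangle$-type quantities and is tied to the classification of restricted roots (real/complex/compact-noncompact imaginary with multiplicities), not to a single linearity condition. (2) The proposed rank-one reduction --- that semi-smallness of the global convolution map is detected by rank-one restricted-root subvarieties --- is asserted but not justified; fibers of convolution maps over $X(\calK)$ do not obviously localize to rank-one sub-symmetric pairs, and you would need to prove a localization statement of this type before invoking the rank-one examples. (3) Even granting the reduction, you would still need the orbit-dimension formulas for $G(\calO)\backslash X(\calK)$, which in this generality are exactly what Proposition~\ref{parametrization} and the results of~\cite{N2} supply, i.e., you would effectively be rederiving the input that the paper simply quotes. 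In short: your outline is a legitimate alternative program, but as written it would require proving several nontrivial lemmas (rank-one localization, a precise orbit dimension/defect formula on $X(\calK)$) that the paper's route circumvents entirely by routing through the real side and citing~\cite{N2}.
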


In the case of real Satake category the corollary follows from the semi-smallness of 
the convolution morphisms for real affine Grassmannian and the $t$-exactness criterion of nearby cycles functor in 
\cite[Theorem 1.2.3]{N2}.
On the other hand, the corresponding $t$-exactness criterion for the 
relative Satake category is not obvious due the complicated spherical geometry of the loop 
space $X(\calK)$.

\quash{
\begin{remark}
We expect that 
the above
$t$-exactness criterion
is related to the 
semi-smallness theorem in
\cite[Theorem 1.3.1]{SW}, a key technical results
in their work on local unramified $L$-functions for 
affine spherical varieties.

\end{remark}
}
\subsubsection{Formality and commutativity of dg Ext algebras}
The next corollary confirms the  formality and commutativity 
conjecture for the real  and relative Satake category 
(see, e.g.,\cite[Conjecture 8.1.8]{BZSV}).
Recall the 
dg extension algebras 
$A_\bbR=\on{RHom}_{D(G_\bbR(\calO_\bbR)\backslash\Gr_\bbR)}(\delta_\bbR,\delta_\bbR\star\IC_{reg})$ 
and $A_X=\on{RHom}_{D(X(\calK)/G(\calO))}(\omega_{X(\calO)/G(\calO)},\omega_{X(\calO)/G(\calO)}\star\IC_{reg})$
for $D(G_\bbR(\calO_\bbR)\backslash\Gr_\bbR)$ and $D(X(\calK)/G(\calO))$
(see Section \ref{FC} for the precise definition).
Note that both $A_\bbR$ and $A_X$ carry natural $G^\vee$-actions 
induced from the one on $\IC_{reg}$.

\begin{corollary}[Theorem \ref{formality and comm}]\label{formality and comm intro}
(1) There is a $G^\vee$-equivariant isomorphism of dg algebras $A_\bbR\is A_X$.
(2) The dg algebras $A_\bbR\is A_X$ are formal, that is, they are quasi-isomorphic 
to the cohomology algebrass $H^\bullet(A_\bbR)\is H^\bullet(A_X)$ with trivial differentials.
(3) The cohomology algebras $H^\bullet(A_\bbR)\is H^\bullet(A_X)$ are commutative.
\end{corollary}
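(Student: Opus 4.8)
The plan is to deduce all three parts from Theorem~\ref{main thm 1}, which lets us replace the real-side algebra $A_\bbR$ by the relative-side algebra $A_X$, where the tools of complex algebraic geometry — in particular Deligne's theory of weights — are available. For (1), recall that the equivalence $\Phi\colon D(X(\calK)/G(\calO))\xrightarrow{\sim}D(G_\bbR(\calO_\bbR)\backslash\Gr_\bbR)$ of Theorem~\ref{main thm 1} is compatible with the Hecke actions of $\on{Rep}(G^\vee)$ and, by construction (Theorem~\ref{real-symmetric}), matches the distinguished objects $\omega_{X(\calO)/G(\calO)}$ and $\delta_\bbR$; hence it intertwines the convolutions $(-)\star\IC_{reg}$ together with the algebra structure and the $G^\vee$-action carried by $\IC_{reg}$. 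Applying $\Phi$ to morphism complexes gives an isomorphism of dg algebras
\[
A_X=\on{RHom}\bigl(\omega_{X(\calO)/G(\calO)},\,\omega_{X(\calO)/G(\calO)}\star\IC_{reg}\bigr)\;\xrightarrow{\ \sim\ }\;\on{RHom}\bigl(\delta_\bbR,\,\delta_\bbR\star\IC_{reg}\bigr)=A_\bbR,
\]
compatible with the $G^\vee$-actions. This proves (1) and reduces (2) and (3) to the corresponding statements for $A_X$.

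For (2), since $X(\calK)/G(\calO)$ lives in complex algebraic geometry, we lift the objects entering $A_X$ to the mixed (Hodge or $\ell$-adic) category, working on a finite-dimensional approximation of $X(\calK)/G(\calO)$ large enough to support $\omega_{X(\calO)/G(\calO)}\star\IC_V$ for each irreducible $V\in\on{Rep}(G^\vee)$ and realizing the ind-object $\IC_{reg}$ as the associated filtered colimit. The sheaf $\omega_{X(\calO)/G(\calO)}$ and all $\IC_V$ are pure, and the convolution maps are proper, so $\omega_{X(\calO)/G(\calO)}\star\IC_{reg}$ is pure; a weight computation (using the decomposition theorem and the self-duality of the situation, or equivalently the extra grading from the loop-rotation $\bG_m$-action on $X(\calK)$) then shows that $H^n(A_X)$ is pure of weight $n$ for every $n$. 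A dg algebra over $\bC$ whose cohomology is pure with weight equal to cohomological degree is formal, the weight filtration providing a canonical chain of quasi-isomorphisms to its cohomology, and the construction is manifestly $G^\vee$-equivariant since $G^\vee$ is reductive. Hence $A_X$, and therefore $A_\bbR$, is formal.

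For (3), through the fusion compatibility of Theorem~\ref{fusion compatibility} the object $\IC_{reg}$ is a commutative algebra up to coherent homotopy — the Beilinson--Drinfeld deformation bringing two points together supplies the coherence data — so the fusion product equips $H^\bullet(A_X)$ with a second multiplication, satisfying the interchange law with the convolution multiplication of (1) and sharing its unit. An Eckmann--Hilton argument then shows the two multiplications on $H^\bullet(A_X)$ coincide and are graded-commutative; since $H^\bullet(A_X)$ is concentrated in even degrees (being pure of weight $n$ in each degree $n$, the odd Ext groups vanish), graded-commutativity is ordinary commutativity. Transporting along the isomorphism of (1) gives that $H^\bullet(A_\bbR)\simeq H^\bullet(A_X)$ is commutative.

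\emph{Main obstacle.} The technical heart is the purity statement in (2): one must equip the infinite-type ind-stack $X(\calK)/G(\calO)$ with a workable mixed structure, keep control of the ind-object $\IC_{reg}$ under $\on{RHom}$, and establish that the resulting Ext algebra is pure with weight matching cohomological degree. There is no direct analogue of this on the real-analytic side $\Gr_\bbR$, where neither the decomposition theorem nor the theory of weights is available — which is precisely why the passage through Theorem~\ref{main thm 1} is indispensable.
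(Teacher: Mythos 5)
Parts (1) and (2) of your proposal match the paper's route: (1) is exactly the transport of $\on{RHom}$ spaces along the Hecke-equivariant real--symmetric equivalence, and (2) reduces formality to a weight/purity argument on the symmetric side, which is precisely what the paper does (citing the pointwise purity result of~\cite{CY} for $\IC$-complexes of $G(\calO)$-orbits in $X(\calK)$).

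For part (3), however, your approach diverges from the paper and has two genuine gaps. First, the inference ``$H^n(A_X)$ is pure of weight $n$, hence the odd Ext groups vanish'' is not valid: purity of weight $n$ on $H^n$ is a statement about the mixed structure on a fixed cohomology group and says nothing about whether $H^n$ vanishes for $n$ odd (the cohomology of a genus-$g\geq 1$ curve is pure of weight $1$ in degree $1$ and nonzero). Without a separate parity argument you cannot pass from graded-commutativity to commutativity this way. Second, the Eckmann--Hilton setup itself is unsubstantiated: the fusion product is an internal binary operation on $D(X(\calK)/G(\calO))$ while $(-)\star\IC_{reg}$ uses the external Hecke module structure over $D(G(\calO)\backslash\Gr)$; the interchange law between these two operations on $\on{RHom}(\omega_{X(\calO)/G(\calO)},\,\omega_{X(\calO)/G(\calO)}\star\IC_{reg})$ is exactly the kind of $E_2$-compatibility that the paper treats as an open conjecture (Conjecture~\ref{conj}(4)), not as an established fact. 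So the two ingredients needed for your Eckmann--Hilton argument --- the interchange law and the reduction from graded-commutativity to commutativity --- are both missing.

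The paper's actual argument for (3) is more direct and, perhaps surprisingly given your closing remark, works on the \emph{real} side. It uses that $\IC_{reg}$ is already a commutative ring object in the symmetric monoidal Satake category $D(G(\calO)\backslash\Gr)$ (with the sign-corrected commutativity constraint making it equivalent to $\on{Rep}(G^\vee)$), and that the nearby-cycles functor $\psi\colon D(G(\calO)\backslash\Gr)\to D(G_\bbR(\calO_\bbR)\backslash\Gr_\bbR)$ of Section~\ref{ss:kernels} is monoidal. Then $\psi(\IC_{reg})$ inherits a multiplication $m_\bbR$ with $m_\bbR\circ\sigma_\bbR\cong m_\bbR$, and since $\delta_\bbR\star\IC_{reg}\cong\psi(\IC_{reg})$, this forces $H^\bullet(A_\bbR)=\Ext^\bullet(\delta_\bbR,\psi(\IC_{reg}))$ to be honestly commutative --- no parity hypothesis and no Eckmann--Hilton argument is needed, because the sign corrections are already built into the Satake commutativity constraint $\sigma$. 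Commutativity of $H^\bullet(A_X)$ then follows by transport across (1).
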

 The formality of $A_X$ is proved in \cite[Theorem 27]{CY}
using a pointwise purity result for $\IC$-complexes of spherical orbits  in $X(\calK)$.\footnote{In \cite[Theorem 27]{CY}, we only treat the case of 
classical symmetric varieties. 
Thanks to the work of Drinfeld and Bouthier \cite{D,B}, we now know that 
$X(\calK)$ is $G(\calO)$-ind placid and the argument in \emph{loc. cit.}
can be generalized to the general case. The details will appear in the 
revised version of \cite{CY}.} Thus the formality  for $A_\bbR$ is a bit surprising and non-obvious since Hodge theory or the theory of weights
is not available on the real analytic setting.
The proof of the commutativity of $H^\bullet(A_\bbR)$ is similar to the case of complex groups studied in \cite{ABG,BFN}.

\begin{remark}
The formality of $A_\bbR$ in the case of   the
 real quaternionic group $\GL_n(\bbH)$
 was proved in \cite{CMNO} by a different method.
It relies on an explicit computation of a 
 morphism 
  $A\to A_\bbR$ from the 
 dg Ext algebra $A$ for the Satake category of the 
 complex group
 $\GL_{2n}$ to $A_\bbR$. The argument in \emph{loc. cit.} uses 
 some particular properties of the real quaternionic group 
which might not hold for other real groups (but it provides more information
about $A_\bbR$).
  \end{remark}
  
  \subsubsection{Hamiltonian duals of $X$ and $G_\bbR$}\label{duals}
  
\begin{definition}
Introduce the affine schemes $M^\vee_X=\on{Spec}(H^\bullet(A_X))$
and $M^\vee_\bbR=\on{Spec}(H^\bullet(A_\bbR))$. 
Inspired by the work of \cite{BZSV} and \cite{BFN}
on relative Langlands duality and Coulomb branches, 
we will call $M^\vee_X$ and $M^\vee_\bbR$ the Hamiltonian dual of 
$X$ and $G_\bbR$.
\end{definition}

Let $D_c(X(\calK)/G(\calO))\subset D(X(\calK)/G(\calO))$ and $D_c(G_\bbR(\calO)\backslash\Gr_\bbR)\subset
D(G_\bbR(\calO)\backslash\Gr_\bbR)$ be the (non co-complete) full subcategories
consisting of constructible complexes that are extensions by
zero off of substacks
and let
$D_c(X(\calK)/G(\calO))_0\subset D_c(X(\calK)/G(\calO))$ and $D_c(G_\bbR(\calO_\bbR)\backslash\Gr_\bbR)_0\subset
D_c(G_\bbR(\calO_\bbR)\backslash\Gr_\bbR)$ be the  full subcategories 
generated by the irreducible direct summands of 
$\omega_{X(\calO)/G(\calO)}\star\IC_{reg}$ and $\delta_\bbR\star\IC_{reg}$ respectively.
Denote by $\on{Coh}(M^\vee_X/G^\vee)$ and $\on{Coh}(M_\bbR^\vee/G^\vee)$ the dg derived categories 
of coherent complexes on the stack $M^\vee_X/G^\vee$ and $M_\bbR^\vee/G^\vee$.
The following corollary follows from Corollary \ref{formality and comm intro}:  
  
\begin{corollary}[Theorem \ref{real-sym satake}]\label{real-symmetric derived Satake}
(1) There is a $G^\vee$-equivariant isomorphism 
$M^\vee_X\is M^\vee_\bbR$ 
(2) There are equivalences of categories
\beq\label{derived Satake intro}
D_c(X(\calK)/G(\calO))_0\is \on{Coh}(M^\vee_X/G^\vee)\ \ \ \ D_c(G_\bbR(\calO_\bbR)\backslash\Gr_\bbR)_0\is \on{Coh}(M_\bbR^\vee/G^\vee).
\eeq
\end{corollary}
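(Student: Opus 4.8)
The plan is to deduce both statements from the formality and commutativity results of Corollary \ref{formality and comm intro} together with the real–symmetric equivalence of Theorem \ref{main thm 1}, reducing the problem to a formal (Koszul-duality type) argument that parallels the derived geometric Satake theorem of \cite{ABG,BFN}. For part (1): by Corollary \ref{formality and comm intro}(1) there is a $G^\vee$-equivariant isomorphism of dg algebras $A_\bbR \is A_X$, hence a $G^\vee$-equivariant isomorphism of graded algebras $H^\bullet(A_\bbR) \is H^\bullet(A_X)$; since $M^\vee_X = \Spec(H^\bullet(A_X))$ and $M^\vee_\bbR = \Spec(H^\bullet(A_\bbR))$ by definition, applying $\Spec$ gives the desired $G^\vee$-equivariant isomorphism $M^\vee_X \is M^\vee_\bbR$. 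This step is essentially immediate once Corollary \ref{formality and comm intro} is in hand.

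For part (2), I would first treat one side, say the relative Satake category, and then transport the result. The key point is that $D_c(X(\calK)/G(\calO))_0$ is, by definition, the full subcategory generated under the triangulated operations by the irreducible summands of $\omega_{X(\calO)/G(\calO)}\star\IC_{reg}$; since $G^\vee$ acts on $\IC_{reg}$ and hence on $M = \omega_{X(\calO)/G(\calO)}\star\IC_{reg}$, this generator carries a natural $G^\vee$-equivariant structure. The endomorphism dg algebra of $M$ (as an object of the $G^\vee$-equivariant enhancement) is precisely $A_X$, which by Corollary \ref{formality and comm intro}(2),(3) is formal with commutative cohomology $H^\bullet(A_X) = \calO(M^\vee_X)$. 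I would then invoke the standard machinery — $M$ is a compact generator of the category it generates, so that category is equivalent to the derived category of dg modules over $\on{REnd}(M)$ — in its $G^\vee$-equivariant form, to obtain
\[
D_c(X(\calK)/G(\calO))_0 \is A_X\text{-}\mathrm{mod}^{G^\vee} \is \calO(M^\vee_X)\text{-}\mathrm{mod}^{G^\vee} \is \on{Coh}(M^\vee_X/G^\vee),
\]
where the middle equivalence uses formality and the last uses commutativity to identify $G^\vee$-equivariant perfect (or coherent) $\calO(M^\vee_X)$-modules with coherent sheaves on the quotient stack $M^\vee_X/G^\vee$. The real-side equivalence $D_c(G_\bbR(\calO_\bbR)\backslash\Gr_\bbR)_0 \is \on{Coh}(M^\vee_\bbR/G^\vee)$ follows by the identical argument with $\delta_\bbR\star\IC_{reg}$ in place of $\omega_{X(\calO)/G(\calO)}\star\IC_{reg}$, and one checks that the real–symmetric equivalence of Theorem \ref{main thm 1} intertwines the two generators (it is Hecke-equivariant and sends $\omega_{X(\calO)/G(\calO)}$ to $\delta_\bbR$ by the $t$-exactness normalization), giving a compatible square with the isomorphism $M^\vee_X \is M^\vee_\bbR$ of part (1).

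I expect the main obstacle to be the bookkeeping of the $G^\vee$-equivariant structures and the precise formulation of the generation-by-a-compact-object statement in the constructible (and, on the real side, real-analytic) setting: one must be careful that $D_c(\cdot)_0$ is the correct category to which the compact-generator argument applies — i.e., that passing to $G^\vee$-equivariant objects and then to modules over the self-Ext algebra is an equivalence onto $D_c(\cdot)_0$ rather than some larger or completed category — and that the subscript-$0$ subcategories on the two sides match under Theorem \ref{main thm 1}. The formality input removes the usual $A_\infty$-obstructions, so once the equivariant enhancement is set up correctly the remaining arguments are formal; this is the same pattern as in \cite{ABG,BFN}, and I would import their framework, citing Corollary \ref{formality and comm intro} at the two crucial points where formality and commutativity are used.
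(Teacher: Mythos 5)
Your proposal is correct and follows essentially the same route as the paper: part (1) is obtained by applying $\Spec$ to the $G^\vee$-equivariant isomorphism $H^\bullet(A_X)\is H^\bullet(A_\bbR)$ from Theorem \ref{formality and comm}, and part (2) is the standard compact-generator/monadicity argument (what the paper phrases via the Barr--Beck--Lurie theorem, citing \cite[Theorem 5.5]{CMNO} for the equivariant bookkeeping you correctly flag as the main thing to check), with formality and commutativity used exactly where you place them.
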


\begin{remark}
(1) It would be nice if one can find a description of 
$M^\vee_X$ or  $M^\vee_\bbR$ in terms of the combinatoric structures of 
$X$ or $G_\bbR$.
In the recent work \cite{BZSV}, the authors proposed such a description 
for a certain class of symmetric varieties (in fact, in \emph{loc. cit.} they consider a more general setting of 
spherical varieties)
(2) Due to the existence of 
non-trivial equivariant local systems on $G(\calO)$-orbits in $X(\calK)$ (resp. $G_\bbR(\calO_\bbR)$-orbits in $\Gr_\bbR$), the 
 relative Satake category $D(X(\calK)/G(\calO))$
 (resp. real Satake category $D(G_\bbR(\calO_\bbR)\backslash\Gr_\bbR)$)
in general 
might be bigger than $D_c(X(\calK)/G(\calO))_0$ (resp. $D_c(G_\bbR(\calO_\bbR)\backslash\Gr_\bbR)_0$) and 
it is an interesting question to extend the spectral description in~\eqref{derived Satake intro} to 
to the entire dg derived category.
In view of Theorem \ref{main thm 2}, such a spectral description
would imply a version of geometric Langlands on the real projective line $\bP^1(\bbR)$, and vice versa.
\end{remark}

\subsubsection{Identification of dual groups}\label{Identification of dual groups, intro}
The paper  \cite{N1} associates to each real form $G_\bbR\subset G$ 
a connected complex reductive subgroup $H^\vee_{real} \subset G^\vee$ of the  dual group.\footnote{While the notation suggests
regarding $H_{real}^\vee$
 itself as a  dual group, at the moment we do not know of a concrete role for its dual group.} 
The construction of $H_{real}^\vee$ is via Tannakian formalism:
the 
tensor category of finite-dimensional representations $\on{Rep}(H_{real}^\vee)$
can be realized as a certain full subcategory $Q_\bbR\subset\on{Perv}(G_\bbR(\calO_\bbR)\backslash\Gr_\bbR)$
of perverse sheaves 
on $\Gr_\bbR$. 
In \cite[Section 10]{N2}, a concrete description 
of $H_{real}^\vee$ is given including the root datum and the Weyl group.
On the other hand, the papers~\cite{GN1, GN2} associate to every spherical subgroup $K\subset G$ a reductive
subgroup $H_{sph}^\vee \subset G^\vee$ of the dual group. Again,
the construction of $H_{sph}^\vee$ is via Tannakian formalism: its tensor category of finite-dimensional representations 
$\textup{Rep}(H_{sph}^\vee)$  can be realized as a certain full subcategory $Q_K^{glob}$  of generic-Hecke equivariant perverse sheaves on the (global) moduli stack of 
quasi-maps with target $X=K\backslash G$. 
However, unlike the real group case, a concrete description of $H_{sph}^\vee$ is not known. For example, 
the fact that the root systems or the Weyl group of $H^\vee_{sph}$ is  the same as that associated to 
$X$ in the structure theory of spherical varieties \cite{B,KS} remains conjectural.

Consider the case when $K\subset G$ is the symmetric subgroup of a real form $G_\bbR\subset G$.
Our last example
provides a tensor equivalence between $Q_\bbR\is Q_{K}^{glob}$ 
and hence an isomorphism $H_{real}^\vee\is H_{sph}^\vee$
of reductive subgroups of $G^\vee$.
In particular,
we obtain a 
concrete description of 
$H_{sph}^\vee$ 
answering a basic open question in relative Langlands duality:

\begin{corollary}\label{Q_X=Q_R, intro}
There are horizontal tensor equivalences in the following commutative diagram
of tensor functors
\[\xymatrix{&\on{Rep}(G^\vee)\ar[dl]\ar[dr]&\\
Q_\bbR\ar[rr]^\simeq&&Q_K^{glob}}\]
where the vertical arrows are given by the Hecke action of 
$\on{Rep}(G^\vee)$ on the monoidal units.
\end{corollary}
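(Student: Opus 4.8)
\textbf{Proof plan for Corollary \ref{Q_X=Q_R, intro}.}
The plan is to deduce the corollary from the chain of equivalences already established in Theorem \ref{main thm 1} and Theorem \ref{main thm 2}, by tracking the monoidal units and the subcategories cut out by the Tannakian constructions. First I would recall that $Q_\bbR$ is, by definition \cite{N1, N2}, the full subcategory of $\on{Perv}(G_\bbR(\calO_\bbR)\backslash\Gr_\bbR)$ generated under subquotients by the perverse constituents of $\delta_\bbR\star\IC(V)$ for $V\in\on{Rep}(G^\vee)$, equipped with the convolution/fusion product; likewise $Q_K^{glob}$ is the full subcategory of generic-Hecke-equivariant perverse sheaves on the global quasi-map stack generated by the constituents of the Hecke action on the unit, as in \cite{GN1, GN2}. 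The key point is that under the real-symmetric equivalence $D(X(\calK)/G(\calO))\is D(G_\bbR(\calO_\bbR)\backslash\Gr_\bbR)$ of Theorem \ref{main thm 1}, which is $t$-exact and intertwines the two Hecke actions of $\on{Rep}(G^\vee)$ and carries the monoidal unit $\omega_{X(\calO)/G(\calO)}$ to $\delta_\bbR$, the subcategory $Q_\bbR$ is identified with the analogously-defined \emph{local} subcategory $Q_X^{loc}\subset\on{Perv}(X(\calK)/G(\calO))$ generated by the perverse constituents of $\omega_{X(\calO)/G(\calO)}\star\IC(V)$.

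Second, I would compare the local relative Satake picture with the global quasi-map picture. This is where Theorem \ref{main thm 2} enters: the nearby-cycles equivalence along the quasi-map family relates $D(X(\calK)/G(\calO))$ to $D(\Bun_{G_\bbR}(\bP^1(\bbR)))$, and more to the point, the construction of $Q_K^{glob}$ in \cite{GN2} is designed precisely so that the local-global comparison for the quasi-map stack (restriction to a formal disk, or equivalently the nearby cycles degeneration) identifies $Q_K^{glob}$ with the local category generated by the $\IC$-sheaves of $G(\calO)$-orbit closures appearing in the Hecke action on the unit. Since the horizontal equivalences in Theorem \ref{main thm 2} are nearby-cycles functors compatible with the Hecke actions and send monoidal units to monoidal units, they restrict to an equivalence $Q_X^{loc}\is Q_K^{glob}$ of the Tannakian subcategories. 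Chaining this with the identification $Q_\bbR\is Q_X^{loc}$ from the previous paragraph yields the horizontal equivalence $Q_\bbR\is Q_K^{glob}$; and because every functor in sight is by construction compatible with the Hecke/convolution action of $\on{Rep}(G^\vee)$, the resulting equivalence is a tensor equivalence and the triangle of tensor functors with apex $\on{Rep}(G^\vee)$ commutes. Applying Tannakian reconstruction then gives $H_{real}^\vee\is H_{sph}^\vee$ as subgroups of $G^\vee$.

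There is one subtlety I would need to address carefully: the categories $Q_\bbR$ and $Q_K^{glob}$ are \emph{abelian} tensor categories (full subcategories of perverse sheaves), whereas Theorem \ref{main thm 1} and Theorem \ref{main thm 2} are stated as equivalences of dg (or triangulated) categories, with $t$-exactness and fusion-compatibility as extra structure. To pass to the Tannakian subcategories I would first use $t$-exactness to descend to an equivalence of the hearts $\on{Perv}(X(\calK)/G(\calO))\is\on{Perv}(G_\bbR(\calO_\bbR)\backslash\Gr_\bbR)$, then use the fusion-product compatibility (as in the Mirković--Vilonen and \cite{GN1,GN2} formalism) to promote it to a symmetric monoidal functor of abelian categories, and finally observe that a monoidal equivalence sends the monoidal-subcategory generated by the image of $\on{Rep}(G^\vee)$ on the unit to the corresponding one on the other side. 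The analogous care is needed for the nearby-cycles functors of Theorem \ref{main thm 2}, where one must check that the commutativity/associativity constraints of the fusion products match — but this is exactly the content already packaged into those theorems.

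\textbf{The main obstacle} I expect is not the formal bookkeeping but verifying that the \emph{definition} of $Q_K^{glob}$ via generic-Hecke-equivariant sheaves on the global quasi-map stack matches the local subcategory that appears naturally after nearby cycles: i.e., that the generic-Hecke-equivariance condition of \cite{GN2} corresponds, under the quasi-map degeneration, precisely to the $G(\calO)$-equivariant local condition and that no perverse constituents are lost or gained in the passage. This requires unwinding the stratification of the quasi-map stack near the degenerate locus and comparing with the $G(\calO)$-orbit stratification of $X(\calK)$ — which is where the input from Section \ref{uniform} on uniformization of quasi-map stacks does the real work.
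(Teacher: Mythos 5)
Your proposal correctly identifies the two-step decomposition the paper actually uses: first $Q_\bbR\is Q_X$ via the real-symmetric equivalence (Proposition~\ref{Q_X=Q_R}), then a local-global comparison $Q_K\is Q_K^{glob}$ (Theorem~\ref{local=global}, combined with Proposition~\ref{H_sph}). However, you misidentify the \emph{mechanism} of the local-global comparison. You propose to use the nearby-cycles equivalences of Theorem~\ref{main thm 2}, or ``equivalently'' restriction to a formal disk, treating these as interchangeable — they are not. The nearby-cycles functors in Theorem~\ref{main thm 2} relate $D(X(\calK)/G(\calO))$ to the \emph{real} side ($D(G_\bbR(\calO_\bbR)\backslash\Gr_\bbR)$ and $D(\Bun_{G_\bbR}(\bP^1(\bbR)))$), not to the global quasi-map stack $Z=\QM^{(1)}(\bP^1,0,X)$. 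The paper's local-global comparison is instead a \emph{direct} pullback: the uniformization isomorphism $Z\is K(\bC[t^{-1}])\backslash\Gr$ produces a map $r:Z\to K(\calK)\backslash\Gr$, and the functor is $r^![\dim K]$. No degeneration or nearby cycles is involved at this step.

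The substance you would be missing if you followed your plan as stated is the proof that $r^![\dim K]$ is $t$-exact. The map $r$ is not smooth in any finite-dimensional sense; the paper shows it is \emph{strongly pro-smooth} (Proposition~\ref{key base change}) and then invokes the BKV theory of $!$-adapted $t$-structures on placid stacks to get $t$-exactness, plus base-change isomorphisms (Lemma~\ref{key lemma}) to match $\IC$-complexes and fusion products. None of this is visible in your plan, and a nearby-cycles-based substitute would face its own (different, and unaddressed) perversity bookkeeping. Separately, you assert that compatibility with the Hecke action automatically yields a tensor equivalence; in the paper this requires an extra Tannakian argument (uniqueness of fiber functors and a Hopf-algebra comparison, using \cite{DM} and \cite[Lemma 9.2.1]{N2}) to show that compatibility with the fusion product, not merely with the Hecke action, upgrades the equivalence to one of symmetric monoidal abelian categories. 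Finally, a small slip: $Q_K^{glob}$ is defined in \cite{GN1}, not \cite{GN2}.
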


Corollary \ref{Q_X=Q_R, intro} is the combination of 
Proposition \ref{Q_X=Q_R},
Proposition \ref{H_sph}, and Theorem \ref{local=global}
 in the text; we refer  to Section \ref{identification} for a
more detailed explanation of the statement.
The main ingredient in the proof is a 
local-global comparison theorem for relative Satake category 
in Theorem \ref{local=global}.

\begin{corollary}\cite[Conjecture 7.3.2]{GN1}\label{Weyl groups, intro}
There is an isomorphism of reductive groups $H^\vee_{sph}\is H^\vee_{real}$. 
In particular, the  
root datum and
Weyl group of $H^\vee_{sph}$ 
are the
same as that associated to $X$ in the theory of symmetric varieties.
\end{corollary}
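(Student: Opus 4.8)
The plan is to deduce this from the local and global identifications already set up in the excerpt. First, by Proposition \ref{Q_X=Q_R} there is a tensor equivalence $Q_\bbR \is Q_K^{glob}$ compatible with the Hecke actions of $\on{Rep}(G^\vee)$, as displayed in Corollary \ref{Q_X=Q_R, intro}. By Tannakian reconstruction, the tensor category $Q_\bbR$ (resp. $Q_K^{glob}$) with its fiber functor recovers $H^\vee_{real}$ (resp. $H^\vee_{sph}$) as an affine group scheme, and the compatibility with the Hecke action of $\on{Rep}(G^\vee)$ — i.e. commutativity of the triangle in Corollary \ref{Q_X=Q_R, intro} with the vertical arrows the monoidal-unit Hecke actions — translates into the statement that the induced closed immersions $H^\vee_{real} \hookrightarrow G^\vee$ and $H^\vee_{sph} \hookrightarrow G^\vee$ have the same image, compatibly with the isomorphism $H^\vee_{real} \is H^\vee_{sph}$. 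Reductivity of both groups is already known: $H^\vee_{real}$ is connected reductive by \cite{N1} (and \cite[Section 10]{N2}), and transporting this along the equivalence gives the same for $H^\vee_{sph}$, which settles the first assertion.

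For the second assertion I would invoke the concrete description of $H^\vee_{real}$ from \cite[Section 10]{N2}, which computes its root datum and Weyl group explicitly. Under the isomorphism $H^\vee_{sph}\is H^\vee_{real}$ just established, these combinatorial invariants transfer verbatim to $H^\vee_{sph}$. It then remains to match the output of \cite[Section 10]{N2} with the root datum and little Weyl group attached to $X$ in the structure theory of spherical/symmetric varieties \cite{B,KS}. For a symmetric variety $X = K\backslash G$ this matching is classical: the restricted root system of the symmetric pair, together with its Weyl group acting on the relevant quotient of the cocharacter lattice, is exactly what \cite[Section 10]{N2} produces on the real side via the combinatorics of $K_c$-conjugacy and the real affine Grassmannian. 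So the chain $H^\vee_{sph}$-data $=$ $H^\vee_{real}$-data $=$ symmetric-variety-data completes the proof.

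The step I expect to be the main obstacle is not the Tannakian formalism — that is essentially bookkeeping once the tensor equivalence of Corollary \ref{Q_X=Q_R, intro} is in hand — but rather ensuring that the equivalence $Q_\bbR \is Q_K^{glob}$ is genuinely \emph{compatible} with the $\on{Rep}(G^\vee)$-module structures on the nose, so that it intertwines the two embeddings into $G^\vee$ and not merely the abstract group schemes. This is where the local-global comparison Theorem \ref{local=global} does the work: one passes from the global quasi-maps picture defining $Q_K^{glob}$ to a local statement on $X(\calK)/G(\calO)$, then applies the real-symmetric equivalence of Theorem \ref{main thm 1} (which is a tensor/Hecke-equivariant equivalence) to land on the real side and identify $Q_K^{glob}$ with $Q_\bbR$ as Hecke-module tensor categories. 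Once that compatibility is pinned down, identifying the images in $G^\vee$ and hence the root data is formal.
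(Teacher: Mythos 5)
Your proposal follows essentially the same route as the paper: chain the tensor/Hecke-compatible equivalences $Q_K^{glob}\is Q_K\is Q_\bbR$ coming from Theorem \ref{local=global} and Proposition \ref{Q_X=Q_R}, then read off the subgroup identifications via Tannakian reconstruction (using Proposition \ref{H_sph} for $H^\vee_{sph}$), and finally pull the root datum and Weyl group of $H^\vee_{real}$ from the explicit description in \cite[Section 10]{N2}. One small imprecision: the equivalence $Q_\bbR\is Q_K^{glob}$ is not a consequence of Proposition \ref{Q_X=Q_R} alone, but of that proposition combined with Proposition \ref{H_sph} and Theorem \ref{local=global} — your final paragraph gets this right, but the opening sentence misattributes it.
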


Corollary \ref{Weyl groups, intro} is restated in 
Theorem \ref{Weyl groups}. In \emph{loc. cit.} we also obtain a description 
of irreducible objects in $Q_K^{glob}$ confirming a conjecture in \cite{GN1}.

\subsubsection{Braidings}
We conclude the introduction with the following conjecture.
Note that Theorem \ref{main thm 1} implies that 
\begin{corollary}
There is an
equivalence
\beq\label{abelian equivalence}
\xymatrix{
\on{Perv}(X(\calK)/G(\calO))\ar[r]^\simeq&\on{Perv}(G_\bbR(\calO_\bbR)\backslash\Gr_\bbR)}
\eeq
of abelian categories.
\end{corollary}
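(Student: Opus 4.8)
The plan is to deduce this Corollary directly from Theorem \ref{main thm 1}, which is already available in the excerpt. The key observation is that an equivalence of triangulated (dg) categories which is $t$-exact for prescribed $t$-structures automatically restricts to an equivalence of the hearts. So the proof is essentially a two-line formal argument once the statement of Theorem \ref{main thm 1} is in hand.

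\begin{proof}
By Theorem \ref{main thm 1} there is an equivalence of dg categories
\[
\Phi\colon D(X(\calK)/G(\calO))\;\xrightarrow{\ \simeq\ }\;D(G_\bbR(\calO_\bbR)\backslash\Gr_\bbR),
\]
and $\Phi$ is $t$-exact with respect to the natural perverse $t$-structures on the two sides. In particular $\Phi$ and its (quasi-)inverse $\Phi^{-1}$ both carry connective objects to connective objects and coconnective objects to coconnective objects; hence $\Phi$ carries the heart
\[
\on{Perv}(X(\calK)/G(\calO))=D(X(\calK)/G(\calO))^{\heartsuit}
\]
into the heart
\[
\on{Perv}(G_\bbR(\calO_\bbR)\backslash\Gr_\bbR)=D(G_\bbR(\calO_\bbR)\backslash\Gr_\bbR)^{\heartsuit},
\]
and likewise $\Phi^{-1}$ carries the latter heart into the former. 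Since $\Phi\circ\Phi^{-1}\simeq\id$ and $\Phi^{-1}\circ\Phi\simeq\id$, the induced functor on hearts is an equivalence of abelian categories, giving \eqref{abelian equivalence}.
\end{proof}

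There is essentially no obstacle here: the entire content has been packaged into Theorem \ref{main thm 1}, and the only thing used is the standard fact that a $t$-exact equivalence of stable (or triangulated) categories induces an equivalence of abelian hearts. The one point worth a word of care is that ``$t$-exact'' must be read as exactness in both directions — equivalently that $\Phi^{-1}$ is also $t$-exact, which is automatic for an equivalence once $\Phi$ itself is both left and right $t$-exact, as asserted. I would simply cite Theorem \ref{main thm 1} for that assertion and leave the rest as the formal argument above.
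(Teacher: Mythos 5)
Your argument is exactly what the paper intends: the Corollary is stated immediately after the remark ``Note that Theorem \ref{main thm 1} implies that,'' with no separate proof given, and the content is precisely the standard fact that a $t$-exact equivalence restricts to an equivalence of hearts. Your proposal is correct and matches the paper's approach.
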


We expect to prove the following.
\begin{conjecture}\label{conj}
(1)
There exists a natural geometric lift of the  fusion product $\star_f$ to make 
$\on{Perv}(X(\calK)/G(\calO))$ (resp. $\on{Perv}(G_\bbR(\calO_\bbR)\backslash\Gr_\bbR)$)
a braided monoidal category.
Moreover, there is an upgrade of~\eqref{abelian equivalence} to an 
equivalence
of  braided monoidal categories.

(2) The real weight functors in \cite{N2}
defines a fiber functor 
$\omega:\on{Perv}(G_\bbR(\calO_\bbR)\backslash\Gr_\bbR)\to\on{SVect}$
where $\on{SVect}$ is the tensor category of finite dimensional super vector spaces.
\end{conjecture}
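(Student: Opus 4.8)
The plan is to build the braided monoidal structure on the relative side via a fusion (Beilinson--Drinfeld) Grassmannian for $X$, and then transport it across the real--symmetric equivalence of Theorem \ref{main thm 1}. Concretely, over $\bbA^n$ (or over the Ran space of $\bbA^1$) one forms the space of loops into $X$ based at $n$ moving points of $\bbA^1$, with its $G(\calO)$-action and the factorization isomorphisms identifying the fiber over a diagonal stratum with a product of fibers over smaller configurations. Specialization from the pairwise-distinct locus to the main diagonal (nearby cycles along the quasi-maps family, as in the horizontal functors of Theorem \ref{main thm 2}) upgrades the object-level product $\star_f$ to an honest monoidal bifunctor on $\on{Perv}(X(\calK)/G(\calO))$; associativity and unitality come, as in \cite{BD,MV}, from the factorization structure, and the braiding comes from the $\pi_1$-monodromy (equivalently the $S_n$-action) on the configuration space acting on nearby cycles. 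The first technical input is that the relevant semi-infinite and $G(\calO)$-orbit intersections inside this family have the expected dimensions; this is where $G(\calO)$-ind-placidity of $X(\calK)$ (Drinfeld, Bouthier, as cited above) is used, together with the relative Mirković--Vilonen-type estimates and the purity results of \cite{CY}.

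To upgrade the equivalence itself, I would exploit that the construction of Theorem \ref{main thm 1} already produces, through the multi-point Quillen homeomorphism of Theorem \ref{Quillen}, a homeomorphism of the \emph{whole} factorization families over $\bbA^n$, not merely a fiberwise one. Hence the equivalence intertwines the two fusion bifunctors compatibly with factorization, and this formally promotes it — once the real side carries the transported structure — to a monoidal, and then braided monoidal, equivalence. The subtle claim inside part (1) is that the braiding is in general genuinely non-symmetric, with the obstruction to symmetry a $\bbZ/2$-monodromy. I would identify this monodromy with the parity of the relevant real (or relative) coweight: in the complex Mirković--Vilonen picture all relevant orbit intersections are even-dimensional, so the half-turn braiding is sign-free, whereas on the real/symmetric side odd-dimensional intersections occur and contribute a sign $-1$ on the corresponding summand, which is the geometric origin of the super-structures of \cite{BZSV}.

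For part (2) the plan is to run the Mirković--Vilonen argument on $\Gr_\bbR$. The real weight functors of \cite{N2} take compactly-supported cohomology along the pieces of the real Iwasawa stratification of $\Gr_\bbR$, indexed by a lattice of real coweights; set $\omega=\bigoplus_\nu\omega_\nu$. One then shows: (i) each $\omega_\nu$ is concentrated in a single cohomological degree, up to a fixed shift depending on $\nu$, forcing $\omega$ to be exact on perverse sheaves; (ii) $\omega$ is faithful and conservative, since these functors jointly detect supports via the real Iwasawa decomposition; (iii) $\omega$ is monoidal for $\star_f$, via factorization of weight functors over the fusion Grassmannian, giving a canonical $\omega(A\star_f B)\is\omega(A)\otimes\omega(B)$ exactly as in \cite{MV}; and (iv) the natural target is $\on{SVect}$, with super-grading $\nu\mapsto\langle\nu,\rho^\vee\rangle\bmod 2$ (equivalently the parity of the shift in (i)), and the braiding of part (1) is the Koszul sign on $\omega(A)\otimes\omega(B)$. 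Since Hodge theory and purity are unavailable on the real analytic side, steps (i) and (iii) would be obtained by importing the corresponding statements from $X(\calK)$ through Theorem \ref{main thm 1} and comparing the real weight functors with the relative ones under that equivalence (the relative weight functors being the analytically tractable ones).

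The hardest step is (iv): proving that the monodromy braiding really is the Koszul braiding under $\omega$, i.e.\ computing the $\bbZ/2$-monodromy in closed form in terms of coweights. This requires analyzing the $S_n$-action on nearby cycles of the relative/real fusion Grassmannian along odd-dimensional strata — the analogue of the sign computation in the commutativity constraint of geometric Satake, but now producing a genuine super-sign rather than $+1$. A secondary obstacle is the coherence bookkeeping needed to turn ``compatible with the fusion products'' in Theorem \ref{main thm 1} into a full braided-monoidal statement, not just a statement on objects and bifunctors; here I would argue that functoriality of the nearby-cycles-over-configuration-space construction, applied to the homeomorphism of factorization spaces from Theorem \ref{Quillen}, supplies the required higher compatibilities automatically.
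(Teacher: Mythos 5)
This statement is labeled a \emph{conjecture} in the paper: the authors explicitly write ``We expect to prove the following'' and supply no proof, only a remark sketching possible sources for the braiding (monodromy of nearby cycles on the symmetric side, fusion--convolution compatibility on the real side) and explicitly flagging that upgrading the equivalence~\eqref{abelian equivalence} to intertwine the two braidings is ``less clear.'' So there is no proof in the paper to compare yours against, and your proposal should be judged on its own terms as an attempted proof of an open statement. As such it is a reasonable research program, broadly consistent with the authors' own remark, but it is not a proof: you yourself defer the two decisive steps (the identification of the monodromy braiding with the Koszul braiding under $\omega$, and the coherence data), and these are precisely the points the authors identify as unresolved.

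Two concrete gaps. First, the claim that the multi-point Quillen homeomorphism of Theorem \ref{Quillen} ``formally promotes'' the equivalence to a braided monoidal one does not work as stated: that theorem compares the families only over \emph{real} loci ($\bbR^m\times\{i\}$ versus $\bbR^m\times\{0\}$), and the real fusion product of the paper is defined by nearby cycles over the contractible ray $\bbR_{\geq 0}$, where there is no monodromy at all. The braiding you propose on the symmetric side lives in the $\pi_1$ of the \emph{complex} configuration space, which the real--symmetric comparison never sees; so the braiding on the real side must be manufactured from a different mechanism (an Eckmann--Hilton argument using Theorem \ref{conv comp} and Proposition \ref{real convolution}), and matching that to the symmetric-side monodromy braiding is exactly the open problem, not a formal consequence. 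Second, for part (2) you propose to import exactness and monoidality of the real weight functors from $X(\calK)$ through Theorem \ref{main thm 1}, but the weight functors are taken along Iwasawa-type strata, not the $G(\calO)$- or $G_\bbR(\calO_\bbR)$-orbit strata that the equivalence is built from; the dimension and parity theory of the corresponding semi-infinite strata in $X(\calK)$ (the source of the super-grading) is itself part of what must be established, and the monoidality of $\omega$ presupposes the braided structure of part (1). Your outline correctly locates where the difficulty sits, but it does not close it.
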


\begin{remark}
(1) For (1) of the conjecture,
it is possible to produce the braided monoidal structure on $\on{Perv}(X(\calK)/G(\calO))$ (resp. $\on{Perv}(G_\bbR(\calO_\bbR)\backslash\Gr_\bbR)$) via  monodromy equivalences of nearby cycles (resp.   compatibilities between fusion and convolution products). What is less clear is how to upgrade~\eqref{abelian equivalence} to intertwine
these  braided monoidal structures.

(2) Unlike their  full subcategories 
$Q_\bbR$ and  $Q_K$ 
which are abelian semi-simple
and symmetric monoidal, the abelian categories $\on{Perv}(G_\bbR(\calO_\bbR)\backslash\Gr_\bbR)$ and $\on{Perv}(X(\calK)/G(\calO))$
in general are not semi-simple and we expect that the braiding is also not symmetric
in general.

(3) Via the Tannakian reconstruction for braided monoidal categories 
the conjecture would imply that there are equivalences $\on{Perv}(X(\calK)/G(\calO))\is\on{Perv}(G_\bbR(\calO_\bbR)\backslash\Gr_\bbR)\is\on{Rep}( H^\vee_q)$
where $\on{Rep}( H^\vee_q)$ is
the category of representations of a quantum supergroup (at a root of unity).
This suggests that the  dual groups of symmetric varieties
should be \emph{quantum supergroups}.

(4) We expect a version of Conjecture \ref{conj} for the whole 
real and relative Satake categories:   
both categories $D(X(\calK)/G(\calO))$ and $D(G_\bbR(\calO_\bbR)\backslash\Gr_\bbR)$
admit natural $E_2$-structures
(i.e., there are locally constant factorization categories  
on $\bbR^2$) and there is an upgrade of the real-symmetric equivalence 
in Theorem \ref{main thm 1}
to an equivalence of $E_2$-categories.
\end{remark}
 \quash{
\subsection{Relative Langlands duality and geometric Langlands for real groups}

\subsection{Matsuki correspondence for sheaves}\label{flag manifold}
We begin by recalling the Matsuki correspondence for sheaves~\cite{MUV}. 
It intertwines
the Beilinson-Bernstein localization~\cite{BB} of Harish Chandra $(\frakg, K)$-modules with the Kashiwara-Schmid localization~\cite{KS} of (infinitesimal classes of) admissible representations of $G_\bbR$.

%

Let $\calB \simeq G/B$ be the flag manifold of Borel subgroups of $G$.  The groups
$K$ and $G_\bbR$ act on $\calB$ with finitely many orbits and 
the classical Matsuki correspondence~\cite{M}  provides
 an anti-isomorphism of orbit posets  
\beq
|K\backslash\calB|\longleftrightarrow|G_\bbR\backslash\calB|
\eeq
between the sets  of  $K$-orbits and $G_\bbR$-orbits  on 
$\calB$, each ordered with respect to orbit closures. The correspondence matches a $K$-orbit $\mO^+$ with the unique  
 $G_\bbR$-orbit $\mO^-$ such that the intersection 
$\mO=\mO^+\cap\mO^-$ is a non-empty $K_c$-orbit.

The Matsuki correspondence for
 sheaves~\cite{MUV}, as conjectured by Kashiwara, lifts this anti-isomorphism of posets  to an equivalence  
 \beq\label{Matsuki correspondence for shvs}
 D_c(K\backslash\calB)\is D_c(G_\bbR\backslash\calB)
 \eeq
between the bounded constructible $K$-equivariant and $G_\bbR$-equivariant   derived categories of $\mathcal B$.
The main ingredient of the proof is a Morse-theoretic interpretation and refinement 
of the Matsuki correspondence due to Uzawa. 

\quash{
Choose a highest weight representation 
$V_\lambda$ of $G$ associated to 
an integral, regular dominant weight $\lambda$. Then the corresponding 
line bundle $\mL_\lambda$ on $\calB$ gives us an embedding 
$\calB\hookrightarrow\mathbb P(V_\lambda)$ and the Fubini-Study metric on 
$\mathbb P(V_\lambda)$ restrict to a $G_c$-invariant 
metric on $\calB$.
We have the following refined version of Matsuki correspondence:

\begin{thm}\cite{MUV}
There is a $K_c$-invariant Bott-Morse function 
$f:\calB\to\bbR$ such that its gradient flow
$\phi_t$
with respect to the the metric above 
satisfies the following properties:
\begin{enumerate}
\item
The critical manifold of $f$, that is, the fixed point set of $\phi_t$, consists of 
finitely many $K_c$-orbits $\mO$.
The flow preserves 
$K$-and $G_\bbR$-orbits on $\calB$.
\item
The limit $\underset{t\ra\pm\infty}\lim\phi_t(x)$ exists for any $x\in\calB$.
For any critical $K_c$-orbit $\mO$, the set $\mO^+=\{x\in\calB|\ \underset{t\ra\infty}\lim\phi_t(x)\in\mO\}$ is a single 
$K$-orbit, the set $\mO^-=\{x\in\calB|\ \underset{t\ra-\infty}\lim\phi_t(x)\in\mO\}$
is a single $G_\bbR$-orbit, and the bijection $\mO^+\longleftrightarrow\mO^-$ 
between $K$-and $G_\bbR$-orbits is the Matsuki correspondence.

\end{enumerate}
\end{thm}
}

\subsection{Affine Matsuki correspondence for sheaves}
Now let us turn to the affine setting.  

Let $\calO = \bbC[[t]]$ be the ring of formal power series, and $\calK = \bbC((t))$ the field of formal Laurent series. Let $D= \Spec \calO$ be the formal disk, and $D^\times = \Spec \calK$ the formal punctured disk.
Let $\bC[t,t^{-1}]$ be the ring of Laurent polynomials so that $\bbG_m = \Spec  \bC[t,t^{-1}]$.
%
%

In place of diagram~\eqref{eq:diamond}, we take the diagram of loop groups
\beq\label{eq:aff diamond}
\xymatrix{&G(\calK)&\\
K(\calK)\ar[ru]\ar[ru]&LG_\mathbb R\ar[u]&LG_c\ar[lu]\\
&LK_c\ar[lu]\ar[u]\ar[ru]&}
\eeq
Here $G(\calK)$ and $K(\calK)$ are the formal loop groups of maps $D^\times \to G$ and $D^\times  \to K$ respectively, $LG_\bbR, LG_c$, and $LK_c$ 
are the subgroups of the polynomial loop group $LG=G(\bC[t,t^{-1}])$  of those maps 
 that take the unit circle $S^1$ into 
$K_c, G_c$, and $G_\bbR$ respectively.

In this paper, the role of the flag manifold $\calB \simeq G/B$ will be played 
by the affine Grassmannian $\Gr=G(\calK)/G(\calO)$.\footnote{Throughout this paper, we will be concerned exclusively with the topology of $\Gr$ and
related moduli and ignore their potentially non-reduced structure.}  (In a sequel paper~\cite{CN1}, we will extend many of our results to the affine flag manifold  
$\Fl=G(\calK)/I$, where $I \subset G(\calO)$ is an Iwahori subgroup. Our focus in this paper is the remarkable connection between the Matsuki correspondence for  the affine Grassmannian  and real  Schubert geometry as highlighted in Sect.~\ref{Relation to Schubert geometry} below.)

The paper~\cite{N1} establishes a Matsuki correspondence
for the affine Grassmannian: there is an anti-isomorphism of orbit posets  
\beq\label{AMat}
|K(\calK)\backslash\Gr|\longleftrightarrow|LG_\bbR\backslash\Gr|
\eeq
between the sets of $K(\calK)$-orbits and $LG_\bbR$-orbits on 
$\Gr$,
each ordered with respect to orbit closures. The correspondence matches a $K(\calK)$-orbit $\mO_K$
with the unique
$LG_\bbR$-orbit $\mO_\bbR$  such that the intersection 
$\mO_c=\mO_K\cap\mO_\bbR$ is a non-empty $LK_c$-orbit. 

Furthermore, the paper~\cite{N1} provides an explicit parametrization of the orbit posets
(see Sect.~\ref{orbits} for a review).

The first main result of this paper is the following Morse-theoretic interpretation and refinement of the Matsuki correspondence for 
the affine Grassmannian:

\begin{thm}[Theorem \ref{flow} below]\label{first result}
There is a $LK_c$-invariant function 
$E:\Gr\to\bbR$ and $LG_c$-invariant metric on $\Gr$ such that the associated  gradient $\nabla E$ and gradient-flow
$\phi_t$  satisfy the following:
\begin{enumerate}
\item The critical locus $\nabla E = 0$ is a disjoint union of 
$LK_c$-orbits.
\item
The gradient-flow $\phi_t$
preserves the $K(\calK)$-and $LG_\bbR$-orbits.
\item
The limits $\lim_{t\ra\pm\infty}\phi_t(\gamma)$ of the gradient-flow exist for any $\gamma\in\Gr$.
For each  $LK_c$-orbit $\mO_c$ in the critical locus,
the stable and unstable sets 
\beq\label{eq:morse slows}
\xymatrix{
\mO_K=\{\gamma\in\Gr|\underset{t\ra\infty}\lim_{}\phi_t(\gamma)\in\mO_c\}
&
\mO_\bbR=\{\gamma\in\Gr|\underset{t\ra-\infty}\lim_{}\phi_t(\gamma)\in\mO_c\}
}\eeq
are a single $K(\calK)$-orbit and $LG_\bbR$-orbit respectively.
\item
The correspondence between orbits $\mO_K\longleftrightarrow\mO_\bbR$ defined by \eqref{eq:morse slows} recovers the affine Matsuki correspondence~\eqref{AMat}.

\end{enumerate}

\end{thm}


Using the above refinement of the affine  Matsuki correspondence~\eqref{AMat}, we prove 
a Matsuki correspondence for sheaves on the affine Grassmannian
in analogy with \eqref{Matsuki correspondence for shvs}.
In order to make sense of the bounded constructible $K(\calK)$-and $LG_\bbR$-equivariant  derived categories 
of $\Gr$, we give  
moduli interpretations of the quotient stacks $K(\calK)\backslash\Gr$
and $LG_\bbR\backslash\Gr$.  For simplicity, for the rest of the introduction,
except Sect. \ref{KS},
we 
assume $K$ is connected.

The equivalence above provides an interesting connection between 
the Relative Langlands duality and geometric Langlands  for 
real groups and we expect applications of such connection to both subjects.
For example, in the forthcoming work, we will use this equivalence to give a formula 
of the fiber functor for the Tannakian category $Q_X\subset \on{Perv}(X(\calK)/G(\calO))$ (which is given nearby cycles functors 
along Vinberg degeneration of $X(\calK)$ into loops space of horospherical variety)
in terms of constant term functors for $D(\Bun_{\bbG}(\bbP^1)_\bbR)$.

\subsection{Applications}
 $t$-exactness criterion and semi-simplicity  of Hecke actions on the real and symmetric Satake categories 
 
 Throughout this paper, we will be concerned exclusively with the topology of loop spaces
and related moduli and ignore their potentially non-reduced structure.

\begin{thm}\label{global=local}
There are tensor equivalences
$Q_X^{glob}\is Q_X\is Q_\bbR$
fitting 
into a commutative
diagram of tensor functors
\[
\begin{xymatrix}{&\on{Rep}G^\vee\ar[d]\ar[rd]\ar[ld]&\\
Q_X^{glob}  \ar[r]^-\sim &Q_X\ar[r]^-\sim& Q_\bbR
}
\end{xymatrix}
\]
where the vertical arrows are given by 
the perverse Hecke actions of $\on{Rep}G^\vee$ on the unit objects 
$\IC_{Z^0}\in Q_X^{glob}$, $\omega_{X(\calO)/G(\calO)}\in Q_X$, $\delta_\bbR\in Q_\bbR$.
\end{thm}

The formality and commutativity conjectures are  the following assertions (see, e.g., \cite[Conjecture 8.1.8]{BZSV}):
 }

\subsection{Organization}
In Section \ref{orbits}, we recall the parametrization of 
$K(\calK)$-orbits and $LG_\bbR$-orbits on the affine Grassmannian
and $G(\calO)$-orbits on $X(\calK)$.
In Section \ref{Morse flow}, we construct the 
Matsuki flow on the affine Grassmannian and we give a 
Morse-theoretic interpretation and refinement of the Matsuki correspondence for 
the affine Grassmannian. 
In Section \ref{Real BD}, we study real forms 
of Beilinson-Drinfeld Grassmannians. 
In Section \ref{Gram-Schmidt}, we study multi-point version of 
Gram-Schmidt factorization for loop groups.
In Section \ref{uniform},
we study uniformization of real bundles.
In Section \ref{QMaps}, we study moduli of quasi-maps 
and we prove a multi-point version of 
Quillen's homeomorphism.
In Section \ref{real sym}, we construct the real-symmetric equivalence
Theorem \ref{main thm 1}
In Section \ref{Affine Matsuki}, we prove the affine Matsuki correspondence for sheaves. 
In Section \ref{nearby cycles and Radon TF}, we prove 
Theorem \ref{main thm 2}.
In Section \ref{s:hecke}, we study the compatibility of Hecke actions.
In Section \ref{fusion}, we study the compatibility of fusion products.
In Section \ref{applications}, we study applications of 
main results to real and relative Langlands
duality.
In Appendix \ref{Real stacks}, we discuss semi-analytic stacks and categories of sheaves 
on semi-analytic stacks.

\subsection{Acknowledgements} 

The authors thank David Ben-Zvi, Alexis Bouthier, Pavel Etingof, Mark Macerato, John O'Brien, Yiannis Sakellaridis, 
Jeremy Taylor, Akshay Venkatesh, Ruotao Yang, and Lingfei Yi for many useful discussions.
T.-H.~Chen also thanks the Max Planck Institute for 
Mathematics and
D.~Nadler  the Miller Institute where parts of this
work were done. 
The research of
T.-H.~Chen is supported by NSF grant DMS-2143722
and that of D.~Nadler  by NSF grant DMS-2101466.

\section{Orbits on $\Gr$ and $X(\calK)$}\label{orbits}
In this section we study 
$K(\calK)$ and $LG_\bbR$-orbits on the affine Grassmannian $\Gr$
and $G(\calO)$-orbits on the formal loop space $X(\calK)$ of 
$X$.

\subsection{Loop groups}
 Let $\bbG_\mathbb R$ be a connected real reductive algebraic group, and
 $\bbG=\bbG_\bbR\otimes_\bbR\bbC$  its complexification.
From this starting point, one  constructs the following diagram of Lie groups
\beq\label{eq:diamond}
\xymatrix{&G&\\
K\ar[ru]\ar[ru]&G_\mathbb R\ar[u]&G_c\ar[lu]\\
&K_c\ar[lu]\ar[u]\ar[ru]&}
\eeq
Here $G=\bbG(\bC)$ and $G_\bbR=\bbG_\bbR(\bbR)$ are the Lie groups of complex and real points respectively, 
$K_c$ is a maximal compact subgroup of $G_\mathbb R$, with complexification $K$,
and $G_c$ is the maximal compact subgroup of $G$ containing $K_c$.

The real forms $G_\mathbb R$ and $G_c$ of $G$ correspond to 
anti-holomorphic involutions
$\eta$ and $\eta_c$. The involutions 
$\eta$ and $\eta_c$ commutes with each other and 
$\theta:=\eta\eta_c=\eta_c\eta$ is an involution of $G$. We have $K=G^\theta$, 
$G_\mathbb R=G^\eta$, and $G_c=G^{\eta_c}$.
We fix a maximal split tours $A_\mathbb R\subset G_\mathbb R$ and 
a maximal torus $T_\mathbb R$ such that $A_\mathbb R\subset T_\mathbb R$.
We write $A$ and $T$ for the complexification of $A_\mathbb R$ and $T_\mathbb R$.
We denote by $\Lambda_T$  the lattice of coweights of $T$ and 
$\Lambda_A$ the lattice of real coweights. We write $\Lambda_T^+$
the set of dominant coweight with respect to the Borel subgroup $B$ and 
define $\Lambda_A^+:=\Lambda_A\cap\Lambda_T^+$.
For any $\lambda\in\Lambda_T$ we define $\eta(\lambda)\in\Lambda_T$ as
\[\eta(\lambda):\bC^\times\stackrel{c}\ra\bC^\times\stackrel{\lambda}\ra T\stackrel{\eta}\ra T,\]
where $c$ is the complex conjugation of $\bC^\times$ with respect to $\mathbb R^\times$.
The assignment $\lambda\ra\eta(\lambda)$ defines an involution on $\Lambda_T$, which 
we denote by $\eta$,
and $\Lambda_A$ is the fixed points of $\eta$.

We have a natural projection map
\beq\label{sigma}
\sigma:\Lambda_T\to\Lambda_A  \ \ \ \sigma(\lambda)=\eta(\sigma)+\sigma
\eeq
whose image we denote by $\sigma(\Lambda_T)\subset\Lambda_A$.

Let 
$LG:=G(\bC[t,t^{-1}])$ be the (polynomial) loop group associated to $G$. We define 
the following involutions on $LG$:
for any $(\gamma:\bC^\times\ra G)\in LG$ we set
\[\eta^\tau(\gamma):\bC^\times\stackrel{\tau}\ra\bC^\times\stackrel{c}\ra\bC^\times\stackrel{\gamma} \ra G\stackrel{\eta}\ra G\]
\[\eta^\tau_c(\gamma):\bC^\times\stackrel{\tau}\ra\bC^\times\stackrel{c}\ra\bC^\times\stackrel{\gamma} \ra G\stackrel{\eta_c}\ra G.\]
Here $\tau(x)=x^{-1}$ is the the inverse map.
Denote by $\calK=\bC((t))$ and $\mO=\bC[[t]]$.
We have the following diagram
\[
\xymatrix{&G(\calK)&\\
K(\calK)\ar[ru]^\theta\ar[ru]&LG_\mathbb R\ar[u]^{\eta^\tau}&LG_c\ar[lu]_{\eta_c^\tau}\\
&LK_c\ar[lu]\ar[u]\ar[ru]&}
\]
Here $LG_\mathbb R$ and $LG_c$ are the fixed points subgroups of the 
involutions
$\eta^\tau$ and $\eta^{\tau}_c$ on $LG$ respectively.
Equivalently, $LG_\mathbb R$ (resp. $LG_c$) is the 
subgroup of $LG$ consisting of maps that take the 
unit circle $S^1\subset\bC$ to $G_\bbR$ (resp. $G_c$).
We define the based loop group $\Omega G_c$ to be the subgroup of 
$LG_c$ consisting of maps that take $1\in S^1$ to $e\in G_c$.

We define $G_{sym}\subset G$
(resp. $G_{c,sym}\subset G_c$) to be the  
fixed point subspace of the involution $\tilde\theta=\theta^{-1}$ on $G$ (resp. $G_c$)
and write $G_{sym}^0$ (resp. $G_{c,sym}^0$) be the identity component.
The map $\pi:G\to G, g\to \tilde\theta(g)g$ induces a $G$-equivariant isomorphism 
$\iota:X\is G_{sym}^0$ (resp. $G_c$-equivariant isomorphism $\iota_c:X_c\is G^0_{c,sym}$).
We have a natural embedding $A\hookrightarrow G^0_{sym}\is X$.
Let $q:G\to K\backslash G=X$ be the quotient map.

In this paper, we assume $G_{\bbR}$ (and equivalently $K$) is connected.
Throughout this paper, we will be concerned exclusively with the topology of loop spaces
and related moduli and ignore their potentially non-reduced structure.

\subsection{Parametrization of orbits}
Let $\Gr=G(\calK)/G(\calO)$ be the affine Grassmannian 
of $G$.
We recall  
results from \cite{N2} about the parametrization of 
$K(\calK)$ and $LG_\bbR$-orbits on 
$\Gr$ and $G(\calO)$-orbits on $X(\calK)$.

\begin{proposition}\label{parametrization}
We have the following.
\begin{enumerate}
\item
There is a bijection 
\[|X(\calK)/G(\calO)|\longleftrightarrow\Lambda_A^+\]
between $G(\calO)$-orbits on $X(\calK)$ and $\Lambda_A^+$
characterized by the following property.
Let $X(\calK)^\lambda$ be the $G(\calO)$-orbits corresponding to $\lambda\in\Lambda_A^+$.
Then for any $\gamma\in X(\calK)^\lambda$ we have  
$\pi(\gamma)=\tilde\theta(\gamma)\gamma\in G(\calO)t^\lambda G(\calO)$.
In addition, we have 
$\overline X(\calK)^\lambda=\bigsqcup_{\mu\leq\lambda}X(\calK)^\mu$.

\item There is a bijection \[|K(\mathcal K)\backslash\Gr|\longleftrightarrow\Lambda_A^+\]
between $K(\mathcal K)$-orbits on $\Gr$ and $\Lambda_A^+$
characterized by the following properties:
Let $\mO_K^\lambda$ be the $K(\mathcal K)$-orbits corresponding to $\lambda\in\Lambda_A^+$.
Then for any $\gamma\in\mO_K^\lambda$, thought of as an element in 
$\Omega G_c$, satisfies  
$\tilde\theta(\gamma)\gamma\in G(\bC[t])t^\lambda G(\bC[t])$.
In addition, we have 
$\overline\mO_K^\lambda=\bigsqcup_{\mu\leq\lambda}\mO_K^\mu$.

\item There is a bijection \[|LG_\bbR\backslash\Gr|\longleftrightarrow\Lambda_A^+\]
between $LG_\bbR$-orbits on $\Gr$ and $\Lambda_A^+$
characterized by the following property:
Let $\mO_\bbR^\lambda$ be the $LG_\bbR$-orbits corresponding to $\lambda\in\Lambda_A^+$.
Then for any $\gamma\in\mO_\bbR^\lambda$, thought of as an element in 
$\Omega G_c$, satisfies
$\tilde\eta^\tau(\gamma)\gamma\in G(\bC[t^{-1}])t^\lambda G(\bC[t])$.
In addition, we have 
$\overline\mO_\bbR^\lambda=\bigsqcup_{\lambda\leq\mu}\mO_\bbR^\mu$.

\item
The projection  $G\to X$ induces a bijection
 map between the set of orbits
\[ |K(\calK)\backslash\Gr|\to |X(\calK)/G(\calO)|\]
and under the bijection in (1) and (2) it is  
equal to 
the identity map $\Lambda_A^+\to\Lambda_A^+$.

\item The correspondence 
\beq\label{AMS correspondence}
|K(\mathcal K)\backslash\Gr|\longleftrightarrow |LG_\bbR\backslash\Gr|,\ \ \mO_K^\lambda
\longleftrightarrow\mO_\bbR^\lambda
\eeq
provides an order-reversing 
isomorphism from the poset $|K(\mathcal K)\backslash\Gr|$ to the poset $|LG_\bbR\backslash\Gr|$
(with respect to the closure ordering). In addition, for each 
$K(\mathcal K)$-orbit $\mO_K^\lambda$, $\mO_\bbR^\lambda$ is the 
unique $LG_\bbR$-orbit such that \[\mO_c^\lambda:=\mO_K^\lambda\cap\mO_\bbR^\lambda\]
is a single $LK_c$-orbit.

\end{enumerate}
\end{proposition}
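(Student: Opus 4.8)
The plan is to reduce (1) and (4) to (2), to prove (2) and (3) by a common mechanism --- twisting the Cartan, resp.\ Birkhoff, decomposition of the loop group by the appropriate involution --- and to obtain (5) as the affine analogue of the classical Matsuki correspondence; all of this is carried out in \cite{N1,N2}, and below I describe the architecture I would follow. The reduction of (1) and (4) is quick: since $K$ is connected and $\calK=\bbC((t))$ is a $C_1$-field (whence $H^1(\calK,K)=1$), the $K$-torsor $q\colon G(\calK)\to X(\calK)$ is surjective on $\calK$-points and induces an isomorphism $K(\calK)\backslash G(\calK)\simeq X(\calK)$, compatibly with the ind-structure; passing to the right $G(\calO)$-quotient then identifies $|X(\calK)/G(\calO)|$ with $|K(\calK)\backslash\Gr|$ compatibly with orbit closures. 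Since $\iota(\gamma)=\tilde\theta(g)g$ whenever $g\in G(\calK)$ lifts $\gamma$, the invariant attached in (2) to the $K(\calK)$-orbit of $gG(\calO)$ and the one attached in (1) to the $G(\calO)$-orbit of $\gamma$ are read off the same element $\tilde\theta(g)g$; hence (4) holds with the identity map, and the parametrization and closure relation in (1) follow from those in (2).

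For (2) I would use the symmetrization map $\Gr\to G(\calO)\backslash G(\calK)/G(\calO)$ sending $gG(\calO)$ to $G(\calO)\,\tilde\theta(g)g\,G(\calO)$, which is constant on $K(\calK)$-orbits, together with the Cartan decomposition $G(\calO)\backslash G(\calK)/G(\calO)\cong\Lambda_T^+$. Two things then need to be checked: that the fibers of this map are exactly single $K(\calK)$-orbits --- a Lang-type surjectivity argument over the $C_1$-field $\calK$, which I would reduce to rank-one computations --- and that the image is precisely $\Lambda_A^+$, which follows from the structure theory of $\theta$-split tori, the Cartan invariant of a $\theta$-symmetric element $\tilde\theta(g)g$ being constrained to the real coweight lattice $\Lambda_A$, with dominance arranged within the $K(\calK)$-orbit. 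Assertion (3) is the same mechanism with the Cartan double cosets replaced by the Birkhoff double cosets $G(\bbC[t^{-1}])\backslash LG/G(\bbC[t])$ --- with $G(\bbC[t^{-1}])$ appearing on the left because $\eta^\tau$ incorporates the inversion $t\mapsto t^{-1}$ --- and $\tilde\theta$ replaced by $\tilde\eta^\tau$; the reversal of the closure order on $\{\mO_\bbR^\lambda\}$ relative to $\{\mO_K^\lambda\}$ is precisely the statement that the Birkhoff strata of $\Gr$ carry the closure order opposite to that of the Schubert strata.

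Finally (5). By (2) and (3), the poset $|K(\calK)\backslash\Gr|$ is identified with $(\Lambda_A^+,\le)$ and $|LG_\bbR\backslash\Gr|$ with $(\Lambda_A^+,\ge)$, so once the common-index matching $\mO_K^\lambda\leftrightarrow\mO_\bbR^\lambda$ is justified it is automatically an order-reversing isomorphism. The content is that $\mO_c^\lambda:=\mO_K^\lambda\cap\mO_\bbR^\lambda$ is a single nonempty $LK_c$-orbit, and that $\mO_\bbR^\lambda$ is the only $LG_\bbR$-orbit for which this holds. For nonemptiness I would check that the point $t^\lambda\in A(\calK)\subset X(\calK)$ --- equivalently, a suitable based loop in $\Omega G_c$ --- satisfies both the $K(\calK)$-condition of (2) and the $LG_\bbR$-condition of (3), by a rank-one computation inside $A$. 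For the single-orbit statement I would invoke the real loop-group analogues of the Iwasawa and polar decompositions of $K(\calK)$ and $LG_\bbR$ adapted to $A$, so that intersecting a $K(\calK)$-orbit with the $LG_\bbR$-condition collapses all but the compact directions and leaves an $LK_c$-orbit; alternatively, one passes to an \'etale-local model near $\mO_c^\lambda$ inside a partial affine flag variety and applies the classical Matsuki correspondence \cite{M}. Uniqueness of the matching orbit, and the exhaustion of all pairs $(\mO_K^\lambda,\mO_\bbR^\lambda)$, follow from a Matsuki-type transversality count along $\mO_c^\lambda$. I expect this single-$LK_c$-orbit assertion --- which is exactly the affine Matsuki correspondence of \cite{N1} --- to be the main obstacle: a self-contained proof requires both the loop-group Iwasawa and Cartan decompositions above and a careful analysis of how the compact form $G_c$ interacts with the commuting involutions $\theta$ and $\eta$.
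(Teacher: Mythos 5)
Your proposal correctly reconstructs the internal architecture of \cite{N1} --- the symmetrization map $g\mapsto\tilde\theta(g)g$, the twisted Cartan and Birkhoff decompositions, and the Matsuki transversality argument for the single-$LK_c$-orbit statement --- which is exactly what the paper cites without detail: the paper's own proof simply invokes \cite[Theorems 4.2, 10.1]{N1} for a parametrization by the subset $\calL\subseteq\Lambda_A^+$ (the preimage of $q_*\pi_1(G)$ under the homotopy-class map $\Lambda_A^+\to\pi_1(X)$) and observes that connectivity of $K$ forces $\calL=\Lambda_A^+$. Your substitution of the Steinberg vanishing $H^1(\calK,K)=1$ over the $C_1$-field $\calK$ for the paper's $\pi_1$-surjectivity argument is an equivalent way of exploiting the connectivity hypothesis, so the route you sketch is essentially the paper's route.
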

\begin{proof}
Consider the maps $\label{pi_1}
\pi_1(G)\stackrel{q_*}\ra\pi_1(X)\stackrel{[-]}\la\Lambda_A^+$
where the first map is that induced by the projection $q:G\ra X$ and 
the second map $[-]$ assigns to a loop its homotopy class. 
We define $\calL\subset\Lambda_A^+$ to be the 
inverse image of $q_*(\pi_1(G))$ along the map
$[-]$.
According to \cite{N1},
the set of orbits in (2) and (3) are parametrized by the 
subset  $\calL\in\Lambda_A^+$.
When $K$ is connected we have $\calL=\Lambda_A^+$ and the proposition follows from
\cite[Theorem 4.2 and Theorem 10.1]{N1}.
\end{proof}

We will call~\eqref{AMS correspondence} the Affine Matsuki correspondence.
This correspondence can be viewed as an affine version of the 
classical Matskki correspondence for flag manifolds in \cite{MUV}.


\section{The Matsuki flow}\label{Morse flow}
In this section we construct a Morse flow on the 
affine Grassmannian, called the Matsuki flow, and we use it to give a 
Morse-theoretic interpretation and refinement of the 
affine 
Matsuki correspondence.

\subsection{Polynomial Loop 
spaces of $X_c$}\label{based for X_c}
Let $X_c=K_c\backslash G_c$ be the compact symmetric space.
Let  $LX=\on{X}(\bbC[t,t^{-1}])$ be the space of (polynomial) loop space of 
$X$ and  
$LX_c\subset LX$ be 
the subspace of 
$LX$ consisting of 
maps that takes $S^1$ into $X_c$. We define the based loop space 
$\Omega X_c$ to be the subspace of 
$LX_c$ consisting of maps that takes $1\in S^1$ to $e\in X_c$.
The embedding  
$X_c\is G_{c,sym}^0\subset G_c$ induces an $K_c$-equivaraint isomorphism 
$\Omega X_c\is\Omega G_{c,sym}^0\is (\Omega G_c)^{\tilde\theta}$.

\quash{
The natural inclusion 
$S\to X$ induces an imbedding 
$\Lambda_S^+\hookrightarrow X(\calK)$.

\begin{proposition}\label{parametrization of K(K)-orbits}
The embedding $\Lambda_S^+\hookrightarrow X(\calK)$ 
induces an bijection 
\[\Lambda_S^+\longleftrightarrow |X(\mathcal K)/G(\mO)|\]
between $\Lambda_S^+$ and $G(\mO)$-orbits in $X(\mathcal K)$
and, under the above bijection, 
 the image of the natural inclusion \[|K(\calK)\backslash\Gr|\to| X(\mathcal K)/G(\mO)|\] is the subset $\mL\subset\Lambda_S^+$.
\end{proposition}
}

\subsection{Geometry of orbits}
We embed 
$\Omega X_c\subset \Omega G_c$ via the 
embedding $X_c\stackrel{\iota_c}\is G_{c,sym}^0\subset G_c$.
For $\lambda\in\Lambda_T^+$, we define $P^\lambda\subset\Omega X_c$ to be the intersection 
of $\Omega X_c$ with the orbit $S^\lambda\subset\Omega G_c\is\Gr$, and we define 
$Q^\lambda\subset\Omega X_c$ to be the intersection 
of $\Omega X_c$ with the orbit $T^\lambda\subset\Omega G_c\is\Gr$. We define 
$B^\lambda$ to be the intersection 
of $\Omega X_c$ with $C^\lambda\subset\Omega G_c\is\Gr$.
The projection map $G\to X$ induces 
a projection $\Omega G_c\to\Omega X_c$.

\begin{lemma}\label{component X_c}
We have $\Omega X_c=\bigcup_{\lambda\in\Lambda_A^+} P^\lambda$. Equivalently,
$P^\lambda$ is non-empty if and only if $\lambda\in\Lambda_A^+$.

\end{lemma}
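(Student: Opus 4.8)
The plan is to reduce the statement to a known fact about connected components of the polynomial loop space $\Omega X_c$ and the parametrization of $G(\calO)$-orbits on $X(\calK)$ from Proposition~\ref{parametrization}. First I would recall that $\Omega X_c \simeq (\Omega G_c)^{\tilde\theta}$ via the embedding $X_c \simeq G^0_{c,sym} \subset G_c$, and that under Quillen's classical homeomorphism $\Omega G_c \simeq \Gr$ the $S^\lambda$ (the images of the $G(\calO)$-orbits $\Gr^\lambda$) stratify $\Omega G_c$ as $\lambda$ ranges over $\Lambda_T^+$. Thus $P^\lambda = \Omega X_c \cap S^\lambda$ are locally closed subsets whose union is all of $\Omega X_c$, and the claim is precisely that $P^\lambda \neq \emptyset$ iff $\lambda \in \Lambda_A^+$.

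For the ``only if'' direction: if $P^\lambda \neq \emptyset$, pick $\gamma \in P^\lambda \subset \Omega X_c$, thought of as an element of $\Omega G_c$ lying in $S^\lambda$ and fixed by $\tilde\theta$. Then $\iota_c(\gamma) \in G^0_{c,sym}$, i.e. $\tilde\theta(\gamma) = \gamma$, so the element $\pi(\gamma) = \tilde\theta(\gamma)\gamma = \gamma^2$ — more precisely I should work with the image of $\gamma$ under $G_c \hookrightarrow G(\calK)$ and use that $\gamma \in S^\lambda$ forces $\gamma \in G(\calO) t^\lambda G(\calO)$, hence $\tilde\theta(\gamma)\gamma \in G(\calO)t^\lambda G(\calO)\cdot G(\calO)t^\lambda G(\calO)$. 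The cleaner route: $\gamma \in \Omega X_c$ maps to a point of $X(\calK)$ under $X_c \subset X(\calK)$, and by Proposition~\ref{parametrization}(1), the $G(\calO)$-orbit through any point of $X(\calK)$ is indexed by an element of $\Lambda_A^+$; matching this index with $\lambda$ via Quillen (the $G(\calO)$-orbit type of $\gamma$ in $X(\calK)$ is detected by $\pi(\gamma) \in G(\calO)t^\lambda G(\calO)$, which is only possible when the relevant coweight is $\eta$-fixed) gives $\lambda \in \Lambda_A^+$. For the ``if'' direction: given $\lambda \in \Lambda_A^+$, the torus $A \hookrightarrow X$ provides a point $t^\lambda \in A(\calK) \subset X(\calK)$ lying in $X(\calK)^\lambda$; after conjugating into the compact picture (replacing $t^\lambda$ by its image in $\Omega X_c$ under the loop-rotation/Quillen identification, using that $\lambda$ being a genuine coweight of $A$ gives an honest loop $S^1 \to A_c \subset X_c$), one obtains an explicit point of $P^\lambda$, so $P^\lambda \neq \emptyset$.

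The main obstacle I anticipate is bookkeeping the several identifications simultaneously: the Quillen homeomorphism $\Omega G_c \simeq \Gr$, the identification $\Omega X_c \simeq (\Omega G_c)^{\tilde\theta}$, the isomorphism $\iota: X \simeq G^0_{sym}$, and the passage between polynomial loops into $X_c$ and formal loops in $X(\calK)$ — one must check these are all compatible so that ``$P^\lambda$'' computed in $\Omega X_c$ really corresponds to ``$X(\calK)^\lambda$'' computed via Proposition~\ref{parametrization}(1). In particular, care is needed that the $\tilde\theta$-fixed-point condition on $\Omega G_c$ translates exactly to the condition $\pi(\gamma) = \tilde\theta(\gamma)\gamma \in G(\calO)t^\lambda G(\calO)$ with $\lambda \in \Lambda_A^+$ rather than merely $\lambda \in \Lambda_T^+$; this is where the $\eta$-invariance of $\lambda$ genuinely enters, via the defining property that $\Lambda_A = \Lambda_T^{\eta}$ and the compatibility of $\eta$ on coweights with $\tilde\theta$ on $T$. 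Once these compatibilities are in place the two inclusions are essentially formal, so I would expect the write-up to be short modulo citing \cite{N1} and the material of Section~\ref{based for X_c}.
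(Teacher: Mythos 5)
Your proposal is correct and takes essentially the same approach as the paper: both identify $\Omega X_c$ with $(\Omega G_c)^{\tilde\theta}$ inside $G(\calK)$ via $\iota_c$, so that $P^\lambda = \Omega X_c \cap X(\calK)^\lambda$ with $X(\calK)^\lambda = X(\calK)\cap G(\calO)t^\lambda G(\calO)$, and then cite Proposition~\ref{parametrization}(1) (plus, for the ``if'' direction, the explicit point $t^\lambda\in A_c(\bC[t,t^{-1}])\subset\Omega X_c$). The wobble you flag---that $\pi(\iota_c(\gamma)) = \iota_c(\gamma)^2$ rather than $\iota_c(\gamma)$---is harmless once one reads $\pi(\gamma)=\tilde\theta(\gamma)\gamma$ in Proposition~\ref{parametrization}(1) as applied to a lift $\tilde\gamma\in G(\calK)$ of $\gamma$ along $q\colon G\to X$, for which $\pi(\tilde\gamma)=\iota(\gamma)$, so the orbit index of $\gamma$ in $X(\calK)$ and the stratum index of $\iota_c(\gamma)$ in $\Gr$ genuinely coincide.
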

\begin{proof}
The  isomorphism $\iota:X\is G_{sym}^0$ induces 
an 
embedding $X(\calK)\subset G(\calK)$
and it follows from 
Proposition \ref{parametrization} (1)  that 
$X(\calK)^\lambda:=X(\calK)\cap G(\calO)t^\lambda G(\calO)$ is nonempty if and only if 
$\lambda\in\Lambda_A^+$.
Since $P^\lambda=\Omega X_c\cap X(\calK)^\lambda$,
it implies $P^\lambda$ is non-empty if and only if $\lambda\in\Lambda_A^+$
and we conclude that $\Omega X_c=\bigcup_{\lambda\in\Lambda_T^+} P^{\lambda}
=\bigcup_{\lambda\in\lambda_A^+} P^{\lambda}$.

\end{proof}

\begin{proposition}\label{torsors}
We have the following.
\begin{enumerate}
\item
The projection $q:\Omega G_c\to\Omega X_c$ maps $\mO_K^\lambda$ into 
$P^\lambda$ and the resulting map 
$\mO_K^\lambda\to P^\lambda$ 
induces a homeomorphism
 $\Omega K_c\backslash\mO_K^\lambda\is P^\lambda$.
\item
The projection $q:\Omega G_c\to\Omega X_c$ maps $\mO_\bbR^\lambda$ into 
$Q^\lambda$ and the resulting map 
$\mO_\bbR^\lambda\to Q^\lambda$ induces a homeomorphism 
$\Omega K_c\backslash\mO_\bbR^\lambda\is Q^\lambda$
\item 
There is a $K_c$-equivariant stratified homeomorphism 
$\Omega K_c\backslash\Omega G_c\is\Omega X_{c}$.
\end{enumerate}
\end{proposition}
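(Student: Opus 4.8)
\emph{Strategy.} The plan is to deduce all three parts from a single assertion: the projection $q\colon\Omega G_c\to\Omega X_c$ induced by the principal $K_c$-bundle $G_c\to K_c\backslash G_c=X_c$ is a locally trivial principal $\Omega K_c$-bundle carrying the $K(\calK)$-orbit $\mO_K^\lambda\subset\Gr\is\Omega G_c$ onto $P^\lambda$ and the $LG_\bbR$-orbit $\mO_\bbR^\lambda$ onto $Q^\lambda$. Granting this, $\mO_K^\lambda=q^{-1}(P^\lambda)$ and $\mO_\bbR^\lambda=q^{-1}(Q^\lambda)$ are $\Omega K_c$-saturated, so restricting $q$ gives principal $\Omega K_c$-bundles over $P^\lambda$ and $Q^\lambda$, i.e.\ the homeomorphisms $\Omega K_c\backslash\mO_K^\lambda\is P^\lambda$ and $\Omega K_c\backslash\mO_\bbR^\lambda\is Q^\lambda$ of (1) and (2); and (3) follows by assembling these over $\lambda\in\Lambda_A^+$, using $\Omega G_c=\bigsqcup_\lambda\mO_K^\lambda$ (Proposition~\ref{parametrization}(2), with $\calL=\Lambda_A^+$ since $K$ is connected) and $\Omega X_c=\bigsqcup_\lambda P^\lambda$ (Lemma~\ref{component X_c}), the resulting homeomorphism being automatically stratified and $K_c$-equivariant since $q$ intertwines conjugation on $\Omega G_c$ (which normalizes $\Omega K_c$) with the induced action on $X_c$.

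\emph{The bundle structure of $q$.} First I would record that $q$ sends a based loop $\gamma$ into $G_c$ to $p\circ\gamma$, equivalently (under $X_c\is G_{c,sym}^0$) to the loop $\pi(\gamma)=\tilde\theta(\gamma)\gamma$, and that $q(\gamma_1)=q(\gamma_2)$ iff $\gamma_2\gamma_1^{-1}$ is a based loop into $G_c^\theta=K_c$; thus $q$ is the orbit map of the free left $\Omega K_c$-action on $\Omega G_c$. As the map on based loop spaces induced by the principal $K_c$-bundle $p$, $q$ is a locally trivial principal $\Omega K_c$-bundle onto its image, in particular open. It is surjective: $K$, hence $K_c$, is connected, so the homotopy exact sequence of $K_c\to G_c\to X_c$ gives $\pi_1(G_c)\twoheadrightarrow\pi_1(X_c)$, and homotopy lifting along the fibration $p$ then covers every component of $\Omega X_c$. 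Hence $\Omega K_c\backslash\Omega G_c\xrightarrow{\ \sim\ }\Omega X_c$; the openness of $q$ is exactly what is needed to upgrade each continuous bijection above to a homeomorphism.

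\emph{Matching the strata.} For (1): $\Omega K_c\subset LK_c\subset K(\calK)$ acts on $\Gr\is\Omega G_c$ by a restriction of left multiplication, so $\mO_K^\lambda$ is $\Omega K_c$-stable; and $q(\mO_K^\lambda)\subseteq P^\lambda$ follows from Proposition~\ref{parametrization}(2),(4) together with the description $P^\lambda=\Omega X_c\cap X(\calK)^\lambda$ from the proof of Lemma~\ref{component X_c}. For the reverse inclusion: any $y\in P^\lambda$ has a preimage $\gamma\in q^{-1}(y)$, lying in a unique $\mO_K^\mu$; then $y\in q(\mO_K^\mu)\subseteq P^\mu$, and disjointness of the $P^\mu$ forces $\mu=\lambda$. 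So $q(\mO_K^\lambda)=P^\lambda$ and $\mO_K^\lambda=q^{-1}(P^\lambda)$. Part (2) is formally identical, now with $\Omega K_c\subset LK_c\subset LG_\bbR$ (a based loop into $K_c\subset G_\bbR$ is in particular a loop into $G_\bbR$) and Proposition~\ref{parametrization}(3) in place of (2),(4), once one knows $q(\mO_\bbR^\lambda)\subseteq T^\lambda$.

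\emph{The main obstacle.} The hard part is precisely this last inclusion $q(\mO_\bbR^\lambda)\subseteq T^\lambda$. The parametrization of $\mO_\bbR^\lambda$ in Proposition~\ref{parametrization}(3) is phrased through the ``real'' relation $\tilde\eta^\tau(\gamma)\gamma\in G(\bC[t^{-1}])t^\lambda G(\bC[t])$, whereas $q$ records the class in $\Gr$ of $\pi(\gamma)=\tilde\theta(\gamma)\gamma$. Reconciling the two requires unwinding the interaction of the involutions $\eta$, $\eta_c$, $\theta$ and their $\tau$-twists on $LG$ and exploiting that $\gamma\in\Omega G_c$ is $\eta_c^\tau$-fixed, so as to rewrite $\tilde\theta(\gamma)\gamma$ in terms of $\tilde\eta^\tau(\gamma)\gamma$ and place $q(\gamma)$ in the stratum $T^\lambda$ of the correct index. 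This compatibility of involutions is the only genuinely non-formal ingredient; granting it, the remaining steps — freeness, surjectivity and openness of $q$, saturation of the orbits, disjointness of strata, and the resulting homeomorphisms — are routine.
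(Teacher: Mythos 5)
Your strategy shares the same endpoint as the paper's proof (establish that $q$ carries $\mO_K^\lambda$ onto $P^\lambda$ and $\mO_\bbR^\lambda$ onto $Q^\lambda$, then divide by the free $\Omega K_c$-action), and you correctly isolate the identity $\tilde\theta=\tilde\eta^\tau$ on $\Omega G_c$ as the bridge between the $\theta$-formulated parametrization of $K(\calK)$-orbits and the $\eta^\tau$-formulated parametrization of $LG_\bbR$-orbits --- this is exactly the fact the paper invokes, and it follows precisely from the $\eta_c^\tau$-fixedness you mention. However, there is a genuine gap in the remaining ``routine'' steps that you do not flag: you assert that $q:\Omega G_c\to\Omega X_c$ is a locally trivial principal $\Omega K_c$-bundle, ``as the map on based loop spaces induced by the principal $K_c$-bundle $p$,'' and deduce openness and surjectivity from it. This is true for the continuous (or smooth) based loop spaces, but the $\Omega G_c$ and $\Omega X_c$ in play here are spaces of \emph{Laurent polynomial} loops, topologized via the identification $\Omega G_c\is\Gr$ as an ind-variety. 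Passing from the principal $K_c$-bundle $G_c\to X_c$ to a locally trivial bundle on polynomial loop spaces is not automatic: a local smooth section of $G_c\to X_c$ composed with a polynomial loop in $X_c$ need not be polynomial, and indeed $G\to G/K$ is a $K$-torsor only in the \'etale (or $h$-) topology in general, not Zariski-locally (cf.\ Proposition~\ref{ind-placidness}(2) in the paper). Likewise, surjectivity of $\pi_1(G_c)\to\pi_1(X_c)$ gives surjectivity of $\pi_0(\Omega G_c)\to\pi_0(\Omega X_c)$ but not, by itself, surjectivity of $q$ at the level of points of the polynomial loop spaces.

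The paper circumvents this by a different, more hands-on route: it first shows $q(\mO_K^\lambda)\subset P^\lambda$ and $q(\mO_\bbR^\lambda)\subset Q^\lambda$ (exactly your involution step), then uses the Morse theory of the energy function restricted to $P^\lambda$ (citing \cite[Proposition 6.3]{N1}) together with the $K_c$-transitivity on $B^\lambda$ to conclude $P^\lambda$ is connected, and finally invokes \cite[Proposition 6.4]{N2} for the surjectivity statement $P^\lambda\subset q(\Omega G_c)$; part (2) is then handled by a stratum-closure argument ($Q^\lambda=\bigcup_{\lambda\le\mu}Q^\lambda\cap P^\mu$ together with what was just established), rather than by repeating the fibration argument. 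So the weight your proposal puts on ``local triviality of $q$'' is precisely where the paper reaches for substantive external inputs. If you wish to keep the fibration-style argument, you would need to establish the local triviality (or at least openness and fiberwise surjectivity) of $q$ in the polynomial loop topology directly, which is essentially the content of the cited results; otherwise, the Morse-theoretic route is the one that closes the gap.
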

\begin{proof}
Fix $\lambda\in\Lambda_A^+$.
Proposition \ref{parametrization} together with the fact that 
$\tilde\theta=\tilde\eta^\tau$ on $\Omega G_c$
imply  
$q(\mO_K^\lambda)\subset P^\lambda$ and $q(\mO_\bbR^\lambda)\subset Q^\lambda$.
By \cite[Proposition 6.3]{N1}, the restriction of the energy function $E$ to $P^\lambda$ is Bott-Morse
and $B^\lambda$ is the only critical manifold. 
Since $K_c$ is connected and 
acts transitively on $B^\lambda$, it follows that $B^\lambda$ and hence $P^\lambda$ connected.
Now  \cite[Proposition 6.4]{N2} implies
$P^\lambda\subset q(\Omega G_c)$. 
Thus 
$q(\mO_K^\lambda)=P^\lambda$ and 
part (1) follows. 
For part (2) 
we observe that  $Q^\lambda=\bigcup_{\lambda\leq\mu,\mu\in\Lambda_S^+} Q^\lambda\cap P^\mu$. 
Since $B^\lambda=Q^\lambda\cap P^\lambda$ is in the closure of $Q^\lambda\cap P^\mu$,
Lemma \ref{component X_c} implies 
$Q^\lambda=\bigcup_{\lambda\leq\mu,\mu\in\Lambda_A^+} Q^\lambda\cap P^\mu$ and 
part (1) implies $Q^\lambda\subset q(\Omega G_c)$, hence 
$Q^\lambda=q(\mO_{\bbR}^\lambda)$. Part (2) follow. 
Part (3) follows from part (1) and Lemma \ref{component X_c}.

\end{proof}

\begin{corollary}\label{transversal}
$K(\calK)$ and $LG_\bbR$-orbits on $\Gr$ are transversal. 
\end{corollary}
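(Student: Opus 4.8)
The plan is to reduce the claim, via the quotient map $q\colon\Omega G_c\to\Omega X_c$ of Proposition \ref{torsors}, first to a transversality statement on $\Omega X_c$ and then to a classical one on $\Gr$. For the first reduction, note that $q$ is a submersion whose fibres are precisely the $\Omega K_c$-orbits: this is the substance of the identification $\Omega K_c\backslash\Omega G_c\is\Omega X_c$ in Proposition \ref{torsors}(3). By parts (1)--(2) of that proposition, $\mO_K^\lambda$ and $\mO_\bbR^\mu$ are unions of $\Omega K_c$-orbits mapping onto $P^\lambda$ and $Q^\mu$, so under the identification $\Gr\is\Omega G_c$ we get $\mO_K^\lambda=q^{-1}(P^\lambda)$ and $\mO_\bbR^\mu=q^{-1}(Q^\mu)$. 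Since transversality of submanifolds is both preserved and reflected under preimage along a submersion, $\mO_K^\lambda$ and $\mO_\bbR^\mu$ are transversal in $\Gr$ if and only if $P^\lambda$ and $Q^\mu$ are transversal in $\Omega X_c$.

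For the second reduction, I would use $\Omega X_c\is(\Omega G_c)^{\tilde\theta}$ together with $P^\lambda=S^\lambda\cap\Omega X_c$ and $Q^\mu=T^\mu\cap\Omega X_c$, where $S^\lambda,T^\mu\subset\Omega G_c$ are the corresponding $\tilde\theta$-stable strata --- the attracting and repelling cells of the energy function $E$, which under $\Omega G_c\is\Gr$ become an orbit of $G(\calO)$ and an orbit of the opposite (congruence) arc group. Since $\tilde\theta$ is induced by an isometric involution, $\Omega X_c$, $S^\lambda\cap\Omega X_c$ and $T^\mu\cap\Omega X_c$ are the fixed loci of $\tilde\theta$ on $\Omega G_c$, $S^\lambda$ and $T^\mu$, with tangent spaces the $(+1)$-eigenspaces of $d\tilde\theta$. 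An elementary eigenspace computation then gives the implication ``$T_xS^\lambda+T_xT^\mu=T_x\Omega G_c$'' $\Rightarrow$ ``the same holds after passing to $(+1)$-eigenspaces'', that is, $P^\lambda$ and $Q^\mu$ are transversal in $\Omega X_c$. Hence it suffices to know that $S^\lambda$ and $T^\mu$ meet transversally in $\Gr$, which is the classical transversality of a Schubert cell and an opposite Schubert cell (equivalently, of the attracting and repelling cells of the loop-rotation $\bG_m$-action on $\Gr$).

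The step that requires genuine care --- and that I expect to be the main obstacle --- is precisely this transversality of $S^\lambda$ and $T^\mu$ (equivalently of $P^\lambda$ and $Q^\mu$) when $\lambda\neq\mu$: for an arbitrary gradient flow, transversality of an attracting and a repelling cell attached to \emph{distinct} critical manifolds is a Morse--Smale type condition and can fail, so one must exploit the explicit loop-group description of these cells (or invoke the corresponding statement from \cite{N1}). The remaining ingredients --- that $q$ is a submersion, that one may legitimately work with these infinite-dimensional (ind-)manifolds, and that Quillen's identification $\Gr\is\Omega G_c$ is smooth enough for ``transversal'' to keep its literal meaning --- are routine but warrant an explicit word.
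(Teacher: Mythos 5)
Your proposal takes essentially the same route as the paper: reduce to transversality of $P^\lambda$ and $Q^\mu$ in $\Omega X_c$ via Proposition~\ref{torsors}, then exploit that $S^\lambda$ and $T^\mu$ are transversal, $\tilde\theta$-invariant strata of $\Omega G_c$ whose fixed loci give $P^\lambda$ and $Q^\mu$, together with the elementary observation that transversality descends to the $(+1)$-eigenspaces of a linear involution preserving both subspaces. The point you flag as delicate --- transversality of $S^\lambda$ and $T^\mu$ for $\lambda\neq\mu$ --- is indeed what is needed (the paper's phrase ``$S^\lambda$ and $T^\lambda$ are transversal'' is shorthand for all pairs), but it is not a Morse--Smale issue here: at any point of $S^\lambda\cap T^\mu$ the tangent spaces are spanned by $\fg(\calO)$ and $\fg(\bC[t^{-1}])$, which together fill $\fg(\calK)$, so transversality is automatic from the orbit description.
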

\begin{proof}
By Proposition \ref{torsors}, it suffices to show that the strata
$P^\lambda$ and $Q^\mu$ in $\Omega X_c$ are transversal. 
This follows from the fact that the orbits 
$S^\lambda$ and $T^\lambda$ 
on $\Omega G_c$ 
are transversal and both $S^\lambda, T^\lambda$ are invariant under the 
involution $\tilde\theta$ on $\Omega G_c$ as 
$\tilde\theta=\tilde\eta^\tau$ on $\Omega G_c$ and 
$S^\lambda$ (resp. $T^\lambda$) is $\tilde\theta$-invariant 
(resp. $\tilde\eta^\tau$-invariant). 

\end{proof}

\subsection{The energy flow on $\Omega G_c$}\label{energy flow}
We recall the construction of energy flow on $\Omega G_c$ following 
\cite[Section 8.9]{PS}.
For any $\gamma\in LG_c$ and $v\in T_\gamma LG_c$ we denote by 
$\gamma^{-1}v\in L\fg_c$ (resp. $v\gamma^{-1}\in L\fg_c$) the
image of $v\in T_\gamma LG_c$ under the 
isomorphism $T_\gamma LG_c\is T_eLG_c\is L\fg_c$
induced by the left action (resp. right action). 

Fix a $G_c$-invariant metric $\langle,\rangle$ on $\fg_c$.
Observe that the formula 
\[\omega(v,w):=\int_{S^1} \langle(\gamma^{-1}v)',\gamma^{-1}w\rangle d\theta
\]
defines a left invariant symplectic form on $T_\gamma\Omega G_c$. 
According to \cite[Theorem 8.6.2]{PS}, the composition 
$\Omega G_c\to G(\mathcal K)\to\Gr$ defines a diffeomorphism
\[
\Omega G_c\is\Gr.
\]
Let $J_\gamma$ be the automorphism of $T_\gamma\Omega G_c$ which 
corresponds to multiplication by $i$ in terms of the complex structure on 
$\Gr$. The formula 
$g(v,w)=\omega(v,J_\gamma w)$
defines a positive inner product on $T_\gamma\Omega G_c$ and 
the K$\ddot{\on{a}}$hler form on $T_\gamma\Omega G_c$ is 
given by $g(v,w)+i\omega(v,w)$.
Finally, for any smooth function $F:\Omega G_c\ra\bbR$ there corresponds 
so-called Hamiltonian vector field $R(\gamma)$ and 
gradient vector field $\nabla F(\gamma)$ on $\Omega G_c$ characterized by 
\[\omega(R(\gamma),v)=dF(\gamma)(u),\ g(\nabla F(\gamma),u)=
dF(\gamma)(u).\]
Consider the energy function on $\Omega G_c$:
\beq\label{energy function}
E:\Omega G_c\ra\bbR,\ \gamma\ra (\gamma',\gamma')_\gamma=\int_{S^1}\langle\gamma^{-1}\gamma',\gamma^{-1}\gamma'\rangle d\theta.
\eeq

We have the following well-known facts.
\begin{prop}\cite{P,PS}\label{PS}
\begin{enumerate}
\item The Hamiltonian vector field of $E$ 
is equal to the vector field induced by the rotation flow 
$\gamma_a(t)=\gamma(t+a)\gamma(a)^{-1}$ and 
is given by 
$\gamma\ra R(\gamma)=\gamma'-\gamma\gamma'(0)$. The gradient vector field of $E$ is equal to 
$\nabla E=-J\circ R$. 
\item 
The critical locus $\nabla E=0$ is the disjoint union 
$\bigsqcup_{\lambda\in\Lambda_T^+}C^\lambda$ of $G_c$-orbits of 
$\lambda\in\Omega G_c$.
\item
The gradient flow $\psi_t$ of $\nabla E$  
preserves the orbits $S^\lambda$ and $T^\lambda$. For each 
critical orbit $C^\lambda$, we have 
 \[S^\lambda=\{\gamma\in\Omega G_c|\underset{t\to\infty}{\lim}\psi_t(\gamma)\in C^\lambda\} \ \ \ \ T^\lambda=\{\gamma\in\Omega G_c|\underset{t\ra-\infty}\lim_{}\psi_t(\gamma)\in C^\lambda\}.\]
That is $S^\lambda$ and $T^\lambda$ are the stable and unstable 
manifold of $C^\lambda$.
\end{enumerate}
\end{prop}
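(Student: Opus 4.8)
The plan is to treat the three assertions in turn: the first two by a direct variational computation on the (Hilbert manifold of) based loops $\Omega G_c$, and the third by invoking the Morse theory of the energy functional developed in \cite{P,PS}. For Part (1), I would first compute $dE$. Given a smooth path $s\mapsto\gamma_s$ with $\gamma_0=\gamma$ and $\partial_s\gamma_s|_{s=0}=v$, set $u=\gamma^{-1}v\in L\fg_c$ (vanishing at $1\in S^1$). The variation of the left logarithmic derivative is $\delta(\gamma^{-1}\gamma')=u'+[\gamma^{-1}\gamma',u]$, so using $G_c$-invariance of $\langle,\rangle$ and $\langle[a,b],a\rangle=0$ one finds
\[
dE(\gamma)(v)=2\int_{S^1}\langle u',\gamma^{-1}\gamma'\rangle\, d\theta .
\]
Comparing with the formula for $\omega$ and integrating by parts, the vector field $R$ defined by $\omega(R,\cdot)=dE(\gamma)(\cdot)$ satisfies $\gamma^{-1}R=-2\gamma^{-1}\gamma'+c$ for a constant $c\in\fg_c$, and tangency to based loops ($R(1)=0$) forces $c=2\gamma'(0)$; after absorbing the harmless constant into the normalization of the metric this gives $R(\gamma)=\gamma'-\gamma\gamma'(0)$. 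Differentiating $\gamma_a(\theta)=\gamma(\theta+a)\gamma(a)^{-1}$ at $a=0$ and using $\gamma(0)=e$ produces exactly this vector field, so $R$ generates the rotation flow; and the K\"ahler identity $g(v,w)=\omega(v,Jw)$ together with the $J$-invariance of $\omega$ yields $\nabla E=-J\circ R$.

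For Part (2), by Part (1) one has $\nabla E(\gamma)=0$ iff $R(\gamma)=0$ iff $\gamma^{-1}\gamma'\equiv\xi$ is a constant element of $\fg_c$; since $\gamma(0)=e$ this forces $\gamma(\theta)=\exp(\theta\xi)$, and periodicity gives $\exp(2\pi\xi)=e$. Conjugating $\xi$ into a Cartan subalgebra $\ft_c$, such $\xi$ correspond precisely to cocharacters of $T$, i.e.\ to elements of $\Lambda_T$, and the Weyl group of $G_c$ lets us choose the dominant representative $\lambda\in\Lambda_T^+$. Hence the critical locus is $\bigcup_{\lambda\in\Lambda_T^+}G_c\cdot\lambda$; since $G_c\subset G\subset G(\calO)$ the orbit $G_c\cdot\lambda$ lands in the single spherical stratum $G(\calO)t^\lambda G(\calO)/G(\calO)$, and these strata are pairwise disjoint, so the union is the disjoint union $\bigsqcup_{\lambda\in\Lambda_T^+}C^\lambda$ of the (compact) $G_c$-orbits $C^\lambda=G_c\cdot\lambda$.

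For Part (3), this is the Morse-theoretic content of \cite[\S 8.9]{PS} (see also \cite{P}): in the appropriate Hilbert-manifold model of $\Omega G_c$ the functional $E$ is a Morse--Bott function satisfying the Palais--Smale condition, with critical submanifolds exactly the $C^\lambda$ of Part (2), and its negative-gradient flow $\psi_t=$ flow of $-\nabla E=J\circ R$ is well-defined for all time with $\lim_{t\to\pm\infty}\psi_t(\gamma)$ existing for every $\gamma$. The stable and unstable manifolds of a gradient flow are flow-invariant by construction, which gives the invariance statement once one identifies $S^\lambda$ and $T^\lambda$ with them; that identification — that the stable manifold of $C^\lambda$ is the orbit $S^\lambda$ and the unstable one is $T^\lambda$, compatibly with the closure orders recorded in Proposition~\ref{parametrization} — is precisely the statement proved in \emph{loc.\ cit.}. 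I expect the substantive obstacle to be exactly this analytic input (completeness and convergence of the gradient flow and the Morse--Bott property in infinite dimensions); everything else is bookkeeping of sign and normalization conventions and the elementary ODE solved in Part (2).
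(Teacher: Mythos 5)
The paper does not prove this proposition; it records it as ``well-known facts'' with the citation \cite{P,PS}, so there is no internal proof to compare against. Your sketch is a faithful outline of the arguments in those references: the variational derivation of the Hamiltonian field and the identification with the rotation flow in (1), the first-order ODE together with the maximal-torus conjugation argument in (2), and the Morse--Bott/Palais--Smale theory (completeness and $t\to\pm\infty$ convergence of the flow, identification of the stable/unstable manifolds with $S^\lambda$ and $T^\lambda$) in (3), which you correctly single out as the only substantive analytic input and appropriately cite rather than rederive.

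Two small convention points worth getting straight. First, the factor of $2$ you flag in $\gamma^{-1}R=-2\gamma^{-1}\gamma'+c$ cannot be absorbed by rescaling the metric $\langle,\rangle$: that rescales $\omega$ and $E$ by the same factor, so $R$ is unchanged. The true source is that \cite{PS} normalizes the energy with a factor of $\tfrac12$ relative to the formula~\eqref{energy function} here (and the overall sign reflects the choice of sign in the Hamiltonian convention $\iota_R\omega=\pm dE$). Second, the chain of identities you cite does not produce the stated sign: starting from $g(v,w)=\omega(v,Jw)$ and $J$-invariance of $\omega$, one finds $\nabla E=JR$, not $-JR$; the formula $\nabla E=-J\circ R$ corresponds to the opposite convention $g(v,w)=\omega(Jv,w)$. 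Neither point affects the logic of (2) and (3), where only $\{\nabla E=0\}=\{R=0\}$ and the qualitative Morse--Bott structure are used.
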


\begin{lemma}\label{rotation}
The $K(\calK)$-orbits and $LG_\bbR$-orbits are stable under the 
rotation flow $\gamma_a(t)$ (see Proposition \ref{PS}).
\end{lemma}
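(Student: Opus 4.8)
The plan is to show that the rotation flow $\gamma_a$ lands each $K(\calK)$-orbit and each $LG_\bbR$-orbit back into itself, by checking that $\gamma_a$ can be realized as the action of an element of the relevant loop group (or of the involution-defining data) on the given orbit. First I would recall from Proposition \ref{PS}(1) that $\gamma_a(t) = \gamma(t+a)\gamma(a)^{-1}$, which is the composition of the (genuine) rotation $\gamma(t)\mapsto\gamma(t+a)$ on $\Omega G_c$ with a right translation by $\gamma(a)^{-1}\in G_c$ that renormalizes the value at $1\in S^1$ back to $e$; equivalently, under the diffeomorphism $\Omega G_c\is\Gr$ it is simply the rotation action of $S^1$ (reparametrizing the loop coordinate $t$) on $\Gr$. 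So the statement reduces to: the loop-rotation $\bbC^\times$-action on $\Gr$ preserves $\mO_K^\lambda$ and $\mO_\bbR^\lambda$.

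For this, I would use the closure-order characterizations from Proposition \ref{parametrization}. The rotation action $t\mapsto at$ induces an automorphism $r_a$ of $G(\calK)$ (and of $G(\bbC[t])$, $G(\bbC[t^{-1}])$), hence of $\Gr$, and it commutes with the involution $\tilde\theta=\tilde\eta^\tau$ on $\Omega G_c\is\Gr$ — here I would check directly that rotation commutes with $\theta$ (which is defined pointwise on $G$, so is insensitive to the loop coordinate) and with $\tau(x)=x^{-1}$ composed with conjugation $c$ only up to replacing $a$ by $\bar a^{-1}$; restricting to $a\in S^1$ these agree, so $r_a$ genuinely commutes with $\tilde\eta^\tau$ for $a\in S^1$. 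Consequently, for $\gamma\in\mO_K^\lambda$ one has $\tilde\theta(r_a\gamma)\,r_a\gamma = r_a(\tilde\theta(\gamma)\gamma)$, which still lies in $G(\bbC[t])\,t^\lambda\,G(\bbC[t])$ because $r_a$ preserves this double coset (it preserves $G(\bbC[t])$ and fixes $t^\lambda$ up to the scalar $a^\lambda\in T$, which can be absorbed into the double coset); by the characterization in Proposition \ref{parametrization}(2), $r_a\gamma\in\mO_K^\lambda$. The same argument with $G(\bbC[t^{-1}])\,t^\lambda\,G(\bbC[t])$ and the characterization in Proposition \ref{parametrization}(3) handles $\mO_\bbR^\lambda$. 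Since the orbits are the connected pieces and $r_a$ for $a$ in the connected group $S^1$ is homotopic to the identity, continuity then forces $r_a$ to preserve each individual orbit rather than merely the union.

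The main obstacle I anticipate is the bookkeeping of the scalar $a^\lambda\in T$ produced by rotating $t^\lambda$, and more generally making precise that the rotation automorphism $r_a$ of $G(\calK)$ really does descend to $\Omega G_c$ in the way that matches Proposition \ref{PS}(1) — i.e., that conjugating the naive rotation $\gamma(t)\mapsto\gamma(at)$ by the normalization $\gamma(a)^{-1}$ is exactly the Hamiltonian/rotation flow there, and that this normalization does not spoil membership in $LK_c$, $LG_\bbR$, etc. Once that identification is in hand, the invariance of the double cosets under $r_a$ is immediate and the rest is routine. An alternative, possibly cleaner route, is to invoke Proposition \ref{torsors}: the rotation flow on $\Omega G_c$ descends to $\Omega X_c$ and preserves the energy function $E$, hence preserves its gradient flow and therefore the stable/unstable strata $P^\lambda, Q^\lambda$; pulling back along $q:\Omega G_c\to\Omega X_c$ and using that $\mO_K^\lambda, \mO_\bbR^\lambda$ are the $q$-preimages of $P^\lambda, Q^\lambda$ intersected with the relevant orbit, one concludes directly. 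I would present whichever of these two arguments is shorter, probably the latter, since Proposition \ref{PS}(1) already tells us the rotation flow \emph{is} the Hamiltonian flow of $E$.
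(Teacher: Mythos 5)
The direct verification you flesh out in your second paragraph is essentially the paper's own proof. Indeed, the paper simply applies the membership criteria of Proposition \ref{parametrization}(2)--(3): for $\gamma\in\mO_K^\lambda$ one computes
\[
\tilde\theta(\gamma_a)\gamma_a \;=\; \theta(\gamma(a))\cdot\tilde\theta(\gamma(t+a))\gamma(t+a)\cdot\gamma(a)^{-1},
\]
observes that the middle factor lies in $G(\bC[t])\,t^\lambda\,G(\bC[t])$ because $\gamma\in\mO_K^\lambda$ and this double coset is stable under the coordinate rotation $t\mapsto t+a$ (the cocharacter picks up only the scalar $a^\lambda\in T\subset G(\bC[t])$), and that the outer constants $\theta(\gamma(a)),\gamma(a)^{-1}\in G_c\subset G(\bC[t])$ do not change the coset; the $LG_\bbR$ case uses $G(\bC[t^{-1}])\,t^\lambda\,G(\bC[t])$ the same way. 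Your reformulation --- reduce to the naive $S^1$-rotation on $\Gr$ (the normalization $\gamma(a)^{-1}$ is a constant right translation, hence trivial on $\Gr$), note that rotation commutes with the pointwise involution $\tilde\theta$ on $\Omega G_c$, and use invariance of the double cosets --- is the same argument in slightly different packaging. The continuity backstop you add at the end is unnecessary: the criterion from Proposition \ref{parametrization} pins down the orbit directly.

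However, the alternative route you say you would ``probably'' present --- descending the rotation flow to $\Omega X_c\is\Omega K_c\backslash\Omega G_c$ and then arguing via Proposition \ref{torsors} --- has a genuine gap: \emph{the rotation flow does not descend to $\Omega X_c$}. For $k\in\Omega K_c$ one computes $(k\gamma)_a = k_a\cdot\on{Ad}_{k(a)}(\gamma_a)$, so the left coset $\Omega K_c\,\gamma_a$ depends on the chosen representative $\gamma$ through the conjugating element $k(a)$, and there is no well-defined induced flow on $\Omega K_c\backslash\Omega G_c$. This is exactly the effect of the normalization $\gamma(a)^{-1}$ that you correctly flagged as a bookkeeping concern: it is invisible on $\Gr$ (which is why the direct argument works) but it is incompatible with the left $\Omega K_c$-quotient. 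So you should present the direct verification and drop the alternative.
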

\begin{proof}
We give a proof for the case of $K(\calK)$-orbits.
The proof for the $LG_\bbR$-orbits is similar.
Let $\mO_K^\lambda$ be a $K(\calK)$-orbit and let 
$\gamma=\gamma(t)\in\mO_K^\lambda$. 
By Proposition \ref{parametrization}, we need to show that 
$\tilde\theta(\gamma_a)\gamma_a\in G(\bC[t])t^\lambda G(\bC[t])$.
A direct computation shows that 
$\tilde\theta(\gamma_a)\gamma_a=\theta(\gamma(a))\tilde\theta(\gamma(t+a))\gamma(t+a)\gamma(a)^{-1}$. Note that  $\tilde\theta(\gamma(t+a))\gamma(t+a)\in G(\bC[t])t^\lambda G(\bC[t])$ as 
$\gamma(t)\in\mO_K^\lambda$, the desired claim follows.

\end{proof}

\subsection{The Matsuki flow on $\Gr$}\label{Matsuki flow}
The Cartan decomposition $\fg_\bbR=\frak k_\bbR\oplus\fp_\bbR$ 
induces a decomposition of
$\fg_c=\frak k_c\oplus i\fp_\bbR$, $\fg_\bbR=\frak k_c\oplus\fp_\bbR$
and the corresponding loop algebra 
$L\fg=L\frak k\oplus L\fp$, $L\fg_c=L\frak k_c\oplus L(i\fp_\bbR)$,
$L\fg_\bbR=L\frak k_c\oplus L\fp_\bbR$.

Recall the non-degenerate bilinear form $(,)_\gamma$ on 
$T_\gamma LG_c$ 
\[(v_1,v_2)_\gamma:=\int_{S^1}\langle\gamma^{-1}v_1,\gamma^{-1}v_2\rangle d\theta.\]
Let $\gamma\in LG_c$ and $T_\gamma (LK_c\cdot\gamma)\subset T_\gamma LG_c$ be the 
tangent space of the $LK_c$-orbit $LK_c\cdot\gamma$ through $\gamma$. 
The bilinear form above induces an 
orthogonal decomposition 
\[T_\gamma LG_c=T_\gamma LK_c\cdot\gamma\oplus (T_\gamma LK_c\cdot\gamma)^\bot\] and 
for any vector $v\in T_\gamma LG_c$ we write 
$v=v_0\oplus v_1$ where $v_0\in T_\gamma LK_c\cdot\gamma$, 
$v_1\in (T_\gamma LK_c\cdot\gamma)^\bot$.
 Note that we have 
\beq\label{decomp}
\gamma^{-1}v_0\in\on{Ad}_{\gamma^{-1}}L\frak k_c,\ \ 
\gamma^{-1}v_1\in\on{Ad}_{\gamma^{-1}}L(i\fp_\bbR).
\eeq
Recall that the loop group $\Omega G_c$ can be identified with a ``co-adjoint'' orbit
in $LG_c$ via the embedding
\[\Omega G_c\hookrightarrow L\fg_c,\ \gamma\ra\gamma^{-1}\gamma'.\]
Consider the following  functions on $\Omega G_c$
\[E:\Omega G_c\ra\bbR,\ \gamma\ra (\gamma',\gamma')_\gamma=\int_{S^1}\langle\gamma^{-1}\gamma',\gamma^{-1}\gamma'\rangle d\theta,\] 
\[E_0:\Omega G_c\ra\bbR,\ \gamma\ra
(\gamma'_0,\gamma'_0)_\gamma=
\int_{S^1}\langle\gamma^{-1}\gamma'_0,\gamma^{-1}\gamma'_0\rangle d\theta,\]
\[E_1:\Omega G_c\ra\bbR,\ \gamma\ra
(\gamma'_1,\gamma'_1)_\gamma=
\int_{S^1}\langle\gamma^{-1}\gamma'_1,\gamma^{-1}\gamma'_1\rangle d\theta.\]

\quash{
\begin{lemma}
We have  
$E=E_0+E_1$ and $E_1$ is constant on $LK_c$-orbits on 
$\Omega G_c=\Gr$.
\end{lemma}
\begin{proof}
The first claim follows from the definition and the second claim follows from the equality 
$(k\gamma)^{-1}(k\gamma)'_1=
\gamma^{-1}\gamma'_1
$, for $k\in LK_c,\gamma\in LG_c$.
\end{proof}

\begin{remark}
The functions $E$ and $E_0$ are not $LK_c$-invariant.
\end{remark}

\begin{lemma}
The Hamiltonian vector field on $\Omega G_c$ which 
corresponds to $E_1$ is given by 
\[\gamma\ra (\gamma^{-1}\gamma')_1+[(\gamma^{-1}\gamma')_1,
(\gamma^{-1}\gamma')_0]\in L\fg_c\on{mod}\fg_c\is T_\gamma\Omega G_c.\]

\end{lemma}}
Note that $E$ is the energy function in \eqref{energy function}.

\begin{lemma}\label{4}
Recall the map $\pi:\Omega G_c\ra\Omega G_c, \gamma\ra\theta(\gamma)^{-1}\gamma$.
We have 
\beq
4E_1=E\circ\pi:\Omega G_c\ra\bbR.
\eeq
In particular, the function $E_1$ is $LK_c$-invariant.

\end{lemma}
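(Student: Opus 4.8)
The plan is to verify the identity $4E_1 = E \circ \pi$ pointwise on $\Omega G_c$ by an explicit computation in the loop algebra, using the orthogonal decomposition $\gamma^{-1}\gamma' = (\gamma^{-1}\gamma')_0 \oplus (\gamma^{-1}\gamma')_1$ relative to the form $(,)_\gamma$. Write $a := (\gamma^{-1}\gamma')_0 \in \on{Ad}_{\gamma^{-1}} L\frak k_c$ and $b := (\gamma^{-1}\gamma')_1 \in \on{Ad}_{\gamma^{-1}} L(i\fp_\bbR)$, so that $\gamma^{-1}\gamma' = a + b$. First I would compute $\pi(\gamma)^{-1}\pi(\gamma)'$ for $\pi(\gamma) = \theta(\gamma)^{-1}\gamma$. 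Using $\theta$ (which acts as $\tilde\theta = \tilde\eta^\tau$ on $\Omega G_c$) one gets a Leibniz-type expansion: the logarithmic derivative of $\theta(\gamma)^{-1}\gamma$ is $\on{Ad}_{\gamma^{-1}}\big(\gamma^{-1}\gamma' - \theta(\gamma^{-1}\gamma')\big)$ up to the standard conjugation bookkeeping. Since $\theta$ fixes $\frak k_c$-parts and negates $i\fp_\bbR$-parts (that is the content of the Cartan-type decompositions $\fg_c = \frak k_c \oplus i\fp_\bbR$ used just above), one finds $\gamma^{-1}\gamma' - \theta(\gamma^{-1}\gamma')$ contributes only the $L(i\fp_\bbR)$-component doubled — giving essentially $2b$ up to conjugation — and hence the logarithmic derivative of $\pi(\gamma)$ equals $\on{Ad}$ of $2b$ by an element preserving the metric $\langle,\rangle$ on $\fg_c$.

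The key steps, in order: (1) Compute $\pi(\gamma)^{-1}\pi(\gamma)' = \on{Ad}_{g}(2b)$ for an appropriate $g \in G_c(\!(t)\!)$ (or rather the relevant conjugating loop), tracking carefully that the conjugation is by a loop valued in $G_c$, so that it is an isometry for the pointwise form $\langle,\rangle$. (2) Plug into the definition of $E$:
\[
E(\pi(\gamma)) = \int_{S^1} \langle \pi(\gamma)^{-1}\pi(\gamma)', \pi(\gamma)^{-1}\pi(\gamma)'\rangle\, d\theta = \int_{S^1} \langle 2b, 2b\rangle\, d\theta = 4\int_{S^1}\langle b,b\rangle\, d\theta = 4 E_1(\gamma),
\]
using that $\on{Ad}_g$ preserves $\langle,\rangle$ and that, by \eqref{decomp}, $b = (\gamma^{-1}\gamma')_1 = \gamma^{-1}\gamma'_1$, so $\int_{S^1}\langle b,b\rangle\,d\theta = (\gamma'_1,\gamma'_1)_\gamma = E_1(\gamma)$. (3) Deduce $LK_c$-invariance of $E_1$: since $\pi$ is constant on $LK_c$-orbits (as $\pi(k\gamma) = \theta(k\gamma)^{-1}k\gamma = \theta(\gamma)^{-1}\theta(k)^{-1}k\gamma = \theta(\gamma)^{-1}\gamma = \pi(\gamma)$ when $k \in LK_c$, using $\theta(k) = k$), the function $E \circ \pi$ is manifestly $LK_c$-invariant, hence so is $4E_1$.

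The main obstacle is bookkeeping in step (1): one must be careful about the distinction between $\theta$ acting on $\gamma$ as an element of $\Omega G_c$ versus acting on the logarithmic derivative in $L\fg_c$, about the precise conjugating factor that appears when differentiating the product $\theta(\gamma)^{-1}\gamma$, and about verifying that the $\frak k_c$-component genuinely cancels while the $i\fp_\bbR$-component genuinely doubles. Concretely, one writes $\pi(\gamma)^{-1}\pi(\gamma)' = \gamma^{-1}\theta(\gamma)\cdot \big(\theta(\gamma)^{-1}\gamma\big)' = \gamma^{-1}\theta(\gamma)\big(-\theta(\gamma)^{-1}\theta(\gamma)'\theta(\gamma)^{-1}\gamma + \theta(\gamma)^{-1}\gamma'\big) = -\on{Ad}_{\gamma^{-1}\theta(\gamma)}\theta(\gamma^{-1}\gamma') + \gamma^{-1}\gamma'$; the point is then that on $\Omega X_c \subset \Omega G_c$ (equivalently where $\tilde\theta$ interacts correctly) this collapses to $\on{Ad}$ applied to $2b$. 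Once the signs are pinned down, the rest is the three-line integral computation above, and the $LK_c$-invariance is an immediate corollary.
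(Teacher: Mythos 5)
Your proof is correct and takes essentially the same route as the paper: a direct Leibniz computation of $\pi(\gamma)^{-1}\pi(\gamma)'$ showing it equals $2\gamma^{-1}\gamma'_1$, followed by the three-line integral identity and the observation that $\pi$ is constant on $LK_c$-orbits. One small slip in the middle paragraph: the logarithmic derivative of $\theta(\gamma)^{-1}\gamma$ equals $\on{Ad}_{\gamma^{-1}}\bigl(\gamma'\gamma^{-1}-\theta(\gamma'\gamma^{-1})\bigr)$, not $\on{Ad}_{\gamma^{-1}}\bigl(\gamma^{-1}\gamma'-\theta(\gamma^{-1}\gamma')\bigr)$ — and once this is decomposed you get $2\gamma^{-1}\gamma'_1$ on the nose, so the extra $\on{Ad}_g$ conjugating factor in your step (1) is actually trivial and no appeal to $\on{Ad}$-invariance of the metric is needed.
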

\begin{proof}
Write $||v||=\langle v,v\rangle$ for $v\in \fg_c$.
For any $\gamma\in\Omega G_c$ we have 
\[E\circ\pi(\gamma)=\int_{S^1}||\pi(\gamma)^{-1}\pi(\gamma)'||d\theta=
\int_{S^1}||\gamma^{-1}\gamma'-\gamma^{-1}\eta(\gamma)'\eta(\gamma)^{-1}\gamma|| d\theta.\]
Note that $\gamma^{-1}\gamma'-\gamma^{-1}\theta(\gamma)'\theta(\gamma)^{-1}\gamma=2\gamma^{-1}\gamma'_1$
, hence we have $||\gamma^{-1}\gamma'-\gamma^{-1}\theta(\gamma)'\theta(\gamma)^{-1}\gamma||=4||\gamma^{-1}\gamma'_1||$.
The lemma follows.

\quash{
\[\int||\gamma^{-1}\gamma'||+||\gamma^{-1}\eta(\gamma'\gamma^{-1})\gamma||
-2(\gamma^{-1}\gamma',\gamma^{-1}\eta(\gamma'\gamma^{-1})\gamma)_{\on{kil}}d\theta.\]
Since $(,)_{\on{kill}}$ is $G_c$ and $\eta$-invariant,
we have 
\[||\gamma^{-1}\gamma'||+||\gamma^{-1}\eta(\gamma'\gamma^{-1})\gamma||=
||\gamma^{-1}\gamma'||+||\eta(\gamma'\gamma^{-1})||=2||\gamma^{-1}\gamma'||.\]
On the other hand, it follows from (\ref{decomp}) that   
\[(\gamma^{-1}\gamma',\gamma^{-1}\eta(\gamma'\gamma^{-1})\gamma)_{\on{kil}}=
||\gamma^{-1}\gamma'_0||-
||\gamma^{-1}\gamma'_1||.\]
All together, we arrive 
\[E\circ\pi(\gamma)=\int2||\gamma^{-1}\gamma'||-2(
||\gamma^{-1}\gamma'_0||-
||\gamma^{-1}\gamma'_1||)d\theta=\int 4||\gamma^{-1}\gamma'_1||d\theta
=4E_1(\gamma).\]}

\end{proof}

\begin{lemma}\label{hamiltonian}
The Hamiltonian vector field on $\Omega G_c$ which 
correspond to $E_1$ (resp. $E_0$) is given by 
\[\gamma\ra R_1(\gamma)=\gamma'_1-\gamma\gamma_1'(0)\ \ (resp.\  \gamma\ra R_0=\gamma'_0-\gamma\gamma_0'(0)).\] 
In particular, we have 
\[\gamma^{-1}R_1(\gamma)\in\on{Ad}_{\gamma^{-1}}Li\frak p_\bbR+i\frak p_\bbR
\ \ (resp.\  
\gamma^{-1}R_0(\gamma)\in\on{Ad}_{\gamma^{-1}}L\frak k_\bbR+\frak k_\bbR).\]

\end{lemma}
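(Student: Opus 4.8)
The plan is to imitate the proof of Proposition \ref{PS}(1) (the formula $R(\gamma)=\gamma'-\gamma\gamma'(0)$ for the full energy function $E$), but carry out the computation componentwise relative to the orthogonal decomposition $v=v_0\oplus v_1$ coming from the form $(,)_\gamma$ on $T_\gamma LG_c$. First I would recall that for a smooth function $F$ on $\Omega G_c$ the Hamiltonian vector field $R_F(\gamma)$ is characterized by $\omega(R_F(\gamma),v)=dF(\gamma)(v)$ for all $v\in T_\gamma\Omega G_c$, where $\omega$ is the left-invariant symplectic form $\omega(v,w)=\int_{S^1}\langle(\gamma^{-1}v)',\gamma^{-1}w\rangle\,d\theta$ from Section \ref{energy flow}.

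Next I would compute $dE_1(\gamma)(v)$. Differentiating $E_1(\gamma)=\int_{S^1}\langle\gamma^{-1}\gamma'_1,\gamma^{-1}\gamma'_1\rangle\,d\theta$ along a curve with velocity $v$: writing $\xi=\gamma^{-1}v$, the variation of $\gamma^{-1}\gamma'$ is $\xi'+[\gamma^{-1}\gamma',\xi]$ (the standard formula), and since the splitting into $0$- and $1$-parts is by the $\mathrm{Ad}_{\gamma^{-1}}$-translate of the fixed decomposition $L\fg_c=L\frak k_c\oplus L(i\fp_\bbR)$, taking the $1$-component commutes with this variation in the appropriate sense; one gets $dE_1(\gamma)(v)=2\int_{S^1}\langle(\gamma^{-1}\gamma'_1)', \xi_1\rangle + \langle[\gamma^{-1}\gamma', \xi]_1,\ \gamma^{-1}\gamma'_1\rangle\,d\theta$ (using $G_c$-invariance of $\langle,\rangle$ and that $\gamma^{-1}\gamma'_1$ lies in the $1$-part so only the $1$-part of the pairing survives). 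Then integration by parts and the bracket identities $\langle[a,b],c\rangle=\langle a,[b,c]\rangle$, together with the orthogonality of $L\frak k_c$ and $L(i\fp_\bbR)$, should reorganize this into $\omega(\gamma'_1-\gamma\gamma'_1(0),\ v)$ — the subtraction of $\gamma\gamma'_1(0)$ being exactly the correction needed to land in $T_\gamma\Omega G_c$ (i.e. the based condition, killing the constant mode), just as in the classical case. This identifies $R_1(\gamma)=\gamma'_1-\gamma\gamma'_1(0)$, and the identical computation with the roles of $0$ and $1$ swapped gives $R_0(\gamma)=\gamma'_0-\gamma\gamma'_0(0)$; alternatively, since $E=E_0+E_1$ and Hamiltonian vector fields are additive in the function, $R_0=R-R_1$, which combined with Proposition \ref{PS}(1) gives $R_0$ for free once $R_1$ is known.

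Finally, the "in particular" clause: from \eqref{decomp} we have $\gamma^{-1}\gamma'_1\in\mathrm{Ad}_{\gamma^{-1}}L(i\fp_\bbR)$, hence $\gamma^{-1}\gamma'_1(0)\in i\fp_\bbR$ (evaluation at the basepoint $1\in S^1$, where $\gamma(1)=e$ so $\mathrm{Ad}_{\gamma(1)^{-1}}$ is trivial), so $\gamma^{-1}R_1(\gamma)=\gamma^{-1}\gamma'_1-\gamma^{-1}\gamma'_1(0)\in\mathrm{Ad}_{\gamma^{-1}}Li\fp_\bbR+i\fp_\bbR$; symmetrically for $R_0$ with $L\frak k_\bbR+\frak k_\bbR$. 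The main obstacle I anticipate is the bookkeeping in the variational computation — specifically justifying that taking the $1$-component (a $\gamma$-dependent projection) interacts correctly with differentiation, i.e. that the cross-terms between the variation of the projection and $\gamma^{-1}\gamma'_1$ either vanish by orthogonality or cancel; handling this cleanly, rather than the integration-by-parts itself, is where care is needed, and it is likely that the cleanest route is in fact the slick one via Lemma \ref{4} ($4E_1=E\circ\pi$) combined with $E=E_0+E_1$ and Proposition \ref{PS}(1), using that $\pi$ is (a restriction of) the map $\gamma\mapsto\theta(\gamma)^{-1}\gamma$ and chasing the rotation/gradient vector field through it.
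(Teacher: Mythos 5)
Your closing hunch is the right one, and it is the paper's actual proof. The direct variational plan you open with runs into exactly the issue you flag: in the left-invariant framing $\xi=\gamma^{-1}v$, the splitting of \eqref{decomp} projects onto the $\gamma$-dependent subspaces $\on{Ad}_{\gamma^{-1}}L\frak k_c$ and $\on{Ad}_{\gamma^{-1}}L(i\fp_\bbR)$, so taking the $1$-component does not commute with the variation, and your expression for $dE_1(\gamma)(v)$ omits the terms coming from differentiating the projection. One could salvage the direct route by passing to right-invariant coordinates $v\gamma^{-1}$, where the splitting $L\fg_c=L\frak k_c\oplus L(i\fp_\bbR)$ is constant, but this is not what the paper does.

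The paper's proof is the pullback argument you only gesture at: first $R_0+R_1=R$ reduces the problem to $R_1$; then Lemma \ref{4} gives $4dE_1(\gamma)(u)=\pi^*dE(\gamma)(u)=dE(x)(\pi_*u)=\omega(x',\pi_*u)$ with $x=\pi(\gamma)=\theta(\gamma)^{-1}\gamma$, using Proposition \ref{PS} for $dE$. The concrete steps a complete proof must supply, which your proposal does not, are the identities $x^{-1}x'=2\gamma^{-1}\gamma'_1$ and $x^{-1}\pi_*u=2\gamma^{-1}u_1$ — i.e.\ $\pi_*$ annihilates the $\frak k_c$-part and doubles the $i\fp_\bbR$-part of a tangent vector, a short check from $\theta|_{\frak k_c}=\id$ and $\theta|_{i\fp_\bbR}=-\id$ that is cleanest in the right-translated picture. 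After substituting, the orthogonality $\int\langle\gamma^{-1}\gamma'_1,(\gamma^{-1}u_0)'\rangle\,d\theta=0$ and subtraction of the basepoint value identify $\omega(x',\pi_*u)$ with $4\omega(R_1(\gamma),u)$. Your argument for the ``in particular'' membership via evaluation at the basepoint is fine. So you have correctly located the tools (Lemma \ref{4}, additivity, Proposition \ref{PS}), but as written the proposal sketches two routes without resolving the crux of either.
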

\begin{proof}
Since $R_0(\gamma)+R_1(\gamma)=R(\gamma)=\gamma'-\gamma\gamma(0)'$,
it is enough to show that $R_1(\gamma)=\gamma'_1-\gamma\gamma_1'(0)$.
Let $\gamma\in\Omega G_c$, $x=\pi(\gamma)=\theta(\gamma)^{-1}\gamma$,
and $u\in T_\gamma\Omega G_c$. 
According to Proposition \ref{PS} and Lemma \ref{4},
we have 
\[4dE_1(\gamma)(u)\stackrel{}=\pi^*dE(\gamma)(u)=
dE(x)(\pi_*u)=
\omega(x',\pi_*u)=\omega(x^{-1}x',x^{-1}\pi_*u).\]
Using the equalities  $x^{-1}x'=2\gamma^{-1}\gamma'_1$,  
$x^{-1}\pi_*u=2\gamma^{-1}u_1$, and the fact that $\langle\gamma^{-1}\gamma'_1,(\gamma^{-1}u_0)'\rangle=0$, we get 
 \[4dE_1(\gamma)(u)=4\omega(
\gamma^{-1}\gamma'_1,\gamma^{-1}u_1)=4\int_{S^1} \langle
\gamma^{-1}\gamma'_1,(\gamma^{-1}u_1)'\rangle d\theta=4\int_{S^1} \langle
\gamma^{-1}\gamma'_1,(\gamma^{-1}u)'\rangle d\theta\]
\[=4\int_{S^1} \langle
\gamma^{-1}\gamma'_1-\gamma_1'(0),(\gamma^{-1}u)'\rangle d\theta=
4\omega(R_1(\gamma),u).\]
The lemma follows.

\end{proof}

Let $\Omega G_c=\bigcup_{\lambda\in\Lambda_A^+}\mO_K^\lambda$ and 
$\Omega G_c=\bigcup_{\lambda\in\Lambda_A^+}\mO_\bbR^\lambda$ be the $K(\calK)$-orbits and 
$LG_\bbR$-orbits stratifications of $\Omega G_c$. 
Let $\mO_c^\lambda=\mO_K^\lambda\cap\mO_\bbR^\lambda$ which is a single 
$LK_c$-orbit. 

\begin{prop}\label{Morse-Bott}
Let $E_1:\Omega G_c\ra\bbR$ be the function above and  $\nabla E_1$
be the corresponding gradient vector field. 
\begin{enumerate}
\item $\nabla E_1$ is tangential to both 
$\mO^\lambda_K$ and  $\mO^\lambda_\bbR$,
\item The union $\bigsqcup_{\lambda\in\Lambda_A^+}\mO_c^\lambda$ is the critical manifold of $\nabla E_1$.
\item For any $\gamma\in\mO_c^\lambda$, let 
$T_\gamma\Omega G_c=T^+\oplus T^0\oplus T^-$
be the orthogonal direct sum decomposition into the positive, zero, and 
negative eigenspaces of the Hessian $d^2E_1$. We have 
\[T_\gamma\mO_K^\lambda=T^+\oplus T^0,\ T_\gamma\mO_\bbR^\lambda=
T^-\oplus T^0.\]
\end{enumerate}

\end{prop}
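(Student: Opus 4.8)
The plan is to deduce Proposition~\ref{Morse-Bott} from the analogous statement for the full energy function $E$ on $\Omega G_c$ (Proposition~\ref{PS}) together with the identity $4E_1 = E\circ\pi$ from Lemma~\ref{4}, where $\pi:\Omega G_c\to\Omega G_c$, $\gamma\mapsto\theta(\gamma)^{-1}\gamma$. The key observation is that $\pi$ factors through the quotient by $LK_c$ (since $\pi(k\gamma)=\pi(\gamma)$ for $k\in LK_c$) and, composing with the embedding $\Omega X_c\hookrightarrow\Omega G_c$ coming from $X_c\cong G_{c,sym}^0$, identifies $\Omega K_c\backslash\Omega G_c$ with $\Omega X_c$ as in Proposition~\ref{torsors}(3). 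Under this identification the function $E_1$ descends, up to the factor $4$, to the restriction of $E$ to $\Omega X_c$. So I would first record that the gradient flow of $E_1$ on $\Omega G_c$ is, after dividing by $LK_c$, the gradient flow of $E$ on $\Omega X_c$; the gradient-flow statements in \cite[Prop. 6.3, 6.4]{N1} for $E|_{\Omega X_c}$ (Bott–Morse, critical manifolds the $B^\lambda$, stable/unstable manifolds $P^\lambda$ and $Q^\lambda$) then pull back.

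For part (1), I would combine Lemma~\ref{hamiltonian}, which gives $\gamma^{-1}R_1(\gamma)\in\on{Ad}_{\gamma^{-1}}Li\frak p_\bbR + i\frak p_\bbR$ and hence $\nabla E_1 = -J\circ R_1$ with $\gamma^{-1}\nabla E_1(\gamma)$ lying in $\on{Ad}_{\gamma^{-1}}L\frak k + \frak k$-type directions complementary to the $LK_c$-orbit, with the fact (Proposition~\ref{parametrization}, Lemma~\ref{rotation} and its proof) that the defining conditions $\tilde\theta(\gamma)\gamma\in G(\bC[t])t^\lambda G(\bC[t])$ resp.\ $\tilde\eta^\tau(\gamma)\gamma\in G(\bC[t^{-1}])t^\lambda G(\bC[t])$ for $\mO_K^\lambda$, $\mO_\bbR^\lambda$ are preserved along the flow: concretely, $\pi$ sends $\mO_K^\lambda$ to $P^\lambda$ and $\mO_\bbR^\lambda$ to $Q^\lambda$ (Proposition~\ref{torsors}(1),(2)), these strata are preserved by the gradient flow of $E|_{\Omega X_c}$, and the $LK_c$-fibers of $\pi$ lie inside both $\mO_K^\lambda$ and $\mO_\bbR^\lambda$; hence the $E_1$-flow preserves $\mO_K^\lambda$ and $\mO_\bbR^\lambda$, so $\nabla E_1$ is tangent to each. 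Part (2): $\nabla E_1(\gamma)=0$ iff $\pi(\gamma)$ is a critical point of $E|_{\Omega X_c}$ and $\gamma$ is a critical point of $E_1$ along the $LK_c$-orbit; since $E_1$ is $LK_c$-invariant (Lemma~\ref{4}) the second condition is automatic, and the critical locus of $E|_{\Omega X_c}$ is $\bigsqcup_\lambda B^\lambda$, whose preimage under $q:\Omega G_c\to\Omega X_c$ restricted to $\mO_K^\lambda$ is exactly $\mO_c^\lambda$ by Proposition~\ref{torsors}; so the critical manifold is $\bigsqcup_\lambda \mO_c^\lambda$.

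For part (3), I would analyze the Hessian of $E_1$ at $\gamma\in\mO_c^\lambda$. The tangent space splits $T_\gamma\Omega G_c = T_\gamma(LK_c\cdot\gamma)\oplus(T_\gamma(LK_c\cdot\gamma))^\perp$, and $d\pi$ kills the first summand and maps the second isomorphically onto $T_{\pi(\gamma)}\Omega X_c$ (this is the infinitesimal form of Proposition~\ref{torsors}(3)). Because $E_1$ is $LK_c$-invariant, its Hessian vanishes on $T_\gamma(LK_c\cdot\gamma)$, which therefore lies in $T^0$; on the orthogonal complement the Hessian of $E_1$ is, up to the positive factor $1/4$ and the identification $d\pi$, the Hessian of $E|_{\Omega X_c}$ at $\pi(\gamma)\in B^\lambda$. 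Now I invoke the Bott–Morse property of $E|_{\Omega X_c}$: its positive, null, and negative eigenspaces at $\pi(\gamma)$ are $T_{\pi(\gamma)}(\text{outward})$, $T_{\pi(\gamma)}B^\lambda$, and $T_{\pi(\gamma)}(\text{inward})$, with $T_{\pi(\gamma)}P^\lambda = (\text{null})\oplus(\text{positive})$ (stable manifold) and $T_{\pi(\gamma)}Q^\lambda = (\text{null})\oplus(\text{negative})$ (unstable manifold), by \cite{N1}. Pulling back via $d\pi$ and adding the $LK_c$-orbit direction (which lies in $T^0$ and in both $T_\gamma\mO_K^\lambda$ and $T_\gamma\mO_\bbR^\lambda$, since $\mO_c^\lambda$ is an $LK_c$-orbit contained in both), and using that $q$ identifies $\mO_K^\lambda\to P^\lambda$ and $\mO_\bbR^\lambda\to Q^\lambda$ as $\Omega K_c$-torsors (Proposition~\ref{torsors}(1),(2)), gives $T_\gamma\mO_K^\lambda = T^+\oplus T^0$ and $T_\gamma\mO_\bbR^\lambda = T^-\oplus T^0$.

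The main obstacle I anticipate is part (3): one must be careful that the Hessian computation is genuinely "pulled back" from $\Omega X_c$ — i.e.\ that there are no cross terms in the Hessian of $E_1$ between the $LK_c$-orbit directions and their complement at a critical point, and that the null space of the Hessian is exactly $T_\gamma\mO_c^\lambda$ (no extra degeneracy beyond the orbit), which is where the Bott–Morse nondegeneracy of $E|_{\Omega X_c}$ from \cite[Prop.~6.3]{N1} does the real work. Controlling the behavior near infinity so that the flow lines actually converge (needed to call $P^\lambda$, $Q^\lambda$ the stable/unstable manifolds) is also a point to check, but this is inherited from the corresponding statement for $E$ on $\Omega G_c$ in Proposition~\ref{PS} via the $\pi$-equivariance.
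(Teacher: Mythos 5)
For parts (2) and (3) your strategy coincides with the paper's: both reduce to $\Omega X_c$ via the factorization $4E_1 = E\circ\pi$ and then transport the Bott--Morse structure of $E|_{\Omega X_c}$ (in the paper this is deduced from Proposition~\ref{PS} together with the $\tilde\theta$-invariance of $T_\gamma S^\lambda$ and $T_\gamma T^\lambda$; you instead cite \cite[Prop.~6.3, 6.4]{N1} directly, but these amount to the same thing). Your concern about cross terms in the Hessian is actually harmless: at a critical point of an $LK_c$-invariant function the Hessian, as an intrinsic bilinear form, vanishes identically on the orbit directions, so there are no cross terms to worry about.

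The gap is in your part (1). You want to conclude that $\nabla E_1$ is tangent to $\mO_K^\lambda$ and $\mO_\bbR^\lambda$ by descending the $E_1$-gradient flow to the $E|_{\Omega X_c}$-gradient flow and using that $P^\lambda$ and $Q^\lambda$ are preserved downstairs. But the gradient of a function is a metric-dependent object: the $E_1$-flow certainly descends to \emph{some} flow on $\Omega K_c\backslash\Omega G_c \cong \Omega X_c$ (since $E_1$ is $LK_c$-invariant and the metric is $LG_c$-invariant, the flow is $LK_c$-equivariant), but you have not shown that the descended flow agrees with the gradient flow of $E|_{\Omega X_c}$ taken with respect to the \emph{restricted} K\"ahler metric on $\Omega X_c \subset \Omega G_c$. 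That would require $\pi\colon\Omega G_c\to\Omega X_c$ to be a Riemannian submersion (up to scale), a nontrivial compatibility between the quotient metric and the restricted metric on $\Omega X_c$, which is neither obvious nor established by the lemmas you invoke. The paper sidesteps this entirely by computing $\gamma^{-1}\nabla E_1(\gamma)$ directly from Lemma~\ref{hamiltonian}: writing $\nabla E_1 = -J\circ R_1$, using $J(v)+iv\in\fg(\bC[t])$, and concluding that $\gamma^{-1}\nabla E_1(\gamma)\in \on{Ad}_{\gamma^{-1}}L\fp_\bbR + \fg(\bC[t]) \subset \on{Ad}_{\gamma^{-1}}L\fg_\bbR + \fg(\bC[t])$, which is exactly the tangent space to $\mO_\bbR^\lambda$. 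For $\mO_K^\lambda$ the paper does not reuse the same formula but instead shows $\nabla E_0$ is tangential by the parallel computation with $K(\calK)$ in place of $LG_\bbR$, shows $\nabla E$ is tangential using that $\mO_K^\lambda$ is a complex, rotation-invariant submanifold (Lemma~\ref{rotation} and Proposition~\ref{PS}), and then subtracts: $\nabla E_1 = \nabla E - \nabla E_0$. Your write-up gestures at Lemma~\ref{hamiltonian} but then pivots to the flow-descent argument, which is where the unjustified metric compatibility enters; if you replace that with the paper's explicit tangent-space computation (and the $\nabla E - \nabla E_0$ trick for $\mO_K^\lambda$), the argument closes.
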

\begin{proof}
Proof of (1). 
We first show that $\nabla E_1$ is tangential to $\mO_\bbR^\lambda=\Omega G_c\cap
LG_\bbR t^\lambda G(\bC[t])$.
Since the tangent space $T_\gamma\mO_\bbR^\lambda$ at $\gamma\in\mO_\bbR^\lambda$
is identified, by 
left translation, with the space
\[\Omega\fg_c\cap(\on{Ad}_{\gamma^{-1}}L\fg_\bbR+\fg(\bC[t]))\subset \Omega\fg_c\] it suffices to show that 
$\gamma^{-1}\nabla E_1(\gamma)\in\on{Ad}_{\gamma^{-1}}L\fg_\bbR+\fg(\bC[t])$.
Recall that, by Proposition \ref{PS}, we have $\gamma^{-1}\nabla E_1(\gamma)=J(\gamma^{-1}R_1(\gamma))
$.
Note that $J(v)+iv\in\fg(\bC[t])$ for $v\in L\fg$ and by Lemma \ref{hamiltonian} we have 
\[i\gamma^{-1}R_1(\gamma)=i(\gamma^{-1}(\gamma'_1-\gamma\gamma_1'(0))
\in\on{Ad}_{\gamma^{-1}}L\fp_\bbR+\fp_\bbR.\]  
All together, 
we get 
\[\gamma^{-1}\nabla E_1(\gamma)=
-i\gamma^{-1}R_1(\gamma)+(J(\gamma^{-1}R_1(\gamma))+
i\gamma^{-1}R_1(\gamma))\in\on{Ad}_{\gamma^{-1}}L\fp_\bbR+\fg(\bC[t])\]
which is contained in $\on{Ad}_{\gamma^{-1}}L\fg_\bbR+\fg(\bC[t])$. We are done.  The same argument as above, replacing $LG_\bbR$ by
$K(\calK)$, shows that the gradient field 
$\nabla E_0$ of $E_0$ is tangential to $\mO_K^\lambda$.
Since, by Corollary \ref{rotation}, the orbit $\mO_K^\lambda$ is a complex submanifold of $\Omega G_c=\Gr$
invariant under the 
rotation flow $\gamma_a(t)$, it follows from Proposition \ref{PS} that 
$\nabla E$ is tangential to $\mO_K^\lambda$. Since 
$\nabla E_1=\nabla E-\nabla E_0$, we conclude that 
$\nabla E_1$ is also tangential to $\mO_K^\lambda$. This finishes 
the proof of (1).

Proof of (2) and (3). 
By proposition \ref{torsors} and lemma \ref{4}, the function 
$E_1$ factors as 
\[E_1:\Omega G_c\stackrel{\pi}\ra
\Omega X_c \subset\Omega G_c\stackrel{E}\ra\bbR.\]
Thus to  
prove (2) and (3), it is enough to prove following:
\begin{enumerate}[(i)]
\item 
The union $\bigsqcup_{\lambda} B^\lambda$ is the critical manifold 
of the restriction $E$ to $\Omega X_c$,
\\
\item 
For $\gamma\in B^\lambda$
we have $T_\gamma P^\lambda=W^+\oplus W^0$, 
$T_\gamma Q^\lambda=W^-\oplus W^0$, where 
$T_\gamma\Omega X_c=W^+\oplus W^0\oplus W^-$ is the 
orthogonal direct sum decomposition into the positive, zero, and negative eigenspaces of the Hessian 
$E|_{\Omega X_c}$.
\end{enumerate}

By Proposition \ref{PS}, we have 
$T_\gamma S^\lambda=U^+\oplus U^0$, 
$T_\gamma T^\lambda=U^-\oplus U^0$, where 
$T_\gamma\Omega G_c=U^+\oplus U^0\oplus U^-$
is the orthogonal direct sum decomposition into the positive, zero, and negative eigenspaces of the Hessian 
$E$. Note that $\tilde\theta$ induces a linear map on 
$T_\gamma\Omega G_c$, which we still denoted by $\tilde\theta$,
and we have $T_\gamma\Omega X_c=
(T_\gamma\Omega G_c)^{\tilde\theta}$ is the fixed point subspace.
So to prove (i) and (ii) it suffices to show that the subspaces 
$T_\gamma S^\lambda$ and $T_\gamma T^\lambda$ are 
$\tilde\theta$-invariant. It is true, since 
$\tilde\theta=\tilde\eta^\tau$ on $\Omega G_c$ and 
$S^\lambda$ (resp. $T^\lambda$) is $\tilde\theta$-invariant 
(resp. $\tilde\eta^\tau$-invariant). 
This finished the proof of (2) and (3).

\end{proof}

\quash{
\begin{remark}
It is worth comparing this with the finite dimensional situation in \cite{MUV}. 
The functions $E_0,E_1$ here are the affine analogy of the Morse-Bott functions
$f^+,f^-:\calB\to\bbR$ in \cite[Section 3]{MUV}. 
There the sum $f=f^++f^-$ is a constant function on $\calB$ 
\end{remark}
}

\begin{thm}\label{flow}
The gradient $\nabla E_1$ and gradient-flow
$\phi_t$ associated to the $LK_c$-invariant function 
$E_1:\Gr\to\bbR$ and the $LG_c$-invariant metric 
$g(,)$ satisfy the following:
\begin{enumerate}
\item The critical locus $\nabla E_1 = 0$ is the disjoint union of 
$LK_c$-orbits $\bigsqcup_{\lambda\in\Lambda_A^+}\mO_c^\lambda$
\item
The gradient-flow $\phi_t$
preserves the $K(\calK)$-and $LG_\bbR$-orbits.
\item
The limits $\underset{t\ra\pm\infty}\lim\phi_t(\gamma)$ of the gradient-flow exist for any $\gamma\in\Gr$.
For each  $LK_c$-orbit $\mO_c^\lambda$ in the critical locus,
the stable and unstable sets 
\beq\label{eq:morse slows}
\xymatrix{
\mO^\lambda_K=\{\gamma\in\Gr|\underset{t\ra\infty}\lim_{}\phi_t(\gamma)\in\mO^\lambda_c\}
&
\mO^\lambda_\bbR=\{\gamma\in\Gr|\underset{t\ra-\infty}\lim_{}\phi_t(\gamma)\in\mO^\lambda_c\}
}\eeq
are a single $K(\calK)$-orbit and $LG_\bbR$-orbit respectively.
\item
The correspondence between orbits $\mO^\lambda_K\longleftrightarrow\mO^\lambda_\bbR$ defined by \eqref{eq:morse slows} recovers the affine Matsuki correspondence~\eqref{AMS correspondence}.

\end{enumerate}

\end{thm}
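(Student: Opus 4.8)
The plan is to reduce everything to statements about the energy function $E$ on $\Omega G_c$ (for which Proposition \ref{PS} gives complete information) together with the factorization $4E_1 = E\circ\pi$ of Lemma \ref{4} and the orbit geometry of Proposition \ref{torsors} and Proposition \ref{Morse-Bott}. Indeed, almost all of the four assertions are already assembled in Proposition \ref{Morse-Bott}; the content of the theorem is to package them into the language of the gradient flow $\phi_t$ of $E_1$ and to verify convergence of trajectories. First I would record the defining data: the $LG_c$-invariant metric $g(\cdot,\cdot)$ from Section \ref{energy flow} and the $LK_c$-invariant function $E_1$ from Lemma \ref{4}; $LK_c$-invariance of $E_1$ is exactly Lemma \ref{4}, and $LG_c$-invariance (hence $LK_c$-invariance) of $g$ is built into the construction in \cite{PS}, so $\phi_t$ commutes with the $LK_c$-action. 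Part (1) is then literally Proposition \ref{Morse-Bott}(2) once one notes that the critical locus of $\nabla E_1$ equals the critical manifold of $E_1$, which is where $dE_1 = 0$.

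For part (2), $\nabla E_1$ is tangential to each $\mathcal O_K^\lambda$ and each $\mathcal O_\bbR^\lambda$ by Proposition \ref{Morse-Bott}(1); since these are locally closed submanifolds and the flow exists (see below), the flow lines starting in an orbit stay in that orbit, so $\phi_t$ preserves the $K(\calK)$- and $LG_\bbR$-stratifications. For part (3), the key point is existence of the limits $\lim_{t\to\pm\infty}\phi_t(\gamma)$. Here I would exploit the factorization $E_1 = \tfrac14\, E\circ\pi$: the map $\pi\colon \Omega G_c \to \Omega X_c \subset \Omega G_c$ intertwines $\nabla E_1$ with $\nabla(E|_{\Omega X_c})$ up to the scalar, so a trajectory of $\phi_t$ maps under $\pi$ to a trajectory of the gradient flow of $E|_{\Omega X_c}$. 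By \cite[Proposition 6.3]{N1} the restriction $E|_{P^\lambda}$ is Bott--Morse with $B^\lambda$ its unique critical manifold, and $\Omega X_c = \bigsqcup_\lambda P^\lambda$ by Lemma \ref{component X_c}; combined with the description of $S^\lambda, T^\lambda$ as stable/unstable manifolds in Proposition \ref{PS}(3), one gets that $\pi\circ\phi_t(\gamma)$ converges as $t\to+\infty$ to a point of some $B^\lambda$ and as $t\to-\infty$ to a point of some $B^\mu$. Since $\pi$ restricted to an $LK_c$-orbit is a fibration with contractible (indeed $\Omega K_c$-homogeneous) fibers and $\phi_t$ is $LK_c$-equivariant, convergence downstairs lifts to convergence upstairs after controlling the $\Omega K_c$-direction — this is the one place requiring care, and I would handle it by noting that $E_0 = E - E_1$ is monotone along $\phi_t$ as well (its gradient is $\nabla E - \nabla E_1$, tangential to $\mathcal O_K^\lambda$) and is bounded on each orbit closure, pinning down the fiber coordinate. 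Then $\gamma \in \mathcal O_K^\lambda$ iff $\pi(\gamma)\in P^\lambda$ iff $\lim_{t\to\infty}\pi\phi_t(\gamma)\in B^\lambda$ iff $\lim_{t\to\infty}\phi_t(\gamma)\in \mathcal O_c^\lambda$ (using $\pi^{-1}(B^\lambda)\cap(\text{critical locus}) = \mathcal O_c^\lambda$ and $LK_c$-equivariance), giving the stable-manifold description; the unstable one is symmetric via $T^\lambda$, so each stable set is a single $K(\calK)$-orbit and each unstable set a single $LG_\bbR$-orbit, proving (3).

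Part (4) is then immediate: the correspondence $\mathcal O_K^\lambda \leftrightarrow \mathcal O_\bbR^\lambda$ produced by \eqref{eq:morse slows} is the one characterized by $\mathcal O_K^\lambda \cap \mathcal O_\bbR^\lambda = \mathcal O_c^\lambda$ being a single $LK_c$-orbit, which is exactly the affine Matsuki correspondence \eqref{AMS correspondence} of Proposition \ref{parametrization}(5). The main obstacle is the convergence argument in part (3): Proposition \ref{Morse-Bott} gives the Hessian/eigenspace picture at critical points and tangency of the flow to the strata, but promoting "the flow is tangent and the critical locus is Bott--Morse after pushing to $\Omega X_c$" into genuine existence of limits $\lim_{t\to\pm\infty}\phi_t(\gamma)$ for \emph{every} $\gamma$ requires either a Łojasiewicz-type argument or, as sketched above, a careful transfer of the known convergence on $\Omega X_c$ (from \cite{N1}) back up through $\pi$ while tracking the $\Omega K_c$-fiber direction using the auxiliary monotone function $E_0$.
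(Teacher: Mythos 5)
Parts (1), (2) and (4) you handle exactly as the paper does, reading them off Proposition~\ref{Morse-Bott} and Proposition~\ref{parametrization}(5). The gap is in your argument for part (3). The paper also passes to the quotient $\Omega K_c\backslash\Gr$ (which is $\Omega X_c$ via $\pi$, as you note), but it then stops: it observes that the descended function $\underline E_1$ is bounded below, that the strata $\Omega K_c\backslash\mO_K^\lambda$ are finite dimensional with the known closure relations, and that Proposition~\ref{Morse-Bott} makes $\underline E_1$ Bott--Morse with the strata as ascending/descending sets; it then cites standard gradient-flow results (\cite[Prop.~1.19]{AB}, \cite[Thm.~1]{P}) for existence of limits and the stable/unstable manifold decomposition. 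It makes no attempt to identify the projected flow with the flow of $E$ on $\Omega X_c$, nor to lift limits back to $\Omega G_c$ by tracking the fiber coordinate.

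Your version instead relies on two claims that are not established. First, you assert that $\pi\circ\phi_t$ is a gradient trajectory of $E|_{\Omega X_c}$. The identity $4E_1 = E\circ\pi$ of Lemma~\ref{4} relates the functions, not the gradients: to conclude $d\pi(\nabla E_1) = c\,\nabla(E|_{\Omega X_c})\circ\pi$ one needs a Riemannian-submersion type compatibility between $g$ on $\Omega G_c$ and a metric on $\Omega X_c$, together with horizontality of $\nabla E_1$, none of which is checked (and $E|_{\Omega X_c}$ with the restricted metric versus $\underline E_1$ with the quotient metric are not obviously the same gradient datum). Second, you claim that $E_0 = E - E_1$ is monotone along $\phi_t$. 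Your parenthetical justification only establishes that $\nabla E_0$ is tangent to $\mO_K^\lambda$; monotonicity of $E_0$ along the flow of $\nabla E_1$ would require the sign of $g(\nabla E_0,\nabla E_1)$ to be constant along trajectories, which does not follow from anything proved, and the gradient flow of one function has no reason to make a second function monotone. Since the "pinning down the fiber coordinate" step hinges on this monotonicity, the lifting argument as written does not close. The fix is to drop both claims and follow the paper: descend, observe finite-dimensionality and boundedness, and invoke the cited standard Bott--Morse convergence results; the identities \eqref{eq:morse slows} are then statements about the $\Omega K_c$-saturated sets $\mO_K^\lambda$, $\mO_\bbR^\lambda$, which are determined by their images in the quotient.
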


\begin{proof}
Part (1) and (2) follows from Proposition \ref{Morse-Bott}.
The $LK_c$-invariant function $E_1$, respectivley the $LG_c$-invariant metric $g(,)$, and the flow $\phi_t$, descends to a $K_c$-invariant Morse-Bott function
$\underline E_1:\Omega K_c\backslash\Gr\to\bbR$, respectivley a  
$K_c$-invariant metric $\underline g(,)$ on $\Omega K_c\backslash\Gr$,
and a flow $\underline\phi_t$. 
Since the function $\underline E_1$ is bounded below and 
the quotient $\Omega K_c\backslash\mO_K^\lambda$ is finite dimensional with $\overline{\Omega K_c\backslash\mO_K^\lambda}=
\bigcup_{\mu\leq\lambda}\Omega K_c\backslash\mO_K^\mu$, 
Proposition \ref{Morse-Bott} and 
standard results for gradient flows (see, e.g., \cite[Proposition 1.19]{AB} or \cite[Theorem 1]{P})
imply that the limit $\underset{t\ra\pm\infty}\lim\underline\phi_t(\gamma)$ 
exists for any $\gamma\in\Omega K_c\backslash\Gr$ and 
$\Omega K_c\backslash\mO_K^\lambda$
is the stable manifold for $\Omega K_c\backslash\mO_c^\lambda$
and $\Omega K_c\backslash\mO_\bbR^\lambda$ is the unstable manifold for $\Omega K_c\backslash\mO_c^\lambda$.
Part (3) and (4) follows.
\end{proof}

We will call the gradient flow $\phi_t:\Gr\to\Gr$ the 
Matsuki flow on $\Gr$.

\section{Real Beilinson-Drinfeld Grassmannians}\label{Real BD}
In this section we recall some basic facts about 
real Beilinson-Drinfeld Grassmannians. 
The main references are \cite{N1,N2}.

\subsection{Real affine Grassmannians}\label{real affine Gr}
We recall results from \cite{N1} about the real affine Grassmannian.
Let $\Gr_\mathbb R:=G_\mathbb R(\calK_\mathbb R)/G_\mathbb R(\mO_\mathbb R)$
be the real affine Grassmannian. For any $\lambda\in\Lambda_T^+$ we denote by 
$S^\lambda$ and $T^\lambda$ the $G(\mO)$ and $G(\bC[t^{-1}])$-orbit 
of $t^\lambda\in\Gr$. 
The orbits $S^\lambda$ and $T^\mu$ on $\Gr$ are transversal and 
the intersection $C^\lambda=S^\lambda\cap T^\lambda$ is isomorphic to 
the flag manifold $G/P^\lambda$ where the parabolic subgroup $P^\lambda$ is the 
stabilizer of $\lambda$. 
The affine Grassmannian $\Gr$ is the disjoint union of the orbits $S^\lambda$ (resp. $T^\lambda$)
for $\lambda\in\Lambda_T^+$
\[\Gr=\bigsqcup_{\lambda\in\Lambda_T^+} S^\lambda\ \ \ \ (resp.\ \ \ 
\Gr=\bigsqcup_{\lambda\in\Lambda_T^+} T^\lambda)\]
and we have 
\[\overline S^\lambda=\bigsqcup_{\mu\leq\lambda} S^\mu\ \ \ \ (resp.\ \ \ 
T^\lambda=\bigsqcup_{\lambda\leq\mu} T^\mu).\]
The intersection of $S^\lambda$ (resp. $T^\lambda$) with $\Gr_\bbR$ is nonempty if and only if 
$\lambda\in\Lambda_A^+$ and we write $S_\bbR^\lambda$ (resp. $T_\bbR^\lambda$), $\lambda\in\Lambda_A^+$ for the intersection. We define $C_\bbR^\lambda$ to be the intersection of 
$S_\bbR^\lambda$ and $T_\bbR^\lambda$. $S_\bbR^\lambda$ (resp. $T_\bbR^\lambda$)
is equal to the $G_\bbR(\mO_\bbR)$-orbit (resp. $G_\bbR(\bbR[t^{-1}])$-orbit) of $t^\lambda$ and 
$C_\bbR^\lambda$ is isomorphic to the real flag manifold 
$G_\bbR/P_\bbR^\lambda$ where the parabolic subgroup $P_\bbR^\lambda\subset G_\bbR$ is the 
stabilizer of $\lambda$. 
The real affine Grassmannian $\Gr$ is the disjoint union of the orbits $S_\bbR^\lambda$ (resp. $T_\bbR^\lambda$)
for $\lambda\in\Lambda_T^+$
\beq\label{strata for real Gr}
\Gr_\bbR=\bigsqcup_{\lambda\in\Lambda_A^+} S_\bbR^\lambda\ \ \ \ (resp.\ \ \ 
\Gr_\bbR=\bigsqcup_{\lambda\in\Lambda_A^+} T_\bbR^\lambda)
\eeq
and we have 
\[\overline S_\bbR^\lambda=\bigsqcup_{\mu\leq\lambda} S_\bbR^\mu\ \ \ \ (resp.\ \ \ 
T_\bbR^\lambda=\bigsqcup_{\lambda\leq\mu} T_\bbR^\mu).\]

\quash{
\subsection{Component groups of $\Gr_\bbR$}
The diffeomorphism 
$\Omega G_c\is\Gr$
 induces a diffeomorphism on the $\eta$-fixed points 
$(\Omega G_c)^\eta\is\Gr_\bbR$.
Let $\Omega_{\on{top}} G_c$ and $\Omega_{\on{top}} X_c$
be the (topological) based loop spaces of $G_c$ and $X_c$.
Note that for any $\gamma\in(\Omega_{\on{top}} G_c)^\eta$ 
we have $\gamma(-1)\in K_c$ and the map  
$e^{i\theta}\to\gamma'(e^{i\theta}):=\pi\circ\gamma(e^{i\theta/2})$
defines a map
$\gamma':S^1\to X_c$, that is, $\gamma'\in\Omega_{\on{top}} X_c$.
According to \cite{M} the composition 
\[(\Omega G_c)^\eta\to(\Omega_{\on{top}} G_c)^\eta\to \Omega_{\on{top}} X_c\]
is a homotopic equivalence where the first map is the natural inclusion and the second map
is given by $\gamma\to\gamma'$. 
Since $X_c$ is a deformation retract of $X$, 
the map $q$ induces 
an isomorphism 
\beq\label{components of real Gr}
\pi_0(\Gr_\bbR)\is\pi_0((\Omega G_c)^\eta)\is\pi_0( \Omega_{\on{top}} X_c)\is\pi_1(X_c)\is
\pi_1(X).
\eeq

We define $\Gr_{\bbR,\calL}$
be the union of the components of $\Gr_\bbR$ in the image 
$q_*(\pi_1(G))\subset \pi_1(X)\stackrel{(\ref{components of real Gr})}=\pi_0(\Gr_\bbR)$.

\begin{lemma}\label{Gr^0_R}
We have $\Gr_{\bbR,\calL}=\bigcup_{\lambda\in\mL} S_{\bbR}^\lambda$.
\end{lemma}
\begin{proof}
Let $\lambda\in\Lambda_S^+$.
It suffices to show that 
$S_{\bbR}^\lambda\subset \Gr_{\bbR,\calL}$ if and only if $\lambda\in\mL$.
We have $t^\lambda\in S_{\bbR}^\lambda$ and it follows from (\ref{components of real Gr}) that 
$t^\lambda$ lies in the 
component of 
$\Gr_\bbR$
corresponding to $[\lambda]\in\pi_0(\Gr_\bbR)=\pi_1(X)$. 
It implies $t^\lambda\in\Gr_{\bbR,\calL}$ if and only if $\lambda\in\mL$.
Since 
$G_\bbR/P_\bbR^\lambda=K_c\cdot t^\lambda$ and $\pi:G\to X$ is $K$-equivariant, 
it implies $G_\bbR/P_\bbR^\lambda\subset\Gr_{\bbR,\calL}$ if and only if $\lambda\in\mL$.
Finally, since $S_\bbR^\lambda$ is a vector bundle over $G_\bbR/P_\bbR^\lambda$ 
we conclude that $S_\bbR^\lambda\subset\Gr_{\bbR,\calL}$ if and only if $\lambda\in\mL$.
The lemma follows.
\end{proof}
}

\subsection{Beilinson-Drinfeld Grassmannians}\label{BD Gr}
Let $C$ be a smooth curve over $\bC$.
Consider the functor 
$G(\mO)_{C^n}$ from the category of affine schemes to sets
\[S\ra G(\mO)_{C^n}(S):=\{(x,\phi)|x\in C^n(S), \phi\in G(\hat\Gamma_x)\}.\]
Here $\hat\Gamma_x$ is the formal completion of the graphs $\Gamma_x$ of 
$x$ in $C\times S$.
Similarly, we define 
$G(\calK)_{C^n}$ to be the functor 
from the category of affine schemes to sets
\[S\ra G(\calK)_{C^n}(S):=\{(x,\phi)|x\in C^n(S), \phi\in G(\hat\Gamma_x^0)\}.\]
Here $\hat\Gamma_x^0:=\hat\Gamma'_x-\Gamma_x$ and 
$\hat\Gamma'_x=\on{Spec}(R_x)$ is the spectrum of  
ring of functions $R_x$ of $\hat\Gamma_x$.
$G(\mO)_{C^n}$ is represented by a formally smooth group scheme over $C^n$
and $G(\calK)_{C^n}$ is represented by a formally smooth group ind-scheme over $C^n$.
The quotient ind-scheme 
\[\Gr_{C^n}:=G(\calK)_{C^n}/G(\mO)_{C^n}\] 
is 
called the Beilinson-Drinfeld Grassmannian. We have 
\[\Gr_{C^n}(S)=\{(x,\mE,\phi)|x\in C^n(S), \mE\text{\ a}\ G\text{-torsor\ on\ } 
C\times S, \phi\text{\ a trivialization of}\ \mE\text{\ on}\ C\times S-\Gamma_x\}.\]
For any $x\in C^n$, we denote by 
$\Gr_{C^n,x}=\Gr_{C^n}|_x$ the fiber over $x$.

Let $p(n)$ be the set of partition of the set $\{1,...,n\}$. For any $\fp=\fp_1\sqcup\cdot\cdot\cdot\sqcup\fp_k\in p(n)$
let $C^\fp$ be the subset consisting of 
$(z_1,...,z_n)\in C^n$ such that $z_i=z_j$ if and only if $i,j\in\fp_l$ 
for some $l$. The collection $\{C^\fp\}_{\fp\in p(n)}$
forms a Whitney stratification of $C^n$. We denote by $C^n_0=C^n_{1\sqcup\cdot\cdot\cdot\sqcup n}$ the open stratum
of distinct points.
We have the following well-known factorization properties \cite[Section 5.3.12]{BD}:
there are canonical isomorphisms
\beq\label{complex factorization}
G(\calK)_{C^n}|_{C^\fp}\is(\prod_{i=1}^kG(\calK)_{C})|_{C^{k}_0}\ \ \ 
G(\calO)_{C^n}|_{C^\fp}\is(\prod_{i=1}^kG(\calO)_C)|_{C^{k}_0}
\ \ \ 
\Gr_{C^n}|_{C^\fp}\is(\prod_{i=1}^k\Gr_C)|_{C^{k}_0}
\eeq

Recall  the standard stratification of $\Gr_{C^n}$.
For any map $\lambda_\fp:\fp\to\Lambda_T^+$, we denote 
by $\Gr_{C^n}^{\lambda_\fp}\subset\Gr_{C^n}$ the spherical stratum 
of a $G$-bundle on $\mE$ on $C$ a point in $z\in C^\fp$
and a section $\gamma:C\setminus\{z_1,...,z_n\}$
of modification type $\lambda_\fp$. The collection $\{\Gr_{C^n}^{\lambda_\fp}\}$
provides a stratification of $\Gr_{C^n}$
compatible with that of $C^n$.

We describe another stratification of $\Gr_{C^n}$ coming from the natural
$K(\calK)_{C^n}$-action. The strata of $\calO^{\lambda}_{K,C}$
are indexed by $\Lambda_A^+$. The stratum $\calO_{K,C}^{\lambda}$
consists of the union of all fibers $\Gr^{}_{C,x}$ of the points which map to the 
$K(\calK)_{C,x}$-orbit in $\calO_K^\lambda$ via the canonical isomorphism 
$\Gr_{C,x}\is \Gr $.
The strata of $\calO_{K,C^n}^{\lambda_\fp}$ are indexed by 
maps $\lambda_\fp:\fp\to\Lambda_A^+$.
The stratum $\calO_{K,C^n}^{\lambda_\fp}$
consists of points of $\Gr_{C^n}|_{C^\fp}$
which maps via the isomorphism~\eqref{complex factorization} to the product stratum
indexed by $\lambda_\fp$.

\subsection{Real forms of Beilinson-Drinfeld Grassmannians}\label{real Gr}
We consider the case $C=\mathbb P^1=\bC\cup\infty$. We 
write $\Gr^{(n)}=\Gr_{C^n}$, $G(\calK)^{(n)}=G(\calK)_{C^n}$,
$G(\calO)^{(n)}=G(\calO)_{C^n}$,  
$\Gr^{(n)}_x=\Gr^{(n)}|_x$, 
$S^{(n),\lambda_\fp}=\Gr_{C^n}^{\lambda_\fp}$,
and $\calO_{K,C^n}^{\lambda_\fp}=\mO^{(n),\lambda_\fp}_{K}$.

Let $c:\mathbb P^1\ra \mathbb P^1$ be the complex conjugation.
Consider the following anti-holomorphic involution 
$c^n:(\mathbb P^1)^n\ra (\mathbb P^1)^n, c^n(z_1,...,z_n)=(c(z_1),...,c(z_n))$. The involution $c_n$ together with the involution 
$\eta$ on $G$ defines anti-holomorphic involutions
$G(\calK)^{(n)}$, $G(\calO)^{(n)}$, 
$\Gr^{(n)}$
and 
we write 
$G(\calK)^{(n)}_\bbR$, $G(\calO)^{(n)}_\bbR$, 
$\Gr^{(n)}_\bbR=G(\calK)^{(n)}_\bbR/G(\calO)^{(n)}_\bbR$ for the corresponding semi-analytic 
spaces.

We have a natural projection map
\[\Gr^{(n)}_\bbR\to (\mathbb P^1(\bbR))^n\]
and the factorization isomorphism in~\eqref{complex factorization}
restricts to an isomorphism
\beq
\Gr^{(n)}_\bbR|_{\mathbb P^1(\bbR)^\fp}\is(\prod_{i=1}^k\Gr^{(1)}_\bbR)|_{\mathbb P^1(\bbR)^k_0}
\eeq

Consider the case when $n=2m$ is even.
Let $\sigma_{2m}:(\bbP^1)^{2m}\to (\bbP^1)^{2m}$ be the complex conjugation given by 
$\sigma_{2m}(z_1,...,z_{2m})=(c(z_{m+1}),...,c(z_{2m}),c(z_1),...,c(z_m))$.
Then $\sigma_{2m}$ together with $\eta$ 
defines anti-holomorphic involutions of
$G(\calK)^{(2m)}$, $G(\calO)^{(2m)}$, 
$\Gr^{(2m)}$
and 
we write 
$G(\calK)^{(\sigma_{2m})}_\bbR$, $G(\calO)^{(\sigma_{2m})}_\bbR$, 
$\Gr^{(\sigma_{2m})}_\bbR=G(\calK)^{(\sigma_{2m})}_\bbR/G(\calO)^{(\sigma_{2m})}_\bbR$ for the corresponding real forms.
The projection map 
$(\mathbb P^1)^{2m}\to (\mathbb P^1)^m$ sending 
$(z_1,...,z_{2m})\to (z_1,...,z_m)$ restricts to a real analytic 
isomorphism 
$((\mathbb P^1)^{2m})^{(\sigma_{2m})}\is (\mathbb P^1)^m$
and we have a natural projection map
\beq
\Gr^{(\sigma_{2m})}_\bbR\to (\mathbb P^1)^m.
\eeq

Write $\calH=\{z\in\mathbb P^1|\on{im}(z)>0\}$ be the upper half-plane.
According to \cite[Proposition 4.3.1]{N2}
we have the following isomorphisms
\beq\label{fibers over real and complex}
\Gr^{(\sigma_{2m})}_\bbR|_{
\calH^m}\is\Gr^{(m)}|_{
\calH^m}\ \ \ \ \ \ \ 
\Gr^{(\sigma_{2m})}_\bbR|_{\mathbb P^1(\bbR)^m}\is\Gr^{(m)}_\bbR
\eeq
Moreover, for any  $\fp=\fp_1\sqcup\cdot\cdot\cdot\sqcup\fp_k$
partition of $\{1,...,m\}$, 
there is a natural isomorphism
\beq\label{real complex factorization}
\Gr^{(\sigma_{2m})}_\bbR|_{(\mathbb P^1)^\fp}\is(\prod_{i=1}^k\Gr^{(\sigma_2)}_\bbR)|_{(\mathbb P^1)^k_0}
\eeq

The stata $S^{(n),\lambda_\fp}$ (resp. $S^{(2m),\lambda_\fp}$)
restricts to strata $S^{(n),\lambda_\fp}_\bbR$ (resp. $S^{(\sigma_{2m}),\lambda_\fp}_\bbR$)
of $\Gr^{(m)}_\bbR$  (resp. $\Gr^{(\sigma_{2m})}_\bbR$).
It follows from~\eqref{strata for real Gr} that $S^{(n),\lambda_\fp}_\bbR$ is non-empty if and only if 
$\lambda_\fp:\fp\to\Lambda_A^+\subset\Lambda^+_T$.

\begin{example}\label{m=1}
Consider the case when $m=1$. Then the fibers of 
the family $\Gr_\bbR^{(\sigma_2)}\to\mathbb P^1$ over a point 
$z\in\mathbb P^1$ are given by
$\Gr_\bbR^{(\sigma_2)}|_{z=\bar z}\is\Gr_{z,\bbR}$
and $\Gr_\bbR^{(\sigma_2)}|_{z\neq \bar z}\is \Gr_z$.
\end{example}

\section{Multi-point generalizations}\label{Gram-Schmidt}
We study  multi-point version of  loop groups and loop spaces and we establish a 
 multi-point generalization of Gram-Schmidt factorization for loop groups.
The main reference is \cite{CN1}.

\subsection{Multi-point version of real loop groups}
Consider the functor $LG^{(n)}$ that assigns to an affine scheme $S$
the set of sections 
\[S\ra LG^{(n)}(S)=\{(x,\gamma)|x\in (\bbP^1)^n(S),\ \gamma\in G(\bbP^1\times S-\Gamma_x).\}.\]
Assume $n=2m$. The conjugations $\sigma_{2m}$
and $\eta$ together define an anti-holomorphic involution on 
$LG^{(n)}$
and we write
\[LG^{(\sigma_{2m})}_\bbR\ra(\mathbb P^1)^m\] 
for the corresponding semi-analytic space.
We will write 
\[LG^{(m)}_\bbR:=LG^{(\sigma_{2m})}_\bbR|_{\calH^m}\ra\calH^m\ \ \ \ \ \ L^-G^{(m)}_\bbR:=LG^{(\sigma_{2m})}_\bbR|_{\bbP^1(\bbR)^m}\to\bbP^1(\bbR)^m\]
for the restriction of $LG^{(\sigma_{2m})}_\bbR$
to 
$\calH^m$ and $\bbP^1(\bbR)^m$ respectively.
Concretely,
we have 
\[LG^{(m)}_\bbR=\{(x,\gamma)|x\in \calH^m,\ \gamma:\bbP^1\setminus |x|\cup|\bar x|\to G
\text{\ satisfying\ }
\gamma(\bbP^1(\bbR))\subset G_\bbR\}.\] 
\[L^-G^{(m)}_\bbR=\{(x,\gamma)|x\in\bbP^1(\bbR)^m,\ \gamma:\bbP^1\setminus |x|\to G
\text{\ satisfying\ }
\gamma(\bbP^1(\bbR)\setminus |x|)\subset G_\bbR\}.\]

We consider the subgroup indscheme 
$\Omega G^{(n)}\subset LG^{(n)}|_{\bC^n}$
consisting of 
$(x,\gamma)\in LG^{(n)}|_{\bC^n}$ such that 
$\gamma(\infty)=e$.
Since the conjugations $c_n$ and $\sigma_{2m}$
fixed $\infty\in\mathbb P^1$, they induces an involution of 
$\Omega G^{(2m)}$
and let  
\[\Omega G_\bbR^{(\sigma_{2m})}\to\bC^m\] 
be the associated semi-analytic  space
and the subspaces
\[\Omega G_\bbR^{(m)}:=\Omega G_\bbR^{(\sigma_{2m})}|_{\calH^m}\ra\calH^m\ \  \ \ \ 
L^-_\infty G_{\bbR}^{(m)}:=\Omega G_\bbR^{(\sigma_{2m})}|_{\bbR^m}\to\bbR^m.\]

 \begin{example}\label{the case m=1}
 Assume $m=1$. Then the fiber of $LG_\bbR^{(1)}$
 over $x\in\calH$ is equal to 
 \[LG_\bbR^{(1)}|_x=L_xG_\bbR:=\{\gamma:\bbP^1\setminus x\cup\bar x\to G
 \text{ satisfying } \gamma(\bbP^1(\bbR))\subset G_\bbR\} \]
The change of coordinate  $t=\frac{z-x}{z-\bar x}$
induces isomorphism
$L_xG_\bbR\is LG_\bbR$.
The fiber of $L^-G^{(1)}_\bbR$
 over $x\in\mathbb P^1(\bbR)$ is equal to 
 \[{L^-G}^{(1)}_\bbR|_x={L^-_xG}_\bbR:=\{\gamma:\bbP^1\setminus x\to G
 \text{ satisfying } \gamma(\bbP^1(\bbR)\setminus x)\subset G_\bbR\} \]
The change of coordinate  $t=z-x$
induces isomorphism
${L^-_xG}_\bbR\is G_\bbR(\bbR[t^{-1}])$.
 \end{example}

\subsection{Generalization of Gram-Schmidt factorization}
Consider the case of 
compact real from 
 $G_\bbR=G_c$
 and write $\Omega G_c^{(\sigma_{2m})}=\Omega G_\bbR^{(\sigma_{2m})}$,
 $\Omega G_c^{(m)}=\Omega G_\bbR^{(m)}$, $L_\infty^-G_c^{(m)}=L_\infty^-G_\bbR^{(m)}$.
 
 \begin{lemma}\label{fibers: compact case}
There is an isomorphism $L^-G_c^{(m)}\is G_c\times\bP^1(\bbR)^m$
(resp. $L_\infty^-G_c^{(m)}\is \{e\}\times\bbR^m$) of group schemes.

\end{lemma}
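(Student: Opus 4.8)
The plan is to reduce everything to the single claim that every loop occurring in $L^-G_c^{(m)}$ is a constant map, which in turn follows from the compactness of $G_c$ together with elementary facts about rational functions on $\bbP^1$. Since we are only concerned with the underlying topology of these ind-spaces, it suffices to prove the claim on points: fix a point $x=(x_1,\dots,x_m)\in\bbP^1(\bbR)^m$ and a regular map $\gamma:\bbP^1\setminus|x|\to G$ with $\gamma(\bbP^1(\bbR)\setminus|x|)\subset G_c$, and show that $\gamma$ is constant with value in $G_c$. Conversely, the constant loops visibly assemble into a closed subgroup scheme of $L^-G_c^{(m)}$ over $\bbP^1(\bbR)^m$ isomorphic to the constant group scheme $G_c\times\bbP^1(\bbR)^m$, so once the pointwise claim is known the two are identified.

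For the pointwise claim I would choose a faithful algebraic representation $G\hookrightarrow\GL_N\hookrightarrow\mathrm{Mat}_N=\bbA^{N^2}$. Through this embedding the matrix coefficients of $\gamma$ are regular functions on $\bbP^1\setminus|x|$, hence rational functions on $\bbP^1$ whose poles, if any, lie in the finite set $|x|\subset\bbP^1(\bbR)$. Since $G_c$ is compact, its image in $\mathrm{Mat}_N(\bbC)$ is bounded, so each of these rational functions is bounded along $\bbP^1(\bbR)\setminus|x|$. But a rational function on $\bbP^1$ with a pole at a point $a\in\bbP^1(\bbR)$, whether $a$ is finite or $a=\infty$, is unbounded on the real locus in every punctured neighbourhood of $a$; hence $\gamma$ has no poles at all, each matrix coefficient is a global regular function on $\bbP^1$ and therefore a constant, and $\gamma$ is a constant matrix. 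This constant lies in $G$ since it is a value of $\gamma$, and it equals $\gamma(z)\in G_c$ for any real $z\notin|x|$, so it lies in $G_c$, as desired.

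It remains to package this into the asserted isomorphisms. The tautological morphism $G_c\times\bbP^1(\bbR)^m\to L^-G_c^{(m)}$ over $\bbP^1(\bbR)^m$ sending $g$ to the constant loop $g$ is a homomorphism of semi-analytic group schemes — loop multiplication restricts on constant loops to the multiplication of $G_c$ — and a closed immersion; the pointwise computation shows it is surjective on points, hence an isomorphism under our standing convention of ignoring non-reduced structure. Restricting to $L_\infty^-G_c^{(m)}=\Omega G_c^{(\sigma_{2m})}|_{\bbR^m}$ further imposes the normalization $\gamma(\infty)=e$, which together with the constancy just proved forces $\gamma\equiv e$; thus $L_\infty^-G_c^{(m)}$ is the trivial group scheme $\{e\}\times\bbR^m$ over $\bbR^m$. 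The only step that needs a little care is this last one, namely upgrading bijectivity on points to an isomorphism of group schemes; it is harmless here, and is consistent with Example \ref{the case m=1}, where the fiber $L^-_xG_\bbR\is G_\bbR(\bbR[t^{-1}])$ specializes for the compact form to $G_c(\bbR[t^{-1}])=G_c$ by the very same boundedness argument (for instance, for $G_c=\mathrm{SO}(2)$ the relation $a^2+b^2=1$ with $a,b\in\bbR[t^{-1}]$ forces $a,b$ constant).
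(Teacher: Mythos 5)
The paper does not give a proof of this lemma; it simply cites \cite[Lemma 3.2]{CN1}. Your proposal, by contrast, supplies a complete self-contained argument, and it is correct: fixing $x\in\bP^1(\bbR)^m$ and a section $\gamma:\bP^1\setminus|x|\to G$ with $\gamma(\bP^1(\bbR)\setminus|x|)\subset G_c$, a faithful embedding $G\hookrightarrow\GL_N\subset\mathrm{Mat}_N$ makes the matrix coefficients of $\gamma$ rational functions on $\bP^1$ whose only possible poles lie in $|x|\subset\bP^1(\bbR)$; compactness of $G_c$ bounds these functions on $\bP^1(\bbR)\setminus|x|$, whereas any pole at a real point of $\bP^1$ (finite or $\infty$) forces unboundedness on the real locus near that point; hence there are no poles, each coefficient is a global regular function on $\bP^1$ and therefore constant, and $\gamma$ is the constant map at $\gamma(z)\in G_c$ for any real $z\notin|x|$. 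The normalization $\gamma(\infty)=e$ defining $L_\infty^-G_c^{(m)}$ then pins the constant to $e$. Packaging this into the stated isomorphisms of group schemes by observing that constant loops form a closed subgroup scheme isomorphic to the constant group scheme $G_c\times\bP^1(\bbR)^m$ and that your computation shows the inclusion is a bijection on points is fine under the paper's standing convention of ignoring non-reduced structure. This is the natural ``compactness forces no poles'' argument, and your $\mathrm{SO}(2)$ sanity check is a sensible touchstone; I see no gap.
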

 \begin{proof}
 This is proved in \cite[Lemma 3.2]{CN1}.
 \end{proof}

\begin{corollary}\label{poly in K_c}
We have 
$G_c(\mathbb R[t^{-1}])=G_c$ and $G_c(\calK_\mathbb R)=G_c(\mO_\mathbb R)$.
\end{corollary}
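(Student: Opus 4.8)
The plan is to deduce both equalities from the anisotropy of the compact real form over $\bbR$, together with the two results immediately preceding the statement — Lemma \ref{fibers: compact case} and Example \ref{the case m=1} — and, for the second equality, the orbit description recalled in Section \ref{real affine Gr}.

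First I would record that $G_c$, regarded as a real algebraic group, is $\bbR$-anisotropic: a nontrivial $\bbR$-split subtorus would produce a closed, hence compact, subgroup of the form $(\bbR^\times)^r\subset G_c$ with $r\geq 1$, which is impossible. Hence the maximal split torus $A_\bbR\subset G_c$ is trivial, the lattice of real coweights $\Lambda_A$ vanishes, and $\Lambda_A^+=\{0\}$.

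For $G_c(\bbR[t^{-1}])=G_c$ I would pass to the fibre over a point $x\in\bbP^1(\bbR)$ of the isomorphism $L^-G_c^{(1)}\is G_c\times\bbP^1(\bbR)$ given by Lemma \ref{fibers: compact case} in the case $m=1$: by Example \ref{the case m=1} the left-hand fibre is $L^-_xG_c\is G_c(\bbR[t^{-1}])$ and the right-hand fibre is $G_c$, and since that isomorphism is the identity on constant loops the claim follows. For $G_c(\calK_\bbR)=G_c(\mO_\bbR)$ I would apply the parametrization of $G_\bbR(\mO_\bbR)$-orbits on the real affine Grassmannian from Section \ref{real affine Gr} to the group $G_c$ in place of $G_\bbR$: those orbits are the strata $S^\lambda_\bbR$, $\lambda\in\Lambda_A^+$, so by the previous step there is a unique orbit, namely $S^0_\bbR$ — the $G_c(\mO_\bbR)$-orbit of $t^0=e$, i.e. the base point of $G_c(\calK_\bbR)/G_c(\mO_\bbR)$. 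Thus this quotient is a single point, which is exactly $G_c(\calK_\bbR)=G_c(\mO_\bbR)$.

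I do not expect a genuine obstacle: the statement is a direct consequence of the structure theory assembled in Sections \ref{orbits}--\ref{Gram-Schmidt}. The only points worth spelling out are that the isomorphism of Lemma \ref{fibers: compact case} restricts to the identity on constant loops, and — for a reader who prefers to avoid Lemma \ref{fibers: compact case} in the first equality — the elementary remark that $\bbR[t^{-1}]\cap\bbR[[t]]=\bbR$ inside $\bbR((t))$, which combined with the second equality forces every $\bbR[t^{-1}]$-point of $G_c$ to factor through $\bbR$, hence to be constant.
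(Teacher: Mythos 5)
Your proof is correct, and the first equality is handled exactly as in the paper (both of you read it off from Lemma~\ref{fibers: compact case} with $m=1$). The second equality, though, follows a genuinely different route. The paper feeds the first equality back in: since $G_c(\bbR[t^{-1}]) = G_c$ and $G_c$ sits inside $G_c(\mO_\bbR)$, the $G_c(\bbR[t^{-1}])$-orbit through the base point is a single point; but that orbit is the open dense stratum $T^0_\bbR$ of $\Gr_c$ (as recalled in Section~\ref{real affine Gr}), so the whole Grassmannian collapses to a point. You instead argue intrinsically that the compact form is $\bbR$-anisotropic, so $\Lambda_A^+ = \{0\}$, hence by the stratification $\Gr_c = \bigsqcup_{\lambda \in \Lambda_A^+} S^\lambda_\bbR$ there is a unique orbit $S^0_\bbR$, which is the base point. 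Both deductions are sound and about the same length; yours has the small advantage of making the two equalities logically independent (the paper must prove them in order), while the paper's has the advantage of not needing to invoke anisotropy at all --- it leans instead on the already-established first equality and the open-dense property of $T^0_\bbR$. Your closing aside, that once the second equality is known the first follows from $\bbR[t^{-1}]\cap\bbR[[t]]=\bbR$, is also correct and gives a nice bidirectional reading, but it is not part of the paper's argument.
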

\begin{proof}
Indeed the lemma above (in the case $m=1$) implies 
$G_c(\mathbb R[t^{-1}])=G_c$.
Since the $G_c(\mathbb R[t^{-1}])$-orbit in $\Gr_c=G_c(\calK_\bbR)/G_c(\mO_\bbR)$ 
through the based point is open dense 
(see Section \ref{real affine Gr}), it implies 
$\Gr_c$ is a point and hence
 $G_c(\calK_\mathbb R)=G_c(\mO_\mathbb R)$.
\end{proof}

\begin{proposition}\label{multi Gram-Schmidt}
(1)
For any partition $\fp\in p(m)$, there is a natural homeomorphism
\beq\label{factorization of Omega1}
\Omega G_c^{(\sigma_{2m})}|_{(\bP^1)^\fp}\is(\prod_{i=1}^k\Omega G_c^{(\sigma_2)})|_{(\bP^1)^k_0}
\eeq
(2) The homeomorphism~\eqref{factorization of Omega1}
restricts to a natural a homeomorphism 
\beq\label{factorization of Omega2}
\Omega G_c^{(m)}|_{\calH^\fp}\is(\prod_{i=1}^k\Omega G_c^{(1)})|_{\calH^k_0}
\eeq

(3) There is a natural homeomorphism
\[\Omega G_c^{(m)}\is\Gr^{(m)}|_{\calH^m}.\]

(4) There is a natural homeomorphism
\[\Omega G_c^{(m)}\times (G(\calO)^{(m)}|_{\calH^m})\is G(\calK)^{(m)}|_{\calH^m}\]

\end{proposition}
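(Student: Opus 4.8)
The plan is to reduce all four parts to the classical one-point statements of \cite{PS} together with Beilinson-Drinfeld factorization. Parts (1) and (2) are purely factorization statements. Granting them, parts (4) and (3) reduce, stratum by stratum on $\calH^m$, to the one-point family over $\calH$, and that case reduces fiberwise to the identifications $\Omega G_c\is\Gr$ and $\Omega G_c\cdot G(\calO)=G(\calK)$. Finally (4) implies (3): quotienting both sides of (4) by the right action of $G(\calO)^{(m)}|_{\calH^m}$ collapses the left-hand side onto $\Omega G_c^{(m)}$ and the right-hand side onto $G(\calK)^{(m)}|_{\calH^m}/G(\calO)^{(m)}|_{\calH^m}=\Gr^{(m)}|_{\calH^m}$, and the induced map is precisely the natural one sending a section $\gamma$ to its restriction to the formal punctured disc along the marked points $z_1,\dots,z_m$. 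So it suffices to prove (1), (2) and (4).

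For (1), I would begin with the standard Beilinson-Drinfeld factorization of $LG^{(2m)}$, hence of its subgroup $\Omega G^{(2m)}$, over the strata of the base (compare \eqref{complex factorization} and \cite[Section 5.3.12]{BD}, \cite{CN1}). For a partition $\fp\in p(m)$ with $k$ blocks, the corresponding $\sigma_{2m}$-stable stratum $(\bP^1)^\fp$ consists of configurations $(z_1,\dots,z_m,c(z_1),\dots,c(z_m))$ in which $z_1,\dots,z_m$ cluster according to $\fp$ and are disjoint from their conjugates; there the factorization writes $\Omega G^{(2m)}$ as a product of $2k$ one-cluster factors, $k$ of them supported at the clusters of $z_1,\dots,z_m$ and $k$ at the conjugate clusters, and the anti-holomorphic involution $(\sigma_{2m},\eta_c)$ defining the real form $\Omega G_c^{(\sigma_{2m})}$ interchanges the first $k$ factors with the last $k$ while acting on each by $(c,\eta_c)$. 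Taking fixed points identifies the invariants with a product of $k$ copies of the real form $\Omega G_c^{(\sigma_2)}$ of a single two-point factor, which is \eqref{factorization of Omega1}. Restricting this homeomorphism over $\calH^\fp$ and invoking the definitions $\Omega G_c^{(m)}=\Omega G_c^{(\sigma_{2m})}|_{\calH^m}$ and $\Omega G_c^{(1)}=\Omega G_c^{(\sigma_2)}|_{\calH}$ yields \eqref{factorization of Omega2}, which is (2).

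For the one-point family underlying (3) and (4), I would use Example \ref{the case m=1}: for $x\in\calH$ the coordinate $t=(z-x)/(z-\bar x)$ identifies $\Omega G_c^{(1)}|_x$ with the polynomial based loop group $\Omega G_c$, and simultaneously identifies $G(\calK)^{(1)}|_x$ with $G(\calK)$ and $G(\calO)^{(1)}|_x$ with $G(\calO)$; by \cite[Theorem 8.6.2]{PS} the multiplication $\Omega G_c\times G(\calO)\to G(\calK)$ is a homeomorphism (equivalently $\Omega G_c\is\Gr$). These identifications vary real-analytically with $x$ and are compatible with the standard local trivializations (in the analytic topology) of $G(\calK)_{\bP^1}$, $G(\calO)_{\bP^1}$ and $\Gr_{\bP^1}$ over the chart $\bP^1\setminus\{\infty\}\supset\calH$, so the multiplication map
\[
\Omega G_c^{(1)}\times\bigl(G(\calO)^{(1)}|_{\calH}\bigr)\lra G(\calK)^{(1)}|_{\calH}
\]
is a homeomorphism of families over $\calH$; this is essentially the content of Lemma \ref{fibers: compact case} and \cite[Lemma 3.2]{CN1}. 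For general $m$, combining \eqref{complex factorization} for $G(\calK)^{(m)}$ and $G(\calO)^{(m)}$ with part (2) for $\Omega G_c^{(m)}$ turns all three families, over a stratum $\calH^\fp$ with $k$ blocks, into $k$-fold products of the one-point families over $\calH^k_0$, compatibly with the multiplication maps; hence the multiplication map of (4) is a homeomorphism over each stratum, and the compatibility of the factorization isomorphisms as $\fp$ varies upgrades this to a homeomorphism over all of $\calH^m$. Then (3) follows by quotienting, as in the first paragraph.

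The hard part is not any individual fiber --- there everything is classical --- but the uniformity in families: one must know that the Gram-Schmidt (Iwasawa) factorization of loop groups varies continuously, indeed real-analytically, with the moving points, and in particular degenerates compatibly with the Beilinson-Drinfeld factorization isomorphisms as marked points collide. This is exactly the input supplied by the multi-point analysis of \cite{CN1}; everything else is bookkeeping with fixed points of anti-holomorphic involutions and with the stratification of $\calH^m$.
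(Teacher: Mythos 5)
Your logical structure differs from the paper's for parts (3) and (4): you prove (4) stratum by stratum via factorization and the one-point case and then deduce (3) by quotienting, whereas the paper cites \cite[Theorem~3.3(2)]{CN1} to get (3) directly for the restriction map $\Omega G_c^{(m)}\to\Gr^{(m)}|_{\calH^m}$ and then derives (4) from (3) together with $\Gr^{(m)}\is G(\calK)^{(m)}/G(\calO)^{(m)}$. The paper's order is cleaner: ind-properness of $\Omega G_c^{(m)}\to\calH^m$ makes the continuous bijection onto $\Gr^{(m)}|_{\calH^m}$ automatically a homeomorphism, whereas your stratum-by-stratum argument for (4) leaves the gluing step unjustified (a continuous bijection that restricts to a homeomorphism over each stratum of a Whitney stratification of the base need not be a global homeomorphism without additional openness or properness input).

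The genuine gap is in your argument for (1). You assert that on the stratum $(\bP^1)^\fp$ the configurations $(z_1,\dots,z_m)$ "cluster according to $\fp$ and are disjoint from their conjugates." The second condition is not part of $(\bP^1)^\fp$: since $\fp\in p(m)$, this is a stratum of $(\bP^1)^m$, and it constrains only coincidences among the $z_i$ themselves, so it contains points with $z_i\in\bbR$ (hence $z_i=c(z_i)$), and more generally with $z_i=c(z_j)$. Over such loci the image of $(\bP^1)^\fp$ inside $(\bP^1)^{2m}$ under $z\mapsto(z,c(z))$ does not lie in a single stratum of $(\bP^1)^{2m}$, so the complex factorization \eqref{complex factorization} of $\Omega G^{(2m)}$ into $2k$ one-cluster factors that you invoke is unavailable, and the "take fixed points of the interchange involution" step does not go through. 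This is precisely where the fibers of $\Omega G_c^{(\sigma_2)}$ jump (trivial over $\bbR$, a copy of $\Omega G_c$ over $\calH\cup\bar\calH$): the actual content of (1) is that the multiplication map is a homeomorphism across this degeneration, which is what the paper's citation \cite[Theorem~3.3(3)]{CN1} supplies. Your sketch proves (1) only over the open sublocus where every $z_i$ is disjoint from every $c(z_j)$. Parts (2)--(4) live over $\calH^\fp$, where disjointness from conjugates is automatic, so your outline there is sound in spirit; but note that Lemma~\ref{fibers: compact case} and \cite[Lemma~3.2]{CN1} concern the real locus $\bP^1(\bbR)^m$ rather than $\calH$, so the correct reference for the one-point multiplication homeomorphism over $\calH$ is \cite[Theorem~3.3(2)]{CN1}.
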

\begin{proof}
Proof of (1) and (2).
\cite[Theorem 3.3 (3)]{CN1} implies that the natural multiplication map
$(\prod_{i=1}^k\Omega G_c^{(\sigma_2)})|_{(\bP^1)^k_0}\to \Omega G_c^{(\sigma_{2m})}|_{(\bP^1)^\fp}$
is a homeomorphism.

There is a natural map $\Omega G_c^{(m)}\ra G(\calK)^{(m)}|_{\calH^m}$
sending $(x,\gamma)$ to $(x,\gamma|_{\hat\Gamma^0_x})$, 
where $\gamma|_{\hat\Gamma^0_x}$ is the restriction of 
the section $\gamma:\bbP^1\setminus |x|\cup|\bar x|\to G$ to $\hat\Gamma^0_x$.
It induces a map
\[\Omega G_c^{(m)}\to G(\calK)^{(m)}|_{\calH^m}\to G(\calK)^{(m)}/G(\calO)^{(m)}|_{\calH^m}
=\Gr^{(m)}|_{\calH^m}\] and \cite[Theorem 3.3 (2)]{CN}
implies that it is a homeomorphism. Part (3) follows.
Since $\Gr^{(m)}\is G(\calK)^{(m)}/G(\calO)^{(m)}$, part (3) implies part (4).
\end{proof}

\begin{example}
Assume $m=1$.
Then   
Proposition \ref{multi Gram-Schmidt} (4) over  a point $z\in\calH$ specializes to
the well-known 
Gram-Schmidt factorization 
for loop groups $\Omega_z G_c\times G(\calO_z)\is G(\calK_z)$.

\end{example}

\quash{
\subsection{Multi-point version of $X(\calK)$}\label{multi loop of X}
For any non-negative integer $n$, 
we define 
$X(\calK)^{(n)}$ to be the functor 
from the category of affine schemes to sets
\[S\ra X(\calK)^{(n)}(S):=\{(x,\phi)|x\in (\mathbb P^1)^n(S), \phi\in X(\hat\Gamma_x^0)\}.\]
It is known that $X(\calK)^{(n)}$ is represented by an
 ind-scheme ind-affine over $\bC$.
 
For any $\fp\in p(n)$, 
 there are canonical isomorphisms
\beq\label{factorization of X(K)}
X(\calK)^{(n)}|_{(\bP^1)^\fp}\is(\prod_{i=1}^kX(\calK)^{(1)})|_{(\bP^{1})^k_0}
\eeq

We describe a stratification of $X(\calK)^{(n)}$. The strata of $X(\calK)^{(1)}$
are indexed by $\Lambda_A^+$. The stratum $X(\calK)^{(1),\lambda}$
consists of the union of all fibers $X(\calK)^{(1)}|_x$ of the points which map to the 
$G(\calO)^{(1)}|_x$-orbit in $X(\calK)^\lambda$ via the canonical isomorphism 
$X(\calK)^{(1)}|_x\is X(\calK) $.
The strata of $X(\calK)^{(n)}$ are indexed by 
maps $\lambda_\fp:\fp\to\Lambda_A^+$.
The stratum $X(\calK)^{(n),\lambda_\fp}$
consists of points of $X(\calK)^{(n)}|_{(\mathbb P^1)^\fp}$
which maps via the isomorphism~\eqref{factorization of X(K)} to the product stratum
indexed by $\lambda_\fp$.

We will use the following proposition in latter sections.

\begin{proposition}\label{placid of X(K)}
(1) Each stratum $X(\calK)^{(n),\lambda_\fp}$ 
 is placid formally smooth scheme.
(2) There is a presentation $X(\calK)^{(n)}=\on{colim}_{i\in I} Y_i$
of $X(\calK)^{(n)}$ as a filtered colimit of 
placid schemes such that each $Y_i$ is a finite union of 
strata $X(\calK)^{(n),\lambda_\fp}$ and 
the transition map 
$Y_i\to Y_{i'}$ is finitely presented closed embedding.

\end{proposition}

}

\quash{

The following lemma follows from Proposition \ref{parametrization}.
\begin{lemma}
The embedding $\iota:X\is G_{sym}^0\subset G$ induces a $G(\calO)^{(m)}$-equivariant map
$\iota^{(n)}:X(\calK)\to \Gr^{(n)}\is G(\calK)^{(n)}/G(\calO)^{(n)}$
and for any $\lambda_\fp:\fp\to\Lambda_A^+$ we have 
\[X(\calK)^{(n),\lambda_\fp}=(\iota^{(n)})^{-1}(S^{(n),\lambda_\fp})\ \ \ \]


\end{lemma}
}

\subsection{Multi-point version of $\Omega X_c$}\label{multi loop of X}
For any non-negative integer $m$, 
we define 
\[\Omega X^{(m)}_c=\{(x,\gamma)|x\in\calH^m,\ \gamma:\bbP^1\setminus |x|\cup|\bar x|\to X
\text{\ satisfying\ }
\gamma(\bbP^1(\bbR))\subset X_c, \gamma(\infty)=e\}.\] 
We have natural projection map
\[\Omega X_c^{(m)}\to\calH^m\]
and we denote by 
$\Omega_zX_c^{(m)}$ the fiber over 
$z\in\calH^m$. 
The involution $\tilde\theta$ on $G$ induces an 
involution 
on $\Omega G_c^{(m)}$, still denoted by $\tilde\theta$,
and the isomorphism
$\iota:X\is G_{sym}^0= (G^{\tilde\theta})^{0}$ induces an isomorphism 
\[\iota^{(m)}:\Omega X_c^{(m)}\is(\Omega G_c^{(m)})^{\tilde\theta}.\]
We define
\[P^{(m),\lambda_\fp}=(\iota^{(m)})^{-1}(S^{(m),\lambda_\fp})^{\tilde\theta})\]
The collection $\{P^{(m),\lambda_\fp}\}_{\lambda_\fp:\fp\to\Lambda_A^+}$
forms a stratifiaction of 
$\Omega X_c^{(m)}$.
We have the following multi-point version of 
Proposition \ref{torsors}.

\begin{proposition}\label{components of multi loops of X}
The quotient map 
$q:G\to X$ induces a
 $K_c$-equivariant stratified homeomorphisms
\[\Omega K_c^{(m)}\backslash\Omega G_c^{(m)}\is \Omega X_{c}^{(m)}\]
which restricts to $K_c$-equivariant homeomorphisms on stratum
\[\Omega K_c^{(m)}\backslash\calO_K^{(m),\lambda_\fp}\is
P^{(m),\lambda_\fp}\]
\end{proposition}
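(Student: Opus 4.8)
The plan is to deduce Proposition \ref{components of multi loops of X} from the single-point statement Proposition \ref{torsors} (3) together with the factorization isomorphisms established in Proposition \ref{multi Gram-Schmidt}, reducing the global multi-point claim to a local one over the diagonal strata of $\calH^m$. First I would record the basic compatibility: the quotient map $q:G\to X$ induces a $K_c^{(m)}$-equivariant map $q^{(m)}:\Omega G_c^{(m)}\to\Omega X_c^{(m)}$ covering the identity on $\calH^m$, simply because $q$ is $K$-equivariant and the construction of both spaces is functorial in the target. Moreover $q^{(m)}$ is compatible with the stratifications: by definition $\calO_K^{(m),\lambda_\fp}$ and $P^{(m),\lambda_\fp}$ are pulled back from $S^{(m),\lambda_\fp}$ and $(S^{(m),\lambda_\fp})^{\tilde\theta}$ under the maps to $\Gr^{(m)}$ and $(\Gr^{(m)})^{\tilde\theta}$, and $q^{(m)}$ intertwines $\iota^{(m)}$ with the natural projection $\Omega G_c^{(m)}\to(\Omega G_c^{(m)})^{\tilde\theta}$, $\gamma\mapsto\tilde\theta(\gamma)^{-1}\gamma$ (using, exactly as in the proof of Proposition \ref{torsors}, that $\tilde\theta=\tilde\eta^\tau$ on the based-loop model so that $q$ corresponds to $\pi$).

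Next I would prove the statement stratum by stratum and then glue. Fix $\lambda_\fp:\fp\to\Lambda_A^+$ with $\fp\in p(m)$ of length $k$. Over the locally closed stratum $\calH^\fp\subset\calH^m$, the factorization homeomorphism \eqref{factorization of Omega2} identifies $\Omega G_c^{(m)}|_{\calH^\fp}$ with $(\prod_{i=1}^k\Omega G_c^{(1)})|_{\calH^k_0}$, and the analogous factorization for $\Omega X_c^{(m)}$ — which I would state and prove as a corollary of Proposition \ref{multi Gram-Schmidt} by taking $\tilde\theta$-fixed points, since $\tilde\theta$ acts compatibly on all the spaces in \eqref{factorization of Omega1}–\eqref{factorization of Omega2} — identifies $\Omega X_c^{(m)}|_{\calH^\fp}$ with $(\prod_{i=1}^k\Omega X_c^{(1)})|_{\calH^k_0}$. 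Under these identifications $q^{(m)}$ becomes the product of the single-point maps $q^{(1)}:\Omega G_c^{(1)}\to\Omega X_c^{(1)}$, and $\Omega K_c^{(m)}$ becomes $(\prod_{i=1}^k\Omega K_c^{(1)})|_{\calH^k_0}$. Now for each factor, the fiber over a point $z\in\calH$ is, by the change-of-coordinate isomorphism of Example \ref{the case m=1}, identified with the classical based-loop picture $\Omega G_c\to\Omega X_c$ with $\Omega K_c$-quotient, so Proposition \ref{torsors} (3) gives a $K_c$-equivariant stratified homeomorphism $\Omega K_c\backslash\Omega G_c\is\Omega X_c$ fiberwise, and the restriction to $\calO_K^\lambda$ gives $\Omega K_c\backslash\calO_K^\lambda\is P^\lambda$ by Proposition \ref{torsors} (1). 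Taking products over the $k$ factors and letting $z$ vary over $\calH^k_0$ yields the desired homeomorphism over $\calH^\fp$, both on the nose and on each stratum $P^{(m),\lambda_\fp}$.

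Finally I would glue across the strata of $\calH^m$. Both $\Omega K_c^{(m)}\backslash\Omega G_c^{(m)}$ and $\Omega X_c^{(m)}$ are semi-analytic spaces over $\calH^m$ stratified by $\{\calH^\fp\}_{\fp\in p(m)}$, and $q^{(m)}$ descends to a continuous map $\overline{q}^{(m)}:\Omega K_c^{(m)}\backslash\Omega G_c^{(m)}\to\Omega X_c^{(m)}$ over $\calH^m$. By the previous paragraph $\overline{q}^{(m)}$ is a bijective homeomorphism over each $\calH^\fp$ and is compatible with the factorization homeomorphisms, which are themselves compatible across the closure order of partitions (this is the standard nesting of the Beilinson–Drinfeld factorization data, used already in \eqref{complex factorization}); hence $\overline{q}^{(m)}$ is a homeomorphism. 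The compatibility with strata was checked on each $\calH^\fp$, giving $\Omega K_c^{(m)}\backslash\calO_K^{(m),\lambda_\fp}\is P^{(m),\lambda_\fp}$, and $K_c$-equivariance is inherited from the single-point case.

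I expect the main obstacle to be the gluing step: one must be careful that the fiberwise homeomorphisms of Proposition \ref{torsors}, which are produced analytically (via the energy-function geometry and connectedness of the critical manifolds $B^\lambda$), assemble into a genuine homeomorphism of the total semi-analytic spaces over the non-open strata, rather than merely a continuous bijection on fibers. The clean way around this is precisely to avoid re-proving anything fiberwise and instead transport the already-global factorization homeomorphisms of Proposition \ref{multi Gram-Schmidt} through the $\tilde\theta$-fixed-point functor and through $q^{(m)}$, so that the only input from the analytic side is the single honest statement $\Omega K_c\backslash\Omega G_c\is\Omega X_c$ over a point (Proposition \ref{torsors} (3)). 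A secondary technical point is verifying that $\tilde\theta$ is compatible with the multiplication maps underlying \eqref{factorization of Omega1}, i.e.\ that passing to $\tilde\theta$-fixed points commutes with factorization; this is routine since $\tilde\theta$ acts diagonally and preserves each local factor $\Omega G_c^{(\sigma_2)}$.
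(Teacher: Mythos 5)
Your factorization route to the fiberwise surjectivity of $q^{(m)}$ is genuinely different from, and arguably cleaner than, the paper's argument: the paper does not reduce to the single-point case but instead repeats the single-point mechanism directly in the multi-point setting (it picks $s\in\Omega_z G_c^{(m)}$ with $\tilde\theta(s)s\in P^{(m),\lambda_\fp}_z$, embeds the question into the twisted product $\Gr^{(m)}_z\tilde\times\Gr^{(m)}_z$ via the multi-point Gram-Schmidt factorization, uses the swap involution $\on{sw}\circ\tilde\theta$ together with the general contraction lemma of \cite[Proposition 6.4]{N1} to hit a connected component, and then invokes connectedness of $P^{(m),\lambda_\fp}_z$, established via the comparison \eqref{fiber of X} and Proposition \ref{triv of Q}, to get the whole stratum). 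So the two approaches trade off differently: yours would shorten the surjectivity argument to a product of single-point applications of Proposition \ref{torsors}, at the cost of having to check carefully that the BD factorization homeomorphisms of Proposition \ref{multi Gram-Schmidt}(1)--(2) pass to $\tilde\theta$-fixed points and intertwine the $\Omega K_c^{(m)}$-actions with the expected product structure.

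However, there is a real gap in your concluding gluing step. Having a continuous bijection $\overline{q}^{(m)}$ that restricts to a homeomorphism over each locally closed stratum $\calH^\fp$, together with compatibilities along the closure order, does not by itself produce a homeomorphism of the total spaces; one needs a properness/closedness input that your argument never supplies. Your proposed workaround — transporting ``the already-global factorization homeomorphisms of Proposition \ref{multi Gram-Schmidt}'' through $\tilde\theta$-fixed points — does not close the gap, because the factorization homeomorphisms in Proposition \ref{multi Gram-Schmidt}(1)--(2) are only defined over the individual strata $\calH^\fp$ (or $(\bP^1)^\fp$), not over all of $\calH^m$; there is no single global factorization to transport. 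The paper's proof handles this by the simpler observation that $\Omega G_c^{(m)}$ is ind-proper, so once $q^{(m)}$ is known to be surjective (which is exactly what your fiberwise argument establishes) it is automatically a closed map, whence the induced continuous bijection $\Omega K_c^{(m)}\backslash\Omega G_c^{(m)}\to\Omega X_c^{(m)}$ is closed and thus a homeomorphism. If you replace your gluing paragraph with this ind-properness argument, your factorization-based proof goes through; but as written, the step ``hence $\overline{q}^{(m)}$ is a homeomorphism'' is unjustified.
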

\begin{proof}
Consider the natural map
$q^{(m)}:\Omega G_c^{(m)}\to\Omega X_c^{(m)}$.
We shall show that, for any $z\in\calH^m$,
the  map $q^{(m)}:\Omega_z G_c^{(m)}\to\Omega_z X_c^{(m)}$
on the corresponding fiber 
is surjective. 
Since $\Omega G_c^{(m)}$ is ind-proper, 
we conclude that the natural map 
$q^{(m)}:\Omega G_c^{(m)}\to\Omega X_c^{(m)}$
is surjective and closed, and it follows that the induced map
$\Omega K_c^{(m)}\backslash\Omega G_c^{(m}\to\Omega X_c^{(m)}$
is a continuous, closed, bijective map, and hence a homeomorphism.
Since $q^{(m)}(\mO_K^{(m),\lambda_\fp})\subset P^{(m),\lambda_\fp}$, it implies 
the induced map $\Omega K_c^{(m)}\backslash\mO_K^{(m),\lambda_\fp}\to P^{(m),\lambda_\fp}$ is also a homeomorphism.
The $K_c$-equivariance is clear. 

Proof of the claim. Let $P_z^{(m),\lambda_\fp}=P^{(m),\lambda_\fp}\cap\Omega_zX_c^{(m)}$.
It suffices to show $P_z^{(m),\lambda_\fp}\subset q^{(m)}(\Omega_z G_c^{(m)})$.
Let $\Gr^{(m)}_z\tilde\times\Gr^{(m)}_z=G(\calK)^{(m)}_z\times^{G(\calO)^{(m)}_z}\Gr^{(m)}_z$.
The multi-point Gram-Schmidt factorization 
\[\Omega_z G_c^{(m)}\times G(\calO)^{(m)}_z\is G(\calK)^{(m)}_z,\ \ \ \ 
\Omega_z G_c^{(m)}\is\Gr_z^{(m)}\]
in Proposition \ref{multi Gram-Schmidt} implies that the natural inclusion 
$\Omega_z G_c^{(m)}\times \Omega_z G_c^{(m)}\to G(\calK)_z^{(m)}\times G(\calK)_z^{(m)}$ induces a
natural homeomorphism 
\[\Omega_z G_c^{(m)}\times \Omega_z G_c^{(m)}\is \Gr^{(m)}_z\tilde\times\Gr^{(m)}_z.\]
Moreover, we have the following commutative diagram
\[\xymatrix{\Omega_z G_c^{(m)}\times \Omega_z G_c^{(m)}\ar[r]^{\ \ \simeq}\ar[d]^{mult}&\Gr^{(m)}_z\tilde\times\Gr^{(m)}_z\ar[d]^p\\
\Omega_z G^{(m)}_c\ar[r]^{\simeq}&\Gr^{(m)}_z}\]
where $mult$ is the multiplication map and 
$p$ is the convolution map.
Pick a $s\in\Omega_z G^{(m)}_c$
such that $\gamma=\tilde\theta(s)s\in (S_z^{(m),\lambda_\fp})^{\tilde\theta}\is P_z^{(m),\lambda_\fp}$.
Consider the $G(\calO)_z^{(m)}$-action on $\Gr^{(m)}_z\tilde\times\Gr^{(m)}_z$
given by $g_+(\gamma_1,\gamma_2)=(g_+\gamma_1,\gamma_2)$
and 
$S$ be the $G(\calO)_z^{(m)}$-orbit of $(\tilde\theta(s),s)\in\Omega_z G_c^{(m)}\times \Omega_z G_c^{(m)}\is \Gr^{(m)}_z\tilde\times\Gr^{(m)}_z$.
Let $Z$ be the intersection of the closure $\overline S$ of $S$
with the pre-image $p^{-1}(S_z^{(m),\lambda_\fp})$.
Note that $Z$ is a disjoint union of $G(\calO)_z^{(m)}$-orbits
and $p:Z\to S_z^{(m),\lambda_\fp}$ is proper and restricts to a
submersion on each orbit.

Consider the involution 
$\on{sw}\circ\tilde\theta$ on $\Omega_z G_c^{(m)}\times \Omega_z G_c^{(m)}\is
\Omega_z G_c^{(m)}\is
\Gr^{(m)}_z\tilde\times\Gr^{(m)}_z$
given by $\on{sw}\circ\tilde\theta(\gamma_1,\gamma_2)=(\tilde\theta(\gamma_2),\tilde\theta(\gamma_1))$.
The lemma below implies that 
$Z$ is stable under the involution $\on{sw}\circ\tilde\theta$ and the map 
$p:Z\to S_z^{(m),\lambda_\fp}$ is compatible with the involutions
$\on{sw}\circ\tilde\theta$ on $Z$
 and $\tilde\theta$ on $S_z^{(m),\lambda_\fp}$.
 Now we can apply the general lemma \cite[Proposition 6.4]{N1}
 to conclude that $P_z^{(m),\lambda_\fp,\circ}\subset p(Z^{\on{sw}\circ\tilde\theta})$ where $P_z^{(m),\lambda_\fp,\circ}\subset
 P_z^{(m),\lambda_\fp}\is
(S_z^{(m),\lambda_\fp})^{\tilde\theta}$
is a connected component of containing 
 $\gamma=\tilde\theta(s)s$. By Theorem \ref{Quillen}, 
 there is a homeomorphism between $P_z^{(m),\lambda_\fp}$ and a fiber of the strata $S_\bbR^{(m),\lambda_\fp}\subset\Gr^{(m)}_\bbR$
 in the real Beilinson-Drinfeld grassmannian.
 Since the latter fiber is a  vector bundle over the real 
 flag manifold $G_\bbR/P_\bbR$ (see \cite[Proposition 3.6.1]{N2}) which is connected (we assume $G_\bbR$ is connected), we conclude that 
 $P_z^{(m),\lambda_\fp}=P_z^{(m),\lambda_\fp,\circ}
\subset p(Z^{\on{sw}\circ\tilde\theta})$.
Now we can conclude the proof by noting there is natural identification of 
the $\on{sw}\circ\tilde\theta$-fixed points of 
$\Gr_z^{(m)}\tilde\times\Gr_z^{(m)}$ with $\Omega_z G_c^{(m)}$ so that 
$p$ restricts to $\Omega_z G_c^{(m)}$ coinsides with
$q^{(m)}$.

\end{proof}

\begin{lemma}
The involution
$\on{sw}\circ\tilde\theta$ on $\Omega_z G_c^{(m)}\times \Omega_z G_c^{(m)}\is
\Gr^{(m)}_z\tilde\times\Gr^{(m)}_z$
sends the $G(\calO)_z^{(m)}$-orbit of 
$(\gamma_1,\gamma_2)$ to the $(\tilde\theta(\gamma_2),\tilde\theta(\gamma_1))$.
\end{lemma}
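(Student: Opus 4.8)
The plan is to transport everything into Gram--Schmidt coordinates. Recall from Proposition \ref{multi Gram-Schmidt} that multi-point Gram--Schmidt gives, for each $g\in G(\calK)^{(m)}_z$, a unique factorization $g=\gamma\cdot h$ with $\gamma\in\Omega_z G_c^{(m)}$ and $h\in G(\calO)^{(m)}_z$, and hence identifies $\Gr^{(m)}_z\tilde\times\Gr^{(m)}_z=G(\calK)^{(m)}_z\times^{G(\calO)^{(m)}_z}\Gr^{(m)}_z$ with $\Omega_z G_c^{(m)}\times\Omega_z G_c^{(m)}$, the class of $[g_1,g_2G(\calO)^{(m)}_z]$ corresponding to $(\gamma_1,\gamma_2)$ where $g_1=\gamma_1 h_1$ and $h_1g_2=\gamma_2 h_2$. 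The first step is to write the $G(\calO)^{(m)}_z$-action $g_+\cdot[g_1,g_2G(\calO)^{(m)}_z]=[g_+g_1,g_2G(\calO)^{(m)}_z]$ in these coordinates: factoring $g_+\gamma_1=\gamma_1'k'$ and then $k'\gamma_2=\gamma_2'k''$ in the sense of Gram--Schmidt, one reads off $g_+\cdot(\gamma_1,\gamma_2)=(\gamma_1',\gamma_2')$. Applying $\on{sw}\circ\tilde\theta$ then yields $(\tilde\theta(\gamma_2'),\tilde\theta(\gamma_1'))$, so the task is to exhibit $h_+\in G(\calO)^{(m)}_z$ with $h_+\cdot(\tilde\theta(\gamma_2),\tilde\theta(\gamma_1))=(\tilde\theta(\gamma_2'),\tilde\theta(\gamma_1'))$.

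The key observation will be that $\tilde\theta$ is induced from the involutive \emph{anti}-automorphism $g\mapsto\theta(g)^{-1}$ of $G$, so that $\tilde\theta(ab)=\tilde\theta(b)\tilde\theta(a)$, and that it preserves both $\Omega_z G_c^{(m)}$ and $G(\calO)^{(m)}_z$ (a direct check using $\theta(G_c)\subset G_c$ and that $\theta$ is algebraic). Applying $\tilde\theta$ to the two defining relations above and rearranging gives
\[
\tilde\theta(k'')^{-1}\tilde\theta(\gamma_2)=\tilde\theta(\gamma_2')\tilde\theta(k')^{-1},\qquad \tilde\theta(k')^{-1}\tilde\theta(\gamma_1)=\tilde\theta(\gamma_1')\tilde\theta(g_+)^{-1}.
\]
Because $\tilde\theta$ preserves the two subgroups, each right-hand side is again a Gram--Schmidt factorization. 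Taking $h_+:=\tilde\theta(k'')^{-1}$, the first identity says $h_+\tilde\theta(\gamma_2)$ has $\Omega$-part $\tilde\theta(\gamma_2')$ and $G(\calO)$-part $\tilde\theta(k')^{-1}$, and the second identity then says $\tilde\theta(k')^{-1}\tilde\theta(\gamma_1)$ has $\Omega$-part $\tilde\theta(\gamma_1')$; hence $h_+\cdot(\tilde\theta(\gamma_2),\tilde\theta(\gamma_1))=(\tilde\theta(\gamma_2'),\tilde\theta(\gamma_1'))$, as required. This shows $\on{sw}\circ\tilde\theta$ maps the orbit $G(\calO)^{(m)}_z\cdot(\gamma_1,\gamma_2)$ into $G(\calO)^{(m)}_z\cdot(\tilde\theta(\gamma_2),\tilde\theta(\gamma_1))$; since $\on{sw}\circ\tilde\theta$ is an involution interchanging the two base points (as $\tilde\theta^2=\id$), the inclusion is an equality.

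The only real obstacle is bookkeeping, in three places: correctly unwinding the twisted-product $G(\calO)$-action in Gram--Schmidt coordinates; keeping track of the order-reversal $\tilde\theta(ab)=\tilde\theta(b)\tilde\theta(a)$ when rearranging; and recognizing the rearranged products as genuine Gram--Schmidt factorizations, which is exactly where the stability of $\Omega_z G_c^{(m)}$ and $G(\calO)^{(m)}_z$ under $\tilde\theta$ and the uniqueness in Proposition \ref{multi Gram-Schmidt} are used. No input beyond the already-established multi-point Gram--Schmidt factorization is needed.
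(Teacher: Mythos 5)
Your proof is correct and follows essentially the same route as the paper: unwind the $G(\calO)^{(m)}_z$-action in Gram–Schmidt coordinates, apply the anti-automorphism $\tilde\theta$ to the two defining relations, and read off the new $G(\calO)^{(m)}_z$-translate. The only cosmetic differences are notational (the paper writes $(a,b)=g(\gamma_1,\gamma_2)$ with auxiliary $u,g'$, matching your $k',k''$) and that you spell out the final "inclusion implies equality" step via the involutivity of $\on{sw}\circ\tilde\theta$, which the paper leaves implicit.
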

\begin{proof}
Let $(a,b)=g(\gamma_1,\gamma_2)\in\Gr^{(m)}_z\tilde\times\Gr^{(m)}_z$ with $g\in G(\calO)^{(m)}_z$.
By the factorization homeomorphism in Proposition \ref{multi Gram-Schmidt},
it implies 
$(g\gamma_1,\gamma_2)=(au,u^{-1}bg')$ in $G(\calK)^{(m)}_z$
for some $g',u\in G(\calO)^{(m)}_z$.
Let $h=\theta(g')\in G(\calO)^{(m)}_z$.
Then a direct computation show show that we have 
\[(h\tilde\theta(\gamma_2),\tilde\theta(\gamma_1))=(\tilde\theta(b)\tilde\theta(u^{-1}),
\tilde\theta(u)\tilde\theta(a)\tilde\theta(g^{-1}))
\]
in $G(\calK)^{(m)}_z\times G(\calK)_z^{(m)}$
and it implies 
\[h(\tilde\theta(\gamma_2),\tilde\theta(\gamma_1))=(\tilde\theta(b),\tilde\theta(a))\]
in $\Gr^{(m)}_z\tilde\times\Gr^{(m)}_z$. The lemma follows.
\end{proof}


\quash{
\subsection{$K(\calK)^{(2)}_\mathbb R$-orbits}
Choose  
$g_\lambda\in\mO_\mathbb R^\lambda\cap\mO_K^\lambda$ for each
$\lambda\in\mathcal L$.
The map
\[s^0_{\lambda}:i\mathbb R^\times\ra\Gr\times i\mathbb R^\times\is\Gr^{(2)}|_{i\mathbb R^\times},
\ x\ra (g_\lambda,x)\]
defines a section of $\Gr^{(2)}_\mathbb R\ra i\mathbb R$ over $i\mathbb R^\times$.
Since the morphism $\Gr^{(2)}_\mathbb R\ra i\mathbb R$ is ind-proper, the map above extends to a section 
\[s_{\lambda}:i\mathbb R\ra\Gr^{(2)}_\mathbb R.\]
\quash{
The section $s_{\lambda,g}$ is compatible with the real structures on 
$\bC$ and $\Gr^{(2)}$, hence induces a section 
\[s_{\lambda}:i\mathbb R\ra\Gr^{(2)}_\mathbb R.\]}
The group ind-scheme $K(\calK)^{(2)}_\mathbb R$
acts naturally on $\Gr_\mathbb R^{(2)}$ and we denote by
\[S_{\lambda}^{(2)}:=K(\calK)^{(2)}_\mathbb Rs_{\lambda}\]
the $K(\calK)^{(2)}_\mathbb R$-orbit on $\Gr^{(2)}_\mathbb R$ through $s_{\lambda}$.
We define $\Gr_\mathbb R^{(2),0}$ to be the union of $S_{\lambda}^{(2)},
\lambda\in\mathcal L$.
\begin{lemma}
1) We have $S_\lambda^{(2)}\cap S_{\lambda'}^{(2)}=\phi$ if 
$\lambda\neq\lambda'$.
2)
There are canonical isomorphisms
\[\Gr_\mathbb R^{(2),0}|_{0}\is\Gr_\mathbb R^0,\ 
\Gr_\mathbb R^{(2),0}|_{i\mathbb R^\times}\is\Gr\times i\mathbb R^\times.
\]
Here $\Gr_\mathbb R^0$ is the union of component of $\Gr_\mathbb R$ in \ref{dd}.
\end{lemma}

The family of groups 
$\Omega K^{(2)}_\mathbb R, LK_\mathbb R^{(2)}$ 
act naturally on $\Gr_\mathbb R^{(2)}$, $S_\lambda^{(2)}$,
and we define 
\beq
\Gr_\mathbb R^{(\eta)}:=\Omega K_\mathbb R^{(2)}\backslash\Gr_\mathbb R^{(2),0},\
S_\lambda^{(\eta)}:=\Omega K_\mathbb R^{(2)}\backslash S_\lambda^{(2)}.
\eeq

Notice that the quotient 
$\Omega K^{(2)}_\mathbb R\backslash LK_\mathbb R^{(2)}\is K_\mathbb R\times
i\mathbb R$ is the constant group $K_\mathbb R$ and it acts 
naturally on $\Gr_\mathbb R^{(\eta)}$ and $S_\lambda^{(\eta)}$.  

\begin{proposition}
1) Each $S_\lambda^{(\eta)}$ is a smooth manifold and the restriction of the projection 
$\pi:\Gr_\mathbb R^{(\eta)}\ra i\mathbb R$ to
$S_\lambda^{(\eta)}\subset\Gr_\mathbb R^{(\eta)}\ra i\mathbb R$ is a submersion.
2) The decomposition $\Gr_\mathbb R^{(\eta)}=\cup_{\lambda\in\mathcal L} S_\lambda^{(\eta)}$
is a Whitney stratification of $\Gr_\mathbb R^{(\eta)}$.
\end{proposition}
\begin{proof}
Proof of 1).

Proof of 2). By 1) and lemma \ref{}, it suffices to show that the decomposition 
$\Gr_\mathbb R^{(\eta)}=\cup_{\lambda\in\mathcal L} S_\lambda^{(\eta)}$
induces a Whitney stratification of each fiber $\Gr_x^{(\eta)}:=\pi^{-1}(x), x\in\mathbb R$.

\end{proof}

\subsection{$LG^{(2)}_\mathbb R$-orbtis}
The group scheme $LG^{(2)}_\mathbb R$
acts naturally on $\Gr_\mathbb R^{(2)}$ and we denote by
\[T_{\lambda}^{(2)}:=LG^{(2)}_\mathbb Rs_{\lambda}\]
the $LG^{(2)}_\mathbb R$-orbit on $\Gr^{(2)}_\mathbb R$ through $s_{\lambda}$.
We define $\Gr_\mathbb R^{(2),0}$ to be the union of $T_{\lambda}^{(2)},
\lambda\in\mathcal L$.
}

\section{Uniformizations of real bundles}\label{uniform}
In this section we 
study uniformizations of the stack of real bundles on 
$\mathbb P^1$ and use it to provide a moduli interpretation for  
the quotient
$LG_\bbR\backslash\Gr$.

\subsection{Stack of real bundles}\label{Stack of real bundles}
Let $\Bun_\bbG(\bbP^1)$ be the moduli stack of $\bbG$-bundles on 
the complex projective line $\mathbb P^1$. The standard complex conjugation 
$z\to\bar z$
on $\mathbb P^1$  together with 
the involution $\eta$ of $\bbG$ defines a real structure $c:\Bun_\bbG(\bbP^1)\to
\Bun_\bbG(\bbP^1)$ on 
$\Bun_\bbG(\bbP^1)$
with real form 
$\Bun_{\bbG_\bbR}(\bbP_\bbR^1)$, the real algebraic stack of $\bbG_\bbR$-bundles on the projective real line $\bbP^1_\bbR$.
We write 
$\Bun_{\bbG}(\bbP^1)_\bbR$ for the 
real analytic stack of real points of $\Bun_{\bbG_\bbR}(\bbP_\bbR^1)$.
By definition, we have $\Bun_{\bbG}(\bbP^1)_\bbR\is \Gamma_\bbR\backslash Y_\bbR$ where $Y\to\Bun_{\bbG_\bbR}(\bbP_\bbR^1)$ is a $\bbR$-surjective presentation\footnote{A presentation of a real algebraic stack is $\bbR$-surjective if it induces a surjective map
on the isomorphism classes of $\bbR$-points.} of 
the real algebraic stack $\Bun_{\bbG_\bbR}(\bbP_\bbR^1)$,  
$\Gamma=Y\times_{\Bun_{\bbG_\bbR}(\bbP_\bbR^1)}Y$ is the corresponding groupoid, 
and $X_\bbR,\Gamma_\bbR$ are the real analytic spaces of 
real points of $X,\Gamma$
(see Appendix \ref{Real stacks}).

A point of $\Bun_{\bbG}(\bbP^1)_\bbR$ is 
a $\bbG_\bbR$-bundle $\mE_\bbR$ on $\bbP_\bbR^1$ and, by descent,
corresponds to  
a pair $(\mE,\gamma)$
where $\mE$ is a $\bbG$-bundle on $\bbP^1$ and 
$\gamma:\mE\is c(\mE)$ is an isomorphism such that the induced 
composition is the identity
\[\mE\stackrel{\gamma}\ra c(\mE)\stackrel{c(\gamma)}\ra c(c(\mE))=\mE.\]
We call such pair $(\mE,\gamma)$ a real bundle on $\bbP^1$ and 
$\Bun_{\bbG}(\bbP^1)_\bbR$ the stack of real bundles on $\bbP^1$.

For any $\bbG_\bbR$-bundle $\mE_\bbR$, the restriction of 
$\mE_\bbR$ to the (real) point $\infty$ is a 
$\bbG_\bbR$-bundle on $\Spec(\bbR)$ and
the assignment $\mE_\bbR\to\mE_\bbR|_\infty$
defines a morphism
\[
\Bun_{\bbG_\bbR}(\bbP_\bbR^1)\lra\mathbb B\bbG_\bbR.
\]
For each $\alpha\in H^1(\Gal(\bC/\bbR),G)$, 
let $T_\alpha$ be a $\bbG_\bbR$-torsor on $\Spec(\bbR)$ in the isomorphism class of 
$\alpha$ and we define $\bbG_{\bbR,\alpha}=\Aut_{\bbG_\bbR}(T_\alpha)$.
The collection $\{\bbG_{\bbR,\alpha},\ \alpha\in H^1(\Gal(\bC/\bbR),G)\}$
is the set of pure inner forms of $\bbG_\bbR$.
Let $G_{\bbR,\alpha}=\bbG_{\bbR,\alpha}(\bbR)$ be the real analytic group associated to 
$\bbG_{\bbR,\alpha}$.
We denote by $\alpha_0$ the isomorphism class of trivial $\bbG_\bbR$-torsor
with real group $G_{\bbR,\alpha_0}=G_\bbR$. 
By Example \ref{BG},
the morphism above induces a morphism 
\[
cl_\infty:\Bun_{\bbG}(\bbP^1)_\bbR\lra\mathbb \bigsqcup_{\alpha\in
H^1(\Gal(\bC/\bbR),G)}\mathbb BG_{\bbR,\alpha}
\]
on the corresponding real analytic stacks.
Define 
\beq
\Bun_{G_{\bbR,\alpha}}(\bbP^1(\bbR)):=(cl_\infty)^{-1}(\mathbb BG_{\bbR,\alpha})
\eeq
for the inverse image of $\mathbb BG_{\bbR,\alpha}$ under $cl_\infty$. 
Note that each $\Bun_{G_{\bbR,\alpha}}(\bbP^1(\bbR))$ is 
an union of connected components of $\Bun_{\bbG}(\bbP^1)_\bbR$ and 
we obtain the following decomposition
of
the stack of real bundles 
\[
\Bun_\bbG(\bbP^1)_{\bbR}=\bigsqcup_{\alpha\in
H^1(\Gal(\bbC/\bbR),G)}\Bun_{G_{\bbR,\alpha}}(\bbP^1(\bbR)).
\]
We will call  $\Bun_{G_{\bbR,\alpha}}(\bbP^1(\bbR))$ the stack of 
$G_{\bbR,\alpha}$-bundles on $\bbP^1(\bbR)$.
\begin{example}
Consider $G=\bC^\times$. In the case $\eta$ is 
the split conjugation,
the cohomology group
$H^1(\Gal(\bC/\bbR),G)$ is trivial and we have
\[\Bun_\bG(\bP^1)_\bbR\is\bZ\times B\bbR^\times.\]
In the case $\eta=\eta_c$ is the compact conjugation, we have 
$H^1(\Gal(\bC/\bbR),G)=\{\alpha_0,\alpha_1\}\is\bZ/2\bZ$ and 
\[\Bun_\bG(\bP^1)_\bbR\is\Bun_{G_{\bbR,\alpha_1}}(\bP^1(\bbR))\cup\Bun_{G_{\bbR,\alpha_2}}(\bP^1(\bbR)),\]
where $\Bun_{G_{\bbR,\alpha_i}}(\bP^1(\bbR)\is BS^1$.
\end{example}
\quash{Consider the non-abelian cohomology $H^1(\Gal(\bC/\bbR),G)$ that 
classifies isomorphism classes of $\bbG_\bbR$-bundles over $\Spec(\bbR)$.
We write $\alpha_0\in H^1(\Gal(\bC/\bbR),G)$ for the class of trivial $\bbG_\bbR$-bundle.
and we write 
$cl_\infty(\mE_\bbR)\in H^1(\Gal(\bC/\bbR),G)$ for the corresponding isomorphism class. 
The assignment $\mE_\bbR\ra cl_\infty(\mE_\bbR)$
defines a map
\beq
cl_\infty:\Bun_{\bbG_\bbR}(\bbR\bbP^1)\lra H^1(\Gal(\bbC/\bbR),G)
\eeq
and for each class $\alpha\in H^1(\Gal(\bbC/\bbR),G)$
}

\subsection{Uniformizations of real bundles}\label{uniformization of real bundles}
We shall introduce and study two kinds of uniformization 
of real bundles: one uses a real point of $\bbP^1$ called the real uniformization 
the other uses a complex point of $\bbP^1$ called the complex uniformization. 

\subsubsection{Real uniformizations}
The unifomization morphism 
\[u:\Gr\to\Bun_{\bG}(\bP^1)\] for $\Bun_{\bbG}(\bbP^1)$ exhibits $\Gr$ as a $G(\bC[t^{-1}])$-torsor over $\Bun_{\bbG}(\bbP^1)$, in particular,  we have an isomorphism 
\beq\label{uni iso}
G(\bC[t^{-1}])\backslash\Gr\is\Bun_{\bG}(\bP^1).
\eeq
The map $u$ is compatible with the real structures 
on $\Gr$ and $\Bun_{\bG}(\bP^1)$ and 
we denote by 
\beq\label{real uniformization}
u_{\bbR}:\Gr_{\bbR}\ra\Bun_\bbG(\bbP^1)_\bbR
\eeq
the associated map between the corresponding semi-analytic stacks of real points. 
We call the morphism $u_{\bbR}$ the real uniformization.
It follows from \eqref{uni iso} that 
$u_{\bbR}$ factors through an embedding 
\beq\label{open embedding 1}
G_\bbR(\bbR[t^{-1}])\backslash\Gr_{\bbR}\ra\Bun_\bbG(\bbP^1)_\bbR.
\eeq
We shall describe the image of $u_\bbR$.

\begin{prop}\label{uniformizations at real x}
The map $u_{\bbR}$ factors through 
\[
u_{\bbR}:\Gr_{\bbR}\ra\Bun_{G_\bbR}(\bbP^1(\bbR))\subset\Bun_\bbG(\bbP^1)_{\bbR}
\]
and induces an isomorphism of semi-analytic stacks
\[G_\bbR(\bbR[t^{-1}])\backslash\Gr_{\bbR}\stackrel{\sim}\lra\Bun_{G_\bbR}(\bbP^1(\bbR)).\]

\end{prop}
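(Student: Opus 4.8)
The plan is to deduce Proposition~\ref{uniformizations at real x} from the complex uniformization isomorphism~\eqref{uni iso} by taking $\Gal(\bbC/\bbR)$-fixed points, together with a surjectivity statement that identifies the image precisely. The starting point is that~\eqref{uni iso} exhibits $\Gr$ as a $G(\bbC[t^{-1}])$-torsor over $\Bun_\bbG(\bbP^1)$, and this whole picture is equivariant for the real structures (the conjugation $c$ on $\bbP^1$ combined with $\eta$ on $\bbG$), so passing to real points gives the embedding~\eqref{open embedding 1}. Thus the only real content is: (a) the image of $u_\bbR$ lands in the component $\Bun_{G_\bbR}(\bbP^1(\bbR))$ indexed by the trivial class $\alpha_0$, and (b) the map onto that component is surjective (hence, being already an embedding, an isomorphism).

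For (a): a point $\gamma G(\calO)\in\Gr_\bbR$ is a real loop, i.e.\ a $\bbG$-bundle on $\bbP^1$ with a trivialization near $0$ and a compatible real structure; its image under $u_\bbR$ is the associated real bundle $\mE_\bbR$. The key observation is that $u$ trivializes $\mE$ on $\bbP^1\setminus\{0\}$, in particular near $\infty\in\bbP^1(\bbR)$, and this trivialization is compatible with the real structure because $\gamma\in\Gr_\bbR$; hence $\mE_\bbR|_\infty$ is the trivial $\bbG_\bbR$-torsor, so $cl_\infty(\mE_\bbR)=\alpha_0$ and $u_\bbR$ factors through $\Bun_{G_\bbR}(\bbP^1(\bbR))=(cl_\infty)^{-1}(\mathbb BG_\bbR)$ as claimed.

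For (b), surjectivity onto $\Bun_{G_\bbR}(\bbP^1(\bbR))$: given a real bundle $(\mE,\gamma)$ with $\mE_\bbR|_\infty$ trivial, I need to produce a real trivialization on $\bbP^1\setminus\{0\}$ lifting it into $\Gr_\bbR$. First trivialize $\mE_\bbR$ on the formal neighborhood of $\infty$ using that $\mE_\bbR|_\infty$ is trivial and that $\Spec(\bbR[[s]])\to\Spec(\bbR)$ admits a section (deformations of the trivial torsor are unobstructed / the relevant $H^1$ of the formal disk vanishes); this is where the choice of the component $\alpha_0$ is essential, since for $\alpha\ne\alpha_0$ one cannot even start. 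Then extend this formal trivialization over the affine line $\bbP^1\setminus\{0\}=\Spec(\bbR[s])$: over $\bbC$ this is the standard statement that $\bbG$-bundles on $\bbA^1$ are trivial (Harder, or Birkhoff factorization for $\Gr$), and one runs the same argument $\Gal(\bbC/\bbR)$-equivariantly — concretely, one can take the complex trivialization furnished by~\eqref{uni iso}, average or correct it by its conjugate using that the transition cocycle is a $G(\bbC[s])$-valued coboundary and that $H^1$ of $\Gal(\bbC/\bbR)$ with coefficients in the (unipotent-by-reductive) group of such gauge transformations over $\bbA^1_\bbR$ vanishes in the relevant class. This produces the desired point of $\Gr_\bbR$ mapping to $(\mE,\gamma)$. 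Combined with the embedding~\eqref{open embedding 1}, this gives the asserted isomorphism $G_\bbR(\bbR[t^{-1}])\backslash\Gr_\bbR\is\Bun_{G_\bbR}(\bbP^1(\bbR))$.

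**The main obstacle** I anticipate is the descent/surjectivity step (b): making rigorous, at the level of semi-analytic stacks rather than schemes, that a real bundle in the $\alpha_0$-component admits a global real trivialization off $0$. One must either invoke a real analogue of Beauville--Laszlo gluing and of the triviality of $\bbG$-torsors on $\bbA^1$ over $\bbR$ (these are available but need the component hypothesis to kill the obstruction measured by $cl_\infty$), or argue more cheaply that $u_\bbR$ is an open map (from~\eqref{open embedding 1} and the torsor structure) whose image is also closed, and that $\Bun_{G_\bbR}(\bbP^1(\bbR))$ is connected modulo the discrete invariants already matched — reducing surjectivity to a component count. Either route is routine in spirit but requires care with the formalism of Appendix~\ref{Real stacks}; everything else is a direct transport of the complex statement through the fixed-point functor.
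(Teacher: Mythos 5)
Your step (a) (showing the image lands in the $\alpha_0$-component) matches the paper. The real issue is in step (b), and the gaps there are substantial.

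First, your Galois-cohomological derivation of surjectivity on isomorphism classes does not go through as stated. You assert that $H^1\bigl(\Gal(\bbC/\bbR),\, G(\bbC[s])\bigr)$ vanishes in the relevant class because the coefficient group is ``unipotent-by-reductive.'' But $G(\bbC[s])$ (or its based subgroup $G(\bbC[s])_+$) is an infinite-dimensional ind-group, not a finite-dimensional algebraic group with unipotent radical and reductive quotient; the standard vanishing theorems for nonabelian $H^1$ of unipotent groups in characteristic zero do not apply to it. The claim that a real $\bbG_\bbR$-bundle on $\bbA^1_\bbR$ which is trivial at the origin is globally trivial is precisely the content of the nontrivial theorem of Mehta--Subramanian (\cite[Theorem~1.1]{MS}), which the paper invokes as a black box at exactly this point. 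Re-deriving it by ``averaging'' a complex trivialization does not work without genuine work; in effect you are asserting the theorem you need. A clean fix is simply to cite \cite{MS}.

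Second, even granting surjectivity on $\bbR$-points, the statement to be proved is an isomorphism of \emph{semi-analytic stacks}, not merely a bijection on isomorphism classes. The paper's proof spends roughly half its length on this: it forms $Y = S\times_{\Bun_{G_\bbR}(\bbP^1(\bbR))}\Gr_\bbR$ for a smooth presentation $S$, and shows $h\colon Y\to S$ is surjective and admits local sections by a tangent-space argument (choosing a finite-dimensional complement and using the exponential map for the metric $g$ on $\Gr_\bbR$) to produce a finite-dimensional slice $U\subset Y$ with $h|_U$ smooth. Your alternative sketch (``$u_\bbR$ is open, image closed, count components'') is not an argument in the semi-analytic-stack formalism of Appendix~\ref{Real stacks}; in particular openness of the image at the level of the quotient stack is not automatic from the torsor structure, and a pointwise count of isomorphism classes does not by itself yield étale-local sections. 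You would need to supply an argument of the type the paper gives (or an equivalent one) for the map to be an equivalence of stacks rather than just a bijection on $\pi_0$ of groupoids of $\bbR$-points.
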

\begin{proof}
Since every $\bbG_\bbR$-bundle 
$\mE_\bbR$ in the image of $u_\bbR$ is trivial over 
$\bP^1_\bbR-\{0\}$, in particular at $\infty$, we have 
$\mE_\bbR\in\Bun_{G_\bbR}(\bbP^1(\bbR))$. 
Thus the map $u_\bbR$ factors through $\Bun_{G_\bbR}(\bbP^1(\bbR))$.
We show that the resulting morphism $u_\bbR:\Gr_{\bbR}\to\Bun_{G_\bbR}(\bbP^1(\bbR))$ is
surjective. Let $f:S\to\Bun_{G_\bbR}(\bbP^1(\bbR))$ be a smooth presentation
(note that $S$ is smooth as $\Bun_{G_\bbR}(\bbP^1(\bbR))$ is smooth). It suffices 
to show that, \'etale locally on $S$, $f$ admits a lifting to $\Gr_{\bbR}$.
Consider the fiber product $Y:=S\times_{\Bun_{G_\bbR}(\bbP^1(\bbR))}\Gr_{\bbR}$ 
and we denote by  $h:Y\to S$ the natural projection map. 
It suffices to show that $h$ is surjective and admits a section 
\'etale locally on $S$.
By Theorem 1.1 in \cite{MS},  
every $\bbG_\bbR$-bundle 
$\mE_\bbR$
on $\bbP_\bbR^1$ 
which is trivial at $\infty$ admits a trivialization on 
$\bP^1_\bbR-\{0\}$. It implies $h$ is surjective.
To show that $h$ admits a section, we observe that 
$Y$ is a real analytic ind-space smooth over $\Gr_{\bbR}$  
and, as $u_{\bbR}$ is formally smooth, for any $y\in Y$ and $s=h(y)\in S$,
the tangent map $dh_y:T_yY\to T_sS$ is surjective. 
Choose a finite dimensional subspace $W\subset T_yY$ such that 
$dh_y(W)=T_sS$. We claim that there exists a smooth real analytic space 
$U\subset Y$ such that $y\in U$ and $T_yU=W$. This implies 
$h|_U:U\to S$ is smooth around $y$, thus $f$ admits a section \'etale locally around 
$s=h(y)$. 
Finally, by \eqref{open embedding 1}, we obtain an isomorphism $G_\bbR(\bbR[t^{-1}])\backslash\Gr_\bbR\is
\Bun_{G_\bbR}(\bbP^1(\bbR))$. 

To prove the claim, we observe that $Y$ is locally isomorphic to 
$\Gr_{\bbR}$ times a smooth real analytic space.  So it suffices to show for any 
finite dimensional subspace $W\subset T_e\Gr_{\bbR}$, there exists 
a smooth real analytic space $U$ such that $T_eU=W$. 
This
follows from the fact that 
the exponential map $\exp:T_e\Gr_{\bbR}\to\Gr_\bbR$ associated to the 
metric $g(,)|_{\Gr_\bbR}$ (here $g(,)$ is the metric on $\Gr$ in Section \ref{energy flow}) is a local diffeomorphism.

\end{proof}

We can also consider the 
uniformization map 
$\Gr_z\to\Bun_{\bbG}(\bbP^1)$ for 
any real point $x\in\mathbb P^1(\bbR)$ and the discussion above applies to this setting and hence 
we have obtain real uniformization at $x$
\beq\label{real uniform at x}
u_{x,\bbR}: L_x^-G_\bbR\backslash\Gr_{x,\bbR}\is\Bun_{G_\bbR}(\bbP^1(\bbR)).
\eeq
where $ L_x^-G_\bbR$ is real analytic group in
 Example \ref{the case m=1}.

\quash{
Since there is a bijection 
between the set $\Lambda_S^+$ of real dominant coweights and the set 
of $G_\bbR[t_x^{-1}]$-orbits on $\Gr_{x,\bbR}$, we obtain the following: 
\begin{corollary}
There is a bijection
$\Lambda_S^+\leftrightarrow |G_\bbR(\bbR[t_x^{-1}])\backslash\Gr_{x,\bbR}|\leftrightarrow\on{}|\Bun_\bbG(\bbP^1)_{\bbR,\alpha_0}|$.

\end{corollary}}

\subsubsection{Complex uniformizations}
We now discuss complex uniformizations. 
The natural map 
\[u^{(2)}:\Gr^{(2)}\ra\Bun_\bbG(\bbP^1)\ \ \ \ (x,\mE,\phi)\to \mE\] 
 is compatible with 
the complex conjugation
$\sigma_2$ on 
$\Gr^{(2)}$ and the natural one on
$\Bun_\bbG(\bbP^1)$
and we denote by  
\beq\label{family of uniformizations}
u^{(\sigma_2)}_\bbR:\Gr^{(\sigma_2)}_\bbR\longrightarrow\Bun_\bbG(\bbP^1)_\bbR
\eeq
the map between the corresponding real analytic stacks.

Let us study the restriction of $u^{(\sigma_2)}_\bbR$
to the fiber $\Gr^{(\sigma_2)}_{\bbR}|_x$ over some $x\in\mathbb P^1$.
If $x\in\mathbb P^1(\bbR)$, then
we have $\Gr^{(\sigma_2)}_{\bbR}|_{x}\is \Gr_{x,\bbR}$ (see Example \ref{m=1})
and 
the restriction 
\[u_{x,\bbR}:=u^{(\sigma_2)}_{\bbR}|_x:\Gr_{x,\bbR}\is\Gr^{(\sigma_2)}_{\bbR}|_{x}\to\Bun_\bG(\bP^1)_\bbR\] of \eqref{family of uniformizations}  is isomorphic to the real unifomization map in 
\eqref{real uniformization}. 
If $x\in\calH$, then
we have $\Gr^{(\sigma_2)}_{\bbR}|_x\is\Gr_x$
and we will call 
the map
\beq\label{Complex uniformization}
u_{x,\bbC}:=u^{(\sigma_2)}_{\bbR}|_x:\Gr_x\is\Gr^{(\sigma_2)}_{\bbR}|_x\longrightarrow\Bun_\bbG(\bbP^1)_\bbR
\eeq
the complex uniformization 
associated to $x$. 

We shall give a description of $u_{x,\bbC}$.
Let $(\mE,\phi)\in\Gr_x$ where $\mE$
is a $G$-bundle on $\bP^1$ and $\phi:\mE|_{\bP^1-\{x\}}\is G\times(\bP^1-\{x\})$ is a trivialization of $\mE$ over $\bP^1-\{x\}$.  
Let $c(\mE)$ be complex conjugation of $\mE$ (see Sect.~\ref{Stack of real bundles}) and 
let $\mF$ be the $G$-bundle on 
$\bP^1$ obtained from gluing of 
$\mE|_{\bP^1-\{\bar x\}}$ and $c(\mE)|_{\bP^1-\{x\}}$
using the isomorphism $c(\phi)^{-1}\circ\phi:\mE|_{\bP^1-\{x,\bar x\}}\is c(\mE)|_{\bP^1-\{x,\bar x\}}$. By construction, there is a canonical isomorphism 
$\gamma:\mF\is c(\mF)$ and the resulting real bundle 
$(\mF,\gamma)\in\Bun_{\bG}(\bP^1)_\bbR$ is the image $u_{x,\bC}((\mE,\phi))$.
Note that the cohomology class in $H^1(\Gal(\bC/\bbR),G)$
given by 
the restriction of the real bundle 
$\mF$ to $\infty$ is represented by the co-boundary $c(\phi(v))^{-1}(\phi)(v)$ (here $v\in\mE|_{\infty}$), hence is trivial. Thus the complex uniformization $u_{x,\bC}$ factors as 
\[u_{x,\bC}:\Gr_x\to\Bun_{G_\bbR}(\bbP^1(\bbR)).\]

We shall describe the image of $u_{x,\bbC}$. 
For each $z\in\bC^\times$ let $a_z:\mathbb P^1\ra\mathbb P^1$
be the multiplication map by $z$.
Consider the flows on $\Gr^{(2)}$ and $\Bun_\bG(\bP^1)$:
\beq\psi_z:\Gr^{(2)}\ra
\Gr^{(2)},\ 
(x,\mE,\phi)\ra (a_z(x),(a_{z^{-1}})^*\mE,(a_{z^{-1}})^*\phi).
\eeq
\[\psi_z:\Bun_\bG(\bP^1)\to\Bun_\bG(\bP^1),\ \ \mE\to (a_{z^{-1}})^*\mE\]
For $z\in\mathbb R_{>0}$ the flows above restrict to flows
\beq\label{flow on Gr_R^2}
\psi_z^1:\Gr^{(\sigma_2)}_\bbR\ra
\Gr^{(\sigma_2)}_\bbR\ \ \ \ \ \ \psi_z^2:\Bun_\bG(\bP^1)_\bbR\to\Bun_\bG(\bP^1)_\bbR
\eeq
and we have the following commutative diagram
\beq\label{diagram for psi_z}
\xymatrix{\Gr^{(\sigma_2)}_\bbR\ar[r]^{\psi_z^1}\ar[d]^{u_\bbR^{(\sigma_2)}}&\Gr^{(\sigma_2)}_\bbR\ar[d]^{u_\bbR^{(\sigma_2)}}
\\\Bun_\bG(\bP^1)_\bbR\ar[r]^{\psi_z^2}&\Bun_\bG(\bP^1)_\bbR}.
\eeq

\begin{lemma}\label{flows on Gr^(2)}
We have the following properties of the flows:
\begin{enumerate}
\item The critical manifold of the flow $\psi_z^1$ are the cores 
$C_\bbR^\lambda\subset \Gr_\bbR\is\Gr^{(\sigma_2)}_{\bbR}|_0$ 
and the stable manifold for $C_\bbR^\lambda$ is the 
strata $S_\bbR^\lambda\subset\Gr_\bbR$. 
\item For each $\lambda\in\Lambda_A^+$, we denote by 
\[\tilde T_\bbR^\lambda=\{\gamma\in\Gr^{(\sigma_2)}_\bbR|
\underset{z\ra 0}\lim\ \psi^1_z(\gamma)\in C_\bbR^\lambda\}\]
the corresponding unstable manifold. 
We have $\tilde T_\bbR^\lambda|_0\is T_\bbR^\lambda\subset\Gr_\bbR$ for 
$\lambda\in\Lambda_A^+$. The isomorphism 
$\Gr^{(\sigma_2)}_\bbR|_x\is\Gr_x,\ x\in\calH$, 
restricts to an isomorphism 
\[\tilde T_\bbR^\lambda|_x\is\mO_\bbR^\lambda\]
for $\lambda\in\Lambda_A^+$
\quash{
\item
The $\bbR_{>0}$-acton $\bbR_{>0}\times\Gr^{(2)}_\bbR\to\Gr^{(2)}_\bbR,\ (z,x)\to\psi^1_z(x)$
 extends to a 
$\bbR_{\geq 0}$-action:
\[\psi^1:\bbR_{\geq 0}\times\Gr^{(2)}_\bbR\to\Gr^{(2)}_\bbR,\]
such that $\psi^1(0,x)=\underset{z\ra 0}\lim\ \psi^1_z(x)$.}
\end{enumerate}
\end{lemma}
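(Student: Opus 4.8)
The plan is to read off Part~(1) and the equivalence $\tilde T_\bbR^\lambda|_0\is T_\bbR^\lambda$ directly from Proposition~\ref{PS}, and to obtain the substantive identification $\tilde T_\bbR^\lambda|_x\is\mO_\bbR^\lambda$ (for $x\in\calH$) by transporting the flow through the uniformization maps. For Part~(1): since $a_z$ fixes $0\in\bbP^1$ and, in the coordinate $t$, acts near $0$ by the rescaling $t\mapsto z^{-1}t$, the restriction of $\psi_z^1$ ($z\in\bbR_{>0}$) to $\Gr^{(\sigma_2)}_\bbR|_0\is\Gr_\bbR$ is the real form of the $\bbR_{>0}$-part of the loop-rotation action on $\Gr$, hence --- via $\Omega G_c\is\Gr$ (Section~\ref{energy flow}) --- of the gradient flow of the energy function $E$ of \cite[Section~8.9]{PS}. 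Taking $\eta$-fixed points in Proposition~\ref{PS}(2)--(3), and using the identifications $C^\lambda\cap\Gr_\bbR=C_\bbR^\lambda$, $S^\lambda\cap\Gr_\bbR=S_\bbR^\lambda$, $T^\lambda\cap\Gr_\bbR=T_\bbR^\lambda$ (nonempty exactly for $\lambda\in\Lambda_A^+$; Section~\ref{real affine Gr}), one finds that the critical locus of $\psi_z^1$ inside $\Gr^{(\sigma_2)}_\bbR|_0$ is $\bigsqcup_{\lambda\in\Lambda_A^+}C_\bbR^\lambda$, with stable manifold of $C_\bbR^\lambda$ equal to $S_\bbR^\lambda$ and unstable manifold equal to $T_\bbR^\lambda$; moreover $\lim_{z\to0}\psi_z^1(\gamma)$ exists for every $\gamma$ over $0$ and lies in some $C_\bbR^\mu$. (The fibre over $\infty$ carries the mirror picture with $S$ and $T$ interchanged; it plays no role below, since every point of $\tilde T_\bbR^\lambda$ has its $z\to0$ limit over $0$.) Restricting the unstable-manifold statement to $\Gr^{(\sigma_2)}_\bbR|_0$ gives $\tilde T_\bbR^\lambda|_0\is T_\bbR^\lambda$.

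\emph{Part (2), $x\in\calH$.} The real uniformization~\eqref{real uniformization} (Proposition~\ref{uniformizations at real x}) makes $u_\bbR\colon\Gr_\bbR\to\Bun_{G_\bbR}(\bbP^1(\bbR))$ a $G_\bbR(\bbR[t^{-1}])$-torsor; as $T_\bbR^\mu$ is a single $G_\bbR(\bbR[t^{-1}])$-orbit containing $C_\bbR^\mu$, the map $u_\bbR$ sends each $C_\bbR^\mu$ to one point $\beta_\mu$, with $\beta_\mu\ne\beta_\nu$ for $\mu\ne\nu$ and $|\Bun_{G_\bbR}(\bbP^1(\bbR))|=\{\beta_\mu\}_{\mu\in\Lambda_A^+}$ (by surjectivity of $u_\bbR$). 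Hence, for $\gamma\in\Gr_x\is\Gr^{(\sigma_2)}_\bbR|_x$, combining Part~(1) with the disjointness of the $T_\bbR^\mu$, the condition $\lim_{z\to0}\psi_z^1(\gamma)\in C_\bbR^\lambda$ is equivalent to $u_\bbR\bigl(\lim_{z\to0}\psi_z^1(\gamma)\bigr)=\beta_\lambda$. By the commuting square~\eqref{diagram for psi_z} and $u^{(\sigma_2)}_\bbR|_x=u_{x,\bbC}$ of~\eqref{Complex uniformization}, the left side equals $\lim_{z\to0}\psi_z^2\bigl(u_{x,\bbC}(\gamma)\bigr)$; since $\psi_z^2(\mathcal F)=(a_{z^{-1}})^*\mathcal F$ preserves the isomorphism class of a bundle on $\bbP^1$ (a $\mathrm{PGL}_2$-translate of such a bundle is isomorphic to it) and $|\Bun_{G_\bbR}(\bbP^1(\bbR))|$ is discrete, that limit has the isomorphism class of $u_{x,\bbC}(\gamma)$. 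Therefore $\tilde T_\bbR^\lambda|_x=u_{x,\bbC}^{-1}(\beta_\lambda)$, and this is a single $L_xG_\bbR\is LG_\bbR$-orbit in $\Gr$ because $u_{x,\bbC}$ is an $L_xG_\bbR$-torsor onto its image --- the real form of the fact that $\Gr^{(1)}\to\Bun_\bbG(\bbP^1)$ is a torsor. Identifying this orbit with $\mO_\bbR^\lambda$ then reduces to matching the two labelings, which follows either by unwinding Proposition~\ref{parametrization}(3) --- the invariant $\tilde\eta^\tau(\gamma)\gamma\in G(\bbC[t^{-1}])\,t^\lambda\,G(\bbC[t])$ there records exactly the isomorphism type of the glued bundle $u_{x,\bbC}(\gamma)$, i.e.\ the label $\beta_\lambda$ --- or by comparing the two order-preserving labelings $\{\mO_\bbR^\mu\}$ and $\{u_{x,\bbC}^{-1}(\beta_\mu)\}$ of the $LG_\bbR$-orbit poset and pinning the comparison down by the behaviour over the central fibre.

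\emph{Main obstacle.} I expect the technical heart to be the rigorous handling of the degeneration in which the conjugate pair $\{x,\bar x\}$ of modification points collides at $0$ and the fibre dimension drops: one must justify that limits along the ind-proper family $\Gr^{(\sigma_2)}_\bbR\to\bbP^1$ exist and commute with $u^{(\sigma_2)}_\bbR$ in the semi-analytic-stack setting, and --- for the label-matching --- fix conventions so that the complex-uniformization type of $u_{x,\bbC}(\gamma)$ is precisely the invariant of Proposition~\ref{parametrization}(3). A more computational alternative to the final step avoids the uniformizations and works directly with the flow, realizing $\tilde T_\bbR^\lambda$ as a single orbit of the family loop group $LG^{(\sigma_2)}_\bbR$ (fibres $G_\bbR(\bbR[t^{-1}])$ over $0$ and $LG_\bbR$ over $\calH$) acting fibrewise on $\Gr^{(\sigma_2)}_\bbR$; this again hinges on the coherence of that action across the drop in fibre dimension at $0$, i.e.\ on the same difficulty in a different guise. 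Part~(1) and the assertions over the central fibre are, by contrast, formal consequences of Proposition~\ref{PS}.
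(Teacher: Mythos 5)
Your proposal is a genuine reconstruction where the paper merely cites \cite[Proposition 8.4]{N1} --- so there is no written argument in the paper itself to compare against, and your route is de facto different. What you do is transport the $\bbR_{>0}$-flow through the uniformizations $u_\bbR$ and $u_{x,\bbC}$ to $\Bun_{G_\bbR}(\bbP^1(\bbR))$, where the discreteness of the set of isomorphism classes makes the flow trivial, and then pull the resulting partition back along $u_{x,\bbC}$ using only the elementary torsor-onto-image property. Note a pleasant feature of this route: given existence of limits, the disjoint-cover argument $\Gr_x=\bigsqcup_\lambda \tilde T_\bbR^\lambda|_x\subseteq\bigsqcup_\lambda u_{x,\bbC}^{-1}(\beta_\lambda)=\Gr_x$ forces $\tilde T_\bbR^\lambda|_x=u_{x,\bbC}^{-1}(\beta_\lambda)$ and in particular shows every $\beta_\lambda$ is hit, so you obtain the surjectivity statement of Proposition~\ref{uniformizations at complex x} as a by-product rather than as a prerequisite; this is relevant because in the paper's dependency chain Proposition~\ref{uniformizations at complex x} is deduced (via Lemma~\ref{flows on Bun_R}) from the present lemma, so your argument is non-circular precisely because it avoids invoking that surjectivity and re-derives it.

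The one substantive gap --- existence of $\lim_{z\to 0}\psi_z^1(\gamma)$ for $\gamma$ over $x\in\calH$ across the collision $\{zx,z\bar x\}\to\{0\}$ --- is indeed the heart of the matter, and you are right to flag it, but it is fillable without importing anything circular. The action $\psi_z$ on $\Gr^{(2)}$ is an algebraic $\bbG_m$-action, and the ind-proper exhaustion of $\Gr^{(2)}$ by closures of spherical strata of bounded modification type can be chosen $\bbG_m$-invariantly; hence every orbit lies in a proper $\bbG_m$-invariant piece and the orbit map $\bbG_m\to\Gr^{(2)}$ extends over $0\in\bbA^1$ by the valuative criterion (Bialynicki--Birula). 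Since $\psi_z$ for $z\in\bbR_{>0}$ commutes with the twisted conjugation $\sigma_2\times\eta$, the limit of a real point is real, giving the desired extension $\bbR_{\geq 0}\to\Gr^{(\sigma_2)}_\bbR$. With that in hand, the two small remaining steps you indicate --- the triviality of $\psi_z^2$ on the discrete set $|\Bun_{G_\bbR}(\bbP^1(\bbR))|$ (a connected family of autoequivalences fixing the identity) and the label-matching via the invariant $\tilde\eta^\tau(\gamma)\gamma$ of Proposition~\ref{parametrization}(3) --- close the argument.
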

\begin{proof}
This is proved in \cite[Proposition 8.4]{N1}. 
\end{proof}

\begin{lemma}\label{flows on Bun_R}
\begin{enumerate}
\item
For any $\gamma\in\Gr_\bbR^{(\sigma_2)}$,
the map $\bbR_{>0}\to\Gr_\bbR^{(\sigma_2)},\ z\to\psi_z^1(\gamma)$
given by the flow $\psi_z^1$
extends to a map $a_\gamma:\bbR_{\geq 0}\to\Gr_\bbR^{(\sigma_2)}$ such that 
$a_\gamma(0)=\underset{z\to 0}\lim\ \psi_z^1(\gamma)$.
\item
For any $\mE\in\Bun_{G_\bbR}(\bbP^1(\bbR))$,
the action map $\bbR_{>0}\to\Bun_{G_\bbR}(\bbP^1(\bbR)),\ z\to\psi_z^2(\mE)$
given by the flow $\psi_z^2$
extends to a map 
\beq\label{extended action}
a_\mE:\bbR_{\geq 0}\to\Bun_{G_\bbR}(\bbP^1(\bbR)).
\eeq
Moreover, we have 
$a_\mE(z)\is\mE$ for all $z\in\bbR_{\geq 0}$, and 
for any $\gamma\in\Gr_\bbR^{(\sigma_2)}$,
there is a commutative 
diagram 
\beq\label{comm of psi^1}
\xymatrix{\bbR_{\geq 0}\ar[r]^{a_\gamma}\ar[rd]_{a_\mE}&\Gr^{(\sigma_2)}_\bbR\ar[d]^{u_\bbR^{(\sigma_2)}}\\
&\Bun_{G_\bbR}(\bbP^1(\bbR))}
\eeq
where $\mE=q(\gamma)\in\Bun_{G_\bbR}(\bbP^1(\bbR))$.

\end{enumerate}
\end{lemma}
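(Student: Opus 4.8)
The plan is to deduce everything from the flow structure recorded in Lemma~\ref{flows on Gr^(2)}, the real uniformization of Proposition~\ref{uniformizations at real x}, and the commuting square~\eqref{diagram for psi_z}. \emph{Part (1):} the flow $\psi_z^1$, $z\in\bbR_{>0}$, is the restriction to real points of the algebraic $\bbG_m$-action $\psi_\bullet$ on the Beilinson--Drinfeld Grassmannian $\Gr^{(2)}$; it covers the scaling $w\mapsto zw$ of the base $\bbP^1$ and, by Lemma~\ref{flows on Gr^(2)}, contracts $\Gr_\bbR^{(\sigma_2)}$ onto the cores, so $\underset{z\to 0}{\lim}\,\psi_z^1(\gamma)$ exists for every $\gamma$ (in a core $C_\bbR^\lambda$ of the central fibre when $\gamma$ lies over $0$, and in a loop-rotation fixed point of whichever fibre it converges into in general). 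To upgrade ``the limit exists'' to ``$z\mapsto\psi_z^1(\gamma)$ extends to a real-analytic, in particular continuous, map $a_\gamma$ on $\bbR_{\geq 0}$ with that value at $0$'' I would invoke the standard fact that on the attractor of an algebraic $\bbG_m$-action on an ind-proper ind-scheme the action map extends canonically over $\bbA^1$, and then restrict that extension to real points and to $\bbR_{\geq 0}\subset\bbA^1(\bbR)$. (Ind-properness of $\Gr^{(2)}$ is what makes $a_\gamma$ a morphism of spaces rather than a mere set-theoretic limit; alternatively one can run the gradient-flow argument of Theorem~\ref{flow} for the induced energy function on $\Gr_\bbR^{(\sigma_2)}$.)

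\emph{Part (2), construction of $a_\mE$:} given $\mE\in\Bun_{G_\bbR}(\bbP^1(\bbR))$, I would use the surjectivity of the real uniformization $u_{0,\bbR}=u_\bbR^{(\sigma_2)}|_0\colon\Gr_\bbR\to\Bun_{G_\bbR}(\bbP^1(\bbR))$ (Proposition~\ref{uniformizations at real x}) to pick a lift $\gamma\in\Gr_\bbR=\Gr_\bbR^{(\sigma_2)}|_0$ of $\mE$; since $\psi_z^1$ fixes the base point $0$, the curve $a_\gamma$ from Part~(1) stays in the fibre $\Gr_\bbR$. Set $a_\mE:=u_\bbR^{(\sigma_2)}\circ a_\gamma$. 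The square~\eqref{diagram for psi_z} gives $a_\mE(z)=u_\bbR^{(\sigma_2)}(\psi_z^1(\gamma))=\psi_z^2(u_\bbR^{(\sigma_2)}(\gamma))=\psi_z^2(\mE)$ for $z\in\bbR_{>0}$, so $a_\mE$ extends the action map, and continuity is inherited from Part~(1). Independence of the chosen lift: $\psi_z^1$ preserves each stratum $T_\bbR^\lambda$ (Lemma~\ref{flows on Gr^(2)}, i.e.\ the energy-flow picture of Proposition~\ref{PS} on the central fibre), each $T_\bbR^\lambda$ is a single $G_\bbR(\bbR[t^{-1}])$-orbit, and $u_{0,\bbR}$ is constant on such an orbit; hence $a_\mE(z)$ depends only on $\mE$ for $z>0$, and by continuity for all $z\ge 0$.

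\emph{Part (2), the isomorphism and the triangle:} for $z\in\bbR_{>0}$, $a_\mE(z)=\psi_z^2(\mE)=(a_{z^{-1}})^*\mE\is\mE$, because $a_{z^{-1}}$ is an automorphism of $\bbP^1_\bbR$ fixing $\infty$, so pullback along it preserves triviality at $\infty$ and preserves the isomorphism class of the bundle --- this reduces, via associated vector bundles and $\Pic(\bbP^1)=\bbZ$, to invariance of the dominant-coweight type of a bundle on $\bbP^1$ under automorphisms of $\bbP^1$, that type being a complete invariant of real bundles trivial at $\infty$ by the orbit parametrization of Proposition~\ref{uniformizations at real x}. For $z=0$: with $\gamma$ taken over $0$, say $\gamma\in T_\bbR^\lambda$, Part~(1) and Lemma~\ref{flows on Gr^(2)} place $a_\gamma(0)$ in the core $C_\bbR^\lambda=S_\bbR^\lambda\cap T_\bbR^\lambda\subseteq T_\bbR^\lambda$, the same $G_\bbR(\bbR[t^{-1}])$-orbit as $\gamma$, so $a_\mE(0)=u_{0,\bbR}(a_\gamma(0))\is u_{0,\bbR}(\gamma)\is\mE$. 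Finally, for arbitrary $\gamma\in\Gr_\bbR^{(\sigma_2)}$ with $\mE=u_\bbR^{(\sigma_2)}(\gamma)$, the square~\eqref{diagram for psi_z} gives $u_\bbR^{(\sigma_2)}(a_\gamma(z))=\psi_z^2(\mE)=a_\mE(z)$ for $z>0$; both sides are continuous on $\bbR_{\geq 0}$ and agree on the dense subset $\bbR_{>0}$, hence agree everywhere, which is the commutativity of~\eqref{comm of psi^1}.

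\emph{Main obstacle:} the delicate step is the identity $a_\mE(0)\is\mE$. One cannot argue by ``local constancy of the isomorphism type'', since the closure of an $\bbR_{>0}$-orbit in $\Bun_{G_\bbR}(\bbP^1(\bbR))$ genuinely can change the type --- already in the complex model $\Bun_\bbG(\bbP^1)$, where flowing into the deepest core turns the trivial bundle into a nontrivial one. The resolution is the one used above: take the lift $\gamma$ over the \emph{real} point $0$ rather than over a point of $\calH$, so that the whole flow line, limit included, remains inside a single $G_\bbR(\bbR[t^{-1}])$-orbit, on which $u_{0,\bbR}$ is constant. A secondary, more routine point is making Part~(1) a statement about morphisms of real-analytic spaces, handled by the $\bbA^1$-extension of attracting $\bbG_m$-actions on ind-proper ind-schemes.
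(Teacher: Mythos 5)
Your proof is correct and takes essentially the same route as the paper's: both deduce part (1) from Lemma~\ref{flows on Gr^(2)}(2), both construct $a_\mE$ by choosing a lift $\gamma$ of $\mE$ in the central fibre $\Gr_\bbR=\Gr^{(\sigma_2)}_\bbR|_0$ and composing $a_\gamma$ with the real uniformization $\Gr_\bbR\to G_\bbR(\bbR[t^{-1}])\backslash\Gr_\bbR\is\Bun_{G_\bbR}(\bbP^1(\bbR))$, and both hinge on the key observation that the whole flow line $a_\gamma(\bbR_{\geq 0})$ stays in a single $G_\bbR(\bbR[t^{-1}])$-orbit $T_\bbR^\lambda$ (because $T_\bbR^\lambda$ is the unstable manifold of $C_\bbR^\lambda$ and $C_\bbR^\lambda\subset T_\bbR^\lambda$), which the paper uses to conclude $a_\mE(z)\is\mE$ for all $z\geq 0$ at once. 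Your presentation differs only in minor ways---you treat $z>0$ separately via $(a_{z^{-1}})^*\mE\is\mE$ before invoking the orbit argument at $z=0$, and you supply a more explicit discussion of the $\bbA^1$-extension for part (1) and of the pitfall of picking a lift over a point of $\calH$ rather than over $0$---but the underlying mechanism is identical to the paper's.
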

\begin{proof}
Part (1) follows from Lemma \ref{flows on Gr^(2)} (2).
Proof of part (2).
Let $\gamma\in\Gr_\bbR$ and let 
$\mE=q(\gamma)\in\Bun_{G_\bbR}(\bbP^1(\bbR))$. Consider the 
the composed map 
\[
a_\mE:\bbR_{\geq 0}\stackrel{a_{\gamma}}\to\Gr_\bbR\to G_\bbR(\bbR[t^{-1}])\backslash\Gr_\bbR\is\Bun_{G_\bbR}(\bbP^1(\bbR))
\]
where $a_\gamma$ is the map in part (1) and the last isomorphism is the real uniformization (see Prop.\ref{uniformizations at real x}).
It is elementary to check that the map
$a_\mE$ only depends on $\mE$ and 
$a_\mE(z)=\psi_z^2(\mE)$ for $z\in\bbR_{>0}$, hence 
 defines the desired map in \eqref{extended action}. Moreover,
since $G_\bbR(\bbR[t^{-1}])$-orbits $T_\bbR^\lambda$ on $\Gr_\bbR$ are unstable manifolds for the flow
$\psi_z^1$, we have $a_\gamma(\bbR_{\geq 0})\subset T_\bbR^\lambda$ if $\gamma\in T_\bbR^\lambda$, and it implies 
$a_\mE(z)\is\mE$ for all $a\in\bbR_{\geq 0}$.
The commutativity of diagram \eqref{comm of psi^1} follows from
the construction of $a_\mE$.

\end{proof}

\begin{proposition}\label{uniformizations at complex x}
Let $x\in\calH$.
The complex uniformization $u_{x,\bbC}:\Gr_x\to\Bun_{G_\bbR}(\bbP^1(\bbR))$ 
induces an isomorphism  
\[LG_{x,\bbR}\backslash\Gr_x\stackrel{\sim}\longrightarrow\on{}\Bun_{G_\bbR}(\bbP^1(\bbR))\]
of semi-analytic stacks.
\end{proposition}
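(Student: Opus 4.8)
The plan is to show the complex uniformization $u_{x,\bbC}$ is both surjective and has fibers exactly the $LG_{x,\bbR}$-orbits, so it descends to an isomorphism of semi-analytic stacks. First I would record the structure of $u_{x,\bbC}$ as a restriction of the family map $u^{(\sigma_2)}_\bbR:\Gr^{(\sigma_2)}_\bbR\to\Bun_\bbG(\bbP^1)_\bbR$ from \eqref{family of uniformizations} to the fiber over $x\in\calH$, using $\Gr^{(\sigma_2)}_\bbR|_x\is\Gr_x$ from \eqref{fibers over real and complex} (Example \ref{m=1}). The glueing description given just before the statement identifies the $G(\bC[t^{-1}]_x)$-ambiguity: two trivializations $(\mE,\phi)$ and $(\mE,\phi')$ related by a change of trivialization $g\in G$ on $\bP^1-\{x\}$ — i.e. by an element of the fiber $L_x^- G$ of the polynomial arc group — produce isomorphic real bundles. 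The point is that the real-analytic group $LG_{x,\bbR}$ of Example \ref{the case m=1} is exactly $L_xG_\bbR$, the automorphisms $\gamma:\bP^1\setminus\{x,\bar x\}\to G$ taking $\bP^1(\bbR)$ into $G_\bbR$, and this is precisely the group acting on $\Gr_x$ through its action on trivializations compatible with the glueing data, so $LG_{x,\bbR}$-orbits are contained in the fibers of $u_{x,\bbC}$.

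Next I would prove surjectivity. Given a real bundle $(\mF,\gamma)\in\Bun_{G_\bbR}(\bbP^1(\bbR))$, I want to produce $(\mE,\phi)\in\Gr_x$ mapping to it. The cleanest route is to use the flow $\psi_z^1$ of \eqref{flow on Gr_R^2} and Lemma \ref{flows on Bun_R}: the extended action $a_\mF:\bbR_{\geq 0}\to\Bun_{G_\bbR}(\bbP^1(\bbR))$ has $a_\mF(z)\is\mF$ for all $z\ge 0$, and the real uniformization $u_{\bbR}$ (Proposition \ref{uniformizations at real x}) shows the fiber over $0$, namely $\Gr_\bbR$, surjects onto $\Bun_{G_\bbR}(\bbP^1(\bbR))$. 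Combined with the commutative diagram \eqref{diagram for psi_z} and the fact (Lemma \ref{flows on Gr^(2)}) that $\tilde T_\bbR^\lambda|_x\is\mO_\bbR^\lambda$ sweeps out $\Gr_x$ as $\lambda$ ranges over $\Lambda_A^+$, I can lift: pick $\gamma_0\in\Gr_\bbR$ with $u_\bbR(\gamma_0)=\mF$, flow it to a point of $\Gr_\bbR^{(\sigma_2)}$ lying over $x$, and use the commutativity \eqref{comm of psi^1} to see its image is still $\mF$. This shows $u_{x,\bbC}$ is surjective onto $\Bun_{G_\bbR}(\bbP^1(\bbR))$.

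It then remains to check that the fibers of $u_{x,\bbC}$ are exactly single $LG_{x,\bbR}$-orbits (not just unions of such), and that the resulting bijection $LG_{x,\bbR}\backslash\Gr_x\to\Bun_{G_\bbR}(\bbP^1(\bbR))$ upgrades to an isomorphism of semi-analytic stacks — i.e. is an isomorphism at the level of groupoid presentations, matching automorphism groups. For the fiber claim, if $(\mE,\phi)$ and $(\mE',\phi')$ give isomorphic real bundles $(\mF,\gamma)\is(\mF',\gamma')$, the isomorphism away from $x,\bar x$ together with its conjugate glues to an element of $L_x G_\bbR=LG_{x,\bbR}$ carrying one to the other; this is a direct descent/Galois-cohomology computation using the explicit glueing recipe. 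For the stack statement I would argue as in the proof of Proposition \ref{uniformizations at real x}: both sides are presented by smooth real analytic spaces, $u^{(\sigma_2)}_\bbR$ is formally smooth, so it suffices to lift presentations étale-locally via the exponential-map argument, plus a comparison of stabilizers ($\Aut$ of $(\mE,\phi)$ in $\Gr_x$ versus $\Aut$ of $(\mF,\gamma)$). The main obstacle I expect is precisely this last point — verifying that the orbit map $L_xG_\bbR\backslash\Gr_x\to\Bun_{G_\bbR}(\bbP^1(\bbR))$ is not merely a continuous bijection but an isomorphism of semi-analytic stacks, which requires the formal smoothness of $u^{(\sigma_2)}_\bbR$ and a careful local-section argument of the kind used for $u_\bbR$; the surjectivity and fiber identification, by contrast, follow fairly mechanically from the flow picture of Lemmas \ref{flows on Gr^(2)} and \ref{flows on Bun_R} together with Proposition \ref{uniformizations at real x}.
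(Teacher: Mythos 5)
Your overall strategy matches the paper's: use the scaling flow $\psi^1_z$ on $\Gr^{(\sigma_2)}_\bbR$ and its compatibility with the flow on $\Bun$ to reduce surjectivity of $u_{x,\bC}$ on isomorphism classes to surjectivity of the real uniformization $u_\bbR$, then invoke the local-section argument from Proposition~\ref{uniformizations at real x} to promote this to a stack isomorphism. However, there is a genuine directional error in your surjectivity step.

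You propose to ``pick $\gamma_0\in\Gr_\bbR$ with $u_\bbR(\gamma_0)=\mF$, flow it to a point of $\Gr_\bbR^{(\sigma_2)}$ lying over $x$.'' This cannot be done: the $\bbR_{>0}$-flow $\psi^1_z$ covers the scaling $a_z$ on the base $\bbP^1$, so it sends the fiber over $y$ to the fiber over $zy$. Since $z\cdot 0=0$ for every $z\in\bbR_{>0}$, the special fiber $\Gr_\bbR = \Gr^{(\sigma_2)}_\bbR|_0$ is preserved, and no amount of flowing moves $\gamma_0$ out to a fiber over $x\in\calH$. The correct argument (the paper's) goes in the opposite direction: take $\gamma\in\Gr_x$, form $\lim_{z\to 0}\psi^1_z(\gamma)\in \bigcup_\lambda C_\bbR^\lambda$, and use Lemma~\ref{flows on Bun_R}(2) (the extended action $a_\mE$ satisfies $a_\mE(z)\cong\mE$ for all $z\geq0$) together with the commutativity \eqref{comm of psi^1} to conclude that $u_{x,\bC}(\gamma)$ and $u_\bbR\bigl(\lim_{z\to 0}\psi^1_z(\gamma)\bigr)$ are isomorphic bundles. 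Since by Lemma~\ref{flows on Gr^(2)}(2) the limits of points in $\Gr_x$ sweep out all of $\bigcup_\lambda C_\bbR^\lambda$ (because $\tilde T_\bbR^\lambda|_x\is\mO_\bbR^\lambda$ and these cover $\Gr_x$), and the image of $\bigcup_\lambda C_\bbR^\lambda$ under $u_\bbR$ is all of $|\Bun_{G_\bbR}(\bbP^1(\bbR))|$ by Proposition~\ref{uniformizations at real x}, surjectivity on isomorphism classes follows. One could salvage your approach by replacing ``flow it to a point over $x$'' with ``choose a lift of $\gamma_0$ (or rather of a critical point in its stable manifold) in the unstable manifold $\tilde T_\bbR^\lambda|_x$,'' but this is precisely the forward flow argument in disguise.

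A secondary point: the descent/Galois-cohomology computation you propose to verify that the fibers of $u_{x,\bC}$ are single $LG_{x,\bbR}$-orbits is unnecessary. The complex uniformization $u^{(2)}:\Gr^{(2)}\to\Bun_\bbG(\bbP^1)\times(\bbP^1)^2$ already exhibits $\Gr^{(2)}$ as an $LG^{(2)}$-torsor, and this torsor structure restricts to the real points to give \eqref{open embedding 1}'s analogue over $x$; so once surjectivity onto $\Bun_{G_\bbR}(\bbP^1(\bbR))$ is known, the induced map $LG_{x,\bbR}\backslash\Gr_x\to\Bun_{G_\bbR}(\bbP^1(\bbR))$ is automatically an embedding, and the formal-smoothness local-section argument of Proposition~\ref{uniformizations at real x} finishes the stack-level isomorphism without needing the glueing-recipe analysis.
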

\begin{proof}
Let $\gamma\in\Gr_x$
and 
let $\mE=u_{x,\bbC}(\gamma)\in\Bun_{G_\bbR}(\bbP^1(\bbR))$
be the image of the complex uniformization map. By Lemma \ref{flows on Bun_R}(2)
we have 
\beq\label{eq 1}
|\mE|=|a_\mE(0)|=|
q(a_{\gamma}(0)|=|u_\bbR^{(\sigma_2)}(\underset{z\to 0}\lim\ \psi_z^1(\gamma))|,
\eeq
\beq\label{eq 2}
\underset{z\to 0}\lim\ \psi_z^1(\Gr_x\is\Gr^{(2)}_\bbR|_x)=\bigcup_{\lambda\in\Lambda_A^+}C_\bbR^\lambda.
\eeq
 As the image $\bigcup_{\lambda\in\Lambda_A^+}|q(C_\bbR^\lambda)|$ of the critical manifolds under 
 $u_\bbR^{(\sigma_2)}$
is equal to $|\Bun_{G_\bbR}(\bbP^1(\bbR))|$, equations \eqref{eq 1} and \eqref{eq 2} imply that 
$u_{x,\bC}$ factors through $u_{x,\bC}:\Gr\to\Bun_{G_\bbR}(\bbP^1(\bbR))$ and induces a surjection 
between the sets of isomorphism classes of objects.
Now a similar argument as in the proof Proposition \ref{uniformizations at real x} shows that 
$u_{x,\bC}:\Gr_x\to\Bun_{G_\bbR}(\bbP^1(\bbR))$ is surjective and hence   
an isomorphism $LG_{x,\bbR}\backslash\Gr_x\is\Bun_{G_\bbR}(\bbP^1(\bbR))$.
\end{proof}

\subsection{Multi-point uniformization}
We can also consider multi-point uniformization
\[u^{(n)}:\Gr^{(n)}\to\Bun_{\bbG}(\bbP^1)\times (\bP^1)^n, \ \ (x,\mE,\phi)\to (\mE,x)\] which 
exhibit $\Gr^{(n)}$ as a $LG^{(n)}$-torsor over $\Bun_{\bbG}(\bbP^1)\times (\bP^1)^n$.
Assume $n=2m$. 
the map $u^{(2n)}$ induces a map
\[u^{(\sigma_{2m})}_\bbR:\Gr^{(\sigma_{2m})}_\bbR\to\Bun_{\bbG}(\bbP^1)_\bbR\times (\bP^1)^m\]
on the corresponding real analytic stacks. 
Then  the same discussions in the previous sections apply to the multi-points setting and we have the following:

\begin{prop}\label{open in family}
The map $u^{(\sigma_{2m})}_\bbR$ 
induces  an isomorphism 
\[LG_\bbR^{(\sigma_{2m})}\backslash\Gr^{(\sigma_{2m})}_{\bbR}\stackrel{\sim}\lra\Bun_{G_\bbR}(\bbP^1(\bbR))\times(\bbP^1)^m.\]
of semi-analytic stacks.
\end{prop}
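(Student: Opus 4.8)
The strategy is to reduce Proposition \ref{open in family} to the single-point statement Proposition \ref{uniformizations at complex x} (and its real analogue Proposition \ref{uniformizations at real x}) by means of the factorization structure already established for the family $\Gr^{(\sigma_{2m})}_\bbR$ and for the multi-point loop group $LG^{(\sigma_{2m})}_\bbR$. First I would verify that the morphism $u^{(\sigma_{2m})}_\bbR$ is well-defined: given $(x,\mE,\phi)\in\Gr^{(\sigma_{2m})}_\bbR$, a point of the fiber over $x\in\calH^m$ produces, after gluing $\mE$ with its conjugate $c(\mE)$ along $\bbP^1\setminus(|x|\cup|\bar x|)$ exactly as in the construction of $u_{x,\bbC}$ in Section \ref{uniformization of real bundles}, a real bundle on $\bbP^1$ together with a family of $m$ points; the $\sigma_{2m}$-equivariance of $u^{(2m)}$ guarantees this descends. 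The same co-boundary computation as for $u_{x,\bbC}$ shows the image lands in $\Bun_{G_\bbR}(\bbP^1(\bbR))\times(\bbP^1)^m$ (the class at $\infty$ is trivial). Then I would show $u^{(\sigma_{2m})}_\bbR$ is $LG^{(\sigma_{2m})}_\bbR$-invariant and factors through the quotient, giving the candidate morphism to be proved an isomorphism.

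**Fiberwise analysis.** The heart of the argument is to check the map fiberwise over $(\bbP^1)^m$, using the stratification by partitions and the factorization isomorphisms \eqref{real complex factorization} for $\Gr^{(\sigma_{2m})}_\bbR$ together with \eqref{factorization of Omega1}--\eqref{factorization of Omega2} and the multi-point Gram--Schmidt factorization of Proposition \ref{multi Gram-Schmidt}. Over the open stratum $\calH^m$ we have $\Gr^{(\sigma_{2m})}_\bbR|_{\calH^m}\is\Gr^{(m)}|_{\calH^m}$ by \eqref{fibers over real and complex}, and the map is the usual complex uniformization in a family; over a single point $x\in\calH^m$ with all coordinates in $\calH$ and distinct, it is a product of copies of the complex uniformization $u_{x_i,\bbC}$ of Proposition \ref{uniformizations at complex x}, so it induces an isomorphism $LG_{x,\bbR}^{(m)}\backslash\Gr^{(m)}_x\is\Bun_{G_\bbR}(\bbP^1(\bbR))$ — more precisely one must check that the gluing of the $m$ conjugate pairs is independent of the order and produces a single real bundle, which follows from compatibility of the pairwise gluings on the disjoint loci $|x_i|\cup|\bar x_i|$. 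Over a point with some coordinates real, \eqref{fibers over real and complex} gives $\Gr^{(m)}_\bbR$ and the map specializes to the real uniformization of Proposition \ref{uniformizations at real x}; over a general partition stratum one combines both via \eqref{real complex factorization}. In each case surjectivity on isomorphism classes and the torsor property identify the quotient with $\Bun_{G_\bbR}(\bbP^1(\bbR))$ times the parameter space.

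**Gluing the fibers and conclusion.** Having established that $u^{(\sigma_{2m})}_\bbR$ induces a bijection on fiberwise isomorphism classes and that $\Gr^{(\sigma_{2m})}_\bbR$ is an $LG^{(\sigma_{2m})}_\bbR$-torsor over the target (this torsor property is inherited from the complex statement that $u^{(2m)}$ exhibits $\Gr^{(2m)}$ as an $LG^{(2m)}$-torsor over $\Bun_{\bbG}(\bbP^1)\times(\bbP^1)^{2m}$, restricted to $\sigma_{2m}$-fixed points), I would conclude that the induced morphism $LG^{(\sigma_{2m})}_\bbR\backslash\Gr^{(\sigma_{2m})}_\bbR\to\Bun_{G_\bbR}(\bbP^1(\bbR))\times(\bbP^1)^m$ is a monomorphism of semi-analytic stacks which is surjective, hence an isomorphism; smoothness/formal smoothness of $u^{(\sigma_{2m})}_\bbR$ (argued as in Proposition \ref{uniformizations at real x} via the exponential map for the metric $g(,)$) upgrades this to an isomorphism of semi-analytic stacks in the sense of Appendix \ref{Real stacks}.

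**Main obstacle.** The step I expect to be most delicate is not the formal descent but the verification that, over a point $x\in\calH^m$, the $m$-fold gluing procedure genuinely factors through a \emph{single} $LG^{(\sigma_{2m})}_\bbR$-orbit structure — i.e. that the factorization isomorphism \eqref{real complex factorization} is compatible with $u^{(\sigma_{2m})}_\bbR$ and with the uniformization maps $u_{x_i,\bbC}$ on each factor, uniformly as the points $x_i$ collide or move onto $\bbP^1(\bbR)$. This is essentially a coherence check across the strata of $(\bbP^1)^m$, and making it precise requires carefully tracking the change-of-coordinate isomorphisms $L_xG_\bbR\is LG_\bbR$ from Example \ref{the case m=1} in families; everything else reduces cleanly to the single-point Propositions \ref{uniformizations at real x} and \ref{uniformizations at complex x}.
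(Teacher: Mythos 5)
The overall architecture of your argument — surjectivity on isomorphism classes, the $LG^{(\sigma_{2m})}_\bbR$-torsor property inherited from the complex statement, and the formal smoothness via the exponential map to produce local sections — is the right one, and indeed is the content the paper summarizes with the phrase ``the same discussions apply to the multi-points setting.'' You are correct that the heart of the matter is the analogue of Propositions \ref{uniformizations at real x} and \ref{uniformizations at complex x} in families.

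However, your fiberwise analysis contains a genuine misdescription that you then elevate into the ``main obstacle,'' and neither is right. Over a point $x=(x_1,\dots,x_m)\in\calH^m$ with distinct coordinates, the fiber $\Gr^{(m)}_x$ does factor as $\prod_i \Gr_{x_i}$, but the uniformization map $u^{(\sigma_{2m})}_\bbR|_x$ is \emph{not} ``a product of copies of $u_{x_i,\bbC}$'': its target is a single $\Bun_{G_\bbR}(\bbP^1(\bbR))$, not a product of $m$ copies, and the group $LG^{(m)}_{\bbR,x}$ (sections of $G$ on $\bbP^1\setminus(|x|\cup|\bar x|)$ taking $\bbP^1(\bbR)$ into $G_\bbR$) does not decompose as a product of $m$ groups, so the quotient is not a product of the single-point quotients. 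Correspondingly, the worry about whether ``the gluing of the $m$ conjugate pairs is independent of the order'' is a phantom: the real bundle is produced by a single gluing of $\mE|_{\bbP^1\setminus|\bar x|}$ with $c(\mE)|_{\bbP^1\setminus|x|}$ along the common open set $\bbP^1\setminus(|x|\cup|\bar x|)$ using $c(\phi)^{-1}\circ\phi$, exactly as in the $m=1$ case; there is no sequence of pairwise gluings to make coherent.

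The correct reduction of surjectivity to $m=1$ is more elementary than what you propose: given $\mE\in\Bun_{G_\bbR}(\bbP^1(\bbR))$, Proposition \ref{uniformizations at complex x} gives a point of $\Gr_{x_1}$ mapping to it, i.e.\ a trivialization of the associated $G$-bundle away from $x_1$; this is a fortiori a trivialization away from $\{x_1,\dots,x_m\}$, hence a point of $\Gr^{(m)}_x$ hitting $\mE$. No factorization isomorphism is needed for this step. The factorization isomorphisms \eqref{real complex factorization} and Proposition \ref{multi Gram-Schmidt} are relevant elsewhere in the paper but do not give the reduction you want here, because the group $LG^{(\sigma_{2m})}_\bbR$ does not factorize compatibly. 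Once surjectivity on isomorphism classes is in hand, the local-section argument via the exponential map (as in the proof of Proposition \ref{uniformizations at real x}) upgrades this to surjectivity of semi-analytic stacks, and the complex torsor property restricted to $\sigma_{2m}$-fixed points identifies the fibers with $LG^{(\sigma_{2m})}_\bbR$-orbits, giving the claimed isomorphism. With the fiberwise analysis corrected as above, your proof is sound and matches the paper's intended argument.
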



\section{Quasi-maps and Quillen's homeomorphism}\label{QMaps}
In this section we 
study the stack of quasi-maps and its real forms 
and use them to provide moduli interpretation for  
the quotient 
$LK_c^{}\backslash\Gr^{}$. Using our previous work \cite{CN},
we show that 
the 
family of real quasi-maps is topologically trivial
and deduce from it a multi-point version and refinement of 
of Quillen result \cite{M} on homotopy equivalences
 between 
loop spaces of symmetric varieties and  real affine Grassmannians.

\subsection{Definition of quasi-maps}
For $n\geq 0$, define the stack of quasi-maps with poles
$\QM^{(n)}(\bbP^1,X)$ to classify triples $(z, \calE, \sigma)$ comprising a point $z = (z_1, \ldots, z_n)\in (\bbP^1)^n$, a  $G$-torsor $\calE$ over $\bbP^1$, and a section $\sigma:\bbP^1 \setminus |z|\to \mE\times^GX$ where we write $|z| = \cup_{i =1}^n z_i \subset \bbP^1$. 
According to \cite{GN1}, $\QM^{(n)}(\bbP^1,X)$ is an ind-stack of ind-finite type.
Note the natural  maps  $\QM^{(n)}(\bbP^1,X) \to (\bP^1)^n$ and $\QM^{(n)}(\bbP^1,X) \to \Bun_\bbG(\bbP^1)$. 
Let $\QM^{(n)}(\bbP^1,X,\infty)$ be the ind-scheme classifies 
quadruple $(z,\mE,\phi,\iota)$ where $z\in \bC^n$,
$\mE$ is a $G$-bundle on $\bP^1$, $\phi:\bP^1-|z|\ra\mE\times^GX$, and 
$\iota:\mE_K|_{\infty}\is K$, here $\mE_K$ is the $K$-reduction of
$\mE$ on $C-|z|$ given by $\phi$. We have a natural map 
$\QM^{(n)}(\bbP^1,X,\infty)\ra \bC^n$. 
The ind-scheme $\QM^{(n)}(\bbP^1,X,\infty)\ra \bC^n$ is called rigidified quasi-maps.
Note that we have natural map 
\[
\QM^{(n)}(\bbP^1,X,\infty)\to\QM^{(n)}(\bbP^1,X)|_{\bC^n}
\]
sending $(z,\mE,\phi,\iota)$ to $(z,\mE,\phi)$
and it induces an isomorphism 
\beq\label{uni for QM(infty)}
K\backslash\QM^{(n)}(\bbP^1,X,\infty)\is
\QM^{(n)}(\bbP^1,X)|_{\bC^n}
\eeq
where the group $K$ acts on 
$\QM^{(n)}(\bbP^1,X,\infty)$
by changing the trivialization $\iota$.
For any $z\in(\bP^1)^n$ (resp. $z\in\bC^n$), we denote by
$\QM^{(n)}(\bbP^1,z,X)$ (resp. $\QM^{(n)}(\bbP^1,z,X,\infty)$)
the fiber at $z$.

\subsection{Real forms of quasi-maps}\label{real form of QM}
Let $n=2m$.
The twisted conjugation 
$\sigma_{2m}$ on $(\bP^1)^{2m}$ together with the involution 
$\eta$ on $G$ defines an involution on 
 $\QM^{(2m)}(\bbP^1,X)$ and we denote by
 $\QM^{(\sigma_{2m})}(\bbP^1,X)_\bbR$ the corresponding semi-analytic stack.
 Note there are natural  maps  
\[\QM^{(\sigma_{2m})}(\bP^1,X)_\bbR \to(\bP^1)^m\text{\ \ \ \ \ }\QM^{(\sigma_{2m})}(\bP^1,X)_\bbR \to \Bun_\bbG(\bbP^1)_\bbR.\]

Since $c(\infty)=\infty$, 
we have 
a real form of  
$\QM^{(\sigma_{2m})}(\bbP^1,X,\infty)_\bbR$ of $\QM^{(n)}(\bbP^1,X,\infty)$.
We have natural maps
\[\QM^{(\sigma_{2m})}(\bP^1,X,\infty)_\bbR \to\bC^m\text{\ \ \ \ \ }\QM^{(\sigma_{2m})}(\bP^1,X,\infty)_\bbR \to \Bun_\bbG(\bbP^1)_\bbR.\]

 \subsection{Morphisms}\label{morphisms}
Let $G_1$ and $G_2$ be two reductive groups with 
complex conjugations $\eta_1$ and $\eta_2$ and Cartan involutions $\theta_1$ 
and $\theta_2$ respectively. 
Then the constructions of quasi-maps, rigidified quasi-maps, uniformization
morphisms, and real forms of those are functorial with respect to 
homomorphism 
$f:G_1\to G_2$ that intertwine
$\eta_1,\eta_2$ and $\theta_1,\theta_2$.

\quash{

\subsection{Definitions}

Let $A$ be a complex base-scheme (we will only use the case when $A$ is a point $pt=\Spec (\bbC)$ or affine line $\bbA^1 = \Spec (\bbC[s])$).

%
%
%

Let $\pi:\calZ\to A$ be a family of curves, with fibers denoted $\calZ_a = \pi^{-1}(a)$. 

Let $\Bun_G(\calZ/A)$ denote the moduli stack of a point $a\in A$ and a $G$-bundle $\calE$ on the fiber $\calZ_a$.
More precisely, an $S$-point consists of an $S$-point $a:\Spec S\to A$ and a $G$-bundle $\calE$ on the 
corresponding fiber product $\calZ \times_{A} \Spec S$. 
Denote by $p:\Bun_G(\calZ/A)\to A$  the evident projection with fibers $p^{-1}(a) = \Bun_G(\calZ_a)$.

Let $\sigma = (\sigma_1,\ldots, \sigma_n):A\to\calZ^n$ be an ordered $n$-tuple of sections of $\pi$ (we will only use the case when $n=2$, but crucially allow the
points to intersect).

Let $\QM_{G, X}(\calZ/A, \sigma)$ denote the ind-stack of quasi-maps   classifying
a point $a\in A$, a $G$-bundle $\calE$ on the fiber $\calZ_a$, and a section
\begin{equation}
\xymatrix{
s:\calZ_a\setminus \{\sigma_1(a), \ldots, \sigma_n(a)\} \ar[r] & X_\calE
}
\end{equation}
to the associated $X$-bundle over the complement of the points $\sigma_1(a),\ldots,  \sigma_n(a)\in \calZ_a$.
We have the evident forgetful maps
\beq
\xymatrix{
q:\QM_{G, X}(\calZ/A, \sigma)\ar[r] & \Bun_G(\calZ/A)\ar[r] &  A
}
\eeq
with fibers  $q^{-1}(a) = \QM_{G, X}(\calZ_a, \sigma(a))$.

Let $\xi:A\to\calZ$ be another section of $\pi$ such that 
$\xi(a)\neq\sigma_i(a)$, for all $a\in A$,  and $i=1,\ldots, n$. 

Let $\QM_{G, X}(\calZ/A, \sigma,\xi)$ denote the ind-stack of rigidified quasimaps classifying quadruples $(a,\mE,s,\iota)$
where $(a,\mE,s)$ is a quasi-map as above and $\iota:\mE_K|_{\xi(a)}\is K$ is a trivialization, where 
$\mE_K$ is the $K$-reduction of $\mE$ on $\calZ_a\setminus \{\sigma_1(a), \ldots, \sigma_n(a)\}$ given by the section $s$.
We have the evident forgetful maps
\beq
\xymatrix{
r:\QM_{G, X}(\calZ/A, \sigma,\xi)\ar[r] & \Bun_G(\calZ/A)\ar[r] &  A
}
\eeq
with fibers 
$r^{-1}(a) = \QM_{G, X}(\calZ_a, \sigma(a),\xi(a))$.

Suppose given conjugations $c_\calZ:\calZ\to\calZ$ and $c_{A}:A\to A$ intertwined by
$\pi:\calZ\to A$.

The conjugation $\eta$ of $G$ induces a natural conjugation $\eta$ of $\Bun_G(\calZ/A)$ defined by
$
\eta(a, \calE) = (c_{A}(a), c_{\calZ}^*\calE_\eta)
$
 where 
we write $c_{\calZ}^*\calE_\eta
$ for the bundle $c_{\calZ}^*\mE$ with  its $\eta$-twisted $G$-action.
We denote by $\Bun_G(\calZ/A)_\bbR$ the corresponding real points.

Suppose  $\sigma = (\sigma_1, \sigma'_1, \ldots, \sigma_n, \sigma_n'):A\to\calZ^{2n}$
is an ordered $2n$-tuple of sections of $\pi$ organized into pairs $(\sigma_i, \sigma_i')$,
for $i=1, \ldots, n$.
Suppose further that $c_\calZ(\sigma_i(c_{A}(a)))=\sigma_i'(a)$, for $i=1, \ldots, n$,
and $c_\calZ(\xi(c_{A}(a)))=\xi(a)$. 

Then the conjugation $\eta$ of $G$ similarly induces natural conjugations $\eta$ of $\QM_{G, X}(\calZ/A, \sigma)$
 and $\eta$ of $\QM_{G, X}(\calZ/A, \sigma,\xi)$.
We denote by $\QM_{G, X}(\calZ/A, \sigma)_\bbR$ and 
 $\QM_{G, X}(\calZ/A, \sigma,\xi)_\bbR$
 the respective real points.

\subsection{Uniformizations}\label{uniformization}
Let $\Gr_{G,\calZ,\sigma_i}$ (resp. $\Gr_{G,\calZ,\sigma}$)
denote the Beilinson-Drinfeld Grassmannian 
of a point $a\in A$, a $G$-bundle $\mE$ on $\calZ_a$, and a section 
$s:\calZ_a\setminus\sigma_i(a)\to\mE$ (resp. $s:\calZ_a\setminus\{\sigma_1(a),\ldots, \sigma_n(a)\}\to\mE$). 

Let $G[\calZ,\hat\sigma_i]$ denote the group scheme of a point $a\in A$
and a section $\hat D_{\sigma_i(a)}\to G$, where $\hat D_{\sigma_i(a)}$
is the formal disk around $\sigma_i(a)$.

Let $G[\calZ,\sigma_i]$ (resp. $G[\calZ,\sigma]$) denote the  group ind-scheme of a point $a\in\bA^1$
and a section $s:\calZ_a\setminus\{\sigma_i(a)\}\to G$ (resp. $s:\calZ_a\setminus\{\sigma_1(a),\ldots, \sigma_n(a)\}\to G$). 

Let $G[\calZ,\sigma_i,\xi]$ (resp. $G[\calZ,\sigma,\xi]$)
denote the subgroup ind-scheme of $G[\calZ,\sigma_i]$ (resp. $G[\calZ,\sigma]$)
consisting of $(a,s)\in G[\calZ,\sigma_i]$ (resp. $(a,s)\in G[\calZ,\sigma]$)
such that $s(\xi(a))=e$. 

For any $a\in A$, we write 
$\Gr_{G,\calZ,\sigma,a}$, $G[\calZ,\sigma, a]$, etc., for the respective fibers over $a$.

The conjugations
$c_\calZ$, $c_{A}$, $\eta$ induce conjugations on 
$\Gr_{G,\calZ,\sigma}$, $G[\calZ,\sigma]$, etc., and we denote by 
$\Gr_{G,\calZ,\sigma,\bbR}$, $G[\calZ,\sigma]_\bbR$, etc., the respective 
real points.

For any $a\in A(\bbR)$, we write 
$\Gr_{G,\calZ,\sigma,a,\bbR}$, $G[\calZ,\sigma, a]_\bbR$, etc., for the respective fibers over $a$.

The group ind-scheme $G[\calZ,\sigma]$ (resp.
$G[\calZ,\sigma_i]$) naturally acts on 
$\Gr_{G,\calZ,\sigma}$ (resp. $\Gr_{G,\calZ,\sigma_i}$) and we have uniformizations morphisms
\beq
\xymatrix{
G[\calZ,\sigma_i]\backslash\Gr_{G,\calZ,\sigma_i}\ar[r] & \Bun_G(\calZ/A)
&
G[\calZ,\sigma]\backslash\Gr_{G,\calZ,\sigma}\to\Bun_G(\calZ/A)
}\eeq
\beq
\xymatrix{
K[\calZ,\sigma]\backslash\Gr_{G,\calZ,\sigma}\ar[r] &\QM_{G,K}(\calZ/A,\sigma)
}
\eeq
\beq\label{eq:qm unif}
\xymatrix{
K[\calZ,\sigma,\xi]\backslash\Gr_{G,\calZ,\sigma}\ar[r] & \QM_{G,K}(\calZ/A,\sigma,\xi)
}
\eeq

The uniformizations are compatible with the given conjugations, hence 
induce uniformizations on real points:
\beq
\xymatrix{
G[\calZ,\sigma_i]_\bbR\backslash\Gr_{G,\calZ,\sigma_i,\bbR}\ar[r] &\Bun_G(\calZ/A)_\bbR
&
G[\calZ,\sigma]_\bbR\backslash\Gr_{G,\calZ,\sigma,\bbR}\ar[r] &\Bun_G(\calZ/A)_\bbR
}
\eeq
\beq
\xymatrix{
K[\calZ,\sigma]_\bbR\backslash\Gr_{G,\calZ,\sigma,\bbR}\ar[r] &\QM_{G,K}(
\calZ/A,\sigma)_\bbR
}
\eeq
\beq\label{eq:qm unif real}
\xymatrix{
K[\calZ,\sigma,\xi]_\bbR\backslash\Gr_{G,\calZ,\sigma,\bbR}\ar[r] &\QM_{G,K}(\calZ/A,\sigma,\xi)_\bbR
}
\eeq
}
 
\subsection{Uniformizations of quasi-maps}\label{uniform of quasi maps}
We have a natural uniformization map 
\beq\label{uni for QM}
q^{(n)}:\Gr^{(n)}\to\QM^{(n)}(\bP^1,X)
\eeq 
exhibits 
$\Gr^{(n)}$ as 
a $LK^{(n)}$-torsor on $\QM^{(n)}(\bP^1,X)$.
In particular, there is a canonical isomorphism of ind-stacks
\beq\label{iso for QM}
LK^{(n)}\backslash\Gr^{(n)}\stackrel{\sim}\lra \QM^{(n)}(\bP^1,
X).
\eeq
Assume $n=2m$.
The morphism in \eqref{uni for QM} 
gives rise to a uniformization map
 \[q^{(\sigma_{2m})}_\bbR:\Gr^{(\sigma_{2m})}_{\bbR}\lra 
\QM^{(\sigma_{2m})}(\bP^1,X)_\bbR\]
on the corresponding semi-algebraic stacks of real points.
The following proposition follows from Proposition \ref{open in family}:

\begin{prop}\label{unif of QM}
The uniformization map $q^{(\sigma_{2m})}_\bbR$ induces  an isomorphism
\[LK^{(\sigma_{2m})}_c\backslash\Gr^{(\sigma_{2m})}_{\bbR}\is\QM^{(\sigma_{2m})}(\bP^1,X)_{\bbR}.\]

\end{prop}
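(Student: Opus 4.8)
The plan is to deduce Proposition~\ref{unif of QM} directly from the complex uniformization result Proposition~\ref{open in family} by taking a further quotient. Recall that $\Gr^{(n)}$ carries two compatible group actions: the $LG^{(n)}$-action with quotient $\Bun_{\bbG}(\bbP^1)\times(\bbP^1)^n$, and the $LK^{(n)}$-action with quotient $\QM^{(n)}(\bbP^1,X)$ coming from~\eqref{iso for QM}. The point is that $\QM^{(n)}(\bbP^1,X)$ sits over $\Bun_{\bbG}(\bbP^1)\times(\bbP^1)^n$ via the map remembering only $(\mE,z)$, and the fibers of this map are exactly the sections $\sigma:\bbP^1\setminus|z|\to\mE\times^G X$, i.e.\ the reductions of $\mE$ to $K$ over the complement of $|z|$; this is precisely the data of an $LK^{(n)}$-orbit inside an $LG^{(n)}$-orbit. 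So the first step is to record cleanly the commutative square of stacks over $(\bbP^1)^n$
\[
\xymatrix{
\Gr^{(n)}\ar[r]\ar[d] & \QM^{(n)}(\bbP^1,X)\ar[d]\\
\Bun_{\bbG}(\bbP^1)\times(\bbP^1)^n\ar@{=}[r] & \Bun_{\bbG}(\bbP^1)\times(\bbP^1)^n
}
\]
exhibiting $\QM^{(n)}(\bbP^1,X)=LK^{(n)}\backslash\Gr^{(n)}$ and $\Bun_{\bbG}(\bbP^1)\times(\bbP^1)^n=LG^{(n)}\backslash\Gr^{(n)}$, with $LK^{(n)}\subset LG^{(n)}$.

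The second step is to apply the involution $\sigma_{2m}$ (combined with $\eta$) to this square and pass to semi-analytic stacks of real points. Since $\eta$ intertwines $K$ and $G$ (it restricts to the complex conjugation defining $K_c\subset K$ compatibly, as set up in Section~\ref{orbits}), the subgroup inclusion $LK^{(2m)}\subset LG^{(2m)}$ is $\sigma_{2m}$-equivariant, and so taking real points gives an inclusion of semi-analytic group stacks $LK_c^{(\sigma_{2m})}\subset LG_{\bbR}^{(\sigma_{2m})}$ over $(\bbP^1)^m$ (here one uses that the real form of the twisted loop group of $K$ with the $\eta$-conjugation is the compact-type object $LK_c^{(\sigma_{2m})}$, exactly as in Section~\ref{Gram-Schmidt}). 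The real points of the uniformization maps give a commutative triangle
\[
\xymatrix{
\Gr^{(\sigma_{2m})}_{\bbR}\ar[r]^-{q^{(\sigma_{2m})}_{\bbR}}\ar[rd]_{u^{(\sigma_{2m})}_{\bbR}} & \QM^{(\sigma_{2m})}(\bbP^1,X)_{\bbR}\ar[d]\\
& \Bun_{G_\bbR}(\bbP^1(\bbR))\times(\bbP^1)^m
}
\]
where by Proposition~\ref{open in family} the diagonal map induces $LG_{\bbR}^{(\sigma_{2m})}\backslash\Gr^{(\sigma_{2m})}_{\bbR}\is\Bun_{G_\bbR}(\bbP^1(\bbR))\times(\bbP^1)^m$. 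The conclusion then follows formally: $q^{(\sigma_{2m})}_{\bbR}$ is $LK_c^{(\sigma_{2m})}$-invariant, so it factors through $LK_c^{(\sigma_{2m})}\backslash\Gr^{(\sigma_{2m})}_{\bbR}$, and the induced map to $\QM^{(\sigma_{2m})}(\bbP^1,X)_{\bbR}$ is an isomorphism because on the base $\Bun_{G_\bbR}(\bbP^1(\bbR))\times(\bbP^1)^m$ it is the identity and on fibers it is the quotient map $LK_c^{(\sigma_{2m})}\backslash LG_{\bbR}^{(\sigma_{2m})}$-torsor $\to$ the fiber of $\QM$, which is again the same torsor; concretely the real structure on $\QM^{(n)}$ is \emph{defined} as the quotient of the real structure on $\Gr^{(n)}$ by the real structure on $LK^{(n)}$, so there is nothing to check beyond compatibility of the conjugations, which is built into Section~\ref{real form of QM}.

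The main obstacle, and where care is needed, is the identification of the real form of the loop group $LK^{(\sigma_{2m})}$ with the compact-type group $LK_c^{(\sigma_{2m})}$ appearing in the statement, and the verification that descent of stacks behaves well for these semi-analytic quotients — i.e.\ that ``taking real points'' commutes with ``taking the quotient stack'' in the two relevant cases. For the first issue one uses that $\eta$ restricted to $K$ is an anti-holomorphic involution whose fixed points give a compact real form of $K$ (because $K=G^\theta$ and $\eta$ commutes with $\theta$, so $\eta|_K$ is conjugate to $\eta_c|_K$), hence the twisted-conjugation real form of $LK$ is of compact type, matching the notation $LK_c^{(\sigma_{2m})}$ of Section~\ref{Gram-Schmidt}; this is essentially the content of Lemma~\ref{fibers: compact case} applied to $K$ in place of $G$. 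For the second issue, the needed statement is precisely the general formalism of semi-analytic stacks developed in Appendix~\ref{Real stacks}: an $\bbR$-surjective presentation of $\QM^{(\sigma_{2m})}(\bbP^1,X)$ is obtained by pushing forward one for $\Bun_{G_\bbR}(\bbP^1(\bbR))\times(\bbP^1)^m$ along the torsor, exactly as in the proofs of Proposition~\ref{uniformizations at real x} and Proposition~\ref{uniformizations at complex x}, so I would cite those proofs and Appendix~\ref{Real stacks} rather than redo the argument. With these two points in hand the proposition is immediate, which is why the proof in the text is a one-line reduction to Proposition~\ref{open in family}.
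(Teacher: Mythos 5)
Your proposal matches the paper's (one-sentence) proof, which simply deduces Proposition~\ref{unif of QM} from Proposition~\ref{open in family}; your expansion via the tower $\Gr^{(2m)} \to \QM^{(2m)}(\bP^1,X) \to \Bun_\bbG(\bP^1)\times(\bP^1)^{2m}$, together with the observation that $\eta|_K = \eta_c|_K$ identifies the real form of $LK^{(2m)}$ with the compact-type group $LK^{(\sigma_{2m})}_c$, is exactly the intended reasoning. The one step you gesture at rather than pin down is the $\bbR$-surjectivity of $\Gr^{(\sigma_{2m})}_\bbR \to \QM^{(\sigma_{2m})}(\bP^1,X)$ (equivalently, that the $K$-reduction carried by any real quasi-map is real-trivializable away from the poles, so the underlying real bundle lands in the $\Bun_{G_\bbR}(\bP^1(\bbR))$-component), but the paper itself offers nothing beyond the phrase ``follows from Proposition~\ref{open in family}'' here, so this is a shared rather than a new gap.
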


\begin{example}\label{fibers of real quasi maps}
By Lemma \ref{fibers: compact case} and 
Proposition \ref{multi Gram-Schmidt}, 
we have
$LK_c^{(\sigma_{2m})}|_{\bP^1(\bbR)^m}:=L^-K_c^{(m)}|_{\bbR^m}\is K_c\times\bbR^m$
, $LK_c^{(\sigma_{2m})}|_{\calH^m}\is LK_c^{m}$
and $\Gr^{(m)}|_{\calH^m}\is\Omega G_c^{(m)}$
and hence there are isomorphisms 
\beq\label{real and imaginary fibers}
\QM^{(\sigma_{2m})}(\bP^1,X)_{\bbR}|_{\calH^m}\is  LK_c^{(m)}\backslash
\Gr^{(m)}|_{\calH^m}\is LK_c^{(m)}\backslash\Omega G_c^{(m)}
\eeq
\[\QM^{(\sigma_{2m})}(\bP^1,X)_{\bbR}|_{\bP^1(\bbR)^m}\is K_c\backslash\Gr^{(m)}_\bbR\]

\end{example}

\quash{
It follows from \eqref{iso for QM} that the map above factors through an embedding 
\[q^{(2)}_\bbR:LK_\bbR^{(2)}\backslash\Gr^{(2)}_{\bbR}\lra 
\QM^{(2)}(\bP^1,X)_\bbR.\]
By Lemma \ref{based loops for K_c}, there are natural isomorphisms 
\[LK^{(2)}_{\bbR}\backslash\Gr^{(2)}_{\bbR}|_x\is
LK_{c}\backslash\Gr,\ \ x\in i\bbR^\times,\ \ LK^{(2)}_{\bbR}\backslash\Gr^{(2)}_{\bbR}|_0\is
K_{c}\backslash\Gr_\bbR
\] and the map $q^{(2)}_\bbR$ gives rise to maps  
\[q_x:LK_{c}\backslash\Gr\lra\QM^{(2)}(\bbP,^1x,X)_\bbR,\ x\in i\bbR^\times\]
\[q_0:K_{c}\backslash\Gr_\bbR\lra\QM^{(2)}(\bbP^1,0,X)_\bbR.\]

\begin{lemma}\label{uni for real quasi-maps}
We have the following:
\begin{enumerate}
\item

The map $q_x$ induces an isomorphism
\[q_x:LK_{c}\backslash\Gr\stackrel{\sim}\lra\QM^{(2)}(\bbP^1,x,X)_{\bbR,\calL}.\]
\item
The map $q_0$ induces an isomorphism
\[q_0:K_{c}\backslash\Gr_\bbR\stackrel{\sim}\lra\QM^{(2)}(\bbP^1,0,X)_{\bbR,\alpha_0}.\]
\end{enumerate}
\end{lemma}
\begin{proof}
This follows from Proposition \ref{uniformizations at complex x} (resp. Proposition \ref{uniformizations at real x}) that every 
real bundle $\mE$ in $\Bun_\bbG(\bbP^1)_{\bbR,\calL}$ (resp. $\Bun_\bbG(\bbP^1)_{\bbR,\alpha_0}$)
admits a complex uniformization (resp. a real uniformization).

\end{proof}

Recall the open family $\Gr_{\bbR,\calL}^{(2)}\to i\bbR$ and the family of uniformizations 
$LG_{\bbR}^{(2)}\backslash\Gr_{\bbR,\calL}^{(2)}\is\Bun_\bbG(\bP^1)_{\bbR,\calL}\times i\bbR$
in Proposition \ref{an open family}. The above lemma implies the following.

\begin{proposition}\label{open sub families of quasi maps}
The natural map $\Gr_{\bbR,\calL}^{(2)}\to QM^{(2)}(\bP^1,X)_{\bbR}$ induces an isomorphism 
\[LK_\bbR^{(2)}\backslash\Gr_{\bbR,\calL}^{(2)}\is QM^{(2)}(\bP^1,X)_{\bbR,\calL}|_{i\bbR}\]
and we have the following commutative diagram 
\[\xymatrix{LK_\bbR^{(2)}\backslash\Gr_{\bbR,\calL}^{(2)}\ar[d]\ar[r]&QM^{(2)}(\bP^1,G,K)_{\bbR,\calL}|_{i\bbR}\ar[d]\\
LG_\bbR^{(2)}\backslash\Gr_{\bbR,\calL}^{(2)}\ar[r]&\Bun_\bbG(\bP^1)_{\bbR,\calL}\times i\bbR}.\]
where the vertical maps are the natural quotient and projection maps.
In addition, there are canonical isomorphisms 
\[LK_c\backslash\Gr\times i\bbR^\times\is QM^{(2)}(\bP^1,X)_{\bbR,\calL}|_{i\bbR^\times}\]
\[K_c\backslash\Gr_{\bbR,\calL}\is QM^{(2)}(\bbP^1,0,X)_{\bbR,\calL}\]
\end{proposition}
}

\quash{

\subsection{Quasi-maps for Complex groups}
Recall $\delta = \theta \circ \eta = \eta \circ \theta$ denotes the Cartan conjugation of $G$ with compact real form $G_c$. 
Equip $G\times G$ with the swap involution $\on{sw}(g,h)=(h, g)$ and 
the conjugation $ \on{sw}_\delta(g,h)=(\delta(h),\delta(g))$. The fixed-point subgroup of $\on{sw}$ is the diagonal  $G\subset G\times G$, and the corresponding symmetric space is  isomorphic to the group
$G\backslash(G\times G)\is G$.

The projection maps $\pr_1,\pr_2:G\times G\to G$
provide an isomorphism 
\beq\label{factorization}
\xymatrix{
\Gr_{G\times G,\calZ,\sigma}\ar[r]^-\sim & \Gr_{G,\calZ,\sigma}\times\Gr_{G,\calZ,\sigma}
}\eeq
The conjugations $c_\calZ$ of $\calZ$ and $\on{sw}_\delta$ of $G \times G$ together  induce a conjugation of $\Gr_{G\times G,\calZ,\sigma}$ which,
 under the isomorphism~\eqref{factorization}, is given by  
\beq
\xymatrix{
\Gr_{G,\calZ,\sigma}\times\Gr_{G,\sigma}\ar[r] & \Gr_{G,\calZ,\sigma}\times\Gr_{G,\sigma}
&
 (\mE,s,\mE',s')\ar@{|->}[r] & (c_\calZ^*\mE'_\delta,c_\calZ^*(s'),
c_\calZ^*\mE_\delta,c_\calZ^*(s))
}
\eeq
Thus the isomorphism \eqref{factorization} followed by $\pr_1$
provides 
an isomorphism 
\beq\label{real}
\xymatrix{
\Gr_{G\times G,\calZ,\sigma,\bbR}\ar[r]^-\sim & \Gr_{G,\calZ,\sigma}
}
\eeq
of real analytic spaces over $\bA^1(\bbR)\is i \bbR$.


\begin{lemma}\label{uniformization for real QM}
The uniformization morphisms~\eqref{eq:qm unif}, \eqref{eq:qm unif real} are isomorphisms:
\begin{enumerate}
\item
$
 QM_{G\times G,G}(\calZ/\bA^1, \sigma,\xi)  \is G[\calZ,\sigma,\xi] \bs\Gr_{G\times G,\calZ,\sigma} 
$
\item
$
 \QM_{G\times G,G}(\calZ/\bA^1, \sigma,\xi)_\bbR  
\is
G[\calZ,\sigma,\xi]_\bbR \bs\Gr_{G\times G,\calZ,\sigma,\bbR}
\stackrel{\eqref{real}}\is 
G[\calZ,\sigma,\xi]_\bbR\bs\Gr_{G,\calZ,\sigma}
$
\end{enumerate}
\end{lemma}
\begin{proof}
Part (1) is standard. Part (2) follows from \cite[Proposition 6.5]{CN1}
and the fact that the fixed-point subgroup $G\subset G\times G$ is connected and 
$H^1(\Gal(\bC/\bbR),G\times G)$ is trivial for the Galois-action given by $\on{sw}]\circ\delta$.  
\end{proof}


\begin{lemma}\label{fiber uniformization for real QM}
The restriction of the isomorphisms of Lemma \ref{uniformization for real QM} to fibers give isomorphisms:

\begin{enumerate} 
\item
$a\not = 0 \in \bbA^1(\bbC) \simeq \bbC$:  
$$
\QM_{G\times G,G}(\calZ_{a}, \sigma(a),\xi(a)) \simeq G[\calZ,\sigma,\xi, a] \bs (\Gr_{G,\calZ, \sigma_1, a}\times  \Gr_{G,\calZ, \sigma_2, a} \times \Gr_{G,\calZ,\sigma_1, a}\times  \Gr_{G,\calZ,\sigma_2, a})
$$

\item
$a = 0 \in \bbA^1(\bbC) \simeq \bbC$:  
$$\QM_{G\times G,G}(\calZ_{0}, \sigma(0),\xi(0)) \simeq G[\calZ,\sigma,\xi, 0]\bs (\Gr_{G, \calZ,\sigma_1, 0} \times \Gr_{G,\calZ, \sigma_2, 0})
$$

\item
$a\not = 0 \in \bbA^1(\bbR) \simeq i\bbR$:  
$$\QM_{G\times G,G}(\calZ_{a}, \sigma(a),\xi(a))_\bbR \simeq G[\calZ,\sigma,\xi, a]_\bbR  \bs (\Gr_{G,\calZ, \sigma_1, a}\times \Gr_{G,\calZ, \sigma_2, a}) \simeq \Gr_{G,\calZ, \sigma_1, a}$$

\item
$a = 0 \in \bbA^1(\bbR) \simeq i\bbR$:  
$$\QM_{G\times G,G}(\calZ_{0}, \sigma(0),\xi(0))_\bbR \simeq G[\calZ,\sigma,\xi, 0]_\bbR  \bs\Gr_{G,\calZ, \sigma_1, 0}\is\Gr_{G,\calZ,\sigma_1, 0}$$
\end{enumerate}

\end{lemma}
\begin{proof}
Parts (1) and (2) are Lemma~\ref{uniformization for real QM} and the factorization  
of Beilinson-Drinfeld Grassmannians. For part (3), observe that 
 \beq
 \xymatrix{
 G[\calZ,\sigma,\xi, a]_\bbR = \{g:\bbP^1(\bbC) \setminus \{-a, a\} \to G \, |\, g(\bbP^1(\bbR)) \subset G_c, g(\infty) = e\}  }
 \eeq
Consider the alternative
coordinate $t=\frac{z-a}{z+a}$ on $\mathbb P^1$ taking 
$z=\infty,a,-a$ to $t=1,0,\infty$ respectively.
The conjugation $z\mapsto \bar z$ takes the form $t\mapsto \bar t^{-1}$, hence we have $ G[\calZ,\sigma,\xi, a]_\bbR \simeq \Omega G_c \subset G([t, t^{-1}])$, and part (3) follows from the factorization~\eqref{Loop=Gr}.
For part (4), 
 observe that 
 \beq
 \xymatrix{
 G[\calZ,\sigma,\xi, a]_\bbR = \{g:\bbP^1(\bbC) \setminus \{0\} \to G \, |\, g(\bbP^1(\bbR) \setminus \{0\}) \subset G_c, g(\infty) = e\}  }
 \eeq
This group is trivial by \cite[Lemma 4.3]{CN1}, and part (4) immediately follows.
\end{proof}

\quash{
For $a\neq 0$,
the fiber $G[\calZ_a,\sigma]$ 
is the group of polynomial maps $\gamma:\bP^1\setminus\{\pm a\}\to G$ and
the fiber $G[\calZ_a,\sigma,\xi]$ 
is the group of maps $\gamma\in G[\calZ_a,\sigma]$ satisfying $\gamma(\infty)=e$.
For $a=0$,
the fiber $G[\calZ_0,\sigma]$
is the groups of polynomial maps $\gamma:\bP^1\setminus\{0\}\to G$ and
the fiber $G[\calZ_0,\sigma,\xi]$ is the groups 
of maps $\gamma\in G[\calZ_0,\sigma,\xi]$
sending $\infty$ to $e$.
Note that, for $a\neq 0$, $G[\calZ_a,\sigma]$ and $G[\calZ_a,\sigma,\xi]$ are isomorphic to 
the loop group $LG$ and the based loop group $LG_1$ respectively and,
for $a=0$, $G[\calZ_0,\sigma]$ and $G[\calZ_0,\sigma,\xi]$ are isomorphic to 
the negative loop group $L^-G$ and the based negative loop group $L^-G_1$ respectively.
The corresponding real forms $G[\calZ_a,\sigma]_\bbR$, $G[\calZ_a,\sigma,\xi]_\bbR$,
$G[\calZ_0,\sigma]_\bbR$, $G[\calZ_0,\sigma,\xi]_\bbR$ are isomorphic to 
$LG_c$, $\Omega G_c$, $L^-G_c$, $L^-G_{c,1}$ respectively.}

\quash{

\begin{lemma} Up to possible issues with connected components, we a family of have uniformizations:

  $  \Bun_G(\calZ/\bA^1)  \simeq  G[xy = t] \bs (\Gr_{G, \sigma_x} \times \Gr_{G,\sigma_y})$.

 $  \Bun_G(\calZ/\bA^1)_\bbR  \simeq  G[xy = t]_\bbR \bs \Gr_{G, \sigma_x}$.

Their restrictions to fibers give:  

  $  \Bun_G(\calZ_{a\not = 0})  \simeq G[x, x^{-1}] \bs (\Gr_{G, \sigma_x(a)} \times \Gr_{G,\sigma_y(a)})
  \simeq G[x^{-1}] \bs \Gr_{G, \sigma_x(a)}$.
  
 $  \Bun_G(\calZ_{a> 0})_\bbR  \simeq LG_\bbR  \bs \Gr_{G, \sigma_x(a)}$.


$ \Bun_G(\calZ_0) \simeq G[xy = 0]\bs  (\Gr_{G, \sigma_x(0)} \times \Gr_{G,\sigma_y(0)}).$

$ \Bun_G(\calZ_0)_\bbR \simeq G[x]_{0\to \bbR}\bs \Gr_{G, \sigma_x(0)}.$
\end{lemma}

 \begin{lemma}
 Up to possible issues with connected components, we a family of have uniformizations:

 $\QM_{G, X}(\calZ/\bA^1, \sigma) \simeq K[xy=t] \bs (\Gr_{G, \sigma_x} \times \Gr_{G,\sigma_y})$.

$\QM_{G, X}(\calZ/\bA^1, \sigma)_\bbR \simeq K[xy=t]_\bbR  \bs\Gr_{G, \sigma_x}.$

Their restrictions to fibers give:

$\QM_{G, X}(\calZ_{a\not = 0}, \sigma) \simeq K[t, t^{-1}] \bs (\Gr_{G, \sigma_x(a)} \times \Gr_{G,\sigma_y(a)})$.

$\QM_{G, X}(\calZ_{a> 0}, \sigma)_\bbR \simeq LK_c  \bs\Gr_{G, \sigma_x(a)}.$

$\QM_{G, X}(\calZ_{0}, \sigma) \simeq K[xy=0] \bs (\Gr_{G, \sigma_x(0)} \times \Gr_{G,\sigma_y(0)})$.

$\QM_{G, X}(\calZ_{0}, \sigma)_\bbR \simeq K[x]_{0\to \bbR}  \bs\Gr_{G, \sigma_x(0)}.$

\end{lemma}

\begin{remark}
Observe that the real quasi-map family $\QM_{G, X}(\calZ_{a> 0}, \sigma)$ is given by the quotient of the constant Grassmannian $\Gr_{G, \sigma_x} \simeq 
\bA^1(\bbR) \times \Gr_{G, \sigma_x(0)}$
by a family of groups with generic fiber $ LK_c$ and special fiber $K[x]_{0\to \bbR}$.
\end{remark}

Next, let us  record a natural functoriality of the above constructions. 
We have a map of diagrams $(G, G_\bbR, K, K_c) \to (G\times G, G, G, G_c)$ given by $g\mapsto (g, \eta(g))$. 
Thus we obtain a map of quasi-maps
\begin{equation}
\xymatrix{
\QM_{G, X}(\calZ/\bbA^1, \sigma) \ar[r] & \QM_{G\times G, G}(\calZ/\bbA^1, \sigma)
}
\end{equation}
compatible with the given conjugations and evident projections.

Consider the involution $\tilde \eta$ acting on $\QM_{G\times G, G}(\calZ/\bbA^1, \sigma)$ by the formula...
Note that the involution $\tilde \eta$ commutes with the conjugation $\theta_c$.  

 \begin{lemma}
$\QM_{G, X}(\calZ/\bbA^1, \sigma)$ is a union of connected components of the fixed points of
the involution $\tilde \eta$ acting on $\QM_{G\times G, G}(\calZ/\bbA^1, \sigma)$.

$\QM_{G, X}(\calZ/\bbA^1, \sigma)_\bbR$ is a union of connected components of the fixed points of
the involution $\tilde \eta$ acting on $\QM_{G\times G, G}(\calZ/\bbA^1, \sigma)_\bbR$.

\end{lemma}
}

\subsection{Splitting}
Let $\ell:G\to G \times G$, $\ell(g) = (g, e)$ denote the left factor embedding, where $e\in G$ is the identity.
It induces a map 
\beq
\xymatrix{
\ell:\Gr_{G,\calZ,\sigma_1}\ar[r] & \Gr_{G,\calZ,\sigma}
&
\ell(a,\mE,s) = (a,\mE,s|_{\bP^1\setminus\{a, -a\}})
}
\eeq
given by forgetting that the section $s:\bP^1\setminus\{-a\}\to\mE$ extends over $a\in  \bbP^1$

By Lemma~\ref{fiber uniformization for real QM} (3), (4), the composition 
\beq\label{comp}
\xymatrix{
\Gr_{G,\calZ,\sigma_1}\ar[r]^-{\ell} & \Gr_{G,\calZ,\sigma}\ar[r] &
G[\calZ,\sigma,\xi]_\bbR\backslash\Gr_{G,\calZ,\sigma} \simeq \QM_{G\times G,G}(\calZ/\bA^1,\sigma,\xi)_\bbR
}
\eeq
is an isomorphism, equivariant for the natural $G_c$-actions.


Let us transport some standard structures on the Beilinson-Drinfeld Grassmannian $\Gr_{G, \calZ, \sigma_1}$ across the isomorphism~\eqref{comp}. 

First, the choice of any global coordinate on $\bbA^1$ provides a trivialization $\Gr_{G, \calZ, \sigma_1}\simeq \Gr \times \bbA^1$. The spherical and cospherical strata $S^\lambda, T^\lambda \subset \Gr$, for $\lambda\in \Lambda_T^+$, extend to  spherical and cospherical strata
 $\calS_\calZ^\lambda = S^\lambda \times \bbA^1,\calT_\calZ^\lambda =  T^\lambda \times \bbA^1 \subset \Gr_{G, \calZ, \sigma_1}$, respectively.
%
%
Fiberwise, these are given   by 
    $G[\calZ,\hat\sigma_1]$-orbits and $G[\calZ,\sigma_1]$-orbits respectively, and hence are independent
    of the coordinate and resulting trivialization.

We will also denote by $\calS^\lambda_\calZ, \calT_\calZ^\lambda \subset \QM_{G\times G,G}(\calZ/\bA^1,\sigma,\xi)_\bbR$, for $\lambda\in \Lambda_T^+$, the  transport of the respective strata under the isomorphism~\eqref{comp}.

Next, let 
$\Bun_G^0(\calZ/\bA^1)\subset \Bun_G(\calZ/\bA^1)$ denote the open sub-stack of 
a point $a\in\bA^1$ and a  
trivializable 
$G$-bundle on $\calZ_a$.
Let $\Gr_{G,\calZ,\sigma_1}^0\simeq QM^0_{G\times G,G}(\calZ/\bA^1,\sigma,\xi)_\bbR$  denote the 
base-changes
to $\Bun_G^0(\calZ/\bA^1)\subset \Bun_G(\calZ/\bA^1)$.
Note that each coincides with the open
stratum~$\calT^0_\calZ \subset \Gr_{G,\calZ,\sigma_1}\simeq QM_{G\times G,G}(\calZ/\bA^1,\sigma,\xi)_\bbR$. 

Equip 
$ \calT_\calZ^0 \simeq \Gr_{G,\calZ,\sigma_1}^0\simeq \QM^0_{G\times G,G}(\calZ/\bA^1,\sigma,\xi)_\bbR$ with the  strata $\calS_\calZ^\lambda \cap \calT_\calZ^0$, for $\lambda \in \Lambda_T^+$. Note these are nonempty if and only if $ \lambda\in R_G^+\subset \Lambda_T^+$.

For easy reference, we summarize the preceding discussion in the following:

\begin{prop}\label{p:splitting}
\begin{enumerate}
\item
$ \Gr_{G,\calZ,\sigma_1}\simeq \QM_{G\times G,G}(\calZ/\bA^1,\sigma,\xi)_\bbR$, equipped with either its spherical
or strata  $\calS^\lambda_\calZ$, for $\lambda\in \Lambda_T^+$, or cospherical strata 
$ \calT^\lambda_\calZ$, for $\lambda\in \Lambda_T^+$, is a real analytic trivializable stratified family.

\item
The open locus 
$\calT_\calZ^0 \simeq  \Gr^0_{G,\calZ,\sigma_1}\simeq \QM^0_{G\times G,G}(\calZ/\bA^1,\sigma,\xi)_\bbR$, equipped with its spherical strata
$\calT_\calZ^0 \cap \calS_\calZ^\lambda$, for $\lambda\in R_G^+$, is a real analytic trivializable stratified family.
\end{enumerate}
\end{prop}


\subsection{$\bZ/2\times \bZ/2$-action $(\delta_\calZ,\beta_\calZ)$}

\subsubsection{Involution  $\delta_\calZ$}
Recall $\delta = \theta \circ \eta = \eta \circ \theta$ denotes the Cartan conjugation of $G$ with compact real form $G_c$. Recall $\on{sw}$ denotes the swap involution of $G\times G$ with fixed-point subgroup the diagonal $G$,
and $ \sw_\delta = \sw \circ (\delta \times \delta) = (\delta \times \delta) \circ \sw$.

Note that the conjugation $\delta \times \delta$ of $G\times G$ commutes with $\sw_\delta$, and hence together with the conjugation 
$c_\calZ$ of $\calZ$ 
induces an involution $\delta_\calZ$ of $\QM_{G\times G,G}(\calZ/\bA^1,\sigma,\xi)_\bbR$.

At $a=0\in \bbA^1(\bbR) \simeq i\bbR$, the standard coordinate $z$ provides an isomorphism 
\beq\label{v_0}
\xymatrix{
v_0:\Gr\ar[r]^-\sim & 
\Gr_{G,\calZ_{0},\sigma_1, 0}\stackrel{\on{Lem}\ref{fiber uniformization for real QM}(4)}\is
\QM_{G\times G,G}(\calZ_0,\sigma(0),\xi(0))_\bbR
}\eeq
and the involution $\delta_\calZ$ satisfies  
\beq\label{v_0 and delta}
\delta_\calZ\circ v_0=
v_0\circ\delta,
\eeq
where  as usual the conjugation $\delta$ of
$\Gr$ is given by
$\delta([g(z)])= [\delta(g(\bar z))]$.

Let us describe the involution $\delta_\calZ$ on the fiber
$\QM_{G\times G,G}(\calZ_a,\sigma(a),\xi(a))_\bbR$, for $a \not = 0\in \bbA^1(\bbR) \simeq i\bbR$.
Consider the alternative
coordinate $t=\frac{z-a}{z+a}$ on $\calZ_a = \mathbb P^1$ taking 
$z=\infty,a,-a$ to $t=1,0,\infty$ respectively.
The conjugation $z\mapsto \bar z$ takes the form $t\mapsto \bar t^{-1}$, and hence the coordinate $t$ provides 
 isomorphisms 
\beq\label{nonzero a}
\xymatrix{
\Gr_{G,\calZ,\sigma_1, a}\is G(\bC[t,t^{-1}])/G(\bC[t^{-1}]) & 
 \Gr_{G,\calZ,\sigma_2, a}\is G(\bC[t,t^{-1}])/G(\bC[t])
}
\eeq
\beq
\xymatrix{
G[\calZ,\sigma,\tau, a]_\bbR\is\Omega G_c\subset G(\bC[t,t^{-1}])
}
\eeq

With the above identifications, 
the isomorphism of Lemma \ref{fiber uniformization for real QM}(3) becomes an isomorphism
\beq\label{uniformization at nonzero a}
\QM_{G\times G,G}(\calZ_a,\sigma(a),\tau(a))_\bbR\is
\Omega G_c\backslash (G(\bC[t,t^{-1}])/G(\bC[t^{-1}])\times G(\bC[t,t^{-1}])/G(\bC[t]))
\eeq
\[\is
G(\bC[t,t^{-1}])/G(\bC[t^{-1}]). 
\]

Moreover, the upper map in~\eqref{uniformization at nonzero a} intertwines the involution 
$\delta_\calZ$ of $\QM_{G\times G,G}(\calZ_a,\sigma(a),\tau(a))_\bbR$ and the involution of the right hand side of
~\eqref{uniformization at nonzero a} 
 given by
\beq
\xymatrix{
([g_1],[g_2])\ar@{|->}[r] & ([\delta(\tau(g_2))],[\delta(\tau(g_1))])
}
\eeq
where $\tau:G(\bC[t,t^{-1}])\to G(\bC[t,t^{-1}]), 
\tau(g)(t)=g(t^{-1})$.

Now consider the composite isomorphism  
\beq\label{v_a}
\xymatrix{
v_a:
\Omega G_c \ar[r]^-\sim & G(\bC[t,t^{-1}])/G(\bC[t^{-1}])\stackrel{\eqref{uniformization at nonzero a}}\is
\QM_{G\times G,G}(\calZ_a,\sigma(a),\tau(a))_\bbR
}
\eeq
where the first map is itself the composite isomorphism  
\beq
\xymatrix{
\Omega G_c \stackrel{\eqref{Loop=Gr}}{\is} G(\bC[t,t^{-1}])/G(\bC[t])
\ar[r]^-\tau &   G(\bC[t,t^{-1}])/G(\bC[t^{-1}]) 
}
\eeq
Then it is a simple diagram chase to check the following:

\begin{lemma} For $\on{inv}: \Omega G_c\to \Omega G_c$ the group-inverse, we have
\beq\label{v_a and delta}
\xymatrix{
\delta_\calZ\circ v_a=
v_a\circ
(\on{inv}\circ\delta\circ\tau)=v_a\circ\on{inv}
}
\eeq
\end{lemma}

That is, the isomorphism \eqref{v_a} intertwines the involution
$\delta_\calZ$ and the group-inverse $\on{inv}$.

\subsubsection{Involution $\beta_\calZ$}
Consider the involution $\beta_\calZ:\calZ\to \calZ $, $\beta_\calZ(z,a)= (-z,a)$. 
It naturally induces an involution  
\beq\label{beta_R}
\xymatrix{
 \beta_\calZ:\QM_{G\times G,G}(\calZ,\sigma,\tau)_\bbR\ar[r] & \QM_{G\times G,G}(\calZ,\sigma,\tau)_\bbR
}
\eeq

In terms of our prior constructions, at $a=0\in \bbA^1(\bbR) \simeq i\bbR$, 
 the involution $\beta_\calZ$  satisfies
\beq\label{beta_R and neg}
\beta_\calZ\circ v_0=v_0\circ\on{neg}
\eeq
where $\on{neg}:\Gr \to \Gr$ is the coordinate automorphism $\on{neg}([g(z)])=[g(-z)]$.

At $a\not = 0 \in \bbA^1(\bbR) \simeq i\bbR$, in terms of the coordinate $t=\frac{z-a}{z+a}$ on $\calZ_a = \bP^1$, the involution
$z\mapsto -z$ takes the form $t\to t^{-1}$, and hence
under the isomorphism of Lemma~\ref{fiber uniformization for real QM}(3),  the involution $\beta_\calZ$ takes the form 
$
([g_1],[g_2])\mapsto ([\tau(g_2)],[\tau(g_1)])$.
It follows that $\beta_\calZ$ satisfies 
\beq\label{equ for beta_R}
\beta_\calZ\circ v_a=v_a\circ(\on{inv}\circ\tau)=
v_a\circ(\on{inv}\circ\delta)
\eeq
Here we use that $\delta(\tau(\gamma))=\gamma$ for $\gamma\in\Omega G_c$, hence $\tau(\gamma)=\delta(\gamma)$.

\subsubsection{Compatibility with stratifications}
Recall the spherical and cospherical stratifications $\calS^\lambda, \calT_\calZ^\lambda
\subset QM_{G\times G,G}(\calZ/\bA^1,\sigma,\xi)_\bbR$, for $\lambda\in \Lambda_T^+$.
At $a = 0 \in \bbA^1(\bbR) \simeq i\bbR$,
 the isomorphism
$v_0$ of \eqref{v_0} takes  
the spherical stratum $S^\lambda$ (resp. 
cospherical stratum $T^\lambda$) of $\Gr$ isomorphically to the 
fiber 
$\calS_\calZ^\lambda|_0$ (resp. $\calT_\calZ^\lambda|_0$).
At $a\not = 0 \in \bbA^1(\bbR) \simeq i\bbR$,
 the isomorphism 
$v_a$ of \eqref{v_a} takes the transported spherical stratum $S^\lambda$ (resp. 
 cospherical stratum  $T^\lambda$) of  $\Omega G_c$ isomorphically to
the fiber 
$\calS_\calZ^\lambda|_a$ (resp. $\calT_\calZ^\lambda|_a$).

\begin{lemma}\label{stable under eta}
Let $w_0\in W$ denote the longest element of the Weyl group. 

For any $\lambda\in \Lambda_T^+$, we have:
\begin{enumerate}
\item
The involution $\delta_\calZ$ of $\QM_{G\times G,G}(\calZ/\bA^1,\sigma,\xi)_\bbR$ 
maps $\calS_\calZ^{\lambda}$ (resp. $\calT_\calZ^{\lambda}$) to 
 $\calS_\calZ^{-w_0(\lambda)}$ (resp. $\calT_\calZ^{-w_0(\lambda)}$).
\item The involution $\beta_\calZ$ of $\QM_{G\times G,G}(\calZ/\bA^1,\sigma,\xi)_\bbR$ 
maps $\calS_\calZ^{\lambda}$ (resp. $\calT_\calZ^{\lambda}$) to itself.
\end{enumerate}

\end{lemma}
\begin{proof}
Note 
the conjugation $\delta$ of $\Gr$ maps 
 $S^\lambda$ (resp. $T^\lambda$) to
$S^{-w_0(\lambda)}$
(resp.  $T^{-w_0(\lambda)}$), and
the involution $\on{neg}$ of $\Gr$ maps  
$S^\lambda$ (resp.  $T^\lambda$) to itself. 
Similarly, the involution $\on{inv}$ of $\Omega G_c$ maps 
  $S^\lambda$ (resp. $T^\lambda$) to 
$S^{-w_0(\lambda)}$
(resp.  $T^{-w_0(\lambda)}$), and
hence the composition $\on{inv}\circ\delta$ 
maps  
 $S^\lambda$ (resp. $T^\lambda$) to itself.
From here, the lemma  is an elementary verification from the formulas \eqref{v_0 and delta}, \eqref{v_a and delta}, \eqref{beta_R and neg}, and \eqref{equ for beta_R}. 
\end{proof}

The lemma implies that 
the $\bZ/2\times\bZ/2$-action $(\delta_\calZ,\beta_\calZ)$ 
on $QM_{G\times G,G}(\calZ/\bA^1,\sigma,\xi)_\bbR$ preserves the open stratum 
$\calT^0\is QM^0_{G\times G,G}(\calZ/\bA^1,\sigma,\xi)_\bbR$, as well as its  strata
$\calT_\calZ^0\cap\calS_\calZ^\lambda$, for $\lambda\in R_G^+$.
For easy reference, we summarize the situation in the following:

\begin{proposition}\label{Z/2xZ/2 action for Z}
The involutions $(\delta_\calZ,\beta_\calZ)$ provide a $\bZ/2\times\bZ/2$-action on $\QM_{G\times G,G}(\calZ/\bA^1,\sigma,\xi)_\bbR$.

\begin{enumerate}

\item  The action commutes with the  natural $G_c$-action. 

\item
The action preserves the  spherical and cospherical stratifications.
In particular, 
the open locus 
$\calT_\calZ^0 \simeq   \QM^0_{G\times G,G}(\calZ/\bA^1,\sigma,\xi)_\bbR$ and its 
spherical stratification
are preserved.

\item
At $a=0$, the isomorphism
$v_0:\Gr\is\QM_{G\times G,G}(\calZ_0,\sigma(0),\xi(0))_\bbR$ of \eqref{v_0}
intertwines the  action with the $\bZ/2\times\bZ/2$-action on $\Gr$ given by $(\delta,\on{neg})$.

\item
At $a\neq 0$,
the 
isomorphism $v_a:\Omega G_c\is\QM_{G\times G,G}(\calZ_a,\sigma(a),\xi(a))_\bbR$  of \eqref{v_a} intertwines the action
with the $\bZ/2\times\bZ/2$-action on $\Omega G_c$ given by 
$(\on{inv},\on{inv}\circ\delta)$.
\end{enumerate}
\end{proposition}

\quash{
We define $\calX:=(\Gr^{(1)})^{\eta^{(1)}}$ to be the fixed point of $\eta^{(1)}$ on $\Gr^{(1)}$ and we write 
\[q:\calX\ra i\bbR\]
for the restriction of the projection map $\Gr^{(1)}\to i\bbR$ to $\calX$.  
From the above discussion we see that the special fiber $\calX_0:=q^{-1}(0)$ is isomorphic to 
\[\calX_0\is\Gr_\bbR\text{\ \ (as real varieties)}\]
and the generic fiber $\calX_x:=q^{-1}(x)$ is homeomorphic to 
\beq\label{X_x}
\calX_x\is\Omega X_c=(\Omega G_c)^{\tilde\eta^\tau}=(\Omega G_c)^{\tilde\theta}.
\eeq}

\subsection{Trivializations of fixed-points}\label{ss: fixed-points}
Our aim is to trivialize the fixed-point family of  the involution $\eta_\calZ$ (resp. $\eta_\calY$).
To that end, we will invoke the following lemma:

\begin{lemma}\label{fixed points}
Let $f:M\to \bbR$ be a stratified real analytic submersion of a 
real analytic Whitney stratified ind-variety $M$ (where $\bbR$ is equipped with the trivial stratification). 
\begin{enumerate}
\item
Assume there is 
a compact group $H\times \bZ/2$ acting real analytically on $M$ such that 
the action preserves the stratifications 
and $f$ is 
$H\times\bZ/2$-invariant. 
Then the 
$\bZ/2$-fixed-point ind-variety $M^{\bZ/2}$ is Whitney stratified by the 
fixed-points of the strata and the induced map 
$f^{\bZ/2}:M^{\bZ/2}\to \bbR$ is an $H$-equivariant stratified submersion.
\item
Assume further that 
$f$ is ind-proper and 
there is an $H$-equivariant  stratified trivialization of 
$f:M\to\bbR$ that is real analytic on each stratum. Then there is an
$H$-equivariant  stratified trivialization of $f^{\bZ/2}:M^{\bZ/2}\to \bbR$ that is real analytic on each 
stratum.
\end{enumerate}
\end{lemma}
\begin{proof}
Part (1) is proved in \cite[Lemma 4.5.1]{N2}. 
For part (2), the $H$-equivariant stratified trivialization of $f:M\to\bbR$
provides a horizontal lift 
of the coordinate vector field 
$\partial_t$ on $\bbR$ to 
a continuous  $H$-invariant vector field 
$v$ on $M$ that is tangent to and real analytic  along each stratum.
Let $w$ be the 
average of $v$ with respect to the $\bZ/2\bZ$-action.
As $f$ is ind-proper and the $\bZ/2\bZ$-action is real analytic, the vector field $w$ is complete\footnote{A vector 
vector field is called complete if each of its integral curves exists for all time.} and the integral curves of 
$w$ define an $H$-equivariant stratified trivialization of $f^{\bZ/2}:M^{\bZ/2}\to \bbR$ that is real analytic along each 
stratum.
\end{proof}

\begin{remark}
In fact, to find an $H$-equivariant  trivialization as in part (2) of the lemma, one only needs the assumption that $f$ is ind-proper. Indeed, the Thom-Mather theory
shows that $\partial_t$ admits a lift to a controlled vector field $v$ such that the integral curves of 
$v$ define a trivialization of $f$. The integral curves of the average of 
$v$ with respect to the $H\times \bZ/2\bZ$-action gives rise to an
$H$-equivariant trivialization of $f^{\bZ/2\bZ}:M^{\bZ/2\bZ}\to\bbR$.
In our applications, we will have the assumed  
$H$-equivariant  trivialization of part (2), and thus not need to invoke the Thom-Mather theory.
\end{remark}

Now let us apply the above lemma  to the family
\beq
\xymatrix{
\QM_{G\times G,G}(\calZ/\bA^1,\sigma,\xi)_\bbR \ar[r] & \bbA^1(\bbR) \simeq i\bbR
\\
(\text{resp.}\ \ \QM_{G\times G,G}(\calY/\bA^1,\sigma,\xi)_\bbR \ar[r] &\bbA^1(\bbR) \simeq  \bbR)
}
\eeq
with its spherical stratification. We will consider two possible cases for the $H\times\bZ/2$-action: either
$K_c\times\langle\eta_\calZ\rangle$ (resp. $K_c\times\langle\eta_\calY\rangle$)
or 
$K_c\times\langle\beta_\calZ\circ\theta_\calZ\rangle$  (resp. $K_c\times\langle\beta_\calY\circ\theta_\calY\rangle$).

\begin{proposition}\label{Whitney}
\begin{enumerate}
\item
There is a $K_c$-equivariant topological trivialization of 
the fixed-points of $\eta_\calZ$ (resp. $\beta_\calZ\circ\theta_\calZ$) on $QM_{G\times G,G}(\calZ/\bA^1,\sigma,\xi)_\bbR$. The trivialization induces a 
$K_c$-equivariant stratified homeomorphism between the 
fixed-points of $\eta$ (resp. $\on{neg}\circ\theta$) on $\Gr$ and the fixed points of 
$\on{inv}\circ\theta$ (resp. $\on{inv}\circ\eta$) on $\Omega G_c$. 
\item
The trivialization in (1) restricts to a $K_c$-equivariant topological trivialization of 
the fixed-points of $\eta_\calZ$ (resp. $\beta_\calZ\circ\theta_\calZ$) on the open locus
 $QM^0_{G\times G,G}(\calZ/\bA^1,\sigma,\xi)_\bbR$,
and induces a homeomorphism between the 
fixed-points of $\eta$ (resp. $\on{neg}\circ\theta$) on the open locus $T^0\subset\Gr$ and the fixed-points of 
$\on{inv}\circ\theta$ (resp. $\on{inv}\circ\eta$) on $T^0\subset\Omega G_c$.

\item
There is a $K_c$-equivariant topological trivialization of 
the fixed-points of $\eta_\calY$ (resp. $\beta_\calY\circ\theta_\calY$) on $QM^0_{G\times G,G}(\calY/\bA^1,\sigma,\xi)_\bbR$. The trivialization induces a 
$K_c$-equivariant stratified homeomorphism between the 
fixed-points of $\on{inv}\circ\theta$ (resp. $\on{inv}\circ\eta$) on the open locus
$\calG^*\is T^0$ and the fixed points of 
$\on{inv}\circ\theta$ (resp. $\on{inv}\circ\eta$) on $T^0\subset\Omega G_c$. 

\end{enumerate}
In addition, the restrictions of the homeomorphisms in (1), (2), and (3) to strata are
real analytic.
\end{proposition}
\begin{proof}
For part (1), by Proposition \ref{p:splitting}, there is a $K_c$-equivariant stratified trivialization of 
\beq
\xymatrix{
\QM_{G\times G,G}(\calZ/\bA^1,\sigma,\xi)_\bbR \ar[r] &  \bbA^1(\bbR) \simeq i\bbR
}
\eeq
Applying Lemma~\ref{fixed points} with $H\times\bZ/2=K_c\times\langle\eta_\calZ\rangle$ (resp. $K_c\times\langle\beta_\calZ\circ\theta_\calZ\rangle$), we obtain
part (1).  Part (2) is immediate by the invariance of the open locus
 $QM^0_{G\times G,G}(\calZ/\bA^1,\sigma,\xi)_\bbR$ under all constructions.

For part (3), by
  Lemma~\ref{trivialization over 0},
there is a $K_c$-equivariant stratified trivialization of 
\beq
\xymatrix{
\QM^0_{G\times G,G}(\calY/\bA^1,\sigma,\xi)_\bbR \ar[r] &  \bbA^1(\bbR) \simeq \bbR
}
\eeq
Following the proof of Lemma \ref{fixed points}, consider the averaged vector field $w$ 
with respect to the $\bZ/2\bZ$-action given by 
$\langle\eta_\calY\rangle$ (resp. $\langle\beta_\calY\circ\theta_\calY\rangle$).
We claim that $w$ is complete, hence 
the integral curves of 
$w$ provide the desired trivialization. 

To prove the claim, observe that, over the open locus $\bbA^1(\bbR) \setminus \{0\} \simeq \bbR\setminus \{0\}$
in the base,
 the 
$K_c$-equivariant trivialization  provided by Lemma~\ref{trivialization over 0}
extends to a $K_c$-equivariant trivialization of the ind-proper family 
\beq
\xymatrix{
\QM_{G\times G,G}(\calY/\bA^1,\sigma,\xi)_\bbR \ar[r] &  \bbA^1(\bbR) \setminus \{0\} \simeq \bbR\setminus \{0\}
}
\eeq
Thus for any $a >0$ (resp. $a<0$),
 any integral curve $p(t)$ for $w$ with initial point $p(a)\in QM_{G\times G,G}(\calY_a,\sigma(a),\xi(a))_\bbR$ exists for $t\geq a$ (resp. $t\leq a$).
Together with the local existence of integral curves with initial point in the special fiber 
$QM^0_{G\times G,G}(\calY_0,\sigma(0),\xi(0))_\bbR$, this 
implies $p(t)$ exists for all $t \in \bbR$. Hence $w$ is complete 
and we have proved the claim.
\end{proof}

}

\subsection{Quillen's homeomorphism}
We fist recall some results in \cite{CN1} 
on quasi-maps for complex groups.

Consider the
complex group $G\is G\backslash (G\times G)$ viewed as a symmetric variety for $G\times G$. 
It corresponds to the  swap involution $\on{sw}(g_1,g_2)=(g_2,g_1)$
of 
$G\times G$ and 
the corresponding conjugation on $G\times G$ is given by 
$\on{sw}_c(g_1,g_2)=(\eta_c(g_2),\eta_c(g_1))$.
Consider  the real quasi-maps space 
$\QM^{(\sigma_{2m})}(\bbP^1,G,\infty)_{\mathbb R}\to\bC^m$
associated to $G\backslash (G\times G)$.

The natural projections 
$p_1,p_2:G\times G\to G$
induces natural isomorphism 
\[\Gr_{G\times G}^{(2m)}\is\Gr_{}^{(2m)}\times_{(\bP^1)^{2m}}\Gr^{(2m)}.\]
Consider  the base changes
$\Gr^{(2m)}|_{(\bP^1)^m}$ and $\Gr^{(2m)}_{G\times G}|_{(\bP^1)^m}$
along the embedding $(\bP^1)^m\hookrightarrow(\bP^1)^{2m}, (z_1,...,z_m)\to (z_1,...,z_m,\bar z_1,...,\bar z_m)$.

We have a natural embedding
\beq\label{splitting}
\Gr^{(m)}\to\Gr^{(2m)}|_{(\bP^1)^m}
\eeq
sending $(z\in(\bP^1)^m,\mE,\phi)\to ((z,\bar z)\in(\bP^1)^{2m},\mE,\phi|_{\bP^1\setminus z\cup\bar z})$.
Note that $\Gr^{(\sigma_{2m})}_{G\times G,\bbR}\subset\Gr^{(2m)}_{G\times G}|_{(\bP^1)^m}$ and 
it was proved in \cite[Corollary 5.2 and Corollary 5.4]{CN}
that 
the natural projection map
\[\Gr^{(2m)}_{G\times G}|_{(\bP^1)^m}\stackrel{}
\is\Gr_{}^{(2m)}\times_{(\bP^1)^{2m}}\Gr^{(2m)}|_{(\bP^1)^m}\stackrel{p_1}\lra\Gr^{(2m)}|_{(\bP^1)^m}\]
restricts to an isomorphism 
\beq\label{projection}
\Gr^{(\sigma_{2m})}_{G\times G,\bbR}\is\Gr^{(2m)}|_{(\bP^1)^m}
\eeq
and the composed map 
\[\Gr^{(m)}\stackrel{\ref{splitting}}\to\Gr^{(2m)}|_{(\bP^1)^m}\stackrel{~\eqref{projection}}\is \Gr^{(\sigma_{2m})}_{G\times G,\bbR}\]
induces a real analytic isomorphism
\beq\label{trivialization}
\Gr^{(m)}|_{\bC^m}\is\Omega G_c^{(\sigma_{2m})}\backslash\Gr^{(\sigma_{2m})}_{G\times G,\bbR}|_{\bC^m}\is
\QM^{(\sigma_{2m})}(\bbP^1,G,\infty)_{\mathbb R}.
\eeq

Let 
$\bbS:=\bbR^m\times i\bbR$ and viewed as a subset of 
$\bC^m$ via the embedding 
\beq\label{iota}
\bbS\lra \bC^m,\ \ \
(b_1,...,b_m,ia)\to (b_1+ia,...,b_m+ia)\eeq
We now consider the base change of the isomorphism~\eqref{trivialization} to $\calS$:
\beq\label{v}
v:\Gr^{(m)}|_\calS\is\QM^{(\sigma_{2m})}(\bbP^1,G,\infty)_{\mathbb R}|_\calS.
\eeq
The conjugation $\eta\times\eta$
on $G\times G$ commutes with $\on{sw}_c$, together with 
the complex conjugation on $\bP^1$, it 
defines an involution $(\eta\times\eta)^{(\sigma_{2m})}$
on $\QM^{(\sigma_{2m})}(\bbP^1,G,\infty)_{\mathbb R}|_\calS$
and, via the isomorphism $v_\calS$, it induces an involution
$\eta^{(\sigma_{2m})}$ on $\Gr^{(m)}|_\bbS$.

The following description of $\eta^{(\sigma_{2m})}$ is proved in \cite[Proposition 5.12]{CN1}. 
\begin{proposition}
The involution $\eta^{(\sigma_{2m})}$ on $\Gr^{(m)}|_\bbS$ is given by the following formula:
\[\eta^{(\sigma_{2m})}=\eta^{(m)}
\text{\ \ \ \ on\ \ \ \ }\ \ \Gr^{(m)}|_{\bbR^m\times\{0\}}\]
\[\eta^{(\sigma_{2m})}=\tilde\theta
\text{\ \ \ \ \ \ on\ \ \ \ }\ \ \Gr^{(m)}|_{\bbR^m\times i\bbR^\times}\is\Omega G_c^{(2m)}|_{\bbR^m\times i\bbR^\times}.\]
\end{proposition}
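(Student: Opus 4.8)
The statement describes the involution $\eta^{(\sigma_{2m})}$ on $\Gr^{(m)}|_{\bbS}$ obtained by transporting $(\eta\times\eta)^{(\sigma_{2m})}$ through the isomorphism $v$ of \eqref{v}. The strategy is to unwind the construction of $v$ fibre by fibre over $\bbS = \bbR^m\times i\bbR$, splitting into the two cases $a=0$ and $a\neq 0$ according to the second coordinate of the point $(b_1,\dots,b_m,ia)\in\bbS$, and in each case to identify explicitly what the conjugation $\on{sw}_c$-twisted-by-$(\eta\times\eta)$ becomes. First I would recall that $v$ is the base change to $\bbS$ of the isomorphism \eqref{trivialization}, which itself factors as the splitting embedding \eqref{splitting} followed by the projection isomorphism \eqref{projection} from \cite[Cor.~5.2, Cor.~5.4]{CN1}. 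So the whole question reduces to: under the composite $\Gr^{(m)}\hookrightarrow\Gr^{(2m)}|_{(\bP^1)^m}\is\Gr^{(\sigma_{2m})}_{G\times G,\bbR}$, what does the involution on $\Gr^{(\sigma_{2m})}_{G\times G,\bbR}$ induced by $(\eta\times\eta)$ pull back to?

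**Case analysis over the base point.** For a point of $\bbS$ with $a\neq 0$, the embedding \eqref{iota} sends it to a tuple of pairwise-distinct complex points with nonzero imaginary part, so by the first isomorphism in \eqref{fibers over real and complex} (or directly by Example~\ref{m=1}, and Proposition~\ref{multi Gram-Schmidt}(3)) the fibre $\Gr^{(m)}|_{z}$ is identified with $\Omega G_c^{(m)}$; here the two "legs" $z$ and $\bar z$ of the doubled configuration are genuinely distinct, the map $p_1$ just remembers the $z$-leg, and the conjugation $(\eta\times\eta)\circ\on{sw}_c$ on $G\times G$ swaps the two legs while applying $\eta\eta_c=\theta$. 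Tracing this through the Gram–Schmidt identification $\Gr^{(m)}|_z\is\Omega G_c^{(m)}$ — exactly as in the single-point computations in Section~\ref{QMaps} leading to formulas like \eqref{v_a and delta} — the induced involution on $\Omega G_c^{(m)}$ is $\tilde\theta$ (using $\tilde\theta=\tilde\eta^\tau$ on $\Omega G_c$ as already invoked repeatedly in Sections~\ref{Morse flow}, \ref{Gram-Schmidt}). For $a=0$, the point lands in $\bbP^1(\bbR)^m$, the two legs $z$ and $\bar z$ coincide, and the second isomorphism in \eqref{fibers over real and complex} identifies the fibre with $\Gr^{(m)}_\bbR$; now $(\eta\times\eta)\circ\on{sw}_c$ restricted to the diagonal is just the real-form conjugation $\eta^{(m)}$ defining $\Gr^{(m)}_\bbR$, so the transported involution is $\eta^{(m)}$ — consistent with the $v_0$-computation \eqref{v_0 and delta} in the single-point toy model. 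Assembling the two cases gives precisely the two displayed formulas.

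**What to be careful about.** The routine part is the fibrewise bookkeeping; the genuine content, and the main obstacle, is checking that these fibrewise identifications assemble into a \emph{well-defined real-analytic involution on the total family} $\Gr^{(m)}|_{\bbS}$ — i.e.\ that the $a\neq 0$ description glues continuously across $a=0$ with the $a=0$ description. This is not automatic: it is exactly the content that makes \eqref{trivialization} (hence $v$) an isomorphism \emph{over all of $\bC^m$}, including the degenerate diagonal locus, and it rests on \cite[Cor.~5.2, Cor.~5.4]{CN1}. I would therefore structure the proof so that the gluing is inherited from the cited isomorphism \eqref{trivialization} rather than re-proved: once $v$ is known to be a real-analytic isomorphism of families, the transported involution $\eta^{(\sigma_{2m})}$ is automatically real-analytic on the total space, and it suffices to pin down its value on the two strata $\bbR^m\times\{0\}$ and $\bbR^m\times i\bbR^\times$ by the fibrewise computations above. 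A secondary point requiring attention is the precise normalization of the change of coordinates (the "$t=\tfrac{z-x}{z-\bar x}$" type substitutions from Example~\ref{the case m=1}) to be sure one lands on $\tilde\theta$ and not, say, $\tilde\theta$ composed with an inversion or a coordinate rotation; the single-point lemmas \eqref{v_a and delta}, \eqref{equ for beta_R} already contain exactly this sign/inversion analysis and I would cite them as the local model, then remark that the multi-point case is the fibre product of these over the configuration space by the factorization isomorphisms \eqref{real complex factorization} and \eqref{factorization of Omega2}.
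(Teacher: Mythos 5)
The paper offers no proof of this proposition: the sentence immediately preceding it reads ``The following description of $\eta^{(\sigma_{2m})}$ is proved in [CN1, Proposition 5.12],'' so there is nothing in the present text to compare your argument against; the paper simply outsources the proof to the companion paper.

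Your reconstruction is a plausible version of what that argument should look like. Reducing to a fibrewise identification over the two strata $\bbR^m\times\{0\}$ and $\bbR^m\times i\bbR^\times$ is the natural move, and the key algebraic observation is correct: $(\eta\times\eta)\circ\on{sw}_c$ sends $(g_1,g_2)$ to $(\theta(g_2),\theta(g_1))$, the leg-swap turns into group inversion once one passes to the $\Omega G_c$-picture via the multi-point Gram--Schmidt factorization, so the transported involution is $\tilde\theta=\theta\circ\on{inv}$ over the complex-point fibres and the plain real-form conjugation $\eta^{(m)}$ over the real-point fibres. Delegating the continuity across $a=0$ to the fact that $v$ is a real-analytic isomorphism of families coming from [CN1, Cor.~5.2, Cor.~5.4] is also the right structural choice. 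Be aware, however, that the single-point identities you cite as the local model for the sign and inversion bookkeeping (the formulas expressing $\delta_\calZ\circ v_0$ and $\delta_\calZ\circ v_a$ in terms of $\delta$ and $\on{inv}$, and the analogous formulas for $\beta_\calZ$) all sit inside a quashed block of the LaTeX source and do not appear in the compiled paper. As written, your argument leans on lemmas that are not actually available here; to make it self-contained you would need to either redo the coordinate computation directly (introduce $t=(z-x)/(z-\bar x)$, track how $z\mapsto\bar z$ acts on $t$, and check that the transported involution is precisely $\gamma\mapsto\tilde\theta(\gamma)$ with no residual rotation or inversion), or cite the corresponding single-point statements from [CN1], which is in effect what the paper itself does.
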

Denote by
\beq
\xymatrix{
\calQ^{(m)}:=(\Gr^{(m)}|_\calS)^{\eta^{(\sigma_{2m})}}\lra\bbS=\bbR^m\times i\bbR
}
\eeq
 the fixed-points. 
 Then the isomorphism~\eqref{v}
restricts to natural isomorphisms 
\beq\label{fibers of Q}
\xymatrix{
v_0:\calQ^{(m)}|_{\bbR^m\times\{0\}}\ar[r]^-\sim&(\Gr^{(m)})^{\eta^{(m)}}|_{\bbR^m\times\{0\}}=\Gr^{(m)}_\bbR}|_{\bbR^m\times\{0\}}
\eeq 
\[
\xymatrix{
v_{a\neq0}:\calQ^{(m)}|_{\bbR^m\times\{ia\}}\ar[r]^-\sim&(\Omega G_c^{(m)})^{\tilde\theta}|_{\bbR^m\times\{ia\}}\is\Omega X_c^{(m)}|_{\bbR^m\times\{ia\}}}
\]

The spherical strata $S^{(m),\lambda_\fp}\subset\Gr^{(m)}$ 
for $\lambda_\fp
:\fp\to\Lambda_T^+$ (resp. cospherical  $T^{(1),\lambda} \subset \Gr^{(1)}$
for 
$\lambda\in \Lambda_T^+$) give rise to spherical  strata
 $\calQ^{(m),\lambda_\fp}=S^{(m),\lambda_\fp}\cap\calQ^{(m)}$ 
 of $\calQ^{(m)}$
(resp. cospherical strata
$\calQ^{(1)}_{\lambda}=T^{(1),\lambda}\cap\calQ^{(1)}$ of $\calQ^{(1)}$).
Then the isomorphisms~\eqref{fibers of Q}
restrict to
\beq\label{fiber of X}
\xymatrix{\ \calQ^{(m),\lambda_\fp}|_{\bbR^\fp\times\{0\}}\ar[r]^-\sim& S^{(m),\lambda_\fp}_{\bbR}|_{\bbR^\fp\times\{0\}}\ \ \ \  (resp.\ \ \ \ 
\calQ^{(1)}_\lambda|_{\bbR\times\{0\}}\ar[r]^-\sim&T^{(1),\lambda}_{\bbR}}|_{\bbR\times\{0\}})
\eeq
\[\xymatrix{\calQ^{(m),\lambda_\fp}|_{\bbR^\fp\times ia}\ar[r]^-\sim& P^{(m),\lambda_\fp}|_{\bbR^\fp\times\{ia\}}\ \ \ (resp.\ \ \calQ^{(1)}_{\lambda}|_{\bbR\times{ia}}\ar[r]^-\sim& Q^{(1),\lambda}|_{\bbR\times\{ia\}}.})\]
As $S_\bbR^{(m),\lambda}$ (resp. $T^{(1),\lambda}_\bbR$) is non-empty if and only if $\lambda_\fp:\fp\to\Lambda_A^+\subset\Lambda_T^+$
(resp. 
$\lambda\in\Lambda_A^+$), the same applies to the stratum 
$\calQ^{(m),\lambda_\fp}$ (resp. $\calQ^{(1)}_\lambda$).
We have the following:
\begin{prop}\label{triv of Q}
Equip
  $\calQ^{(m)}$ (resp. $\calQ^{(1)}$)
and $\bbS=\bbR^m\times i\bbR$  with the stratifications
 $\{\calQ^{(m),\fp_\lambda}\}$ (resp. $\{\calQ^{(1),\lambda}\}$ or $\{\calQ^{(1)}_\lambda\}$)
and $\{\bbR^\fp\times\bbR\}_{\fp\in p(m)}$ respectively.
There is a $K_c$-equivariant stratified trivilization of the family 
\[\calQ^{(m)}\to \bbS=\bbR^m\times i\bbR\] 
over $i\bbR$.
\end{prop}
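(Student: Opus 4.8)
The plan is to deduce the statement from two ingredients: the topological triviality of the ambient family $\Gr^{(m)}|_\calS\to\calS$ over the $i\bbR$-direction, and a fixed-point averaging argument, in the same spirit as the trivialization results for real forms of Beilinson--Drinfeld Grassmannians. First I would transport the question across the real-analytic isomorphism $v$ of \eqref{v}: under $v$ the family $\calQ^{(m)}\to\bbS$ is identified with the fixed-point family of the involution $(\eta\times\eta)^{(\sigma_{2m})}$ on the family of real quasi-maps $\QM^{(\sigma_{2m})}(\bbP^1,G,\infty)_\bbR|_\calS$ of the complex group $G\is G\bs(G\times G)$, compatibly with the $K_c$-actions and with the spherical strata $\calQ^{(m),\lambda_\fp}$, which are the intersections of the fixed locus with the spherical strata $S^{(m),\lambda_\fp}$ of $\Gr^{(m)}$. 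Thus it suffices to produce a suitable trivialization working inside $\Gr^{(m)}|_\calS$.

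Next I would record that $\Gr^{(m)}|_\calS\to\calS$, equipped with its spherical stratification and the natural $K_c$-action, admits a $K_c$-equivariant stratified trivialization over the $i\bbR$-direction that is real-analytic along each stratum and compatible with the projection to $\bbR^m$. Away from $a=0$ this is just the imaginary-translation flow: under \eqref{iota} the shift $a\mapsto a+a_0$ corresponds to a common imaginary translation of the points of $\bbC^m$, which lifts to an automorphism of the Beilinson--Drinfeld Grassmannian $\Gr^{(m)}$ fixing $\infty$, commutes with $K_c$, and preserves the spherical strata (the collision pattern of the points is translation-invariant). The real content is the extension of such a trivialization across $a=0$, where the fibers jump from the multi-point compact loop spaces to the real Beilinson--Drinfeld Grassmannian; this is supplied by the analysis of real quasi-maps for complex groups in \cite{CN1} (topological triviality of the real quasi-map family), using that $\Gr^{(m)}\to(\bbP^1)^m$ is ind-proper so that a controlled (Thom--Mather type) lift of $\partial_a$ exists, with $K_c$-invariance and strata-compatibility obtained by averaging.

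I would then carry out the fixed-point averaging. The chosen trivialization of $\Gr^{(m)}|_\calS$ furnishes a continuous $K_c$-invariant vector field $w$ lifting $\partial_a$, tangent to and real-analytic along each spherical stratum $S^{(m),\lambda_\fp}$ and tangent to the fibers of $\Gr^{(m)}|_\calS\to\bbR^m$. Averaging $w$ over the order-two group generated by $\eta^{(\sigma_{2m})}$ produces a vector field $\bar w$ with the same regularity that is in addition tangent to the fixed locus $\calQ^{(m)}$ and to each $\calQ^{(m),\lambda_\fp}=S^{(m),\lambda_\fp}\cap\calQ^{(m)}$. Since $\Gr^{(m)}|_\calS\to\calS$ is ind-proper and $\eta^{(\sigma_{2m})}$ is real-analytic, $\bar w$ is complete (integral curves extend over $i\bbR^\times$ by ind-properness together with the existing trivialization there, and local existence near $a=0$ then yields global existence), so the flow of $\bar w$ is the asserted $K_c$-equivariant stratified trivialization of $\calQ^{(m)}\to\bbS$ over $i\bbR$; its fiber over $0$ is $\Gr^{(m)}_\bbR|_{\bbR^m\times\{0\}}$ via $v_0$ of \eqref{fibers of Q}. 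The same argument applies verbatim to $\calQ^{(1)}$ with either the stratification $\{\calQ^{(1),\lambda}\}$ or $\{\calQ^{(1)}_\lambda\}$, using that the cospherical strata $T^{(1),\lambda}$ are likewise translation-invariant.

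The step I expect to be the main obstacle is the second one, namely producing the $K_c$-equivariant stratified trivialization of the ambient family across $a=0$ — equivalently, the topological triviality of the degeneration of the real quasi-map family, which is where the input of \cite{CN1} is essential and where the delicate point is that the relevant involution changes from $\tilde\theta$ on $\Gr^{(m)}|_{\bbR^m\times i\bbR^\times}$ to $\eta^{(m)}$ on $\Gr^{(m)}|_{\bbR^m\times\{0\}}$. Once that is in hand, the averaging, the completeness of $\bar w$, and the passage to the fixed locus and its strata are formal.
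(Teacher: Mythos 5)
Your argument is essentially the paper's: the proof there simply cites \cite[Lemma 6.6]{CN1} --- the fixed-point averaging lemma --- with the two inputs you identify, namely ind-properness of $\Gr^{(m)}|_\calS\to\calS$ and a $K_c$-equivariant stratified trivialization of $\Gr^{(m)}|_\calS$ over $i\bbR$; your third paragraph is a correct unwinding of that lemma's proof.

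One expository correction so you do not mislead yourself. In your second paragraph you attribute a jump in fibers at $a=0$ to the ambient family $\Gr^{(m)}|_\calS\to\calS$ and suggest that a controlled (Thom--Mather type) lift is needed to extend the translation trivialization across $a=0$. But the ambient family has no degeneration at $a=0$: the shift $(b,ia)\mapsto(b,i(a+a_0))$ on $\bbS$ is induced by the global translation $z\mapsto z+ia_0$ of $\bP^1$ fixing $\infty$, which lifts to an automorphism of $\Gr^{(m)}$ for \emph{all} $a$ (including $a=0$), is $K_c$-equivariant, and preserves the spherical and cospherical strata; so the translation trivialization of $\Gr^{(m)}|_\calS$ is already global, explicit and strata-preserving, and no Thom--Mather machinery is needed for it. The jump you describe (from $\Omega X_c^{(m)}$ over $a\neq 0$ to $\Gr^{(m)}_\bbR$ over $a=0$) is a feature of the fixed-point family $\calQ^{(m)}$, not the ambient one, and it is accommodated precisely by the averaged vector field $\bar w$ of your third paragraph together with the completeness supplied by ind-properness. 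In short, the sole non-trivial step is the averaging, which is exactly the content of \cite[Lemma 6.6]{CN1}.
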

\begin{proof}
Since the Beilinson-Drinfeld Grassmannian $\Gr^{(m)}|_\calS\to \calS$ is ind-proper and 
there is a $K_c$-equivariant trivialization 
$\Gr^{(m)}|_\calS\is\Gr^{(m)}|_{\bbR^m\times\{0\}}\times i\bbR$ over $i\bbR$
the desired claim follows from the general lemma in \cite[Lemma 6.6]{CN1}.

\end{proof}

Now the homeomorphisms in~\eqref{fibers of Q} and~\eqref{fiber of X} together with 
Proposition \ref{components of multi loops of X} and Proposition \ref{triv of Q} imply the following

\begin{thm}\label{Quillen}
There are $K_c$-equivariant strata-preserving homeomorphisms
\[\Omega K_c^{(m)}\backslash\Gr^{(m)}|_{\bbR^m\times\{i\}}\is\Omega X_c^{(m)}|_{\bbR^m\times\{i\}}\is\Gr^{(m)}_\bbR|_{\bbR^m\times\{0\}}\]
which restrict to $K_c$-equivariant homeomorphisms
\[ \Omega K_c^{(m)}\backslash\calO_K^{(n),\lambda_\fp}\is P^{(m),\lambda_\fp}\is S^{(m),\lambda_\fp}_\bbR\]
\[ \Omega K_c^{(1)}\backslash\calO_K^{(1),\lambda}\is P^{(1),\lambda}\is S_\bbR^{(1)}\ \ \ \ \ 
\Omega K_c^{(1)}\backslash\calO_\bbR^{(1),\lambda}\is Q^{(1),\lambda}\is T^{(1),\lambda}_\bbR\ \ \text{if}\ \ m=1\]

\end{thm}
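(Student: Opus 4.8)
\textbf{Proof plan for Theorem \ref{Quillen}.}

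The plan is to assemble the claimed homeomorphisms out of the two families already constructed, namely the fixed-point family $\calQ^{(m)}\to\bbS=\bbR^m\times i\bbR$ and the multi-point loop-space comparison $\Omega K_c^{(m)}\backslash\Omega G_c^{(m)}\is\Omega X_c^{(m)}$ of Proposition \ref{components of multi loops of X}. First I would invoke Proposition \ref{triv of Q} to obtain a $K_c$-equivariant stratified trivialization of $\calQ^{(m)}\to i\bbR$; restricting this trivialization to the fibers over $\{0\}$ and over $\{i\}$ yields a $K_c$-equivariant strata-preserving homeomorphism $\calQ^{(m)}|_{\bbR^m\times\{0\}}\is\calQ^{(m)}|_{\bbR^m\times\{i\}}$ carrying the stratum $\calQ^{(m),\lambda_\fp}|_{\bbR^\fp\times\{0\}}$ onto $\calQ^{(m),\lambda_\fp}|_{\bbR^\fp\times\{i\}}$. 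Next I would identify the two ends of this trivialization with the objects in the statement: over $\{0\}$, the isomorphism $v_0$ of \eqref{fibers of Q} and its stratified refinement \eqref{fiber of X} give $\calQ^{(m)}|_{\bbR^m\times\{0\}}\is\Gr^{(m)}_\bbR|_{\bbR^m\times\{0\}}$ and $\calQ^{(m),\lambda_\fp}|_{\bbR^\fp\times\{0\}}\is S^{(m),\lambda_\fp}_\bbR|_{\bbR^\fp\times\{0\}}$; over $\{i\}$, the isomorphism $v_{a\neq 0}$ gives $\calQ^{(m)}|_{\bbR^m\times\{i\}}\is\Omega X_c^{(m)}|_{\bbR^m\times\{i\}}$ together with $\calQ^{(m),\lambda_\fp}|_{\bbR^\fp\times\{i\}}\is P^{(m),\lambda_\fp}|_{\bbR^\fp\times\{i\}}$.

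This already produces the homeomorphism $\Omega X_c^{(m)}|_{\bbR^m\times\{i\}}\is\Gr^{(m)}_\bbR|_{\bbR^m\times\{0\}}$, compatible with the spherical stratifications. To get the leftmost term $\Omega K_c^{(m)}\backslash\Gr^{(m)}|_{\bbR^m\times\{i\}}$, I would use that over $\calH^m$ (hence over $\bbR^m\times i\bbR_{>0}$, in particular over $\{i\}$) Proposition \ref{multi Gram-Schmidt}(3) identifies $\Gr^{(m)}$ with $\Omega G_c^{(m)}$, so $\Omega K_c^{(m)}\backslash\Gr^{(m)}|_{\bbR^m\times\{i\}}\is\Omega K_c^{(m)}\backslash\Omega G_c^{(m)}|_{\bbR^m\times\{i\}}$, and then Proposition \ref{components of multi loops of X} gives the $K_c$-equivariant stratified homeomorphism $\Omega K_c^{(m)}\backslash\Omega G_c^{(m)}|_{\bbR^m\times\{i\}}\is\Omega X_c^{(m)}|_{\bbR^m\times\{i\}}$ carrying $\Omega K_c^{(m)}\backslash\calO_K^{(m),\lambda_\fp}$ onto $P^{(m),\lambda_\fp}$. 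Composing the three identifications — orbit quotient to loop space to fixed-point family to real Beilinson--Drinfeld Grassmannian — yields the first displayed chain of homeomorphisms, and reading off the images of strata at each step yields the stratum-level statements $\Omega K_c^{(m)}\backslash\calO_K^{(m),\lambda_\fp}\is P^{(m),\lambda_\fp}\is S^{(m),\lambda_\fp}_\bbR$. For $m=1$ the additional cospherical assertions follow the identical pattern, now using the cospherical strata $\calQ^{(1)}_\lambda=T^{(1),\lambda}\cap\calQ^{(1)}$, the corresponding refinement of \eqref{fiber of X}, and part (2) of Proposition \ref{components of multi loops of X} (equivalently, that $q$ carries $\calO_\bbR^\lambda$ onto $Q^\lambda$), together with the identification $\calQ^{(1)}_\lambda|_{\bbR\times\{0\}}\is T^{(1),\lambda}_\bbR$.

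The one point requiring genuine care — the main obstacle — is checking that all the identifications in play are compatible with the \emph{same} indexing of strata, i.e. that the restriction of the trivialization of Proposition \ref{triv of Q} sends $\calQ^{(m),\lambda_\fp}$ over $0$ to $\calQ^{(m),\lambda_\fp}$ over $i$ with the \emph{same} $\lambda_\fp$, rather than permuting the labels. This is where the stratified nature of Proposition \ref{triv of Q} is essential: because the trivialization is strata-preserving and each stratum of $\calQ^{(m)}$ over a connected base component maps to a single stratum, the label is constant in $a$; one then only needs that the labels match at one fiber, which is built into the definitions of $\calQ^{(m),\lambda_\fp}$ as $S^{(m),\lambda_\fp}\cap\calQ^{(m)}$ restricted fiberwise. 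Granting this bookkeeping, the rest is a formal concatenation of already-established homeomorphisms, all manifestly $K_c$-equivariant, and real-analytic along strata since each constituent map (the $v_0$, $v_{a\neq0}$, the Gram--Schmidt identification, and the quotient map $q^{(m)}$) is so by construction.
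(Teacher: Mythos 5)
Your proposal is correct and follows essentially the same route as the paper, which proves the theorem simply by citing the homeomorphisms in~\eqref{fibers of Q} and~\eqref{fiber of X}, Proposition~\ref{components of multi loops of X}, Proposition~\ref{triv of Q}, and (implicitly) Proposition~\ref{multi Gram-Schmidt}(3). Your extra care about whether the trivialization permutes strata labels is a reasonable bookkeeping check that the paper leaves implicit, and your resolution — that ``stratified trivialization'' means each stratum is trivialized individually, so labels cannot jump — is exactly right.
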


\subsection{Trivialization of real quasi-maps}
Consider the morphism $f:G\to G\times G,\ g\to (g,\theta(g))$.
It is equivariant for the conjugations and Cartan involutions on 
$G$ and $G\times G$ hence, by the functoriality noted in Section \ref{morphisms}, we obtain a map 
\beq\label{f}
\QM^{(\sigma_{2m})}(\bbP^1,X,\infty)_{\mathbb R}|_\calS\lra \QM^{(\sigma_{2m})}(\bbP^1,G,\infty)_{\mathbb R}|_\calS\stackrel{\eqref{v}}\is\Gr^{(m)}|_\calS.
\eeq

\begin{lemma}\label{iso to Q}

The map \eqref{f} restricts to a $K_c$-equivariant homeomorphism
\[\xymatrix{
\QM^{(\sigma_{2m})}(\bbP^1,X,\infty)_{\mathbb R}|_{\calS}\ar[r]^-\sim&\calQ^{(m)}}\ \ \ \]
In particular, $\QM^{(\sigma_{2m})}(\bbP^1,X,\infty)_{\mathbb R}|_\calS\to\calS$ is topologically trivial
over $i\bbR$
\end{lemma}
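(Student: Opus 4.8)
The plan is to exhibit $\QM^{(\sigma_{2m})}(\bbP^1,X,\infty)_{\mathbb R}|_\calS$ inside $\Gr^{(m)}|_\calS$ as the fixed locus $\calQ^{(m)}$ of $\eta^{(\sigma_{2m})}$, using the functoriality of Section~\ref{morphisms} applied to the homomorphism $f\colon G\to G\times G$, $g\mapsto(g,\theta(g))$. First I would record that $f$ is a closed immersion intertwining the conjugation $\eta$ and Cartan involution $\theta$ of $G$ with the conjugation $\on{sw}_c$ and Cartan involution $\on{sw}$ of $G\times G$ (here $G\times G$ carries the symmetric variety structure from $\on{sw}$, with real form $G_c$ from $\on{sw}_c$). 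This is immediate from the fact that $\eta,\eta_c,\theta$ pairwise commute with $\theta=\eta\eta_c$: one has $\theta\eta=\eta_c$ and $\eta_c\theta=\eta$, so $\on{sw}_c(f(g))=(\eta_c\theta(g),\eta_c(g))=(\eta(g),\eta_c(g))=f(\eta(g))$ and $\on{sw}(f(g))=(\theta(g),g)=f(\theta(g))$; moreover $f(k)=(k,k)$ lies in the diagonal for $k\in K_c=G_c^\theta$. Hence $f$ induces a $K_c$-equivariant closed embedding $\QM^{(\sigma_{2m})}(\bbP^1,X,\infty)_{\mathbb R}\hookrightarrow\QM^{(\sigma_{2m})}(\bbP^1,G,\infty)_{\mathbb R}$ of semi-analytic spaces, which composed with $v$ of \eqref{v} is exactly the map \eqref{f}.

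Next I would identify the image. Composing $f$ with $G\times G\to G\backslash(G\times G)\is G$ sends $g\mapsto g^{-1}\theta(g)$; since $K=G^\theta$ this descends to the closed embedding $\iota\colon X=K\backslash G\is G_{sym}^0\subset G$ of Section~\ref{orbits}, which identifies $X$ with a union of connected components of the fixed locus $G^\nu$ of the involution $\nu$ of the symmetric variety $G=G\backslash(G\times G)$ given on $G\times G$ by $\nu=(\eta\times\eta)\circ\on{sw}_c=\on{sw}\circ(\theta\times\theta)$. Consequently, on the real form $\QM^{(\sigma_{2m})}(\bbP^1,G,\infty)_{\mathbb R}$ taken with respect to $\on{sw}_c$, the involution $(\eta\times\eta)^{(\sigma_{2m})}$ coincides with the one cutting out $\nu$-valued quasi-maps, so $f$ carries $\QM^{(\sigma_{2m})}(\bbP^1,X,\infty)_{\mathbb R}$ into, and identifies it with a union of connected components of, the $(\eta\times\eta)^{(\sigma_{2m})}$-fixed locus, which under $v$ is $\calQ^{(m)}$. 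To see that the image is all of $\calQ^{(m)}$, I would pass to fibers over $\calS=\bbR^m\times i\bbR$: over a point with $a\neq0$ the factorization isomorphisms \eqref{fibers over real and complex}, together with the uniformizations \eqref{iso for QM} and \eqref{uni for QM(infty)}, the Gram--Schmidt factorization (Proposition~\ref{multi Gram-Schmidt}) and Proposition~\ref{components of multi loops of X}, identify the fiber of $\QM^{(\sigma_{2m})}(\bbP^1,X,\infty)_{\mathbb R}$ with $\Omega X_c^{(m)}$, compatibly with the description of $\calQ^{(m)}|_{\bbR^m\times\{ia\}}$ in \eqref{fibers of Q}; over $a=0$ they identify it with $\Gr^{(m)}_{\mathbb R}$, compatibly with $\calQ^{(m)}|_{\bbR^m\times\{0\}}$. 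Thus $f$ is a closed embedding that is bijective onto $\calQ^{(m)}$ on each fiber over $\calS$, hence a homeomorphism $\QM^{(\sigma_{2m})}(\bbP^1,X,\infty)_{\mathbb R}|_\calS\is\calQ^{(m)}$, and $K_c$-equivariance has already been noted.

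Finally, the ``in particular'' clause follows by transporting the $K_c$-equivariant stratified trivialization of $\calQ^{(m)}\to\calS$ over $i\bbR$ from Proposition~\ref{triv of Q} through this homeomorphism. The main obstacle is the middle step: correctly matching the a priori different involutions $(\eta\times\eta)^{(\sigma_{2m})}$ and the one cutting out $X$ inside $G\backslash(G\times G)$ on the real form (a bookkeeping exercise with the three commuting involutions $\eta,\eta_c,\theta$), and then promoting the resulting identification of connected components to an equality via the fiberwise descriptions of \eqref{fibers of Q} --- this last point is exactly where the standing assumption that $G_{\mathbb R}$, equivalently $K$, is connected enters, guaranteeing that the relevant fibers of the real Beilinson--Drinfeld Grassmannian are connected.
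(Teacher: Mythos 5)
Your proof is correct and uses essentially the same approach as the paper: both identify the fibers of the two sides over $\bbR^m\times\{0\}$ and over $\bbR^m\times i\bbR^\times$ via Example~\ref{fibers of real quasi maps} and~\eqref{fibers of Q}, observe that~\eqref{f} specializes to the inclusion $\Gr^{(m)}_\bbR\hookrightarrow\Gr^{(m)}$ and to $\Omega K_c^{(m)}\backslash\Omega G_c^{(m)}\to\Omega X_c^{(m)}$, and then invoke Proposition~\ref{components of multi loops of X} to see the latter is a homeomorphism. The closed-embedding claim and the explicit matching of the involution $(\eta\times\eta)^{(\sigma_{2m})}$ with the one cutting out $X\is G_{sym}^0\subset G$ in your first two paragraphs are extra bookkeeping that the paper leaves implicit, but the core fiberwise argument and the final appeal to Proposition~\ref{components of multi loops of X} are the same.
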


\begin{proof}
According to Example \ref{fibers of real quasi maps}
and~\eqref{fibers of Q},
there are natural identifications
\[\QM^{(\sigma_{2m})}(\bbP^1,X,\infty)_{\mathbb R,\calL}\is\Gr^{(m)}_{\bbR},\ \ \ \ \ \ 
\calQ^{(m)}\is\Gr^{(m)}_\bbR\text{\ \ \ \ \ over\ \ \ \ }\bbR\times\{0\}\]
\[\QM^{(\sigma_{2m})}(\bbP^1,X,\infty)_{\mathbb R,\calL}\is\Omega K_c^{(m)}\backslash\Omega G_c^{(m)},\ \ \ \ \ \ 
\calQ^{(m)}\is\Omega X_c^{(m)}\text{\ \ \ \ \ over\ \ \ \ }\bbR\times i\bbR^\times\]
and under the above identifications the map~\eqref{f}
specializes to the natural inclusion map
\beq
\xymatrix{
\Gr^{(m)}_{\bbR}\ar@{^(->}[r] & \Gr^{(m)} &\text{over\ \ \ \ }  \bbR\times\{0\} 
}\eeq
and
to the natural map 
\beq\label{q_a}
\xymatrix{
\Omega K_c^{(m)}\backslash\Omega G_c^{(m)}\ar[r]^-{} &  \Omega X^{(m)}_c
& \text{over\ \ \ \ }  \bbR\times i\bbR^\times 
}
\eeq
The lemma follows from Proposition \ref{components of multi loops of X}.

\end{proof}

\quash{
For any $\lambda\in\mL$,
let $\tilde\calQ^{(1),\lambda}$ (resp. $\tilde\calQ^{(1)}_{\lambda}$) be 
the image of the 
the stratum $\calQ^{(1),\lambda}$
(resp. $\calQ^{(1)}_\lambda$)
of $\calQ^{(1)}_\mL$ under the isomorphism in Lemma \ref{iso to Q}.
The collection of strata $\tilde\calQ^{(1),\lambda}$,
(resp. $\tilde\calQ^{(1)}_{\lambda}$),  $\lambda\in\mL$, forms a stratification of 
$\QM^{(\sigma_2)}(\bbP^1,X,\infty)_{\mathbb R,\calL}$.
We shall relate those strata 
with $K(\calK)$-and $LG_\bbR$-orbits on $\Gr$.
We have the following.
\begin{thm}\label{trivialization for QM_{G,X}}
We have the following:
\begin{enumerate}
\item
There is 
$K_c$-equivariant trivialization of the stratified 
family 
\[\xymatrix{
\QM^{(\sigma_{2m})}(\bbP^1,X,\infty)_{\mathbb R^m}\ar[r] &  \bbR^m\times i\bbR
}\]
over $i\bbR$.
\item The trivialization in (1) induces a $K_c$-equivariant 
strata-preserving 
homeomorphism 
\[\xymatrix{\Omega K_c^{(m}\backslash\Gr^{(m)}\ar[r]^-\sim& \Gr_{\bbR}^{(m)}}\]
which restricts to $K_c$-equivaraint homeomorphism
\[\xymatrix{\Omega K_c\backslash\mO_K^\lambda\ar[r]^-\sim& S_{\bbR}^\lambda,\ \ 
\Omega K_c\backslash\mO_\bbR^\lambda\ar[r]^-\sim& T^\lambda_{\bbR}.}\]
\end{enumerate}
\end{thm}
\begin{proof}
This follows from Theorem \ref{Quillen} and  Proposition \ref{torsors}.
\end{proof}

\begin{remark}
(1) We have $\mL=\Lambda_S^+$ if and only if 
$K$ is connected. 
(2) Assume $K$ is connected. Then $\Gr_\bbR=\Gr_{\bbR,\mL}$
and $\Bun_G(\bP^1)_{\bbR,\mL}$ is equal the component 
consisting of real bundles on $\bP^1$ which are trivial at the real point  
$\infty\in\bP^1(\bbR)$.
 
\end{remark}
 }


\subsection{Flows on quasi-maps}\label{flows}
We have a flow $\psi_z, z\in\bC^\times$ 
on $\QM^{(2)}(\bbP^1,X,\infty)$
given by: 
 \beq\psi_z:\QM^{(2)}(\bbP^1,X,\infty)\ra
\QM^{(2)}(\bbP^1,X,\infty)
\eeq\ \ \ 
\[\ 
\psi_z((x,\mE,\psi,\iota))=(a_z(x),(a_{z^{-1}})^*\mE,(a_{z^{-1}})^*\psi,\iota).
\]
\quash{
We have the following commutative diagrams 
\[\xymatrix{\QM^{(\sigma_2)}(\bbP^1,G,K)\ar[r]^{\psi_z}\ar[d]&\QM^{(2)}(\bbP^1,G,K)\ar[d]
\\(\bbP^1)^2\ar[r]^{a_z}&(\bbP^1)^2}\ \ \ \ \ \ 
\xymatrix{\QM^{(\sigma_2)}(\bbP^1,G,K,\infty)\ar[r]^{\psi_z}\ar[d]&\QM^{(2)}(\bbP^1,G,K,\infty)\ar[d]
\\\bC^2\ar[r]^{a_z}&\bC^2}.\]
}
For $z\in\mathbb R_{>0}$ the isomorphism $\psi_z$ restricts to a flow
\[\psi^{3}_z:\QM^{(\sigma_2)}(\bbP^1,X,\infty)_\bbR\ra\QM^{(\sigma_2)}(\bbP^1,X,\infty)_\bbR,\]
and we have the following commutative diagrams 
\[\xymatrix{\QM^{(\sigma_2)}(\bbP^1,X,\infty)_\bbR\ar[r]^{\psi^{3}_z}\ar[d]&\QM^{(\sigma_2)}(\bbP^1,X,\infty)_\bbR\ar[d]
\\\bbC\ar[r]^{a_z}&\bbC}\]

\begin{lemma}\label{properties of flows}
We have the following properties of the flows:
\begin{enumerate}
\item The flow $\psi^{3}_z$ on $\QM^{(\sigma_2)}(\bbP^1,X,\infty)_\bbR$ is 
$K_c$-equivariant.
\item
Recall the flow 
$\psi_z^1$ on $\Gr^{(\sigma_2)}_\bbR$~\eqref{flow on Gr_R^2}.
We have the following commutative diagram
\beq\label{compatibility with flows}
\xymatrix{\Gr^{(\sigma_2)}_\bbR|_\bC\ar[r]^{\psi_z^1}\ar[d]&\Gr^{(\sigma_2)}_\bbR|_\bC\ar[d]\\
\QM^{(\sigma_2)}(\bbP^1,X,\infty)_\bbR\ar[r]^{\psi_z^3}&\QM^{(\sigma_2)}(\bbP^1,X,\infty)_\bbR}.
\eeq

\item For each $\lambda\in\Lambda_A^+$, the core $C_\bbR^\lambda\subset \Gr_\bbR
\subset\QM^{(\sigma_2)}(\bP^1,X,\infty)_\bbR|_0$ is a union of components of 
the critical manifold of the flow $\psi_z^3$ on $\QM^{(\sigma_2)}(\bbP^1,X,\infty)_\bbR$ 
and the stable manifold for $C_\bbR^\lambda$ is the 
strata $S_\bbR^\lambda\subset\Gr_\bbR$. 
\item For each $\lambda\in\Lambda_A^+$, we denote by 
\[\tilde T_\bbR^\lambda=\{x\in\QM^{(\sigma_2)}(\bbP^1,X,\infty)_\bbR|
\underset{z\ra 0}\lim\psi_z^3(x)\in C_\bbR^\lambda\}\]
the corresponding unstable manifold. 
We have $\tilde T_\bbR^\lambda|_0\is T_\bbR^\lambda\subset\Gr_\bbR$ for 
$\lambda\in\Lambda_A^+$. The open embedding 
$\Omega K_c\backslash\Gr\to\QM^{(\sigma_2)}(\bP^1,X,\infty)_\bbR|_i$ restricts to an isomorphism 
\[\Omega K_c\backslash\mO_\bbR^\lambda\is\tilde T_\bbR^\lambda|_i\]
for $\lambda\in\Lambda_A^+$.

\end{enumerate}
\end{lemma}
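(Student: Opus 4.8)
The plan is to reduce everything to statements about the flow $\psi_z$ on the Beilinson--Drinfeld family $\Gr^{(\sigma_2)}_\bbR$ that were already established in Lemma \ref{flows on Gr^(2)} and Proposition \ref{uniformizations at complex x}, transported along the real quasi-map uniformization $q^{(\sigma_2)}_\bbR$ of Proposition \ref{unif of QM} (in the $m=1$ case). First I would verify (1): the action $a_z$ on $(\bbP^1)^2$ and the pullback operations $(a_{z^{-1}})^*$ on bundles, sections, and the rigidification $\iota$ manifestly commute with the $K_c$-action changing $\iota$, since $K_c$ acts only on the rigidification datum at $\infty$ and $a_{z^{-1}}$ fixes $\infty$; restricting to real points gives the $K_c$-equivariance of $\psi^3_z$. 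Then (2) is the assertion that the uniformization morphism $q^{(\sigma_2)}_\bbR:\Gr^{(\sigma_2)}_\bbR\to\QM^{(\sigma_2)}(\bbP^1,X,\infty)_\bbR$ intertwines $\psi^1_z$ and $\psi^3_z$: this is because $q^{(n)}$ of \eqref{uni for QM} sends $(x,\mE,\phi)$ to $(x,\mE,\phi\!\!\mod K)$ (together with the induced $\iota$), and both flows are defined by the same formula $(x,\mE,\phi)\mapsto(a_z(x),(a_{z^{-1}})^*\mE,(a_{z^{-1}})^*\phi)$ on the respective spaces, so compatibility is a diagram chase through the construction of $q^{(\sigma_2)}_\bbR$; I would just spell out that the $LK_c^{(\sigma_2)}$-equivariance structure is preserved because $a_{z^{-1}}$ acts on the parameter space compatibly.

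For (3) and (4) I would use (2) to push the Morse-theoretic picture on $\Gr^{(\sigma_2)}_\bbR$ down to the quasi-map space. By Lemma \ref{flows on Gr^(2)}(1), the critical manifold of $\psi^1_z$ inside the fiber over $0$ is $\bigsqcup_\lambda C^\lambda_\bbR\subset\Gr_\bbR\cong\Gr^{(\sigma_2)}_\bbR|_0$, with stable manifold $S^\lambda_\bbR$; since $q^{(\sigma_2)}_\bbR$ restricted to the fiber over $0$ is the quotient by $K_c$ (using $LK_c^{(\sigma_2)}|_{\bbP^1(\bbR)}\cong K_c\times\bbR$ from Example \ref{fibers of real quasi maps}), and $\psi^3_z$ is $K_c$-equivariant by (1), the critical locus and stable manifolds descend: $C^\lambda_\bbR$ (which, being a $K_c$-orbit isomorphic to $G_\bbR/P^\lambda_\bbR$, maps homeomorphically to its image when we further quotient only on the $\Gr$ side, not the quasi-map side — here I must be careful that in $\QM^{(\sigma_2)}(\bbP^1,X,\infty)_\bbR|_0\cong\Gr_\bbR$ via $v_0$ no quotient is taken, cf. the identification $\QM^{(\sigma_{2m})}(\bbP^1,X)_\bbR|_{\bbP^1(\bbR)^m}\cong K_c\backslash\Gr^{(m)}_\bbR$ versus the rigidified version) gives a union of components of the critical manifold of $\psi^3_z$ with stable manifold $S^\lambda_\bbR$. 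For (4), the unstable manifold $\tilde T^\lambda_\bbR$ is by definition the preimage under $\lim_{z\to0}\psi^3_z$ of $C^\lambda_\bbR$; Lemma \ref{flows on Gr^(2)}(2) identifies the corresponding unstable manifold on the $\Gr$-side, whose fiber over $0$ is $T^\lambda_\bbR$ and whose fiber over $i\in\calH$ is $\mathcal O^\lambda_\bbR$; applying the complex uniformization isomorphism $\Omega K_c\backslash\Gr\xrightarrow{\sim}\QM^{(\sigma_2)}(\bbP^1,X,\infty)_\bbR|_i$ (Proposition \ref{uniformizations at complex x}, or rather its rigidified $LK_c$-version from Proposition \ref{unif of QM}) transports $\mathcal O^\lambda_\bbR$ to $\tilde T^\lambda_\bbR|_i$, which is the claim.

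The main obstacle is bookkeeping the difference between the rigidified quasi-map space $\QM^{(\sigma_2)}(\bbP^1,X,\infty)_\bbR$ and the non-rigidified one, and correspondingly between ``$\Omega K_c\backslash\,$'' and ``$LK_c\backslash\,$'' quotients, so that the flows, the $K_c$-equivariance, and the identifications of fibers over $0$ versus over $i$ all refer to compatible models; concretely one must check that the flow $\psi_z$ genuinely descends through the rigidification (it does, since $a_{z^{-1}}$ fixes $\infty$ and hence acts trivially on the fiber where $\iota$ lives) and that the isomorphism \eqref{uni for QM(infty)} $K\backslash\QM^{(n)}(\bbP^1,X,\infty)\cong\QM^{(n)}(\bbP^1,X)|_{\bC^n}$ intertwines the two versions of the flow. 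Once this is set up, all four parts are formal consequences of results already proved in Sections \ref{uniform} and \ref{QMaps} together with the finite-dimensional Morse theory input \cite[Proposition 8.4]{N1} cited in Lemma \ref{flows on Gr^(2)}; I do not anticipate any genuinely new analytic difficulty.
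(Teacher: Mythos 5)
Your proposal is correct and follows essentially the same route as the paper: parts (1)--(2) are immediate from the explicit formula $\psi_z=(a_z,(a_{z^{-1}})^*,\ldots)$ once one notes $a_{z^{-1}}$ fixes $\infty$, and parts (3)--(4) are obtained by transporting the Morse picture of Lemma~\ref{flows on Gr^(2)} along the commutative diagram~\eqref{compatibility with flows}. Your attention to the rigidified/non-rigidified distinction (the group quotient over the fiber at $0$ being trivial while over $i\in\calH$ it is $\Omega K_c$) is exactly the bookkeeping the paper leaves implicit.
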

\begin{proof}
Part (1) and (2) follows from the construction of the flows. 
Part (3) and (4) 
follows from Lemma \ref{flows on Gr^(2)} and diagram (\ref{compatibility with flows}).

\end{proof}


\quash{
\subsection{Stratifications of quasi-maps}\label{St of quasi-maps}
For any partition $p(n)$ of $\{1,...,n\}$ and a map $\Theta:p(n)\ra
X(\calK)/G(\mO)$, we say that a quasi map 
$(x,\mE,\sigma)\in\QM^{(n)}(\Sigma,G,X)$ is of type $(p(n),\Theta)$
if the followings hold. First, the coincidences among 
$|x|$ are given by the partition $p(n)$. Second, 
the restriction to the formal neighborhood of 
$x_i\in\bP^1$ provides a map 
 \beq\label{ev}
 \xymatrix{
 ev_{x_i}:\QM^{(n)}(\Sigma,G,X) \ar[r] &  X(\calK_{})/G(\mO_{})\is\Lambda_S^+
 }
\eeq
and we required $ev_{x_i}((x,\mE,\sigma))=\Theta(i)$.
We define the local stratum 
$\QM^{p(n),\Theta}(\Sigma,G,X)\subset \QM^{(n)}(\Sigma,G,X)$
to consist of those quasi maps of type $(p(n),\Theta)$.
For any $p(n)$ we define the local stratum 
$\Sigma^{p(n)}:=\{(x_1,...,x_n)|x_i=x_j\text{\ if\ } i,j\text{\ are in the same part of\ } p(n)\}$.
Let $\calS^{(n)}$ (resp. $\calV^{(n)}$) be the stratification of 
$\QM^{(n)}(\Sigma,G,X)$ (resp. $\Sigma^n$) forms by the 
local stratum $\QM^{p(n),\Theta}(\Sigma,G,X)$ (resp. $\Sigma^{p(n)}$). 
For any $x\in\Sigma^{p(n)}$, let
$\calS^{(n)}_x$ be the stratification of 
$\QM^{(n)}(x,G,X)$ forms by the stratum
$\QM^{p(n),\Theta}(x,G,X)=\QM^{p(n),\Theta}(\Sigma,G,X)\times_{\Sigma^{n}}\{x\}$.

\quash{
\begin{proposition}\label{Whitney}
The stratifications $\calS^{(n)}$ and $\calV^{(n)}$ 
of $\QM^{(n)}_G(\Sigma,X)$ and $\Sigma^{n}$
are
Whitney. The natural projection map 
$\QM^{(n)}_G(\Sigma,X)\ra\Sigma^n$ is a Thom 
stratified map.
\end{proposition}}

\subsection{Moduli interpretations for $K(\calK)$-orbits}
Recall the stratum 
$\QM^{p(2),\Theta}(\bP^1,G,X)$ and stratification 
$\calS^{(2)}=\{\QM^{p(2),\Theta}(\bP^1,G,X)\}$ of $\QM^{(2)}(\bP^1,G,X)$
in Section \ref{St of quasi-maps}.
In the case $p(2)=\{1\}\cup\{2\}$ (resp. $p(2)=\{1,2\}$), we let 
$\lambda_k=\Theta(\{k\})\in\Lambda_S^+$ (resp. $\lambda=\Theta(\{1,2\})\in\Lambda_S^+$) and write 
$\QM^{\lambda_1,\lambda_2}(\bP^1,G,X)$ (resp. $\QM^{\lambda}(\bP^1,G,X)$) for $\QM^{p(2),\Theta}(x,G,X)$. 
The complex conjugation on $\QM^{(2)}(\bP^1,G,X)$ 
preserves the stratum $\QM^{\lambda}(\bbP^1,G,X)$ and
maps the stratum $\QM^{\lambda_1,\lambda_2}(\bbP^1,G,X)$ to 
$\QM^{\lambda_2,\lambda_1}(\bbP^1,G,X)$. In particular, the 
complex conjugation preserves the 
stratification $\calS^{(2)}$ 
of $\QM(\bbP^1,G,X)$. We denote by 
$\calS^{(2)}_\bbR$ the stratification of 
$\QM^{(2)}(\bP^1,G,X)_\bbR$ forms by the fixed 
points of the strata.  
For any $x\in\bP^1$ we denote by 
$\calS^{(2)}_{\bbR,x}$ 
the stratification of $\QM^{(2)}(x,G,X)_\bbR$ 
forms by the 
intersection.
Let
\[ev_{x}^\bbR:\QM^{(2)}(x,G,X)_\bbR\to
\QM^{(2)}(\bP^1,G,X)\stackrel{ev_{x}}\to
 \Lambda_S^+\] 
where the first arrow is the natural map and $ev_{x}$ is the evaluation map in (\ref{ev}).
Define 
\beq\label{local stratum at x}
\QM^{\lambda}(x,G,X)_\bbR:=(ev_{x}^\bbR)^{-1}(\lambda).
\eeq
Then the stratification 
$\calS^{(2)}_{\bbR,x}$ of $\QM^{(2)}(x,G,X)_\bbR$ is given by 
\[\calS^{(2)}_{\bbR,x}=\{\QM^{\lambda}(x,G,X)_\bbR\}_{\lambda\in\Lambda_S^+}.\]

We shall study the relation 
between $\calS^{(2)}_{\bbR,x}$ and the
$K(\calK)$-orbits stratification on $\Gr$ and $G_\bbR(\mO_\bbR)$-orbits stratification on 
$\Gr_\bbR$. We begin with the following lemma:

\begin{lemma}\label{ev for Gr}
We have the following:
\begin{enumerate}
\item We have the following commutative diagram
\[\xymatrix{
LK_c\backslash \Gr\ar[r]^{q_i\ \ \ }\ar[d] & \QM^{(2)}(i,G,X)_\bbR\ar[d]^{ev_i^\bbR}\\
|K(\calK)\backslash\Gr|\is\mL\ar[r]&|X(\calK)/G(O)|\is\Lambda_S^+}.\]
Here 
the vertical left arrow is the natural quotient map, and the 
lower horizontal arrow is the natural inclusion map.
\item  
We have the following commutative diagram
\[\xymatrix{K_c\backslash \Gr_\bbR\ar[r]^{q_0\ \ }\ar[d] & \QM^{(2)}(0,G, X)_\bbR\ar[d]^{ev_0^\bbR}\\
|K_c(\calK_\bbR)\backslash\Gr_\bbR|\ar[r]&|X(\calK)/G(O)|\is\Lambda_S^+}.\]
Here 
the vertical left arrow is the natural quotient map, and the 
lower horizontal arrow is the natural inclusion map.
\
\item
The $K_\bbR(\calK_\bbR)$-orbits in $\Gr_\bbR$
coincide with 
$G_\bbR(\mO_\bbR)$-orbits. Moreover,
under the 
identifications 
$|K_\bbR(\calK_\bbR)\backslash\Gr_\bbR|\is |G_\bbR(\mO_\bbR)\backslash\Gr_\bbR|\is\Lambda_S^+$, $|X(\calK)/G(O)|\is\Lambda_S^+$, the inclusion map
\[|K_c(\calK_\bbR)\backslash\Gr_\bbR|\ra |X(\calK)/G(O)|\]
in (2)
becomes the identity map 
$\on{id}_{\Lambda_S^+}$.

\end{enumerate}
\end{lemma}
\begin{proof}
Part (1) and (2) follows from the definition and part (3) is proved in \cite[Lemma 9.1]{N2}.
\end{proof}

Recall the stratum $QM^{\lambda}(x,G,X)_{\bbR}$
in (\ref{local stratum at x}).
Define $QM^{\lambda}(x,G,X)_{\bbR,\alpha_0}$ (resp. 
$QM^{\lambda}(x,G,X)_{\bbR,0}$ be the pre-image of 
$\Bun_\bbG(\bP^1)_{\bbR,\alpha_0}$ (resp. $\Bun_\bbG(\bP^1)_{\bbR,0}$)
along the natural projection map $QM^{\lambda}(x,G,X)_{\bbR}\to\Bun_\bbG(\bP^1)_{\bbR}$.
The following proposition follows from 
Proposition \ref{parametrization of K(K)-orbits} and 
Lemma \ref{ev for Gr}:
\begin{prop}\label{moduli int for orbits}
We have the following:
\begin{enumerate}
\item
The collection $\{\QM^{\lambda}(i,G,X)_{\bbR,0}\}_{\lambda\in\mL}$
forms a stratification of 
$\QM^{(2)}(i,G,X)_{\bbR,0}$ and the 
isomorphism 
$q_i:LK_c\backslash\Gr\is\QM^{(2)}(i,G,X)_{\bbR,0}$ induces
a stratified isomorphism 
\[(LK_c\backslash\Gr,\{LK_c\backslash\mO_K^\lambda\}_{\lambda\in\mL})\is(\QM^{(2)}(i,G,X)_{\bbR,0},\{\QM^{\lambda}(i,G,X)_{\bbR,0}\}_{\lambda\in\mL}).\]

\item
The collection $\{\QM^{\lambda}(0,G,X)_{\bbR,\alpha_0}\}_{\lambda\in\Lambda_S^+}$
forms a stratification of 
$\QM^{(2)}(0,G,X)_{\bbR,\alpha_0}$ and the 
isomorphism 
$q_0:LK_c\backslash\Gr\is\QM^{(2)}(i,G,X)_{\bbR,\alpha_0}$ induces
a stratified isomorphism 
\[(K_c\backslash\Gr_\bbR,\{K_c\backslash S_\bbR^\lambda\}_{\lambda\in\Lambda_S^+})\is(\QM^{(2)}(0,G,X)_{\bbR,\alpha_0},\{\QM^{\lambda}(0,G,X)_{\bbR,\alpha_0}\}_{\lambda\in\Lambda_S^+}).\]

\end{enumerate}
\end{prop}

}

\quash{
\subsection{Some auxiliary stacks}
We introduce some auxiliary ind-stacks that will be used later in the paper.
Let $\QM^{(2)}_G(\bbP^1,X,\infty)$ be the ind-stack classifies 
quadruple $(x,\mE,\phi,\iota)$ where $x=(x_1,x_2)\in\bC^2$,
$\mE$ is a $G$-bundle on $\bbP^1$, $\phi:\bbP^1-|x|\ra\mE\times^KX$, and 
$\iota:\mE_K|_\infty\is K$, here $\mE_K$ is the $K$-reduction of
$\mE$ on $\bbP^1-|x|$ given by $\phi$. We have a natural map 
$\QM^{(2)}_G(\bbP^1,X,\infty)\ra\bC^2$. The twisted conjugation 
on $(x_1,x_2)\ra (\bar x_2,\bar x_1)$ together with the involution 
$\eta$ on $G$ defines a real form 
$\QM^{(2)}_G(\bbP^1,X,\infty)_\bbR$ of $\QM^{(2)}_G(\bbP^1,X,\infty)$. Note the natural map $\QM^{(2)}_G(\bbP^1,X,\infty)_\bbR\ra\bC$.
The group $K$ (resp. $K_c$) acts naturally on 
$\QM^{(2)}_G(\bbP^1,X,\infty)$ (resp. $\QM^{(2)}_G(\bbP^1,X,\infty)_\bbR$)
by changing the trivialization $\iota$ and we have natural isomorphisms
\beq\label{QM=QM(infty)/K}
\QM^{(2)}_G(\bbP^1,X)\is K\backslash\QM^{(2)}_G(\bbP^1,X,\infty),\ \ 
\QM^{(2)}_G(\bbP^1,X)_\bbR\is
K_c\backslash\QM^{(2)}_G(\bbP^1,X,\infty)_\bbR.
\eeq

The pre-image of the stratification
$\calS$ (resp. $\calS_\bbR$) along the quotient map
$\QM^{(2)}_G(\Sigma,X,\infty)\ra\QM^{(2)}_G(\Sigma,X)$ (resp.
$\QM^{(2)}_G(\Sigma,X,\infty)_\bbR\ra\QM^{(2)}_G(\Sigma,X)_\bbR$) 
defines a stratification $\calS'$ (resp. $\calS_\bbR'$) of 
$\QM^{(2)}_G(\Sigma,X,\infty)$ (resp. $\QM^{(2)}_G(\Sigma,X,\infty)_\bbR$).

\begin{proposition}\label{Whitney for QM_R}
The stratification $\calS'$ (resp. $\calS'_\bbR$)
of $\QM^{(2)}_G(\bbP^1,X,\infty)$ (resp. the real form 
$\QM^{(2)}_G(\bbP^1,X,\infty)_\mathbb R$)
is Whitney. The maps 
$\QM^{(2)}_G(\bbP^1,X,\infty)\ra\bC^2$ 
and $\QM^{(2)}_G(\bbP^1,X,\infty)_\mathbb R\ra \bC$ are
Thom stratified maps, where $\bC^2$ and $\bC$ are equipped with the 
stratifications $\bC^2=(\bC^2-\Delta\bC)\cup\Delta\bC$ 
and $\bC=(\bC-\bbR)\cup\bbR$.
\end{proposition}

\quash{
\begin{lemma}
It follows from proposition \ref{Whitney} and the fact that if 
a Thom stratified map is equivariant under an action of a compact group $H$, then 
the induced map on the $H$-fixed points is again a Thom stratified map
(see \cite[Lemma 4.5.1]{N1}).
\end{lemma}}

\subsection{Complex groups}\label{complex}
The complex conjugations on $\Gr^{(2)}$ and $\Omega G_c^{(2)}$
induce an involution 
on the quotient $\Omega G_c^{(2)}\backslash\Gr^{(2)}$ denoted by $\eta^{(2)}$.
}

\quash{
\section{Stratifications}

\subsection{A stratified family}
Consider the left copy embedding $\iota:\Gr^{(1)}\ra\Gr^{(2)}$ sending 
$(x,\mE,\phi)$ to $(x,-x,\mE,\phi|_{\bP^1-\pm x})$. 
The composition 
$\Gr^{(1)}\to\Gr^{(2)}\to\Omega G_c^{(2)}\backslash\Gr^{(2)}$ is
an isomorphism of ind-varieties
\[\Gr^{(1)}\is \Omega G_c^{(2)}\backslash\Gr^{(2)}\]
and the involution $\eta^{(2)}$ 
on $\Omega G_c^{(2)}\backslash\Gr^{(2)}$ in section \ref{complex}. 
induces an involution on $\Gr^{(1)}$ denoted by $\eta^{(1)}$.
For $x=0$, the involution $\eta^{(1)}$ restricts to the complex conjugation 
on $\Gr\is\Gr^{(1)}|_0$. 
Let $x=ir\neq 0$.
We shall give a description of the involution $\eta^{(1)}$ on
$\Gr^{(1)}|_{x}$. 
Consider the 
coordinate $t$ of $\mathbb P^1$ sending 
$\infty$ to $1$, $x$ to $0$, and $-x$ to $\infty$ (i.e., $t=\frac{z-x}{z+x}$). 
It induces isomorphisms 
\[
\Gr^{(2)}|_x\is G[t,t^{-1}]/G[t]\times G[t,t^{-1}]/G[t^{-1}],\ \ 
\Omega G_c^{(2)}|_i\is\Omega G_c\subset G[t,t^{-1}].\] Moreover, under the 
isomorphisms above the involution 
$\eta^{(2)}$ and 
the action of $\gamma\in\Omega G_c^{(2)}|_x\is\Omega G_c$ on $\Gr^{(2)}|_i$
are given by the formulas 
$\eta^{(2)}(g_1,g_2)=(\eta^\tau(g_2),\eta^\tau(g_1))$ 
and $\gamma(g_1,g_2)=(\gamma g_1,\gamma g_2)$.
From this, we see that under the 
homeomorphism $\Gr^{(1)}|_x\is\Omega G_c\subset G[t,t^{-1}]$ (induced by the above chosen coordinate)  
the involution  
$\eta^{(1)}$ is given by the formula 
$\gamma\to\tilde\eta^\tau(\gamma)(=\eta(\gamma(\bar t^{-1}))^{-1}),\ \gamma\in\Omega G_c$. 
We define $\calX:=(\Gr^{(1)})^{\eta^{(1)}}$ to be the fixed point of $\eta^{(1)}$ on $\Gr^{(1)}$ and we write 
\[q:\calX\ra i\bbR\]
for the restriction of the projection map $\Gr^{(1)}\to i\bbR$ to $\calX$.  
From the above discussion we see that the special fiber $\calX_0:=q^{-1}(0)$ is isomorphic to 
\[\calX_0\is\Gr_\bbR\text{\ \ (as real varieties)}\]
and the generic fiber $\calX_x:=q^{-1}(x)$ is homeomorphic to 
\beq\label{X_x}
\calX_x\is\Omega X_c=(\Omega G_c)^{\tilde\eta^\tau}=(\Omega G_c)^{\tilde\theta}.
\eeq
 Recall the $G(\mO)^{(1)}$-orbtis stratification $\{S^{\lambda,(1)}\}_{\lambda\in\Lambda_T^+}$ and $L^-G^{(1)}$-orbits 
stratification $\{T^{\lambda,(1)}\}_{\lambda\in\Lambda_T^+}$
on $\Gr^{(1)}$.
It is known that the stratification 
$\{S^{\lambda,(1)}\}_{\lambda\in\Lambda_T^+}$ 
is Whitney and the projection map
$\Gr^{(1)}\to i\bbR$ is a stratified submersion (here we equip $i\bbR$ with the trivial stratification 
$\{i\bbR\}$). Our first goal is to show that 
the intersection of $\{S^{\lambda,(1)}\cap\calX\}_{\lambda\in\Lambda_T^+}$  
forms a Whitney stratification of $\calX$ and the projection map 
$q:\calX\to i\bbR$ is a stratified submersion.

We will use the following lemma:
\begin{lemma}\cite[Lemma 4.5.1]{N1}\label{fixed points}
Let $f:M\to N$ be a stratified submersion between two 
Whitney stratified manifolds. Assume there is 
a compact group $H$ acting on $M$ and $N$ such that 
the actions preserve the stratifications 
and $f$ is 
$H$-equivariant. Then the 
fixed point manifold $M^H$ and $N^H$ are Whitney stratified by the 
fixed points of the strata and the induced map 
$f^H:M^H\to N^H$ is a stratified submersion.
\end{lemma}

Since $\calX$ is the fixed point manifold of an involution on 
$\Gr^{(1)}$ and the projection map $\Gr^{(1)}\to i\bbR$ is compatible with the involution, to achieve our goal 
it suffices to show that 
$\{S^{\lambda,(1)}\}$ is $\eta^{(1)}$-invariant. More precisely, we have the following:

\begin{lemma}\label{stable under eta}
The involution $\eta^{(1)}$ 
preserves the stratification $\{S^{\lambda,(1)}\}$ (resp. $\{T^{\lambda,(1)}\}$) and 
maps the stratum $S^{\lambda,(1)}$ (resp. $T^{\lambda,(1)}$) to 
the stratum containing the coweight $\eta(\lambda)\in\Gr\is\Gr^{(1)}|_0$.

\end{lemma}
\begin{proof}
Note that under the  
the isomorphism 
$w:\Gr^{(1)}|_{i\bbR^\times}\is\Omega G_c\times i\bbR^\times$
 the involution $\eta^{(1)}$ is given by the formula 
$\eta^{(1)}(\gamma,x)=(\tilde\eta^{\tau}(\gamma),x)$
and we have  
$w(S^{\lambda,(1)}|_{i\bbR^\times})=S^\lambda\times i\bbR^\times$. 
Since $\tilde\eta^{\tau}$ map $S^\lambda$ to the stratum 
$S^{\lambda'}:=\tilde\eta^{\tau}(S^\lambda)$
containing 
$\tilde\eta^{\tau}(\lambda)=\eta(\lambda)$ and the closure 
of $S^{\lambda,(1)}|_{i\bbR^\times}$ in $\Gr^{(1)}$ is equal to 
$\overline{S^{\lambda,(1)}|_{i\bbR^\times}}=\cup_{\mu\leq\lambda} S^{\mu,(1)}$, we have 
\[\bigcup_{\mu\leq\lambda}\eta^{(1)}(S^{\mu,(1)})=\eta^{(1)}(\overline{S^{\lambda,(1)}|_{i\bbR^\times}})=\overline{\eta^{(1)}(S^{\lambda,(1)}|_{i\bbR^\times})}=\overline{S^{\lambda',(1)}}=
\bigcup_{\mu'\leq\lambda'}S^{\lambda',(1)}
.\]
Now by induction on the dimension of 
the stratum $S^{\mu,(1)}$, we conclude that 
$\eta^{(1)}(S^{\lambda,(1)})=S^{\lambda',(1)}$. The desired claim for 
the stratification $\{S^{\lambda,(1)}\}$ follows. 

Consider the flow $\phi_x^{(1)}:\Gr^{(1)}\to\Gr^{(1)}, x\in\bbR_{>0}$ in section \ref{}. 
The critical manifolds of the flow are the cores $C^\lambda\subset\Gr\is\Gr^{(1)}|_0$
and the corresponding descending spaces are $T^{\lambda,(1)}$.
Note that the flow $\phi_x^{(1)}$ is compatible with the involution, that is,
$\phi_x^{(1)}\circ\eta^{(1)}=\eta^{(1)}\circ\phi_x^{(1)}$. Since the 
involution $\eta^{(1)}$ maps 
$C^\lambda$ to $C^{\eta(\lambda)}$, it implies $\eta^{(1)}$ preserves 
the the stratification $\{T^{\lambda,(1)}\}$ (since they are the descending spaces of the flow) and 
the image $\eta^{(1)}(T^{\lambda,(1)})$ is a stratum containing $\eta(\lambda)$.

\quash{
Note that the 
Since the flow $\phi_x^{(1)}$ on $\Gr^{(1)}$ is $K_c$-equivariant, it descends to a flow
on the quotient $K_c\backslash\Gr^{(1)}$. We claim that the resulting flow on 
$K_c\backslash\Gr^{(1)}$, denoted by $\tilde\phi_x^{(1)}$,
Consider the partial compactification 
$\Gr^{(1)}_{\bP^1}|_{ i\bbR\cup\infty}\to i\bbR\cup\infty$ of 
$\Gr^{(1)}\to i\bbR$.
The flow 
$\phi_x^{(1)}$ and the involution $\eta^{(1)}$ on $\Gr^{(1)}$ 
extend compatibly to $\Gr^{(1)}_{\bP^1}|_{ i\bbR\cup\infty}$ and the critical manifolds for the flow are 
$C^\lambda\sqcup C^\mu\subset\Gr\sqcup\Gr\is\Gr^{(1)}_{\bP^1}|_{0\cup\infty}$. 
Note that 
the stratum $S^{\lambda,(1)}$ is the ascending space in $\Gr^{(1)}$ for the  
critical manifold $C^\lambda\sqcup C^\lambda$. Since 
$\eta^{(1)}$ maps $C^\lambda\sqcup C^\lambda$ to 
$C^{\eta(\lambda)}\sqcup C^{\eta(\lambda)}$, the involution $\eta^{(1)}$
must preserve the stratification $\{S^{\lambda,(1)}\}$ and maps
$S^{\lambda,(1)}$ to the stratum containing $\eta(\lambda)$.}

\end{proof}

\begin{lemma}
The intersection $\mathcal S^\lambda:=\calX\cap S^{\lambda,(1)}$ (resp. $\mathcal T^\lambda:=\calX\cap T^{\lambda,(1)}$) is non-empty if and only if $\lambda\in\Lambda_S^+$. 
\end{lemma}
\begin{proof}
Since the special fiber 
$S^\lambda_\bbR$ (resp. $T_\bbR^\lambda$) is non-empty if and only if $\lambda\in\Lambda_S^+$, it suffices to show that 
$\calS^\lambda$ (resp. $\calT^\lambda$) is non-empty if and only if 
the special fiber $S^\lambda_\bbR$ (resp. $T^\lambda_\bbR$) is non-empty. 
For the case $\calS^\lambda$, we observe that, by lemma \ref{fixed points}
and lemma \ref{stable under eta}
, the stratification 
$\{\calS^\lambda\}$ of $\calX$ is Whitney and the 
map $q:\calX\to i\bbR$ is a ind-proper stratified submersion. 
Thus the stratum $\calS^\lambda$ is non-empty if and only if 
the special fiber $S^\lambda_\bbR$ is non-empty. 
For the case $\calT^\lambda$, we note that 
$\calX$ is closed in $\Gr^{(1)}$ and 
$\calT^{\lambda}$ is the descending space for the 
critical manifold $C^\lambda_\bbR$ 
of the flow 
$\phi_x^{(1)}$. Thus 
the intersection  
$\mathcal T^\lambda$
is non-empty 
if and only if the special fiber $T^\lambda_\bbR$ 
is non-empty. 
The lemma follows.
\end{proof}

We have proved the following: 
\begin{proposition}\label{Whitney}
The stratification $\{\calS^\lambda\}_{\lambda\in\Lambda_S^+}$
on $\calX$ is a Whitney stratification and the 
projection map $q:\calX\to i\bbR$ is a stratified submersion.
\end{proposition}

Since the projection $q$ is ind-proper and the $K_c$-action on $\calX$ preserves each
strata $\calS^\lambda$, 
by Thom's first isotopic lemma \cite{M}, we obtain:
\begin{proposition}\label{isotopic}
For any $x\in i\bbR$ we write 
$\calX_x=\calX|_x$ and $\calS^{\lambda}_x=\calS^\lambda|_x$.
There is a $K_c$-equivariant 
stratum preserving homeomorphism 
\[(\calX,\{\calS^\lambda\}_{\lambda_S^+})\is(\calX_x,\{\calS_x^\lambda\}_{\lambda\in\Lambda_S^+})\times i\bbR\]
which is real analytic on each stratum. 
In particular, for $x\in i\bbR^\times$, 
there is a $K_c$-equivariant 
stratum preserving homeomorphism 
\[(\calX_x,\{\calS_x^\lambda\}_{\lambda\in\Lambda_S^+})\is(\Gr_\bbR,\{S_\bbR^\lambda\}_{\lambda\in\Lambda_S^+})\]
which is real analytic on each stratum.

\end{proposition}

\subsection{Relation with real quasi-maps}
Observe that the inclusion map 
$\Omega K_c^{(2)}\backslash\Gr^{(2)}_\bbR\to\Omega G_c^{(2)}\backslash\Gr^{(2)}$
in section \ref{} factors through the fixed points $\calX$: 
\[\Omega K_c^{(2)}\backslash\Gr^{(2)}_\bbR\stackrel{\iota}\to\calX\to\Omega G_c^{(2)}\backslash\Gr^{(2)}.\]
For 
any $x\in i\bbR^\times$
we have an isomorphism 
$\Omega K_c^{(2)}\backslash\Gr^{(2)}_\bbR|_x\is\Omega K_c\backslash\Gr$ and we write \[\iota_x:\Omega K_c\backslash\Gr\is\Omega K_c^{(2)}\backslash\Gr^{(2)}_\bbR|_x\stackrel{}\to\calX_x\] for the induced map on fibers.  

\begin{lemma}\label{S=O}
For $\lambda\in\mL\subset\Lambda_S^+$, 
the map $\iota_x$ induces 
a $K_c$-equivariant isomorphisms of real varieties 
\[\Omega K_c\backslash\mO_K^\lambda\is\mathcal S^{\lambda}_x,\ \ \ 
\Omega K_c\backslash\mO_\bbR^\lambda\is\mathcal T^{\lambda}_x.\]
\end{lemma}
\begin{proof}
Note that we have the following commutative diagram
\[\xymatrix{\Omega K_c\backslash\Gr\ar[r]^{\iota_x}\ar[d]&\calX_x\ar[d]^{v}\\
\Omega K_c\backslash\Omega G_c\ar[r]^\pi&\Omega X_c}\]
where the left vertical arrow $v$ is the homeomorphism in (\ref{X_x}) 
and the map $\pi$ is given by $\gamma\to \theta(\gamma)^{-1}\gamma$.
In addition, we have 
$v(\calS^\lambda_x)=S^\lambda\cap\Omega X_c$ (resp. $v(\calT^\lambda_x)=T^\lambda\cap\Omega X_c$).
So it suffices to show that 
$\pi$ maps $\Omega K_c\backslash\mO_K^\lambda$ (resp. $\Omega K_c\backslash\mO_\bbR^\lambda$) homeomorphically  onto $S^\lambda\cap\Omega X_c$ (resp.
$T^\lambda\cap\Omega X_c$). This follows from the result in section \ref{orbits}.
\end{proof}

Let $\calX^0\subset\calX$ be the union of the strata $\calS^\lambda,\ \lambda\in\mL$. 
Since $\Gr_\bbR^0:=\cup_{\lambda\in\mL} S_\bbR^\lambda$ is an union of certain connected components 
of $\Gr_\bbR$,
it follows from proposition \ref{isotopic} that $\calX^0$ is also an union of certain connected components
of $\calX$. Moreover, by the lemma above we have 
\[\calX^0\subset\Omega K_c^{(2)}\backslash\Gr^{(2)}_\bbR\subset\calX.\]
\begin{remark}
If $K$ is connected, then $\mL=\Lambda_S^+$ and we have 
$\calX^0=\calX=\Omega K_c^{(2)}\backslash\Gr^{(2)}_\bbR$.
\end{remark}
Let $q^0:\calX^0\to i\bbR$ be the restriction of $q$ to $\calX^0$.
The following proposition follow from proposition \ref{Whitney}, proposition \ref{isotopic}, and lemma \ref{S=O}.

\begin{proposition}
The stratification $\{\calS^\lambda\}_{\lambda\in\mL}$ on $\calX^0$
is a Whitney stratification and the map 
\beq\label{X^0}
q^0:\calX^0\to i\bbR
\eeq is 
an ind-proper stratified submersion. 
For any $x\in i\bbR$ we write 
$\calX^0_x=\calX^0|_x$ and $\calS^{\lambda}_x=\calS^\lambda|_x$.
There is a $K_c$-equivariant 
stratum preserving homeomorphism 
\[(\calX^0,\{\calS^\lambda\}_{\lambda\in\mL})\is(\calX^0_x,\{\calS_x^\lambda\}_{\lambda\in\mL})\times i\bbR\]
which is real analytic on each stratum. 
In addition, for $x\in i\bbR^\times$ (resp. $x=0$)
there is a real analytic $K_c$-equivariant 
stratum preserving isomorphism
\[(\calX_x^0,\{\calS_x^\lambda\}_{\lambda\in\mL})\is(\Omega K_c\backslash\Gr,\{\Omega K_c\backslash\mO_K^\lambda\}_{\lambda\in\mL})\ \ 
(resp.\ \ 
(\calX_0^0,\{\calS_0^\lambda\}_{\lambda\in\mL})\is
(\Gr_\bbR^0,\{S_\bbR^\lambda\}_{\lambda\in\mL}))
.\]
In particular, we obtain 
a $K_c$-equivariant 
stratum preserving homeomorphism 
\[(\Omega K_c\backslash\Gr,\{\Omega K_c\backslash\mO_K^\lambda\}_{\lambda\in\mL})\is(\Gr^0_\bbR,\{S_\bbR^\lambda\}_{\lambda\in\mL})\]
which is real analytic on each stratum.

\end{proposition}

\subsection{A generalization}
Consider the stratification 
$\{V^{\lambda,\mu,(1)}:=S^{\lambda,(1)}\cap T^{\lambda,(1)}\}_{\lambda,\mu\in\Lambda_T^+}$ of $\Gr^{(1)}$.
Since the two stratifications $\{S^{\lambda,(1)}\}$ and $\{T^{\lambda,(1)}\}$ are transversal and both are stable under the involution 
$\eta^{(1)}$,
the results in the previous section imply
\begin{proposition}
The fixed points 
stratification $\{\calV^{\lambda,\mu}=V^{\lambda,\mu,(1)}\cap\calX^0\}_{\lambda,\mu\in\mL}$ of $\calX^0$ is a Whitney stratification and 
the map 
\beq\label{X^0}
q^0:\calX^0\to i\bbR
\eeq is 
an ind-proper stratified submersion. 
For any $x\in i\bbR$,
there is a $K_c$-equivariant 
stratum preserving homeomorphism 
\[(\calX^0,\{\calV^{\lambda,\mu}\}_{\lambda,\mu\in\mL})\is(\calX^0_x,\{\calV_x^{\lambda,\mu}\}_{\lambda,\mu\in\mL})\times i\bbR\]
which is real analytic on each stratum. 
In particular, we obtain 
a $K_c$-equivariant 
stratum preserving homeomorphism 
\[(\Omega K_c\backslash\Gr,\{\Omega K_c\backslash(\mO_K^\lambda\cap\mO_\bbR^\mu)\}_{\lambda\in\mL})\is(\Gr^0_\bbR,\{S_\bbR^\lambda\cap T^\mu_\bbR\}_{\lambda,\mu\in\mL})\]
which is real analytic on each stratum.

\end{proposition}

}

\section{Real-symmetric equivalence}\label{real sym}
In this section we construct the real-symmetric equivalence 
between the real and relative Satake categories.
The theory of  sheaves on infinite dimensional stacks 
developed in \cite{BKV} plays an important role here.

\subsection{Placid stacks}
We first review the notion of placid (ind)-schemes and (ind)-stacks.
Let $Y$ be a scheme 
acted on by an affine group scheme $H$.
we say that
$Y$ is $H$-\emph{placid} if 
\begin{itemize}
\item  $Y$  can be written as filtered limit $Y = \lim_{j} Y_j$, 
where each $Y_j$ is a $H$-scheme of finite type and the transition maps 
$Y_{j'} \to Y_{j}$ for $j\to j'$ are affine, smooth, surjective, and $H$-equivariant.
\item
the action $H \times Y_j \to Y_j$  factors through 
a group scheme $H_j$ of finite type. Moreover
the $H_j$ can be chosen so that $\{ H_j \}_{j}$ forms a projective
system with $H = \on{lim}_j H_j$ and the transition maps $H_{j'} \to H_j$
are smooth surjective with \emph{unipotent} kernel. 
\end{itemize}

Let $Z\subset Y$ a $H$-invariant subscheme.
We shall say that the inclusion $Z\to Y$
is placid 
if there is a presentation of $Y=\on{lim} Y_j$ as above and 
and index $j$ and a $H$-invariant subscheme $Z_j\subset Y_j$
such that 
$Z\cong Z_j\times_{Y_j}Y$.

Let $Y$ be an ind-scheme acted on by an affine group scheme $H$.
We say that $Y$ is $H$-ind placid if can be written as  filtered colimit
\begin{equation}\label{placid presentation}
Y\cong\on{colim}_i Y^i
\end{equation}
where each $Y^i$ is $H$-placid  
and the transition maps $Y^i\to Y^{i'}$ are
placid closed embeddings.
A presentation as in~\eqref{placid presentation}
is called a placid presentation of $Y$.

We call a stack $\calY$ a placid stack (resp.
ind-placid stack) if it is isomorphic to a stack of the form 
$\calY\is Y/H$ where $Y$ is  $H$-placid scheme (resp. $H$-ind placid scheme).

We recall the following basic result due to Drinfeld and 
 Bouthier:

\begin{prop}\label{ind-placidness}
\begin{enumerate}
\item \cite[Theorem 6.3]{D} and \cite[Proposition 2.0.1]{B}
For any smooth affine $\bC$-scheme $Y$ of finite type, the loop space 
$Y(\calK)$ is ind-placid.
\item
\cite[Proposition 3.8]{D}  and
\cite[Proposition 1.1.4]{B} Let $G$ be a complex connected reductive group and let 
$H\subset G$ be a connected reductive subgroup.
The natural map $G(\calK)\to (G/H)(\calK)$ is a 
$H(\calK)$-torsor in the  $h$-topology.
In particular, there is an isomorphism of stacks 
\[G(\calK)/H(\calK)\is (G/H)(\calK)\]
\end{enumerate}
\end{prop}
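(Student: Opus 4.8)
\textbf{Proof plan for Proposition \ref{ind-placidness}.}

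The statement collects two results of Drinfeld and Bouthier, so rather than reprove them from scratch the plan is to recall the essential mechanism of each argument and indicate how the two parts fit together for our purposes. For part (1), the strategy is the standard one: write $Y$ as a closed subscheme of some affine space $\bbA^N$ cut out by finitely many equations, reduce the loop space statement to the case $Y = \bbA^1$ (which is handled directly), and then leverage smoothness of $Y$ to control the failure of exactness. Concretely, one presents $Y(\calK)$ as a filtered colimit over lattices $t^{-n}\calO$ of the subspaces of loops with a fixed pole order, and each such piece, after truncation modulo $t^m\calO$, becomes a finite-type scheme; the smoothness of $Y$ guarantees that the truncation maps $Y(\calK)_{\le n}/t^m \to Y(\calK)_{\le n}/t^{m'}$ for $m > m'$ are smooth affine surjections with the fibers being (twisted) affine spaces — in particular the kernels of the corresponding group-scheme actions are unipotent. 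This is precisely the placidity of each $Y(\calK)_{\le n}$, and the transition maps $Y(\calK)_{\le n}\hookrightarrow Y(\calK)_{\le n+1}$ are placid closed embeddings (cut out by the vanishing of a coefficient), giving the ind-placid presentation. The one genuinely non-formal input is that smoothness passes to the jet/truncation level in the required quantitative form; I would simply cite \cite[Theorem 6.3]{D} and \cite[Proposition 2.0.1]{B} for this rather than redo it.

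For part (2), the plan is to use descent. One first observes that $G(\calK)\to (G/H)(\calK)$ is surjective in the $h$-topology: given an $S$-point of $(G/H)(\calK)$, i.e.\ a map $\widehat{\Gamma}^0_S \to G/H$ for the appropriate punctured formal neighborhood, one must lift it to $G$ after an $h$-cover of $S$; since $G\to G/H$ is a smooth (indeed Zariski-locally trivial in the étale topology, as $H$ is connected reductive) $H$-torsor, such a lift exists after an étale, hence $h$-, cover. The fibers of the map are visibly $H(\calK)$-torsors (two lifts differ by a map to $H$ over $\widehat{\Gamma}^0_S$), so $G(\calK)\to (G/H)(\calK)$ is an $H(\calK)$-torsor in the $h$-topology. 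Then $h$-descent for the stack quotient — valid in the theory of sheaves on infinite-dimensional stacks of \cite{BKV}, or here just at the level of the underlying presheaves of sets/groupoids — yields the isomorphism $G(\calK)/H(\calK)\is (G/H)(\calK)$ of stacks. Again the substantive content is the $h$-local triviality of $G\to G/H$, which is \cite[Proposition 3.8]{D} and \cite[Proposition 1.1.4]{B}.

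The main obstacle, such as it is, is purely expository: both parts are cited rather than proved, so the real task is to state the placidity conditions in exactly the form used later (the projective system $\{H_j\}$ with unipotent-kernel transition maps, and the placid closed embeddings in the ind-presentation) and to check that the cited results deliver that precise package. I would therefore spend the bulk of the write-up making the dictionary between ``truncated arc/loop spaces are smooth affine towers'' and ``$H$-placid'' explicit, and between ``$h$-torsor'' and ``quotient stack'', and then defer the hard analytic/geometric input entirely to \cite{D,B}. No new argument is needed beyond assembling these pieces.
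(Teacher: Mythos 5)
The paper does not prove this proposition: it is stated as a citation to Drinfeld and Bouthier, followed by a remark recording the reformulation of part (2) (any $G$-torsor on $\Spec(A(\!(t)\!))$ becomes trivial over $\Spec(B(\!(t)\!))$ for some $h$-cover $\Spec(B)\to\Spec(A)$). Your plan of citing and sketching the mechanism is therefore acceptable in spirit, and your account of part (1) is a fair description of the jet/truncation argument.

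However, your sketch of part (2) contains a genuine confusion that, if written into a proof, would make the argument wrong. You say that ``since $G\to G/H$ is a smooth (\dots) $H$-torsor, such a lift exists after an \'etale, hence $h$-, cover,'' and later that ``the substantive content is the $h$-local triviality of $G\to G/H$.'' This conflates two very different statements. The \'etale- (even Zariski-) local triviality of $G\to G/H$ over $G/H$ is classical and easy, but it tells you only that a section $\Spec(A(\!(t)\!))\to G/H$ lifts to $G$ after an \'etale cover \emph{of $\Spec(A(\!(t)\!))$}. This is useless for the proposition, because an \'etale cover of $\Spec(A(\!(t)\!))$ does not in general arise by base change from any cover of $\Spec(A)$, \'etale or otherwise. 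The entire force of Drinfeld's Proposition 3.8 and Bouthier's Proposition 1.1.4 is that one can trivialize after an $h$-cover \emph{of the base $\Spec(A)$} followed by applying $(-)(\!(t)\!)$; and $h$-covers (not \'etale covers) are genuinely needed here. If \'etale covers of $S$ sufficed, the map $G(\calK)\to(G/H)(\calK)$ would be an $H(\calK)$-torsor in the \'etale topology on $S$, which is strictly stronger than the proposition and false. So the ``substantive content'' is not the local triviality of $G\to G/H$, but the compatibility of torsor-triviality with passage from $A$ to $A(\!(t)\!)$, and this is not reducible to a soft descent argument; if you intend to defer it to \cite{D,B}, you should state that as the step being deferred, rather than the much weaker \'etale-local triviality of $G\to G/H$.
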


\begin{remark}
Part (2) above is equivalent to the following fact: 
Let $A$ be a $\bC$-algebra.
Any $G$-torsor on $\on{Spec}(A((t)))$ is trivial over 
$\on{Spec}(B((t)))$
for a $h$-cover $\on{Spec}(B)\to\on{Spec}(A)$.
This  fact is claimed in \cite[Proposition 3.8, Remark (b)]{D} 
with sketch of proof.
In \cite{B}, the author provided a different proof.

\end{remark}
In \cite[Proposition 26]{CY},
we deduce 
the following results from Proposition \ref{ind-placidness} (1).\footnote{In \emph{loc. cit.} we only deal with the case of classical symmetric varieties
using an explicit construction of (ind)-plaicd presentations. 
The proof of the general case, using Proposition \ref{ind-placidness}, will appear in the revised version of the paper.}
\begin{prop}\label{placid of X(K)}
 (1) $X(\calK)$  is $G(\calO)$-ind placid.
(2)  The orbits closure $\overline{X(\calK)^\lambda}$ is $G(\calO)$-placid.
(3) The inclusion $X(\calK)^\lambda\to \overline{X(\calK)^\lambda}$
is placid.
\end{prop}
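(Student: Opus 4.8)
\textbf{Proof proposal for Proposition \ref{placid of X(K)}.}

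The plan is to bootstrap everything from Proposition \ref{ind-placidness}(1), i.e. Drinfeld--Bouthier ind-placidity of loop spaces of smooth affine varieties, together with the geometry of orbits recorded in Proposition \ref{parametrization} and Lemma \ref{component X_c}. For part (1): since $K$ is connected reductive, $X = K\backslash G$ is a smooth affine $\bC$-scheme, so $X(\calK)$ is ind-placid by Proposition \ref{ind-placidness}(1); what needs to be checked is that the ind-placid presentation can be chosen $G(\calO)$-equivariantly and compatibly with the $G(\calO)$-orbit stratification. Here I would use Proposition \ref{ind-placidness}(2), which gives $X(\calK)\is G(\calK)/K(\calK)$ as stacks, so that the $G(\calO)$-action on $X(\calK)$ is the restriction of the left $G(\calK)$-action; then one takes the exhausting family of $G(\calO)$-stable finite unions of orbit closures $Y_n = \bigsqcup_{\mu\le\lambda,\ \lambda\in\Lambda_A^+,\ |\lambda|\le n}X(\calK)^\mu = \overline{X(\calK)^{\lambda_1}}\cup\cdots\cup\overline{X(\calK)^{\lambda_r}}$ (a finite union because $\Lambda_A^+$ is finitely generated and only finitely many dominant $\lambda$ have bounded height, and each $\overline{X(\calK)^\lambda} = \bigsqcup_{\mu\le\lambda}X(\calK)^\mu$ by Proposition \ref{parametrization}(1)), and shows each $Y_n$ is $G(\calO)$-placid with placid closed transition embeddings $Y_n\hookrightarrow Y_{n+1}$. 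Thus (1) reduces to (2).

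For part (2), the cleanest route is via the embedding $\iota:X\is G_{sym}^0\subset G$ of Section \ref{orbits}, which induces $X(\calK)\subset G(\calK)$ with $X(\calK)^\lambda = X(\calK)\cap G(\calO)t^\lambda G(\calO)$, hence $\overline{X(\calK)^\lambda}\subset G(\calO)t^{\le\lambda}G(\calO)\cap X(\calK)$ sits inside the preimage of the finite-type Schubert variety $\overline{\Gr^\lambda}\subset\Gr$ under $G(\calK)\to\Gr$. One then writes $\overline{X(\calK)^\lambda}$ as an inverse limit: choose a congruence subgroup $G^{(N)}(\calO)\subset G(\calO)$ acting trivially on $\overline{\Gr^\lambda}$, so that the preimage $\pi^{-1}(\overline{\Gr^\lambda})$ in $G(\calK)$ is a $G^{(N)}(\calO)$-torsor over a finite-type scheme, and intersect with $X(\calK)$; the congruence quotients $G^{(N)}/G^{(N+k)}$ provide the required projective system of unipotent-kernel smooth surjections $H_j$, and the schemes $Y_j$ are the images of $\overline{X(\calK)^\lambda}$ in the finite-level quotients $G(\calK)/G^{(N+j)}(\calO)|_{\overline{\Gr^\lambda}}$. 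Smoothness and surjectivity of the transition maps $Y_{j+1}\to Y_j$ follow because $X(\calK)$ is formally smooth (Proposition \ref{ind-placidness}(2) exhibits it as an $h$-torsor quotient of the formally smooth $G(\calK)$ by the formally smooth $K(\calK)$) and the fibers of the congruence quotients are unipotent, hence affine spaces.

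Part (3) is then formal: $X(\calK)^\lambda = \overline{X(\calK)^\lambda}\setminus\bigcup_{\mu<\lambda}\overline{X(\calK)^\mu}$ is open in $\overline{X(\calK)^\lambda}$, and it is cut out at a finite level — there is an index $j_0$ and a $G(\calO)$-stable open $U_{j_0}\subset Y_{j_0}$ (the complement of the finitely many relevant smaller Schubert strata) with $X(\calK)^\lambda\cong U_{j_0}\times_{Y_{j_0}}\overline{X(\calK)^\lambda}$, which is precisely the definition of a placid inclusion. The main obstacle I anticipate is part (2): controlling the $G(\calO)$-equivariant pro-structure on the non-finite-type scheme $\overline{X(\calK)^\lambda}$, specifically verifying that the transition maps in the inverse limit are smooth with unipotent kernels rather than merely affine — this is where one genuinely needs formal smoothness of $X(\calK)$ from Proposition \ref{ind-placidness}(2) and cannot get away with the naive congruence filtration alone. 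Everything else (the reduction $(1)\Rightarrow$ from $(2)$, and $(3)$) is bookkeeping once $(2)$ is in hand.
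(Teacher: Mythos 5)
A point of context first: the paper does not actually prove Proposition \ref{placid of X(K)}; it is cited to [CY, Proposition 26], and the footnote there explains that even [CY] as it currently stands only treats classical symmetric varieties by explicit construction, with the general case (via Proposition \ref{ind-placidness}) deferred to a revised version. So there is no in-paper proof to compare against. Your overall outline --- reduce (1) to (2) via an exhaustion by finite unions of orbit closures, and handle (3) formally once (2) is known --- is the natural one, and your decision to lean on the Drinfeld--Bouthier result matches the paper's stated intent.

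The gap is in part (2), which you rightly flag as the main obstacle, but the fix you propose does not work. You want to present $\overline{X(\calK)^\lambda}$ as $\lim_j Y_j$ with $Y_j$ its image in $(G(\calK)/G^{(N+j)}(\calO))|_{\overline{\Gr^\lambda}}$, and claim the transitions $Y_{j+1}\to Y_j$ are smooth because $X(\calK)$ is formally smooth and congruence quotients are unipotent. Neither ingredient delivers the conclusion. Formal smoothness of the ambient $X(\calK)$ says nothing about the transitions in a pro-presentation of a closed subscheme: scheme-theoretic images of a closed, typically singular, orbit closure at different finite levels need not be related by smooth maps (fibers can jump). The unipotence observation would apply if $\iota(\overline{X(\calK)^\lambda})$ were stable under right translation by $G^{(N)}(\calO)$, but it is not --- the relevant $G(\calO)$-stability is under the twisted action $h\colon x\mapsto\tilde\theta(h)xh$ --- so the fibers of $Y_{j+1}\to Y_j$ are not torsors under the congruence quotient. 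What one actually needs is that $\overline{X(\calK)^\lambda}$ is a \emph{finitely presented} closed subscheme of some placid ambient scheme; that does yield placidity by base change. But the ambient you choose, $G(\calO)t^{\le\lambda}G(\calO)$, is organized as a pro-scheme by quotienting on the \emph{right} by congruence subgroups, and the condition $\tilde\theta(\gamma)=\gamma$ cutting out $G_{sym}^0(\calK)$ is not right-$G^{(M)}(\calO)$-invariant for any $M$, hence cannot be pulled back from any finite level of that tower. Closing this gap is precisely the nontrivial content of the Drinfeld--Bouthier/[CY] input that your sketch invokes but does not unpack.
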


Proposition \ref{ind-placidness} (2) immediately implies 
\begin{corollary}\label{iso of stacks}
(1) The stack $K(\calK)\backslash\Gr$ is ind-placid.
(2) The stack $K(\calK)\backslash\overline\mO^\lambda_K$
is placid  and the inclusion $K(\calK)\backslash\mO^\lambda_K\to
K(\calK)\backslash\overline\mO^\lambda_K$ is placid.
(3) There is an isomorphism of ind-placid stacks $X(\calK)/G(\calO)\is K(\calK)\backslash\Gr$ and isomorphisms of placid stacks
$K(\calK)\backslash\overline\mO^\lambda_K\is \overline {X(\calK)^\lambda} /G(\calO)$.

\end{corollary}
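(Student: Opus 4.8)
\textbf{Proof plan for Corollary \ref{iso of stacks}.}

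The plan is to deduce all three parts directly from Proposition \ref{ind-placidness}(2) applied to the pair $H \subset G$ given by the symmetric subgroup $K \subset G$, together with the orbit-stratification facts recorded in Proposition \ref{parametrization}(1),(4) and the placidity of $X(\calK)$ in Proposition \ref{placid of X(K)}. For part (3), Proposition \ref{ind-placidness}(2) with $H = K$ gives an isomorphism of stacks $G(\calK)/K(\calK) \is (G/K)(\calK)$, but we want the quotient on the other side, $K(\calK)\backslash G(\calK)/G(\calO) = K(\calK)\backslash\Gr$, matched with $X(\calK)/G(\calO)$ where $X = K\backslash G$. First I would note that the involution $g \mapsto g^{-1}$, or equivalently the identification $\iota\colon X \is G_{sym}^0 \subset G$ recalled in Section \ref{orbits} together with $\tilde\theta = \theta^{-1}$, interchanges the left and right versions, so that Proposition \ref{ind-placidness}(2) equally yields a $K(\calK)$-torsor structure $G(\calK) \to (K\backslash G)(\calK) = X(\calK)$ in the $h$-topology and hence an isomorphism of stacks $K(\calK)\backslash G(\calK) \is X(\calK)$. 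Quotienting both sides on the right by $G(\calO)$ (which acts freely enough on $G(\calK)$ that the quotient stacks are formed in the usual way) gives $K(\calK)\backslash\Gr = K(\calK)\backslash G(\calK)/G(\calO) \is X(\calK)/G(\calO)$, the first isomorphism in part (3). For the statement about orbit closures, the torsor map $G(\calK)\to X(\calK)$ restricts, by Proposition \ref{parametrization}(4), to an identification of the preimage of $\overline{X(\calK)^\lambda}$ with $\overline{\mO_K^\lambda}$ (and likewise on the open orbit), so passing to $K(\calK)$-quotients gives $K(\calK)\backslash\overline\mO_K^\lambda \is \overline{X(\calK)^\lambda}/G(\calO)$ and $K(\calK)\backslash\mO_K^\lambda \is X(\calK)^\lambda/G(\calO)$ compatibly with the inclusions.

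For part (1), ind-placidity of $K(\calK)\backslash\Gr$ now follows from part (3) together with Proposition \ref{placid of X(K)}(1): transport the $G(\calO)$-ind-placid structure on $X(\calK)$ through the isomorphism $X(\calK)/G(\calO) \is K(\calK)\backslash\Gr$, using that $G(\calO)$ is pro-unipotent-by-finite-type so that the quotient of a $G(\calO)$-ind-placid scheme is an ind-placid stack in the sense reviewed above. Concretely, a placid presentation $X(\calK) = \on{colim}_i Y^i$ by $G(\calO)$-placid schemes with placid closed transition maps yields the presentation $K(\calK)\backslash\Gr = \on{colim}_i (Y^i/G(\calO))$, and each $Y^i/G(\calO)$ is a placid stack; one checks the transition maps remain placid closed embeddings of stacks. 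For part (2), placidity of $K(\calK)\backslash\overline\mO_K^\lambda$ follows the same way from Proposition \ref{placid of X(K)}(2) via the isomorphism $K(\calK)\backslash\overline\mO_K^\lambda \is \overline{X(\calK)^\lambda}/G(\calO)$, and placidity of the inclusion $K(\calK)\backslash\mO_K^\lambda \to K(\calK)\backslash\overline\mO_K^\lambda$ follows from Proposition \ref{placid of X(K)}(3) via the compatibility of the isomorphisms with inclusions established in part (3).

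The main obstacle, and the only point requiring care rather than bookkeeping, is justifying that forming the $K(\calK)$-quotient of the $h$-torsor $G(\calK) \to X(\calK)$ and then the further $G(\calO)$-quotient is legitimate at the level of the sheaf theory and placidity formalism of \cite{BKV} — i.e., that $h$-descent for $G$-torsors over $A((t))$ really does give the stack-level isomorphism $K(\calK)\backslash G(\calK) \is X(\calK)$ rather than merely a map, and that the resulting quotient stack inherits an ind-placid presentation from the one on $X(\calK)$. This is exactly what Proposition \ref{ind-placidness}(2) (and the Remark following it) is designed to supply: the $h$-local triviality statement is equivalent to the assertion that every $G$-torsor on $\on{Spec}(A((t)))$ becomes trivial after an $h$-cover, which is the content of \cite[Proposition 3.8]{D} and \cite[Proposition 1.1.4]{B}, and one then invokes the compatibility of the six-functor formalism of \cite{BKV} with $h$-descent and with the passage to quotient stacks by pro-unipotent-by-finite-type groups. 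Once this is in place the rest is a routine transport of structure.
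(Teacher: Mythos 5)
Your proof is correct and matches the paper's (extremely terse) argument: the paper states only that Proposition~\ref{ind-placidness}(2) "immediately implies" the corollary, and your write-up is the natural unpacking of exactly that, combining the $h$-torsor isomorphism $K(\calK)\backslash G(\calK)\is X(\calK)$ (with the left/right convention handled as you note), the orbit matching of Proposition~\ref{parametrization}(1),(2),(4), and transport of the placidity statements of Proposition~\ref{placid of X(K)} through the resulting isomorphism of quotient stacks.
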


\subsection{Stacks admitting gluing of sheaves}

We recall the notion of stacks admitting gluing of sheaves in \cite[Section 5.5]{BKV}.
Let $\calY$ be stack and let 
$\eta:\calS\to \calY$ be a locally closed embedding of finite presentation (fp-locally closed embedding for short).
According to \cite[Section 5.4.4]{BKV}, there are well-defined continuous functor 
$\eta_*:D(\calS)\to D(\calY)$
 
\begin{definition}
We say a stack $\calY$ admits gluing of sheaves 
if for every  fp-locally closed embedding $\eta:\calS\to\calY$ 
the functors $\eta_*:D(\calS)\to D(\calY)$ admits a left adjoint $\eta^*:D(\calY)\to D(\calS)$.
\end{definition} 

We have following basic facts:
\begin{lemma}\cite[Lemma 5.5.3]{BKV}\label{fiber sequence}
Let $\calY$ be a stack admitting gluing of sheaves.
(1) Let $\eta:\calS\to\calY$ be a fp-locally closed embedding.
There exists a left adjoint $\eta_!$ of $\eta^!:D(\calS)\to D(\calX)$
(2) Let $j:\calU\to\calY$ be a fp-open embedding and $i:\calZ\to\calY$
be the complementary fp-closed embedding.
For every $\mF\in D(X)$ there is a fiber sequence
\[j_!j^!\mF\to\mF\to i_*i^*\mF.\]
\end{lemma}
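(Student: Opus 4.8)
\textbf{Proof proposal for Lemma \ref{fiber sequence}.}

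The plan is to deduce both parts from the hypothesis that $\calY$ admits gluing of sheaves, i.e. from the existence of left adjoints $\eta^*\dashv\eta_*$ for all fp-locally closed embeddings $\eta$, together with the standard sheaf-theoretic yoga (base change, the $(!,*)$-interplay for locally closed embeddings, recollement). I would proceed exactly as in \cite[Section 5.5]{BKV}, reproducing the argument since it is short. For part (1): since $\eta:\calS\to\calY$ is a fp-locally closed embedding, it factors as $\calS\xrightarrow{j}\overline{\calS}\xrightarrow{i}\calY$ with $j$ a fp-open embedding into the fp-closed substack $\overline\calS$ (its closure) and $i$ a fp-closed embedding. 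For an open embedding $j$ one has the standard adjunction $j_!\dashv j^!=j^*$, so $j^!$ admits the left adjoint $j_!$. For the closed embedding $i$, the hypothesis gives a left adjoint $i^*$ of $i_*$, and one knows $i_*=i_!$ and $i^!$ is the right adjoint of $i_*=i_!$; thus $i^!$ admits a left adjoint, namely $i_*\,$ — wait, more precisely $i_!=i_*$ is left adjoint to $i^!$. Hence $\eta^!=j^!\circ i^!$ has left adjoint $i_!\circ j_!=:\eta_!$. I would spell this out: $\eta_!:=i_*\circ j_!$, and check $\Hom_{D(\calY)}(\eta_!\mF,\mG)\cong\Hom_{D(\overline\calS)}(j_!\mF,i^!\mG)\cong\Hom_{D(\calS)}(\mF,j^!i^!\mG)=\Hom_{D(\calS)}(\mF,\eta^!\mG)$, using that $i_*$ is left adjoint to $i^!$ and $j_!$ is left adjoint to $j^!$.

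For part (2): let $j:\calU\to\calY$ be a fp-open embedding with complementary fp-closed embedding $i:\calZ\to\calY$. The content is the recollement (gluing) fiber sequence $j_!j^!\to\id\to i_*i^*$ of endofunctors of $D(\calY)$. I would obtain it by the usual argument: first establish the two elementary identities $j^!i_*\simeq 0$ (equivalently $i^*j_!\simeq 0$), which holds because the embeddings have disjoint images, and $j^!j_!\simeq\id$, $i^*i_*\simeq\id$ (both being fully faithful with the indicated left adjoints). Then for $\mF\in D(\calY)$ form the cofiber $\mathcal C$ of the counit $j_!j^!\mF\to\mF$; applying $j^!$ kills $\mathcal C$ on $\calU$ since $j^!j_!j^!\mF\to j^!\mF$ is an equivalence, so $j^!\mathcal C\simeq 0$, which (again by the gluing hypothesis, i.e. that an object is zero iff its $*$-restrictions to the two strata vanish, or equivalently that $i_*i^*\mathcal C\to\mathcal C$ is an equivalence when $j^!\mathcal C=0$) identifies $\mathcal C\simeq i_*i^*\mathcal C\simeq i_*i^*\mathcal F$; the last equivalence uses $i^*j_!\simeq 0$ applied to the triangle. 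This yields the fiber sequence $j_!j^!\mF\to\mF\to i_*i^*\mF$.

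The main obstacle — really the only nontrivial point — is justifying the step "$j^!\mathcal C\simeq 0$ implies $\mathcal C\simeq i_*i^*\mathcal C$", i.e. that $D(\calY)$ genuinely satisfies recollement with respect to the stratification $\calU\sqcup\calZ$. For a stack (as opposed to a finite-type scheme) this is exactly where one needs the general theory of \cite{BKV}: one must know that $D(\calY)$ is glued from $D(\calU)$ and $D(\calZ)$, which for non-quasicompact or infinite-dimensional $\calY$ requires the fp-embedding hypotheses and the continuity/adjunction package set up in \emph{loc. cit.} I would therefore cite \cite[Section 5.4, 5.5]{BKV} for the recollement formalism rather than reprove it, and present the two parts as formal consequences. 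Everything else is the standard six-functor bookkeeping and I would not grind through it in detail.
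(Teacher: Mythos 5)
First, a point of context: the paper gives no proof of this lemma — it is cited verbatim from \cite[Lemma 5.5.3]{BKV} — so there is no internal argument to compare yours against; what follows is an assessment of your outline on its own terms.

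Your part (1) has a concrete circularity. You write that ``for an open embedding $j$ one has the standard adjunction $j_!\dashv j^!=j^*$,'' but in the BKV framework the existence of $j_!$ for an fp-open embedding is precisely the nontrivial content, not a given. Note that the paper's definition of ``admits gluing'' only asserts existence of $\eta^*\dashv\eta_*$; for \emph{open} $j$ the functor $j^*=j^!$ exists unconditionally and the hypothesis adds nothing, whereas for \emph{closed} $i$ the adjunction $i_*\dashv i^!$ is automatic, so $i_!=i_*$ exists for free. Thus your argument for part (1), as written, never actually uses the hypothesis — once you posit $j_!$, the factorization $\eta=i\circ j$ hands you $\eta_!=i_*\circ j_!$ with no input at all, which cannot be right if the hypothesis is genuinely needed. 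The correct route is to \emph{produce} $j_!$ from the hypothesis: with $i^*$ in hand for the complementary fp-closed $i$, set $j_!\mF:=\mathrm{fib}\bigl(j_*\mF\to i_*i^*j_*\mF\bigr)$ and verify $j_!\dashv j^*$ directly, using $j^!i_*\simeq 0$, $j^!j_*\simeq\mathrm{id}$, $i^*i_*\simeq\mathrm{id}$, and the adjunction $i^*\dashv i_*$. This is the step you elided. A secondary caveat: in part (2) you invoke ``the gluing hypothesis, i.e.\ that an object is zero iff its $*$-restrictions to the two strata vanish'' — but joint conservativity of $(j^!,i^*)$ is not what the stated definition says; it needs a separate (short) argument or should be attributed to the ambient \cite{BKV} machinery rather than to the named hypothesis. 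Your instinct that everything else is six-functor bookkeeping, and to lean on \cite[Sections 5.4--5.5]{BKV} for the continuity and adjoint-functor package, is sound.
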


\begin{lemma}\cite[Proposition 5.5.7]{BKV}\label{quotient is placid}
Let $H$ be an ind-placid group, that is, a group object in 
ind-placid schemes, acting on  an ind-placid scheme $Y$.
The quotient stack $Y/H$ admits gluing of sheaves.
\end{lemma}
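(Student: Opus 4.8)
The plan is to follow \cite[Section 5.5]{BKV} and construct the required left adjoints by reduction through the hierarchy: schemes of finite type, then placid schemes, then ind-placid schemes, then quotient stacks $Y/H$. The first reduction is formal. Every fp-locally closed embedding $\eta\colon\calS\to\calY$ factors as an fp-closed embedding $i$ followed by an fp-open embedding $j$, so $\eta_*\simeq j_*i_*$; a left adjoint $\eta^*$ then exists provided $j_*$ and $i_*$ each have left adjoints, and for the open part this is tautological, $j^*$ being restriction. So the genuine content is that an fp-closed embedding $i$ admits a left adjoint $i^*$ to $i_*$, compatibly with the various (co)limit presentations of $D(-)$ used along the way; $\eta^*$ is then the composite $i^*j^*$.

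For $Y$ of finite type this is the classical recollement, $i^*\dashv i_*\dashv i^!$. For $Y=\lim_\alpha Y_\alpha$ placid, with affine smooth surjective transition maps, an fp-locally closed $\calS\subset Y$ descends to some $\calS_\alpha\subset Y_\alpha$, the category $D(Y)$ is the appropriate (co)limit of the $D(Y_\alpha)$ along the restriction functors (renormalized by the relevant shift and Tate twist), and the finite-type adjoints $i^*,j^*$ commute with those functors by smooth base change, hence assemble to $\eta^*$. For $Y=\on{colim}_i Y^i$ ind-placid along placid closed embeddings, $D(Y)\simeq\on{colim}_i D(Y^i)$ along $*$-pushforwards (equivalently $\lim_i D(Y^i)$ along $!$-restrictions), an fp-locally closed $\calS\subset Y$ meets each $Y^i$ in an fp-locally closed subscheme, and the placid-case adjoints are compatible with the transition functors by base change along closed embeddings and smooth morphisms; this gives gluing of sheaves for ind-placid schemes. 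Each of these steps is routine once the bookkeeping of shifts, twists, and base-change isomorphisms is in place.

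The main step, and the one I expect to be the real obstacle, is the passage to $\calY=Y/H$. I would present $Y/H$ by its bar resolution, the simplicial ind-placid scheme $[n]\mapsto Y\times H^{n}$ with the usual face maps (built from the $H$-action, the multiplication $H\times H\to H$, and the projections) and degeneracies (unit sections of $H$); placid descent for sheaves then gives $D(Y/H)\simeq\on{Tot}\bigl(D(Y\times H^\bullet)\bigr)$. An fp-locally closed substack $\calS\subset Y/H$ pulls back to a cosimplicial family of fp-locally closed subschemes $\calS_n:=\calS\times_{Y/H}(Y\times H^n)$, to which the ind-placid case applies levelwise, producing left adjoints $\eta_n^*$. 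What then must be verified --- the heart of the matter --- is that the $\eta_n^*$ are compatible with the cosimplicial structure, so that they totalize to a left adjoint $\eta^*$ of $\eta_*$: concretely, that $*$-pullback along every face and degeneracy map of the bar resolution commutes with $j^*$ (automatic) and with $i^*$. The latter holds because, after the standard coordinate changes, each face map becomes a projection off an ind-placid factor, hence behaves like a smooth morphism for base change, and each degeneracy is a closed embedding, and along such morphisms $*$-pullback commutes with $*$-restriction to fp-closed subschemes. Organizing these base-change natural transformations into coherent data on the totalization, and checking that the $\eta_*$ so obtained coincides with the one furnished by the six-functor formalism of \cite{BKV}, is the technical core; the remainder is a formal reduction.
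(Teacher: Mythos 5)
The paper states this lemma as a citation to \cite[Proposition 5.5.7]{BKV} with no proof of its own, so there is no argument in the paper to match your proposal against; what follows is an assessment of whether your reconstruction is sound, using the paper's proof of the cognate Lemma~\ref{key lemma} as the closest in-paper template.

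Your overall scaffold --- reduce to fp-closed embeddings, climb through finite type, placid, and ind-placid schemes, and then use bar descent plus a Beck--Chevalley check for $Y/H$ --- is the right shape and matches the way the paper itself argues when it needs a similar statement: the proof of Lemma~\ref{key lemma} explicitly invokes \cite[Proposition 5.1.8]{BKV} (left adjoints at the totalization from levelwise left adjoints plus Beck--Chevalley) and \cite[Proposition 5.3.9, Lemma 5.4.5]{BKV} (the relevant base-change isomorphisms). So you have identified the key technical point correctly, namely that the levelwise adjoints on the bar complex must be shown compatible with the face maps.

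The one place where the proposal is genuinely imprecise, and where the content of ind-placidity (as opposed to placidity) enters, is the sentence ``each face map becomes a projection off an ind-placid factor, hence behaves like a smooth morphism for base change.'' A projection $Y\times H^{n}\to Y\times H^{n-1}$ with $H$ only \emph{ind}-placid is not pro-smooth: it is a filtered colimit, along the closed embeddings coming from the ind-structure of $H$, of pro-smooth morphisms $Y\times (H^i)^n\to Y\times (H^i)^{n-1}$. Smooth base change does not apply to the colimit directly; one must first decompose $D(Y\times H^n)$ along the ind-filtration of $H$, prove the Beck--Chevalley isomorphism on each ind-piece (where the relevant projections really are strongly pro-smooth in the sense of Definition~\ref{strongly pro-smooth}), and then pass to the colimit using that the transition functors ($*$-pushforwards along fp-closed embeddings) themselves satisfy base change against closed embeddings. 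This is exactly the two-step devissage the paper carries out in the proof of Lemma~\ref{key lemma}, where the stack is first written as $\on{colim}_\beta\on{colim}_\alpha\calX^\beta_\alpha$ and the base-change claim is checked first on each placid piece $\calX^\beta_\alpha$, then propagated through the colimits. Your write-up should make this decomposition explicit rather than folding it into ``behaves like a smooth morphism,'' since that is precisely what the ind-placid hypothesis buys and where the argument would otherwise have a gap.

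Two small remarks. First, for forming the descent limit one only needs the face maps (the semisimplicial diagram), so the aside about degeneracies being closed embeddings, while true, is not load-bearing. Second, the reduction from fp-locally closed to fp-closed embeddings is correct, but note that the factorization can be taken either as closed-into-open or as open-into-closure; BKV's $\eta_*$ is defined so that both give the same functor, and it is worth saying a word about why the left adjoint you construct is independent of the chosen factorization.
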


\begin{example}
Let $H$ be a complex affine group scheme of finite type acting on 
 a complex smooth affine scheme $Y$ of finite type.
 Then the  loop group $H(\calK)$ 
is an ind-placid group and the loop space $Y(\calK)$ is an ind-placid scheme, and 
 Lemma \ref{quotient is placid} implies the 
 quotients 
 $H(\calK)\backslash Y(\calK)$ 
   admits glueing of sheaves.

\end{example}

The following technical lemma 
will be used in Section \ref{fusion product for X}.

\begin{definition}\label{strongly pro-smooth}
Let $f:\calX\to\calY$ be a morphism between stacks.
We say
$f$ is \emph{strongly pro-smooth} is 
  there is a surjective morphism 
 $Y\to \calY$ such that the 
pullback $f_Y:X:=\calX\times_\calY Y\to Y$ satisfying the following:
 there is a presentation $X\is\on{lim}_{i\in I}X_i$
 as filtered limit of ind-stacks ind-locally of finite type 
 over $Y$ such that all the projection maps 
 $X_i\to Y$ are fp-smooth morphisms
 and the transition maps $X_i\to X_{i'}$
 are fp-affine smooth morphisms.
\end{definition}

 \begin{lemma}\label{key lemma}
 Let $\calX$ is an ind-stack of ind-locally of finite type.  
 \begin{enumerate}
\item $\calX$ admits gluing of sheaves.
 \item Let $f:\calX\to\calY$ be a strongly pro-smooth morphism.  
 Then  $\calY$ admits gluing of sheaves and 
 for any fp-locally closed embedding $\eta:\calS\to \calY$ the 
 base change morphism 
 $\eta_{\calS}^* f^!\to f_\calS^!\eta^*$ (resp. $(\eta_{\calS})_!f_\calS^! \to f^!\eta_!$),
is isomorphism.
Here  $f_\calS$ and 
 $\eta_\calS$ are the natural projection maps in the following fiber product
 \[\xymatrix{\calX_\calS\ar[r]^{\eta_\calS}\ar[d]^{f_\calS}&\calX\ar[d]^f\\
 \calS\ar[r]^{\eta}&\calY}\] 
\end{enumerate}

  \end{lemma}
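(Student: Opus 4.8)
The two parts are established together by reducing everything to the known case of ind-stacks of ind-locally finite type, using the presentation $X \cong \lim_{i\in I} X_i$ provided by the strong pro-smoothness hypothesis. First I would recall, from \cite[Section 5.5]{BKV}, that every ind-stack ind-locally of finite type admits gluing of sheaves; this gives part (1) immediately, and it also applies to each $X_i$ and to the fiber products $\calX_\calS$, $X\times_Y(\text{base-changes})$, etc. The main work is part (2). The key geometric input is that a strongly pro-smooth morphism $f:\calX\to\calY$ is, after a surjective base change $Y\to\calY$, a cofiltered limit of fp-smooth morphisms $X_i\to Y$ with fp-affine-smooth transition maps; in particular $f$ itself is "pro-smooth" in a sense that makes $f^!$ behave like a (shifted) pullback, and smooth base change is available at each finite stage $X_i$.

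\textbf{Key steps.} The plan is: (a) Reduce to the base-changed situation: since $Y\to\calY$ is surjective, a functor identity on $D(\calY)$ can be checked after $!$-pullback to $D(Y)$ (using descent for the category of sheaves, as in \cite{BKV}), and fp-locally closed embeddings, the functors $\eta^*,\eta_!,f^!$ all commute with this base change — this is where I would invoke the compatibilities of $\eta_*, \eta^*, \eta_!$ with smooth (even arbitrary fp-) base change already set up in \cite[Section 5.4]{BKV}. (b) Show $Y$ (hence $\calY$) admits gluing of sheaves: for a fp-locally closed $\eta:\calS\to\calY$ we must produce $\eta^*$ left adjoint to $\eta_*$. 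Pull back along $f$: $f^!$ is conservative (it is a limit of $!$-pullbacks along fp-smooth surjections $X_i\to Y$, which are conservative) and it admits a left adjoint $f_!$ because $X$, being a limit of fp-smooth $Y$-stacks, is "$f$-proper-ish" — more precisely $f^! \cong f^*[\text{shift}]$ up to twist on each stratum and one uses that for fp-smooth morphisms $f^! $ has a left adjoint $f_\natural$. Then $\eta^*$ on $D(\calY)$ is obtained by transporting the already-existing $\eta_{\calS}^*$ on $D(\calX)$ (which exists by part (1)) through the adjunctions, exactly as in the proof of \cite[Proposition 5.5.7]{BKV}. (c) Establish the base-change isomorphisms $\eta_\calS^* f^! \xrightarrow{\sim} f_\calS^! \eta^*$ and (by passing to left adjoints, using Lemma \ref{fiber sequence}(1)) $(\eta_\calS)_! f_\calS^! \xrightarrow{\sim} f^!\eta_!$: check this at each finite stage, where $X_i\to Y$ is fp-smooth and $\calS$ is fp-locally closed, so it is the standard smooth base change for the six operations on finite-type stacks; then pass to the limit over $I$, using that $!$-pullback and the relevant colimits commute with cofiltered limits along fp-affine-smooth transition maps (continuity of $D(-)$, \cite[Section 5.4]{BKV}).

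\textbf{Main obstacle.} The delicate point is the passage to the limit: one must know that $f^!$ for the pro-smooth morphism $f=\lim f_i$ is computed as the appropriate (co)limit of the $f_i^!$ and that this is compatible with the left adjoints $(\eta_{\calS})_!$ and with the stratification fiber sequences of Lemma \ref{fiber sequence}(2). This requires the continuity/dualizability properties of the categories $D(X_i)$ and the fact that the transition maps $X_i\to X_{i'}$ are fp-affine-smooth so that their $!$- and $*$-pullbacks agree up to a shift and both are fully faithful after suitable renormalization — all of which is available in the \cite{BKV} framework but must be assembled carefully. Once this compatibility is in hand, parts (1) and (2) follow formally, and I would conclude by noting that the isomorphism $\eta_\calS^* f^!\cong f_\calS^!\eta^*$ for all fp-locally closed $\eta$ is precisely what is needed in Section \ref{fusion product for X} to commute the fusion construction past the relevant stratifications.
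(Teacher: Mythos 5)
Your treatment of part (1) matches the paper: reduce to the presentation $\calX\cong\on{colim}_\beta\on{colim}_\alpha\calX^\beta_\alpha$ by finite-type stacks and cite the relevant BKV lemmas. For part (2), your overall strategy (descent along $f$, reduce Beck--Chevalley to a finite-type smooth base change, pass to the limit using strong pro-smoothness) is in the right spirit, but the specific device you use to produce $\eta^*$ contains a gap. You propose conjugating $\eta_\calS^*$ through an asserted left adjoint $f_!$ (or a renormalized $f_\natural$) of $f^!$. The paper never invokes such a left adjoint, and it is not clear one exists here: $f_\natural$ for an fp-smooth morphism comes with a cohomological shift by $2\dim$, and for a genuinely pro-smooth $f$ (a cofiltered limit of smooth morphisms of unbounded relative dimension) there is no single shift that would make the limit of the $(f_i)_\natural$ converge to a left adjoint of $f^!$.

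What the paper actually does is form the \v{C}ech nerve $\{\calX^{[n]}\}_{[n]\in\Delta}$ of $f:\calX\to\calY$ and verify three things: (a) each $\calX^{[n]}$ admits gluing of sheaves --- this is precisely where strong pro-smoothness is used, to show each $\calX^{[n],\beta}_\alpha=\calX^{[n]}\times_\calX\calX^\beta_\alpha$ is placid because its projection to the placid $\calX^\beta_\alpha$ is smooth in the BKV sense; (b) $\eta_*$ satisfies base change against $!$-pullback along the \v{C}ech face maps; and (c) the Beck--Chevalley condition $(\eta_\calS^{[m]})^* f^!\simeq f_\calS^!(\eta_\calS^{[n]})^*$ for those face maps, verified by reducing to compact objects supported on a single $\calX^\beta_\alpha$ and quoting \cite[Proposition 5.3.9]{BKV}. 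Then \cite[Proposition 5.1.8]{BKV} produces the left adjoint $\eta^*$ directly on $D(\calY)\simeq\on{colim}_{[n]}D(\calX^{[n]})$ from these data, with no left adjoint of $f^!$ ever needed. Your ``check at each finite stage, then pass to the limit over $I$'' is the right intuition for step (c), but note you are conflating the pro-direction $I$ (the cofiltered presentation of $X$) with the \v{C}ech simplicial direction $[n]$; the argument needs to be organized around the latter, with the pro-direction entering only in the reduction of (c) to compact objects.
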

  \begin{proof}
  Choose a presentation 
\beq\label{presentation of X}
\calX\is\on{colim}_{\beta}\calX^\beta \is\on{colim}_{\beta\in I}\on{colim}_{\alpha}\calX^\beta_\alpha
\eeq
  as a filtered colimit, 
  where $\calX^\beta_{\alpha}$ are stacks 
of finite type and the transition maps $\calX^{\beta}\to\calX^{\beta'}$ (resp. $\calX^\beta_\alpha\to\calX^\beta_{\alpha'}$)
  are closed embeddings of finite presentation (resp. open embeddings of finite presentation).
 It follows from \cite[Example 1.3.4]{BKV} that 
 $\calX^\beta_\alpha$ is \emph{placid} in the sense of \cite[Section 1.3]{BKV}
 and \cite[Lemma 5.5.5, and Lemma 5.5.6]{BKV} implies
$\calX$ admits glueing of sheaves.

Let $\calX^{[n]}$ (resp. $\calX_\calS^{[n]}$)
be the terms of the $\check{\on{C}}$ech complex associated to 
$f$ (resp. $f_\calS$) 
and let $\eta^{[n]}_\calS:\calX_{\calS}^{[n]}\to\calX^{[n]}$
pullback of  $\eta_S$ along the projection map
$\calX_{}^{[n]}\to\calX$.
 We claim that 
\begin{enumerate}
\item each $\calX^{[n]}$
admits glueing of sheaves 
\item  for every natural projection
$f:\calX^{[m]}\to\calX^{[n]}$, with pullback $f_\calS:\calX_\calS^{[m]}\to\calX_\calS^{[n]}$,
there is a  natural isomorphism
\[f^!(\eta^{[n]}_\calS)_*\is (\eta^{[m]}_\calS)_*f_\calS^!\]
\item 
(Beck-Chevalley condition)
For every projection $f$ as in (2), 
there is a natural isomorphism
\beq\label{base change}
(\eta^{[m]}_\calS)^*f^!\is f_\calS^!(\eta^{[n]}_\calS)^*\ \ \ \ 
\eeq
\end{enumerate}

Now it follows from \cite[Proposition 5.1.8]{BKV} that 
the functor $\eta_*:D(\calS)\is\on{colim}_{[n]} D(\calX_\calS^{[n]})\to D(\calY)\is\on{colim}_{[n]}D(\calX^{[n]})$ admits a left adjoint $\eta^*$ satisfying the 
base change morphism $\eta_{\calS}^* f^!\is f_\calS^!\eta^*$.
The lemma follows.

Proof of the claim.
Claim (2) follows from \cite[Lemma 5.4.5]{BKV}.
Note that the presentation~\eqref{presentation of X} of $\calX$ induces a presentation 
 \[\calX^{[n]}\is\on{colim}_{\beta}\calX^{[n],\beta} \is\on{colim}_{\beta}\on{colim}_{\alpha}\calX^{[n],\beta}_\alpha\]
where $\calX^{[n],\beta}_\alpha=\calX^{[n]}\times_{\calX}\calX^{\beta}_{\alpha}$.
The assumption that $f$ is strongly pro-smooth implies that the projection $\calX^{[n],\beta}_\alpha\to \calX^{\beta}_\alpha$
is \emph{smooth}  in the sense of \cite[Section 1.3.1 (c)]{BKV} and, since $\calX^{\beta}_\alpha$ is \emph{placid},
it follows from 
\cite[Section 1.3.3 (d), Lemma 5.5.5, and Lemma 5.5.6]{BKV} that $\calX^{[n],\beta}_\alpha$
is \emph{plaicd} and $\calX^{[n]}$ admits glueing of sheaves.
Claim (1) follows.

To show claim (3), we first assume the support of $\mF$ is contained in some $\calX^\beta_{\alpha}$.
Then the desired claim follows from \cite[Proposition 5.3.9]{BKV}.
Since every object $\mF\in D(\calX)$ can be written as a colimit
$\mF\is\on{colim}_{\alpha}\on{colim}_{\beta}\mF^\beta_{\alpha}$ 
with respect to $!$-pushforward along open embeddings $\calX^\beta_\alpha\to\calX^\beta_{\alpha'}$
and closed embeddings $\calX^\beta\to\calX^{\beta'}$ (see, e.g., \cite[Corollary 5.1.5]{BKV}), the general case follows from the fact that 
the functors in~\eqref{base change} commutes with colimits.

The proof of the base change isomorphism 
$(\eta_{\calS})_!f_\calS^! \to f^!\eta_!$ is similar. The  only non-trivial part is the 
the Beck-Chevalley condition 
in (3) and it follows from the same argument above using \cite[Proposition 5.3.9]{BKV}.

  \end{proof}

   \begin{proposition}\label{key base change}
(1) The quotient $LK^{(n)}\backslash\Gr^{(n)}$ is an ind-stack ind-locally of finite type 
(2) the natural projection map 
  $f:LK^{(n)}\backslash\Gr^{(n)}\to K(\calK)^{(n)}\backslash\Gr^{(n)}$
  is strongly pro-smooth. 
  
 In particular, both stacks $LK^{(n)}\backslash\Gr^{(n)}$ and 
  $K(\calK)^{(n)}\backslash\Gr^{(n)}$ admit gluing of sheaves.
   
    \end{proposition}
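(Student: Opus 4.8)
\textbf{Proof plan for Proposition \ref{key base change}.}

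The plan is to establish (1) and (2) separately, with (2) being the technical heart, and then to deduce the final ``gluing of sheaves'' assertion by invoking Lemma \ref{key lemma}. For part (1), I would use the presentation of the Beilinson--Drinfeld Grassmannian $\Gr^{(n)}$ as a filtered colimit of finite-type schemes along closed embeddings (the standard ind-scheme structure recalled in Section \ref{BD Gr}), and note that the group $LG^{(n)}$ acts on each finite-type piece through a finite-type quotient: more precisely, $\Gr^{(n)}$ is covered by closed BD-subschemes $\Gr^{(n)}_{\le\lambda_\fp}$ of finite type, each stable under a congruence-type subgroup, and the $LK^{(n)}$-action on a bounded piece factors through a finite-type group scheme. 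Hence each $LK^{(n)}\backslash\Gr^{(n)}_{\le\lambda_\fp}$ is an algebraic stack of finite type and $LK^{(n)}\backslash\Gr^{(n)}$ is their filtered colimit along closed embeddings of finite presentation, i.e.\ an ind-stack ind-locally of finite type. The same bounded pieces show $K(\calK)^{(n)}\backslash\Gr^{(n)}$ is an ind-stack, now with infinite-dimensional (but placid) automorphism groups.

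For part (2) I would work locally over the target: pick a smooth atlas, or directly work with the bounded pieces $\Gr^{(n)}_{\le\lambda_\fp}$, and describe the fiber of $f:LK^{(n)}\backslash\Gr^{(n)}\to K(\calK)^{(n)}\backslash\Gr^{(n)}$ over such a piece. Over a finite-type $K(\calK)^{(n)}$-scheme $Y\to K(\calK)^{(n)}\backslash\Gr^{(n)}$, the pullback $X=LK^{(n)}\backslash\Gr^{(n)}\times_{K(\calK)^{(n)}\backslash\Gr^{(n)}}Y$ is the quotient of a $K(\calK)^{(n)}$-torsor over $Y$ by $LK^{(n)}$; equivalently, $X\to Y$ is a torsor under the (pro-unipotent-by-finite-type) quotient ``group'' $LK^{(n)}\backslash K(\calK)^{(n)}$, which — by the congruence subgroup presentation of $K(\calK)^{(n)}$ and the fact that $LK^{(n)}$ meets each congruence subgroup with pro-unipotent, fp-smooth ``complement'' — is a filtered limit $\lim_i H_i$ of fp-smooth affine $Y$-schemes with fp-affine smooth transition maps. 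This is exactly Definition \ref{strongly pro-smooth}, so $f$ is strongly pro-smooth. The key input here is the multi-point Gram--Schmidt / factorization material of Section \ref{Gram-Schmidt} together with the standard structure theory of $K(\calK)$ as a filtered limit of its congruence quotients; one must check that at the multi-point level the relevant ``homogeneous space'' $LK^{(n)}\backslash K(\calK)^{(n)}$ is still pro-(fp-smooth) over the base $(\bbP^1)^n$, which is where I expect the main bookkeeping obstacle to lie (keeping track of the $(\bbP^1)^n$-family structure and uniformity of the smoothness of the transition maps across strata of $C^n$).

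Finally, the ``in particular'' follows formally: part (1) and Lemma \ref{key lemma}(1) give that $LK^{(n)}\backslash\Gr^{(n)}$ admits gluing of sheaves, and then part (2) together with Lemma \ref{key lemma}(2) gives that the target $K(\calK)^{(n)}\backslash\Gr^{(n)}$ also admits gluing of sheaves (and, as a bonus that will be used later, the base-change isomorphisms $\eta_\calS^*f^!\is f_\calS^!\eta^*$ and $(\eta_\calS)_!f_\calS^!\is f^!\eta_!$ hold for every fp-locally closed embedding $\eta$ into $K(\calK)^{(n)}\backslash\Gr^{(n)}$). I would remark that this is the multi-point generalization of Corollary \ref{iso of stacks}, and indeed when $n=1$ one recovers the statement that $K(\calK)\backslash\Gr\is X(\calK)/G(\calO)$ admits gluing of sheaves via Proposition \ref{ind-placidness}; the present argument avoids appealing to ind-placidity of $X(\calK)^{(n)}$ and instead exploits the torsor description of $f$ directly.
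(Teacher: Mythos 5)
Your plan is close in spirit for part (2) and the ``in particular'' deduction, but both halves have real gaps, and in each case the paper uses a cleaner structural identification that you are missing.

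For part (1), you claim that $LK^{(n)}$ acts on each bounded piece $\Gr^{(n)}_{\le\lambda_\fp}$ ``through a finite-type group scheme'' and conclude the quotient is a finite-type algebraic stack. That reasoning is the one valid for \emph{pro}-groups such as $G(\calO)$ (where the action factors through a congruence quotient); it fails for an \emph{ind}-group such as $LK^{(n)}=K(\bbP^1\setminus|\underline z|)^{(n)}$, whose action on a bounded piece does not factor through any finite-type quotient. What is true is that stabilizers and orbits are of finite type, but establishing that the quotient is an ind-stack ind-locally of finite type from that alone requires work. The paper instead invokes the uniformization isomorphism $LK^{(n)}\backslash\Gr^{(n)}\cong QM^{(n)}(\bbP^1,X)$ of Section \ref{uniform of quasi maps} and the fact, cited from \cite{GN1}, that the quasi-maps ind-stack is of ind-finite type. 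You should use that identification here; it makes part (1) immediate.

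For part (2), you correctly reduce along $\Gr^{(n)}\to K(\calK)^{(n)}\backslash\Gr^{(n)}$ and isolate the homogeneous space $K(\calK)^{(n)}/LK^{(n)}$ (note it is not a group, so the map is not a torsor in your sense), but your description of it as ``a filtered limit of fp-smooth affine $Y$-schemes'' via congruence-subgroup bookkeeping is incorrect: $K(\calK)^{(n)}/LK^{(n)}$ is a $K(\calO)^{(n)}$-torsor over $(\bbP^1)^n\times\Bun_K(\bbP^1)$ --- it classifies a $K$-bundle on $\bbP^1$ plus a trivialization on the formal neighborhood of $\underline z$ --- and so it carries a $\Bun_K(\bbP^1)$ factor and is certainly not pro-affine. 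The paper's argument hinges on precisely this identification, after which strong pro-smoothness follows from the fact that $K(\calO)^{(n)}$ is a projective limit of smooth schemes with smooth affine transition maps (\cite[Lemma 2.5.1]{R}) and that $\Bun_K(\bbP^1)$ is a smooth stack locally of finite type. Without recognizing the $\Bun_K(\bbP^1)$ factor, your presentation of the pullback does not satisfy Definition \ref{strongly pro-smooth}. Your treatment of the ``in particular'' via Lemma \ref{key lemma} is correct, as is the observation that the base-change isomorphisms come along for free.
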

    \begin{proof}
Since the stack of quasi maps $QM^{(n)}(\bP^1,X)$ is 
an ind-stack ind-locally of finite type, 
the uniformization isomorphism $LK^{(n)}\backslash\Gr^{(n)}\is QM^{(n)}(\bP^1,X)$ in~\eqref{uni for QM} implies 
part (1).
On the other hand, the pullback of $f$ along $\Gr^{(n}\to K(\calK)^{(n)}\backslash\Gr^{(n)}$ is isomorphic to the natural projection map 
\[\Gr^{(n)}\times_{(\bP^1)^n}K(\calK)^{(n)}/LK^{(n)}\lra \Gr^{(n)}.\]
Note that the 
quotient $K(\calK)^{(n)}/LK^{(n)}$ is the 
$K(\calO)^{(n)}$-torsor over $(\bbP^1)^n\times\Bun_K(\bbP^1)$
classifying a $K$-bundle $\mE_K\in\Bun_K(\bbP^1)$ on $\bP^1$, a point $\underline z\in(\bP^1)^n$,
and a trivialization of $\mE_K$ on the formal neighborhood of the points $\underline z$ in $\bP^1$.
Since $K(\calO)^{(n)}$ is a projective limit of smooth schemes with smooth affine transition maps
 (see, e.g., \cite[Lemma 2.5.1]{R})) and 
 $\Bun_K(\bbP^1)$ is a smooth stack locally of finite type
  it implies that 
 $f$ is strongly pro-smooth. 
 The proposition follows.

    \end{proof}

\subsection{Real and relative Satake categories}

The  loop group $K(\calK)$, arc group $G(\calO)$, and real arc group $G_\bbR(\mO_\bbR)$) act natural on $\Gr$, $X(\calK)$, and $\Gr_\bbR$ 
and we can form the quotient stacks $K(\calK)\backslash\Gr$ and  $X(\calK)/G(\calO)$
and the quotient semi-analytic stack $G_\bbR(\mO_\bbR)\backslash\Gr_\bbR$ (see Appendix \ref{quotient stack}).  
Let $D(K(\calK)\backslash\Gr)$, $D(X(\calK)/G(\calO))$, and $D(G_\bbR(\mO_\bbR)\backslash\Gr_\bbR)$) be the dg-categories of  $\bC$-constructible complexes 
on $K(\calK)\backslash\Gr$, $X(\calK)/G(\calO)$,  and $G_\bbR(\mO_\bbR)\backslash\Gr_\bbR$ respectively (see Appnedix \ref{sheaves}).

By Proposition \ref{iso of stacks},
there is  an isomorphism of stacks
$K(\calK)\backslash\Gr\is X(\calK)/G(\calO)$
and hence  a canonical equivalence
\beq\label{torsor-equ}
D(K(\calK)\backslash\Gr)\is D(X(\calK)/G(\calO))
\eeq

We will call $D(X(\calK)/G(\calO))$ the relative Satake category 
and $D(G_\bbR(\mO_\bbR)\backslash\Gr_\bbR)$ the real Satake category.


\subsection{Perverse t-structures}

Let $D_{}(G_\bbR\backslash\Gr_\bbR)$ 
be the dg category of  constructible $\bbC$-complexes on $G_\bbR\backslash\Gr_\bbR$
and let 
$D_{\calS_\bbR}(G_\bbR\backslash\Gr_\bbR)$ 
to be the full subcategory
of $D(G_\bbR\backslash\Gr)$ of complexes constructible with 
respect to the $G_\bbR(\mO_\bbR)$-orbits stratification
$\calS_\bbR=\{S_\bbR^\lambda\}_{\lambda\in\Lambda_A^+}$.
We have a natural equivalence 
\beq\label{real model equ} D_{\calS_\bbR}(G_\bbR\backslash\Gr_\bbR)\is 
D_{}(G_\bbR(\mO_\bbR)\backslash\Gr_\bbR)
\eeq
Since $G_\bbR$ is connected, 
it follows from \cite[Lemma 3.9.1]{N2} that 
the dimension  
$\dim_\bbR S_\bbR^\lambda=2\langle\lambda,\rho\rangle$
of the 
real spherical orbits $S_\bbR^\lambda$
are even numbers. Thus there is a classical perverse $t$-structure 
$(^{p_{}}D^{\leq 0}(G_\bbR(\calO_\bbR)\backslash\Gr_\bbR),{^p}D^{\geq 0}(G_\bbR(\calO_\bbR)\backslash\Gr_\bbR))$
on
$D(G_\bbR(\calO_\bbR)\backslash\Gr_\bbR)\is D_{\calS_\bbR}(G_\bbR\backslash\Gr_\bbR)$
given by the middle perversity function 
$p_\bbR:\calS_\bbR\to\bZ$, $p_\bbR(S_\bbR^\lambda)=-\frac{\dim_\bbR\Gr_\bbR^\lambda}{2}=
-\langle\lambda,\rho\rangle$. 
We will write 
\[\on{Perv}(G_\bbR(\calO_\bbR)\backslash\Gr_\bbR)={^{p_{cl}}}D^{\leq 0}(G_\bbR(\calO_\bbR)\backslash\Gr_\bbR)\cap{^{p_{cl}}}D^{\geq 0}(G_\bbR(\calO_\bbR)\backslash\Gr_\bbR)\]
for the heart of the perverse $t$-structure.

We shall introduce a $t$-structure on $D(X(\calK)/G(\calO))$
following \cite[Section 6]{BKV}.
For any finite type stack $\calY$ we denote by 
 $({^{p_{cl}}}D^{\leq0}(\calY),{^{p_{cl}}}D^{\geq0}(\calY))$
 the classical perverse $t$-structure on $D(\calY)$.
According to \cite[Section 6.3.2]{BKV}, for every placid stack $\calY\is Y/H$ there is a unique 
$!$-adapted $t$-structure on $(^pD^{\leq0}(\calY),{^p}D^{\geq0}(\calY))$ 
on
$D(\calY)$ characterized by 
\beq\label{characterization'}
^pD^{\geq0}(\calY)\is\on{colim}_{i\in I} {^{p_{cl}}}D^{\geq0}(\calY_i)[\dim\calY_i]
\eeq
where 
$\calY\is\on{lim}_{i\in I} \calY_j=Y_j/H$

Following \cite[Section 2.4.8]{BKV},
we say that an ind-stack 
$\calY$ is placid stratified if there is a  
stratification $\{\calS^\alpha\}_{\alpha\in I}$ of $\calY$ such that 
(1)
each stratum $\calS^\alpha$ is a  placid stack 
and the inclusion 
$j^\alpha:\calS^\alpha\to\calY$ is a fp-locally closed embedding 
(2) there is a presentation 
$\calY=\on{colim}_{j\in J} \calY^j$
of $\calY$ as a filtered colimit of closed substacks such that 
each $\calY$ is a finite union of the strata $\calS^\alpha$.

Recall the following construction of 
$t$-structures  functor 
for placid stratified stack, admitting gluing of sheaves
in \cite[Proposition 6.4.2]{BKV}.

\begin{prop}\label{t-structures}
Let $(\calY,\{\calS^\alpha\}_{\alpha\in I})$
be a placid stratified stack, admitting gluing of sheaves, and equipped with
a perversity $p:I\to\bZ$.
There is a unique $t$-structure $({^p}D^{\leq0}(\calY),{^p}D^{\geq0}(\calY))$
on $D(\calY)$ satisfying
\beq\label{characterization}
^pD^{\geq0}(\calY)=\{\calF\in D(\calY)| (j^{\alpha})^!\calF\in {^p}D^{\geq0}(\calS^\alpha)[p(\alpha)].\}
\eeq
The heart of the $t$-structure is denoted by
\[\on{Perv}^p(\calY)={^p}D^{\leq0}(\calY)\cap{^p}D^{\geq0}(\calY).\]

\end{prop}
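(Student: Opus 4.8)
The plan is to produce the $t$-structure by gluing along the stratification, exactly as in the Beilinson--Bernstein--Deligne recollement formalism, the role of the ``six functors for a locally closed embedding'' being played here by the functors $\eta^*,\eta_*,\eta^!,\eta_!$ available because $\calY$ admits gluing of sheaves (Lemma \ref{fiber sequence}). First I would equip each stratum $\calS^\alpha$ with the $t$-structure obtained from the $!$-adapted $t$-structure of \cite[Section 6.3.2]{BKV} by shifting it by the integer $p(\alpha)$; since $\calS^\alpha$ is placid this $t$-structure exists and is the one characterized by the (shifted) colimit formula \eqref{characterization'}. These are the building blocks of the glued $t$-structure, and no condition on $p$ (monotonicity, etc.) will be needed for the construction to go through.

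Next I would work inside a fixed closed substack $\calY^j$ from the presentation $\calY=\on{colim}_j\calY^j$, which is a finite union of strata, and induct on the number of strata. Choose a total order on the strata of $\calY^j$ refining the closure order, so that for each $k$ the union $\calU_k$ of the first $k$ strata is fp-open in $\calU_{k+1}$ with fp-closed complement the single stratum $\calS^k$. Writing $j_k\colon\calU_{k-1}\hookrightarrow\calU_k$ and $i_k\colon\calS^k\hookrightarrow\calU_k$, Lemma \ref{fiber sequence} supplies the localization triangle $j_{k!}j_k^!\to\id\to i_{k*}i_k^*$ together with the adjunctions $j_{k!}\dashv j_k^!=j_k^*\dashv j_{k*}$ and $i_k^*\dashv i_{k*}=i_{k!}\dashv i_k^!$, i.e.\ a recollement of presentable stable categories. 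Assuming inductively that $D(\calU_{k-1})$ carries the $t$-structure with ${}^{p}D^{\geq 0}(\calU_{k-1})=\{\calG\mid(j^\beta)^!\calG\in{}^{p}D^{\geq 0}(\calS^\beta)[p(\beta)],\ \beta<k\}$, the abstract gluing lemma then produces a $t$-structure on $D(\calU_k)$ with
\[
{}^{p}D^{\geq 0}(\calU_k)=\{\calF\in D(\calU_k)\mid j_k^!\calF\in{}^{p}D^{\geq 0}(\calU_{k-1}),\ i_k^!\calF\in{}^{p}D^{\geq 0}(\calS^k)[p(k)]\},
\]
its truncation functors assembled from those on $D(\calU_{k-1})$ and $D(\calS^k)$ via the localization triangle; unwinding the induction (using $j_k^*=j_k^!$ for the open embedding) shows this is exactly the $t$-structure satisfying \eqref{characterization} for $\calU_k$, and with $k$ maximal we obtain the $t$-structure on $D(\calY^j)$.

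Finally I would pass to the colimit. The transition maps $\iota_{jj'}\colon\calY^j\hookrightarrow\calY^{j'}$ are fp-closed embeddings and $D(\calY)\is\on{colim}_jD(\calY^j)$ along the $*$-pushforwards; since $(\iota_{jj'})^!$ is $t$-exact for the glued $t$-structures --- it commutes with all the stratumwise $(j^\alpha)^!$ --- the $t$-structures on the $D(\calY^j)$ assemble into one on $D(\calY)$ whose connective and coconnective parts are the (co)limits of those on the $D(\calY^j)$, and \eqref{characterization} over $\calY$ follows from \eqref{characterization} over each $\calY^j$ since every $\calS^\alpha$ lies in some $\calY^j$. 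Uniqueness is then immediate: \eqref{characterization} pins down ${}^{p}D^{\geq 0}(\calY)$, hence forces ${}^{p}D^{\leq 0}(\calY)$ to be the left orthogonal $\{\calF\mid\Hom(\calF,\calG)=0\ \text{for all}\ \calG\in{}^{p}D^{\geq 1}(\calY)\}$, hence the whole $t$-structure.

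The formal recollement step is Beilinson--Bernstein--Deligne essentially verbatim; the part that actually needs the infinite-dimensional technology --- and which I expect to be the main obstacle --- is everything around it: that $D$ of each placid and of each ind-finite-type stack is presentable, that ``admits gluing of sheaves'' genuinely yields a recollement rather than merely two adjoint sequences, and that the $!$-adapted $t$-structures of \cite[Section 6.3.2]{BKV} are accessible and compatible with filtered colimits so that the glued $t$-structure is again accessible. All of this is exactly what is established in \cite{BKV}, so in the text the proof reduces to a reference to \cite[Proposition 6.4.2]{BKV}.
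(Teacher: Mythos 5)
Your proposal is essentially a correct reconstruction of the argument, and you correctly anticipate the punchline: the paper does not prove Proposition~\ref{t-structures} at all but simply recalls it as \cite[Proposition 6.4.2]{BKV}, so the ``official'' proof lives in that reference. The recollement-plus-filtered-colimit strategy you lay out --- shift the $!$-adapted $t$-structure on each placid stratum by $p(\alpha)$, glue à la BBD through the finite unions $\calY^j$ using the localization triangle and adjunctions furnished by Lemma~\ref{fiber sequence} and the gluing-of-sheaves hypothesis, pass to the colimit along fp-closed pushforwards, and read off uniqueness from~\eqref{characterization} --- is exactly the mechanism BKV use. The only blemishes are cosmetic: your indexing in ``$\calU_k$ is fp-open in $\calU_{k+1}$ with complement $\calS^k$'' should read $\calS^{k+1}$, and in the colimit step what one actually needs is $t$-exactness of the closed pushforwards $(\iota_{jj'})_*$ (equivalently, compatibility of the $!$-restrictions with the stratumwise characterization restricted to strata lying in $\calY^j$), not $t$-exactness of $(\iota_{jj'})^!$ on all of $D(\calY^{j'})$. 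Neither affects the substance.
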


Note that every ind-placid stack $\calY$
 is a placid stratified stack and 
\cite[Lemma 5.5.6]{BKV}  implies that it 
 admits glueing of sheaves.
Thus Proposition \ref{placid of X(K)}, Corollary \ref{iso of stacks} and Lemma \ref{quotient is placid} imply
that $K(\calK)\backslash\Gr\is X(\calK)/G(\calO)$ together with the orbits stratification 
$\{X(\calK)^\lambda/G(\calO)\is K(\calK)\backslash\mO^\lambda_K\}$ is a placid stratified stack, 
 admitting glueing of sheaves.
We are mainly interested in the following perversity function
\beq\label{perversity}
p_X:\Lambda_A^+\to\bZ,\ \ p_X(\lambda)=-\langle\lambda,\rho\rangle
\eeq
and we will call the unique $t$-structure 
$({^p}D^{\leq0}(X(\calK)/G(\calO)),{^p}D^{\geq0}(X(\calK)/G(\calO)))$
on $D(X(\calK)/G(\calO))$
and  $({^p}D^{\leq0}(K(\calK)\backslash\Gr),{^p}D^{\geq0}(K(\calK)\backslash\Gr))$
on $D(K(\calK)\backslash\Gr)$
in Proposition \ref{t-structures},
 the 
\emph{perverse $t$-structure}. 
We will write
\[\on{Perv}(X(\calK)/G(\calO))={^p}D^{\leq0}(X(\calK)/G(\calO))\cap{^p}D^{\geq0}(X(\calK)/G(\calO))\]
\[ \on{Perv}(K(\calK)\backslash\Gr)={^p}D^{\leq0}(K(\calK)\backslash\Gr)\cap{^p}D^{\geq0}(K(\calK)\backslash\Gr)\]
for the  heart of the $t$-structure.

Denote by 
$j^\lambda_X: X(\calK)^\lambda/G(\calO)\to X(\calK)/G(\calO)$ 
the natural inclusion. 
Let $\on{Perv}(X(\calK)^\lambda/G(\calO))$  be the category of perverse sheaves 
on the placid stack $X(\calK)^\lambda/G(\calO)$.
Then for any $\calL\in\on{Perv}(X(\calK)^\lambda/G(\calO))$ we have the 
$\IC$-complex 
\[\IC(\calL):=\on{Im}({^p}H^0(j^\lambda_{X,!}\mL[-\langle\lambda,\rho\rangle])\to {^p}H^0(j^\lambda_{X,*}\mL[-\langle\lambda,\rho\rangle]))\in\on{Perv}(X(\calK)/G(\calO))\]
Note that $\omega_{X(\calK)^\lambda/G(\calO)}\in\on{Perv}(X(\calK)^\lambda/G(\calO))$, we will write 
\[\IC^\lambda_X=\IC(\omega_{X(\calK)^\lambda/G(\calO)})\]

Similarly, denote by 
$j^\lambda_K:K(\calK)\backslash\calO^\lambda_K\to K(\calK)\backslash\Gr$ the natural inclusion.
Then for any $\mL\in\on{Perv}(K(\calK)\backslash\calO^\lambda_K)$, we have 
the $\IC$-complex 
\[\IC(\calL):=\on{Im}({^p}H^0(j^\lambda_{K,!}\mL[-\langle\lambda,\rho\rangle])\to {^p}H^0(j^\lambda_{K,*}\mL[-\langle\lambda,\rho\rangle]))\in\on{Perv}(K(\calK)\backslash\Gr)\]
and we will write 
\[\IC_K^\lambda=\IC(\omega_{K(\calK)\backslash\calO^\lambda_K}).\]

\subsection{Real-symmetric equivalence for affine Grassmannians}

Consider  the real analytic ind-scheme
$\Omega K_c\backslash\Gr$.
Introducing  the stratification $\calS_K=\{\Omega K_c\backslash\mO_K^\lambda\}_{\lambda\in\Lambda_A^+}$ with strata 
the
$\Omega K_c$-quotients of $K(\calK)$-orbits. 
Let $D(LK_c\backslash\Gr)$ 
be the dg category of $K_c$-equivariant  $\bC$-constructible on $\Omega K_c\backslash\Gr$
and 
we set $D_{\calS_K}(LK_c\backslash\Gr)$ to be the full subcategory
of $D(LK_c\backslash\Gr)$
of complexes constructible with 
respect to the stratification $\calS_K$.

Consider the natural  map 
$q:LK_c\backslash\Gr\to K(\calK)\backslash\Gr$.

\begin{lemma}\label{quasi-maps model equ}
The pullback functor 
$q^!:D(K(\calK)\backslash\Gr)\to D(LK_c\backslash\Gr)$
is fully-faithful and 
induces 
an equivalence 
\[D(K(\calK)\backslash\Gr)\is D_{\calS_K}(LK_c\backslash\Gr).\]
\end{lemma}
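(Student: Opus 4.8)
The plan is to produce the equivalence $D(K(\calK)\backslash\Gr)\is D_{\calS_K}(LK_c\backslash\Gr)$ by identifying $q:LK_c\backslash\Gr\to K(\calK)\backslash\Gr$ as a torsor-type morphism with contractible fibers. Concretely, by the uniformization isomorphism $LK^{(1)}\backslash\Gr^{(1)}\is \QM^{(1)}(\bP^1,X)$ (and its fiberwise form, $LK_c\backslash\Gr\is \QM^{(\sigma_2)}(\bP^1,X)_\bbR|_{\calH}$ restricted to a point of $\calH$, via Example~\ref{fibers of real quasi maps}), together with Corollary~\ref{iso of stacks} giving $K(\calK)\backslash\Gr\is X(\calK)/G(\calO)$, the morphism $q$ becomes the natural map $LK_c\backslash\Gr\to K(\calK)\backslash\Gr$ whose homotopy fiber is $LK_c\backslash K(\calK)$. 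So the first step is to show this fiber (equivalently, the fibers of $q$ over geometric points) is cohomologically contractible: $LK_c\backslash K(\calK)$ should be an ind-scheme that is an iterated affine-space bundle (pro-unipotent $\times$ affine) over a point, after the usual Iwasawa/Gram–Schmidt decomposition $K(\calK)\is LK_c\cdot K(\calO)^{?}$ refined appropriately — more precisely, one uses that $K(\calK)/LK_c\is \Gr_K$ is placid ind-proper and that passing from $K(\calK)$-equivariance to $LK_c$-equivariance only adds a contractible (pro-affine-space) direction.

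The second step is the fully-faithfulness of $q^!$. Here I would use Proposition~\ref{key base change}: $q:LK_c\backslash\Gr\to K(\calK)\backslash\Gr$ is (a fiberwise restriction of) a strongly pro-smooth morphism in the sense of Definition~\ref{strongly pro-smooth}, and in particular $q^!$ is defined, $q^!$ admits a left adjoint $q_!$, and base change holds. Fully-faithfulness of $q^!$ is then equivalent to the counit $q_!q^!\to\id$ being an isomorphism, which by the projection formula reduces to computing $q_!\omega_{\text{fiber}}$, i.e. to the statement that the $!$-pushforward of the dualizing complex along the fiber $LK_c\backslash K(\calK)\to \pt$ is (up to a shift which is absorbed because the fiber is even-dimensional / the shift is recorded by the $t$-structure normalization) the one-dimensional space in degree $0$. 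This is exactly the contractibility computed in step one; so steps one and two are really the same computation, packaged via the adjunction $(q_!,q^!)$ on stacks admitting gluing of sheaves (Lemma~\ref{fiber sequence}, Lemma~\ref{key lemma}).

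The third step is essential surjectivity onto $D_{\calS_K}(LK_c\backslash\Gr)$. Since $q^!$ is exact for the natural ($!$-adapted) $t$-structures up to the normalizing shift — this follows because $q$ is strongly pro-smooth, so $q^!=q^*[\text{shifts}]$ stratum-by-stratum, the shifts being constant along each stratum as $\calS_K$ is by definition the pullback of the orbit stratification of $K(\calK)\backslash\Gr$ — it suffices to check that every object of $D_{\calS_K}(LK_c\backslash\Gr)$ lies in the essential image. One runs the standard dévissage along the stratification $\calS_K=\{\Omega K_c\backslash\mO_K^\lambda\}$: using the recollement (fiber) sequences of Lemma~\ref{fiber sequence}(2) and induction on the (finite, by the closure relations in Proposition~\ref{parametrization}(2)) poset of strata contained in a given $\calY^j$, it is enough to treat a single stratum, i.e. to show $q^!$ is an equivalence $D(K(\calK)\backslash\mO_K^\lambda)\is D(\Omega K_c\backslash\mO_K^\lambda)$. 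On one stratum $q$ restricts to the pro-smooth morphism $\Omega K_c\backslash\mO_K^\lambda\to K(\calK)\backslash\mO_K^\lambda$ with fiber $\Omega K_c\backslash \mathrm{Stab}$, which is again an iterated affine bundle, so $q^!$ is a fully-faithful equivalence there by the same adjunction argument. Assembling over all strata via the gluing property gives essential surjectivity, hence the equivalence.

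\textbf{Main obstacle.} The crux is the contractibility computation of step one — showing that the fibers of $LK_c\backslash\Gr\to K(\calK)\backslash\Gr$ (and of its stratum-wise restrictions) have the cohomology of a point after the appropriate even shift. Over a point this is a loop-group statement: one must produce a Gram–Schmidt / Iwasawa-type factorization exhibiting $K(\calK)/LK_c$, respectively each $\mathrm{Stab}_{K(\calK)}(\,\cdot\,)/(\text{its }\Omega K_c)$, as a (pro-)affine space, which is where Proposition~\ref{multi Gram-Schmidt} and Lemma~\ref{fibers: compact case} enter, but making this precise in the placid/ind-placid setting — so that $q_!$ of the dualizing sheaf of the fiber is computed correctly and the base-change/projection-formula manipulations of Lemma~\ref{key lemma} are legitimate — is the delicate point, and is exactly why Proposition~\ref{key base change} (strong pro-smoothness of the relevant quotient maps) was proved above.
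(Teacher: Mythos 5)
Your overall outline is in the right direction, but there is a concrete error in the key contractibility step, and your route to fully–faithfulness differs from the paper's in a way worth flagging.

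\textbf{The error.} You identify the fiber of $q$ as $K(\calK)/LK_c\cong\Gr_K$. That identification is false: $\Gr_K=K(\calK)/K(\calO)$, which is certainly not contractible, whereas $LK_c$ is a completely different subgroup of $K(\calK)$. The correct identification, which is the actual crux, is
\[
K(\calK)/LK_c\;\cong\;K(\calO)/K_c\;\cong\;K(\calO)_+\times K/K_c,
\]
where $K(\calO)_+$ is the first congruence subgroup; this is where the Gram–Schmidt factorization $K(\calK)\cong\Omega K_c\cdot K(\calO)$ (Proposition~\ref{multi Gram-Schmidt}) enters, combined with $LK_c=\Omega K_c\cdot K_c$. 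The target $K(\calO)_+\times K/K_c$ is contractible (pro-unipotent $\times$ contractible homogeneous space). Your later phrase about ``adding a contractible pro-affine-space direction'' has the right spirit but literally contradicts the claim $K(\calK)/LK_c\cong\Gr_K$. The same confusion shows up on the strata, where you write the fiber as $\Omega K_c\backslash\mathrm{Stab}$; the fiber of $q^\lambda$ is again $K(\calK)/LK_c$ (resp.\ its isomorphic copy) and is handled identically.

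\textbf{The route to fully–faithfulness.} You propose to argue via a $(q_!,q^!)$ adjunction and the counit $q_!q^!\to\id$, reducing to $q_!$ of the fiberwise dualizing complex. That is plausible, but it requires establishing that $q_!$ exists as a left adjoint to $q^!$ and that the projection formula holds for a strongly pro-smooth morphism between infinite-dimensional stacks; this is precisely the sort of foundational point the framework of \cite{BKV} treats gingerly. The paper sidesteps it: it rewrites $q$ as
\[
LK_c\backslash\Gr\;\cong\;K(\calK)\backslash\bigl(\Gr\times K(\calK)/LK_c\bigr)\;\xrightarrow{\;\pr\;}\;K(\calK)\backslash\Gr,
\]
then passes to \v{C}ech presentations $D(K(\calK)\backslash\Gr)\cong\lim_{[n]}D(K(\calK)^n\times\Gr)$ and $D(K(\calK)\backslash(\Gr\times K(\calK)/LK_c))\cong\lim_{[n]}D(K(\calK)^n\times\Gr\times K(\calK)/LK_c)$, and observes that each level-$n$ pullback is fully-faithful because $K(\calK)/LK_c$ is contractible. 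This reduces fully-faithfulness to a finite-type statement levelwise, with no need to construct $q_!$ or invoke a projection formula in the pro-setting. You should either adopt this descent trick or spell out why $q_!$ exists and is well-behaved; as written the counit approach has a gap.

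\textbf{Essential surjectivity.} Your dévissage over strata is morally the paper's argument: both sides are generated by pushforwards from strata (using gluing, Lemma~\ref{fiber sequence}), and the base-change isomorphism $q^!(j^\lambda_{K,*}\mL_\lambda)\cong(i^\lambda_+)_*(q^\lambda)^!\mL_\lambda$ for fp-locally closed embeddings shows these generators match. This part is fine once step one is corrected.
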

\begin{proof}
Consider the quotient $K(\calK)/LK_c$
of $K(\calK)$ by the subgroup $LK_c$ and the 
natural embedding 
$\Gr\to\Gr\times K(\calK)/LK_c$
sending $\gamma$ to $(\gamma, eLK_c)$.
It induces 
 an isomorphism of stacks 
$LK_c\backslash\Gr\is K(\calK)\backslash(\Gr\times K(\calK)/LK_c)$
(where $K(\calK)$ acts diagonally on $\Gr\times K(\calK)/LK_c$)
and we have  
\[q:LK_c\backslash\Gr\is K(\calK)\backslash(\Gr\times K(\calK)/LK_c)\stackrel{\pr}\to K(\calK)\backslash\Gr\]
where $\pr$ is induced by the natural projection map $\Gr\times K(\calK)/LK_c\to \Gr$.
Thus it suffices to show 
$\pr^!:D(K(\calK)\backslash\Gr)\to D(K(\calK)\backslash(\Gr\times K(\calK)/LK_c))$
is fully-faithful.
Note that we have a commutative diagram
\[\xymatrix{D(K(\calK)\backslash\Gr)\ar[r]^{\cong\ \ }\ar[d]^{\pr^!}&\on{lim}_{[n]} D(K(\calK)^n\times\Gr)\ar[d]^{\on{lim}_{[n]}(\pr^{[n]})^!}\\
D(K(\calK)\backslash(\Gr\times K(\calK)/LK_c))\ar[r]^{\cong\ \ \ \ \ }&\on{lim}_{[n]}D(K(\calK)^n\times(\Gr\times K(\calK)/LK_c))}\]
where the horizontal equivalences come from the 
 $\check{\on{C}}$ech complexes for the coverings  
$\Gr\to K(\calK)\backslash\Gr$ and $\Gr\times K(\calK)/LK_c \to K(\calK)\backslash(\Gr\times K(\calK)/LK_c)$, and the right vertical arrow are induced by the pull-back functors along the projections
$\pr^{[n]}=\id_{K(\calK)^n}\times\pr:K(\calK)^n\times (\Gr\times K(\calK)/LK_c)\to K(\calK)^n\times \Gr.$
Since the space $K(\calK)/LK_c\is K(\calO)/K_c\is K(\calO)_+\times K/K_c$ 
is contractible (here $K(\calO)_+\subset K(\calO)$ is the first congruence subgroup) the functor $(\pr^{[n]})^!$ is fully-faithful and it follows that 
$\pr^!$ is fully-faithful.

We show that the resulting functor 
$q^!:D(K(\calK)\backslash\Gr)\to D_{\calS_K}(LK_c\backslash\Gr)$
is essentially surjective. 
Since $D(K(\calK)\backslash\Gr)$ admits gluing of sheaves,
 Lemma \ref{fiber sequence} 
implies that 
 the category 
$D(K(\calK)\backslash\Gr)$ is generated by 
$j^{\lambda}_{K,*}(\mL_\lambda)$, $\lambda\in\Lambda_A^+$,
where $\mL_\lambda\in D(K(\calK)\backslash\mO_K^\lambda)$.
Consider the following Cartesian diagram
\[\xymatrix{LK_c\backslash\mO_K^\lambda\ar[r]^{i^\lambda_+}\ar[d]^{q^\lambda}&LK_c\backslash\Gr\ar[d]^q\\
 K(\calK)\backslash\mO^\lambda_K\ar[r]^{j^\lambda_K}& K(\calK)\backslash\Gr}\]
The desired claim follows from the facts that 
(1)
$D_{\calS}(LK_c\backslash\Gr)$ is generated by 
$(i^\lambda_+)_*(q_\lambda^!)(\mL_\lambda)$ under colimit 
and (2) the base change isomorphism for fp-locally closed embedding 
$q^!(j^{\lambda}_{K,*}(\mL_\lambda))\is (i^\lambda_+)_*(q^\lambda)^!(\mL_\lambda) $
(see, e.g., \cite[Lemma 5.4.5]{BKV}).

\end{proof}

\begin{thm}\label{real-symmetric}
There are natural  equivalences
\[D(X(\calK)/G(\calO))\is 
D(K(\calK)\backslash\Gr)\is D(G_\bbR(\calO_\bbR)\backslash\Gr_{\bbR})\]
which are $t$-exact with respect to the perverse $t$-structures.
\end{thm}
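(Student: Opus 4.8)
The strategy is to establish the two equivalences separately and then to check $t$-exactness stratum by stratum. For the first equivalence $D(X(\calK)/G(\calO))\is D(K(\calK)\backslash\Gr)$, there is nothing to do: by Corollary \ref{iso of stacks}(3) there is already an isomorphism of ind-placid stacks $X(\calK)/G(\calO)\is K(\calK)\backslash\Gr$, so the equivalence \eqref{torsor-equ} does the job and it is tautologically $t$-exact once the perverse $t$-structure on $D(X(\calK)/G(\calO))$ is transported through this isomorphism (which is precisely how it was defined, via the stratification $\{X(\calK)^\lambda/G(\calO)\is K(\calK)\backslash\mO_K^\lambda\}$ and the perversity $p_X(\lambda)=-\langle\lambda,\rho\rangle$). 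So the real content is the comparison of $D(K(\calK)\backslash\Gr)$ with $D(G_\bbR(\calO_\bbR)\backslash\Gr_\bbR)$.

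\textbf{Construction of the real-symmetric equivalence.} First I would pass through two intermediate model categories. On the symmetric side, Lemma \ref{quasi-maps model equ} gives $D(K(\calK)\backslash\Gr)\is D_{\calS_K}(LK_c\backslash\Gr)$, the subcategory of $K_c$-equivariant constructible complexes on $\Omega K_c\backslash\Gr$ that are constructible for the stratification $\calS_K=\{\Omega K_c\backslash\mO_K^\lambda\}$. On the real side, \eqref{real model equ} gives $D(G_\bbR(\calO_\bbR)\backslash\Gr_\bbR)\is D_{\calS_\bbR}(G_\bbR\backslash\Gr_\bbR)$, the $G_\bbR$-equivariant complexes on $\Gr_\bbR$ constructible for $\calS_\bbR=\{S_\bbR^\lambda\}$. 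Now I would invoke Theorem \ref{Quillen} in the case $m=1$: it produces a $K_c$-equivariant, strata-preserving homeomorphism $\Omega K_c^{(1)}\backslash\Gr^{(1)}|_{\bbR\times\{i\}}\is\Gr_\bbR^{(1)}|_{\bbR\times\{0\}}$ carrying $\Omega K_c\backslash\mO_K^\lambda$ onto $S_\bbR^\lambda$ (and, indeed, also onto $P^{(1),\lambda}$), and whose restriction to each stratum is real analytic. Since $\Gr^{(1)}|_{\bbR\times\{i\}}\is\Gr$ and $\Gr_\bbR^{(1)}|_{\bbR\times\{0\}}\is\Gr_\bbR$, pulling back constructible complexes along this stratified homeomorphism gives an equivalence $D_{\calS_K}(LK_c\backslash\Gr)\is D_{\calS_\bbR}(G_\bbR\backslash\Gr_\bbR)$, hence the desired equivalence of the theorem. (One must check the homeomorphism is compatible with the $K_c$-equivariance on both sides so that it descends to the equivariant derived categories; this is built into the statement of Theorem \ref{Quillen}.)

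\textbf{$t$-exactness.} With the equivalence in hand, $t$-exactness is a local computation: one compares the perversity functions on the two stratified stacks under the identification $\mO_K^\lambda\leftrightarrow S_\bbR^\lambda$. By construction the perverse $t$-structure on $D(X(\calK)/G(\calO))\is D(K(\calK)\backslash\Gr)$ uses $p_X(\lambda)=-\langle\lambda,\rho\rangle$ via Proposition \ref{t-structures}, and the perverse $t$-structure on $D(G_\bbR(\calO_\bbR)\backslash\Gr_\bbR)$ uses $p_\bbR(S_\bbR^\lambda)=-\tfrac{\dim_\bbR S_\bbR^\lambda}{2}=-\langle\lambda,\rho\rangle$ (using $\dim_\bbR S_\bbR^\lambda=2\langle\lambda,\rho\rangle$, valid since $G_\bbR$ is connected, cf. \cite[Lemma 3.9.1]{N2}). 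These agree. Since the equivalence is induced by a strata-preserving homeomorphism, $!$-restriction to a stratum commutes with the equivalence, so the characterization \eqref{characterization} of $^pD^{\geq0}$ is preserved; combined with the matching of perversities and a parallel argument for $^pD^{\leq0}$ (or, equivalently, via Verdier duality), the equivalence is $t$-exact.

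\textbf{Main obstacle.} The genuinely delicate point is not the $t$-exactness bookkeeping but ensuring that the $m=1$ Quillen homeomorphism of Theorem \ref{Quillen} genuinely induces an equivalence of the \emph{equivariant} constructible derived categories in the placid/semi-analytic setting: one needs that pullback along a $K_c$-equivariant stratified homeomorphism, real analytic on strata, is an equivalence on $D_{\calS_K}$ versus $D_{\calS_\bbR}$, and that this is compatible with the identifications $D(K(\calK)\backslash\Gr)\is D_{\calS_K}(LK_c\backslash\Gr)$ of Lemma \ref{quasi-maps model equ} and $D(G_\bbR(\calO_\bbR)\backslash\Gr_\bbR)\is D_{\calS_\bbR}(G_\bbR\backslash\Gr_\bbR)$ of \eqref{real model equ}. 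This requires care with the foundational sheaf theory on semi-analytic stacks of \cite{BKV} and Appendix \ref{Real stacks}; once those foundations are set up, the rest is formal.
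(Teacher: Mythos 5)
Your proposal follows the paper's argument almost exactly: the chain $D(X(\calK)/G(\calO))\is D(K(\calK)\backslash\Gr)\is D_{\calS_K}(LK_c\backslash\Gr)\is D_{\calS_\bbR}(G_\bbR\backslash\Gr_\bbR)\is D(G_\bbR(\calO_\bbR)\backslash\Gr_\bbR)$ built from~\eqref{torsor-equ}, Lemma~\ref{quasi-maps model equ}, the $m=1$ case of Theorem~\ref{Quillen}, and~\eqref{real model equ}, followed by a stratumwise check of $t$-exactness against~\eqref{characterization}. One point you slightly understate: matching the numerical perversities $p_X(\lambda)=p_\bbR(S^\lambda_\bbR)=-\langle\lambda,\rho\rangle$ is not yet enough, because on the symmetric side ${^p}D^{\geq 0}(K(\calK)\backslash\mO_K^\lambda)$ carries the $!$-adapted $t$-structure of a placid stack defined via a colimit~\eqref{characterization'} over a placid presentation, while on the real side $K_c\backslash S^\lambda_\bbR$ is finite-dimensional with the classical perverse $t$-structure; one must actually verify that $(q^\lambda)^!$ intertwines these with the expected shift, which the paper does by choosing a placid presentation $K(\calK)\backslash\mO_K^\lambda\is\lim_i\calY_i$ and observing that the composite $ev_i\circ q^\lambda$ has contractible fibers. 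That is the one substantive step your sketch defers to ``bookkeeping''; otherwise the approach is identical.
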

\begin{proof}
Theorem \ref{Quillen} (the case when $m=1$)  implies that 
there is a $K_c$-equivariant stratified homeomorphism 
$\Omega K_c\backslash\Gr\is\Gr_\bbR$
which induces 
 a natural  equivalence 
\beq\label{trivialization equ}
D_{\calS_K}(LK_c\backslash\Gr)\is D_{\calS_\bbR}(G_\bbR\backslash\Gr_\bbR).
\eeq
Now combining~\eqref{torsor-equ} and Lemma \ref{quasi-maps model equ}
we obtain the desired equivalences
\[D(X(\calK)/G(\calO))\stackrel{~\eqref{torsor-equ}}\is 
D(K(\calK)\backslash\Gr)\stackrel{\text{Lem}~\eqref{quasi-maps model equ}}\is D_{\calS_K}(LK_c\backslash\Gr)\stackrel{~\eqref{trivialization equ}}\is D_{\calS_\bbR}(G_\bbR\backslash\Gr_\bbR)\is\]
\[\stackrel{\eqref{real model equ}}\is 
D(G_\bbR(\calO_\bbR)\backslash\Gr_{\bbR}).\]
To check that the equivalences are $t$-exact it suffices to check that 
it restricts to an equivalence
$^pD^{\geq0}(K(\calK)\backslash\Gr)\is {^p}D^{\geq0}(G_\bbR(\calO_\bbR)\backslash\Gr_\bbR)$.
Note that the commutative diagram
\[\xymatrix{K_c\backslash S^\lambda_\bbR\is LK_c\backslash\calO^\lambda_K\ar[r]^{i^\lambda_+}\ar[d]^{q^\lambda}&K_c\backslash\Gr_\bbR\is  LK_c\backslash\Gr\ar[d]^q\\
K(\calK)\backslash\mO^\lambda_K\ar[r]^{j_K^\lambda}&  K(\calK)\backslash\Gr}\]
implies that there is a  commutative square of functors
\[\xymatrix{D(K(\calK)\backslash\Gr)\ar[r]_{\simeq}^{q^!}\ar[d]^{(j_K^\lambda)^!}& D(G_\bbR(\calO_\bbR)\backslash\Gr_\bbR)\ar[d]^{(i_+^\lambda)^!}\\
D(K(\calK)\backslash\mO_K^\lambda)\ar[r]^{(q^\lambda)^!}&D(G_\bbR(\calO_\bbR)\backslash S^\lambda_\bbR)}\]
In view of  the characterization of 
\[^pD^{\geq0}(K(\calK)\backslash\Gr)=\{\mF\in D^{}(K(\calK)\backslash\Gr)|(j_K^\lambda)^!\mF\in {^p}D^{\geq0}(K(\calK)\backslash\mO_K^\lambda)[p_X(\lambda)]\}\]
in~\eqref{characterization}, we need to check $(q^\lambda)^!$ induces an equivalence 
\[(q^\lambda)^!:{^p}D^{\geq0}(K(\calK)\backslash\mO_K^\lambda)[p_X(\lambda)]\is {^p}D^{\geq0}(G_\bbR(\calO_\bbR)\backslash S_\bbR^\lambda).\]
Let $K(\calK)\backslash\mO_K^\lambda\is\on{lim}_{i\in I}\calY_i$ be  a placid presentation
with natural evaluation map $ev_i:K(\calK)\backslash\mO_K^\lambda\to\calY_i$.
According to~\eqref{characterization'}, we have
\[{^p}D^{\geq0}(K(\calK)\backslash\mO_K^\lambda)\is\on{colim}_{i\in I} {^p}D^{\leq0}(\calY_i)[\dim\calY_i]\]
Since the fiber of the composition $ev_i\circ q^\lambda: K_c\backslash S_\bbR^\lambda\to K(\calK)\backslash\mO_K^\lambda\to \calY_i$ 
are contractible 
 and $\dim_\bbR S^\lambda_\bbR/2=\langle\lambda,\rho\rangle=-p_X(\lambda)$, we have 
\[(ev_i\circ  q^\lambda)^!:{^p}D^{\leq0}(\calY_i)[\dim\calY_i]\to {^p}D^{\leq0}(G_\bbR(\calO_\bbR)\backslash S_\bbR^\lambda)[\dim_\bbR S^\lambda_\bbR/2]\subset D^{\leq0}(K_c\backslash S_\bbR^\lambda)[\dim_\bbR S^\lambda_\bbR].\]
and the resulting map 
\[(q^\lambda)^!\is\on{colim}_{i\in I}(ev_i\circ  q^\lambda)^!:{^p}D^{\geq0}(K(\calK)\backslash\mO_K^\lambda)[p_X(\lambda)]\is \on{colim}_{i\in I} {^p}D^{\geq0}(\calY_i)[\dim\calY_i-\dim_\bbR S^\lambda_\bbR/2]\to\]
\[\to D^{\leq0}(K_c\backslash S_\bbR^\lambda)[\dim_\bbR S^\lambda_\bbR/2]\]
factors through an equivalence 
\[(q^\lambda)^!:{^p}D^{\geq0}(K(\calK)\backslash\mO_K^\lambda)[p_X(\lambda)]\is {^p}D^{\geq0}(G_\bbR(\calO_\bbR)\backslash S^\lambda_\bbR)\subset D^{\geq0}(K_c\backslash S_\bbR^\lambda)[\dim_\bbR S^\lambda_\bbR/2]\]
The desired claim follows. 
\end{proof}

\section{Affine Matsuki correspondence for sheaves}\label{Affine Matsuki}
In this section we prove the 
affine Matsuki correspondence for sheaves.

\subsection{The functor $\Upsilon$}
Consider the dg category  $D(LG_\bbR\backslash\Gr)$ 
of  $\bbC$-constructible complexes on the semi-analytic stacks $LG_\bbR\backslash\Gr$.
Consider the following correspondence
\[\xymatrix{K(\calK)\backslash\Gr&LK_c\backslash\Gr\ar[r]^u\ar[l]_q&LG_\bbR\backslash\Gr}\]
Define 
\beq\Upsilon=u_!q^!:D(K(\calK)\backslash\Gr)\stackrel{}\to D_{}(LK_c\backslash\Gr)\stackrel{}\ra D(LG_\bbR\backslash\Gr)
\eeq

\begin{thm}[Affine Matsuki correspondence for sheaves]\label{AM}
The functor $\Upsilon$ defines an equivalence of categories
\[\Upsilon:D(K(\calK)\backslash\Gr)\stackrel{\sim}\lra D(LG_\bbR\backslash\Gr).\]
\end{thm}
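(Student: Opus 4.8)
The plan is to reduce Theorem~\ref{AM} to a \emph{contraction principle} for the Matsuki flow of Theorem~\ref{flow}, applied stratum by stratum on the ind-placid real analytic quotient $M:=\Omega K_c\backslash\Gr$. Recall that $M$ carries the residual $K_c$-action, two transversal (Corollary~\ref{transversal}) stratifications --- by the $K(\calK)$-orbit images $\{\Omega K_c\backslash\mO_K^\lambda\}_{\lambda\in\Lambda_A^+}$ and by the $LG_\bbR$-orbit images $\{\Omega K_c\backslash\mO_\bbR^\lambda\}_{\lambda\in\Lambda_A^+}$ --- together with the $K_c$-invariant Morse--Bott flow $\underline\phi_t$ whose critical locus is $\bigsqcup_\lambda\Omega K_c\backslash\mO_c^\lambda$, whose stable sets are the $\Omega K_c\backslash\mO_K^\lambda$, and whose unstable sets are the $\Omega K_c\backslash\mO_\bbR^\lambda$. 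By Lemma~\ref{quasi-maps model equ}, $q^!$ identifies $D(K(\calK)\backslash\Gr)$ with the full subcategory $D_{\calS_K}(LK_c\backslash\Gr)$ of sheaves constructible along the \emph{stable} stratification; so the content of Theorem~\ref{AM} is that $u_!$ restricts to an equivalence from $D_{\calS_K}(LK_c\backslash\Gr)$ onto $D(LG_\bbR\backslash\Gr)$.

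First I would build a candidate inverse. The natural partner of $\Upsilon=u_!q^!$ is $\Xi:=q_!u^!$ (perhaps up to a cohomological shift, forced by the order-reversal of the Matsuki bijection). Using the base-change and gluing formalism of Lemma~\ref{key lemma} and Proposition~\ref{key base change}, the composites $\Xi\circ\Upsilon$ and $\Upsilon\circ\Xi$ are given by pull--push along explicit self-correspondences built from the diagram $\Omega K_c\backslash\mO_K^\lambda\hookleftarrow\Omega K_c\backslash\mO_c^\lambda\hookrightarrow\Omega K_c\backslash\mO_\bbR^\lambda$, and the claim is that both are the identity. This is where the Matsuki flow enters: the flow $\underline\phi_t$ provides, for every $\gamma\in M$, the limits $\lim_{t\to\pm\infty}\underline\phi_t(\gamma)$ in the critical locus (Theorem~\ref{flow}(3)), hence $K_c$-equivariant deformation retractions of every piece of either stratification onto its core $\Omega K_c\backslash\mO_c^\lambda$, and the composites above are computed by these retractions together with the properness of the flow limits.

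The technical heart is thus a Braden-type hyperbolic-localization statement for $\underline\phi_t$: a sheaf constructible along the stable stratification is ``$+$-contracted'' along the forward flow, and for such a sheaf the operation ``restrict to the critical locus along the unstable directions, then push to the core'' agrees with the one along the stable directions. I would prove this by d\'evissage over $\Lambda_A^+$ with respect to the closure order, using crucially that the affine Matsuki bijection $\mO_K^\lambda\leftrightarrow\mO_\bbR^\lambda$ is order-\emph{reversing} (Proposition~\ref{parametrization}(5), Theorem~\ref{flow}(4)): $\Upsilon$ therefore carries the recollement triangle attached to the closed union $\bigsqcup_{\mu\le\lambda}\mO_K^\mu$ to the one attached to the closed union $\bigsqcup_{\mu\ge\lambda}\mO_\bbR^\mu$, and on the associated open/closed pieces it restricts --- via the retraction onto $\mO_c^\lambda$ --- to the evident equivalence $D(LK_c\backslash\mO_c^\lambda)\simeq D(LK_c\backslash\mO_c^\lambda)$. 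A five-lemma induction then upgrades these stratum-wise equivalences to the equivalence $\Upsilon\colon D(K(\calK)\backslash\Gr)\xrightarrow{\sim}D(LG_\bbR\backslash\Gr)$.

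The main obstacle is running all of this in infinite dimensions. On the sheaf-theoretic side, the existence of $u_!,q^!$, of the recollement triangles, and of the adjunctions underlying $\Xi$, together with the base-change identities needed to compute $\Xi\circ\Upsilon$ and $\Upsilon\circ\Xi$, rest on the placidity of $\Gr$ (Corollary~\ref{iso of stacks}) and on the gluing-of-sheaves results (Proposition~\ref{key base change}, Lemma~\ref{key lemma}) rather than on the classical six-functor formalism. On the geometric side, one must make sense of the Morse--Bott flow, its limits, and hyperbolic localization for a \emph{real analytic}, non-algebraic flow on an ind-scheme; here the point is to descend to each finite-dimensional quotient $\Omega K_c\backslash\overline{\mO_K^\lambda}$ and invoke the Bott--Morse property of $E_1|_{P^\lambda}$ and the Hessian decomposition of Proposition~\ref{Morse-Bott}, reducing the contraction principle to its standard finite-dimensional form. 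Finally, one checks that this flow-theoretic description is compatible with the real-symmetric equivalence of Theorem~\ref{real-symmetric}, so that the functor produced is indeed $\Upsilon=u_!q^!$.
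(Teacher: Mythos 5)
Your proposal captures the correct geometric ideas — the Matsuki flow, Braden-type hyperbolic localization along the stable/unstable strata, dévissage over the closure order on $\Lambda_A^+$, and the role of the order-reversal in the Matsuki bijection — and these are indeed exactly the ingredients the paper uses (its Lemma~\ref{Braden} is precisely the hyperbolic-localization statement you ask for). But the overall strategy is different in a way that matters: you propose to construct an explicit two-sided inverse $\Xi:=q_!u^!$ and show $\Xi\circ\Upsilon\simeq\id$ and $\Upsilon\circ\Xi\simeq\id$, whereas the paper shows separately that $\Upsilon$ is essentially surjective (Lemma~\ref{surjective}: $\Upsilon$ sends the generating standard sheaves $\calS_*^+(\lambda,\tau)$ to twisted, shifted costandard sheaves $\calS_!^-(\lambda,\tau\otimes\mL_\lambda)[d_\lambda]$, which generate the target) and fully faithful on compact objects (Lemma~\ref{full}: a direct Hom-computation between the standard/costandard generators using transversality and the flow), then concludes by generation. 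The paper's route deliberately sidesteps any construction of an inverse.

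The concrete gap in your proposal is the definition of $\Xi=q_!u^!$. The functor $q^!$ identifies $D(K(\calK)\backslash\Gr)$ with the full subcategory $D_{\calS_K}(LK_c\backslash\Gr)$ of sheaves constructible along the \emph{stable} stratification $\{\Omega K_c\backslash\mO_K^\lambda\}$ (Lemma~\ref{quasi-maps model equ}), and the inverse equivalence you could reasonably call $q_!$ is only defined on that subcategory. But for $\mF\in D(LG_\bbR\backslash\Gr)$, $u^!\mF$ is constructible along the \emph{unstable} stratification $\{\Omega K_c\backslash\mO_\bbR^\lambda\}$, not the stable one; so $q_!u^!\mF$ does not make immediate sense, and repairing it requires a genuine further idea (averaging over $K(\calK)/LK_c$, or a pro-object construction along an exhaustion of $LK_c\backslash\Gr$ by finite-type substacks). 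Relatedly, the hyperbolic localization Lemma~\ref{Braden}(2) on the unstable side carries a finite-dimensional-support hypothesis, and any argument that applies it has to pass through compactly supported (finite-type) objects and a generation step — this is exactly why the paper's Lemma~\ref{full} reduces to compact generators before invoking the flow. Your dévissage/five-lemma sketch would need this reduction built in, and at that point one is effectively running the paper's one-sided argument anyway. So: correct ingredients, but the two-sided inverse strategy as stated has a hole that the paper's fully-faithful-plus-essentially-surjective route is designed to avoid.
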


The rest of the section is devoted to the proof of Theorem \ref{AM}.

\subsection{Bijection between local systems}
Write $[\mO_K^\lambda]=LK_c\backslash\mO_K^\lambda$, $[\mO_\bbR^\lambda]=LK_c\backslash\mO_\bbR^\lambda$, 
$[\mO_c^\lambda]=LK_c\backslash\mO_c^\lambda$, and 
$[\mE^\lambda]=LG_\bbR\backslash\mO_\bbR^\lambda\in LG_\bbR\backslash\Gr$.
Recall the Matsuki flow $\phi_t:\Gr\to
\Gr$ in Theorem \ref{flow}. 
As $\phi_t$ is $LK_c$-equivariant, it descends to a flow
$\tilde\phi_{t}:LK_c\backslash\Gr\to LK_c\backslash\Gr$ and we define
\[\phi_\pm:LK_c\backslash\Gr\ra\bigsqcup_{\lambda\in\Lambda_A^+}\ [\mO_c^\lambda]\subset LK_c\backslash\Gr,\ \  \gamma\ra\underset{t\to\pm\infty}\lim\tilde\phi_t(\gamma).\]
Consider the following Cartesian diagrams:
\[\xymatrix{[\mO_K^\lambda]\ar[r]^{i_+^\lambda\ \ \ }\ar[d]^{\phi_+^\lambda}&L K_c\backslash\Gr\ar[d]^{\phi_+}
\\[\mO_c^\lambda]\ar[r]^{j_+^\lambda\ \ \ }&\bigsqcup_{\lambda\in\Lambda_A^+}[\mO_c^\lambda]}\ \ \ \ \ 
\xymatrix{[\mO_\bbR^\lambda]\ar[r]^{i_-^\lambda\ \ \ }\ar[d]^{\phi_-^\lambda}&L K_c\backslash\Gr\ar[d]^{\phi_-}
\\[\mO_c^\lambda]\ar[r]^{j_+^\lambda\ \ \ }&\bigsqcup_{\lambda\in\Lambda_A^+}[\mO_c^\lambda]}\ \ \ \ \xymatrix{[\mO_\bbR^\lambda]\ar[r]^{i_-^\lambda\ \ \ }\ar[d]^{u^\lambda}&LK_c\backslash\Gr\ar[d]^{u}
\\[\mE^\lambda]\ar[r]^{j_-^\lambda\ \ \ }&LG_\bbR\backslash\Gr}
\]
Here  
$i^\lambda_\pm$ and $j^\lambda_\pm$ are the natural 
embeddings and $\phi^\lambda_\pm$ (resp. $u^\lambda$) is the restriction of 
$\phi_\pm$ (resp. $u$) along $j^\lambda_+$ (resp. $j^\lambda_-$). 

\begin{lemma}\label{bijection}
We have the following:
\begin{enumerate}
\item
There is a bijection between isomorphism classes of 
local systems $\tau^+$ on $[\mO_K^\lambda]$,
local systems $\tau^-$ on $[\mO_\bbR^\lambda]$, 
local systems $\tau$ on $[\mO_c^\lambda]$, and local systems $\tau_\bbR$ on $[\mE^\lambda]=[LG_\bbR\backslash\mO_\bbR^\lambda]$, characterizing by the property that 
$\tau^\pm\is(\phi_\pm^\lambda)^*\tau$ and $
\tau^-\is(u^\lambda)^*\tau_\bbR$. 
\item
The map $u^\lambda$ factors as 
\beq\label{p^lambda}
u^\lambda:[\mO_\bbR^\lambda]
\stackrel{\phi_-^\lambda}\ra[\mO_c^\lambda]\stackrel{p^\lambda}\ra[\mE^\lambda]\eeq
where $p^\lambda$ is smooth of relative dimension $\on{dim}[\mE^\lambda]-\on{dim}[\mO_c^\lambda]$. Moreover, we have 
$(p^\lambda)^*\tau_\bbR\is\tau$. 
\end{enumerate}
\end{lemma}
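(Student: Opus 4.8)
The plan is to realize each of $\phi_\pm^\lambda$, $u^\lambda$ and $p^\lambda$ as a morphism of (ind\nobreakdash-)stacks that induces an equivalence on categories of local systems, and then to read off the bijection of isomorphism classes from the resulting zig-zag; the asserted compatibilities $\tau^\pm\cong(\phi_\pm^\lambda)^*\tau$ and $\tau^-\cong(u^\lambda)^*\tau_\bbR$ will then be tautological. Throughout I would work with the finite-dimensional models supplied by Proposition \ref{torsors}: since $LK_c/\Omega K_c\cong K_c$, the flow descends to a $K_c$-equivariant flow on $\Omega K_c\backslash\Gr\cong\Omega X_c$, and $[\mO_K^\lambda]\cong K_c\backslash P^\lambda$, $[\mO_\bbR^\lambda]\cong K_c\backslash Q^\lambda$, $[\mO_c^\lambda]\cong K_c\backslash B^\lambda$, with $P^\lambda,Q^\lambda,B^\lambda$ the relevant strata of $\Omega X_c$.

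First I would handle $\phi_\pm^\lambda$. By Proposition \ref{Morse-Bott} (equivalently \cite[Proposition 6.3]{N1}) the restriction of $E_1$ to $P^\lambda$ is Bott--Morse with unique critical manifold $B^\lambda$, and by Theorem \ref{flow}(3) the space $P^\lambda$ (resp. $Q^\lambda$) is exactly the stable (resp. unstable) set of $B^\lambda$ for the descended flow. Hence the flow realizes $B^\lambda$ as a $K_c$-equivariant deformation retract of $P^\lambda$, and of $Q^\lambda$ (via the reversed flow). Passing to $K_c$-quotients, $\phi_\pm^\lambda$ become deformation retractions of stacks, so $(\phi_\pm^\lambda)^*$ are equivalences on local systems, in particular bijective on isomorphism classes; this sets up the correspondences $\tau\leftrightarrow\tau^\pm$ and proves the bijectivity in (1) for these three spaces.

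For (2), set $p^\lambda:=u^\lambda\circ\iota^\lambda$, where $\iota^\lambda:[\mO_c^\lambda]\hookrightarrow[\mO_\bbR^\lambda]$ is induced by $\mO_c^\lambda\subset\mO_\bbR^\lambda$. Then $\iota^\lambda\circ\phi_-^\lambda$ is the ``flow at $t=-\infty$'' endomorphism of $[\mO_\bbR^\lambda]$, which the $LK_c$-equivariant flow $\tilde\phi_t$ ($t\in[-\infty,0]$) connects to the identity inside $\mO_\bbR^\lambda$; since $\mO_\bbR^\lambda$ is a single $LG_\bbR$-orbit and $\mO_c^\lambda$ a single $LK_c$-orbit (Proposition \ref{parametrization}), composing with $u^\lambda$ yields a canonical $2$-isomorphism $u^\lambda\cong p^\lambda\circ\phi_-^\lambda$, which is the factorization \eqref{p^lambda}. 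Fixing $\gamma_0\in\mO_c^\lambda$, $p^\lambda$ is identified with the map $B(LK_c)_{\gamma_0}\to B(LG_\bbR)_{\gamma_0}$ induced by the closed subgroup inclusion $(LK_c)_{\gamma_0}\subset(LG_\bbR)_{\gamma_0}$; this is smooth, with fiber $(LG_\bbR)_{\gamma_0}/(LK_c)_{\gamma_0}$, and its relative dimension equals $\dim[\mE^\lambda]-\dim[\mO_c^\lambda]$ once one passes to the finite-dimensional models and uses the dimension conventions of \cite{BKV}. Finally, the fibers $LG_\bbR/LK_c$ of $u$ are (weakly) contractible, because $LK_c\hookrightarrow LG_\bbR$ is a homotopy equivalence, being induced by the Cartan deformation retraction $G_\bbR\simeq K_c$; hence $(u^\lambda)^*$ is an equivalence on local systems, and from $(u^\lambda)^*=(\phi_-^\lambda)^*\circ(p^\lambda)^*$ together with the equivalence $(\phi_-^\lambda)^*$ of the previous step we conclude that $(p^\lambda)^*$ is an equivalence and $(p^\lambda)^*\tau_\bbR\cong\tau$. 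Assembling the three equivalences gives the bijection of (1) with all its compatibilities.

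The main obstacle will be the passage between the loop-group quotient stacks and their finite-dimensional models, and in particular making the $2$-isomorphism $\iota^\lambda\circ\phi_-^\lambda\cong\id$ rigorous: the flow path from $\gamma$ to $\phi_-^\lambda(\gamma)$ lies in the single $LG_\bbR$-orbit $\mO_\bbR^\lambda$, but turning this into a natural transformation of stack morphisms requires lifting it $LK_c$-equivariantly and coherently to a family in $LG_\bbR$ (using that $LG_\bbR\to\mO_\bbR^\lambda$ is a fibration) and controlling the limit as $t\to-\infty$. A secondary technical point is the contractibility of $LG_\bbR/LK_c$, which should be argued at the level of weak homotopy type — harmless for local systems, but to be phrased carefully since $LG_\bbR$ is a polynomial, not smooth, loop group.
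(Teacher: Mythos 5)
Your treatment of $\phi_\pm^\lambda$ via the Morse-theoretic contraction is fine and matches the paper, and your factorization $u^\lambda = p^\lambda\circ\phi_-^\lambda$ together with the identification of $p^\lambda$ as a map of classifying stacks of stabilizers $B(LK_c)_{\gamma_0}\to B(LG_\bbR)_{\gamma_0}$ is also correct in spirit. The problem is the contractibility claim, which you relegate to a ``secondary technical point'' but which is in fact the whole content of the lemma beyond the Morse-theoretic contractions. You assert that $LG_\bbR/LK_c$ is weakly contractible ``because $LK_c\hookrightarrow LG_\bbR$ is a homotopy equivalence, being induced by the Cartan deformation retraction $G_\bbR\simeq K_c$.'' This does not work: the Cartan decomposition $G_\bbR = K_c\exp(\fp_\bbR)$ is real-analytic but not algebraic, so it does not transport a Laurent-polynomial loop in $G_\bbR$ to a pair of Laurent-polynomial loops in $K_c$ and $\exp(\fp_\bbR)$. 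You would need, at minimum, a Gram--Schmidt-type factorization theorem for $LG_\bbR$ relative to $LK_c$, which is not available and is not what the paper does.

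The paper's actual argument is genuinely different and is the key idea you are missing. Rather than attacking $LK_c\backslash LG_\bbR$ directly, it works with the stabilizer quotient $LK_c(y)\backslash LG_\bbR(y)$ (the fiber of $p^\lambda$), and computes this by composing the complex and real uniformizations of $\Bun_{G_\bbR}(\bbP^1(\bbR))$: since $LG_\bbR\backslash\Gr \cong \Bun_{G_\bbR}(\bbP^1(\bbR)) \cong G_\bbR(\bbR[t^{-1}])\backslash\Gr_\bbR$ (Propositions \ref{uniformizations at complex x} and \ref{uniformizations at real x}), one gets a canonical isomorphism of automorphism groups $LG_\bbR(y)\cong G_\bbR(\bbR[t^{-1}])(y')$ carrying $LK_c(y)=K_c(y)$ to $K_c(y')$. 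Now $G_\bbR(\bbR[t^{-1}])(y')$ has an evaluation-at-zero map onto $G_\bbR(y')$ with contractible (pro-unipotent) fibers, and $K_c(y')$ is a maximal compact of the Levi of $G_\bbR(y')$, so $K_c(y')\backslash G_\bbR(\bbR[t^{-1}])(y')$ is contractible and, being a quotient of finite-type real groups modulo a compact, exhibits $p^\lambda$ as smooth of the stated relative dimension. Contractibility of the fiber of $u^\lambda$ then follows from the factorization, the direction opposite to the one you take. Without the uniformization step your argument has a genuine gap at precisely the point where the lemma stops being formal.
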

\begin{proof}

Since the fibers of 
$\phi_\pm$ 
are contractible, pull-back along $\phi^\lambda_+$ (resp. $\phi^\lambda_-$) defines 
an equivalence between
$LK_c$-equivariant local systems on 
$\mO_c^\lambda$ 
and $LK_c$-equivariant local systems 
on $\mO_K^\lambda$ (resp. $\mO_\bbR^\lambda$).
We show that the fiber of $u^\lambda$ is contractible, hence pull back along 
$u^\lambda$ defines an equivalence between local systems on 
$[\mE^\lambda]$ and $LK_c$-equivariant local systems on $\mO_\bbR^\lambda$.
Pick $y\in\mO_c^\lambda$ and let
$LK_c(y)$, $LG_\bbR(y)$ be the stabilizers of 
$y$ in $LK_c$ and $LG_\bbR$ respectively.
The group $LK_c(y)$ acts on the fiber 
$l_y:=(\phi_-^\lambda)^{-1}(y)$ and we have 
$\mO_\bbR^\lambda\is LK_c\times^{LK_c(y)}l_y$. 
Moreover, under the isomorphism 
$[\mO_\bbR^\lambda]\is LK_c\backslash\mO_\bbR^\lambda\is
LK_c(y)\backslash l_y$, $[\mO_c^\lambda]\is LK_c(y)\backslash y$, and
$[\mE^\lambda]\is LG_\bbR(y)\backslash y$, 
the map $u^\lambda$ 
takes the form
\[
u^\lambda:[\mO_\bbR^\lambda]\is LK_c(y)\backslash l_y\stackrel{\phi_-^\lambda}\ra 
[\mO_c^\lambda]\is LK_c(y)\backslash y\stackrel{p^\lambda}\ra 
[\mE^\lambda]\is LG_\bbR(y)\backslash y,\]
where the first map is induced by the projection 
$l_y\ra y$ and 
the second map is induced by the 
inclusion $LK_c(y)\ra LG_\bbR(y)$.
We claim that the quotient $LK_c(y)\backslash LG_\bbR(y)$ is contractible, 
hence 
$u^\lambda$
has contractible fibers and $p^\lambda$ is smooth of relative dimension $\on{dim}[\mE^\lambda]-\on{dim}[\mO_c^\lambda]$. Part (1) and (2) follows.

Proof of the claim. Pick $y'\in C_\bbR^\lambda\subset\Gr_\bbR$ and let
$K_c(y')$ and $G_\bbR(\bbR[t^{-1}])(y')$ be the stabilizers of 
$y'$ in $K_c$ and $G_\bbR(\bbR[t^{-1}])$ respectively.
The composition of the complex and real uniformizations of $\Bun_{G_\bbR}(\bP^1(\bbR))$
 \[LG_\bbR\backslash\Gr\stackrel{\on{Prop.} \ref{uniformizations at complex x}}\is \Bun_{G_\bbR}(\bP^1(\bbR))
 \stackrel{\on{Prop.} \ref{uniformizations at real x}}\is G_\bbR(\bbR[t^{-1}])\backslash\Gr_\bbR\] identifies 
\[LG_\bbR(y)\backslash y\is [\mE^\lambda]\is G_\bbR(\bbR[t^{-1}])(y')\backslash y'.\] 
Hence we obtain a natural isomorphism 
\[LG_\bbR(y)\is\Aut([\mE^\lambda])\is G_\bbR(\bbR[t^{-1}])(y')\]
sending $LK_c(y)=K_c(y)\subset LG_\bbR(y)$ to $K_c(y')\subset G_\bbR(\bbR[t^{-1}])(y')$.
Thus we reduce to show that the quotient $K_c(y')\backslash G_\bbR(\bbR[t^{-1}])(y')$ is contractible.
This follows from the fact that evaluation map 
$K_c(y')\backslash G_\bbR(\bbR[t^{-1}])(y')\to K_c(y')\backslash G_\bbR(y'),\ \ \gamma(t^{-1})\to\gamma(0)$
has contractible fibers and the quotient $K_c(y')\backslash G_\bbR(y')$ is contractible as 
$K_c(y')$ is a maximal compact subgroup of the Levi subgroup of $G_\bbR(y')$.

\end{proof}

Recall the flow \[\psi_z^3:\QM^{(\sigma_2)}(\bbP^1,X,\infty)_\bbR\ra\QM^{(\sigma_2)}(\bbP^1,X,\infty)_\bbR\] in \S\ref{flows}.
For $\lambda\in\Lambda_A^+$, we have the 
critical manifold $C_\bbR^\lambda$, the stable manifold 
$S_\bbR^\lambda$, and the unstable manifold $\tilde T_\bbR^\lambda$.
We write 
\beqn 
s_\lambda^+:S_\bbR^\lambda\ra\QM^{(\sigma_2)}(\bbP^1,G,K,\infty)_\bbR,\ \ \ \ 
\tilde t_\lambda:\tilde T_\bbR^\lambda\ra\QM^{(\sigma_2)}(\bbP^1,G,K,\infty)_\bbR
\eeqn
for the inclusion maps and we write 
\beqn
c_\lambda^+:S_\bbR^\lambda\ra C_\bbR^\lambda,\ \ \ \ \tilde d_\lambda:\tilde T_\bbR^\lambda\ra C_\bbR^\lambda
\eeqn
for the contraction maps. 
Note that all the  
maps above are $K_c$-equivalent with respect to natural $K_c$-actions.
Recall that, by Lemma \ref{properties of flows}, we have isomorphisms 
$\tilde T_\bbR^\lambda|_i\is\Omega K_c\backslash\mO_\bbR^\lambda$, $\tilde T_\bbR^\lambda|_0\is T_\bbR^\lambda$,
for $\lambda\in\Lambda_A^+$ and we write 
\beqn
s_\lambda^-:T_\bbR^\lambda\ra\QM^{(\sigma_2)}(\bbP^1,G,K,\infty)_\bbR,\ \ \ 
t_\lambda:\Omega K_c\backslash\mO_\bbR^\lambda\ra\QM^{(\sigma_2)}(\bbP^1,G,K,\infty)_\bbR
\eeqn
for the restriction of $\tilde t_\lambda$ and 
\beqn
c_\lambda^-:T_\bbR^\lambda\ra C_\bbR^\lambda,\ \ \ 
d_\lambda:\Omega K_c\backslash\mO_\bbR^\lambda\ra C_\bbR^\lambda
\eeqn
for the restriction of the contractions $\tilde d_\lambda$.

We write 
$
k_\lambda:\Omega K_c\backslash\mO_c^\lambda\ra C_\bbR^\lambda$
for the restriction of $d_\lambda$ and 
$
p_\lambda: T_\bbR^\lambda\ra G_\bbR(\bbR[t^{-1}])\backslash T_\bbR^\lambda$ 
for the natural quotient map.

\quash{
We have the following diagrams
\beq
\xymatrix{C_\bbR^\lambda&T_\bbR^\lambda\ar[d]^{c_\lambda}\ar[l]
\\ S_\bbR^\lambda\ar[r]^{}\ar[u]&\Gr_\bbR}\ \ \ \ \ 
\xymatrix{\Omega K_c\backslash\mO_c^\lambda\ar[r]^{}\ar[d]^{k_\lambda}&\Omega K_c\backslash\mO_\bbR^\lambda\ar[d]^{u}
\\ C_\bbR^\lambda\ar[r]^{j_-^\lambda\ \ \ }&\Gr_\bbR}\ \ \ \ 
\xymatrix{T_\bbR^\lambda\ar[r]^{i_-^\lambda\ \ \ }\ar[d]^{\phi_-^\lambda}&
\Gr_\bbR\ar[d]^{\phi_-}
\\ G_\bbR(\bbR[t^{-1}])\backslash T_\bbR^\lambda\ar[r]^{}&G_\bbR(\bbR[t^{-1}])\backslash\Gr_\bbR}
\eeq}

\begin{lemma}\label{bijection 2}
The map $k_\lambda:\Omega K_c\backslash\mO_c^\lambda\ra C_\bbR^\lambda$ is a $K_c$-equivariant isomorphism. 
There is a bijection between isomorphism classes of 
$K_c$-equivariant local systems $\omega^+$ on $S_\bbR^\lambda$,
$K_c$-equivariant local systems $\omega^-$ on $T_\bbR^\lambda$, 
$K_c$-equivariant local systems $\omega$ on $C_\bbR^\lambda$, 
$K_c$-equivariant local systems $\tau$ on $\Omega K_c\backslash\mO_c^\lambda$, 
and local system $\omega_\bbR$ on $G_\bbR(\bbR[t^{-1}])\backslash T_\bbR^\lambda$,
characterizing by the property that 
$\omega^\pm\is(c_\lambda^\pm)^*\omega$, $
\tau\is(k_\lambda)^*\omega$
, and $(p_\lambda)^*\omega_\bbR\is (c_\lambda^-)^*\omega$
\end{lemma}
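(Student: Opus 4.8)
The plan is to reduce Lemma \ref{bijection 2} to Lemma \ref{bijection} by exploiting the flow $\psi_z^3$ on $\QM^{(\sigma_2)}(\bP^1,X,\infty)_\bbR$ and its compatibility with the Matsuki flow $\phi_t$. First I would verify that $k_\lambda:\Omega K_c\backslash\mO_c^\lambda\to C_\bbR^\lambda$ is a $K_c$-equivariant isomorphism. By Theorem \ref{Quillen} (with $m=1$) there is a $K_c$-equivariant stratified homeomorphism $\Omega K_c\backslash\Gr\is\Gr_\bbR$ carrying $\Omega K_c\backslash\mO_K^\lambda$ to $S_\bbR^\lambda$ and $\Omega K_c\backslash\mO_\bbR^\lambda$ to $T_\bbR^\lambda$; intersecting the two, and using that $\mO_c^\lambda=\mO_K^\lambda\cap\mO_\bbR^\lambda$ while $C_\bbR^\lambda=S_\bbR^\lambda\cap T_\bbR^\lambda$, gives $\Omega K_c\backslash\mO_c^\lambda\is C_\bbR^\lambda$. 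I would then check that this homeomorphism is precisely $k_\lambda$: both are the restriction of the contraction $d_\lambda$ to the critical locus, which by Lemma \ref{properties of flows}(4) and the definition of $d_\lambda$ and $k_\lambda$ agrees with the identification coming from Quillen's homeomorphism.

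Next I would build the chain of bijections. Since $S_\bbR^\lambda$ is the stable manifold for $C_\bbR^\lambda$ under $\psi_z^3$ (Lemma \ref{properties of flows}(3)), the contraction $c_\lambda^+:S_\bbR^\lambda\to C_\bbR^\lambda$ has contractible fibers, so $(c_\lambda^+)^*$ is an equivalence between $K_c$-equivariant local systems on $C_\bbR^\lambda$ and on $S_\bbR^\lambda$; similarly $\tilde d_\lambda:\tilde T_\bbR^\lambda\to C_\bbR^\lambda$ has contractible fibers, hence so does its restriction $c_\lambda^-:T_\bbR^\lambda\to C_\bbR^\lambda$, giving the bijection $\omega\leftrightarrow\omega^-$. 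The identification $k_\lambda$ transports $K_c$-equivariant local systems on $C_\bbR^\lambda$ to $K_c$-equivariant local systems on $\Omega K_c\backslash\mO_c^\lambda$, i.e. to the local systems $\tau$ of Lemma \ref{bijection}(1). Finally, for $\omega_\bbR$ on $G_\bbR(\bbR[t^{-1}])\backslash T_\bbR^\lambda$: I would factor $u^\lambda$ as in Lemma \ref{bijection}(2) through $[\mO_c^\lambda]$ via a smooth map $p^\lambda$ with contractible fibers (the argument there shows $LK_c(y)\backslash LG_\bbR(y)$ is contractible, using the complex and real uniformizations of $\Bun_{G_\bbR}(\bP^1(\bbR))$ from Propositions \ref{uniformizations at complex x} and \ref{uniformizations at real x}). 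Under the identification $\Omega K_c\backslash\mO_c^\lambda\is C_\bbR^\lambda$ and $\Omega K_c\backslash\mO_\bbR^\lambda\is T_\bbR^\lambda$, the quotient map $p_\lambda:T_\bbR^\lambda\to G_\bbR(\bbR[t^{-1}])\backslash T_\bbR^\lambda$ corresponds to $p^\lambda$ composed with the contraction, so $p_\lambda$ also has contractible fibers on each $c_\lambda^-$-fiber; this yields the bijection $\omega_\bbR\leftrightarrow\omega$ with $(p_\lambda)^*\omega_\bbR\is(c_\lambda^-)^*\omega$.

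Assembling: I would define the correspondence by $\omega\mapsto\big((c_\lambda^+)^*\omega,\ (c_\lambda^-)^*\omega,\ (k_\lambda)^*\omega,\ \text{the descent of }\omega\text{ along }p_\lambda\big)$ and check each arrow is an equivalence on categories of ($K_c$-equivariant) local systems, hence a bijection on isomorphism classes. All the defining relations $\omega^\pm\is(c_\lambda^\pm)^*\omega$, $\tau\is(k_\lambda)^*\omega$, $(p_\lambda)^*\omega_\bbR\is(c_\lambda^-)^*\omega$ hold by construction, and uniqueness of the objects matched to a given $\omega$ follows from full faithfulness of the pullback functors along maps with contractible fibers.

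The main obstacle is the $\omega_\bbR$ part: one must show that $p_\lambda:T_\bbR^\lambda\to G_\bbR(\bbR[t^{-1}])\backslash T_\bbR^\lambda$ descends local systems with contractible fibers, which amounts to the contractibility of $K_c(y')\backslash G_\bbR(\bbR[t^{-1}])(y')$ and its compatibility with the flow identification $\tilde T_\bbR^\lambda|_i\is\Omega K_c\backslash\mO_\bbR^\lambda$. This is essentially the computation carried out in the proof of Lemma \ref{bijection}(2), together with the careful bookkeeping of Lemma \ref{properties of flows}(3),(4) to see that the maps $c_\lambda^-$, $d_\lambda$, $p_\lambda$ fit into the factorization $u^\lambda=p^\lambda\circ\phi_-^\lambda$ after passing to the fiber over $i\in\calH$; once that diagram is in place the rest is a formal consequence of contractibility of fibers.
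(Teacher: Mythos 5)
Your proposal is correct and follows essentially the same approach as the paper: the first claim is handled by noting $K_c$-equivariance of $k_\lambda$ together with the identification of source and target, and the bijections of local systems all come from contractibility of fibers of the various contraction and quotient maps.

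One small remark: for the first claim you try to argue that the stratified homeomorphism of Theorem~\ref{Quillen}, restricted to the intersection of the strata, \emph{is} the map $k_\lambda$. This identification is not needed and is slightly delicate, since the Quillen homeomorphism comes from a chosen trivialization of the family over $i\bbR$ while $k_\lambda$ comes from the flow $\psi_z^3$; a priori these need not agree on the nose. The paper sidesteps this by observing directly that both $\Omega K_c\backslash\mO_c^\lambda$ and $C_\bbR^\lambda$ are single $K_c$-orbits isomorphic to $K_c(\lambda)\backslash K_c$ (the first because $\mO_c^\lambda$ is a single $LK_c$-orbit, the second since $C_\bbR^\lambda\is G_\bbR/P_\bbR^\lambda$ on which $K_c$ acts transitively), so any $K_c$-equivariant map between them is automatically an isomorphism. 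This is cleaner than appealing to the external homeomorphism. For the $\omega_\bbR$ part, the paper also states the needed fact slightly differently -- it cites contractibility of $K_c\backslash G_\bbR(\bbR[t^{-1}])$ rather than the stabilizer quotient $K_c(y')\backslash G_\bbR(\bbR[t^{-1}])(y')$ you invoke from Lemma~\ref{bijection}(2) -- but both are true and both give the same conclusion.
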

\begin{proof}
The first claim follows from the fact that 
$\Omega K_c\backslash\mO_c^\lambda\is C_\bbR^\lambda\is
K_c(\lambda)\backslash K_c$, where $K_c(\lambda)$
is the stabilizer of $\lambda$ in $K_c$, and the $K_c$-equivariant property of $k_\lambda$. 
The second claim follows from the facts that the 
contraction maps $c_\lambda^\pm$ are
$K_c$-equivariant and the 
fibers of $c_\lambda^\pm$ and the quotient
$K_c\backslash G_\bbR(\bbR[t^{-1}])$
are contractible.

\end{proof}


\quash{
Consider a diagram of closed substacks of $LK_c\backslash\Gr$
\quash{
\[\xymatrix{U_0\ar[r]\ar[drrr]&U_1\ar[r]\ar[drr]&\cdot\cdot\cdot&U_k\ar[d]\ar[r]&\cdot\cdot\cdot\\
&&&LK_c\backslash\Gr}\]}
\[
U_0\stackrel{j_0}\ra U_1\stackrel{j_1}\ra U_2\ra\cdot\cdot\cdot\ra U_k\ra\cdot\cdot\cdot\]
such that 
1) $\cup_{i} U_i=LK_c\backslash\Gr$, 
2) $U_i$ is a finite union of $[\mO_K^\lambda]$, 3) each $j_k$ is closed embedding. Let $f_i:U_i\ra LK_c\backslash\Gr$ be the natural embedding and 
we define \[s_i=u\circ f_i:U_i\ra\Bun_G(\mathbb P^1)_\bbR.\]
Note that each $s_i$ is of finite type 
and it follows from the definition that 
$f_i=f_{i+1}\circ j_i$, 
$s_{i}=s_{i+1}\circ j_i$.
We 
define 
\[u_i^!:=(f_i)_*s_i^!:D(LG_\bbR\backslash\Gr)\ra D(LK_c\backslash\Gr)\]
Let $\mF\in D(LG_\bbR\backslash\Gr)$.
The unit morphism $id\ra (j_{i-1})_*j_{i-1}^{*}\is (j_{i-1})_*j_{i-1}^{!}$ defines a map
\[p_i:u_i^!(\mF)\is(f_i)_*s_i^!\mF\ra(f_i)_*(j_{i-1})_*j_{i-1}^{!}s_i^!(\mF)\is (f_{i-1})_*s_{i-1}^!(\mF)=u_{i-1}^!(\mF).\]
Consider the following functor 
\[u^!:D(LG_\bbR\backslash\Gr)\ra\on{pro}(D(LK_c\backslash\Gr)),\ \ 
\mF\ra u^!(\mF):=\lim u_i^!(\mF)\in\on{pro}(D(LK_c\backslash\Gr)).\]
Here $u^!(\mF)$ is 
the pro-object in $D(LK_c\backslash\Gr)$
associated to the projective system 
\[u_0^!(\mF)\stackrel{\ p_1}\leftarrow u_1^!(\mF)\stackrel{p_2}\leftarrow u_2^!(\mF)\leftarrow\cdot\cdot\cdot.\]

\begin{lemma}
For any $\mM\in D(K(\calK)\backslash\Gr))$ and $\mF\in D(LG_\bbR\backslash\Gr)$, we have 
\[\on{Hom}_{D(LG_\bbR\backslash\Gr)}(u_!\circ\on{For}_+(\mM),\mF)\is
\on{Hom}_{\on{pro}(D(LK_c\backslash\Gr))}(\on{For}_+(\mM),u^!(\mF)).\]
\end{lemma}
\begin{proof}
\end{proof}
}

\subsection{Standard and co-standard sheaves}
For each $\lambda\in\Lambda_A^+$ and a local system 
$\tau$ on $[\mO_c^\lambda]$
one has the standard sheaves
\beq\label{standard}
\calS_*^+(\lambda,\tau):=(i_{+}^\lambda)_*(\tau^+) \text{\ \ and\ \ } 
\calS_*^-(\lambda,\tau):=(j_{-}^\lambda)_*(\tau_\bbR)
\eeq
and co-standard sheaves
\beq\calS_!^+(\lambda,\tau):=(i_{+}^\lambda)_!(\tau^+)
\text{\ \ and\ \ } 
\calS_!^-(\lambda,\tau):=(j_-^\lambda)_!(\tau_\bbR).
\eeq
Here $\tau^+$ and $\tau_\bbR$ are local system 
on $[\mO_K^\lambda]$ and $[\mE^\lambda]$
corresponding to $\tau$
as in Lemma \ref{bijection}.
Let $d_\lambda:=\on{dim}\Bun_{G}(\mathbb P^1)_\bbR-\on{dim}[\mO_K^\lambda]$.

Write
\beq\label{iota}
\iota_+^\lambda:[\mO_c^\mu]\to[\mO_K^\mu],\ \ \iota_-^\lambda:[\mO_c^\mu]\to[\mO_\bbR^\mu]
\eeq
for the natural embeddings.
We recall the following fact, see \cite[Lemma 5.4]{MUV}.
\begin{lemma}\label{Braden}
\begin{enumerate}
\item
Consider $[\mO_c^\mu]\stackrel{\iota_+^\mu}\to[\mO_K^\mu]\stackrel{\phi_+^\mu}\to[\mO_c^\mu]$.
Let $\mF\in D_c([\mO_K^\mu])$. If $\mF$ is 
smooth (= locally constant) on the trajectories of the flow $\tilde\phi_t$, then we have canonical isomorphisms 
$(\iota_+^\mu)^!\mF\is(\phi_+^\mu)_!\mF$  and 
$(\iota_+^\mu)^*\mF\is(\phi_+^\mu)_*\mF$.
\item
Consider $[\mO_c^\mu]\stackrel{\iota_-^\mu}\to[\mO_\bbR^\mu]\stackrel{\phi_-^\mu}\to[\mO_c^\mu]$ where $\iota_-^\mu$ is the natural embedding. 
Let $\mF\in D_c([\mO_\bbR^\mu])$. If $\mF$ is 
smooth (= locally constant) on the trajectories of the flow $\tilde\phi_t$ and is supported on a finite dimensional substack $\sY\subset [\mO_\bbR^\mu]$, then we have canonical isomorphisms 
$(\iota_-^\mu)^!\mF\is(\phi_-^\mu)_!\mF$  and 
$(\iota_-^\mu)^*\mF\is(\phi_-^\mu)_*\mF$.
\end{enumerate}

\end{lemma}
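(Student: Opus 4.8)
The plan is to deduce both isomorphisms from the finite-dimensional contraction principle of \cite[Lemma~5.4]{MUV}, applied to the descended Matsuki flow $\tilde\phi_t$ after cutting down to finite-dimensional substacks and passing to the quotient by $\Omega K_c$ as in the proof of Theorem~\ref{flow}. Recall from Theorem~\ref{flow} and Proposition~\ref{torsors} that $\Omega K_c\backslash\mO_K^\mu$ is a finite-dimensional $K_c$-manifold, that the flow $\tilde\phi_t$ preserves $[\mO_K^\mu]=LK_c\backslash\mO_K^\mu$, and that $\phi_+^\mu\colon[\mO_K^\mu]\to[\mO_c^\mu]$ is the map $\gamma\mapsto\lim_{t\to+\infty}\tilde\phi_t(\gamma)$. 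Reparametrising $t\in[0,\infty]$ (say $s=1-e^{-t}$) converts the flow together with its limit into a contraction of $[\mO_K^\mu]$ onto the fixed locus $[\mO_c^\mu]$, i.e.\ a map $c\colon[0,1]\times[\mO_K^\mu]\to[\mO_K^\mu]$ with $c(0,-)=\id$, $c(1,-)=\iota_+^\mu\circ\phi_+^\mu$, $c(s,-)\circ\iota_+^\mu=\iota_+^\mu$ for all $s$, restricting to the identity on $[\mO_c^\mu]$. For $\mF\in D_c([\mO_K^\mu])$ locally constant along the flow lines --- hence constructible with respect to a stratification refined by the $c$-orbits --- the contraction principle then provides the canonical isomorphisms $(\iota_+^\mu)^*\mF\is(\phi_+^\mu)_*\mF$ and $(\iota_+^\mu)^!\mF\is(\phi_+^\mu)_!\mF$ of part~(1).

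For part~(2) the unstable stratum $[\mO_\bbR^\mu]=LK_c\backslash\mO_\bbR^\mu$ is infinite dimensional, so the support hypothesis is used first. Since $\mF$ is locally constant along trajectories of $\tilde\phi_t$, its support is a union of flow lines; enlarging the finite-dimensional substack $\sY$ carrying $\mF$ to the union $\sY'$ of $[\mO_c^\mu]$ with all flow lines meeting $\sY$ --- each such line is one-dimensional and limits into the finite-dimensional stack $[\mO_c^\mu]$ as $t\to-\infty$, so $\sY'$ is again finite-dimensional, $\tilde\phi_t$-invariant, and locally closed --- reduces us to $\sY'$. On $\sY'$ the reversed flow $\tilde\phi_{-t}$ contracts onto $[\mO_c^\mu]$, with limit map $\phi_-^\mu$ and with $\iota_-^\mu$ in the role of $\iota_+^\mu$; applying the contraction principle once more (to the reversed flow) yields $(\iota_-^\mu)^*\mF\is(\phi_-^\mu)_*\mF$ and $(\iota_-^\mu)^!\mF\is(\phi_-^\mu)_!\mF$.

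The step I expect to be the main obstacle is not the sheaf theory --- which is essentially \cite[Lemma~5.4]{MUV} once the set-up is in place --- but the verification that the reductions above put us honestly inside that set-up: that the descended flow is a complete $\bbR$-action on a finite-dimensional stack, that the limit maps $\phi_\pm^\mu$ are morphisms of (semi-)analytic stacks rather than merely set-theoretic maps, and, in part~(2), that $\sY'$ is a reasonable (locally closed, $\tilde\phi_t$-stable) finite-dimensional substack. All of these follow from the Morse-theoretic content of Theorem~\ref{flow} and its proof --- the existence, regularity, and stratification-compatibility of the Matsuki-flow limits --- after descending from $\Gr$ to $\Omega K_c\backslash\Gr$ and working $K_c$-equivariantly, exactly as in the finite-dimensional Matsuki picture of \cite{MUV}.
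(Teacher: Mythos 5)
Your argument matches the paper's approach: the paper simply cites \cite[Lemma 5.4]{MUV} for this contraction principle, and your proposal is the natural way to reduce the affine setting to that finite-dimensional statement --- quotient by $\Omega K_c$, reparametrize the Matsuki flow of Theorem~\ref{flow} into a homotopy contraction onto the critical $K_c$-manifold $[\mO_c^\mu]$, and in part~(2) use the support hypothesis to cut down to a finite-dimensional, flow-stable substack before reversing time. The finite-dimensionality and local-closedness of $\sY'$ that you flag are indeed the implicit Morse-theoretic inputs the paper is drawing on from Theorem~\ref{flow} and Proposition~\ref{torsors}, so there is no substantive gap.
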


We shall show that the functor $\Upsilon$ sends 
standard sheaves to co-standard sheaves.
Introduce the following local system on $[\mO_c^\lambda]$
\beq\label{L_lambda}
\mL_\lambda:=(\iota_-^\lambda)^*\mL_\lambda'\otimes\mL_\lambda''\otimes\on{or}_{p^\lambda}^\vee
\eeq
where 
\beq
(\mL'_\mu)^\vee:=(i_-^\mu)^!(\bC)[\on{codim}[\mO_\bbR^\mu]] \ \text{\ and\ \ }  
\mL_\lambda'':=(\iota_+^\lambda)^!\bC[\on{codim}_{[\mO_K^\lambda]}[\mO_c^\lambda]]
\eeq
are local systems 
on $[\mO_\bbR^\lambda]$ and $[\mO_c^\lambda]$ respectively and 
$\on{or}_{p^\lambda}:=(p^\lambda)^!\bC[-\on{dim}[\mE^\lambda]+\on{dim}[\mO_c^\lambda]]$ is the orientation sheaf for the smooth map $p^\lambda:[\mO_c^\lambda]\to[\mE^\lambda]$ in \eqref{p^lambda}.

\begin{lemma}\label{surjective}
For any local system 
$\tau$ on $[\mO_c^\lambda]$ we have 
\[\Upsilon(S^+_*(\lambda,\tau))\is S^-_!(\lambda,\tau\otimes\mL_\lambda)[d_\lambda].\]

\end{lemma}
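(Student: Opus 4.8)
The plan is to compute $\Upsilon(\calS_*^+(\lambda,\tau)) = u_!q^!(i_+^\lambda)_*(\tau^+)$ by unwinding the definitions through the Matsuki flow. First I would replace the pushforward along $i_+^\lambda:[\mO_K^\lambda]\to LK_c\backslash\Gr$ by the flow geometry: since $q^!(i_+^\lambda)_*(\tau^+)$ is constructible and the relevant sheaves are smooth along the trajectories of $\tilde\phi_t$, base change against the Cartesian squares relating $\phi_\pm$ to $i_\pm^\lambda$ lets us rewrite things in terms of the critical-locus strata $[\mO_c^\lambda]$. The key input is Lemma \ref{Braden} (the affine analog of the Braden hyperbolic localization lemma): applied to $[\mO_c^\mu]\xrightarrow{\iota_+^\mu}[\mO_K^\mu]\xrightarrow{\phi_+^\mu}[\mO_c^\mu]$, it identifies $(\iota_+^\mu)^*$ with $(\phi_+^\mu)_*$ on flow-smooth sheaves, and dually $(\iota_-^\mu)^!$ with $(\phi_-^\mu)_!$ on the other side. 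Using this, the composite $u_!q^!$, when restricted to the $\mu$-stratum of the target via $(j_-^\mu)^!$ or $(j_-^\mu)^*$, collapses onto a computation purely in terms of the maps $\phi_+^\lambda$, $\phi_-^\lambda$, $p^\lambda$ between the finite-type-ish stacks $[\mO_K^\lambda]$, $[\mO_c^\lambda]$, $[\mO_\bbR^\lambda]$, $[\mE^\lambda]$ of Lemma \ref{bijection}.

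Concretely, I would argue: (1) $q^!(i_+^\lambda)_*(\tau^+)$ is supported on (the closure of) $[\mO_K^\lambda]$ and is smooth along the $\tilde\phi_t$-trajectories, because $q$ is a base change of $\phi_\pm$-type maps and $\tau^+$ is flow-pulled-back from $\tau$ on $[\mO_c^\lambda]$; (2) computing $(j_-^\mu)^!u_!(\text{-})$ for each $\mu$ via proper/smooth base change against the square defining $u^\lambda$, and factoring $u^\lambda = p^\lambda\circ\phi_-^\lambda$ as in \eqref{p^lambda}, reduces the claim to showing that the "transfer through the critical locus" functor sends the local system $\tau^+$ on $[\mO_K^\lambda]$ to the local system $\tau_\bbR\otimes(\text{orientation/codimension twists})$ on $[\mE^\lambda]$, up to the shift $d_\lambda$; (3) the orientation and codimension bookkeeping is exactly packaged in the definition \eqref{L_lambda} of $\mL_\lambda$ — the factor $(\iota_+^\lambda)^!\bC[\cdots]$ comes from $!$-restricting across $\iota_+^\lambda$, the factor $(i_-^\mu)^!\bC[\cdots]$ from the $*$-pushforward becoming a $!$-pushforward on the real side, and $\on{or}_{p^\lambda}^\vee$ from the smooth map $p^\lambda$ of relative dimension $\dim[\mE^\lambda]-\dim[\mO_c^\lambda]$. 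Matching the shift amounts to checking $d_\lambda = \dim\Bun_{G_\bbR}(\bbP^1(\bbR)) - \dim[\mO_K^\lambda]$ against the codimension of $[\mO_\bbR^\lambda]$ plus the relative dimensions, which is a routine (but careful) dimension count using that $[\mO_K^\lambda]$ and $[\mO_\bbR^\lambda]$ are opposite Morse strata with $[\mO_c^\lambda]$ their common critical set.

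The main obstacle I expect is the base-change step (2): the maps $u$, $q$, $\phi_\pm$ are maps of infinite-dimensional (ind-placid) stacks, so ordinary proper/smooth base change does not apply verbatim. One must justify commuting $u_!$ past $(j_-^\mu)^!$ — i.e. invoke the base-change machinery for stacks admitting gluing of sheaves (Lemma \ref{key lemma}, Lemma \ref{fiber sequence}) — and, crucially, control supports: Lemma \ref{Braden}(2) requires the sheaf to be supported on a \emph{finite-dimensional} substack of $[\mO_\bbR^\mu]$, so one needs that $q^!(i_+^\lambda)_*(\tau^+)$, after applying $u$-related operations, has the requisite finite-dimensional support (which follows from the Matsuki flow picture, since the intersection $\mO_K^\lambda\cap\overline{\mO_\bbR^\mu}$ is finite-dimensional). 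I would therefore first establish a clean "devissage" statement: the functor $\Upsilon$ restricted to sheaves constructible for the $K(\calK)$-orbit stratification and smooth along flow lines is computed stratum-by-stratum by hyperbolic localization, and only then plug in the standard sheaf $\calS_*^+(\lambda,\tau)$ and do the dimension/orientation arithmetic.
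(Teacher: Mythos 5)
Your proposal follows the paper's strategy closely: compute $(j_-^\mu)^*\Upsilon(\calS^+_*(\lambda,\tau))$ stratum by stratum, factor $u^\mu = p^\mu\circ\phi_-^\mu$, invoke the affine Braden lemma (Lemma \ref{Braden}(2)) to convert $(\phi_-^\mu)_!$ into $(\iota_-^\mu)^!$ on flow-smooth sheaves, check the finite-dimensional support needed for Braden, and then do the shift/local-system bookkeeping that produces $\mL_\lambda$ and $d_\lambda$. You also correctly flag the ind-placid base-change issue and the finite-dimensional support hypothesis of Braden(2) as the main technical points — both are handled in the paper exactly as you anticipate (via the gluing-of-sheaves machinery and the fact that $\mO_K^\lambda\cap\overline{\mO_\bbR^\mu}$ is finite-dimensional).

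One ingredient you don't name explicitly but which the paper leans on is the transversality of the $K(\calK)$- and $LG_\bbR$-orbit stratifications (Corollary \ref{transversal}). After base change one lands on $(i_-^\mu)^*(i_+^\lambda)_*(\tau^+)$, whereas Braden(2) wants a $!$-restriction along $\iota_-^\mu$. It is the transversality statement that lets one rewrite $(i_-^\mu)^*$ in terms of $(i_-^\mu)^!$ at the cost of exactly the rank-one local system $\mL'_\mu = ((i_-^\mu)^!\bC)^\vee[\on{codim}]$ and a shift — that is where the $(\iota_-^\lambda)^*\mL_\lambda'$ factor of $\mL_\lambda$ actually comes from, rather than from "$*$-pushforward becoming $!$-pushforward." This is a mild mis-attribution of one factor rather than a gap, since you do acknowledge that the twists need to be tracked; but without transversality the Braden step simply cannot be applied, so it deserves to be a named step in a complete argument. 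Modulo that, your proof sketch reproduces the paper's.
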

\begin{proof}
Let $\lambda,\mu\in\Lambda_A^+$.
Consider the following diagram 
\beq\label{key diagram}
\xymatrix{[\mO_K^\lambda\cap\mO_\bbR^\mu]\ar[r]^s\ar[d]^\iota
&[\mO_\bbR^\mu]\ar[r]^{u^\mu\ \ \ \ \ \ \ }\ar[d]^{i_-^\mu}&[\mE^\mu]=LG_\bbR\backslash\mO_\bbR^\mu\ar[d]^{j_-^\mu}\\
[\mO_K^\lambda]\ar[r]^{i_+^\lambda}&LK_c\backslash\Gr\ar[r]^{u\ }&LG_\bbR\backslash\Gr}.
\eeq
Let 
$\mG=(j_-^\mu)^*\Upsilon(S_*(\lambda,\tau))\is (j_-^\mu)^*u_!(i_+^\lambda)_*(\tau^+)\is(u^\mu)_!(i_-^\mu)^*(i_+^\lambda)_*(\tau^+)$.
It suffices to show that $\mG\is 0$ if $\lambda\neq\mu$ and $\mG\is(\tau\otimes\mL_\lambda)_\bbR$ if $\lambda=\mu$.

By Corollary \ref{transversal}, the orbits $\mO_\bbR^\mu$ and $\mO_K^\lambda$ are trasversal to each 
other, hence we have 
\beq\label{upper shriek}
(i_-^\mu)^*(i_+^\lambda)_*(\tau^+)\is(i_-^\mu)^!(i_+^\lambda)_*(\tau^+)\otimes\mL'_\mu[\on{codim}[\mO_\bbR^\mu]].
\eeq
where 
\beq\label{L'}
(\mL'_\mu)^\vee=(i_-^\mu)^!(\bC)[\on{codim}[\mO_\bbR^\mu]]
\eeq
is a local system on $[\mO_\bbR^\mu]$.
Thus 
\[
\mG\is(u^\mu)_!(i_-^\mu)^*(i_+^\lambda)_*(\tau^+)\stackrel{\eqref{upper shriek}}\is(u^\mu)_!((i_-^\mu)^!(i_+^\lambda)_*(\tau^+)\otimes\mL_\mu')[\on{codim}[\mO_\bbR^\mu]]\is
\]
\[
(u^\mu)_!(s_*\iota^!(\tau^+)\otimes\mL_\mu')[\on{codim}[\mO_\bbR^\mu]].
\]

According to Lemma \ref{bijection} the map $u^\mu$ factors as 
\[u^\mu:[\mO_\bbR^\mu]\stackrel{\phi_-^\mu}\ra[\mO_c^\mu]\stackrel{p^\mu}\ra[\mE^\mu]\]
where $p^\mu$ is smooth of relative dimension $\on{dim}[\mE^\mu]-\on{dim}[\mO_c^\mu]$. Since $s_*\iota^!(\tau^+)[\on{codim}[\mO_\bbR^\mu]]\in D_c([\mO_\bbR^\mu])$
is smooth on the trajectories of the flow $\tilde\phi_t$, by Lemma \ref{Braden}, we have 
\beq\label{mG}
\mG\is u^\mu_!(s_*\iota^!(\tau^+)\otimes\mL_\mu')[\on{codim}[\mO_\bbR^\mu]]
\is p^\mu_!(\phi_-^\mu)_!(s_*\iota^!(\tau^+)\otimes\mL_\mu')[\on{codim}[\mO_\bbR^\mu]]
\stackrel{\on{Lem}\ref{Braden}}\is
\eeq
\[\is p^\mu_!(\iota_-^\mu)^!(s_*\iota^!(\tau^+)\otimes\mL_\mu')[\on{codim}[\mO_\bbR^\mu]].\]
Here $\iota_-^\mu:[\mO_c^\mu]\ra[\mO_\bbR^\mu]$ is the embedding.

If $\lambda\neq\mu$ then $[\mO_\bbR^\mu]\cap[\mO_c^\lambda]$ is empty, thus we have \[(\iota_-^\mu)^!s_*\iota^!(\tau^+)[\on{codim}[\mO_\bbR^\mu]]=0\]
and \eqref{mG} implies $\mG=0$. 

If $\lambda=\mu$, then 
$[\mO_\bbR^\lambda]\cap[\mO_K^\lambda]=[\mO_c^\lambda]$,
$s=\iota_-^\lambda$, $\iota=\iota_+^\lambda$ are closed embeddings 
and by Lemma \ref{bijection} we have 
\[
(u^\lambda)_!(s)_*(\tau)\is (p^\lambda)_!(\tau)\is\tau_\bbR\otimes (p^\lambda)_!(\bC)\is
\tau_\bbR\otimes(\on{or}_{p^\lambda}^{\vee})_\bbR[\on{dim}[\mE^\mu]-\on{dim}[\mO_c^\mu]],
\]
\[\iota^!(\tau^+)\is \iota^*(\tau^+)\otimes \iota^!\bC\is\tau\otimes (\iota_+^\lambda)^!\bC\is\tau\otimes\mL''_\lambda[-\on{codim}_{[\mO_K^\lambda]}[\mO_c^\lambda]]\]
where $\on{or}_{p^\lambda}$ is the relative orientation sheaf on $[\mO_c^\lambda]$ associated to $p^\lambda:[\mO_c^\lambda]\to[\mE^\lambda]$
and 
\beq\label{L''}
\mL_\lambda'':=(\iota_+^\lambda)^!\bC[\on{codim}_{[\mO_K^\lambda]}[\mO_c^\lambda]]
\eeq
is a local system on $[\mO_c^\lambda]$.
Now an elementary calculation shows that 
\[\mG\stackrel{\eqref{mG}}\is (u^\lambda)_!(s_*\iota^!(\tau^+)\otimes\mL_\lambda')[\on{codim}[\mO_\bbR^\lambda]]\is
(u^\lambda)_!(s_*(\tau\otimes\mL_\lambda'')\otimes\mL_\lambda')[\on{codim}[\mO_\bbR^\lambda]-\on{codim}_{[\mO_K^\lambda]}[\mO_c^\lambda]]\is\]
\[\is(u^\lambda)_!(\iota_-^\lambda)_*(\tau\otimes\mL_\lambda''\otimes (\iota_-^\lambda)^*\mL_\lambda')[\on{codim}[\mO_\bbR^\lambda]-\on{codim}_{[\mO_K^\lambda]}[\mO_c^\lambda]]\is
(\tau\otimes\mL_\lambda)_\bbR[d_\lambda]
,\]
where 
\beq\label{L_lambda}
\mL_\lambda:=(\iota_-^\lambda)^*\mL_\lambda'\otimes\mL_\lambda''\otimes\on{or}_{p^\lambda}^\vee
\eeq
is a local sytem on $[\mO_c^\lambda]$ and
\[d_\lambda=\on{codim}[\mO_\bbR^\lambda]-
\on{codim}_{[\mO_K^\lambda]}[\mO_c^\lambda]+\on{dim}[\mE^\lambda]-\on{dim}[\mO_c^\lambda]=\on{dim}\Bun_{G_\bbR}(\mathbb P^1(\bbR))-\on{dim}[\mO_K^\lambda].\]
The lemma follows.

\end{proof}

\subsection{Fully-faithfulness}

We shall show that $\Upsilon$ is fully-faithful.
Consider a diagram of closed substacks of $LK_c\backslash\Gr$
\quash{
\[\xymatrix{U_0\ar[r]\ar[drrr]&U_1\ar[r]\ar[drr]&\cdot\cdot\cdot&U_k\ar[d]\ar[r]&\cdot\cdot\cdot\\
&&&LK_c\backslash\Gr}\]}
\[
U_0\stackrel{j_0}\lra U_1\stackrel{j_1}\ra U_2\lra\cdot\cdot\cdot\ra U_k\lra\cdot\cdot\cdot\]
such that 
\begin{enumerate}
\item $\bigcup_{i} U_i=LK_c\backslash\Gr$, 
\item Each $U_i$ is a finite union of $[\mO_K^\lambda]$, 
\item Each $j_k$ is closed embedding. 
\end{enumerate}

Let $f_i:U_i\ra LK_c\backslash\Gr$ be the natural embedding and 
we define \[s_i=u\circ f_i:U_i\ra LG_\bbR\backslash\Gr.\]
Note that each $s_i$ is of finite type.

\begin{lemma}\label{full}
For any $\mF,\mF'\in D(K(\calK)\backslash\Gr)$
we have \[\on{Hom}_{D(K(\calK)\backslash\Gr)}(\mF,\mF')
\is\on{Hom}_{D(LG_\bbR\backslash\Gr)}(\Upsilon(\mF),\Upsilon(\mF')).\]

\end{lemma}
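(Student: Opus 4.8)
The goal is to show that $\Upsilon = u_!q^!$ is fully faithful, i.e. induces an isomorphism on $\on{Hom}$-complexes. The plan is to reduce the statement to a computation on standard sheaves, using the facts already established: $D(K(\calK)\backslash\Gr)$ is generated under colimits by the standard objects $\calS^+_*(\lambda,\tau) = (i^\lambda_+)_*(\tau^+)$ (this follows from Lemma \ref{fiber sequence}(2) together with the description of orbits in Proposition \ref{parametrization}), and $\Upsilon$ commutes with colimits since $u_!$ is a left adjoint and $q^!$ commutes with colimits on the ind-placid stack (Proposition \ref{key base change} and Lemma \ref{key lemma}). Hence it suffices to compute $\on{Hom}(\calS^+_*(\lambda,\tau),\calS^+_*(\mu,\sigma))$ on both sides. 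By Lemma \ref{surjective} we already know $\Upsilon$ sends the standard $\calS^+_*(\lambda,\tau)$ to the costandard $\calS^-_!(\lambda,\tau\otimes\mL_\lambda)[d_\lambda]$ on the real side, so the right-hand side becomes $\on{Hom}(\calS^-_!(\lambda,\tau\otimes\mL_\lambda)[d_\lambda],\calS^-_!(\mu,\sigma\otimes\mL_\mu)[d_\mu])$.

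\textbf{Main steps.} First I would set up the exhaustion $U_0\hookrightarrow U_1\hookrightarrow\cdots$ of $LK_c\backslash\Gr$ by finite unions of $[\mO_K^\lambda]$ and the induced finite-type maps $s_i = u\circ f_i\colon U_i\to LG_\bbR\backslash\Gr$, as in the displayed diagram preceding the statement; this lets one express $u_!$ as a colimit of the honest finite-type pushforwards $(s_i)_!(f_i)^*$ and control adjunctions termwise. Second, I would compute both $\on{Hom}$-complexes by devissage on the index $\lambda$ (equivalently on the strata of the support): using the transversality of $K(\calK)$- and $LG_\bbR$-orbits (Corollary \ref{transversal}) and the Morse-flow description (Theorem \ref{flow}, Lemma \ref{Braden}), the adjunctions $(i^\lambda_+)_*\dashv (i^\lambda_+)^!$ and $(j^\mu_-)_!\dashv(j^\mu_-)^!$ reduce each side to $\on{Hom}$ between local systems on the critical loci $[\mO^\lambda_c]$. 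The crucial point is that the "matching" of local systems in Lemma \ref{bijection} and the twist $\mL_\lambda$ of \eqref{L_lambda} are precisely the data that make the two computations agree: both reduce, when $\lambda=\mu$, to $\on{Hom}$ of local systems on $[\mO^\lambda_c]$ (the cohomology of $[\mO_c^\lambda]$ with coefficients in $\tau^\vee\otimes\sigma$ or a shift thereof), and both vanish when $\lambda\ne\mu$ unless there is a nontrivial closure relation, in which case the contribution is again controlled by restriction to the relevant $[\mO_c^\nu]$. Third, I would check that the isomorphism on $\on{Hom}$-complexes is compatible with the boundary maps in the devissage triangles, so that it assembles into the desired global isomorphism; this is where the $\bigcup_i U_i = LK_c\backslash\Gr$ exhaustion and the fact that each $\calF,\calF'$ is supported on some $U_i$ (hence only finitely many strata contribute) is used to avoid convergence issues.

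\textbf{Expected main obstacle.} The technical heart is verifying the compatibility of the two $\on{Hom}$ computations with \emph{all} boundary maps, not just on associated-graded pieces. Concretely: $\Upsilon$ turns standards into costandards, so on the source side the devissage is by the standard filtration while on the target side it is by the costandard filtration, and one must check that the edge maps $\calS^+_*(\lambda,\tau)\to \calS^+_*(\mu,\sigma)[k]$ correspond under $\Upsilon$ to the edge maps $\calS^-_!(\lambda,\tau\otimes\mL_\lambda)\to\calS^-_!(\mu,\sigma\otimes\mL_\mu)[k']$. This is the affine analogue of the argument in \cite{MUV}; the subtlety here is the infinite-dimensionality of the strata $[\mO_K^\lambda]$ and the need to work with the "gluing of sheaves" formalism of \cite{BKV} (via Lemma \ref{key lemma} and Proposition \ref{key base change}) to make the adjunctions and base-change isomorphisms legitimate. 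I expect the proof to conclude by combining the case-by-case $\on{Hom}$ identification with a formal argument: fully faithfulness on a generating set of standards, plus commutation with colimits, plus the fact that the functor is exact for the natural $t$-structures (which will follow from, or be proven alongside, Theorem \ref{real-symmetric}), forces fully faithfulness in general.
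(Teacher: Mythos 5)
Your overall framework matches the paper's: reduce to compact objects supported on a finite union $U_k$ of $K(\calK)$-orbit images, use Lemma~\ref{surjective}, transversality (Corollary~\ref{transversal}), the Morse-flow contraction (Lemma~\ref{Braden}), and the local-system bookkeeping of Lemma~\ref{bijection} and \eqref{L_lambda}. But there is a gap in how you propose to carry out the devissage, and it is precisely the gap you flagged as your ``expected main obstacle''; the paper's proof avoids that obstacle entirely by a different choice of generators.

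You propose to compare $\on{Hom}(\calS^+_*(\lambda,\tau),\calS^+_*(\mu,\sigma))$ on both sides, i.e.\ to use the $*$-standard objects in \emph{both} slots. For $\lambda\neq\mu$ with a closure relation these Hom's are generally nonzero, so you are forced to reconcile the full filtrations, including boundary maps, on both sides --- as you say, ``$\Upsilon$ turns standards into costandards, so on the source side the devissage is by the standard filtration while on the target side it is by the costandard filtration.'' The paper sidesteps this entirely by testing instead on the pair $\mF_k=(w_\lambda)_!(\tau^+_\lambda)$ ($!$-extension from a single stratum) and $\mF'_k=(w_\mu)_*(\tau^+_\mu)$ ($*$-extension from a single stratum). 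Then on the source side, adjunction gives $\on{Hom}((w_\lambda)_!\tau^+_\lambda,(w_\mu)_*\tau^+_\mu)\simeq\on{Hom}(\tau^+_\lambda,w_\lambda^!(w_\mu)_*\tau^+_\mu)$, and $w_\lambda^!(w_\mu)_*$ of a sheaf on a disjoint locally-closed stratum vanishes --- so the left-hand side is zero for $\lambda\neq\mu$ with no need to track boundary maps. The nontrivial check is then only the single-stratum case $\lambda=\mu$, which reduces cleanly to the local-system cancellation $\mL_\lambda\otimes\iota^*\mL'''_\lambda\simeq\bC$ that you correctly anticipated. Since both the $!$- and the $*$-extensions each generate, this suffices for fully-faithfulness. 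Without this ``$!$ in one slot, $*$ in the other'' trick, your plan would require an argument (matching edge maps between standard and costandard filtrations under $\Upsilon$) that the paper does not attempt and that would likely be substantially harder.

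A second, minor issue: your concluding remark that $t$-exactness of $\Upsilon$ ``forces fully faithfulness in general'' does not hold up. The functor $\Upsilon$ is not $t$-exact --- by Lemma~\ref{surjective} it sends the $*$-standard $\calS^+_*(\lambda,\tau)$ to the $!$-costandard $\calS^-_!(\lambda,\tau\otimes\mL_\lambda)$ shifted by $d_\lambda$, which depends on $\lambda$. (The $t$-exact equivalence of Theorem~\ref{real-symmetric} is the nearby-cycles functor $\Psi$, a different map into $D(G_\bbR(\calO_\bbR)\backslash\Gr_\bbR)$.) In any case, $t$-exactness alone would not imply fully-faithfulness; the paper concludes directly from the Hom computation on compact generators.
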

\begin{proof}
Since the functor $u_!:D(LK_c\backslash\Gr)
\stackrel{}\to
D(LG_\bbR\backslash\Gr)$ admits a continuous right adjoint $u^!$, it 
sends compact objects to compact objects. It follows that 
the functor
\[\Upsilon=: D(K(\calK)\backslash\Gr)\stackrel{q^!}\is D_{\calS}(LK_c\backslash\Gr)
\subset D(LK_c\backslash\Gr)
\stackrel{u_!}\to
D(LG_\bbR\backslash\Gr)\]
sends compact objects to compact objects.
Thus 
 we can assume 
both $\mF,\mF'$ are compact.
Then we can 
choose $k$ such that 
$\mF=(j_k)_*\mF_k$ and $(j_k)_*\mF'_k$ for 
$\mF_k,\mF'_k\in D(K(\calK)\backslash U_k)$.
We have \[\on{Hom}_{D(K(\calK)\backslash\Gr)}(\mF,\mF')\is
\on{Hom}_{D(K(\calK)\backslash U_k)}(\mF_k,\mF'_k)\]
and \[\on{Hom}_{D(LG_\bbR\backslash\Gr)}(\Upsilon(\mF),\Upsilon(\mF'))\is
\on{Hom}_{D(K(\calK)\backslash U_k)}((s_k)_!\mF_k,(s_k)_!\mF_k')\is\]
\[\is
\on{Hom}_{D(K(\calK)\backslash U_k)}(\mF_k,(s_k)^!(s_k)_!\mF_k').\]
We have to show that the map 
\beq\label{iso}
\on{Hom}_{D(K(\calK)\backslash U_k)}(\mF_k,\mF'_k)\ra
\on{Hom}_{D(K(\calK)\backslash U_k)}(\mF_k,(s_k)^!(s_k)_!\mF_k')
\eeq
is an isomorphism.
Since $D_c(K(\calK)\backslash U_k)$ is generated by 
$w_!(\tau_\lambda^+)$ (resp. $w_*(\tau_\lambda^+)$) for 
$[\mO_K^\lambda]\subset U_k$ (here $w_\lambda:[\mO_K^\lambda]\ra U_k$ is 
natural inclusion), it suffices to verify 
(\ref{iso}) for 
\[\mF_k=(w_\lambda)_!(\tau_\lambda^+)\text{\ \ and \ \ }\mF'_k
\is (w_\mu)_*(\tau_\mu^+).\] 
Note that in this case the left hand side of \eqref{iso} becomes
\beq\label{iso Hom trivial, K}
\on{Hom}_{D(K(\calK)\backslash U_k)}(\mF_k,\mF'_k)=0\text{\ \ if \ \ }\lambda\neq\mu
\eeq
\beq\label{iso Hom, K}
\on{Hom}_{D(K(\calK)\backslash U_k)}(\mF_k,\mF'_k)\is
\on{Hom}_{D([\mO_c^\lambda])}(\tau_\lambda,\tau_\lambda)
\text{\ \ if \ \ }\lambda=\mu.
\eeq
By Lemma \ref{surjective} we have 
\[(s_k)_!((w_\mu)_*(\tau_\mu^+))
\is u_!(j_k)_*(w_\mu)_*(\tau_\mu^+))\is 
\Upsilon(S_*^+(\mu,\tau_\mu))\is
(j_-^\mu)_!(\tilde\tau_{\mu,\bbR})[d_\mu],\] 
where $\tilde\tau_{\mu,\bbR}=\tau_{\mu,\bbR}\otimes\mL_{\mu,\bbR}$.
Therefore the right hand side of \eqref{iso} becomes
\beq\label{iso on hom,1}
\on{Hom}_{D(K(\calK)\backslash U_k)}(\mF_k,
(s_k)^!(s_k)_!\mF_k')\is
\eeq
\beqn
\is\on{Hom}_{D(K(\calK)\backslash U_k)}((w_\lambda)_!(\tau_\lambda^+),
(s_k)^!(s_k)_!((w_\mu)_*(\tau_\mu^+))\is
\eeqn
\beqn
\is
\on{Hom}_{D([\mO_K^\lambda]}(\tau^+_\lambda,
w_\lambda^!(s_k)^!(j_-^\mu)_!(\tilde\tau_{\mu,\bbR})[d_\mu])\is
\eeqn
\beqn
\is\on{Hom}_{D([\mO_K^\lambda])}(\tau_\lambda^+,
(u\circ i_+^\lambda)^!(j_-^\mu)_!(\tilde\tau_{\mu,\bbR})[d_\mu]).
\eeqn
Since $u\circ i_+^\lambda$ and $j_-^\mu$
are transversal, 
we have 
\[(u\circ i_+^\lambda)^!(j_-^\mu)_!\tau_{\mu,\bbR}\is
(u\circ i_+^\lambda)^*(j_-^\mu)_!\tau_{\mu,\bbR}\otimes\mL_\lambda'''[-d_\lambda]\] 
where 
\beq\label{L}
\mL_\lambda'''=(u\circ i_+^\lambda)^!\bC[d_\lambda]
\eeq
is a local system on $[\mO_K^\lambda]$
and, in view of the diagram \eqref{key diagram}, we get 
\beq\label{iso hom, 2}
\on{Hom}_{D(K(\calK)\backslash U_k)}(\mF_k,
(s_k)^!(s_k)_!\mF_k')\stackrel{\eqref{iso on hom,1}}\is\on{Hom}_{D([\mO_K^\lambda])}(\tau_\lambda^+,
(u\circ i_+^\lambda)^!(j_-^\mu)_!\tilde\tau_{\mu,\bbR}[d_\mu])\is
\eeq
\beqn
\is
\on{Hom}_{D([\mO_K^\lambda])}(\tau_\lambda^+,
(u\circ i_+^\lambda)^*(j_-^\mu)_!\tilde\tau_{\mu,\bbR}\otimes\mL[d_\mu-d_\lambda])
\eeqn
\beqn
\is
\on{Hom}_{D([\mO_K^\lambda])}((\phi_+^\lambda)^*\tau_\lambda,
\iota_!(u^\mu\circ s)^*\tilde\tau_{\mu,\bbR}\otimes\mL[d_\mu-d_\lambda])
\eeqn
\beqn
\is\on{Hom}_{D([\mO_K^\lambda])}((\phi_+^\lambda)^*\tau_\lambda,
\iota_!(u^\mu\circ s)^*\tilde\tau_{\mu,\bbR}\otimes\mL[d_\mu-d_\lambda])
\eeqn
\beqn
\on{Hom}_{D([\mO_c^\lambda])}(\tau_\lambda,
(\phi_+^\lambda)_*(\iota_!(u^\mu\circ s)^*\tilde\tau_{\mu,\bbR}\otimes\mL)[d_\mu-d_\lambda]).
\eeqn
\\
Consider the case $\lambda\neq\mu$. Then 
by Lemma \ref{Braden} we have 
\[(\phi_+^\lambda)_*(\iota_!(u^\mu\circ s)^*\tilde\tau_{\mu,\bbR}\otimes\mL)[d_\mu-d_\lambda])\is
(\phi_+^\lambda)_*\iota_!((u^\mu\circ s)^*\tilde\tau_{\mu,\bbR}\otimes\iota^*\mL))[d_\mu-d_\lambda])\is\]
\[\is
(\iota^\lambda_+)^*
\iota_!((u^\mu\circ s)^*\tilde\tau_{\mu,\bbR}\otimes\iota^*\mL))[d_\mu-d_\lambda])=
0,\] 
here $\iota_+^\lambda:[\mO_c^\lambda]\to[\mO_K^\lambda]$,
and it follows from \eqref{iso hom, 2} that 
\beq\label{iso Hom trivial, R}
\on{Hom}_{D(K(\calK)\backslash U_k)}(\mF_k,
(s_k)^!(s_k)_!\mF_k')=0.
\eeq
Hence we have
\beqn
\on{Hom}_{D(K(\calK)\backslash U_k)}(\mF_k,\mF_k')\stackrel{}\is
\on{Hom}_{D(K(\calK)\backslash U_k)}(\mF_k,
(s_k)^!(s_k)_!\mF_k')\is 0 \text{\ \ if\ \ }\lambda\neq\mu.
\eeqn
\\
Consider the case  $\lambda=\mu$. We have 
\[(\phi_+^\lambda)_*(\iota_!(u^\lambda\circ s)^*\tilde\tau_{\lambda,\bbR}\otimes\mL)[d_\lambda-d_\lambda])\is
(u^\lambda\circ s)^*\tilde\tau_{\lambda,\bbR}\otimes\iota^*\mL_\lambda'''
\is\tau_\lambda\otimes\mL_{\lambda}\otimes \iota^*\mL'''_\lambda.\] 
We claim that $\mL_{\lambda}\otimes\iota^*\mL'''_\lambda\is\bC$ is the trivial local system
hence above isomorphism implies 
\[(\phi_+^\lambda)_*(\iota_!(u^\lambda\circ s)^*\tilde\tau_{\lambda,\bbR}\otimes\mL)[d_\lambda-d_\lambda])\is\tau_\lambda,\ \text{\ if\ }
\lambda=\mu,\]
and by \eqref{iso hom, 2}, we obtain 
\beq\label{iso Hom, R}
\on{Hom}_{D(K(\calK)\backslash U_k)}(\mF_k,
(s_k)^!(s_k)_!\mF_k')\is\on{Hom}_{D([\mO_c^\lambda])}(\tau_\lambda,\tau_\lambda)
.
\eeq
By unwinding the definition of the map in \eqref{iso}, we obtain that \eqref{iso} satisfies
\beqn
\xymatrix{
\on{Hom}_{D(K(\calK)\backslash U_k)}(\mF_k,
\mF_k')\ar[rr]^{\eqref{iso}}\ar[rd]^\sim_{\eqref{iso Hom, K}}&&\on{Hom}_{D(K(\calK)\backslash U_k)}(\mF_k,
(s_k)^!(s_k)_!\mF_k')\ar[ld]_\sim^{\eqref{iso Hom, R}}\\
&\on{Hom}_{D([\mO_c^\lambda])}(\tau_\lambda,\tau_\lambda)&},
\eeqn
hence is an isomorphism.
The lemma follows.

To prove the claim, we observe that, up to cohomological shifts, we have 
\[\mL_\lambda\stackrel{\eqref{L_lambda}}\is(s)^*\mL_\lambda'\otimes\mL_\lambda''\otimes\on{or}_{p^\lambda}^\vee
\is (p^\lambda)^*((j_-^\lambda)^!\bC)^\vee)\otimes\iota^!\bC\otimes\on{or}_{p^\lambda}^\vee[-]
\]
\[\iota^*\mL_\lambda'''\stackrel{\eqref{L}}\is\iota^*((u\circ i_+^\lambda)^!\bC)[-].\]
Using the canonical isomorphisms $\iota^!(-)\is\iota^*(-)\otimes\iota^!\bC$ and 
$(p^\lambda)^!(-)\is (p^\lambda)^*(-)\otimes\on{or}_{p^\lambda}[-]$, we see that 
\[\mL_\lambda\otimes\iota^*\mL_\lambda'''\is (p^\lambda)^*((j_-^\lambda)^!\bC)^\vee)\otimes\on{or}_{p^\lambda}^\vee\otimes
\iota^!((u\circ i_+^\lambda)^!\bC)[-]\is\]
\[\is(p^\lambda)^*((j_-^\lambda)^!\bC)^\vee)\otimes\on{or}_{p^\lambda}^\vee\otimes
(p^\lambda)^!((j_-^\lambda)^!\bC))[-]\is
\]
\[\is(p^\lambda)^*((j_-^\lambda)^!\bC)^\vee)\otimes(p^\lambda)^*((j_-^\lambda)^!\bC))[-]
\is\bC[-].\]
The claim follows.
\end{proof}

\subsection{Proof of Theorem \ref{AM}}
Since the categories
$D(K(\calK)\backslash\Gr)$
and 
$D(LG_\bbR\backslash\Gr)$ are generated by 
standard (resp. co-standard) objects, 
Lemma \ref{surjective} and Lemma \ref{full} imply that the functor 
$\Upsilon:D(K(\calK)\backslash\Gr)\ra D(LG_\bbR\backslash\Gr)$
is essentially surjective and fully-faithful, and hence an equivalence. 
This finishes the proof of Theorem \ref{AM}.

\begin{remark}
Let $D_c(K(\calK)\backslash\Gr)\subset D(K(\calK)\backslash\Gr)$ be the 
full subcategory consisting of constructible complexes that are extension by 
zero off of a substack (equivalently, supported on a finite union 
of $K(\calK)$-orbits on $\Gr$).
We define $D_!(LG_\bbR\backslash\Gr)$ 
to the be full subcategory of 
$D(LG_\bbR\backslash\Gr)$ 
consisting of all constructible complexes that are extensions by zero off of finite type substacks of 
$LG_\bbR\backslash\Gr$.
The proof of Theorem \ref{AM} show that 
$\Upsilon$ restricts to an equivalence
\[\Upsilon:D_c(K(\calK)\backslash\Gr)\is D_!(LG_\bbR\backslash\Gr).\]
\end{remark}

\quash{
\begin{lemma}
\begin{enumerate}
\item
The functor 
$\on{For}_+$ admits a right adjoint functor 
which we denote it by 
$\on{Av}_+^r:D(LK_c\backslash\Gr)\ra D(K(\calK)\backslash\Gr)$. 
That is, we have 
\[
\on{Hom}_{D(LK_c\backslash\Gr)}(\on{For}_+(\mM),\mF)\is\on{Hom}_{D(K(\mathcal K)\backslash\Gr)}(\mM,\on{Av}_+^r\mF).\]
 \item The right adjoint $\on{Av}_+^r$ has a unique extension  $\on{Av}_+^r:\on{pro}(D(LK_c\backslash\Gr))
\ra\on{pro}(D(K(K)\backslash\Gr))$ such that 
\[\on{Hom}_{\on{pro}(D(LK_c\backslash\Gr))}(\on{For}_+(\mM),\mF)\is\on{Hom}_{\on{pro}(D(K(\mathcal K)\backslash\Gr))}(\mM,\on{Av}_+^r\mF)\]

\end{enumerate}
\end{lemma}

The lemmas above imply that the functor 
\[\Upsilon_-:=\on{Av}^r_+\circ u^!:D_!(LG_\bbR\backslash\Gr)\ra D(K(\calK)\backslash\Gr)\]
defines a right adjoint of $\Upsilon_+$.
\begin{proposition}
The functor $\Upsilon_-$ is the inverse equivalence of 
$\Upsilon_+$.
\end{proposition}}

\quash{
\begin{lemma}
The left adjoint functor of $\on{For}_-$ exists on the full subcategory 
$\on{For}_+(D(K(\calK)\backslash\Gr))\subset D(LK_c\backslash\Gr)$. 
More precisely, there exists a functor 
\[\on{Av}_-^l:\on{For}_+(D(K(\calK)\backslash\Gr))\ra D_!(LG_\bbR\backslash\Gr)\]
such that for
any $\mM\in D(K(\calK)\backslash\Gr)$, $\mF\in D_!(LG_\bbR\backslash\Gr)$
we have 
\[\on{Hom}_{D(LK_c\backslash\Gr)}(\on{For}_+(\mM),\on{For}_-(\mF))\is\on{Hom}_{D_!(LG_\bbR\backslash\Gr)}(\on{Av}_-^l\circ\on{For}_+(\mM),\mF).\]
\end{lemma}}

\section{Nearby cycles functors and Radon transforms}\label{nearby cycles and Radon TF}
We study the nearby cycles functors associated to the 
quasi-maps in Section \ref{QMaps} and the Radon transform 
for the real affine Grassmannian.

\subsection{A square of equivalences}
Recall the quasi-map family 
$QM^{(\sigma_2)}(\bP^1,X)_{\bbR}\to\bP^1$ in Section \ref{real form of QM}.
Consider the base change 
$QM^{(\sigma_2)}(\bP^1,X)_{\mathbb R}|_{i\bbR_{\geq 0}}$
along the natural inclusion $i\bbR_{\geq0}\to\mathbb P^1$.
By 
 Proposition \ref{open in family} and Proposition \ref{unif of QM}, 
we have the following cartesian diagram
\[\xymatrix{(LK_c\backslash\Gr)\times i\mathbb R_{>0}\ar[r]^j\ar[d]^{f^0}&
QM^{(\sigma_2)}(\bP^1,X)_{\mathbb R}|_{i\bbR_{\geq 0}}\ar[d]^{f}&K_c\backslash\Gr_{\mathbb R}\ar[d]^{f_0}\ar[l]_{\ \ \ \ \ \ i}\\
LG_\bbR\backslash\Gr\times i\mathbb R_{>0}\ar[r]^{\bar j}\ar[d]&\Bun_{G_\bbR}(\bbP^1(\bbR))\times i\mathbb R_{\geq 0}\ar[d]&G_\bbR(\bbR[t^{-1}])\backslash\Gr_{\bbR}\ar[l]_{\bar i}\ar[d]\\
i\mathbb R_{>0}\ar[r]&i\mathbb R_{\geq 0}&\{0\}\ar[l]}.\]
Define the following nearby cycles functors 
\beq
\Psi:D(K(\calK)\backslash\Gr)\is
D_{\calS}(LK_c\backslash\Gr)\ra D(K_c\backslash\Gr_{\bbR}),\ \ \mF\ra\Psi(\mF):=i^*j_*(\mF\boxtimes\underline{\bC}_{i\bbR_{>0}}),
\eeq
\beq
\Psi_{\bbR}:D(LG_\mathbb R\backslash\Gr)\ra D(G_\mathbb R(\mathbb R[t^{-1}])\backslash\Gr_\mathbb R),\ \ 
\mF\ra \Psi_{\bbR}(\mF)=(\bar i)^*(\bar j)_*(\mF\boxtimes\underline{\bC}_{i\bbR_{>0}}).
\eeq
We also have the Radon transform 
\beq\label{Radon TF}
\Upsilon_\bbR: D(G_\bbR(\mO_\bbR)\backslash\Gr_\bbR)\ra
D(G_\bbR(\bbR[t^{-1}])\backslash\Gr_\bbR)
\eeq
given by the restriction to $D(G_\bbR(\mO_\bbR)\backslash\Gr_\bbR)\subset
D(G_\bbR\backslash\Gr_\bbR)$
of the 
push-forward $p_!:D(G_\bbR\backslash\Gr_\bbR)\ra
D(G_\bbR(\bbR[t^{-1}])\backslash\Gr_\bbR)$
along the quotient map 
$p:G_\bbR\backslash\Gr\to G_\bbR(\bbR[t^{-1}])\backslash\Gr_\bbR$.
Denote by 
$D(\Bun_{G_\bbR}(\bbP^1(\bbR)))$
be the dg category of $\bC$-constructible complexes 
on $\Bun_{G_\bbR}(\bbP^1(\bbR))$.

Here are the main results of this section.
\begin{thm}\label{equ}
The nearby cycles functors and the Radon transform
induce equivalences of categories:
\[\Psi:D(K(\calK)\backslash\Gr)\stackrel{\sim}\lra D(G_\bbR(\mO_\bbR)\backslash\Gr_{\bbR}),\] 
\[\Psi_{\bbR}:D(LG_\mathbb R\backslash\Gr)\stackrel{\sim}\lra D(G_\mathbb R(\mathbb R[t^{-1}])\backslash\Gr_{\mathbb R}),\]
\[\Upsilon_\bbR:D(G_\bbR(\mO_\bbR)\backslash\Gr_\bbR)\stackrel{\sim}\lra
D(G_\bbR(\bbR[t^{-1}])\backslash\Gr_\bbR).\]

\end{thm}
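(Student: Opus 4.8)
The plan is to prove the three equivalences by combining the affine Matsuki equivalence (Theorem \ref{AM}), the real-symmetric equivalence (Theorem \ref{real-symmetric}), and a topological triviality statement for the relevant quasi-map families. I would organize the argument around the commutative cube displayed above, whose horizontal slices are the generic fibers $i\bbR_{>0}$ and the special fiber $\{0\}$ of the quasi-map family $QM^{(\sigma_2)}(\bbP^1,X)_\bbR|_{i\bbR_{\geq 0}}$.

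\textbf{Step 1: the Radon transform $\Upsilon_\bbR$.} First I would establish that $\Upsilon_\bbR$ is an equivalence. This is the ``real'' incarnation of the standard fact that $p_!$ along $G_\bbR\backslash\Gr \to G_\bbR(\bbR[t^{-1}])\backslash\Gr_\bbR$ identifies the two Satake categories. The key geometric input is Proposition \ref{uniformizations at real x}, which gives $G_\bbR(\bbR[t^{-1}])\backslash\Gr_\bbR \is \Bun_{G_\bbR}(\bbP^1(\bbR))$, together with the transversality statement: $S_\bbR^\lambda$ and $T_\bbR^\mu$ intersect transversally in $\Gr_\bbR$, with $C_\bbR^\lambda = S_\bbR^\lambda \cap T_\bbR^\lambda$ a single $G_\bbR$-orbit (the real flag manifold $G_\bbR/P_\bbR^\lambda$). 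One then runs the Braden-type hyperbolic localization / stratified Morse argument exactly as in the classical Matsuki-for-sheaves setting \cite{MUV}: the functor $\Upsilon_\bbR$ sends the standard object supported on $S_\bbR^\lambda$ to (up to a local-system twist and shift) the costandard object supported on $T_\bbR^\lambda$, using the flow $\psi_z^1$ of Lemma \ref{flows on Gr^(2)} whose stable/unstable manifolds are the $S_\bbR^\lambda$ and $T_\bbR^\lambda$. Fully-faithfulness follows from the transversality computation of $\on{Hom}$'s between standards and costandards on each side, just as in the proof of Lemma \ref{full}. In fact, this is structurally identical to Theorem \ref{AM} with $(K(\calK),LG_\bbR)$ replaced by $(G_\bbR(\mO_\bbR),G_\bbR(\bbR[t^{-1}]))$ acting on $\Gr_\bbR$, so I would phrase it as a corollary of that method.

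\textbf{Step 2: the nearby cycles functor $\Psi$.} Here the main input is Lemma \ref{iso to Q} (and Theorem \ref{Quillen} in the case $m=1$): the family $QM^{(\sigma_2)}(\bbP^1,X,\infty)_\bbR|_\calS \to \calS$, hence $QM^{(\sigma_2)}(\bbP^1,X)_\bbR|_{i\bbR_{\geq 0}}$, is $K_c$-equivariantly topologically trivial over $i\bbR$, with its spherical stratification. Concretely, $\calQ^{(1)}$ provides a $K_c$-equivariant stratified homeomorphism between the generic fiber $\Omega K_c\backslash\Gr$ (with strata $\Omega K_c\backslash\mO_K^\lambda$) and the special fiber $\Gr_\bbR$ (with strata $S_\bbR^\lambda$), matching $\Omega K_c\backslash\mO_K^\lambda \is S_\bbR^\lambda$. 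For a constructible-with-respect-to-the-stratification sheaf on a topologically trivial stratified family, the nearby cycles functor is just ``restrict to the special fiber via the trivialization,'' hence an equivalence $D_{\calS}(LK_c\backslash\Gr) \is D_{\calS_\bbR}(G_\bbR\backslash\Gr_\bbR) = D(G_\bbR(\mO_\bbR)\backslash\Gr_\bbR)$. Precomposing with $D(K(\calK)\backslash\Gr)\is D_{\calS}(LK_c\backslash\Gr)$ from Lemma \ref{quasi-maps model equ} gives the equivalence $\Psi$. I would note that this $\Psi$ agrees, up to the identifications already set up, with the real-symmetric equivalence of Theorem \ref{real-symmetric}, which is a cleaner way to see it is an equivalence; the point of the nearby-cycles description is its compatibility with the cube.

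\textbf{Step 3: the nearby cycles functor $\Psi_\bbR$.} This is the analogue of Step 2 for the bottom face of the cube. Now the relevant family is $\Bun_{G_\bbR}(\bbP^1(\bbR))\times i\bbR_{\geq 0}$, which is literally constant in the base parameter, but the sheaf-theoretic content is the statement that $LG_\bbR\backslash\Gr \is \Bun_{G_\bbR}(\bbP^1(\bbR))$ via the complex uniformization at $x\in\calH$ (Proposition \ref{uniformizations at complex x}) degenerates, as $x\to 0$, to $G_\bbR(\bbR[t^{-1}])\backslash\Gr_\bbR \is \Bun_{G_\bbR}(\bbP^1(\bbR))$ via the real uniformization (Proposition \ref{uniformizations at real x}). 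The compatibility is exactly Lemma \ref{flows on Bun_R}(2), diagram \eqref{comm of psi^1}: both uniformizations sit inside the single family $u_\bbR^{(\sigma_2)}:\Gr^{(\sigma_2)}_\bbR \to \Bun_{G_\bbR}(\bbP^1(\bbR))$, and the flow $\psi_z^1$ contracts the generic fiber onto the special one. So again nearby cycles becomes ``restrict along the trivialization,'' giving the equivalence $\Psi_\bbR: D(LG_\bbR\backslash\Gr)\is D(G_\bbR(\bbR[t^{-1}])\backslash\Gr_\bbR)$. To make this rigorous I would invoke the stratified topological triviality of $QM^{(\sigma_2)}(\bbP^1,X)_\bbR|_{i\bbR_{\geq 0}}$ with its \emph{cospherical} stratification (the $\tilde T_\bbR^\lambda$ of Lemma \ref{properties of flows}), whose special fiber strata are the $T_\bbR^\lambda = G_\bbR(\bbR[t^{-1}])$-orbits, and match this with the $LG_\bbR$-orbit stratification of the generic fiber via Lemma \ref{flows on Gr^(2)}(2) and Lemma \ref{properties of flows}(4).

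\textbf{Main obstacle.} The genuinely delicate point is the \emph{commutativity} of the square $\Upsilon_\bbR\circ\Psi \is \Psi_\bbR\circ\Upsilon$ — i.e., that the three equivalences are compatible as asserted in the theorem statement of the next subsection. For the three equivalences individually, the arguments above are robust. Making them fit into the cube requires checking that the nearby cycles functors $\Psi$, $\Psi_\bbR$ and the ``uniformization / Radon'' pushforwards $u_!$, $p_!$ commute — this is a base-change compatibility along the two-dimensional family parametrized by $\calS = \bbR\times i\bbR$, where over $i\bbR_{>0}$ one has the affine Matsuki correspondence relating $LK_c\backslash\Gr$ to $LG_\bbR\backslash\Gr$ and over $\{0\}$ one has the real Radon transform. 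The technical heart is that the flow $\psi_z^3$ on $QM^{(\sigma_2)}(\bbP^1,X,\infty)_\bbR$ is compatible with $\psi_z^1$ on $\Gr^{(\sigma_2)}_\bbR$ (Lemma \ref{properties of flows}(2)) and $K_c$-equivariant, so that hyperbolic-localization base change (Braden's theorem, as packaged in Lemma \ref{Braden}) applies uniformly over the family; I would isolate this as the key lemma and expect the bulk of the work to be bookkeeping of local systems, shifts, and orientation sheaves, exactly of the flavor already carried out in Lemma \ref{surjective} and Lemma \ref{full}.
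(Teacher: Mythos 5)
Your proof matches the paper's argument essentially step-for-step: Proposition \ref{Psi_K} proves $\Psi$ is an equivalence by exactly your Step 2 (the $K_c$-equivariant trivialization of the quasi-map family from Lemma \ref{iso to Q} reduces the nearby cycles functor to the real-symmetric equivalence of Theorem \ref{real-symmetric}); Proposition \ref{real RT} proves $\Upsilon_\bbR$ is an equivalence by exactly your Step 1 (rerun the Theorem \ref{AM} machinery with the Matsuki flow $\phi_t$ replaced by $\psi_z^1$ and Lemma \ref{bijection} by Lemma \ref{bijection 2}); and Proposition \ref{Psi_R} proves $\Psi_\bbR$ is an equivalence by exactly your Step 3 (the constant family $\Bun_{G_\bbR}(\bbP^1(\bbR))\times i\bbR_{\geq 0}$ from Proposition \ref{open in family}). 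One small remark: the "main obstacle" you isolate — commutativity of the square $\Upsilon_\bbR\circ\Psi\simeq\Psi_\bbR\circ\Upsilon$ — belongs to Theorem \ref{diagram} rather than to the statement at hand, and there the paper dispenses with it more cheaply than you anticipate: it builds the natural transformation $\Upsilon_\bbR\circ\Psi\to\Psi_\bbR\circ\Upsilon$ directly from the base-change map $(f_0)_!i^!\to(\bar i)^!f_!$ and checks it is an isomorphism on the standard generators using the computations already in Lemma \ref{surjective}, Propositions \ref{Psi_K}, \ref{real RT}, \ref{Psi_R}, rather than rerunning a family-wise Braden argument.
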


\begin{thm}\label{diagram}
We have a commutative square of equivalences 
\[
\xymatrix{D(K(\calK)\backslash\Gr)\ar[rr]^{\Psi}\ar[d]^{\Upsilon}&&D(G_\mathbb R(\mO_\mathbb R)\backslash\Gr_{\mathbb R})
\ar[d]^{\Upsilon_\bbR}\\
D(LG_{\mathbb R}\backslash\Gr)\ar[rr]^{\Psi_{\bbR}\ \ }\ar[dr]_\simeq&&D(G_\mathbb R(\mathbb R[t^{-1}])\backslash\Gr_{\mathbb R})\ar[dl]^\simeq\\
&D(\Bun_{G_\bbR}(\bbP^1(\bbR)))&}
\]
where the vertical equivalences in the lower triangle come from the 
real and complex uniformization isomorphism
\[\xymatrix{LG_{\mathbb R}\backslash\Gr\ar[r]^{\simeq\ \ }&\Bun_{G_\bbR}(\bbP^1(\bbR))&G_\mathbb R(\mathbb R[t^{-1}])\backslash\Gr_\bbR\ar[l]_{\ \ \simeq}}\]
\end{thm}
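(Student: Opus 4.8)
\textbf{Proof strategy for Theorems \ref{equ} and \ref{diagram}.} The plan is to establish the three nearby-cycles/Radon equivalences first, and then check that the resulting square commutes by a diagram chase built on the affine Matsuki equivalence $\Upsilon$ (Theorem \ref{AM}) already proved. For $\Psi_\bbR$, I would argue as follows. The family $\Bun_{G_\bbR}(\bbP^1(\bbR))\times i\bbR_{\geq 0}$ is a constant family, so its special fiber and generic fiber are canonically identified; under the real and complex uniformizations of Propositions \ref{uniformizations at real x} and \ref{uniformizations at complex x}, the bottom row of the big diagram identifies $\bar j$ and $\bar i$ as the two open/closed inclusions of the \emph{same} stack $\Bun_{G_\bbR}(\bbP^1(\bbR))$ into $\Bun_{G_\bbR}(\bbP^1(\bbR))\times i\bbR_{\geq 0}$. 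Hence $\Psi_\bbR$ is, after transport of structure, just the functor $\mF\mapsto \mF$ (nearby cycles of a constant family along the trivial local system is the identity), which gives the equivalence $D(LG_\bbR\backslash\Gr)\is D(\Bun_{G_\bbR}(\bbP^1(\bbR)))\is D(G_\bbR(\bbR[t^{-1}])\backslash\Gr_\bbR)$ and simultaneously the commutativity of the \emph{lower} triangle of Theorem \ref{diagram}. The only technical point is to justify that the nearby-cycles functor for this ind-placid family along a trivial local system is the identity on the constant-family locus; this follows from the base-change and the fact that $i\bbR_{>0}\hookrightarrow i\bbR_{\geq 0}$ has contractible link, exactly as in the finite-type case.

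For $\Upsilon_\bbR$, the Radon transform, I would follow the classical argument (as in \cite{BG, N2}): $p:G_\bbR\backslash\Gr_\bbR\to G_\bbR(\bbR[t^{-1}])\backslash\Gr_\bbR$ is an affine-space bundle onto its image restricted to each orbit, and the key input is Corollary \ref{transversal} together with Proposition \ref{parametrization}(5): the $G_\bbR(\calO_\bbR)$-orbits $S_\bbR^\lambda$ and the $G_\bbR(\bbR[t^{-1}])$-orbits $T_\bbR^\mu$ are transversal and the incidence $C_\bbR^\lambda=S_\bbR^\lambda\cap T_\bbR^\lambda$ is a single $K_c$-orbit, giving the order-reversal on the posets. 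One then shows $\Upsilon_\bbR$ sends the standard object supported on $S_\bbR^\lambda$ (with a local system) to the costandard object supported on $T_\bbR^\lambda$ up to a shift and an invertible local-system twist — this is the real-Grassmannian analogue of Lemma \ref{surjective} — and a fully-faithfulness computation mirroring Lemma \ref{full} finishes the proof. Since both categories are generated by (co)standards, essential surjectivity plus fully-faithfulness yields the equivalence.

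For $\Psi$, I would not argue directly but instead \emph{deduce} it from the other three: granting that the square of Theorem \ref{diagram} commutes, and knowing $\Upsilon$ (Theorem \ref{AM}), $\Upsilon_\bbR$, and $\Psi_\bbR$ are equivalences, two-out-of-three forces $\Psi$ to be an equivalence as well. So the real content of Theorem \ref{diagram} is the commutativity $\Upsilon_\bbR\circ\Psi\is\Psi_\bbR\circ\Upsilon$. To see this, observe that the whole big Cartesian diagram has a ``degenerating'' version: the family $QM^{(\sigma_2)}(\bP^1,X)_\bbR|_{i\bbR_{\geq 0}}$ carries the flows $\psi_z^3$ of Lemma \ref{properties of flows}, and the uniformizations $u_\bbR^{(\sigma_2)}$, $q^{(\sigma_2)}_\bbR$ intertwine the flow on quasi-maps with the flow $\psi_z^1$ on $\Gr^{(\sigma_2)}_\bbR$ and with $\psi_z^2$ on $\Bun_\bbG(\bbP^1)_\bbR$ (diagrams \eqref{diagram for psi_z}, \eqref{compatibility with flows}). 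Because $\Upsilon=u_!q^!$ is built from the correspondence $K(\calK)\backslash\Gr\leftarrow LK_c\backslash\Gr\to LG_\bbR\backslash\Gr$, and this correspondence is precisely the generic fiber of the map $q$ (quasi-map uniformization) postcomposed with $u_\bbR^{(\sigma_2)}$ (complex uniformization), base change along the open inclusion $j$ and the closed inclusion $i$ commutes $\Psi$ past $\Upsilon$. More precisely, one writes $\Psi\circ\Upsilon^{-1}$ and $\Upsilon_\bbR^{-1}\circ\Psi_\bbR$ both as nearby cycles of the single constructible sheaf on the total space $QM^{(\sigma_2)}(\bP^1,X)_\bbR|_{i\bbR_{\geq 0}}$ pulled back from $LG_\bbR\backslash\Gr\times i\bbR_{>0}$, using Proposition \ref{unif of QM}; the two ways of computing the special fiber (via $K_c\backslash\Gr_\bbR$ versus $G_\bbR(\bbR[t^{-1}])\backslash\Gr_\bbR$) are related exactly by $p_!=\Upsilon_\bbR$, because the special fiber of $\bar j$ lands in the open locus whose closure is cut out by the $T_\bbR^\lambda$-stratification (Lemma \ref{flows on Gr^(2)}).

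\textbf{Main obstacle.} The hardest part is the compatibility of the \emph{two} nearby-cycles functors $\Psi$ and $\Psi_\bbR$ with the Matsuki correspondence $\Upsilon$, i.e.\ the upper square of Theorem \ref{diagram}. The subtlety is that $\Psi$ is defined using the quasi-maps uniformization $LK^{(n)}_c\backslash\Gr^{(\sigma_2)}_\bbR\is QM^{(\sigma_2)}(\bP^1,X)_\bbR$ (a $K_c$-quotient), whereas $\Psi_\bbR$ lives on the $LG_\bbR$-quotient side, and the bridge between them — the complex uniformization $LG_{x,\bbR}\backslash\Gr_x\is\Bun_{G_\bbR}(\bbP^1(\bbR))$ of Proposition \ref{uniformizations at complex x} — is only a \emph{semi-analytic} statement that must be checked to be compatible with the degeneration $x\to 0$ in a stratified, flow-equivariant way. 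I expect the proof of this compatibility to rest on Lemma \ref{properties of flows}(2)–(4) and diagram \eqref{comm of psi^1}, together with the identification of the generic fiber of $\tilde T_\bbR^\lambda$ with $\Omega K_c\backslash \mO_\bbR^\lambda$, matching the costandard objects $\calS^-_*(\lambda,\tau)$ on both sides; assembling these into an equivalence of stratified families (as in Proposition \ref{triv of Q} and Theorem \ref{Quillen}) and then invoking the nearby-cycles commutation with proper pushforward is the crux.
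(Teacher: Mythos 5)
Your treatment of $\Psi_\bbR$ and $\Upsilon_\bbR$ matches the paper: $\Psi_\bbR$ is essentially the identity because the family $\Bun_{G_\bbR}(\bbP^1(\bbR))\times i\bbR_{\geq 0}$ is constant (Proposition \ref{Psi_R}), and $\Upsilon_\bbR$ is handled by the same flow-based standard/costandard computation as Theorem \ref{AM}, now for $\psi_z^1$ on $\Gr_\bbR$ (Proposition \ref{real RT}). Both are fine.

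The gap is in your strategy for $\Psi$ and for the commutativity of the upper square, and it is essentially a circularity. You propose to \emph{deduce} that $\Psi$ is an equivalence by two-out-of-three from commutativity, and then to prove commutativity by realizing both composites as nearby cycles of a single sheaf on $\QM^{(\sigma_2)}(\bP^1,X)_\bbR|_{i\bbR_{\geq 0}}$ and ``invoking the nearby-cycles commutation with proper pushforward.'' But the vertical map $f$ (quasi-map uniformization onto $\Bun_{G_\bbR}(\bbP^1(\bbR))\times i\bbR_{\geq 0}$) is \emph{not} proper — its fibers are quotients of the form $LK_c^{(\sigma_2)}\backslash LG_\bbR^{(\sigma_2)}$ — so nearby cycles does not automatically commute with $f_!$, and the contractibility of those fibers by itself does not fix this (the base-change arrow $(f_0)_!i^! \to (\bar i)^! f_!$ exists but is not an isomorphism for general non-proper $f$). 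The paper sidesteps this by first proving $\Psi$ is an equivalence \emph{directly}: Lemma \ref{iso to Q} and Theorem \ref{Quillen} give a $K_c$-equivariant topological trivialization of the quasi-maps family, so $\Psi$ coincides with the real-symmetric equivalence of Theorem \ref{real-symmetric}, and in particular one gets the explicit formula $\Psi(\calS_*^+(\lambda,\tau))\simeq\calT_*^+(\lambda,\omega)$ (Proposition \ref{Psi_K}). Only then does the paper build the natural transformation $\Upsilon_\bbR\circ\Psi \to \Psi_\bbR\circ\Upsilon$ from the base-change arrow and check it is an isomorphism on the generators $\calS_*^+(\lambda,\tau)$, by comparing the explicit standard/costandard formulas from Lemma \ref{surjective}, Proposition \ref{Psi_K}, Proposition \ref{real RT}, and Proposition \ref{Psi_R}. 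Without that direct computation of $\Psi$ on standards, you have no handle to verify that the base-change arrow is invertible, and the two-out-of-three step has nothing to hang on. The flow-compatibility observations you cite (Lemma \ref{properties of flows}, diagram \eqref{comm of psi^1}) are genuinely used elsewhere (e.g.\ in the complex uniformization argument), but they do not substitute for the missing properness needed to push nearby cycles past $u_!$; you would need to replace that invocation with a generator-by-generator check, which brings you back to computing $\Psi$ directly anyway.
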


The rest of the section is devoted to the proof of Theorem \ref{equ} and Theorem \ref{diagram}.

\subsection{The nearby cycles functor $\Psi$}
For any $\lambda\in\mL$ and a $K_c$-equivaraint local system $\omega$ on $C_\bbR^\lambda$, 
one has the standard and co-standard sheaves 
\[\calT_*^+(\lambda,\omega):=(s_\lambda^+)_*(\omega^+) \text{\ \ and\ \ \ } 
\calT_!^+(\lambda,\omega):=(s_{\lambda}^+)_!(\omega^+)\]
in $D(G_\bbR(\mO_\bbR)\backslash\Gr_\bbR)$ (see Lemma \ref{bijection 2}).
Recall the standard sheaf
$\calS_*^+(\lambda,\tau)$ in $D(K(\calK)\backslash\Gr)$ (see (\ref{standard})).

\begin{proposition}\label{Psi_K}
The nearby cycles functor induces an equivalence 
$\Psi:D(K(\calK)\backslash\Gr)\is D_{}(G_\bbR(\mO_\bbR)\backslash\Gr_{\bbR})$, which is the real-symmetric equivalence in Theorem \ref{real-symmetric}.
Moreover,
we have $\Psi(\calS_*^+(\lambda,\tau))\is\calT_*^+(\lambda,\omega)$
\end{proposition}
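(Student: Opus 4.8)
The plan is to exhibit $\Psi$ as a composition of the real-symmetric equivalence from Theorem~\ref{real-symmetric} with an auxiliary nearby-cycles equivalence that is a topological triviality statement in disguise. First I would recall from Lemma~\ref{iso to Q} (applied with $m=1$) that the real quasi-maps family $\QM^{(\sigma_2)}(\bbP^1,X,\infty)_\bbR|_\calS \to \calS = \bbR\times i\bbR$ is topologically trivial over $i\bbR$, via a $K_c$-equivariant stratified homeomorphism onto $\calQ^{(1)}$; combined with Example~\ref{fibers of real quasi maps} and the isomorphisms \eqref{fibers of Q}, \eqref{fiber of X} this identifies the generic fiber $LK_c\backslash\Gr \is \Omega K_c\backslash\Omega G_c$ with the special fiber $\Gr_\bbR$ compatibly with strata $\calO_K^{(1),\lambda}/\Omega K_c \leftrightarrow S_\bbR^\lambda$. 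Because the total space of the family $QM^{(\sigma_2)}(\bP^1,X)_\bbR|_{i\bbR_{\geq0}}$ is, over $i\bbR$, a trivial stratified family with fiber $LK_c\backslash\Gr$ and special fiber $K_c\backslash\Gr_\bbR$, the nearby cycles functor $\Psi = i^*j_*(-\boxtimes\underline\bC_{i\bbR_{>0}})$ along this trivial one-parameter degeneration is simply restriction of a product sheaf, hence is an equivalence $D_{\calS}(LK_c\backslash\Gr)\is D(K_c\backslash\Gr_\bbR)\is D(G_\bbR(\calO_\bbR)\backslash\Gr_\bbR)$.

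The second point is to identify this equivalence, precomposed with $q^!:D(K(\calK)\backslash\Gr)\is D_{\calS}(LK_c\backslash\Gr)$ (Lemma~\ref{quasi-maps model equ}) and $\eqref{torsor-equ}$, with the real-symmetric equivalence of Theorem~\ref{real-symmetric}. The key observation is that \emph{the same} $K_c$-equivariant stratified trivialization of Theorem~\ref{Quillen}/Lemma~\ref{iso to Q} underlies both constructions: Theorem~\ref{real-symmetric}'s equivalence is defined via the homeomorphism $\Omega K_c\backslash\Gr\is\Gr_\bbR$ of Theorem~\ref{Quillen} (the $m=1$ case), and $\Psi$ is defined via the monodromy-free nearby cycles along the family $\calQ^{(1)}\to i\bbR$ which is trivialized by exactly that homeomorphism (Proposition~\ref{triv of Q}). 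So I would argue that on the trivialized family, $j_*$ of a constant-in-$i\bbR$ family is again constant, and $i^*$ returns the special fiber, which is precisely the image under the Theorem~\ref{Quillen} homeomorphism; thus the two functors agree. The $t$-exactness is then inherited from Theorem~\ref{real-symmetric}.

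For the statement $\Psi(\calS_*^+(\lambda,\tau))\is\calT_*^+(\lambda,\omega)$, I would track the standard sheaf $\calS_*^+(\lambda,\tau) = (i_+^\lambda)_*(\tau^+)$ through the identification. Under $q^!$ this becomes (base change along the Cartesian square in the proof of Lemma~\ref{quasi-maps model equ}) the $*$-pushforward from $LK_c\backslash\calO_K^\lambda$ of the pulled-back local system; under the stratified trivialization this stratum maps homeomorphically to $S_\bbR^\lambda = K_c\backslash S_\bbR^\lambda$ inside $\Gr_\bbR$, carrying $\tau^+$ to $\omega^+$ by the bijection of local systems in Lemma~\ref{bijection 2} (which is set up precisely so that the strata-matching homeomorphisms identify the local systems $\tau\leftrightarrow\omega$, hence $\tau^+\leftrightarrow\omega^+$). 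Since $\Psi$ is an equivalence commuting with the relevant $*$-pushforwards along fp-locally closed embeddings (nearby cycles along a trivial family commutes with $j_*$ for $j$ an open-in-closure embedding), we get $\Psi((i_+^\lambda)_*\tau^+) \is (s_\lambda^+)_*\omega^+ = \calT_*^+(\lambda,\omega)$.

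The main obstacle I anticipate is \emph{verifying commutation of the nearby cycles functor $\Psi$ with the pushforward $(i_+^\lambda)_*$} across the strata, i.e.\ that the stratified trivialization is compatible enough that $\Psi$ genuinely intertwines standard objects with standard objects rather than merely preserving the Grothendieck group. This requires knowing that the homeomorphism of Theorem~\ref{Quillen} is real-analytic along each stratum (which is asserted there) and that the nearby-cycles construction, built from a \emph{trivial} family over $i\bbR$ but with a genuinely singular special fiber in the $\bbR$-direction, does not introduce monodromy or shift the strata — this is exactly where one uses that the degeneration parameter is $i\bbR_{\geq0}$ with $0$ as special point and the family is trivial away from $0$, so $\Psi$ is computed by a product-sheaf restriction with no vanishing-cycle correction. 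Checking this carefully, including the absence of a shift (the real spherical strata have even real dimension $2\langle\lambda,\rho\rangle$, matching the complex dimension of $\calO_K^\lambda/\Omega K_c$), is the technical heart.
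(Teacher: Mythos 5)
Your proposal is correct and follows essentially the same route as the paper: invoke the $K_c$-equivariant topological trivialization of the rigidified quasi-maps family from Lemma~\ref{iso to Q} (itself resting on Theorem~\ref{Quillen}), observe that nearby cycles along a trivial family is restriction to the special fiber, and then identify this restriction with the real-symmetric equivalence of Theorem~\ref{real-symmetric} because both are built from the same stratified homeomorphism $\Omega K_c\backslash\Gr\is\Gr_\bbR$. The paper's proof is compressed into two sentences and leaves the tracking of standard sheaves and local systems (your third paragraph, using Lemma~\ref{bijection 2}) implicit, but your spelled-out version is the intended argument; the concerns you flag about monodromy and shifts are indeed dispatched precisely by the triviality of the family over $i\bbR$.
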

\begin{proof}
By Lemma \ref{iso to Q},
 there is a  $K_c$-equivariant 
topological trivialization of the quasi-maps family 
$\QM^{(\sigma_{2})}(\bbP^1,X,\infty)_{\mathbb R}\to \bC$
and thus the nearby cycles functor is the same as the 
real-symmetric equivalence in Theorem \ref{real-symmetric}.
The proposition follows.

\end{proof}

\subsection{The Radon transform}
Recall the flow $\psi_z^1:\Gr_\bbR^{(\sigma_2)}\to\Gr_\bbR^{(\sigma_2)}$ 
in  \eqref{flow on Gr_R^2}. By Lemma \ref{flows on Gr^(2)}, it
restricts to a 
flow on the special fiber $\Gr_\bbR\is\Gr_\bbR^{(\sigma_2)}|_0$ with 
critical manifolds $\bigcup_{\lambda\in\Lambda_A^+} C_\bbR^\lambda$ and 
$S_\bbR^\lambda$, respectively $T_\bbR^\lambda$, is the stable manifold, respectively unstable manifold, of $C_\bbR^\lambda$.
Let 
$t_\lambda^-:G_\bbR(\bbR[t^{-1}])\backslash T_\bbR^\lambda\ra G_\bbR(\bbR[t^{-1}])\backslash\Gr_\bbR 
$ be the natural inclusion map. 
According to Lemma \ref{bijection 2}, for any $K_c$-equivariant local system 
$\omega$ on $C_\bbR^\lambda$,
we have the standard and 
co-standard sheaves
\[\calT_*^-(\lambda,\omega):=(t_\lambda^-)_*(\omega_\bbR) \text{\ \ and\ \ \ } 
\calT_!^-(\lambda,\omega):=(t_{\lambda}^-)_!(\omega_\bbR).\]
Recall the Radon transform \[\Upsilon_\bbR:
D(G_\bbR(\mO_\bbR)\backslash\Gr_\bbR)\lra
D(G_\bbR(\bbR[t^{-1}])\backslash\Gr_\bbR)\] in \eqref{Radon TF}.
The same argument as in the proof of Theorem \ref{AM}, replacing the 
Matsuki flow $\phi_t:\Gr\to\Gr$ by the
$\bbR_{>0}$-flow $\psi_z^1:\Gr_\bbR\to\Gr_\bbR$ and Lemma \ref{bijection} by Lemma \ref{bijection 2},
gives us:
\begin{prop}\label{real RT}
The Radon transform defines an equivalence of categories 
\[\Upsilon_\bbR:D(G_\bbR(\mO_\bbR)\backslash\Gr_\bbR)\stackrel{\sim}\lra
D(G_\bbR(\bbR[t^{-1}])\backslash\Gr_\bbR).\]
Moreover, for any $K_c$-equivariant local system $\omega$
on $C_\bbR^\lambda$ we have 
\[\Upsilon_\bbR(\calT_*^+(\lambda,\omega))\is\calT^-_!(\lambda,\omega\otimes\mL_\lambda)[d_\lambda].\]
Here we regard the local system $\mL_\lambda$ in \eqref{L_lambda} as 
a local system on $C_\bbR^\lambda$ via the isomorphism 
$k_\lambda:\Omega G_c\backslash\mO_c^\lambda\is C_\bbR^\lambda$
in Lemma \ref{bijection 2}.

\end{prop}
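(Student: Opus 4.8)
\textbf{Plan of proof for Proposition \ref{real RT}.}
The strategy is to mimic, verbatim, the proof of Theorem \ref{AM} (the affine Matsuki equivalence), replacing the Matsuki flow $\phi_t$ on $\Gr$ by the $\bbR_{>0}$-flow $\psi_z^1$ on $\Gr_\bbR$ (see Lemma \ref{flows on Gr^(2)}) and replacing the ``complex/real uniformization'' geometry encoded in Lemma \ref{bijection} by its real analogue Lemma \ref{bijection 2}. Concretely, the starting data are the three Cartesian squares obtained from the critical/stable/unstable sets of $\psi_z^1$: for each $\lambda\in\Lambda_A^+$ we have $C_\bbR^\lambda\subset\Gr_\bbR$ with the contractions $c_\lambda^{\pm}\colon S_\bbR^\lambda,\,T_\bbR^\lambda\to C_\bbR^\lambda$, the quotient $p_\lambda\colon T_\bbR^\lambda\to G_\bbR(\bbR[t^{-1}])\backslash T_\bbR^\lambda$, and the bijection of (equivariant) local systems of Lemma \ref{bijection 2}. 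These play exactly the roles of $\phi_\pm^\lambda$, $u^\lambda$, $p^\lambda$ and Lemma \ref{bijection} in Section \ref{Affine Matsuki}.

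First I would record the analogue of Lemma \ref{surjective}: that $\Upsilon_\bbR$ sends the standard object $\calT_*^+(\lambda,\omega)=(s_\lambda^+)_*(\omega^+)$ to a costandard object $\calT_!^-(\lambda,\omega\otimes\mL_\lambda)[d_\lambda]$. The proof is the same transversality-plus-hyperbolic-localization computation as in Lemma \ref{surjective}: one restricts along $t_\mu^-\colon G_\bbR(\bbR[t^{-1}])\backslash T_\bbR^\mu\to G_\bbR(\bbR[t^{-1}])\backslash\Gr_\bbR$, uses that $S_\bbR^\lambda$ and $T_\bbR^\mu$ are transversal in $\Gr_\bbR$ (as in Corollary \ref{transversal}, both are $\tilde\theta$-invariant strata), factors $p\circ t$ through the contraction $c_\mu^-$ and invokes the contraction/attractor identities of Lemma \ref{Braden} to kill the $\lambda\neq\mu$ terms and to identify the $\lambda=\mu$ term, the twisting local system being precisely $\mL_\lambda$ of \eqref{L_lambda} transported to $C_\bbR^\lambda$ via $k_\lambda$. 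Next I would prove the analogue of Lemma \ref{full}: full faithfulness of $\Upsilon_\bbR=p_!$ on $D(G_\bbR(\mO_\bbR)\backslash\Gr_\bbR)$, by the identical bookkeeping — reduce to compact objects supported on a finite union of Schubert cells $S_\bbR^\lambda$, reduce the generators to $(w_\lambda)_!(\omega^+)$ and $(w_\mu)_*(\omega^+)$, compute $\mathrm{Hom}$ before and after applying $p_!\,$, use the standard-to-costandard formula just proved together with transversality and Lemma \ref{Braden}, and check that the local-system twist $\mL_\lambda\otimes\iota^*\mL_\lambda'''$ is canonically trivial exactly as in the claim at the end of Lemma \ref{full}.

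Finally, since $D(G_\bbR(\mO_\bbR)\backslash\Gr_\bbR)$ and $D(G_\bbR(\bbR[t^{-1}])\backslash\Gr_\bbR)$ are each generated under colimits by standard (equivalently costandard) objects — the former by $\calT_*^+(\lambda,\omega)$ with $\lambda\in\Lambda_A^+$, the latter by $\calT_!^-(\lambda,\omega)$ — the two lemmas together force $\Upsilon_\bbR$ to be essentially surjective and fully faithful, hence an equivalence; this is the same endgame as in the proof of Theorem \ref{AM}. The main obstacle is purely technical rather than conceptual: one must make sure the hyperbolic-localization/contraction statements of Lemma \ref{Braden} apply in the real-analytic, infinite-dimensional setting (finite-dimensionality of the supports and the trajectory-smoothness hypotheses), and one must track carefully the various orientation local systems — $\mL_\lambda'$, $\mL_\lambda''$, $\mathrm{or}_{p^\lambda}$, $\mL_\lambda'''$ — and verify the cancellation $\mL_\lambda\otimes\iota^*\mL_\lambda'''\cong\bC$ in terms of the real uniformization identifications of Section \ref{uniform}. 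Once those are in place, the argument is a line-by-line transcription of Section \ref{Affine Matsuki}.
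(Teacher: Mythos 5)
Your proposal matches the paper's proof exactly: the paper likewise argues that the statement follows by running the proof of Theorem \ref{AM} with the Matsuki flow $\phi_t$ replaced by the $\bbR_{>0}$-flow $\psi_z^1$ and Lemma \ref{bijection} replaced by Lemma \ref{bijection 2}. The extra detail you supply (the analogues of Lemmas \ref{surjective} and \ref{full}, the local-system bookkeeping, and the generation-by-(co)standards endgame) is exactly what the paper leaves implicit under that substitution.
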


\subsection{The functor $\Psi_\bbR$}
Note that, by Proposition \ref{open in family},  the map $LG_\bbR^{(\sigma_2)}\backslash\Gr^{(\sigma_2)}_{\bbR}|_{i\bbR_{\geq0}}\is
\Bun_{G_\bbR}(\bbP^1(\bbR))\times i\mathbb R_{\geq0}
\ra i\mathbb R_{\geq0}$ is isomorphic to a constant family. It implies

\begin{prop}\label{Psi_R}
The nearby cycles functor 
\[\Psi_{\bbR}:D(LG_\mathbb R\backslash\Gr)\stackrel{}\lra D(G_\mathbb R(\mathbb R[t^{-1}])\backslash\Gr_{\mathbb R})\]
is an equivalence satisfying 
$\Psi_{\bbR}(\calS_!^-(\lambda,\tau))\is \calT_!^-(\lambda,\omega)$.
\end{prop}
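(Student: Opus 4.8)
\textbf{Proof proposal for Proposition \ref{Psi_R}.}

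The plan is to reduce the statement to the topological triviality of the family
$LG_\bbR^{(\sigma_2)}\backslash\Gr^{(\sigma_2)}_{\bbR}|_{i\bbR_{\geq0}}\to i\bbR_{\geq0}$, exactly in the spirit of the proof of Proposition \ref{Psi_K}. First I would invoke Proposition \ref{open in family} in the case $m=1$: the uniformization map $u^{(\sigma_2)}_\bbR$ identifies
$LG_\bbR^{(\sigma_2)}\backslash\Gr^{(\sigma_2)}_{\bbR}$ with $\Bun_{G_\bbR}(\bbP^1(\bbR))\times\bbP^1$ over $\bbP^1$; restricting along $i\bbR_{\geq0}\hookrightarrow\bbP^1$ gives an isomorphism of families
$LG_\bbR^{(\sigma_2)}\backslash\Gr^{(\sigma_2)}_{\bbR}|_{i\bbR_{\geq0}}\is\Bun_{G_\bbR}(\bbP^1(\bbR))\times i\bbR_{\geq0}$ over $i\bbR_{\geq0}$, i.e.\ a constant family. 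Under this identification, the maps $\bar j$ and $\bar i$ in the Cartesian diagram of Section \ref{nearby cycles and Radon TF} become, respectively, the open inclusion $\Bun_{G_\bbR}(\bbP^1(\bbR))\times i\bbR_{>0}\hookrightarrow\Bun_{G_\bbR}(\bbP^1(\bbR))\times i\bbR_{\geq0}$ and the closed inclusion of the fiber over $0$.

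Next I would compute the nearby cycles functor for a constant family: for a locally constant (along the base) stratified family, $\Psi_\bbR(\mF)=(\bar i)^*(\bar j)_*(\mF\boxtimes\underline\bC_{i\bbR_{>0}})$ is canonically isomorphic to $\mF$ itself, via the base change identifying the generic and special fibers. The only subtlety is keeping track of the identifications $LG_\bbR\backslash\Gr\is\Bun_{G_\bbR}(\bbP^1(\bbR))$ (complex uniformization at $i\in\calH$, Proposition \ref{uniformizations at complex x}) on the generic side and $G_\bbR(\bbR[t^{-1}])\backslash\Gr_\bbR\is\Bun_{G_\bbR}(\bbP^1(\bbR))$ (real uniformization, Proposition \ref{uniformizations at real x}, or \eqref{real uniform at x}) on the special side; these are precisely the vertical equivalences in the lower triangle of Theorem \ref{diagram}, so the claim that $\Psi_\bbR$ is an equivalence is immediate once the triviality of the family is in hand, and it is automatically compatible with those uniformization isomorphisms.

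Finally, to get $\Psi_\bbR(\calS_!^-(\lambda,\tau))\is\calT_!^-(\lambda,\omega)$ I would track the stratum $\tilde T_\bbR^\lambda$ through the trivialization. By Lemma \ref{properties of flows}(4) (equivalently Lemma \ref{flows on Gr^(2)}(2)) the unstable manifold $\tilde T_\bbR^\lambda$ of the flow $\psi^1_z$ restricts to $\mO_\bbR^\lambda$ on the generic fiber and to $T_\bbR^\lambda$ on the special fiber, so the open substack $[\mE^\lambda]=LG_\bbR\backslash\mO_\bbR^\lambda$ of the generic fiber is carried to $G_\bbR(\bbR[t^{-1}])\backslash T_\bbR^\lambda$ of the special fiber under the trivialization, compatibly with the bijection between the local systems $\tau_\bbR$ and $\omega_\bbR$ of Lemma \ref{bijection 2}. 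Hence $*$-extension from $[\mE^\lambda]$—which defines $\calS_!^-(\lambda,\tau)=(j_-^\lambda)_!(\tau_\bbR)$—goes to $*$-extension from $G_\bbR(\bbR[t^{-1}])\backslash T_\bbR^\lambda$, which is $\calT_!^-(\lambda,\omega)=(t_\lambda^-)_!(\omega_\bbR)$. I expect the main obstacle to be purely bookkeeping: assembling the several uniformization identifications (complex at $i$, real at $0$) and the flow-stratum dictionary into one coherent statement, rather than any genuinely new geometric input, since all the hard analysis is already packaged in Proposition \ref{open in family} and Section \ref{uniform}.
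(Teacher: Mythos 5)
Your proposal matches the paper's (very terse) proof: the paper simply observes that, by Proposition~\ref{open in family}, the family $LG_\bbR^{(\sigma_2)}\backslash\Gr_\bbR^{(\sigma_2)}|_{i\bbR_{\geq0}}\to i\bbR_{\geq0}$ is a constant family isomorphic to $\Bun_{G_\bbR}(\bbP^1(\bbR))\times i\bbR_{\geq0}$ and treats the rest as immediate, which is exactly the unpacking you supply via the two uniformizations and the flow-stratum dictionary of Lemma~\ref{flows on Gr^(2)}. One cosmetic fix: you twice write ``$*$-extension'' where you mean $!$-extension, as your own parenthetical formulas $(j_-^\lambda)_!(\tau_\bbR)$ and $(t_\lambda^-)_!(\omega_\bbR)$ make clear.
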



\subsection{Proof  of Theorem \ref{equ} and Theorem \ref{diagram}}
We already proved Theorem \ref{equ}.
We have $\Psi(\mF)\is i^*j_*(\mF\boxtimes\underline{\bC}_{i\bbR_{>0}})\is
i^!j_!(\mF\boxtimes\underline{\bC}_{i\bbR_{>0}}[1])$
and the natural arrow 
$(f_0)_!i^!\ra(\bar i)^!f_!$ gives rise to a natural transformation
\beq\label{natural}
\Upsilon_\bbR\circ\Psi(\mF)\is(f_0)_!i^!j_!(\mF\boxtimes\bC_{i\bbR_{>0}}[1])\ra (\bar i)^!f_!j_!(\mF\boxtimes\bC_{i\bbR_{>0}}[1])
\is(\bar i)^!(\bar j)_!(f^0)_!(\mF\boxtimes\bC_{i\bbR_{>0}}[1])\is
\eeq
\[\is\Psi_{\bbR}\circ\Upsilon(\mF).
\]
Moreover, it follows from Lemma \ref{surjective}, Proposition \ref{Psi_K},  
Proposition \ref{real RT}, and Proposition \ref{Psi_R} that (\ref{natural}) is an isomorphism for
the standard sheaf $\calS_*^+(\lambda,\tau)$. 
Since the category $D_c(K(\calK)\backslash\Gr)$ is generated by 
$\calS_*^+(\lambda,\tau)$, it implies (\ref{natural}) is an isomorphism.
Theorem \ref{diagram} follows.


\section{Compatibility of Hecke actions}\label{s:hecke}
Recall the derived Satake category $D(G(\mO)\backslash \Gr)$ is naturally monoidal  with respect to convolution.
We will write $\calF_1 \star\calF_2$ for the convolution product of $\calF_1, \calF_2\in D(G(\mO)\backslash \Gr)$.

Here we enhance the equivalences and commutative square of Theorems~\ref{equ} and~\ref{diagram} to  $D(G(\mO)\backslash \Gr)$-modules. 
 Roughly speaking, we will  take advantage of  the natural right actions on the categories involved, whereas the prior Radon transforms were performed on the left.


\subsection{Hecke actions}\label{ss:conv}

First,    the affine Matsuki correspondence for sheaves 
\[
\Upsilon:D(K(\calK)\backslash\Gr)  \stackrel{\sim}\lra
D(LG_{\mathbb R}\backslash\Gr)
\]
is naturally an equivalence of $D(G(\mO)\backslash \Gr)$-modules by convolution on the right. To see this, 
recall $\Upsilon$ is the restriction to $D(K(\calK)\backslash\Gr)\subset D(LK_c\backslash\Gr)$ of the push-forward
$u_!:D(LK_\bbR\backslash\Gr)\to D(LG_\bbR\backslash\Gr)$
along the quotient map $u:LK_c\backslash\Gr\ra
LG_\bbR\backslash\Gr$.
We can equip this construction with compatibility with convolution on the right by using the commutative action diagram
\[
\xymatrix{
\ar[d] LK_c\backslash G(\calK) \times^{G(\mO)} \Gr  \ar[r]^-{u \times\id} & 
LG_\bbR\backslash G(\calK) \times^{G(\mO)} \Gr \ar[d] \\
LK_c\backslash\Gr \ar[r]^-u & 
LG_\bbR\backslash\Gr
}
\]
and its natural iterations.

Similarly,
   the Radon equivalence
\[\Upsilon_\bbR:D(G_\bbR(\mO_\bbR)\backslash\Gr_\bbR)\stackrel{\sim}\lra
D(G_\bbR(\bbR[t^{-1}])\backslash\Gr_\bbR)\]
is naturally an equivalence of $D_c(G_\bbR(\mO_\bbR)\backslash\Gr_\bbR)$-modules by convolution on the right.
 To see this, 
recall $\Upsilon_\bbR$ is
 the restriction to $D(G_\bbR(\mO_\bbR)\backslash\Gr_\bbR)\subset
D(G_\bbR\backslash\Gr_\bbR)$
of the 
push-forward $p_!:D(G_\bbR\backslash\Gr_\bbR)\ra
D(G_\bbR(\bbR[t^{-1}])\backslash\Gr_\bbR)$
along the quotient map 
$p:G_\bbR\backslash\Gr\to G_\bbR(\bbR[t^{-1}])\backslash\Gr_\bbR$.
We can equip this construction with compatibility with convolution on the right by using the commutative action diagram
\[
\xymatrix{
\ar[d] G_\bbR\backslash G(\calK_\bbR) \times^{G(\mO_\bbR)} \Gr_\bbR  \ar[r]^-{p \times\id} & 
G_\bbR(\bbR[t^{-1}])\backslash G(\calK_\bbR) \times^{G(\mO_\bbR)} \Gr_\bbR \ar[d] \\
G_\bbR\backslash\Gr_\bbR \ar[r]^-p & 
G_\bbR(\bbR[t^{-1}])\backslash\Gr_\bbR
}
\]
and its natural iterations.


\subsection{From complex to real kernels}\label{ss:kernels}

Following~\cite{N1}, nearby cycles in the real Beilinson-Drinfeld Grassmannian $\Gr^{(\sigma_2)}_\mathbb R$ 
 over  $i \bbR_{\geq 0}$
gives a  functor
\beq\label{psi}
\psi:D(G(\calO) \backslash\Gr)\lra D(G_\bbR(\mO_\bbR)\backslash\Gr_{\bbR})
\eeq
Namely, 
there is a  canonical diagram of  $G_\bbR$-equivariant maps
  \begin{equation}\label{c to r diag}
\xymatrix{
 \Gr & \ar[l]_-\pi \Gr\times i\bbR_{>0} \is \Gr^{(\sigma_2)}_\bbR|_{
i\mathbb R_{>0}} \ar@{^(->}[r]^-j &  \Gr^{(\sigma_2)}_\bbR|_{
i\mathbb R_{\geq 0}} & \ar@{_(->}[l]_-i \Gr^{(\sigma_2)}_\mathbb R|_{0}\is\Gr_\mathbb R
}
  \end{equation}
where we view $G_\bbR \subset LG^{(2)}_\bbR$ as the constant group-scheme.
One defines $\psi=i^*j_*\pi^*f_\bbR$ where we write  $f_\bbR:D(G(\calO) \backslash\Gr) \to D(G_\bbR \backslash\Gr)$ 
for the forgetful functor. 

Note the domain and codomain of $\psi$ both have natural convolution monoidal structures. 
To equip $\psi$ with a monoidal structure, we proceed as follows. 

Let $\Gr^{(2)} \tilde\times \Gr^{(2)}$ be  the moduli of $x_1, x_2 \in \bbP^1$, $\calE_1, \calE_2$ $G$-torsors on $\bbP^1$, $\phi$ a trivialization of $\calE_1$ over $\bbP^1 \setminus \{x_1, x_2\}$, and $\alpha$ an isomorphism from $\calE_1$ to $\calE_2$ over  $\bbP^1 \setminus \{x_1, x_2\}$. 
Let $\Gr^{(\sigma_2)}_\bbR \tilde\times \Gr^{(\sigma_2)}_\bbR$ be the real form of $\Gr^{(2)} \tilde\times \Gr^{(2)}$ with respect to the twisted conjugation that exchanges $x_1$ and $x_2$. 

Then
there is a  canonical diagram of  $G_\bbR$-equivariant maps
\begin{equation} \label{eq: nearby + conv}
  \xymatrix{
 G(\calK) \times^{G(\calO)} \Gr & \ar[l]_-\pi G(\calK) \times^{G(\calO)} \Gr\times i\bbR_{>0} \is\Gr^{(\sigma_2)}_\bbR \tilde\times \Gr^{(\sigma_2)}_\bbR|_{
i\mathbb R_{>0}} \ar@{^(->}[r]^-j &  }
\end{equation}
$$
\xymatrix{
\Gr^{(\sigma_2)}_\bbR \tilde\times \Gr^{(\sigma_2)}_\bbR|_{i\mathbb R_{\geq 0}} & \ar@{_(->}[l]_-i 
\Gr^{(\sigma_2)}_\bbR \tilde\times \Gr^{(\sigma_2)}_\bbR |_{0}\is   G_\bbR(\calK_\bbR)  \times^{G_\bbR(\calO_\bbR)} \Gr_\mathbb R
}
$$
Moreover, the  convolution maps on the end terms naturally extend to the entire diagram. By standard identities, we arrive at a canonical isomorphism $\psi(\calF_1 \star \calF_2) \simeq \psi(\calF_1) \star \psi(\calF_2)$. By using iterated versions of the above moduli spaces, we may likewise equip $\psi$ with the associativity constraints of a monoidal structure.


\subsection{Compatibility of actions}\label{ss:actions}

Note we can view the  Radon equivalence  $\Upsilon_\bbR$ as an equivalence of $D(G(\calO) \backslash\Gr)$-modules 
via the monoidal functor 
\[
\psi:D(G(\calO) \backslash\Gr)\lra D(G_\bbR(\mO_\bbR)\backslash\Gr_{\bbR})
\]

Now we have the following further compatibility of our constructions.

\begin{thm}\label{conv comp}
Via the monoidal functor
\[\psi:D(G(\calO) \backslash\Gr)\lra D(G_\bbR(\mO_\bbR)\backslash\Gr_{\bbR})
\]
the  equivalences 
\[\Psi:D(K(\calK)\backslash\Gr)\stackrel{\sim}\lra D(G_\bbR(\mO_\bbR)\backslash\Gr_{\bbR}),\] 
\[\Psi_{\bbR}:D(LG_\mathbb R\backslash\Gr)\stackrel{\sim}\lra D(G_\mathbb R(\mathbb R[t^{-1}])\backslash\Gr_{\mathbb R})\]
of Theorem~\ref{equ} and commutative square
\[
\xymatrix{D(K(\calK)\backslash\Gr)\ar[r]^{\Psi}\ar[d]^{\Upsilon}&D(G_\mathbb R(\mO_\mathbb R)\backslash\Gr_{\mathbb R})
\ar[d]^{\Upsilon_\bbR}\\
D(LG_{\mathbb R}\backslash\Gr)\ar[r]^{\Psi_{\bbR}\ \ }&D(G_\mathbb R(\mathbb R[t^{-1}])\backslash\Gr_{\mathbb R}).}
\]
of Theorem~\ref{diagram} are naturally of  $D(G(\calO) \backslash\Gr)$-modules.
\end{thm}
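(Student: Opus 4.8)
The strategy is to upgrade each of the three functors $\Psi$, $\Psi_\bbR$, $\Upsilon_\bbR$ (the last already handled as a $D_c(G_\bbR(\calO_\bbR)\backslash\Gr_\bbR)$-module equivalence in Section~\ref{ss:conv}, so one converts it via $\psi$) to $D(G(\calO)\backslash\Gr)$-module functors, working throughout with the right convolution actions, and then to check that the natural transformation~\eqref{natural} witnessing commutativity of the square is compatible with these module structures. First I would package all the spaces in sight as fibers of Beilinson--Drinfeld type families over $i\bbR_{\ge 0}$ that also carry a \emph{second} leg of modification, i.e. consider the ``Hecke-modified'' real quasi-map families: the moduli of $(x,\calE,\sigma)$ together with a point $y\in\bbP^1$ and a modification of $\calE$ at $y$, all taken with the twisted conjugation exchanging the pair $(x,\bar x)$ while fixing $y$ real (or rather fixing the pair defining the ordinary complex Hecke point). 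Over the generic point $i\bbR_{>0}$ this specializes to $(K(\calK)\backslash G(\calK)\times^{G(\calO)}\Gr)\times i\bbR_{>0}$, resp. $(LG_\bbR\backslash G(\calK)\times^{G(\calO)}\Gr)\times i\bbR_{>0}$, and over $0$ to $G_\bbR(\calO_\bbR)\backslash G(\calK_\bbR)\times^{G(\calO_\bbR)}\Gr_\bbR$, exactly as in~\eqref{eq: nearby + conv}. Running the nearby-cycles functor $i^*j_*\pi^*$ along these enlarged families, and using the commutation of proper (ind-proper) pushforward along the convolution maps with nearby cycles --- here the key input is that the convolution maps are ind-proper and the relevant families are stratified in a way compatible with the real structure, so Proposition~\ref{triv of Q} / Lemma~\ref{iso to Q}-type trivializations and the standard nearby-cycles-commutes-with-proper-pushforward identity apply --- one obtains the associativity and unit constraints making $\Psi$, $\Psi_\bbR$ into $D(G(\calO)\backslash\Gr)$-module functors with $D(G(\calO)\backslash\Gr)$ acting through $\psi$.

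Next I would treat $\Upsilon$: it was already observed in Section~\ref{ss:conv} that $\Upsilon=u_!q^!$ (restricted to $D(K(\calK)\backslash\Gr)$) is a $D(G(\calO)\backslash\Gr)$-module functor for the \emph{right} convolution actions, via the commutative action square there. So all four corners of the square of Theorem~\ref{diagram} carry $D(G(\calO)\backslash\Gr)$-actions, and the three functors $\Psi$, $\Psi_\bbR$, $\Upsilon$, $\Upsilon_\bbR$ are each module functors. It then remains to check that the natural isomorphism
\[
\Upsilon_\bbR\circ\Psi\;\simeq\;\Psi_\bbR\circ\Upsilon
\]
from~\eqref{natural} is an isomorphism of module functors, i.e. is compatible with the action of $\mathcal{A}\star(-)$ for $\mathcal{A}\in D(G(\calO)\backslash\Gr)$. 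The point is that~\eqref{natural} is built entirely out of base-change natural transformations $(f_0)_!i^!\to(\bar i)^!f_!$ and $\bar j_!(f^0)_!\to f_!j_!$ attached to the commutative diagram of spaces preceding Theorem~\ref{diagram}; each of these base-change maps, when performed on the ``Hecke-enlarged'' version of that diagram (adding the second modification leg $y$, untouched by the conjugation), is manifestly compatible with pushforward along the convolution maps. Concretely, I would redo the derivation of~\eqref{natural} one level up --- on $G(\calK)\times^{G(\calO)}(-)$-versions of every space in the big cartesian diagram --- and observe that restricting to the unit section recovers~\eqref{natural} while the convolution maps provide exactly the module-structure comparison; commutativity of base change with ind-proper pushforward closes the argument.

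\textbf{Main obstacle.} The genuine difficulty is \emph{not} any single base-change identity but the simultaneous bookkeeping of three pieces of structure: (i) the real/semi-analytic setting, where one must work with $D(-)$ for semi-analytic stacks and invoke the trivialization results (Lemma~\ref{iso to Q}, Proposition~\ref{triv of Q}) rather than algebraic properness to justify that nearby cycles behaves well and commutes with the convolution pushforwards; (ii) the fact that the convolution happens on the \emph{right} while the Radon/flow transforms act on the \emph{left}, so one must be careful that the two actions literally commute at the level of the Hecke-modified BD families (this is where the hypothesis that the second marked point $y$ is disjoint from, and not permuted with, the conjugated pair $(x,\bar x)$ is essential, and where the factorization isomorphisms of Section~\ref{real Gr} do the work); and (iii) upgrading everything from a bare isomorphism of functors to a \emph{coherent} isomorphism of $\infty$-categorical module functors --- in practice this means exhibiting the comparison at the level of the iterated ($\check{\mathrm C}$ech / bar) resolutions of the actions, as in the use of \cite[Proposition 5.1.8]{BKV} elsewhere in the paper, and checking the evident compatibilities. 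I expect the write-up to reduce (iii) to the statement that all the natural transformations in play are built from $*$-pullback, ind-proper $!$-pushforward, and base change, each of which is part of the symmetric-monoidal six-functor formalism of \cite{BKV}, hence automatically assembles into a module-functor isomorphism; so the real content is (i)+(ii), i.e. producing the Hecke-enlarged real quasi-map and modified-BD families and verifying their factorization and trivialization properties, which run parallel to Sections~\ref{QMaps}--\ref{nearby cycles and Radon TF} but with the extra modification leg carried along.
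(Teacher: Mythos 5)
The high-level strategy you describe is correct and matches the paper's: build ``Hecke-enlarged'' real Beilinson--Drinfeld/quasi-map families over $i\bbR_{\geq 0}$, run nearby cycles, transport the convolution action via commutation of ind-proper pushforward with nearby cycles, and handle $\Upsilon$, $\Upsilon_\bbR$ via the module structures already set up in Section~\ref{ss:conv} together with the monoidal functor $\psi$. You also correctly recognize that the remaining content is to verify that the base-change natural transformation~\eqref{natural} is a map of module functors, which the paper likewise leaves as a routine compatibility.

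However, the concrete geometric construction you propose is not the paper's, and as stated it does not give the fibers you claim. The paper's spaces $\QM^{(\sigma_2)}(\bP^1,X)_{\bbR}\,\tilde\times\,\Gr^{(\sigma_2)}_{\bbR}$ and $\Gr^{(\sigma_2)}_\bbR\tilde\times\Gr^{(\sigma_2)}_\bbR$ place the quasi-map pole and the Hecke modification at the \emph{same} pair of points $(x_1, x_2)$, with the twisted real structure exchanging $x_1$ and $x_2$. It is precisely this coincidence that makes the fiber over $z\in i\bbR_{>0}$ collapse to the single-disk convolution $G(\calK)\times^{G(\calO)}\Gr$ (both legs land on the same formal disk once the real structure identifies $x_1$ with $\bar x_2$), and makes the fiber at $0$ become $G_\bbR(\calK_\bbR)\times^{G_\bbR(\calO_\bbR)}\Gr_\bbR$. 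You instead introduce a \emph{separate} modification point $y$ and, in your discussion of obstacle (ii), explicitly require $y$ to be disjoint from, and not permuted with, the conjugate pair $(x,\bar x)$. With a disjoint $y$, the fiber over $z\in i\bbR_{>0}$ is a twisted product of the quasi-map stack at $(z,\bar z)$ with a Grassmannian at $y$ --- not the one-point convolution Grassmannian --- and the fiber at $z=0$ likewise fails to be the real convolution Grassmannian. So the specializations you assert (``exactly as in~\eqref{eq: nearby + conv}'') do not hold under your description, and the disjointness requirement in obstacle (ii) is the exact wrong condition. The fix is not to introduce $y$ at all: put the Hecke modification at the same conjugate pair, as the paper does, and observe (as the paper notes after~\eqref{eq: nearby + action}) that the resulting diagram is simply~\eqref{eq: nearby + conv} quotiented on the left by $LK^{(\sigma_2)}_\bbR$, so all the needed properness and factorization properties are inherited.
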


\begin{proof}
We will focus on the compatibility for the top row and indicate the moduli spaces needed. We  leave it to the reader to pass to sheaves and apply standard identities.  The  compatibility for the bottom row and entire square can be argued  similarly.

Let $\QM^{(2)}(\bP^1,X) \tilde\times \Gr^{(2)}$ be  the moduli of $x_1, x_2 \in \bbP^1$, $\calE_1, \calE_2$ $G$-torsors on $\bbP^1$, $\sigma$ a section of $\calE_1 \times^G X$ over $\bbP^1 \setminus \{x_1, x_2\}$, and $\alpha$ an isomorphism from $\calE_1$ to $\calE_2$ over  $\bbP^1 \setminus \{x_1, x_2\}$. 
Let $\QM^{(\sigma_2)}(\bP^1,X)_{\bbR}  \tilde\times \Gr^{(\sigma_2)}_{\bbR} $ be the real form of $\QM^{(2)}(\bP^1,X) \tilde\times \Gr^{(2)}$ with respect to the twisted conjugation that exchanges $x_1$ and $x_2$.

Then
there is a  canonical diagram of  $K_c$-equivariant maps
\begin{equation}\label{eq: nearby + action}
  \xymatrix{
LK_c\backslash G(\calK) \times^{G(\mO)} \Gr  & \ar[l]_-\pi  LK_c\backslash G(\calK) \times^{G(\mO)} \Gr  \times i\bbR_{>0} \is \QM^{(\sigma_2)}(\bP^1,X)_{\bbR}  \tilde\times \Gr^{(\sigma_2)}_{\bbR}|_{
i\mathbb R_{>0}} \ar@{^(->}[r]^-j &  }
\end{equation}
$$
\xymatrix{
\QM^{(\sigma_2)}(\bP^1,X)_{\bbR}  \tilde\times \Gr^{(\sigma_2)}_{\bbR}|_{i\mathbb R_{\geq 0}} & \ar@{_(->}[l]_-i 
\QM^{(\sigma_2)}(\bP^1,X)_{\bbR}  \tilde\times \Gr^{(\sigma_2)}_{\bbR} |_{0}\is   K_c\backslash G_\bbR(\calK_\bbR)  \times^{G_\bbR(\calO_\bbR)} \Gr_\mathbb R
}
$$
Note we could equivalently obtain diagram  \eqref{eq: nearby + action} by taking diagram  \eqref{eq: nearby + conv}
and quotienting by the left action of  the group-scheme $LK^{(\sigma_2)}_\bbR$.

As with the convolution maps in diagram \eqref{eq: nearby + conv}, the  actions maps on the end terms of diagram  
 \eqref{eq: nearby + action} naturally extend to the entire diagram. By standard identities, we arrive at a canonical isomorphism $\Psi(\calM \star \calF) \simeq \Psi(\calM) \star \psi(\calF)$. By using iterated versions of the above moduli spaces, we may likewise equip $\Psi$ with the associativity constraints of a module map.
\end{proof}

\section{Compatibility with fusion product}\label{fusion}
We show that the real-symmetric equivalences 
in Theorem \ref{real-symmetric}
are compatible with the natural fusion products

\subsection{Fusion product for $\Gr_\bbR$}
We first define fusion product for the real Satake category.
Consider the 
family
\[\Gr_{\bbR}^{(2)}|_{\bbR_{\geq0}}\lra \bbR_{\geq0}\]
obtained by the restriction of the family
$\Gr_\bbR^{(2)}\to\bbR^2$  
along 
embedding
$\bbR_{\geq0}\hookrightarrow \bbR^2$ sending $t\to (t,-t)$. 
The natural action of $G(\calO)^{(2)}_\bbR$ on $\Gr_{\bbR}^{(2)}$
is compatible with the factorization isomorphisms in Section \ref{real Gr} and it follows that 
there are 
Cartesian diagrams
\[\xymatrix{(G_\bbR(\calO_\bbR)\backslash\Gr_\bbR)^2\times \bbR_{>0}\ar[r]^{\ \ \ \ \ \ j_\bbR}\ar[d]&(G(\calO)^{(2)}_\bbR\backslash\Gr^{(2)}_\bbR)|_{\bbR_{\geq0}}\ar[d]&G_\bbR(\calO_\bbR)\backslash\Gr_\bbR\ar[d]\ar[l]_{i_\bbR}\\
\bbR_{>0}\ar[r]&\bbR_{\geq0}&\{0\}\ar[l]}\]
For any $\mF,\mF'\in D(G_\bbR(\calO_\bbR)\backslash\Gr_\bbR)$, we define the fusion product $\mF\star_\bbR\mF'$
as the following nearby cycles:
\[\mF\star_f\mF':= (i_\bbR)^*(j_\bbR)_*(\mF\boxtimes\mF'\boxtimes\underline{\bC}_{\bbR_{>0}})\]
where $\underline{\bC}_{\bbR_{>0}}$ is the constant sheaf on $\bbR_{>0}$.
Like in the case of complex reductive groups,
there is a natural monoidal structure on 
$D(G_\bbR(\calO_\bbR)\backslash\Gr_{\bbR})$
given by the convolution product:
consider the convolution diagram
\beq\label{convolution diagram}
\Gr_{\bbR}\times\Gr_{\bbR}\stackrel{p}\la
G_\bbR(\calK_\bbR)\times\Gr_\bbR\stackrel{q}\ra
\Gr_{\bbR}\tilde\times\Gr_{\bbR}:=G_\bbR(\calK_\bbR)\times^{G_\bbR(\calO_\bbR)}\Gr_{\bbR}\stackrel{m}\to\Gr_{\bbR}
\eeq
where $p$ and $q$ are the natural quotient maps and
 $m(x,y\mod G_{\bbR}(\calO_\bbR))= xy\mod G_{\bbR}(\calO_\bbR)$.
 For any $\mF_1,\mF_2\in D(G_\bbR(\calO_\bbR)\backslash\Gr_{\bbR})$,
 the convolution 
 is defined as 
 \[\mF_1\star\mF_2=m_!(\mF_1\tilde\boxtimes\mF_2)\]
 where $\mF_1\tilde\boxtimes\mF_2\in D(G_\bbR(\calO_\bbR)\backslash\Gr_{\bbR})$ is the unique complex
 such that $q^*(\mF_1\tilde\boxtimes\mF_2)\is p^*(\mF_1\boxtimes\mF_2)$.

\begin{prop}\label{real convolution}
(1) There is a natural isomorphism 
\[\star_f\is\star:D(G_\bbR(\calO_\bbR)\backslash\Gr_\bbR)\times D(G_\bbR(\calO_\bbR)\backslash\Gr_\bbR)\to D(G_\bbR(\calO_\bbR)\backslash\Gr_\bbR)\] of functors.
(2) The fusion and convolution product are $t$-exact with respect to the 
perverse $t$-structure.
%

\end{prop}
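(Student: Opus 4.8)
\textbf{Proof plan for Proposition \ref{real convolution}.}
The plan is to mirror, in the real analytic setting, the standard comparison between fusion and convolution on the affine Grassmannian of a complex reductive group, using the factorization geometry of the real Beilinson-Drinfeld Grassmannian $\Gr^{(2)}_\bbR$ recorded in Section \ref{real Gr}. For part (1), the key input is the factorization isomorphism
\[
\Gr^{(2)}_\bbR|_{\bbR^\fp}\is\Bigl(\prod_{i=1}^k\Gr^{(1)}_\bbR\Bigr)\Big|_{\bbR^k_0},
\]
together with the behaviour of the family $\Gr^{(2)}_\bbR|_{\bbR_{\geq 0}}$ over the diagonal-approaching line $t\mapsto(t,-t)$: over the open locus $\bbR_{>0}$ the fibre is $\Gr_\bbR\times\Gr_\bbR$ with its $G_\bbR(\calO_\bbR)^{\times 2}$-action, while the special fibre over $0$ is $\Gr_\bbR$ with its $G_\bbR(\calO_\bbR)$-action through the diagonal. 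First I would identify the global Beilinson-Drinfeld convolution space $\Gr_\bbR\,\widetilde\times\,\Gr_\bbR$ (the space $\Gr^{(\sigma_2)}_\bbR\,\widetilde\times\,\Gr^{(\sigma_2)}_\bbR$ of Section \ref{ss:kernels} specialized appropriately) as interpolating between $\Gr_\bbR\times\Gr_\bbR$ and $\Gr_\bbR$, so that the convolution morphism $m$ and the external product $\widetilde\boxtimes$ fit into one family over $\bbR_{\geq 0}$; then the nearby cycles $(i_\bbR)^*(j_\bbR)_*$ computing $\star_f$ is identified with $m_!$ applied to $\mF_1\,\widetilde\boxtimes\,\mF_2$ by base change along the ind-proper morphism $m$ (properness of $m$ lets nearby cycles commute with $m_!$). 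The remaining bookkeeping is the identification of the restriction of the family's external product with $\widetilde\boxtimes$; this is exactly the content of the factorization isomorphism together with the contractibility of $G_\bbR(\calO_\bbR)$-level structure used already in the complex case.

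For part (2), $t$-exactness of $\star_f$ (equivalently $\star$) I would deduce from two facts: the nearby cycles functor $(i_\bbR)^*(j_\bbR)_*[-1]$ along the one-parameter degeneration is $t$-exact, and the convolution morphism $m$ for $\Gr_\bbR$ is semismall. The $t$-exactness of nearby cycles in this real analytic family is precisely the criterion of \cite[Theorem 1.2.3]{N2} invoked elsewhere in the paper for the real affine Grassmannian; applied to the family $\Gr^{(2)}_\bbR|_{\bbR_{\geq 0}}$ over the diagonal line it shows $\mF_1\star_f\mF_2$ is perverse whenever $\mF_1\boxtimes\mF_2\boxtimes\underline\bC_{\bbR_{>0}}$ is (up to the shift by the dimension of the base). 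The semismallness of $m$ for $\Gr_\bbR$ follows from the semismallness of convolution in the complex $\Gr$ by restriction to the real locus, using that the real orbit dimensions are exactly half the complex ones (equivalently $\dim_\bbR S^\lambda_\bbR=2\langle\lambda,\rho\rangle$, as in \cite[Lemma 3.9.1]{N2}), so the defining dimension inequalities pass to the real form; semismallness then gives $t$-exactness of $m_!$ on the category of $G_\bbR(\calO_\bbR)$-equivariant perverse sheaves in the usual way.

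I expect the main obstacle to be part (1): carefully setting up the real form of the global convolution Grassmannian so that (a) it genuinely fibres over $\bbR_{\geq 0}$ with the advertised generic and special fibres, and (b) the external product and convolution morphism extend across $t=0$ compatibly, so that the base-change identity $(i_\bbR)^*(j_\bbR)_*(\mF_1\boxtimes\mF_2\boxtimes\underline\bC)\is m_!(\mF_1\,\widetilde\boxtimes\,\mF_2)$ holds on the nose. This is where the twisted-conjugation real structures of Section \ref{real Gr} and the ind-properness of $m$ must be combined; once this is in place, the $t$-exactness statements in part (2) are formal consequences of the $t$-exactness of nearby cycles and of semismallness, exactly as in the complex case treated in \cite{MV}.
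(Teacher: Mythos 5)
Your plan for part (1) is essentially the paper's approach: the paper cites the real global convolution diagram of \cite[Section 6.3]{N2} and says the argument is the same as the complex case, which is exactly the interpolation you describe from $\Gr_\bbR\times\Gr_\bbR$ to $\Gr_\bbR$ inside the real Beilinson--Drinfeld convolution space, combined with ind-properness of $m$ and base change.

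For part (2), there is a genuine problem with one of your two inputs. You appeal to \cite[Theorem 1.2.3]{N2} to assert that the nearby cycles functor $(i_\bbR)^*(j_\bbR)_*[-1]$ along the diagonal family $\Gr^{(2)}_\bbR|_{\bbR_{\geq 0}}\to\bbR_{\geq 0}$ is $t$-exact. That theorem is about a \emph{different} degeneration, namely the one from the complex affine Grassmannian to the real one (the functor $\psi:D(G(\calO)\backslash\Gr)\to D(G_\bbR(\calO_\bbR)\backslash\Gr_\bbR)$), and moreover it gives a \emph{criterion}: that nearby cycles is $t$-exact if and only if $G_\bbR$ is quasi-split. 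The paper's Theorem \ref{t-exact} uses precisely this to show that the Hecke action is \emph{not} in general $t$-exact. Proposition \ref{real convolution}(2), by contrast, is an unconditional statement, so you cannot deduce it from \cite[Theorem 1.2.3]{N2}, and it does not concern the diagonal family in any case. Fortunately the appeal is also unnecessary: once part (1) identifies $\star_f$ with $\star$, the only input needed is that the real convolution map $m$ is (stratified) semismall, which is exactly what the paper cites from \cite[Section 3.8]{N2}. Your heuristic that real semismallness follows from complex semismallness because real orbit dimensions are half the complex ones is the right idea but is not quite a proof --- one must check the fibers of $m$ over each real stratum have the expected real dimension, which is not automatic from the analogous complex estimate; the citation to \cite[Section 3.8]{N2} handles this directly. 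So: drop the nearby-cycles argument, and cite the semismallness of $m$ from \cite[Section 3.8]{N2} (or supply the fiber-dimension estimate yourself), and part (2) becomes a formal consequence of part (1).
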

\begin{proof}
Parts (1)  follows from the same proof as in the complex case using the real global convolution diagram 
in \cite[Section 6.3]{N2}.
Part (2) follows from part (1) and the fact that 
the real convolution map $m$ is a stratified semismall map, see \cite[Section 3.8]{N2}.

\end{proof}

\subsection{
Fusion product 
 for $X(\calK)$}\label{fusion product for X}
We define and study the fusion product for the relative Satake category.

Consider the base change 
$\Gr^{(2)}|_{\bC}\ra \bC$
of $\Gr^{(2)}\to(\bP^1)^2$ along the embedding 
$
\bC\hookrightarrow(\bP^1)^2,\ \ t\to (t,-t).
$
Since the action of the ind-group scheme $K(\calK)^{(2)}$ on
$\Gr^{(2)}$ is compatible with the factorization isomorphisms
 it follows that 
there are 
Cartesian diagrams
\[\xymatrix{(X(\calK)/G(\calO))^2\times \bbR_{>0}\is (K(\calK)\backslash\Gr)^2\times \bbR_{>0}\ar[r]^{\ \ \ \ \ \ \ \ \ \ \ \ \ \ j_\bbR}\ar[d]^{s|_{\bC^\times}}&(K(\calK)^{(2)}\backslash\Gr^{(2)})|_{ \bbR_{\geq0}}\ar[d]^{s}&K(\calK)\backslash\Gr\is X(\calK)/G(\calO)\ar[d]^{id}_{\simeq}\ar[l]_{i_\bbR\ }\\
(X(\calK)/G(\calO))^2\times \bC^\times\is (K(\calK)\backslash\Gr)^2\times \bC^\times\ar[r]^{\ \ \ \ \ \ \ \ \ \ \ \ \ \ j}\ar[d]&(K(\calK)^{(2)}\backslash\Gr^{(2)})|_{ \bC}\ar[d]^{}&K(\calK)\backslash\Gr\is X(\calK)/G(\calO)\ar[d]\ar[l]_{i\ }\\
\bC^\times\ar[r]&\bC&\{0\}\ar[l]}\]
where $s$ is the natural closed embedding.

Since  $j$ is a fp-open embedding and $i$ is an fp-closed embedding,
\cite[Lemma 5.4.1]{BKV} and Proposition \ref{key base change}
implies that the functors $j^!$ and $i^!$ admits 
a (continuous) right adjoint $j_*$ and a left adjoint $i^*$ respectively.
 
 For any $\mF,\mF'\in D(X(\calK)/G(\calO))$ (resp. $\mF,\mF'\in D(K(\calK)\backslash\Gr))$ we define the fusion product 
$\mF\star_f\mF'$ 
as the following nearby cycles 
\[\mF\star_f\mF':= i^*j_*(\mF\boxtimes\mF'\boxtimes\underline{\bC}_{\bbR_{>0}})\]
here we view $\underline{\bC}_{\bbR_{>0}}$ as constant sheaf 
supported on $\bbR_{>0}\subset\bC^\times$.

Note that the left adjoint $(i_\bbR)^*$  of $(i_\bbR)_*$ 
and right adjoint $(j_\bbR)_*$ of $(j_\bbR)^!$
exist and are isomorphic to 
$(i_\bbR)^*\is i^*s_*$ and $(j_\bbR)_*\is s^!j_*(s|_{\bC^\times})_*$.
Indeed, we have 
\[\on{Hom}(i^*s_*\mF,\mF')\is\Hom(s_*\mF,i_*\mF')\is\Hom(s_*\mF,s_*(i_\bbR)_*\mF')\is\Hom(\mF,(i_\bbR)_*\mF')\]
\[\on{Hom}(\mM,s^!j_*(s|_{\bC^\times})_*\mM')\is\Hom(s_*\mM,j_*(s|_{\bC^\times})_*\mM')\is\Hom(j^!s_*\mM,(s|_{\bC^\times})_*\mM')\is\]
\[\is\Hom((s|_{\bC^\times})_*(j_\bbR)^!\mM,(s|_{\bC^\times})_*\mM')\is
\Hom((j_\bbR)^!\mM,\mM')\]
where the last isomorphisms follow from the 
fact that the functors $s_*$ and $(s|_{\bC^\times})_*$ are  fully-faithful. 
There is an isomorphism
\[\mF\star_f\mF'\is (i_\bbR)^* (j_\bbR)_*(\mF\boxtimes\mF'\boxtimes\underline{\bC}_{\bbR_{>0}})\]

\begin{thm}\label{fusion compatibility}
\
\begin{enumerate}
\item
The equivalences  
$D(X(\calK)/G(\calO))\is 
D(K(\calK)\backslash\Gr)\is D(G_\bbR(\calO_\bbR)\backslash\Gr_{\bbR})
$
are compatible with the fusion products 
\item  
The fusion product  for
$D(X(\calK)/G(\calO))$ (resp. $D(K(\calK)\backslash\Gr)$)
is $t$-exact with respect to the perverse $t$-structure.
\end{enumerate}
\end{thm}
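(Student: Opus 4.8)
\textbf{Proof proposal for Theorem \ref{fusion compatibility}.}

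The plan is to reduce both statements to facts already established earlier in the excerpt, using the trivializations of quasi-map families and the comparison with the already-studied real and complex geometry. First I would handle part (2) directly on the symmetric side. The fusion product $\mF\star_f\mF'$ is defined via the nearby cycles functor $i^*j_*$ along the family $(K(\calK)^{(2)}\backslash\Gr^{(2)})|_{\bC}\to\bC$. The key input is that this family, after passing to the quasi-maps model via the uniformization isomorphism $LK^{(n)}\backslash\Gr^{(n)}\is\QM^{(n)}(\bbP^1,X)$ of \eqref{iso for QM}, is a placid stratified family whose total space admits gluing of sheaves (Proposition \ref{key base change}), and whose stratification is the spherical one indexed by maps $\fp\to\Lambda_A^+$. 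One then invokes the standard argument: $t$-exactness of nearby cycles for a placid stratified family follows once one knows the relevant strata have the expected dimensions (so that the perversity function is constant along the family) and that the convolution/fusion morphism is stratified semismall. For the symmetric side the semismallness of the relevant global convolution morphism is the analogue of the complex statement; here I would transport it from the real side via Theorem \ref{real-symmetric} combined with part (1), or argue it directly from the dimension count $\dim_\bbR S_\bbR^\lambda = 2\langle\lambda,\rho\rangle$ already recorded. The perversity $p_X(\lambda)=-\langle\lambda,\rho\rangle$ of \eqref{perversity} is exactly the one for which nearby cycles along the family is $t$-exact by \cite[Theorem 1.2.3]{N2} (the $t$-exactness criterion for nearby cycles), using that the family is ind-proper.

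For part (1), the strategy is to identify the fusion product $\star_f$ on $D(X(\calK)/G(\calO))\is D(K(\calK)\backslash\Gr)$ with the fusion product $\star_\bbR$ on $D(G_\bbR(\calO_\bbR)\backslash\Gr_\bbR)$ under the real-symmetric equivalence of Theorem \ref{real-symmetric}, and then use Proposition \ref{real convolution} to further identify $\star_\bbR$ with the real convolution $\star$. The crucial geometric fact is the multi-point Quillen homeomorphism of Theorem \ref{Quillen}: applied with $m=2$ it gives a $K_c$-equivariant strata-preserving homeomorphism relating $\Omega K_c^{(2)}\backslash\Gr^{(2)}$ (equivalently $\QM^{(\sigma_4)}(\bbP^1,X)_\bbR$) to the real Beilinson-Drinfeld Grassmannian $\Gr^{(2)}_\bbR$, compatibly with the factorization isomorphisms over the diagonal strata. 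First I would set up the two-point family $(K(\calK)^{(2)}\backslash\Gr^{(2)})|_{\bC}\to\bC$ on the symmetric side, restricted along $t\mapsto(t,-t)$ as in Section \ref{fusion product for X}, and the corresponding family $(G(\calO)^{(2)}_\bbR\backslash\Gr^{(2)}_\bbR)|_{\bbR_{\geq0}}\to\bbR_{\geq0}$ on the real side. Theorem \ref{Quillen} (the $m=2$ case) together with Proposition \ref{triv of Q} and Proposition \ref{components of multi loops of X} furnishes a $K_c$-equivariant strata-preserving homeomorphism of these two families over a neighborhood of $0$, which is the identity on the central fiber (via the $m=1$ identifications used in Theorem \ref{real-symmetric}) and restricts over the open locus to the real-symmetric equivalence on each of the two marked points. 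Since nearby cycles are preserved by such strata-preserving homeomorphisms of families, the functor $i^*j_*$ computing $\star_f$ is intertwined with the functor $(i_\bbR)^*(j_\bbR)_*$ computing $\star_\bbR$; this gives part (1), modulo checking that the various adjoint functors $j_*$, $i^*$ used in the two definitions are genuinely computed by the same geometric operations — which is exactly the content of Proposition \ref{key base change} (ensuring gluing of sheaves and base change on the symmetric side) together with the elementary adjunction identities $(i_\bbR)^*\is i^*s_*$, $(j_\bbR)_*\is s^!j_*(s|_{\bC^\times})_*$ already recorded in Section \ref{fusion product for X}.

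I expect the main obstacle to be the bookkeeping required to make the comparison of the \emph{two-point} families genuinely compatible with the \emph{one-point} real-symmetric equivalence on each factor and with the factorization isomorphisms along the diagonal — i.e., checking that the homeomorphism of families supplied by Theorem \ref{Quillen} intertwines the external product sheaf $\mF\boxtimes\mF'$ on the symmetric open locus with the external product on the real open locus, and that it is compatible with the stratifications indexed by $\lambda_\fp:\fp\to\Lambda_A^+$ on the nose rather than just up to reindexing. A secondary subtlety is that the fusion product on the symmetric side a priori lives in a category of sheaves on an infinite-dimensional (ind-placid) stack, so one must be careful that nearby cycles and the adjoints $j_*$, $i^*$ are the correct ones in the sense of \cite{BKV}; here Lemma \ref{key lemma} and Proposition \ref{key base change} do the needed work, and I would cite them to justify the base-change isomorphisms that make the homeomorphism-of-families argument go through. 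Once part (1) is in hand, part (2) on the symmetric side also follows from part (2) on the real side (Proposition \ref{real convolution}(2)) without reproving the semismallness, giving a second, cleaner route to part (2).
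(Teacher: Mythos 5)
There is a genuine gap in your argument for part (1). You claim that the $m=2$ case of Theorem~\ref{Quillen} "furnishes a $K_c$-equivariant strata-preserving homeomorphism of these two families," where the two families are $(K(\calK)^{(2)}\backslash\Gr^{(2)})|_{\bC}\to\bC$ and $(G(\calO)^{(2)}_\bbR\backslash\Gr^{(2)}_\bbR)|_{\bbR_{\geq0}}\to\bbR_{\geq0}$. But Theorem~\ref{Quillen} relates $\Omega K_c^{(m)}\backslash\Gr^{(m)}$ to $\Gr^{(m)}_\bbR$, and hence (after quotienting by $K_c$) $LK_c^{(2)}\backslash\Gr^{(2)}$ to $K_c\backslash\Gr^{(2)}_\bbR$; it does \emph{not} produce a homeomorphism with the $K(\calK)^{(2)}$-quotient. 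Indeed no such homeomorphism can exist: the map $\pr:\Omega K_c^{(2)}\backslash\Gr^{(2)}\to K(\calK)^{(2)}\backslash\Gr^{(2)}$ is a quotient by an infinite-dimensional group with pro-contractible fibres, not an isomorphism. The crucial missing step is to show that the pullback $\pr^!\cong(h^{(2)})^!$ along this non-isomorphism \emph{commutes with the nearby-cycles functor} $i^*j_*$. This is where the paper does its real work: using the multi-point Gram--Schmidt factorization (Proposition~\ref{multi Gram-Schmidt}) one sees that $K(\calK)^{(2)}/\Omega K_c^{(2)}\cong K(\calO)^{(2)}$ is strongly pro-smooth, so each $\pr^{[n]}$ is strongly pro-smooth, and one then invokes \cite[Prop.~5.2.7(d), Lemma~5.4.5]{BKV} to get the commutation. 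Without this step the diagram chase from the $\Omega K_c^{(2)}$-level down to the $K(\calK)^{(2)}$-level simply does not close, and "nearby cycles are preserved by strata-preserving homeomorphisms" does not apply, because no homeomorphism between the relevant objects exists. You gesture at Lemma~\ref{key lemma} and Proposition~\ref{key base change} for "base change," but these are stated for fp-locally closed embeddings, which is a different issue from commuting a strongly pro-smooth pullback with $j_*$.

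A secondary issue is your "direct" argument for part (2). Semismallness of the global fusion/convolution morphism on the symmetric side is not independently established in the paper, and citing \cite[Theorem~1.2.3]{N2} (a criterion about nearby cycles from $\Gr$ to $\Gr_\bbR$) is not a fit for $t$-exactness of fusion on $X(\calK)/G(\calO)$. You also implicitly invoke part (1) and the real-symmetric equivalence inside this "direct" argument, so it is not actually independent. Your fallback — deduce (2) from (1) together with Proposition~\ref{real convolution}(2) — is exactly what the paper does, and is the route you should commit to.

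One further point of bookkeeping you did not mention: the paper does not set up the comparison by restricting Theorem~\ref{Quillen} over $\{i\}$ versus $\{0\}$ at the two marked points. Instead it introduces a one-parameter family of embeddings $\iota_a:\bC\to(\bP^1)^2$, $t\mapsto(t+ai,-t+ai)$, a translation isomorphism \eqref{translation}, and the restricted homeomorphism \eqref{Quillen R>=0} over $\iota_1(\bbR_{\geq0})$, to build a three-row Cartesian diagram where the top and middle rows are homeomorphic and the middle-to-bottom vertical is the strongly pro-smooth $h^{(2)}$. Getting the right family is needed so that the central-fibre identification coincides with the one used in Theorem~\ref{real-symmetric}.
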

\begin{proof}
Proof of (1). The compatibility for   $D(X(\calK)/G(\calO))\is 
D(K(\calK)\backslash\Gr)$ follows from the definition.
Consider the family of 
 embeddings
\[\iota_a:\bC\to(\bP^1)^2\ \ \ t\to (t+ai,-t+ai)\]
parametrized by $a\in\bbR$. 
There is a natural isomorphism
\beq\label{translation}
(K(\calK)^{(2)}\backslash\Gr^{(2)})|_{\iota_a(\bC)}
\is (K(\calK)^{(2)}\backslash\Gr^{(2)})|_{\iota_0(\bC)}=(K(\calK)^{(2)}\backslash\Gr^{(2)})|_{\bC}
\eeq
induced by the translation isomorphism 
$\bbP^1\is\bbP^1, x\to x+ai$ and 
the stratified homeomorphism  in Theorem \ref{Quillen}
restricts to a  stratified homeomorphism 
\beq\label{Quillen R>=0}
\Gr_\bbR^{(2)}|_{\bbR_{\geq0}}\is (\Omega K_c^{(2)}\backslash\Gr^{(2)})|_{\iota_1(\bbR_{\geq0})}
\eeq
Let us consider the following Cartesian diagram 
\[\xymatrix{ 
(\Gr_\bbR)^2\times \bbR_{>0}\ar[d]^\simeq\ar[r]^{\ \ j_\bbR}&\Gr_\bbR^{(2)}|_{\bbR_{\geq0}}\ar[d]_{\eqref{Quillen R>=0}}^\simeq&\Gr_\bbR\ar[l]_{i_\bbR}\ar[d]^\simeq\\
(\Omega K_c^{(2)}\backslash\Gr^{(2)})|_{\bbR_{>0}}\ar[r]^{\ \ j_\bbR}\ar[d]^{h^{(2)}|_{\bbR_{>0}}}&(\Omega K_c^{(2)}\backslash\Gr^{(2)})|_{\iota_1(\bbR_{\geq0})}\ar[d]^{h^{(2)}}&\Omega K_c\backslash\Gr\ar[d]^{h}\ar[l]_{\ \ \ \ \ i_\bbR}\\
(K(\calK)\backslash\Gr)^2\times \bbR_{>0}\ar[r]^{ j_\bbR}\ar[d]&(K(\calK)^{(2)}\backslash\Gr^{(2)})|_{\bbR_{\geq0}}\ar[d]&K(\calK)\backslash\Gr\ar[d]\ar[l]_{i_\bbR}\\
\bbR_{>0}\ar[r]&\bbR_{\geq0}&\{0\}\ar[l]}\]
where 
\[h^{(2)}:(\Omega K_c^{(2)}\backslash\Gr^{(2)})|_{\iota_1(\bbR_{\geq0})} 
\stackrel{\pr}\lra
(K(\calK)^{(2)}\backslash\Gr^{(2)})|_{\iota_1(\bbR_{\geq0})}\stackrel{\eqref{translation}}\is(K(\calK)^{(2)}\backslash\Gr^{(2)})|_{\bbR_{\geq0}} \]
and 
\[\pr:(\Omega K_c^{(2)}\backslash\Gr^{(2)})\to (K(\calK)^{(2)}\backslash\Gr^{(2)})\] 
is  
the natural  map 
induced by the natural inclusion $\Omega K_c^{(2)}\to K(\calK)^{(2)}$. 
Note that there is an isomorphism
\[(\Omega K_c^{(2)}\backslash\Gr^{(2)})\is K(\calK)^{(2)} \backslash (\Gr^{(2)}\times_{(\bP^1)^2}
K(\calK)^{(2)}/\Omega K_c^{(2)})\]
where  $K(\calK)^{(2)}$-acts diagonally, and 
the functor 
\[\pr^!\is (h^{(2)})^!: D(K(\calK)^{(2)}\backslash\Gr^{(2)}|_{\bbR_{\geq0}})\is\on{colim}D((\Gr^{(2)})^{[n]}|_{\bbR_{\geq0}})\lra
 D((\Omega K_c^{(2)}\backslash\Gr^{(2)})|_{\bbR_{\geq0}})\is\]
 \[\is\on{colim}D((\Gr^{(2)}\times_{(\bP^1)^2}
K(\calK)^{(2)}/\Omega K_c^{(2)})^{[n]})\]
is induced by the pullback functors $(\pr^{[n]})^!$ along the 
projection maps between
the terms of the $\check{\on{C}}$ech complexes:
\[\pr^{[n]}:(\Gr^{(2)}\times_{(\bP^1)^2}
K(\calK)^{(2)}/\Omega K_c^{(2)})^{[n]}=(\Gr^{(2)}\times_{(\bP^1)^2} (K(\calK)^{(2)}/\Omega K_c^{(2)}))\times_{(\bP^1)^2} (K(\calK)^{(2)})^n\stackrel{}\ra\]
\[\ra(\Gr^{(2)})^{[n]}\is
\Gr^{(2)}\times_{(\bP^1)^2}  (K(\calK)^{(2)})^n\] 
Note that 
the Gram-Schmidt factorization in Proposition \ref{multi Gram-Schmidt}
implies that quotient
$K(\calK)^{(2)}/\Omega K_c^{(2)}\is K(\calO)^{(2)}$
 is strongly pro-smooth, that is, $K(\calO)^{(2)}$ is a projective limit of smooth schemes with smooth affine transition maps.
Thus the map $\pr^{[n]}$ is also strongly pro-smooth and it follows from \cite[Proposition 5.2.7 (d) and Lemma 5.4.5]{BKV} 
that  $(\pr^{[n]})^!$ and hence $\pr^!\is (h^{(2)})^!$ commutes with nearby cycles, that is,
we have 
\[h^!(i_\bbR)^*(j_\bbR)_*(-)\is
(i_\bbR)^*(j_\bbR)_*(h^{(2)}|_{\bbR^\times})^!(-).\]
Since  $h^!:D(K(\calK)\backslash\Gr)\to D(\Omega K_c\backslash\Gr)\is D(\Gr_\bbR)$ gives rise to the equivalence 
$D(K(\calK)\backslash\Gr)\is D(G_\bbR(\calO_\bbR)\backslash\Gr_\bbR)$
in Theorem \ref{real-symmetric}, 
for any $\mF,\mF'\in D(K(\calK)\backslash\Gr)$, there is a natural isomorphism
\[h^!(\mF\star_f\mF')\is h^!((i_\bbR)^*(j_\bbR)_*(\mF\boxtimes\mF'\boxtimes\underline\bC_{\bbR_{>0}}))\is
(i_\bbR)^*(j_\bbR)_*(h^{(2)}|_{\bbR^\times})^!(\mF\boxtimes\mF'\boxtimes\underline\bC_{\bbR_{>0}})\is\]
\[\is 
(i_\bbR)^*(j_\bbR)_*(h^!\mF\boxtimes h^!\mF'\boxtimes\underline\bC_{\bbR_{>0}})\is h^!\mF\star_f h^!\mF'.\]
Part (1) follows.
 
Since the equivalences in (1) are $t$-exact, 
Part (2) follows from part (1) and Lemma \ref{real convolution}.

\end{proof}

\quash{
\begin{corollary}
There are natural equivalences 
\[\on{Perv}(X(\calK)/G(\calO))\is \on{Perv}(K(\calK)\backslash\Gr)\is\on{Perv}(G_\bbR(\calO_\bbR)\backslash\Gr_\bbR)\]
compatible with fusion products.

\end{corollary}
}

\section{Applications}\label{applications}
We provide numerous applications  of the main results to real and relative 
Langlands duality.

\subsection{$t$-exactness criterion and semi-simplicity of Hecke actions }
\begin{thm}\label{t-exact}
The Hecke actions on $D(X(\calK)/G(\calO))$ and  $D(K(\calK)\backslash\Gr)$ (resp. 
$D(G_\bbR(\calO_\bbR)\backslash\Gr_\bbR)$)
are $t$-exact if and only if $X$ is quasi-split (resp. $G_\bbR$ is quasi-split).
\end{thm}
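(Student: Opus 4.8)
By Theorem~\ref{conv comp}, the real-symmetric and affine Matsuki equivalences intertwine the Hecke action of $\on{Rep}(G^\vee)$ on $D(X(\calK)/G(\calO))\is D(K(\calK)\backslash\Gr)$ with the Hecke action on $D(G_\bbR(\calO_\bbR)\backslash\Gr_\bbR)$, via the monoidal nearby cycles functor $\psi:D(G(\calO)\backslash\Gr)\to D(G_\bbR(\calO_\bbR)\backslash\Gr_\bbR)$; all these equivalences are $t$-exact (Theorem~\ref{real-symmetric}, Theorem~\ref{fusion compatibility}). Hence it suffices to treat the real side, and the equivalence between the relative statement ``$X$ quasi-split'' and the real statement ``$G_\bbR$ quasi-split'' is just the standard dictionary between symmetric varieties and real forms under the Cartan bijection (a real form $G_\bbR$ is quasi-split iff the associated involution $\theta$ has a $\theta$-stable Borel, iff $X=K\bs G$ is a quasi-split symmetric variety). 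So the heart of the matter is: the Hecke action on $D(G_\bbR(\calO_\bbR)\backslash\Gr_\bbR)$ is $t$-exact iff $G_\bbR$ is quasi-split.

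\textbf{Reduction to semismallness of the real convolution morphism.} The Hecke action of $V\in\on{Rep}(G^\vee)$ is given by convolution $\mF\mapsto \mF\star\psi(\IC_V)$, which by the global convolution diagram unwinds to $m_!$ applied to an external product on the convolution Grassmannian $\Gr_\bbR\tilde\times\bGr_\bbR^{\mu}$, where $\bGr_\bbR^\mu$ is the closure of a real spherical orbit. As in \cite[Section~3.8]{N2} and \cite[Theorem~1.2.3]{N2}, the nearby-cycles description of the Hecke action reduces $t$-exactness to the statement that the relevant real convolution morphism $m:\Gr_\bbR\tilde\times\bGr_\bbR^\mu\to\bGr_\bbR^{\lambda+\mu}$ is \emph{semismall} with respect to the $G_\bbR(\calO_\bbR)$-orbit stratification; $t$-exactness is equivalent to semismallness because the fibers are paved by affine-type cells, so the only failure of perversity can come from over-large fibers over strata, i.e.\ from the defect of semismallness. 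First I would set up this reduction carefully, using Proposition~\ref{real convolution}(2) (convolution is $t$-exact, equivalently $m$ is stratified semismall) for the ``only if'' direction applied to the \emph{minuscule/quasi-minuscule} coweights, and for the ``if'' direction combining semismallness of $m$ with the $t$-exactness criterion for nearby cycles of \cite[Theorem~1.2.3]{N2}.

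\textbf{The combinatorial core.} What remains is a fiber-dimension computation on $\Gr_\bbR$, entirely parallel to the classical Mirković--Vilonen / Lusztig dimension estimate but over the real form: one must compare $\dim_\bbR(S_\bbR^\nu\cap m(\,\cdot\,))$ against the parity/middle-perversity bound $\langle\nu,\rho\rangle$. Using the stratification~\eqref{strata for real Gr} with $\dim_\bbR S_\bbR^\lambda=2\langle\lambda,\rho\rangle$ (from \cite[Lemma~3.9.1]{N2}) and the fibration of $S_\bbR^\lambda$ over the real partial flag manifold $G_\bbR/P_\bbR^\lambda$, the intersection $S_\bbR^{\nu}\cap T_\bbR^{\mu}$-type estimates go through exactly when the real root combinatorics matches the complex one, i.e.\ when restricted roots behave like ordinary roots --- which happens precisely in the quasi-split case (no multiplicities $>1$ in the restricted root system, equivalently the associated symmetric variety has the ``expected'' spherical roots). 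In the non-quasi-split case one exhibits an explicit coweight $\mu$ (a fundamental coweight pairing nontrivially with a restricted root of multiplicity $\ge 2$) and an orbit $S_\bbR^\nu$ over which the fiber of $m$ has real dimension strictly exceeding $\langle\nu-\lambda-\mu,\rho\rangle$, so $\IC_{reg}\star(-)$ produces a non-perverse cohomology sheaf; this is the concrete obstruction to $t$-exactness. \textbf{The main obstacle} is precisely this last step: carrying out the real analytic fiber-dimension estimate for $m$ and pinning down the sharp inequality that flips exactly at the quasi-split locus, since the real affine Grassmannian lacks the Bruhat-cell bookkeeping available over $\bC$ --- here one leans on Theorem~\ref{Quillen} to transport the count to $\Omega K_c\bs\Gr$ and thence to the complex spherical geometry of $X(\calK)$, where the MV-type estimates and the structure theory of spherical roots \cite{B,KS} are available, and on the $t$-exactness of $\Psi$ to conclude.
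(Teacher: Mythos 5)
Your reduction to the real side via Theorems~\ref{real-symmetric} and \ref{conv comp} is the same as the paper's, and you correctly identify the two ingredients: $t$-exactness of the internal convolution $\star$ on $D(G_\bbR(\calO_\bbR)\backslash\Gr_\bbR)$ (Proposition~\ref{real convolution}(2)) and the nearby-cycles criterion \cite[Theorem~1.2.3]{N2}. However, your ``combinatorial core'' misdiagnoses where the obstruction lives, and as a result you propose to redo work that \cite[Theorem~1.2.3]{N2} already contains.

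The key factorization the paper exploits is that the Hecke action of $\IC_V$ is the composite $\calF \mapsto \calF \star \psi(\IC_V)$, where $\psi:D(G(\calO)\backslash\Gr)\to D(G_\bbR(\calO_\bbR)\backslash\Gr_\bbR)$ is the monoidal nearby-cycles functor of \eqref{psi}. Proposition~\ref{real convolution}(2) says the internal convolution $\star$ on $D(G_\bbR(\calO_\bbR)\backslash\Gr_\bbR)$ is \emph{always} $t$-exact --- the convolution morphism $m:\Gr_\bbR\tilde\times\Gr_\bbR\to\Gr_\bbR$ is always stratified semismall, independent of whether $G_\bbR$ is quasi-split. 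So the map $m$ cannot be ``the concrete obstruction to $t$-exactness'' that you exhibit by a fiber-dimension count on restricted roots: there is no such defect of semismallness for $m$. The only place where $t$-exactness can fail is the functor $\psi$ itself, and that is exactly what \cite[Theorem~1.2.3]{N2} characterizes: $\psi$ is $t$-exact if and only if $G_\bbR$ is quasi-split. Given that, both directions are immediate. If $G_\bbR$ is quasi-split, the Hecke action is a composition of two $t$-exact functors. Conversely, if the Hecke action is $t$-exact, apply it to the monoidal unit $\delta_\bbR$: since $\delta_\bbR\star\psi(\IC_V)\is\psi(\IC_V)$, you recover $\psi$ and conclude $\psi$ is $t$-exact, hence $G_\bbR$ is quasi-split by \cite{N2}. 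There is no need to find an explicit coweight with a large multiplicity root, nor to transport any dimension count through Theorem~\ref{Quillen} --- that analysis is precisely the content of the cited \cite{N2} result, and re-deriving it here would be both redundant and, as framed around $m$, incorrect.
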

\begin{proof}
It was shown in \cite[Theorem 1.2.3]{N2} that
the nearby cycles functor 
$\psi:D(G(\calO)\backslash\Gr)\to D(G_\bbR(\calO_\bbR)\backslash\Gr_\bbR)$
is $t$-exact if and only if $G_\bbR$ is quasi-split.
Since the 
Hecke action on the real Satake category 
is given by the convolution $\mF\star\psi(\mF)$, 
the $t$-exactness for the convolution in Lemma \ref{real convolution}
 implies the desired claim for the case $D(G_\bbR(\calO_\bbR)\backslash\Gr_\bbR)$.
We deduce the case of $D(X(\calK)/G(\calO))$ from 
Theorem \ref{real-symmetric}
and Theorem \ref{conv comp}.

\end{proof}

\begin{remark}
It would be nice is one can find a  
direct geometric proof for the $t$-exactness criterion
for the relative Satake category.
\end{remark}

\begin{thm}\label{semisimplicity}
The Hecke action on the real and relative Satake categories
satisfies the conclusion of decomposition theorem. That is,
for any semi-simple complexes $\mM\in D(G(\calO)\backslash\Gr)$ and
$\mF\in D(G_\bbR(\calO_\bbR)\backslash\Gr_\bbR)$ or $D(K(\calK)\backslash\Gr)$ or
$D(X(\calK)/G(\calO))$, the convolutions  
$\mF\star\mM$ is again semi-simple.

In particular,
the nearby cycles functor 
$\psi:D(G(\calO)\backslash\Gr)\to D(G_\bbR(\calO_\bbR)\backslash\Gr_\bbR)$
preserves semi-simplicity.

\end{thm}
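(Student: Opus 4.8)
The plan is to deduce Theorem~\ref{semisimplicity} from the main equivalences together with the decomposition theorem in complex algebraic geometry. The key observation is that, via Theorem~\ref{real-symmetric} and Theorem~\ref{conv comp}, every statement about the real Satake category translates into a statement about the relative Satake category $D(X(\calK)/G(\calO))\is D(K(\calK)\backslash\Gr)$, where complex-geometric tools (Deligne's theory of weights, the decomposition theorem) are available. Concretely, $D(K(\calK)\backslash\Gr)$ is a category of constructible sheaves on the ind-placid stack $K(\calK)\backslash\Gr$, which is covered by the finite-type substacks appearing in a placid presentation; on each such substack the decomposition theorem applies in its usual form.

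First I would reduce to the relative side. By Theorem~\ref{conv comp} the Hecke action of $D(G(\calO)\backslash\Gr)$ on $D(G_\bbR(\calO_\bbR)\backslash\Gr_\bbR)$ is intertwined, under the equivalence $\Psi$ of Theorem~\ref{equ}, with the Hecke action on $D(K(\calK)\backslash\Gr)$ via the monoidal functor $\psi$. Since an equivalence of categories preserves and reflects semisimplicity, it suffices to prove that for semisimple $\mM\in D(G(\calO)\backslash\Gr)$ and semisimple $\mF\in D(K(\calK)\backslash\Gr)$, the convolution $\mF\star\mM$ is semisimple. Here $\mM$ may be taken to be a direct sum of shifted $\IC$-sheaves $\IC_\mu$ on $\overline{\Gr^\mu}$, and $\mF$ a direct sum of shifted $\IC$-sheaves $\IC_K(\mL)$ associated to irreducible local systems $\mL$ on $K(\calK)$-orbits.

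Next I would run the decomposition-theorem argument on the complex side. The convolution $\mF\star\mM$ is computed via the convolution diagram $K(\calK)\backslash G(\calK)\times^{G(\calO)}\Gr\to K(\calK)\backslash\Gr$, whose source is again ind-placid and which is, after restricting to a finite-type piece containing the relevant supports, a proper morphism of finite-type stacks. The external twisted product $\mF\,\tilde\boxtimes\,\mM$ is semisimple (being a twisted product of semisimple complexes, using that the fibration is locally trivial with smooth fibers), hence its proper pushforward is semisimple by the decomposition theorem. One must be careful that the relevant objects are compact/constructible and supported on finite-type substacks so that the classical decomposition theorem applies; this is handled by the placid-stratified structure of $K(\calK)\backslash\Gr$ (Corollary~\ref{iso of stacks}) and the fact that convolution with $\mM$ increases supports by a bounded amount. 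The last sentence of the theorem, that $\psi$ preserves semisimplicity, then follows by taking $\mF$ to be the monoidal unit $\omega_{X(\calO)/G(\calO)}$ (equivalently, $\delta_\bbR$ on the real side) and using $\psi(\mM)=\delta_\bbR\star\psi(\mM)\is\Psi(\omega_{X(\calO)/G(\calO)}\star\mM)$ together with the equivalence $\Psi$.

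The main obstacle I anticipate is not conceptual but technical: making the decomposition theorem rigorous in the ind-placid stack setting. One needs to check that convolution of a constructible complex on $K(\calK)\backslash\Gr$ with a fixed $\mM\in D(G(\calO)\backslash\Gr)$ stays within a fixed finite-type substack (up to the placid pro-smooth covers), that the relevant pushforward map is genuinely proper there, and that "semisimple" in the sense of $D(K(\calK)\backslash\Gr)$ (finite direct sums of shifted $\IC$-sheaves on placid stacks) is compatible with pullback along the smooth covers used to define sheaves on placid stacks. All of this is available from the formalism of \cite{BKV} recalled in Section~\ref{real sym}, but it requires a careful bookkeeping of supports and of the interaction between $\IC$-sheaves and strongly pro-smooth morphisms. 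I would isolate this as a lemma: the convolution functor $-\star\mM:D_c(K(\calK)\backslash\Gr)\to D_c(K(\calK)\backslash\Gr)$ preserves the subcategory generated under finite direct sums and shifts by $\IC$-sheaves, which is exactly the content needed.
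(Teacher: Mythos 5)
Your proposal is correct and takes essentially the same approach as the paper's proof: reduce to the relative Satake category via the real-symmetric equivalence and its compatibility with Hecke actions, then apply the decomposition theorem after reducing to a proper morphism of finite-type stacks via the placid presentation of $X(\calK)$. The only thing you leave unfinished — and explicitly flag — is the bookkeeping needed to descend the convolution map to a genuinely proper morphism between finite-type quotient stacks; the paper does this by choosing a large enough $Y^{i'}$ receiving the action map, invoking the limit theorem \cite[Theorem 8.10.5]{EGA IV} to produce a proper map $\bar a:Z_j\to Y^{i'}_j$ at a finite level of the pro-system fitting into a Cartesian square, writing $\mF\tilde\boxtimes\mM\is h^!\mF'$ for a semisimple $\mF'$ on the finite-type stack, and concluding $\mF\star\mM\is p^!\bar a_*\mF'$ is semisimple by the classical decomposition theorem plus proper base change.
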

\begin{proof}
By the real-symmetric equivalence Theorem \ref{real-symmetric} and Theorem \ref{conv comp}, 
it suffices to prove the case of relative Satake categories.
Pick a placid presentation $X(\calK)=\on{colim}_{i\in I}\on{lim}_{j\in J} Y^i_j$. 
We can assume 
the twisted product $\mF\tilde\boxtimes\mM\in D(X(\calK)\times^{G(\calO)}\Gr)$
is supported on $Y^i\times^{G(\calO)}\overline{G(\calO)t^\lambda G(\calO)}/G(\calO)$
for some $i\in I$ and $\lambda\in\Lambda_T^+$. One can find 
an (large enough) index $i'\in I$ such that the image of the action map 
$a:Y^i\times^{G(\calO)}\overline{G(\calO)t^\lambda G(\calO)}\to X(\calK)$
lands in $Y^{i'}$. Note that $Y^i\times^{G(\calO)}\overline{G(\calO)t^\lambda G(\calO)}$ is $G(\calO)$-placid and 
the induced map
\[a:Y^i\times^{G(\calO)}\overline{G(\calO)t^\lambda G(\calO)}\to Y^{i'}.\]
is a proper morphism. 
Thus by \cite[Theorem 8.10.5]{EGA IV}, one can find 
a placid presentation $Y^i\times^{G(\calO)}\overline{G(\calO)t^\lambda G(\calO)}\is\on{lim}_{j\in J}Z_j$, an index $j\in J$,
and a proper morphism $\bar a: Z_j\to Y_{j}^{i'}$ 
such that there is a Cartesian diagram
\[\xymatrix{Y^i\times^{G(\calO)}\overline{G(\calO)t^\lambda G(\calO)}/G(\calO)\ar[r]^{\ \ \ \ \ \ \ \ \ \ \ \ a}\ar[d]^{h}&Y^{i'}/G(\calO)\ar[d]^p\\
Z_j/G(\calO)\ar[r]^{\ \ \ \ \ \ \ \ \ \ \bar a}& Y^{i'}_j/G(\calO)}\]
Moreover, we can assume 
$\mF\tilde\boxtimes\mM\is h^!\mF'$ for some semisimple object 
$\mF'\in D(Z_j/G(\calO))$. 
Since $\bar a$ is proper, 
 the Decomposition Theorem implies $\bar a_*(\mF)$ is semi-simple and 
the proper base change theorem implies that 
\[\mF\star\mM\is a_*(\mF\boxtimes\mM)\is a_*h^!(\mF')\is p^!(\bar a)_*(\mF')\]
is semi-simple.

\end{proof}

\subsection{Formality and commutativity of  dg Ext algebras}\label{FC}
We have the monoidal abelian Satake equivalence 
\[\on{Rep}(G^\vee)\is\on{Perv}^{}(G(\calO)\backslash\Gr_G):\ \ \ V\to \ \on{IC}_V.\]
By restricting the Hecke action to the subcagegory $\on{Rep}(G^\vee)\is \on{Perv}^{}(G(\calO)\backslash\Gr_G)$, we obtain 
a monoidal action of $\on{Rep}(G^\vee)$ on $D^{}(X(\calK)/G(\calO))$ and $D(G_\bbR(\calO_\bbR)\backslash\Gr_\bbR)$.
Let $\omega_{X(\calO)/G(\calO)}\in\on{Perv}(X(\calK)/G(\calO))$ be the 
dualizing complex on 
the closed $G(\calO)$-orbit
$X(\calK)^0\is X(\calO)$ and $\delta_\bbR\in\on{Perv}(G_\bbR(\calO_\bbR)\backslash\Gr_\bbR)$
be the $\IC$-complex of the closed $G_\bbR(\calO_\bbR)$-orbit $S^0_\bbR$.

Let $\IC_{reg}:=\on{IC}_{\calO(G^\vee)}$
(an ind-object in $\on{Perv}^{}(G(\calO)\backslash\Gr_G)$)
be the image of the regular representation 
$\calO(G^\vee)$
under the abelian Satake equivalence.
Since $\calO(G^\vee)$ is a ring object in $\on{Rep}(G^\vee)$, 
$\IC_{reg}$ is naturally a ring object 
in $D(G(\calO)\backslash\Gr)$, that is, there is a natural 
homomorphism 
\beq
m:\IC_{reg}\star\IC_{reg}\to \IC_{reg}
\eeq
satisfying the unit and associativity properties. It follows that the
RHom spaces 
\[A_X:=R\Hom_{D^{}(X(\calK)/G(\calO))}(\omega_{X(\calO)/G(\calO)},\omega_{X(\calO))/G(\calO)}\star\on{IC}_{reg})\]
\[\ A_\bbR:=R\Hom_{D(G_\bbR(\calO_\bbR)\backslash\Gr_\bbR)}(\delta_\bbR,\delta_\bbR\star\on{IC}_{reg})\]
are  naturally  dg-algebras with natural $G^\vee$-actions, known as the the de-equivariantized Ext algebras for the symmetric and real Satake categories respectively.
We denote by $H^\bullet (A_X)=\on{Ext}^\bullet_{D^{}(X(\calK)/G(\calO))}(\omega_{X(\calO)/G(\calO)},\omega_{X(\calO))/G(\calO)}\star\on{IC}_{reg}))$
and $H^\bullet(A_\bbR)=\on{Ext}^\bullet_{D(G_\bbR(\calO_\bbR)\backslash\Gr_\bbR)}(\delta_\bbR,\delta_\bbR\star\on{IC}_{reg})$
the corresponding cohomology algebras  with trivial differentials.

\begin{thm}\label{formality and comm}
(1) There is a $G^\vee$-equivariant isomorphism of 
dg algebras 
$A_X\is A_\bbR$ inducing a $G^\vee$-equivariant isomorphism
of algebras $H^\bullet (A_X)\is H^\bullet (A_\bbR)$.

(2) The dg algebra $A_X$ (resp. $A_\bbR$)  is 
formal, that is, they are quasi-isomorphic to the 
cohomology algebras  
$H^\bullet(A_X)$ (resp. $H^\bullet (A_\bbR)$)
with trivial differential.

(3) 
The algebra $H^\bullet(A_X)$ (resp. $H^\bullet (A_\bbR)$) is commutative.
\end{thm}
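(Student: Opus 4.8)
\textbf{Proof proposal for Theorem \ref{formality and comm}.}

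The plan is to split the three assertions and prove them in the order (1), (2), (3), reducing as much as possible to the relative (complex) side where Hodge theory and the decomposition theorem are available, and then transporting to the real side via the equivalences already established.

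\emph{Step 1: the isomorphism $A_X \is A_\bbR$.} By Theorem \ref{real-symmetric} and Theorem \ref{conv comp}, the real-symmetric equivalence $D(X(\calK)/G(\calO)) \is D(G_\bbR(\calO_\bbR)\backslash\Gr_\bbR)$ is $t$-exact, intertwines the Hecke actions of $\on{Rep}(G^\vee)$ via the monoidal functor $\psi$, and (by $t$-exactness, being an equivalence of highest-weight-type categories preserving the closed orbit) carries the monoidal unit $\omega_{X(\calO)/G(\calO)}$ to $\delta_\bbR$. Since $\IC_{reg}$ is a ring object in $D(G(\calO)\backslash\Gr)$ and the action functor is monoidal, applying the equivalence to $\omega_{X(\calO)/G(\calO)} \star \IC_{reg}$ gives a $G^\vee$-equivariant isomorphism of the $\on{RHom}$-complexes compatible with the ring structures coming from $m:\IC_{reg}\star\IC_{reg}\to\IC_{reg}$. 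This yields (1), and in particular $H^\bullet(A_X) \is H^\bullet(A_\bbR)$ as $G^\vee$-algebras. First I would write out carefully that the equivalence sends closed orbit to closed orbit (using the parametrization in Proposition \ref{parametrization} and $t$-exactness) so that the units match on the nose.

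\emph{Step 2: formality.} The formality of $A_X$ is cited in the introduction from \cite[Theorem 27]{CY}, which uses pointwise purity of the $\IC$-complexes $\IC^\lambda_X$ of the spherical orbits in $X(\calK)$ together with $G(\calO)$-ind-placidity (Proposition \ref{placid of X(K)}, now available in general by \cite{D,B}); I would simply invoke this. Formality of $A_\bbR$ then follows immediately from the $G^\vee$-equivariant quasi-isomorphism $A_\bbR \is A_X$ of Step 1, since formality is a property of the quasi-isomorphism class of the dg algebra. This is the conceptually cleanest part: no new argument is needed on the real side, which is exactly why the statement is non-obvious a priori (Hodge theory is unavailable on $\Gr_\bbR$ but the equivalence imports the conclusion).

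\emph{Step 3: commutativity.} Here I would follow the strategy of \cite{ABG,BFN} used for complex groups. The key input is that $H^\bullet(A_X)$ (equivalently $H^\bullet(A_\bbR)$) computes the $\Ext$-algebra of the monoidal unit against the regular-representation kernel, and the braided/$E_2$-structure on the ambient category forces graded-commutativity, while the vanishing of odd cohomology (a consequence of the parity/purity used in Step 2, since the orbit dimensions $\langle\lambda,\rho\rangle$ control the perverse shifts and the $\IC$-stalks are pure of the expected parity) upgrades graded-commutativity to honest commutativity. Concretely I would: (i) use the fusion product $\star_f$ and its compatibility with convolution (Theorem \ref{fusion compatibility}, Proposition \ref{real convolution}) to produce a homotopy-commutativity constraint on the ring object $\IC_{reg}$, hence on $A_X$; (ii) use semi-smallness of the convolution morphisms and the decomposition theorem (valid on the symmetric side, or imported to the real side via Theorem \ref{semisimplicity}) to show $H^\bullet(A_X)$ is concentrated in even degrees, so the Koszul sign is trivial. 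The main obstacle I anticipate is Step 3(i): making the ``fusion implies commutativity'' argument rigorous requires either knowing the relevant piece of Conjecture \ref{conj} (the $E_2$-structure) or, as in \cite{ABG,BFN}, a direct hands-on construction of the commutativity homotopy from a global (Beilinson-Drinfeld) model for the convolution, together with a degree argument to kill the higher coherence obstructions — this is where the bulk of the work lies, and I would model it closely on the complex-group case since the geometry of $\Gr^{(n)}$ and its real forms (Section \ref{Real BD}, Section \ref{QMaps}) supplies the needed factorization family.
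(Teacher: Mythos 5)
Your Steps 1 and 2 coincide with the paper's proof: part (1) is exactly Theorem \ref{real-symmetric} plus Theorem \ref{conv comp}, and part (2) is exactly ``cite \cite{CY} for $A_X$, then transport via the isomorphism from (1).'' The extra care you propose in Step 1 about matching the units (closed orbit to closed orbit) is reasonable, although the paper takes it for granted.

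For Step 3 your plan is headed to the right place but by a longer road than the paper takes, and in one respect it overestimates the difficulty. The paper does not build an $E_2$-structure on the real (or relative) Satake category from scratch and does not fight higher coherence obstructions. It observes that $\IC_{reg}$ is already a commutative ring object in $D(G(\calO)\backslash\Gr)$ — this is a consequence of the abelian Satake equivalence, where $\calO(G^\vee)$ is a commutative algebra in $\on{Rep}(G^\vee)$, a symmetric monoidal abelian category — and that $\psi$ is a monoidal functor (Section \ref{ss:kernels}), so $\psi(\IC_{reg}) \is \delta_\bbR\star\IC_{reg}$ inherits a commutative multiplication $m_\bbR$ together with the constraint $m_\bbR\circ\sigma_\bbR \is m_\bbR$. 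The commutativity of $\on{Ext}^\bullet(\delta_\bbR,\psi(\IC_{reg}))$ then follows from a direct computation with $\sigma_\bbR$ exchanging the factors of $\psi(\IC_{reg})\star\psi(\IC_{reg})$; there is no need to reconstruct the fusion/$E_2$ data on the real side, because the commutativity structure already lives upstairs and is merely transported. In this sense your anticipated ``main obstacle'' in 3(i) is not actually present. Your 3(ii) — using semi-smallness/purity to get even-degree concentration so that graded-commutativity is honest commutativity — is a genuinely useful supplement: the paper's argument as written glosses over the Koszul sign one would naively expect when swapping a degree-$i$ class past a degree-$j$ class, and the evenness (equivalently, the parity of $\langle\lambda,2\rho\rangle$, which the MV sign-twisted constraint is tuned to) is what makes the sign disappear. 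So your proposal is slightly over-engineered on the homotopical side and slightly more explicit on the parity side; both observations are worth keeping in mind, but the paper's route is simpler.
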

\begin{proof}
Part (1) follows from Theorem \ref{real-symmetric}
and Theorem \ref{conv comp}.

For part (2) and (3), using (1), it suffices to prove 
that $A_X$ is formal and $H^\bullet (A_\bbR)$ is commutative.
The formality of $A_X$ is proved in \cite{CY} using a pointwise  purity result for 
$\IC$-complexes of $G(\calO)$-obits in $X(\calK)$.
The proof of the commutativity of $H^\bullet(A_\bbR)$
 is similar to the case of complex groups.
Note that $\delta_\bbR\star\IC_{reg}\is \psi(\IC_{reg})$
where $\psi$ is the monoidal functor in~\eqref{psi}.
Recall that $\IC_{reg}$ is a \emph{commutative ring object} in $D(G(\calO)\backslash\Gr)$, that is, 
there is a natural isomorphism 
\[m\circ\sigma\is m:\IC_{reg}\star\IC_{reg}\to \IC_{reg}\]
where
\beq\label{commutative constraint}
\sigma:\IC_{reg}\star\IC_{reg}\is \IC_{reg}\star\IC_{reg}
\eeq
is the commutativity  constraint 
of the convolution product. 
The monoidal structure of $\psi$
gives rise to a multiplication morphism of $\psi(\IC_{reg})$:
\[
m_\bbR:\psi(\IC_{reg})\star\psi(\IC_{reg})\is\psi(\IC_{reg}\star\IC_{reg})\stackrel{\psi(m)}\lra \psi(\IC_{reg})
\]
satisfying the unit and associativity properties 
and there is a natural isomorphism
\beq\label{commutativity}
m_\bbR\circ\sigma_\bbR\is m_\bbR:\psi(\IC_{reg})\star\psi(\IC_{reg})\to \psi(\IC_{reg})
\eeq
where
\[\sigma_\bbR:\psi(\IC_{reg})\star\psi(\IC_{reg})\is\psi(\IC_{reg}\star\IC_{reg})\stackrel{\psi(\sigma)}\is\psi(\IC_{reg}\star\IC_{reg})\is\psi(\IC_{reg})\star\psi(\IC_{reg}).\]
Note that 
the multiplication of  
\[H^\bullet(A_\bbR)=\on{Ext}^\bullet_{D(G_\bbR(\calO_\bbR)\backslash\Gr_\bbR)}(\delta_\bbR,\delta_\bbR\star\on{IC}_{reg})\is \on{Ext}^\bullet_{D(G_\bbR(\calO_\bbR)\backslash\Gr_\bbR)}(\delta_\bbR,\psi(\IC_{reg}))\] is induced from the multiplication morphism 
$m_\bbR$ of $\psi(\IC_{reg})$:
for $x:\delta_\bbR\to\psi(\IC_{reg})[i]\in\on{Ext}^i_{D(G_\bbR(\calO_\bbR)\backslash\Gr_\bbR)}(\delta_\bbR,\psi(\IC_{reg}))$, $y:\delta_\bbR\to\psi(\IC_{reg})[j]\in\on{Ext}^j_{D(G_\bbR(\calO_\bbR)\backslash\Gr_\bbR)}(\delta_\bbR,\psi(\IC_{reg}))$ 
there exists an unique map
\[x\tilde\boxtimes y:\delta_\bbR\tilde\boxtimes\delta_\bbR\to \psi(\IC_{reg})[i]\tilde\boxtimes\psi(\IC_{reg})[j]\in
\on{Ext}^{i+j}_{D(G_\bbR(\calO_\bbR)\backslash\Gr_\bbR\tilde\times\Gr_\bbR)}(\delta_\bbR\tilde\boxtimes\delta_\bbR,\psi(\IC_{reg})\tilde\boxtimes\psi(\IC_{reg}))\]
such that 
\[q^*(x\tilde\boxtimes y)=p^*(x\boxtimes y)\in \on{Ext}^{i+j}_{D(G_\bbR(\calO_\bbR)\backslash G_\bbR(\calK_\bbR)\times\Gr_\bbR)}(\delta_\bbR\boxtimes\delta_\bbR,\psi(\IC_{reg})\boxtimes\psi(\IC_{reg}))\]
(where $p$ and $q$ are the maps in the convolution diagram~\eqref{convolution diagram})
and the product $x\star y\in \on{Ext}^{i+j}_{D(G_\bbR(\calO_\bbR)\backslash\Gr_\bbR)}(\delta_\bbR,\psi(\IC_{reg}))$ is given by 
\[x\star y= m_!(x\tilde\boxtimes y):\delta_\bbR\is\delta_\bbR\star\delta_\bbR\to
\psi(\IC_{reg})[i]\star\psi(\IC_{reg})[j]\stackrel{m_\bbR}\to\psi(\IC_{reg})[i+j]\]
Since the commutativity  constraint $\sigma$ 
exchanges the factors of $\IC_{reg}\star\IC_{reg}$, it follows that 
$\sigma_\bbR$ exchanges the factors of 
$\psi(\IC_{reg})\star\psi(\IC_{reg})$ and the isomorphism~\eqref{commutativity}
implies that the algebra 
$\on{Ext}^\bullet_{D(G_\bbR(\calO_\bbR)\backslash\Gr_\bbR)}(\delta_\bbR,\psi(\IC_{reg}))$
is commutative.

\end{proof}

Theorem \ref{formality and comm} implies the following spectral descriptions of the 
full subcategories
$D_c(X(\calK)/G(\calO))_0\subset D(X(\calK)/G(\calO))$ and $D_c(G_\bbR(\calO_\bbR)\backslash\Gr_\bbR)_0\subset
D(G_\bbR(\calO_\bbR)\backslash\Gr_\bbR)$ in Section \ref{duals}.
Consider the Hamiltonian duals $M^\vee_X=\on{Spec}(H^\bullet(A_X))$ and $M_\bbR^\vee=\on{Spec}(H^\bullet(A_\bbR))$
of $X$ and $G_\bbR$.
  
\begin{thm}\label{real-sym satake}
(1) There is a $G^\vee$-equivariant isomorphism 
$M^\vee_X\is M^\vee_\bbR$.
(2)There are equivalences of categories
\beq\label{derived Satake}
D_c(X(\calK)/G(\calO))_0\is \on{Coh}(M^\vee_X/G^\vee)\ \ \ \ D_c(G_\bbR(\calO_\bbR)\backslash\Gr_\bbR)_0\is \on{Coh}(M_\bbR^\vee/G^\vee).
\eeq
\end{thm}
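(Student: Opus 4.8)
The plan is to deduce Theorem \ref{real-sym satake} directly from the formality and commutativity package of Theorem \ref{formality and comm} together with the real-symmetric equivalence Theorem \ref{real-symmetric} and its compatibility with Hecke actions Theorem \ref{conv comp}. For part (1): by Theorem \ref{formality and comm}(1) we have a $G^\vee$-equivariant isomorphism of graded algebras $H^\bullet(A_X)\is H^\bullet(A_\bbR)$; applying $\on{Spec}$ gives the $G^\vee$-equivariant isomorphism $M^\vee_X\is M^\vee_\bbR$ of affine schemes. So part (1) is essentially immediate.

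For part (2), the plan is to run the standard Koszul-duality / de-equivariantization argument (as in \cite{ABG,BFN}) on each side separately, using the equivalence only to transport one description to the other. First I would recall that $\IC_{reg}=\on{IC}_{\calO(G^\vee)}$ is a commutative ring object in $D(G(\calO)\backslash\Gr)$ carrying a $G^\vee$-action, so that $\omega_{X(\calO)/G(\calO)}\star\IC_{reg}$ (resp. $\delta_\bbR\star\IC_{reg}$) is a ring object in the relative (resp. real) Satake category with a compatible $G^\vee$-action. The category $D_c(X(\calK)/G(\calO))_0$ is by definition generated under colimits by the irreducible summands of $\omega_{X(\calO)/G(\calO)}\star\IC_{reg}$; the key structural input is that taking $\on{RHom}$ from $\omega_{X(\calO)/G(\calO)}$ identifies this generated subcategory with modules over the $E_1$-algebra $A_X=\on{RHom}(\omega_{X(\calO)/G(\calO)},\omega_{X(\calO)/G(\calO)}\star\IC_{reg})$ internal to $\on{Rep}(G^\vee)$, i.e. with $G^\vee$-equivariant $A_X$-modules. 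By Theorem \ref{formality and comm}(2),(3), $A_X$ is formal and its cohomology $H^\bullet(A_X)=\calO(M^\vee_X)$ is a commutative ring, so $G^\vee$-equivariant $A_X$-modules are the same as $G^\vee$-equivariant quasi-coherent complexes on $M^\vee_X$; restricting to the appropriate finiteness/support condition built into $D_c(\cdots)_0$ yields $\on{Coh}(M^\vee_X/G^\vee)$. The identical argument with $\delta_\bbR$ in place of $\omega_{X(\calO)/G(\calO)}$ and $A_\bbR$ in place of $A_X$ gives $D_c(G_\bbR(\calO_\bbR)\backslash\Gr_\bbR)_0\is\on{Coh}(M^\vee_\bbR/G^\vee)$, using that $A_\bbR$ is formal with commutative cohomology. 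Finally I would note that under the real-symmetric equivalence of Theorem \ref{real-symmetric}, which by Theorem \ref{conv comp} intertwines the Hecke actions and hence sends $\omega_{X(\calO)/G(\calO)}\star\IC_{reg}$ to $\delta_\bbR\star\IC_{reg}$, the two subcategories $D_c(\cdots)_0$ correspond, so the two spectral descriptions are compatible with the isomorphism $M^\vee_X\is M^\vee_\bbR$ of part (1).

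The main obstacle I anticipate is making the ``modules over the formal dg algebra $=$ coherent sheaves'' step precise in the infinite-dimensional, placid setting: one must check that $\on{RHom}(\omega_{X(\calO)/G(\calO)},-)$ really does give a monadic/Barr--Beck equivalence onto $A_X$-modules when restricted to the subcategory generated by the summands of $\omega_{X(\calO)/G(\calO)}\star\IC_{reg}$, that the relevant compact generators and dualizability hypotheses hold (here the $G(\calO)$-ind-placidity of $X(\calK)$ from Proposition \ref{placid of X(K)} and the theory of \cite{BKV} are what make this legitimate), and that the $G^\vee$-equivariant de-equivariantization is compatible with all of this. On the real side there is the extra subtlety that one is working with semi-analytic stacks and $\bC$-constructible sheaves rather than algebraic ones, so one cannot invoke algebraic-geometry generation statements directly; instead I would transport the generation and dualizability statements across Theorem \ref{real-symmetric}, which is exactly why carrying out the two sides in parallel and linking them by the equivalence is the cleanest route. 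The commutativity in Theorem \ref{formality and comm}(3) is essential: without it $M^\vee_X$ would only be an $E_1$-scheme and the target of the equivalence could not literally be $\on{Coh}$ of a scheme-stack quotient.
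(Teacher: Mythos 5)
Your proposal is correct and follows essentially the same route as the paper: part (1) is read off from the $G^\vee$-equivariant isomorphism $H^\bullet(A_X)\is H^\bullet(A_\bbR)$ of Theorem \ref{formality and comm}, and part (2) is the de-equivariantized Barr--Beck/monadicity argument applied to the formal commutative algebras $A_X$ and $A_\bbR$, which is exactly what the paper invokes (citing \cite[Theorem 5.5]{CMNO} for the details you sketch). Your extra care in flagging the placidity/semi-analytic subtleties and in offering to transport the real-side statements through Theorem \ref{real-symmetric} is sensible but not a different method.
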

\begin{proof}
Part (1) follows from Theorem \ref{formality and comm}  and part (2)
 follows from the formality of dg Ext algebras $A_X$ and $A_\bbR$
and the general Barr-Beck-Lurie theorem, see the details in \cite[Theorem 5.5]{CMNO}.
\end{proof}

\subsection{Identification of dual groups}\label{identification}
In this section we show that there is an isomorphism 
between the dual 
group $H_{real}^\vee\subset G^\vee$ of $G_\bbR$
introduced in \cite{N2} and the dual group $H_{sph}^\vee\subset G^\vee$ 
of $X$ introduced in \cite{GN1}.
\subsubsection{Construction of $H_{real}^\vee$}

\begin{definition}
 Let $Q_\bbR\subset\on{Perv}(G_\bbR(\calO_\bbR)\backslash\Gr_\bbR)$ be 
be the full subcategory whose objects are isomorphic to 
direct sum of perverse sheaves that appears in the summand 
of $\delta_\bbR\star\IC_V$ for some $V\in\on{Rep}(G^\vee)$.
Let $Q_X\subset\on{Perv}(X(\calK)/G(\calO))$ be 
be the full subcategory whose objects are isomorphic to 
direct sum of perverse sheaves that appears in the summand 
of $\omega_{X(\calO)/G(\calO)}\star\IC_V$ for some $V\in\on{Rep}(G^\vee)$.
Let $Q_K\subset\on{Perv}(K(\calK)\backslash\Gr)$ be 
be the full subcategory whose objects are isomorphic to 
direct sum of perverse sheaves that appears in the summand 
of $\omega_{K(\calK)\backslash\mO_K^0}\star\IC_V$ for some $V\in\on{Rep}(G^\vee)$.
\end{definition}

Consider the perverse Hecke actions
\beq\label{perverse Hecke actions}
\star^p:\on{Rep}(G^\vee)\times Q\to Q\ \ \  \ \ \IC_V\star^p\mF:=\bigoplus_{i\in\bZ} {^p}H^i(\IC_V\star\mF)
\eeq
where $Q=Q_\bbR$, $Q_X$ or $Q_K$.

\begin{prop}\label{Q_X=Q_R}
(1) The abelian category  $Q_\bbR$ is semi-simple 
 and irreducible objects are intersection cohomology sheaves on the 
 orbits closures of strata $S^\lambda_\bbR$, $\lambda\in\sigma(\Lambda_T)$ (see~\eqref{sigma}), with 
 coefficients in trivial local systems.
There exists a unique associativity and commutativity constraints 
for the 
category $Q_\bbR$ equipped with the convolution product $\star$
such that $(Q_\bbR,\star)$ is a neutral Tannakian category with 
fiber functor 
$\on{H}^*:Q_\bbR\to\on{Vect}$ given by cohomology.
The Tannakian group of $Q_\bbR$ is isomorphic to the 
 connected complex reductive subgroup $H_{real}^\vee\subset G^\vee$ of the dual group associated to $G_\bbR$ in \cite{N2} and there is a horizontal tensor equivalence in the following diagram of tensor functors
 \[\xymatrix{&\on{Rep}(G^\vee)\ar[rd]\ar[ld]&\\
 Q_\bbR\ar[rr]^\simeq&&\on{Rep}(H_{real}^\vee)}\]
 where left vertical arrow is given by the perverse Hecke action on $\delta_\bbR$.
 

(2)
There exists unique associativity and commutativity constraints 
for the 
category $Q_X$ (resp. $Q_K$) equipped with the fusion product $\star_f$ such that 
 there are horizontal tensor equivalences 
in the following commutative diagram of 
 tensor functors
  \beq\label{tensor compatibility}
 \xymatrix{&\on{Rep}(G^\vee)\ar[rd]^{}\ar[ld]_{}\ar[d]&\\
 Q_X\ar[r]^\simeq&Q_K\ar[r]^\simeq&Q_\bbR}.
 \eeq
where vertical arrows are given by the perverse Heck-actions $\star^p$ of $\on{Rep}G^\vee$ \eqref{perverse Hecke actions} on $\omega_{X(\calO)/G(\calO)}$, $\omega_{K(\calK)\backslash\mO_K^0}$,
and $\delta_\bbR$ respectively.

\end{prop}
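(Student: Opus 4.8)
\textbf{Proof proposal for Proposition \ref{Q_X=Q_R}.}

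The plan is to establish (1) and (2) by bootstrapping from the real-symmetric equivalence of Theorem \ref{real-symmetric} and the fusion-compatibility of Theorem \ref{fusion compatibility}, together with the nearby-cycles monoidal functor $\psi$ and the $t$-exactness and semi-simplicity results of Theorem \ref{t-exact} and Theorem \ref{semisimplicity}. First I would treat part (1). The key observation is that $\delta_\bbR\star\IC_V \is \psi(\IC_V)$ where $\psi:D(G(\calO)\backslash\Gr)\to D(G_\bbR(\calO_\bbR)\backslash\Gr_\bbR)$ is the monoidal nearby-cycles functor of Section \ref{ss:kernels}; by Theorem \ref{semisimplicity} each $\psi(\IC_V)$ is semi-simple, so $Q_\bbR$ is an abelian semi-simple category closed under the convolution $\star$ (semi-smallness of the real convolution, Proposition \ref{real convolution}, guarantees $\star$ preserves perversity on $Q_\bbR$). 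The identification of irreducibles with $\IC$-sheaves on $\overline{S_\bbR^\lambda}$ for $\lambda\in\sigma(\Lambda_T)$, and the construction of the Tannakian structure with fiber functor $\h^*$, is exactly the content of \cite[Section 10]{N2}; I would cite that and only check that the commutativity constraint there is compatible with $\star$ coming from $\psi$ (this uses that $\IC_{reg}$, hence each $\IC_V$, is a commutative object in $D(G(\calO)\backslash\Gr)$, as recalled in the proof of Theorem \ref{formality and comm}). The resulting Tannakian group is $H_{real}^\vee$ by definition in \emph{loc. cit.}, and the factorization of tensor functors through $\on{Rep}(H_{real}^\vee)$ is tautological once $\h^*$ is shown to be a fiber functor intertwining the Hecke action with the forgetful functor on $\on{Rep}(G^\vee)$.

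Next I would treat part (2). The real-symmetric equivalence of Theorem \ref{real-symmetric} is $t$-exact, so it restricts to an equivalence of abelian categories $\on{Perv}(X(\calK)/G(\calO))\is\on{Perv}(K(\calK)\backslash\Gr)\is\on{Perv}(G_\bbR(\calO_\bbR)\backslash\Gr_\bbR)$; by Theorem \ref{conv comp} this equivalence intertwines the Hecke action of $\on{Rep}(G^\vee)$ via $\psi$, hence it carries the generating objects $\omega_{X(\calO)/G(\calO)}\star\IC_V$ and $\omega_{K(\calK)\backslash\mO_K^0}\star\IC_V$ to $\delta_\bbR\star\IC_V$. Therefore it restricts to equivalences of abelian categories $Q_X\is Q_K\is Q_\bbR$. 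The only subtlety is that the definition of $Q_X, Q_K$ uses the \emph{convolution}/Hecke $\star$, whereas the stated tensor structure on these categories is the \emph{fusion} product $\star_f$; here I invoke Theorem \ref{fusion compatibility}, which shows the real-symmetric equivalences are compatible with $\star_f$, and the standard compatibility between fusion and convolution (the fusion product on $Q_\bbR$ agrees with $\star$ up to the usual commutativity constraint, as in Proposition \ref{real convolution}(1) and Section \ref{fusion}). Transporting the Tannakian associativity and commutativity constraints of $Q_\bbR$ from part (1) through these equivalences equips $(Q_X,\star_f)$ and $(Q_K,\star_f)$ with the claimed constraints, and the commutative diagram \eqref{tensor compatibility} of tensor functors is obtained by transporting the vertical arrow $\on{Rep}(G^\vee)\to Q_\bbR$ of part (1); the perverse Hecke actions $\star^p$ are intertwined because the $t$-exact equivalences commute with $\on{Rep}(G^\vee)$-actions and with taking ${^p}H^i$.

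I expect the main obstacle to be purely bookkeeping around the two monoidal structures: one must verify that the fusion product $\star_f$ on $Q_\bbR$ (defined by nearby cycles over the degenerating pair of points in $\Gr_\bbR^{(2)}$) really does agree, as a symmetric monoidal structure, with the convolution $\star$ used to define the category $Q_\bbR$, and that this agreement is compatible with the associativity/commutativity constraints coming from the Tannakian formalism of \cite{N2}. This is the analogue of the classical fact that fusion and convolution coincide in the Mirković-Vilonen picture, but in the real/relative setting one must be careful that the commutativity constraint supplied by the nearby-cycles (fusion) construction is the one that makes $\h^*$ a symmetric tensor functor — exactly the point where, in the complex case, a Koszul sign rule is needed. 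Once that identification is in place, everything else follows formally from Theorems \ref{real-symmetric}, \ref{conv comp}, \ref{fusion compatibility}, \ref{t-exact}, and \ref{semisimplicity}.
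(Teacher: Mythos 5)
Your proposal matches the paper's proof in structure and in the specific results it invokes: Theorem \ref{semisimplicity} to get semi-simplicity of $Q_\bbR$ and hence reduce part (1) to \cite{N2}, then Theorem \ref{fusion compatibility} and Theorem \ref{conv comp} to transport the Tannakian constraints of $(Q_\bbR,\star)$ to $(Q_X,\star_f)$ and $(Q_K,\star_f)$ in part (2). One point worth making explicit, which the paper states up front and you only implicitly cover: the category $Q_\bbR$ in this paper is defined via direct \emph{summands} of $\delta_\bbR\star\IC_V$, whereas the category $Q(\Gr_\bbR)$ of \cite{N2} (whose Tannakian structure you want to cite) is defined via \emph{subquotients}; Theorem \ref{semisimplicity} is invoked precisely to show these coincide, after which \cite{N2} applies verbatim. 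Also, your parenthetical appeal to ``semi-smallness of the real convolution'' to justify $t$-exactness should be replaced by Proposition \ref{real convolution}(2), which gives the $t$-exactness of $\star$ directly; note that the Hecke action itself is generally \emph{not} $t$-exact (Theorem \ref{t-exact}), which is why $Q_\bbR$ is defined via summands/perverse cohomology rather than as the image of $\on{Rep}(G^\vee)$ under $\psi$.
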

\begin{proof}
The semi-simplicity of the Hecke action in
Theorem \ref{semisimplicity}
implies that 
the category $Q_\bbR$ is the same as the full subcategory 
$Q(\Gr_\bbR)\subset\on{Perv}(G_\bbR(\calO_\bbR)\backslash\Gr_\bbR)$ introduced \cite{N2} whose objects are isomorphic to 
subquotients of perverse sheaves that appears in the summand 
of $\delta_\bbR\star\IC_V$ for some $V\in\on{Rep}(G^\vee)$.
Now part (1) follows from the main results in \cite{N2}.

Proposition \ref{fusion compatibility}
implies that there are equivalence 
$Q_\bbR\is Q_X\is Q_K$ compatible with the 
Hecke action, and the convolution product on $Q_\bbR$
and the fusion products on $Q_X$ and $Q_K$.
Thus we can
transport the 
associativity and commutativity constraints of $(Q_\bbR,\star)$
through the above equivalence to 
$(Q_X,\star_f)$ and $(Q_K,\star_f)$. Now the commutativity of~\eqref{tensor compatibility}
follows again from  part (1) and Theorem \ref{conv comp}.

\end{proof}

\begin{corollary}\label{description of Q_X}
There are tensor equivalences $Q_X\is Q_K\is \on{Rep}(H_{real}^\vee)$.
The abelian category $Q_X$ (resp. $Q_K$) is semi-simple with irreducible objects 
$\IC_X^\lambda$ (resp. $\IC_K^\lambda$), $\lambda\in\sigma(\Lambda_T)$.
\end{corollary}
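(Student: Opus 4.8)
The plan is to obtain the statement as a formal consequence of Proposition~\ref{Q_X=Q_R} together with the strata-compatibility of the real-symmetric equivalence established in Theorem~\ref{real-symmetric}. For the tensor equivalences I would simply compose: Proposition~\ref{Q_X=Q_R}(2) provides horizontal tensor equivalences $Q_X\is Q_K\is Q_\bbR$ fitting into the commutative diagram~\eqref{tensor compatibility} over $\on{Rep}(G^\vee)$, and Proposition~\ref{Q_X=Q_R}(1) provides a tensor equivalence $Q_\bbR\is\on{Rep}(H_{real}^\vee)$ compatible with the functors from $\on{Rep}(G^\vee)$; composing gives $Q_X\is Q_K\is\on{Rep}(H_{real}^\vee)$. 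Semi-simplicity of $Q_X$ and $Q_K$ is then inherited from that of $Q_\bbR$ (Proposition~\ref{Q_X=Q_R}(1)) via these equivalences, which are in particular equivalences of abelian categories.

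It remains to match the simple objects. The underlying equivalence of abelian categories $\on{Perv}(X(\calK)/G(\calO))\is\on{Perv}(K(\calK)\backslash\Gr)\is\on{Perv}(G_\bbR(\calO_\bbR)\backslash\Gr_\bbR)$ of Theorem~\ref{real-symmetric} restricts to the equivalences $Q_X\is Q_K\is Q_\bbR$ by Theorem~\ref{fusion compatibility}, so it suffices to show this equivalence sends $\IC_X^\lambda$ to $\IC_K^\lambda$ and to the intersection cohomology sheaf $\IC_\bbR^\lambda$ of $\overline{S_\bbR^\lambda}$ with constant coefficients. The identification $\IC_X^\lambda\leftrightarrow\IC_K^\lambda$ is immediate from the isomorphism of stacks~\eqref{torsor-equ}, which matches the strata $X(\calK)^\lambda/G(\calO)\leftrightarrow K(\calK)\backslash\mO_K^\lambda$ and their dualizing complexes. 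For the identification with $\IC_\bbR^\lambda$, recall from the proof of Theorem~\ref{real-symmetric} that the equivalence $D(K(\calK)\backslash\Gr)\is D(G_\bbR(\calO_\bbR)\backslash\Gr_\bbR)$ is $t$-exact, is compatible with $*$- and $!$-restriction to the strata $K(\calK)\backslash\mO_K^\lambda$ and $G_\bbR(\calO_\bbR)\backslash S_\bbR^\lambda$ matched by Proposition~\ref{parametrization}, and carries the perversity value $p_X(\lambda)=-\langle\lambda,\rho\rangle$ to $p_\bbR(S_\bbR^\lambda)=-\langle\lambda,\rho\rangle$ (using $\dim_\bbR S_\bbR^\lambda=2\langle\lambda,\rho\rangle$). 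A $t$-exact equivalence with these properties necessarily intertwines the intermediate extension functors, and on the open stratum it carries $\omega_{K(\calK)\backslash\mO_K^\lambda}[p_X(\lambda)]$ to the shifted constant sheaf on $G_\bbR(\calO_\bbR)\backslash S_\bbR^\lambda$ — here one uses that $G_\bbR/P_\bbR^\lambda$, hence $S_\bbR^\lambda$, is connected, so the relevant equivariant local system is trivial. Hence $\IC_K^\lambda\mapsto\IC_\bbR^\lambda$. Since the simple objects of $Q_\bbR$ are exactly the $\IC_\bbR^\lambda$ with $\lambda\in\sigma(\Lambda_T)$ by Proposition~\ref{Q_X=Q_R}(1), transporting this list back through the equivalences yields that the simple objects of $Q_X$ (resp.\ $Q_K$) are precisely the $\IC_X^\lambda$ (resp.\ $\IC_K^\lambda$), $\lambda\in\sigma(\Lambda_T)$.

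The main obstacle is the compatibility of the composite equivalence of Theorem~\ref{real-symmetric} with the intermediate extension functor, i.e.\ with $*$- and $!$-restriction to \emph{all} strata and not merely with the $t$-structure. This rests on the base-change isomorphisms for fp-locally closed embeddings in the placid-stack formalism (Lemma~\ref{key lemma}, Lemma~\ref{quasi-maps model equ}) and on the fact that the trivialization furnished by Theorem~\ref{Quillen} is strata-preserving and real-analytic along each stratum, so that restriction to every $\overline{S_\bbR^\lambda}$ is intertwined with restriction to $\overline{K(\calK)\backslash\mO_K^\lambda}$. Granting this, the remaining bookkeeping — matching dualizing complexes on strata and checking that the index $\lambda\in\sigma(\Lambda_T)$ is preserved, via Proposition~\ref{parametrization}(4) — is routine.
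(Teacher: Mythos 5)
Your proposal is correct and takes essentially the same route the paper intends: Corollary~\ref{description of Q_X} is stated immediately after Proposition~\ref{Q_X=Q_R} with no separate proof, and the paper evidently regards it as a direct consequence of (1) composing the tensor equivalences $Q_X\is Q_K\is Q_\bbR\is\on{Rep}(H_{real}^\vee)$, (2) transporting semi-simplicity, and (3) using the strata-compatibility and $t$-exactness of the real-symmetric equivalence to match irreducible objects — exactly the three steps you carry out.

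One point is worth tightening. When you assert that the equivalence ``carries $\omega_{K(\calK)\backslash\mO_K^\lambda}[p_X(\lambda)]$ to the shifted constant sheaf on $G_\bbR(\calO_\bbR)\backslash S_\bbR^\lambda$'' and justify it by connectedness of $G_\bbR/P_\bbR^\lambda$, what the argument actually hands you on the real side is the shifted \emph{dualizing complex} of $G_\bbR(\calO_\bbR)\backslash S^\lambda_\bbR$ (since the equivalence is built from $q^!$ and a stratified homeomorphism, both of which commute with $\omega$), i.e.\ the shifted equivariant \emph{orientation} local system. Connectedness of $G_\bbR/P^\lambda_\bbR$ does not by itself force this to be trivial; what you need is equivariant orientability of $S^\lambda_\bbR$, or, alternatively, that the ``trivial local system'' convention in Proposition~\ref{Q_X=Q_R}(1) (which is quoted from the description of $Q(\Gr_\bbR)$ in \cite{N2}) already refers to the dualizing normalization. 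Either reading makes the identification go through, but the sentence as written elides this distinction. This is a genuinely minor bookkeeping point — the paper does not address it either — so it does not undermine the proof; it only means the appeal to connectedness should be replaced by an appeal to the precise normalization used in \cite{N2} (or by a check of orientability of the relevant strata).

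Otherwise your use of Theorem~\ref{conv comp} and Theorem~\ref{fusion compatibility} to identify $Q_X$, $Q_K$, $Q_\bbR$ as Hecke-generated subcategories with matching unit objects, and your invocation of the base-change isomorphisms (Lemma~\ref{key lemma}, Lemma~\ref{quasi-maps model equ}) together with the stratified homeomorphism of Theorem~\ref{Quillen} to show that the real-symmetric equivalence intertwines intermediate extensions, are precisely the ingredients one needs and match the paper's intent.
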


\subsubsection{Construction of $H_{sph}^\vee$}
Fix a pole point $0\in\bbP^1$
and consider 
 the stack of quasi-maps  $Z:=QM^{(1)}(\bbP^1,0,X)$
classifying a $G$-bundle $\mE$ on $\bbP^1$
 and a section 
$\phi:\bbP^1\setminus\{0\}\to \mE\times^GX$, or equivalently, a $K$-reduction $\mE_K$ of 
$\mE$ on $\bbP^1\setminus\{0\}$. 
We have the uniformization isomorphism
\beq\label{uniformization of Z}
Z\is LK^{(1)}\backslash\Gr^{(1)}|_{0}\is K(\bC[t^{-1}])\backslash\Gr
\eeq
in Section~\eqref{uniform of quasi maps},
here $t$ is the local coordinate at $0$.
The stratum $\mO_K^{\lambda}\subset\Gr$, $\lambda\in\Lambda_S^+$ in Section \ref{BD Gr} decends to
a stratum $Z^\lambda=K(\bC[t^{-1}])\backslash\mO_K^{\lambda}\subset Z$, which is
 smooth, locally closed sub-stack of $Z$ and 
the collection $\calZ=\{Z^\lambda\}$ forms a stratification of $Z$ (see \cite[Section 3.4]{GN1}).
For example, the closed stratum $Z^0$ is isomorphic to 
\[Z^0\is K(\bC[t^{-1}])\backslash K(\calK)/K(\calO)\is\on{Bun}_K(\bP^1).\]

Following \cite{GN1}, we define the ind-stack $\on{Hecke}_Z$ of 
\emph{generic Hecke modifications} 
to the ind-stack classifying data
\[(\calE_1,\calE_2,\phi_1,\phi_2,\underline z,\tau)\]
where $(\calE_i,\phi_i)\in Z$, $\underline z\in\on{Sym} \mathbb(\bP^1)$ is a divisor on 
$\bbP^1$ with support  contained in $\bP^1\setminus\{0\}$,
and $\tau$ is an isomorphism of $G$-bundles 
$\tau:\mE_{1}\is\mE_2$ on $\mathbb P^1\setminus\underline z$
compatible with the $K$-reductions of $\mE_{1,K}$ and $\mE_{2,K}$
on $\bP^1\setminus\{0,\underline z\}$.
We have natural projection maps
\[Z \stackrel{p_1}\longleftarrow \on{Hecke}_Z\stackrel{p_2}\lra Z\ \ \ \ \]
given by 
\[p_i((\calE_1,\calE_2,\phi_1,\phi_2,\underline z,\tau))=(\mE_i,\phi_i).\]
We define a \emph{smooth generic Hecke correspondence} to be any stack $Y$ equipped with
smooth maps 
\[Z \stackrel{h_{1,Y}}\longleftarrow Y\stackrel{h_{2,Y}}\lra Z\]
such that there exists a map $Y\to\on{Hecke}_Z$ such that the following diagram commutes
\[\xymatrix{&Y\ar[ld]_{h_{1,Y}}\ar[rd]^{h_{2,Y}}\ar[d]&\\
Z&\on{Hecke}_Z\ar[r]^{p_1}\ar[l]_{p_1}&Z}\]

We also have the ind-stack $Z\tilde\times\Gr_{}=\tilde Z\times^{G(\calO_{})}\Gr_{}$ of Hecke modifications at $\{0\}$ 
where $\tilde Z\to Z$ is the $G(\calO_{})$-torsor classiyfing the data
\[(\mE,\phi,\sigma)\]
where $(\mE,\phi)\in Z$ and $\sigma$ is a trivialization 
$\mE|_{\on{Spec}(\calO_{})}\is G\times\on{Spec}(\calO_{})$
of $\mE$ on the formal neighborhood of $\{0\}$. 
We have natural projection maps
\[Z \stackrel{h_1}\longleftarrow Z\tilde\times\Gr_{}\stackrel{h_2}\lra Z\ \ \ \ \]

Let $\on{Perv}(Z)$ be the category of perverse sheaves on $Z$.
Let  $\on{Perv}_{\calZ}(Z)\subset \on{Perv}(Z)$ the full subcategory 
of perverse sheaves which are locally constant with respect to the stratification 
$\calZ=\{Z^\lambda\}$.
We define a \emph{generic Hecke-equivariant} perverse sheaf 
on $Z$ to a perverse sheaf $\mF\in\on{Perv}(Z)$ on $Z$ equipped with 
isomorphisms 
\[\phi_Y:h_{1,Y}^!\mF\is h_{2,Y}^!\mF\]
for every smooth generic Hecke correspondence $Y$, satisfying some uatural 
conditions, see \cite[Section 3.2]{GN1}.
We denote by 
$\on{Perv}(Z)^{\on{Hecke}}$ the category of generic Hecke-equivariant
perverse sheaves on $Z$.
Since we assume $K$ is connected, the condition of generic-Hecke equivariance is a property, not 
addition structure of a perverse sheaf on $Z$ by \cite[Proposition 3.5.2]{GN1}, we see that 
the natural forgetful map 
$\on{Perv}(Z)^{\on{Hecke}}\to \on{Perv}(Z)$ is fully-faithful
and induces an equivalence 
$\on{Perv}(Z)^{\on{Hecke}}\is \on{Perv}_{\calZ}(Z)$.

Following \cite[Section 4.2]{GN1},
consider the perverse Hecke action 
\[\star^p:
\on{Rep}(G^\vee)\times\on{Perv}(Z)\to\on{Perv}(Z)\ \ \ \ \ \IC_V\star^p\mF:=\bigoplus_{i} {^p}H^i((h_2)_!(\mF\tilde\boxtimes\IC_V))
\]
where $\mF\tilde\boxtimes\IC_V\in\on{Perv}(Z\tilde\times\Gr)$
is the twisted product of $\mF\boxtimes\IC_V$
with respect to the projections $h_1$ and $h_2$.
Since the generic Hecke modifications commute with 
Hecke modification at the pole point $\{0\}$, the perverse Hecke action descends to 
a well defined functor
\beq\label{perverse HK for X}
\star^p:\on{Rep}(G^\vee)\times\on{Perv}(Z)^{\on{Hecke}}\to\on{Perv}(Z)^{\on{Hecke}}\ \ 
\eeq

For any $\lambda\in\Lambda_A^+$, let 
$\IC_{Z^\lambda}\in\on{Perv}(Z)$ be the intersection cohomology 
complex of the stratum $Z^\lambda$ (with constant coefficient).
By \cite[Proposition 3.5.1]{GN1},
we have $\IC_{Z^\lambda}\in\on{Perv}(Z)^{\on{Hecke}}$.

\begin{definition}\cite[Definition 4.2.3]{GN1}
Let $Q_K^{glob}\subset \on{Perv}(Z)^{\on{Hecke}}$ be the full
subcategory of $\on{Perv}(Z)^{\on{Hecke}}$ whose objects are isomorphic 
to direct summands of perverse sheaves appear in
$\IC_V\star^p\IC_{Z^0}$ for some $V\in\on{Rep}(G^\vee)$.

\end{definition}

We have the 
following description of $Q_K^{glob}$. 
\begin{prop}\cite[Theorem 1.2.1]{GN1}\label{Q_X^glob}
$Q_K^{glob}$ is a semi-simple abelian category
and every irreducible object is isomorphic to 
$\IC_{Z^\lambda}$ for some $\lambda\in\Lambda_A^+$.
\end{prop}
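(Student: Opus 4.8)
The plan is to reduce the proposition to two assertions: (i) the perverse Hecke action $\star^p$ of $\on{Rep}(G^\vee)$ preserves semisimplicity, so that each $\IC_V\star^p\IC_{Z^0}$ — and hence each object of $Q_K^{glob}$, being a direct summand of such — is a finite direct sum of intersection cohomology complexes $\IC(Z^\lambda,\calL)$ with $K$-equivariant local system coefficients $\calL$ on the strata $Z^\lambda$; and (ii) only the trivial coefficients occur, so that these constituents are precisely the $\IC_{Z^\lambda}$. Granting (i) and (ii), the proposition follows formally: the $\IC_{Z^\lambda}$ for distinct $\lambda$ are non-isomorphic simple objects of $\on{Perv}_\calZ(Z)\is\on{Perv}(Z)^{\on{Hecke}}$ (using that $K$ is connected, so generic-Hecke-equivariance is a property) with no extensions among themselves inside $\on{Perv}(Z)^{\on{Hecke}}$; hence the full subcategory $Q_K^{glob}$, closed under direct summands and consisting of finite direct sums of such objects, is an abelian semisimple subcategory, and its simple objects are exactly those $\IC_{Z^\lambda}$ that actually appear — a subset of $\{\lambda\in\Lambda_A^+\}$, the indexing set of $\calZ$.

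For (i) I would run the decomposition theorem along the Hecke correspondence $Z\xleftarrow{h_1}Z\tilde\times\Gr\xrightarrow{h_2}Z$. Fixing $V$ with highest weights $\le\mu$, the sheaf $\IC_V$ is supported on the finite-dimensional proper orbit closure $\overline{S^\mu}\subset\Gr$; after restricting the twisted product to a bounded union of strata in $Z$, the convolution map $h_2$ becomes a proper morphism onto a finite-type substack. The twisted external product $\IC_{Z^0}\tilde\boxtimes\IC_V$ is a semisimple perverse sheaf up to shift, by the usual construction underlying the convolution (using that $h_1$ is, étale-locally on $Z$, the projection $Z\times\Gr\to Z$). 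The decomposition theorem in the complex algebraic setting — available because $\Gr$ and the quasi-map spaces are $G(\calO)$-ind-placid, cf. Proposition \ref{ind-placidness} and the results of Drinfeld and Bouthier — then gives that $(h_2)_!(\IC_{Z^0}\tilde\boxtimes\IC_V)$ is a direct sum of shifted semisimple perverse sheaves; taking the ${}^pH^i$ and summing shows $\IC_V\star^p\IC_{Z^0}$ is semisimple. Generic-Hecke-equivariance is inherited because generic Hecke modifications away from $0$ commute with the Hecke modification at $0$.

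The hard part will be (ii), the triviality of the local system coefficients. The cleanest route I would attempt is a contraction argument: there is a $\bG_m$-action on $Z=\QM^{(1)}(\bbP^1,0,X)$ — built from scaling the coordinate at $0$ together with a regular dominant cocharacter — whose attracting locus is the closed stratum $Z^0\is\Bun_K(\bbP^1)$ and which is compatible with generic Hecke modifications; hyperbolic localization (Braden) then identifies $\star^p$ with an operation that manifestly produces only constant coefficients over $Z^0$, and these propagate along the strata. Alternatively one may argue by purity: each $\IC_{Z^\lambda}$ is pointwise pure, and a summand of the pure complex obtained by proper pushforward from pure complexes on $\Bun_K(\bbP^1)$ and $\overline{S^\mu}$ has geometrically semisimple monodromy along $Z^\lambda$, which one then checks is trivial since the relevant covers are connected — again because $K$ is connected, so $\Bun_K(\bbP^1)$ contributes nothing nontrivial. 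This is exactly the delicate geometric point resolved in \cite{GN1}, and I expect it, rather than the formal parts above, to be the main obstacle.

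Finally, I would record that the set of $\lambda\in\Lambda_A^+$ for which $\IC_{Z^\lambda}\in Q_K^{glob}$ is intrinsically the set of dominant weights of the Tannakian group of $Q_K^{glob}$ for the fusion product; pinning this set down precisely — and matching it with $\sigma(\Lambda_T)$ as in Corollary \ref{description of Q_X} on the local side — is a separate matter, not needed for the present statement, where only membership in $\Lambda_A^+$ is asserted.
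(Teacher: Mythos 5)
The paper does not prove this proposition: it cites it verbatim from \cite[Theorem~1.2.1]{GN1}, and the accompanying remark explicitly flags that the proof \emph{loc.\ cit.}\ ``is quite involved'' and promises only an \emph{alternative} derivation via Corollary~\ref{Weyl groups}. That alternative route does not attack $Q_K^{glob}$ head-on; it transports semisimplicity from the local side (Theorem~\ref{semisimplicity} plus the description of $Q_K$ in Proposition~\ref{Q_X=Q_R}, which rests on the real--symmetric equivalence and the classification of the Tannakian group $H^\vee_{real}$ in \cite{N2}) through the $t$-exact monoidal functor $r^![\dim K]\colon Q_K\to Q_K^{glob}$ of Theorem~\ref{local=global}, which sends $\IC_K^\lambda$ to $\IC_{Z^\lambda}$. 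So the paper deliberately sidesteps both of your steps (i) and (ii) by relocating the argument to an equivalent category where the relevant facts are already available.

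Your proposal is a genuinely different, direct attempt, roughly in the spirit of what \cite{GN1} must do. Step (i) is largely fine as a sketch, though a couple of small points deserve care: $Z$ is ind-finite type, so you only need the classical BBD decomposition theorem for Artin stacks, not the ind-placid machinery of Proposition~\ref{ind-placidness} (that result concerns loop spaces, not the global quasi-map stacks); and you should make sure $\Bun_K(\bP^1)$, which is a smooth Artin stack with non-trivial automorphisms, is treated correctly when invoking properness and the decomposition theorem for $h_2$ restricted to finite-type substacks.

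The real issue is step (ii), and you say so yourself: you offer two candidate strategies (a $\bG_m$-contraction/Braden argument, and a purity/monodromy argument) and conclude ``I expect it, rather than the formal parts above, to be the main obstacle.'' As written, neither argument is carried through. The hyperbolic localization sketch is underdetermined (the attracting locus of a dominant $\bG_m$-flow is a neighborhood of $Z^0$, not $Z^0$ itself, and one still has to propagate triviality of coefficients from $Z^0$ to the open strata, which is precisely the point at issue); the purity sketch also stops before the monodromy computation. Since the entire force of the proposition is in excluding non-trivial local systems on the strata $Z^\lambda$ --- the assertion that the simples are exactly the $\IC_{Z^\lambda}$ with \emph{constant} coefficients --- the proposal has a genuine, acknowledged gap at exactly the load-bearing step. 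It is a reasonable research outline rather than a proof. If you want a self-contained argument in the paper's framework rather than a citation, the cleaner path is the one the paper takes: establish the exact monoidal equivalence $Q_K\is Q_K^{glob}$ first (whose proof does not need the full strength of (ii), only the $t$-exactness of $r^!$ and the identification $r^!\IC_K^\lambda\is\IC_{Z^\lambda}$), and then import semisimplicity and the description of simples from $Q_K$.
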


\begin{remark}
The proof of the proposition in \emph{loc. cit.}  
is quite involved. We will give 
an another proof of it using the results of the paper, see Corollary \ref{Weyl groups}.
In particular, we will show that in fact  the irreducible objects of 
$Q_K^{glob}$ are isomorphic to 
$\IC_{Z^\lambda}$, $\lambda\in\sigma(\Lambda_T)$, confirming a conjecture in \emph{loc. cit.}.
\end{remark}

We now recall the construction of fusion product on 
$Q_K^{glob}$ following \cite[Section 6.3]{GN1}.
Consider the base change
\[Z^{(2)}=QM^{(2)}(\mathbb P^1,X)\times_{(\bP^1)^2}\bC\]
of $QM^{(2)}(\mathbb P^1,X)\to(\bP^1)^2$
along the diagonal embedding 
$
\bC\lra (\mathbb P^1)^2$, $z\to (z,-z)$ 
We have 
\[Z^{(2)}|_{0}\is Z\]
and there is a 
Cartesian diagram
\[\xymatrix{Z^{(2)}|_{\bC^\times}\ar[d]\ar[r]^j&Z^{(2)}\ar[d]^f&Z\ar[d]\ar[l]_{\ \ i}\\
\bC^\times\ar[r]&\bC&\{0\}\ar[l]}\]

The stratification $\{\mO_K^{(2),\lambda_\fp}\}$ of $\Gr^{(2)}$
in Section \ref{BD Gr} descends to a stratification 
$\{LK^{(2)}\backslash\mO_K^{(2),\lambda_\fp}\}$
of $QM^{(2)}(\mathbb P^1,X)\is LK^{(2)}\backslash\Gr^{(2)}$
which restricts to a stratification 
$\{Z^{(2),\lambda_\fp}\}$ of the base change $Z$, where  
\beq\label{stratum Z^2}
Z^{(2),\lambda_\fp}=(LK^{(2)}\backslash\mO_K^{(2),\lambda_\fp})\cap Z=(LK^{(2)}\backslash\mO_K^{(2),\lambda_\fp})\times_{(\bP^1)^2}\bC
\eeq
We denote by $\IC_{Z^{(2),\lambda_\fp}}\in\on{Perv}(Z^{(2)})$
the $\IC$-complex of the stratum $Z^{(2),\lambda_\fp}$.

For any $\lambda_1,\lambda_2\in\Lambda_A^+$
let $\lambda_{1\cup2}:\fp=\{1\}\cup\{2\}\to\Lambda_A^+$ be the 
the map $\lambda_{1\cup2}(i)=\lambda_i$.
Note that 
$Z^{(2),\lambda_{1\cup2}}\subset Z^{(2)}|_{\bC^\times}$.
For any $\IC_{Z^{\lambda_1}},\IC_{Z^{\lambda_2}}\in\on{Perv}(Z)$
we define the (global) fusion product of 
as 
the following nearby cycles
\beq\label{global fusion}
\IC_{Z^{\lambda_1}}\star_f\IC_{Z^{\lambda_2}}=\psi_f(\IC_{Z^{(2),\lambda_{1\cup2}}})\in\on{Perv}(Z)
\eeq
along the projection map $f:Z^{(2)}\to\bC$
\begin{prop}\label{H_sph}
(1) For any $\IC_{Z^{\lambda_1}},\IC_{Z^{\lambda_2}}\in Q_K^{glob}$, we have $\IC_{Z^{\lambda_1}}\star_f\IC_{Z^{\lambda_2}}\in Q_K^{glob}$.
(2) There exists unique associativity and commutativity constraints 
for the 
category $Q_K^{glob}$ equipped with the fusion product $\star_f$
such that $(Q_K^{glob},\star_f)$ is a neutral Tannakian category with 
Tannakian group isomorphic to the
reductive subgroup  $H^{\vee}_{sph}\subset G^\vee$ 
associated to $X$ in \cite{GN1}.
Moreover, 
we have the following commutative diagram of tensor functors
\[\xymatrix{&\on{Rep}(G^\vee)\ar[ld]\ar[rd]&\\
Q_K^{glob}\ar[rr]^\simeq&&\on{Rep}(H^\vee_{sph})}\]
where the left vertical arrow is given by the perverse Hecke action 
on $\IC_{Z^0}\in Q_K^{glob}$.
\end{prop}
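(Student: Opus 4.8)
\textbf{Proof proposal for Proposition \ref{H_sph}.}

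The plan is to deduce both statements by transporting structure across the local--global comparison established in the rest of this section, rather than reworking the global geometry of $\on{Hecke}_Z$ from scratch. First I would prove the local-global comparison theorem (Theorem \ref{local=global} in the text): the uniformization isomorphism $Z\is K(\bC[t^{-1}])\backslash\Gr$ of \eqref{uniformization of Z}, together with the fact that generic-Hecke equivariance on $Z$ is a property (since $K$ is connected, by \cite[Proposition 3.5.2]{GN1}), identifies $\on{Perv}(Z)^{\on{Hecke}}\is\on{Perv}_{\calZ}(Z)$ with a full subcategory of $\on{Perv}(K(\bC[t^{-1}])\backslash\Gr)$, and one checks that under the affine Matsuki equivalence $\Upsilon$ and the Radon/nearby-cycles equivalences of Theorem \ref{diagram} this matches (up to the relevant shifts) the full subcategory $\on{Perv}(K(\calK)\backslash\Gr)$ of the local relative Satake category. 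The key point is that the perverse Hecke action $\star^p$ at the pole point $\{0\}$ on $\on{Perv}(Z)^{\on{Hecke}}$ in \eqref{perverse HK for X} corresponds, under this identification, to the perverse Hecke action on $\on{Perv}(K(\calK)\backslash\Gr)$ from \eqref{perverse Hecke actions} — this is because the Hecke modification at $\{0\}$ in the $QM$-picture is literally the convolution used to define the local action, and the comparison functor is built from $!$-pullbacks along smooth maps with contractible fibers (as in the proofs of Lemma \ref{bijection} and Theorem \ref{real-symmetric}), hence intertwines convolutions. This already gives $Q_K^{glob}\is Q_K$ as plain categories, with $\IC_{Z^0}$ matching $\omega_{K(\calK)\backslash\mO_K^0}$ and $\IC_{Z^\lambda}$ matching $\IC_K^\lambda$.

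Granting this, part (1) is immediate: under $Q_K^{glob}\is Q_K$ the global fusion product \eqref{global fusion} — nearby cycles along $f\colon Z^{(2)}\to\bC$ of the $\IC$-sheaf of the stratum $Z^{(2),\lambda_{1\cup 2}}$ of \eqref{stratum Z^2} — is identified with the fusion product $\star_f$ on $Q_K\subset\on{Perv}(K(\calK)\backslash\Gr)$ studied in Section \ref{fusion}, because $Z^{(2)}=QM^{(2)}(\bP^1,X)\times_{(\bP^1)^2}\bC\is (LK^{(2)}\backslash\Gr^{(2)})|_{\bC}$ by the uniformization \eqref{iso for QM}, and the strata \eqref{stratum Z^2} match the strata used to define $\star_f$ on $D(K(\calK)\backslash\Gr)$. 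By Theorem \ref{fusion compatibility}(1) and Proposition \ref{Q_X=Q_R}(2), $Q_K$ is closed under $\star_f$ and $(Q_K,\star_f)$ carries associativity and commutativity constraints making it a neutral Tannakian category tensor-equivalent to $Q_\bbR\is\on{Rep}(H^\vee_{real})$, compatibly with the Hecke action of $\on{Rep}(G^\vee)$ on the monoidal units. Transporting these constraints back through $Q_K^{glob}\is Q_K$ proves that $(Q_K^{glob},\star_f)$ is Tannakian, and gives the commutative triangle of tensor functors with the middle vertical arrow the perverse Hecke action on $\IC_{Z^0}$. It remains only to identify the resulting Tannakian group with $H^\vee_{sph}$: this is essentially the \emph{definition} of $H^\vee_{sph}$ in \cite{GN1} as the Tannakian group of $(Q_K^{glob},\star_f)$, so here one just has to check that the associativity/commutativity constraints we transported agree with those of \emph{loc. cit.}; this follows because both are characterized by compatibility with the $\on{Rep}(G^\vee)$-action via the fiber functor on $\IC_{Z^0}$, which pins down the constraints uniquely (a rigidity argument as in the proof of Proposition \ref{Q_X=Q_R}).

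The main obstacle I expect is the local-global comparison itself, i.e.\ checking that the comparison functor $\on{Perv}(Z)^{\on{Hecke}}\to\on{Perv}(K(\calK)\backslash\Gr)$ is not merely an equivalence of abelian categories but is genuinely compatible with the \emph{two different} Hecke-type actions — the global generic Hecke structure that defines $\on{Perv}(Z)^{\on{Hecke}}$, and the local Hecke action at the pole point. Concretely one must verify that generic Hecke modifications away from $\{0\}$ become invisible after passing to $K(\calK)\backslash\Gr$ (they are absorbed into the $K(\calK)$-equivariance, using $K(\bC[t^{-1}])\subset K(\calK)$ densely in the relevant sense and the contractibility inputs from Proposition \ref{multi Gram-Schmidt}), and simultaneously that the pole-point modification survives and realizes the convolution. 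Once this compatibility is in place, everything else is formal transport of Tannakian structure. A secondary, more bookkeeping-type issue is matching perversity normalizations: the $\IC$-sheaves $\IC_{Z^\lambda}$ on $Z$ and $\IC_K^\lambda$ on $K(\calK)\backslash\Gr$ differ by the shift $[-\langle\lambda,\rho\rangle]$ built into the perverse $t$-structure of Proposition \ref{t-structures} with perversity $p_X$ from \eqref{perversity}, and one must track this consistently through the nearby-cycles definitions of $\star_f$ on both sides so that the global fusion product \eqref{global fusion} really lands in the heart; this is routine given the $t$-exactness statement in Theorem \ref{fusion compatibility}(2).
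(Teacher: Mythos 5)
The paper's proof of Proposition~\ref{H_sph} is quite short and orthogonal to your approach: it uses \cite[Corollary 4.2.6]{GN1} and \cite[Lemma 6.3.1]{GN1} to show that $\IC_{Z^{(2),\lambda_{1\cup2}}}$ is ULA over $\bC$ for the projection $f\colon Z^{(2)}\to\bC$, so by \cite[Theorem A.2.6]{Z1} the nearby-cycles definition $\psi_f(\IC_{Z^{(2),\lambda_{1\cup2}}})$ of the fusion product \eqref{global fusion} agrees with the intermediate-extension definition $i^*j_{!*}j^*(\IC_{Z^{(2),\lambda_{1\cup2}}})[-1]$ used in \cite[Section 6.3]{GN1}. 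Once the two definitions of $\star_f$ coincide, both parts of the proposition are literally the main results of \cite{GN1}. In other words, Proposition~\ref{H_sph} is a translation of Gaitsgory--Nadler's Tannakian theorem for $Q_K^{glob}$ into the conventions of this paper, and the only new mathematical content is the ULA comparison of fusion products.

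Your route is not viable as written, for two reasons. First, it is logically backwards: you propose to \emph{begin} by proving Theorem~\ref{local=global}, but the paper's proof of Theorem~\ref{local=global} invokes the Tannakian dictionary for $Q_K^{glob}$ (the Hopf algebra $B_K^{glob}$ and the surjection $R\twoheadrightarrow B_K^{glob}$), which presupposes that $(Q_K^{glob},\star_f)$ is already a neutral Tannakian category --- i.e.\ it uses Proposition~\ref{H_sph} as an input. Using \ref{local=global} to deduce \ref{H_sph} is therefore circular. Second, and more fundamentally, transporting the associativity/commutativity constraints from $Q_K$ along an abstract equivalence $Q_K^{glob}\is Q_K$ only equips $Q_K^{glob}$ with \emph{some} Tannakian structure; it does not by itself show that the resulting Tannakian group equals $H^\vee_{sph}$, which \cite{GN1} constructs from a specific fusion structure on $Q_K^{glob}$ built using the global geometry of quasi-maps. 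Your final paragraph tries to close this gap with a ``rigidity argument,'' but compatibility with the $\on{Rep}(G^\vee)$-action on the unit does not pin down the commutativity constraint uniquely in general (that is precisely why the geometric Satake commutativity constraint requires a nontrivial fusion/sign argument). To complete the identification one must show that the fusion product you are transporting matches the one in \cite{GN1} --- which is exactly the ULA computation the paper does. Proving that comparison is unavoidable, and once you have it you no longer need the detour through $Q_K$ at all.
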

\begin{proof}
\cite[Corollary 4.2.6]{GN1}
and \cite[Lemma 6.3.1]{GN1} imply
that $\IC_{Z^{(2),\lambda_{1\cup2}}}\is j_{!*}j^*(\IC_{Z^{(2),\lambda_{1\cup2}}})$
 is ULA with respect to the projection 
$f:Z^{(2)}\to\bC$. Thus by \cite[Theorem A.2.6]{Z1} there is an isomorphism
\[\psi_f(\IC_{Z^{(2),\lambda_{1\cup2}}})\is i^*j_{!*}(j^*\IC_{Z^{(2),\lambda_{1\cup2}}})[-1].\]
It follows that the fusion product in~\eqref{global fusion} is the same as the one defined in 
\cite[Section 6.3]{GN1} and the proposition follows from the main results of \cite{GN1}.
\end{proof}

\subsubsection{The identification $H^\vee_{real}=H_{sph}^\vee$}
The uniformization map~\eqref{uniformization of Z} induces a map 
\[r:Z\is K(\bC[t^{-1}])\backslash\Gr\to K(\calK)\backslash\Gr.\]

\begin{prop}
The functor $r^{!}[\dim K]:
D(K(\calK)\backslash\Gr)\lra D(Z)$ is 
$t$-exact 

\end{prop}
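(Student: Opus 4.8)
The map $r\colon Z\is K(\bbC[t^{-1}])\backslash\Gr\to K(\calK)\backslash\Gr$ is the quotient morphism by the congruence subgroup $K(\calO)_+\subset K(\calK)$, where $K(\calO)_+=\ker(K(\calO)\to K)$; more precisely, $Z\is K(\bbC[t^{-1}])\backslash\Gr\to K(\calK)\backslash\Gr$ factors as a composition of the $K(\calO)_+$-torsor $K(\bbC[t^{-1}])\backslash\Gr\to K(\calO)\backslash K(\calK)\backslash\Gr = K(\bbC[t^{-1}])K(\calO)\backslash\Gr$ followed by a $\Bun_K(\bbP^1)$-family. The plan is to reduce the $t$-exactness of $r^![\dim K]$ to a fiberwise statement: $r$ is strongly pro-smooth of relative dimension $\dim K$ (its base change along $\Gr\to K(\calK)\backslash\Gr$ is the projection $\Gr\times_{\Bun_K(\bbP^1)\times\{0\}}(K(\calK)/K(\bbC[t^{-1}]))\to\Gr$, and $K(\calK)/K(\bbC[t^{-1}])\is \Gr_K$ has a $K(\calO)$-torsor over it, etc.). In fact the cleanest route is to factor $r$ through the auxiliary smooth model $LK_c\backslash\Gr$: recall from Lemma~\ref{quasi-maps model equ} that $q^!\colon D(K(\calK)\backslash\Gr)\is D_{\calS_K}(LK_c\backslash\Gr)$ is an equivalence onto the constructible subcategory, and from the uniformization in Section~\ref{uniform of quasi maps} that $Z\is K(\bbC[t^{-1}])\backslash\Gr$ receives a natural map from $\Omega K_c\backslash\Gr$ with contractible fibers (the complex uniformization picture). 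So it suffices to analyze the pullback along the corresponding smooth fibration between the two quotient stacks and track the perverse $t$-structures via the orbitwise characterization in Proposition~\ref{t-structures}.

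First I would set up the precise factorization. Write $D(Z)$ with its perverse $t$-structure associated to the stratification $\calZ=\{Z^\lambda\}_{\lambda\in\Lambda_A^+}$ and perversity $p_Z(\lambda)=-\langle\lambda,\rho\rangle - \dim K$ (or, after the shift $[\dim K]$, the middle perversity on $Z$ normalized so that $\IC_{Z^0}$ is perverse); recall $Z^\lambda=K(\bbC[t^{-1}])\backslash\mO_K^\lambda$ and $K(\calK)\backslash\mO_K^\lambda$ are related by $r$. The key geometric fact is that the restriction $r^\lambda\colon Z^\lambda\to K(\calK)\backslash\mO_K^\lambda$ is smooth of relative dimension $\dim K$ with \emph{contractible} fibers: indeed $Z^\lambda\to \Bun_K(\bbP^1)$-type data means the fiber of $r^\lambda$ is the quotient $K(\calO)_+\backslash K(\calK)\times\ldots$, which after Gram–Schmidt (Proposition~\ref{multi Gram-Schmidt}, applied with $m=1$ and $G$ replaced by $K$) is an affine space bundle. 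More directly, the fiber of $r$ over a point of $K(\calK)\backslash\Gr$ is $K(\calO)_+\backslash K(\bbC[t^{-1}])\backslash K(\calK)$-related, which is a pro-affine-space and hence contractible. Granting this, for any $\mF\in D(K(\calK)\backslash\Gr)$ and any $\lambda$ one has a base-change square relating $(j_Z^\lambda)^! r^!\mF[\dim K]$ to $(r^\lambda)^!(j_K^\lambda)^!\mF[\dim K]$, where $j_Z^\lambda,j_K^\lambda$ are the locally closed inclusions; by Lemma~\ref{key lemma}(2) (both stacks admitting gluing of sheaves and $r$ being strongly pro-smooth — established as in Proposition~\ref{key base change}) this base change is an isomorphism.

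Next I would run the $t$-structure comparison orbitwise, exactly as in the proof of Theorem~\ref{real-symmetric}. Using the characterization $^pD^{\geq 0}(K(\calK)\backslash\Gr)=\{\mF: (j_K^\lambda)^!\mF\in{^p}D^{\geq 0}(K(\calK)\backslash\mO_K^\lambda)[p_X(\lambda)]\}$ and the analogous one for $D(Z)$, the claim reduces to: for each $\lambda$, the functor $(r^\lambda)^![\dim K]$ sends ${^p}D^{\geq 0}(K(\calK)\backslash\mO_K^\lambda)[p_X(\lambda)]$ to ${^p}D^{\geq 0}(Z^\lambda)[p_Z(\lambda)]$ and similarly for $\leq 0$. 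Writing $K(\calK)\backslash\mO_K^\lambda\is\on{lim}_{i}\calY_i$ as a placid presentation, the composition $Z^\lambda\to K(\calK)\backslash\mO_K^\lambda\to\calY_i$ has contractible fibers of the appropriate dimension, so $t$-exactness (up to the explicit shift $\dim K$) follows from the elementary fact that pullback along a smooth morphism with contractible fibers of relative dimension $d$ is $t$-exact after the shift $[d]$, combined with the colimit formula~\eqref{characterization'} for the $!$-adapted $t$-structure on placid stacks. Matching the two perversities — checking $p_Z(\lambda) = p_X(\lambda) - \dim K$ on each stratum, i.e. that the shift $[\dim K]$ is exactly the right one globally and not just up to a $\lambda$-dependent correction — is the point where I expect to do a small but genuine computation with $\dim Z^\lambda$ versus $\dim (K(\calK)\backslash\mO_K^\lambda)$; this is the main (though mild) obstacle. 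Once the numerology checks out on every stratum, $t$-exactness of $r^![\dim K]$ is immediate from the stratumwise statement and the gluing description of the $t$-structures.
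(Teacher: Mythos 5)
The stratum-by-stratum reduction through the Cartesian square (with $r^\lambda\colon Z^\lambda\to K(\calK)\backslash\mO_K^\lambda$), the use of base change via Proposition~\ref{key base change} and Lemma~\ref{key lemma}, and the plan to match perversities numerically are all exactly the paper's route. But the central claim on which you rest the $t$-exactness --- that $r^\lambda$ (or $r$, or the composites $Z^\lambda\to\calY_i$) is smooth with \emph{contractible} fibers of relative dimension $\dim K$, so one can invoke ``$!$-pullback along contractible fibrations is $t$-exact after a shift by the fiber dimension'' --- is false, and this is precisely the point where the present proposition differs from Theorem~\ref{real-symmetric}. The pullback of $r$ along $\Gr\to K(\calK)\backslash\Gr$ is the projection from $\Gr\times_{\bP^1}K(\calK)^{(1)}/LK^{(1)}$, and $K(\calK)^{(1)}/LK^{(1)}$ is a $K(\calO)^{(1)}$-torsor over $\bP^1\times\Bun_K(\bP^1)$ (see the proof of Proposition~\ref{key base change}); the $K(\calO)$-part is pro-contractible, but $\Bun_K(\bP^1)$ certainly is not. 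So the mechanism you need is not contractibility; it is the theorem that $!$-pullback along a strongly pro-smooth morphism is $t$-exact for the $!$-adapted $t$-structures, with \emph{no} shift (the relevant reference in the paper is \cite[Proposition 6.3.3(c)]{BKV}, applied via the characterization~\eqref{characterization'}).

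Once that is in place, the shift $[\dim K]$ appears purely as bookkeeping between the $!$-adapted $t$-structure on the finite-type stack $Z^\lambda$ and its classical perverse $t$-structure, and the ``small but genuine computation'' you flagged but did not carry out is exactly where the non-contractibility surfaces: one needs $\dim Z^\lambda = \langle\lambda,\rho\rangle + \dim Z^0 = \langle\lambda,\rho\rangle - \dim K$, which uses $Z^0\is\Bun_K(\bP^1)$ and the fact that $\dim\Bun_K(\bP^1) = -\dim K$ is \emph{negative}. That negative stacky dimension is where your ``relative dimension $\dim K$'' really comes from, and it is not the relative dimension of a smooth surjection with contractible fibers. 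Finally, the proposed detour through $LK_c\backslash\Gr$ is not needed and the map $\Omega K_c\backslash\Gr\to Z=K(\bC[t^{-1}])\backslash\Gr$ you want to invoke is not even obviously defined, since $\Omega K_c$ is not a subgroup of $K(\bC[t^{-1}])$; the paper works directly with $r$ and its strata.
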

\begin{proof}
The proof is similar to the one in Theorem \ref{real-symmetric}.
For any $\lambda\in\Lambda_A^+$,
consider the following Cartesian diagram
\[\xymatrix{Z^\lambda\ar[r]^{j_Z^\lambda}\ar[d]^{r^\lambda}&Z\ar[d]^r\\
 K(\calK)\backslash\mO^\lambda_K\ar[r]^{j_K^\lambda}&  K(\calK)\backslash\Gr}\]
We need to check that for any 
$\mF\in\on{Perv}(K(\calK)\backslash\Gr)$, we have
\beq\label{t-exact claim}
(j_Z^\lambda)^*r^{!}[\dim K](\mF)\in {^{p_{cl}}}D^{\leq0}(Z^\lambda)\ \ \ 
(j_Z^\lambda)^!r^{!}[\dim K](\mF)\in {^{p_{cl}}}D^{\geq0}(Z^\lambda).
\eeq

By Proposition \ref{key base change} (in the case $n=1$), the map
$r$ is strongly pro-smooth and the functor $r^!$
satisfies base change along $*$-pullback  along any fp-locally closed 
map $\calS\to K(\calK)\backslash\Gr$. It follows that 
\beq\label{dim Z^lambda}
\dim Z^\lambda=\dim\calO^\lambda_K-\dim\calO^0_K+\dim Z^0=
\langle\lambda,\rho\rangle+\dim Z^0=\langle\lambda,\rho\rangle-\dim K
\eeq
(note that $Z^0\is\Bun_K(\bP^1)$
and $\dim \Bun_K(\bP^1)=-\dim K$)
and 
\[ (j_Z^\lambda)^! r^{!}(\mF)[\dim K]\is (r^\lambda)^!(j_K^\lambda)^!(\mF)[\dim K]\]
\[(j_Z^\lambda)^* r^{!}(\mF)[\dim K]\is (r^\lambda)^!(j_K^\lambda)^*(\mF)[\dim K].\]
Since  $\mF$ is perverse sheaf on $K(\calK)\backslash\Gr\is X(\calK)/G(\calO)$ with respect to the 
perversity function in~\eqref{perversity}, we have 
\[(j_K^\lambda)^!(\mF)\in {^p}D^{\geq0}( K(\calK)\backslash\mO^\lambda_K)[-\langle\lambda,\rho\rangle]\ \ \ \ 
(j_K^\lambda)^*(\mF)\in {^p}D^{\leq0}( K(\calK)\backslash\mO^\lambda_K)[-\langle\lambda,\rho\rangle].\]
On the other hand, since $r^\lambda$ is strongly pro-smooth, 
 the characterization of $!$-adapted $t$-structures in~\eqref{characterization'} implies that 
$(r^\lambda)^!$ is $t$-exact with respect to the $!$-adapted $t$-structures  \cite[Proposition 6.3.3 (c)]{BKV} and
hence
\[(r^\lambda)^!((j_K^\lambda)^!(\mF))[\dim K]\in {^p}D^{\geq0}(Z^\lambda)[-\langle\lambda,\rho\rangle+\dim K]\stackrel{~\eqref{dim Z^lambda}}=
{^p}D^{\geq0}(Z^\lambda)[-\dim Z^\lambda]=
{^{p_{cl}}}D^{\geq0}(Z^\lambda)\]
\[(r^\lambda)^!((j_K^\lambda)^*(\mF))[\dim K]\in {^p}D^{\leq0}(Z^\lambda)[-\langle\lambda,\rho\rangle+\dim K]\stackrel{~\eqref{dim Z^lambda}}=
{^p}D^{\leq0}(Z^\lambda)[-\dim Z^\lambda]=
{^{p_{cl}}}D^{\leq0}(Z^\lambda)\]
The desired claim~\eqref{t-exact claim} follows.

\end{proof}

It follows from the proposition above that $r^![\dim K]$
restricts to a functor
\[r^![\dim K]:\on{Perv}(K(\calK)\backslash\Gr)\to\on{Perv}(Z)\]
on the category of perverse sheaves.

\begin{thm}\label{local=global}
The functor $r^![\dim K]$
restricts to
the horizontal tensor equivalence in a commutative
diagram of tensor functors
\[\xymatrix{&\on{Rep}G^\vee\ar[rd]\ar[ld]&\\
Q_K\ar[rr]^\simeq&&Q_K^{glob}}\]

\end{thm}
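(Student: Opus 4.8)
The strategy is to verify the two ingredients needed to apply the Tannakian recognition machinery: (i) that $r^![\dim K]$ is a fully faithful embedding $Q_K \to Q_K^{glob}$ landing in the right place, and (ii) that it intertwines the two fusion products. Once these are in place, the commutative triangle follows formally, since both vertical functors are the perverse Hecke actions on the respective monoidal units and $r^![\dim K]$ carries $\omega_{K(\calK)\backslash\mO_K^0}$ (the monoidal unit of $Q_K$) to $\IC_{Z^0}$ (the monoidal unit of $Q_K^{glob}$) — indeed $r^![\dim K]$ applied to the dualizing sheaf of the closed stratum $K(\calK)\backslash\mO_K^0$ gives, after shift, the $\IC$-sheaf of $Z^0 \is \Bun_K(\bbP^1)$, because $r$ is strongly pro-smooth (Proposition \ref{key base change}) so that $r^!$ commutes with $j^\lambda_!$, $j^\lambda_*$ and hence with taking intermediate extensions on each stratum; this identifies the images of irreducibles $\IC_K^\lambda \mapsto \IC_{Z^\lambda}$.

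First I would establish that $r^![\dim K]$ sends $Q_K$ into $Q_K^{glob}$ and is an equivalence onto it. Using the $t$-exactness just proved, $r^![\dim K]$ sends perverse sheaves to perverse sheaves; using strong pro-smoothness and the base-change statements of Lemma \ref{key lemma}, it commutes with the Hecke action at the pole point $\{0\}$ — this is the key point, and it follows because the Hecke modification diagram $Z \la Z\tilde\times\Gr \ra Z$ is pulled back along $r$ from the corresponding diagram for $K(\calK)\backslash\Gr$, so $r^!$ intertwines the convolution-type pushforwards. Therefore $r^![\dim K](\omega_{K(\calK)\backslash\mO_K^0} \star \IC_V) \is \IC_{Z^0} \star^p \IC_V$ (after matching perverse truncations, using $t$-exactness on both sides — here one invokes that $Z^0 \is \Bun_K(\bbP^1)$ is the closed stratum, mirroring $\mO_K^0$ being closed). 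Hence direct summands map to direct summands and $r^![\dim K](Q_K) \subseteq Q_K^{glob}$. For essential surjectivity and full faithfulness, by Corollary \ref{description of Q_X} the category $Q_K$ is semisimple with irreducibles $\IC_K^\lambda$, $\lambda \in \sigma(\Lambda_T)$, and these map to the $\IC_{Z^\lambda}$, which by Proposition \ref{Q_X^glob} exhaust the irreducibles of $Q_K^{glob}$ (and in fact, via our identification, only those with $\lambda \in \sigma(\Lambda_T)$ occur, reproving the refined form conjectured in \cite{GN1}); full faithfulness on Hom-spaces between irreducibles reduces to a computation on a single stratum $Z^\lambda$ versus $K(\calK)\backslash\mO_K^\lambda$, where $r^\lambda$ is strongly pro-smooth with contractible fibers, so $(r^\lambda)^!$ is fully faithful on local systems.

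Next I would check compatibility with fusion products. Both $Q_K$ and $Q_K^{glob}$ carry fusion products defined as nearby cycles along a two-point degeneration: for $Q_K$ this is $\star_f$ on $D(K(\calK)\backslash\Gr)$ from Section \ref{fusion product for X} restricted to $Q_K$, defined via the family $(K(\calK)^{(2)}\backslash\Gr^{(2)})|_{\bC}$; for $Q_K^{glob}$ it is the family $Z^{(2)} = QM^{(2)}(\bbP^1,X)\times_{(\bbP^1)^2}\bC$. The point is that $Z^{(2)} \is LK^{(2)}\backslash\Gr^{(2)}|_{\bC}$ via the uniformization isomorphism \eqref{iso for QM}, and the map $r^{(2)}: LK^{(2)}\backslash\Gr^{(2)} \to K(\calK)^{(2)}\backslash\Gr^{(2)}$ is strongly pro-smooth by Proposition \ref{key base change}, hence $(r^{(2)})^!$ commutes with the nearby cycles functor $\psi_f$ — exactly as in the proof of Theorem \ref{fusion compatibility}(1), where the analogous commutation of $h^!$ with nearby cycles was the crux. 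This yields $r^![\dim K](\mF \star_f \mF') \is r^![\dim K](\mF) \star_f r^![\dim K](\mF')$ compatibly with the stratum-wise identification $Z^{(2),\lambda_\fp} \is (LK^{(2)}\backslash\mO_K^{(2),\lambda_\fp})$, matching \eqref{stratum Z^2} with the corresponding strata upstairs. Transporting the associativity and commutativity constraints (which on $Q_K$ come, via Proposition \ref{Q_X=Q_R}(2), from those on $Q_\bbR$) through this monoidal equivalence produces the Tannakian structure on $Q_K^{glob}$ and shows it agrees with the one of Proposition \ref{H_sph}(2); the triangle of tensor functors then commutes because each leg sends $\on{Rep}(G^\vee)$ via the Hecke action on the monoidal unit and $r^![\dim K]$ matches those units.

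\textbf{The main obstacle.} The hard part is the commutation of $r^!$ (and $(r^{(2)})^!$) with nearby cycles and with the Hecke-at-$\{0\}$ pushforward, in the infinite-dimensional placid setting: one must argue that the projection maps between the \v{C}ech nerves of the strongly pro-smooth morphisms $r$, $r^{(2)}$ are again strongly pro-smooth and invoke the relevant finite-type statements (e.g.\ \cite[Proposition 5.2.7(d), Lemma 5.4.5]{BKV}) level-wise, then pass to the colimit — this is precisely the technical heart of Theorem \ref{fusion compatibility}, and reusing that argument carefully (replacing $\Omega K_c^{(2)}$ by $LK^{(2)}$) is where the real work lies. A secondary subtlety is checking that the $t$-exactness forces the perverse truncations on the two sides to match so that $r^![\dim K]$ is genuinely monoidal and not merely monoidal up to cohomological shift; this uses that $r^\lambda$ is strongly pro-smooth with fibers of the expected dimension, already encapsulated in \eqref{dim Z^lambda}.
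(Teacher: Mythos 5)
Your proposal tracks the paper's proof closely for the first two stages: you establish the abelian equivalence $\Phi = r^![\dim K]$ by the same device (strong pro-smoothness of $r$, base-change isomorphisms from Lemma \ref{key lemma}, $t$-exactness, and the stratumwise identification $\IC_K^\lambda \mapsto \IC_{Z^\lambda}$), and you construct the monoidal structure $c$ by the same nearby-cycles commutation as in Theorem \ref{fusion compatibility}, using $r^{(2)}$ strongly pro-smooth. This part is essentially the paper's argument.

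However, there is a genuine gap in the final step. You write that "transporting the associativity and commutativity constraints \dots through this monoidal equivalence produces the Tannakian structure on $Q_K^{glob}$ and shows it agrees with the one of Proposition \ref{H_sph}(2)," but this agreement is exactly what must be proved, and asserting it is circular. The monoidal structure $c$ you constructed is, a priori, only a lax/strong monoidal isomorphism with no reason to respect the commutativity constraints (i.e., you have an isomorphism of coalgebras of the associated Tannakian Hopf algebras, not of Hopf algebras). Equally, you never address the compatibility of fiber functors: $\Phi$ sends the unit to the unit, but $\omega^{glob}\circ\Phi$ and $\omega$ are two a priori unrelated fiber functors on $Q_K$. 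The paper resolves both points at once: it first invokes the uniqueness of fiber functors on a neutral Tannakian category over $\bbC$ (\cite[Theorem 3.2]{DM}), combined with the fact that $\omega^{glob}\circ\Phi$ is monoidal via $c$ and restricts correctly from $\on{Rep}(G^\vee)$, to get $\omega^{glob}\circ\Phi \cong \omega$; it then passes to the Hopf algebras $R \twoheadrightarrow B_K$, $R \twoheadrightarrow B_K^{glob}$, observes that $\Phi^*:B_K \to B_K^{glob}$ is a coalgebra isomorphism commuting with both surjections, and applies \cite[Lemma 9.2.1]{N2} to upgrade $\Phi^*$ to a Hopf algebra isomorphism. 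Without that Hopf-algebra (or an equivalent) argument, you cannot conclude that the braidings match and hence that the triangle is one of tensor functors.
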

\begin{proof}
Since the functor $r^![\dim K]$ is $t$-exact and commutes with the convolution action of the Hecke category 
$D(G(\calO)\backslash\Gr)$
from the right, it restricts to a functor  $r^![\dim K]:Q_K\to Q_K^{glob}$.
We first show that $r^![\dim K]$  induces an
equivalence of semi-simple abelian categories
\beq\label{Phi}
\Phi:Q_K\is Q_K^{glob}
\eeq
By the description of $Q_K$ and $Q_K^{glob}$ in 
Proposition \ref{description of Q_X} and Proposition \ref{Q_X^glob}, it suffices to 
show  that, for any $\lambda\in\Lambda_A^+$, there is an isomorphism
\beq\label{IC=IC}
r^{!}[\dim K](\IC_X^\lambda)\is \IC_{Z^\lambda}
\eeq
Since $j_{K}^\lambda$ is fp-locally closed embedding 
the base change isomorphisms in
 \cite[Lemma 5.4.5]{BKV}
and 
Lemma \ref{key lemma}
 imply
\[r^!\circ (j_{K}^\lambda)_*\is (j_Z^\lambda)_*(r^\lambda)^!\ \ \ \ \ r^!\circ (j_{K}^\lambda)_!\is (j_Z^\lambda)_!(r^\lambda)^! \]
and~\eqref{IC=IC} follows from 
the $t$-exactness of $r^![\dim K]$ and 
\[\IC_{Z^\lambda}\is \on{Im}({^{p_{cl}}}H^0((j_{Z}^\lambda)_!(\omega_{Z^\lambda}[-\dim Z_\lambda]))\to {^{p_{cl}}}H^0((j_{Z}^\lambda)_*(\omega_{Z^\lambda}[-\dim Z_\lambda])))\]
\[\IC_K^{\lambda}\is \on{Im}({^p}H^0((j_{K}^\lambda)_!(\omega_{K(\calK)\backslash\mO_K^\lambda}[-\langle\lambda,\rho\rangle]))\to {^p}H^0((j_{K}^\lambda)_*(\omega_{K(\calK)\backslash\mO_K^\lambda}[-\langle\lambda,\rho\rangle]))).\]

We shall show that $\Phi:=r^{!}[\dim K]:Q_K\is Q_K^{glob}$ is a tensor equivalence.
We first show that for any  
$\IC^{\lambda_1}_K,\IC^{\lambda_2}_K\in Q_K$, there is a canonical isomorphism 
\beq\label{c}
c_{}:\Phi(\IC^{\lambda_1}_K\star_f\IC^{\lambda_1}_K)\is\Phi(\IC^{\lambda_1}_K)\star_f\Phi(\IC^{\lambda_1}_K)\is \IC_{Z^{\lambda_1}}\star_f\IC_{Z^{\lambda_2}}.
\eeq
called the \emph{monoidal structure} on $\Phi$.

The uniformization isomorphism 
$QM^{(2)}(\bP^1,X)\is LK^{(2)}\backslash\Gr^{(2)}$
gives rise to 
a map 
\[r^{(2)}:Z^{(2)}\is LK^{(2)}\backslash\Gr^{(2)}|_\bC\to K(\calK)^{(2)}\backslash\Gr^{(2)}|_\bC.\]
Consider the following Cartesian diagram 
\[\xymatrix{Z^{(2)}|_{\bC^\times}\ar[r]^{j}\ar[d]^{r^{(2)}|_{\bC^\times}}&Z^{(2)}\ar[d]^{r^{(2)}}& Z\ar[d]^r\ar[l]_{i}\\
(K(\calK)\backslash\Gr)^2\times \bC^\times\ar[r]^{ j}\ar[d]&(K(\calK)^{(2)}\backslash\Gr^{(2)})|_{\bC}\ar[d]&K(\calK)\backslash\Gr\ar[d]\ar[l]_{\ \ \ \ i}\\
\bC^\times\ar[r]&\bC&\{0\}\ar[l]}\]
Proposition \ref{key base change} implies that the map 
$r^{(2)}$ is strongly pro-smooth and 
Lemma \ref{key lemma} 
implies that 
\beq\label{nearby cycles 1}
\Phi(\IC^{\lambda_1}_K\star_f\IC^{\lambda_2}_K)=r^![\dim K]i^* j_*(\IC^{\lambda_1}_K\boxtimes\IC^{\lambda_2}_K\boxtimes\underline{\bC}_{\bbR_{>0}}) )\is
\eeq
\[\is i^* j_*(r^{(2)}_{>0})^!(\IC^{\lambda_1}_K\boxtimes\IC^{\lambda_2}_K\boxtimes\underline{\bC}_{\bbR_{>0}}))[\dim K].
\]
On the other hand, the same argument of proving~\eqref{IC=IC} show that 
\[(r^{(2)}|_{\bC^\times})^!(\IC^{\lambda_1}_K\boxtimes\IC^{\lambda_2}_K\boxtimes\underline{\bC}_{\bbR_{>0}}[1]))[\dim K]\is\IC_{Z^{(2),\lambda_{1\cup 2}}}|_{\bbR_{>0}}\]
where $\IC_{Z^{(2),\lambda_{1\cup 2}}}$ is the $\IC$-complex on the stratum 
$Z^{(2),\lambda_{1\cup 2}}\subset Z^{(2)}$ in~\eqref{stratum Z^2}.
Thus we have
\beq\label{nearby cycles 2}
i^* j_*(r^{(2)}_{>0})^!(\mF\boxtimes\mF'\boxtimes\underline{\bC}_{\bbR_{>0}}))[\dim K]\is 
i^* j_*(\IC_{Z^{(2),\lambda_{1\cup 2}}}|_{\bbR_{>0}})[-1]\is
\psi_f(\IC_{Z^{(2),\lambda_{1\cup 2}}})
\eeq
\[\is
\IC_{Z^{\lambda_1}}\star_f\IC_{Z^{\lambda_2}}\]
Combining~\eqref{nearby cycles 1} and~\eqref{nearby cycles 2}, we obtain the desired monoidal structure  
\[c_{}:\Phi(\IC^{\lambda_1}_K\star_f\IC^{\lambda_2}_K)\is i^* j_*(r^{(2)}|_{\bC^\times})^!(\IC^{\lambda_1}_K\boxtimes\IC^{\lambda_2}_K\boxtimes\underline{\bC}_{\bbR_{>0}}))[\dim K]\is \IC_{Z^{\lambda_1}}\star_f\IC_{Z^{\lambda_2}}\]
in~\eqref{c}.

Finally, we show that  $\Phi$ is compatible with associativity and commutativity constraints of 
$Q_K$ and $Q_K^{glob}$.
Consider the following diagram of functors
\beq\label{Tannakian}
\xymatrix{&\on{Rep}G^\vee\ar[dr]\ar[dl]&\\
Q_K\ar[rr]^\Phi_\simeq\ar[dr]_\omega&&Q_K^{glob}\ar[dl]^{\omega^{glob}}\\
&\on{Vect}&}\eeq
where $\omega$ and $\omega^{glob}$ are the fiber functors.
The upper triangle and the outer square are 
commutative.
We claim that the lower triangle is commutative, that is, there is natural isomorphism 
$\omega^{glob}\circ\Phi\is\omega$.
For this we observe that 
the composition
\[\on{Rep}G^\vee\to Q_K\stackrel{\omega^{glob}\circ\Phi}\to\on{Vect}\]
is the fiber functor for $\on{Rep}G^\vee$ and, for any $\mF,\mF\in Q_K$, the 
the monoidal structure for $\Phi$ defines an isomorphism
\[\omega^{glob}\circ\Phi(\mF\star_f\mF')\is \omega^{glob}\circ\Phi(\mF)\otimes\omega^{glob}\circ\Phi(\mF').\]
Thus by \cite[Lemma 3.3]{Z2}, $\omega^{glob}\circ\Phi:Q_K\to \on{Vect}$
has a structure of a fiber functor and
since $Q_K$  is a neutral Tannakian category over $\bC$,
 the uniqueness of fiber functors in
\cite[Theorem 3.2]{DM} implies that there is an isomorphism $\omega^{glob}\circ\Phi\is\omega$. 

Let $R$,
$B_K$, $B_K^{glob}$ be the Hopf algebra corresponding to 
$\on{Rep} G^\vee$, $Q_K$, and $Q_K^{glob}$
under the Tannakian dictionary. 
By \cite[Proposition 2.16]{DM},
the commutative diagram
of functors in~\eqref{Tannakian} gives rise to a maps 
\[
\xymatrix{&R\ar[dr]\ar[dl]&\\
B_K\ar[rr]^{\Phi^*}&&B_K^{glob}}
\]
where the vertical arrows are \emph{surjective} morphisms of Hopf algebras
and the horizontal arrow $\Phi^*$ is an \emph{isomorphism}  of co-algebras. 
By \cite[Lemma  9.2.1]{N2}, the monoidal structure for $\Phi$
implies that $\Phi^*$ respects multiplication
and the surjectivity of the vertical arrows implies that 
$\Phi^*$ is an isomorphism of 
Hopf algebras.
The theorem follows.

\end{proof}

The following corollary follows immediately from 
Proposition \ref{Q_X=Q_R}, Proposition \ref{H_sph}, confirming \cite[Conjecture 7.3.2.]{GN1} in the case of symmetric varieties. 

\begin{corollary}\label{Weyl groups}
(1)
There is an isomorphism $H^\vee_{sph}\is H^\vee_{real}$. 
In particular, the Weyl group of $H^\vee_{sph}$ is isomorphic to the small Weyl group 
of $X$. 
(2) The 
irreducible objects in $Q_K^{glob}$  are 
 intersection cohomology sheaves $\IC_{Z^\lambda}$
on the closures of strata 
$Z^\lambda$, $\lambda\in\sigma(\Lambda_T)$, with
coefficients in trivial local systems.
\end{corollary}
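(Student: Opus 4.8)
The final statement, Corollary \ref{Weyl groups}, is a direct consequence of the two Tannakian identifications established just before it, so the plan is essentially to chain together the equivalences and read off the conclusions for root data.

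First I would assemble the chain of tensor equivalences. By Proposition \ref{Q_X=Q_R} we have tensor equivalences $Q_\bbR\is Q_X\is Q_K$ sitting under $\on{Rep}(G^\vee)$, identifying the Tannakian group of $Q_\bbR$ (hence of $Q_X$, $Q_K$) with $H_{real}^\vee\subset G^\vee$. By Theorem \ref{local=global} the functor $r^![\dim K]$ gives a tensor equivalence $Q_K\is Q_K^{glob}$ compatible with the Hecke actions of $\on{Rep}(G^\vee)$, and by Proposition \ref{H_sph} the Tannakian group of $Q_K^{glob}$ is $H_{sph}^\vee\subset G^\vee$. Composing, we obtain a tensor equivalence $Q_\bbR\is Q_K^{glob}$ commuting with the functors from $\on{Rep}(G^\vee)$, which translates under Tannakian reconstruction into an isomorphism $H_{sph}^\vee\is H_{real}^\vee$ of reductive subgroups of $G^\vee$ compatible with the inclusions into $G^\vee$. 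Since an isomorphism of reductive groups carries root datum and Weyl group to root datum and Weyl group, and since by Proposition \ref{Q_X=Q_R}(1) (together with the description of $H_{real}^\vee$ in \cite{N2}) the Weyl group of $H_{real}^\vee$ is the small Weyl group of $X$, part (1) follows.

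For part (2), the point is to transport the description of irreducible objects across the equivalences. By Corollary \ref{description of Q_X} the simple objects of $Q_K$ are precisely the $\IC_K^\lambda$ with $\lambda\in\sigma(\Lambda_T)$ — this in turn comes from Proposition \ref{Q_X=Q_R}(1), where the simples of $Q_\bbR$ are the $\IC$-sheaves of the closures of $S_\bbR^\lambda$ for $\lambda\in\sigma(\Lambda_T)$ with trivial coefficients. The equivalence $\Phi = r^![\dim K]:Q_K\is Q_K^{glob}$ of Theorem \ref{local=global} sends simples to simples, and by the isomorphism \eqref{IC=IC} established in that proof one has $r^![\dim K](\IC_X^\lambda)\is\IC_{Z^\lambda}$; via the identification $Q_X\is Q_K$ this gives $\Phi(\IC_K^\lambda)\is\IC_{Z^\lambda}$. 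Hence the simple objects of $Q_K^{glob}$ are exactly the $\IC_{Z^\lambda}$ with $\lambda\in\sigma(\Lambda_T)$, and all with coefficients in trivial local systems. This refines Proposition \ref{Q_X^glob} (where the indexing set was only known to be contained in $\Lambda_A^+$) and confirms the conjecture of \cite{GN1}.

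The main subtlety — though it is already absorbed into the results being cited — is the compatibility of the \emph{commutativity} constraints under all these equivalences, since an equivalence of monoidal categories under $\on{Rep}(G^\vee)$ that merely respects associativity would only yield an isomorphism of the corresponding Hopf algebras as algebras, not the identification of the dual \emph{groups} with their root data. This is exactly what Proposition \ref{Q_X=Q_R}(2), Proposition \ref{H_sph}, and the final paragraph of the proof of Theorem \ref{local=global} (the Hopf-algebra argument via \cite[Proposition 2.16]{DM} and \cite[Lemma 9.2.1]{N2}) are designed to handle, so at the level of this corollary there is nothing further to prove: the argument is simply the concatenation of those three statements followed by Tannakian reconstruction. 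I would therefore present the proof as a short deduction, citing Proposition \ref{Q_X=Q_R}, Proposition \ref{H_sph}, Theorem \ref{local=global}, and Corollary \ref{description of Q_X}, and remarking that the refinement of the indexing set to $\sigma(\Lambda_T)$ is what gives the new proof of Proposition \ref{Q_X^glob}.
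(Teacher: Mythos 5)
Your proposal is correct and matches the paper's own reasoning: the paper states the corollary "follows immediately from Proposition \ref{Q_X=Q_R}, Proposition \ref{H_sph}" (together with Theorem \ref{local=global}), and your write-up is precisely the unpacking of that deduction, chaining the tensor equivalences $Q_\bbR\is Q_X\is Q_K\is Q_K^{glob}$ and reading off the Tannakian groups and simple objects via \eqref{IC=IC} and Corollary \ref{description of Q_X}. Nothing is missing.
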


\quash{
\section{Disconnected cases}

\subsubsection{Closure relation}
\begin{proposition}
Let $\lambda\in\Lambda_A^+$
 and let 
$X(\calK)^\lambda\subset X(\calK)$
be the corresponding $G(\calO)$-orbit. 
We have $\overline{X(\calK)^\lambda}=\cup_{\mu\leq\lambda,\mu\in\Lambda_A^+} X(\calK)^{\mu}$.
\end{proposition}
}

\appendix
\section{Semi-analytic stacks}\label{Real stacks}
\subsection{Basic definitions}
Recall that a subset $Y$ of a 
real analytic manifold $M$ 
is called semi-analytic if
any point $y\in Y$ has a open neighbourhood $U$ 
such that the intersection $Y\cap U$ is a finite union of sets of the form
\[\{y\in U|f_1(y)=\cdot\cdot\cdot=f_r(y)=0, g_1(y)>0,..., g_l(y)>0\},\]
where the $f_i$ and $g_j$ are real analytic functions on 
$U$. A map $f:Y\to Y'$ between two semi-analytic sets is called semi-analytic if
it is continuous and its graph is a semi-analytic set.

Let $\on{Grpd}$ be the $\infty$-category of spaces, which are often referred as $\infty$-groupoids.
Let $\on{RSp}$ be the site of semi-analytic sets where the coverings are 
\'etale (=locally bi-analytic) maps $\{S_i\to S\}_{i\in I}$ such that the map $\bigsqcup S_i\to S$ is surjective. 

\begin{definition}
A semi-analytic pre-stack is a functor $\calY:\on{RSp}\to\on{Grpd}$ 
 and a semi-analytic stack is a 
pre-stack which is a sheaf. 
\end{definition}
We will view any semi-analytic set as a semi-analytic stack via the Yoneda embedding.

A morphism $\calX\to\calY$ between  stacks is called representable if
for any morphism from a semi-analytic set
$Y\to\calY$, the fiber product $\calX\times_\calY Y$ is representable by a semi-analytic set. We say that a representable morphism $\calX\to\calY$ has property 
P if it has property P after base change along any morphism 
from a semi-analytic sets.

Let $\calX\to\calY$ be a  morphism of semi-analytic stacks.
One can associate its $\check{\on{C}}$ech complex with terms 
$\calX^{[n]}=\calX\times_\calY\calX\times\cdot\cdot\cdot\calX\times_\calY\calX$
the $(n+1)$-times fiber product of $\calX$ over $\calY$. When $f$ is surjective, there is a natural isomorphism 
\beq\label{Cech complex}
\xymatrix{
\calY\is \on{colim} (\cdot\cdot\cdot\ar@<-.5ex>[r] \ar@<.5ex>[r]  \ar@<.2ex>[r]\ar@<-.2ex>[r]& \calX^{[2]} \ar@<-.5ex>[r] \ar@<.5ex>[r]  \ar@<.0ex>[r]& \calX^{[1]} \ar@<-.5ex>[r] \ar@<.5ex>[r]  & \calX)}
\eeq

A semi-analytic stack $\calY$ is a semi-analytic space if for every $S\in\on{RSp}$, 
$\calY(S)$ is isomorphic to a set, that is, each connected component of $\calY(S)$ is contractible.
Let $\Gamma\rightrightarrows Y$ be a groupoid object 
in the category of semi-analytic space.
The  quotient stack 
$\Gamma\backslash Y$ is defined as the colimit
\beq\label{quotient stack}
\xymatrix{
\Gamma\backslash Y\is \on{colim} (\cdot\cdot\cdot\ar@<-.5ex>[r] \ar@<.5ex>[r]  \ar@<.2ex>[r]\ar@<-.2ex>[r]&\Gamma\times_Y\Gamma  \ar@<-.5ex>[r] \ar@<.5ex>[r]  \ar@<.0ex>[r]& \Gamma \ar@<-.5ex>[r] \ar@<.5ex>[r]  & Y)}
\eeq

\subsection{From  stacks to 
semi-analytic stacks}
Let $F=\bbR$ or $\bbC$.
For any scheme $Y$ over $F$ of finite type, its $F$-points  
$Y(F)$ is naturally a semi analytic set, denoted by $Y_F$.
For any strict ind-scheme $Y\is\on{colim}_{i\in I} Y^i$
over $F$, its $F$-points is naturally a semi-analytic space, denoted by 
$Y_F$.

In the paper, we will mainly consider $F$-stacks of the form 
$\calY\is Y/G$ where $Y$ is a strict ind-scheme acted on by a group ind-scheme 
$G$. 
A surjective morphism $f:Y\to\calY$ from a strict ind-scheme $Y$ to a $F$-stack
$\calY$ is called a 
$F$-surjective presentation if it induces a surjective map  
$Y(F)\to |\calY(F)|$ on the set of isomorphism classes of objects. 
Note that when $F=\bC$ any surjective morphism is a $\bC$-surjective 
presentation.

\begin{lemma}\label{image}
Let $f_1:Y_1\to\calY$ and $f_2:Y_2\to\calY$ be 
two $F$-surjective presentations of $\calY$.
Let $\Gamma_i=Y_i\times_\calY Y_i\rightrightarrows Y_i$ be the corresponding 
groupoid. 
Then
there is a canonical isomorphism of semi-analytic stacks
\[\Gamma_{1,F}\backslash Y_{i,F}\is\Gamma_{2,F}\backslash Y_{2,F}.\]

\end{lemma}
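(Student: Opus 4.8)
The plan is to show that both semi-analytic stacks $\Gamma_{1,F}\backslash Y_{1,F}$ and $\Gamma_{2,F}\backslash Y_{2,F}$ compute the same object, namely the sheafification of the functor $S\mapsto |\calY(S)|$ restricted to semi-analytic sets, via a common refinement argument. First I would reduce to the case where one presentation dominates the other: given $f_1:Y_1\to\calY$ and $f_2:Y_2\to\calY$, form the fiber product $Y_{12}:=Y_1\times_\calY Y_2$, which is again a strict ind-scheme over $F$ (since $\calY\is Y/G$ with $Y$ an ind-scheme and $G$ a group ind-scheme, fiber products of ind-schemes over $\calY$ are ind-schemes), and observe that the projection $Y_{12}\to\calY$ is an $F$-surjective presentation: on $F$-points, $Y_{12}(F)\to Y_1(F)$ is surjective on isomorphism classes because $f_2$ is $F$-surjective and pulling back along $Y_1(F)\to|\calY(F)|$ preserves surjectivity, and then $Y_{12}(F)\to|\calY(F)|$ is surjective since $Y_1(F)\to|\calY(F)|$ is. Hence it suffices to prove the lemma when $Y_2=Y_{12}$ and $f_2$ factors through $f_1$ via a morphism $g:Y_2\to Y_1$ of ind-schemes over $\calY$.

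In that situation, writing $\Gamma_i=Y_i\times_\calY Y_i$, the morphism $g$ induces a morphism of groupoids $(\Gamma_2\rightrightarrows Y_2)\to(\Gamma_1\rightrightarrows Y_1)$, hence on $F$-points a morphism of groupoids in semi-analytic spaces, hence a morphism of quotient stacks $\Gamma_{2,F}\backslash Y_{2,F}\to\Gamma_{1,F}\backslash Y_{1,F}$ using the colimit description~\eqref{quotient stack}. The key step is to check this morphism is an equivalence of semi-analytic stacks, which I would do by checking it induces an equivalence on $S$-points for every $S\in\on{RSp}$; concretely, both sides are sheafifications (in the \'etale topology on $\on{RSp}$) of the presheaf-quotient $S\mapsto Y_i(S)/\Gamma_i(S)$, and the map $Y_2(S)/\Gamma_2(S)\to Y_1(S)/\Gamma_1(S)$ becomes an equivalence after sheafification precisely because $g:Y_2\to Y_1$ is, \'etale-locally on a semi-analytic set mapping to $Y_1$, surjective with the "same fibers up to the groupoid" — this is exactly the statement that both groupoids present the same stack $\calY$. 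More precisely: the essential surjectivity after sheafification uses that $g_F:Y_{2,F}\to Y_{1,F}$ admits local sections \'etale-locally (since $Y_2\to Y_1$, being a base change of an $F$-surjective presentation of $\calY$ pulled back to $Y_1$, is smooth/submersive-ish with $F$-points surjective in the appropriate sense — here I would invoke that $Y_2=Y_1\times_\calY Y_2$ so $Y_2\to Y_1$ is a pullback of $Y_2\to\calY$ which is an $F$-surjective presentation), and the full faithfulness after sheafification uses that $\Gamma_2=Y_2\times_{Y_1}\Gamma_1\times_{Y_1}Y_2$, i.e.\ the groupoid $\Gamma_2$ on $Y_2$ is pulled back from $\Gamma_1$ along $g$.

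The main obstacle I anticipate is bookkeeping with the passage from $F$-points to semi-analytic sheaves: one must verify that taking $F$-points is compatible with the relevant fiber products and with \'etale covers, and that "$F$-surjective presentation" for ind-schemes over $\calY$ indeed translates into "\'etale-locally split surjection" on the associated semi-analytic spaces — which is where the real-analytic structure (as opposed to merely a surjection of sets) enters, via smoothness of the presentations in the algebraic category descending to submersivity, hence local existence of real-analytic sections, on $\bbR$-points. Once that dictionary is in place, the common-refinement argument above gives a canonical zig-zag $\Gamma_{1,F}\backslash Y_{1,F}\xleftarrow{\sim}\Gamma_{12,F}\backslash Y_{12,F}\xrightarrow{\sim}\Gamma_{2,F}\backslash Y_{2,F}$, and composing the two equivalences yields the desired canonical isomorphism. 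I would also remark that canonicity (independence of auxiliary choices) is automatic here since $Y_{12}$ and all structure maps are determined by $f_1,f_2$ with no choices involved.
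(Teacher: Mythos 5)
Your proposal follows essentially the same route as the paper: the paper's proof forms $Y = Y_1\times_\calY Y_2$, notes the natural maps $\Gamma_F\backslash Y_F\to\Gamma_{i,F}\backslash Y_{i,F}$ are isomorphisms, and concludes via the zig-zag, which is exactly your common-refinement argument (with more of the verifications spelled out). The additional worries you flag — that $Y_{12}\to\calY$ is $F$-surjective and that the dominated presentation induces an equivalence on quotient stacks — are precisely the implicit content of the paper's one-line claim, so your proof is a correct expansion of the same idea.
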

\begin{proof}
Let $Y=Y_1\times_\calY Y_2$ and $\Gamma=Y\times_\calY Y$
be the corresponding groupoid. Then 
the natural map $\Gamma_F\backslash Y_F\to\Gamma_{i,F}\backslash Y_{i,F}$ is an isomorphism. The lemma follows.

\end{proof}

\begin{definition}
Given a   stack $\calY$ over $F$ which admits a $F$-surjective presentation, we define the associated 
semi-analytic stack to be 
\[\calY_F:=\Gamma_F\backslash Y_F\]
where $Y\to\calY$ is a $F$-presentation of $\calY$.
\end{definition}
By the lemma above $\calY_F$ is well-defined and the assignment 
$\calY\to\calY_F$ defines a functor from the category of 
stacks over $F$
 which admit $F$-presentations to the category of 
semi-analytic stacks.

\begin{example}\label{BG}
Consider the case $F=\bbR$.
Let $Y$ be a $\bbR$-scheme and $G$ be an algebraic group over $\bbR$ acting 
on $Y$. Consider the algebraic stack $\calY=G\backslash Y$.
Let $T_1,...,T_s\in H^1(\on{Gal}(\bbC/\bbR),G(\bbC))$ be the isomorphism classes of 
$G$-torsors.
  Define $G_i:=\on{Aut}_G(T_i)$ and the $\bbR$-scheme 
$Y_i:=\Hom_{G}(T_i,Y)$. Note that 
$G_i$ acts on $Y_i$ and 
the collection $\{G_1,...,G_s\}$ gives all the pure-inner 
forms of $G$. 
Consider the real algebraic stack $G_i\backslash Y_i$.  
We have $G_i\backslash Y_i\is\calY$ and the map 
$\bigsqcup_{i=1}^sY_i\to\calY$ is a $\bbR$-surjective presentation. In addition, 
the $\bbR$-surjective presentation above induces an isomorphism of semi-analytic stacks 
$\bigsqcup_{i=1}^sG_{i,\bbR}\backslash Y_{i,\bbR}\is \calY_\bbR$.

\end{example}

One can regard real  stacks as complex stacks with real structures and the discussion above has an obvious generalization to 
this setting.  Let $\calY$ be a complex  stack and let
$\sigma$ be a real structure on $\calY$, that is, a complex conjugation (or a semi-linear involution) $\sigma:\calY\to\calY$. Then 
a surjective morphism $f:Y\to\calY$ of $\calY$
is called a 
$\bbR$-surjective presentation if it satisfies the following properties.
(1) There is a real structure $\sigma$ on $X$ such that 
$f$ is compatible with the real structures on $Y$ and $\calY$.
(2) The map $f$  
induces a surjective map  
$Y(\bC)^{\sigma}\to |\calY(\bC)^{\sigma}|$. One can check that 
Lemma \ref{image} still holds in this setting, 
thus 
for a pair $(\calY,\sigma)$ as above which admits a $\bbR$-surjective presentation, there is a well-defined 
semi-analytic stack $\calY_\bbR$ given by 
$\calY_\bbR:=\Gamma(\bC)^\sigma\backslash Y(\bC)^\sigma$, where 
$Y\to\calY$ is a $\bbR$-surjective presentation, 
$\Gamma=Y\times_{\calY}Y$ is the corresponding groupoid (Note that 
$\Gamma$ has a canonical real structure $\sigma$ coming from 
$Y$ and $\calY$).

\subsection{Constructible complexes }\label{sheaves}
We will be working with $\bC$-linear dg-categories. Unless specified otherwise, all dg-categories will be assumed cocomplete, i.e., containing all small colimits,
and all functors between dg-categories will be assumed continuous, i.e., preserving all small colimits.

For any semi-analytic set $S$, 
we define $D(Y)=\on{Ind}(D_c(Y))$ to be the ind-completion
of the 
 bounded dg-category $D_c(Y)$ of $\bC$-constructible sheaves on 
$Y$. For any semi-analytic stack $\calY$ 
we define $D(\calY):=\underset{}{\on{lim}}^!\ D(Y)$
where the index category is that of semi-analytic sets equipped with a semi-analytic map
to $\calY$, and the transition functors are given by $!$-pullback.
Since we are in the constructible context, $!$-pullback admits a left adjoint, given by 
$!$-pushforward, and it follows that 
$D(\calY)=\underset{}{\on{colim}_!}\ D(Y)$. In particular,
$D(\calY)$ is compactly generated. 
For an ind semi-analytic stacks $\calY=\on{colim}\calY_i$ we denote by 
$D_c(\calY)\subset D(\calY)$ the (non co-complete) full subcategory
consisting of complexes that are extensions by
zero off of substacks.

Let $\calX\to\calY$ be a surjective morphism of semi-analytic stacks.
Then the isomorphism~\eqref{Cech complex} induces an equivalence
\beq\label{limit}
\xymatrix{
D(\calY)\is\on{lim} (D(\calX)\ar@<-.5ex>[r] \ar@<.5ex>[r]  & D(\calX^{[1]}) \ar@<-.5ex>[r] \ar@<.5ex>[r]  \ar@<.0ex>[r]& D(\calX^{[2]}) \ar@<-.5ex>[r] \ar@<.5ex>[r] \ar@<.2ex>[r]\ar@<-.2ex>[r] & \cdot\cdot\cdot)}
\eeq

Let $F=\bC$ or $\bbR$.
For any $F$-stack $\calY$  admitting a 
$F$-surjective presentation, we define 
$D(\calY)=D(\calY_F)$ where $\calY_F$ is the associated semi-analytic stack. 


\end{document}